\newcommand{\B}{\mathrm{B}}
\newcommand{\C}{\ensuremath{\mathbb{C}}}
\newcommand{\E}{\ensuremath{\mathbb{E}}}
\newcommand{\F}{\ensuremath{\mathbb{F}}}
\newcommand{\Fc}{\ensuremath{\mathcal{F}}}
\newcommand{\G}{\ensuremath{\mathbb{G}}}
\newcommand{\I}{\mathrm{I}}
\let\L\relax
\newcommand{\L}{\mathrm{L}}
\newcommand{\M}{\mathrm{M}}
\newcommand{\N}{\ensuremath{\mathbb{N}}}
\newcommand{\Q}{\ensuremath{\mathbb{Q}}}
\newcommand{\R}{\ensuremath{\mathbb{R}}}
\newcommand{\T}{\ensuremath{\mathbb{T}}}
\newcommand{\Z}{\ensuremath{\mathbb{Z}}}
\newcommand{\X}{\ensuremath{\mathrm{X}}}
\newcommand{\trans}[1]{\prescript{t}{}{#1}} 
\newcommand{\tr}{\ensuremath{\mathop{\rm Tr\,}\nolimits}}
\renewcommand{\leq}{\ensuremath{\leqslant}}
\renewcommand{\geq}{\ensuremath{\geqslant}}
\newcommand{\qed}{\hfill \vrule height6pt  width6pt depth0pt}
\newcommand{\bnorm}[1]{ \big\| #1  \big\|}
\newcommand{\Bnorm}[1]{ \Big\| #1  \Big\|}
\newcommand{\bgnorm}[1]{ \bigg\| #1  \bigg\|}
\newcommand{\Bgnorm}[1]{ \Bigg\| #1  \Bigg\|}
\newcommand{\norm}[1]{\left\Vert#1\right\Vert}
\newcommand{\xra}{\xrightarrow}
\newcommand{\otp}{\widehat{\ot}}
\newcommand{\ot}{\otimes}
\newcommand{\epsi}{\varepsilon}
\newcommand{\ovl}{\overline}
\newcommand{\otvn}{\ovl\ot}
\newcommand{\dsp}{\displaystyle}
\newcommand{\co}{\colon}
\renewcommand{\d}{\mathop{}\mathopen{}\mathrm{d}} 
\let\i\relax 
\newcommand{\i}{\mathrm{i}}
\newcommand{\w}{\mathrm{w}}
\newcommand{\exc}{\mathrm{exc}} 
\newcommand{\ov}{\overset}
\newcommand{\Rad}{\mathrm{Rad}}
\newcommand{\dec}{\mathrm{dec}}
\newcommand{\Dec}{\mathrm{Dec}}
\newcommand{\reg}{\mathrm{reg}}
\newcommand{\Aut}{\mathrm{Aut}}
\newcommand{\Reg}{\mathrm{Reg}}
\newcommand{\QWEP}{\mathrm{QWEP}}
\newcommand{\sign}{\mathrm{sign}}
\newcommand{\disc}{\mathrm{disc}}
\newcommand{\dist}{\mathrm{dist}}
\newcommand{\Id}{\mathrm{Id}}
\newcommand{\VN}{\mathrm{VN}}
\newcommand{\CB}{\mathrm{CB}}
\newcommand{\loc}{\mathrm{loc}}
\newcommand{\op}{\mathrm{op}} 
\newcommand{\e}{\mathrm{e}} 
\let\ker\relax 
\DeclareMathOperator{\ker}{Ker} 
\DeclareMathOperator{\Ran}{Ran} 
\DeclareMathOperator{\supp}{supp} 
\DeclareMathOperator{\card}{card} 
\DeclareMathOperator{\Span}{span} 
\newcommand{\cb}{\mathrm{cb}} 
\newcommand{\cp}{\mathrm{cp}} 
\newcommand{\CV}{\mathrm{CV}} 
\newcommand{\HS}{\mathrm{HS}} 
\newcommand{\tree}{\mathcal{T}} 
\newcommand{\St}{\mathrm{St}} 
\newcommand{\ALSS}{\mathrm{ALSS}}
\newcommand{\ALS}{\mathrm{ALS}}
\newcommand{\ADS}{\mathrm{ADS}}
\newcommand{\sing}{\mathrm{sing}} 
\newtheorem{thm}{Theorem}[section]
\newtheorem{defi}[thm]{Definition}
\newtheorem{prop}[thm]{Proposition}
\newtheorem{cor}[thm]{Corollary}
\newtheorem{lemma}[thm]{Lemma}
\newtheorem{remark}[thm]{Remark}
\newenvironment{proof}[1][]{\noindent {\it Proof #1} : }{\hbox{~}\qed
\smallskip
}
\numberwithin{equation}{section}
\let\OLDthebibliography\thebibliography
\renewcommand\thebibliography[1]{
  \OLDthebibliography{#1}
  \setlength{\parskip}{0pt}
  \setlength{\itemsep}{0pt plus 0.3ex}
}
\begin{document}
\selectlanguage{english}
\title{\bfseries{Projections, multipliers and decomposable maps on noncommutative $\L^p$-spaces}}
\date{}

\author{\bfseries{C\'edric Arhancet - Christoph Kriegler}}

\maketitle

\begin{abstract}
We introduce a noncommutative analogue of the absolute value of a regular operator acting on a noncommutative $\L^p$-space. We equally prove that two classical operator norms, the regular norm and the decomposable norm are identical. We also describe precisely the regular norm of several classes of regular multipliers. This includes Schur multipliers and Fourier multipliers on some unimodular locally compact groups which can be approximated by discrete groups in various senses. A main ingredient is to show the existence of a bounded projection from the space of completely bounded $\L^p$ operators onto the subspace of Schur or Fourier multipliers, preserving complete positivity. On the other hand, we show the existence of bounded Fourier multipliers which cannot be approximated by regular operators, on large classes of locally compact groups, including all infinite abelian locally compact groups. We finish by introducing a general procedure for proving positive results on selfadjoint contractively decomposable Fourier multipliers, beyond the amenable case.
\end{abstract}

\makeatletter
 \renewcommand{\@makefntext}[1]{#1}
 \makeatother
 \footnotetext{\noindent
 2020 {\it Mathematics subject classification:}
 46L51, 46L07, 43A07, 43A15, 43A22, 43A30, 43A35, 43A40, 46L52, 47L25. 
\\
{\it Key words}: noncommutative $\L^p$-spaces, operator spaces, regular operators, decomposable operators, Fourier multipliers, Schur multipliers, factorizable maps, complementations, Chabauty-Fell topology.}

\tableofcontents

\newpage

\section{Introduction}
\label{sec:Introduction}

The absolute value $|T|$ and the regular norm $\norm{T}_\reg$ of a regular operator $T$ already appear in the seminal work of Kantorovich \cite{Kan} on operators on linear ordered spaces. These constructions essentially rely on the structure of (Dedekind complete) Banach lattices. These notions are of central importance in the theory of linear operators between Banach lattices, including classical $\L^p$-spaces, since the absolute value is a positive operator. Indeed it is well-known that positive contractions are well-behaved operators. Actually, contractively regular operators on $\L^p$-spaces share in general the same nice properties as contractions on Hilbert spaces. We refer to the books \cite{AbA}, \cite{MeN} and \cite{Sch1} and to the papers \cite{Pis11} and \cite{Pis14} for more information.

Due to the lack of local unconditional structure, on a Schatten space and more generally on a noncommutative $\L^p$-space, the canonical order on the space of selfadjoint elements does not induce a structure of a Banach lattice, see \cite[Chapter 17]{DJT} and \cite[page 1478]{PiX}. Nevertheless, there exists a purely Banach space characterization of regular operators on classical $\L^p$-spaces \cite[Theorem 2.7.2]{{HvNVW}} which says that a linear operator $T \co \L^p(\Omega) \to \L^p(\Omega')$ is regular if and only if for any Banach space $X$ the map $T \ot \Id_X$ induces a bounded operator between the Bochner spaces $\L^p(\Omega,X)$ and $\L^p(\Omega',X)$. In this case, the regular norm is given by
\begin{equation}
\label{Norm-reg-c}
\norm{T}_{\reg,\L^p(\Omega) \to \L^p(\Omega')}
\ov{\mathrm{def}}{=} \sup_X \norm{T \ot \Id_X}_{\L^p(\Omega,X)\to \L^p(\Omega',X)},
\end{equation}
where the supremum runs over all Banach spaces $X$.
Using this property, a natural extension of this notion for noncommutative $\L^p$-spaces is introduced in \cite{Pis4}. A linear map $T \co \L^p(M) \to \L^p(N)$ between noncommutative $\L^p$-spaces, associated with approximately finite-dimensional von Neumann algebras $M$ and $N$, is called regular if for any noncommutative Banach space $E$ (that is, an operator space), the map $T \ot \Id_E$ induces a bounded operator between the vector-valued noncommutative $\L^p$-spaces $\L^p(M,E)$ and $\L^p(N,E)$. As in the commutative case, the regular norm is defined by
\begin{equation}
\label{Norm-reg-nc}
\norm{T}_{\reg,\L^p(M) \to \L^p(N)}
\ov{\mathrm{def}}{=} \sup_E \norm{T \ot \Id_E}_{\L^p(M,E) \to \L^p(N,E)},
\end{equation}
where the supremum runs over all operator spaces $E$. For classical $\L^p$-spaces, this norm coincides with \eqref{Norm-reg-c}. 
Nevertheless, the paper \cite{Pis4} does not give a definition of the absolute value of a regular operator and the definition of the latter is only usable for \textit{approximately finite-dimensional} von Neumann algebras. 

In this paper, we define a noncommutative analogue of the absolute value of a regular operator acting on an arbitrary noncommutative $\L^p$-space for any $1 \leq p \leq \infty$. For that, recall that a linear map $T \co \L^p(M) \to \L^p(N)$ is decomposable \cite{Haa,JuR}  if there exist linear maps $v_1,v_2 \co \L^p(M) \to \L^p(N)$ such that the linear map
\begin{equation}
\label{Matrice-2-2-Phi}
\Phi \ov{\mathrm{def}}{=} \begin{bmatrix}
   v_1  &  T \\
   T^\circ  &  v_2  \\
\end{bmatrix}
\co S^p_2(\L^p(M)) \to S^p_2(\L^p(N)), \quad \begin{bmatrix}
   a  &  b \\
   c &  d  \\
\end{bmatrix}\mapsto 
\begin{bmatrix}
   v_1(a)  &  T(b) \\
   T^\circ(c)  &  v_2(d)  \\
\end{bmatrix}
\end{equation}
is completely positive (a stronger condition than positivity of operators) where $T^\circ(c)\overset{\textrm{def}}{=}T(c^*)^*$ and where $S^p_2(\L^p(M))$ and $S^p_2(\L^p(N))$ are vector-valued Schatten spaces. In this case, $v_1$ and $v_2$ are completely positive and the decomposable norm of $T$ is defined by
\begin{equation}
\label{Norm-dec}
\norm{T}_{\dec,\L^p(M) \to \L^p(N)}
\ov{\mathrm{def}}{=} \inf\big\{\max\{\norm{v_1},\norm{v_2}\}\big\},
\end{equation}
where the infimum is taken over all maps $v_1$ and $v_2$. See the books \cite{BLM}, \cite{ER} and \cite{Pis7} for more information on this classical notion in the case $p=\infty$. If $1<p<\infty$ and if $M$ and $N$ are approximately finite-dimensional, it is alluded in the introduction of \cite{JuR} that these maps coincide with the regular maps. First, we greatly strengthen this statement by showing that the regular norm $\norm{T}_{\reg,\L^p(M) \to \L^p(N)}$ and the decomposable norm $\norm{T}_{\dec,\L^p(M) \to \L^p(N)}$ are identical for a regular map $T$ (see Theorem \ref{thm-dec=reg-hyperfinite}). 
Hence, the decomposable norm is an extension of the regular norm for noncommutative $\L^p$-spaces associated to arbitrary von Neumann algebras. Moreover, we prove that if $T \co \L^p(\Omega) \to \L^p(\Omega')$ is a regular operator between classical $\L^p$-spaces then the map 
$\begin{bmatrix} 
|T| & T \\ 
T^\circ & |T| 
\end{bmatrix} 
\co S^p_2(\L^p(\Omega)) \to S^p_2(\L^p(\Omega'))$ is completely positive (Theorem \ref{prop-reg=decomp}) where $|T| \co \L^p(\Omega) \to \L^p(\Omega')$ denotes the absolute value of $T$. In addition, we show that the infimum \eqref{Norm-dec} is actually a minimum (Proposition \ref{Prop-dec-inf-atteint}). Consequently, the map \eqref{Matrice-2-2-Phi} with some $v_1,v_2$ which realize the infimum \eqref{Norm-dec} can be seen as a natural noncommutative analogue of the absolute value $|T|$ although we have no uniqueness results for $v_1$ and $v_2$.

The ingredients of the identification of the decomposable norm and the regular norm involve a reduction of the problem on noncommutative $\L^p$-spaces to the case of finite-dimensional Schatten spaces $S^p_n$ by approximation. Moreover, a 2x2-matrix trick gives a second reduction to adjoint preserving maps between these spaces. Finally, the case of adjoint preserving maps acting on finite-dimensional Schatten spaces is treated in Theorem \ref{prop-dec-reg-matrix-case}. To conclude, note that the ideas of the manuscript \cite{Jun2} (which seems definitely postponed) could be used to define a notion of regular operator between vector-valued noncommutative $\L^p$-spaces associated with QWEP von Neumann algebras. Of course, it is likely that the identification of the decomposable norm and the regular norm is true in this generalized context. Finally, we refer to the preprint \cite{Arh8} for a generalization of the notion of decomposable map and for applications to contractively complemented subspaces of noncommutative $\L^p$-spaces.

The next task is devoted to identify precisely decomposable Fourier multipliers on noncommutative $\L^p$-spaces $\L^p(\VN(G))$ of a group von Neumann algebra $\VN(G)$ associated to a unimodular locally compact group $G$. Recall that if $G$ is a locally compact group then $\VN(G)$ is the von Neumann algebra, whose elements act on the Hilbert space $\L^2(G)$, generated by the left translation unitaries $\lambda_s \co f \mapsto f(s^{-1} \cdot)$, $s \in G$. If $G$ is abelian, then $\VN(G)$ is $*$-isomorphic to the algebra $\L^\infty(\hat{G})$ of essentially bounded functions on the dual group $\hat{G}$ of $G$. As basic models of quantum groups, they play a fundamental role in operator algebras and this task can be seen as an effort to develop $\L^p$-Fourier analysis of non-abelian locally compact groups, see the contributions \cite{CPPR}, \cite{JMP1}, \cite{JMP2}, \cite{JR1}, \cite{LaS} and \cite{MR} in this line of research and references therein. If $G$ is discrete, a Fourier multiplier $M_\varphi \co \L^p(\VN(G)) \to \L^p(\VN(G))$ is an operator which maps $\lambda_s$ to $\varphi(s)\lambda_s$, where $\varphi \co G \to \C$ is the symbol function (see Definition \ref{defi-Lp-multiplier} for the general case of unimodular locally compact groups). 

We connect this problem with several notions of approximation by discrete groups of the underlying locally compact group $G$. We are able to show that a symbol $\varphi \co G \to \C$ inducing a decomposable Fourier multiplier $M_\varphi \co \L^p(\VN(G)) \to \L^p(\VN(G))$ already induces a decomposable Fourier multiplier $M_\varphi \co \VN(G) \to \VN(G)$ at the level $p=\infty$ for some classes of locally compact groups. 
We also give a comparison between the decomposable norm at the level $p$ and the operator norm at the level $\infty$ in some cases (see Theorem \ref{prop-non-amenable-discrete-Fourier-multiplier-dec-infty}, Theorem \ref{prop-amenable-discrete-Fourier-multiplier-dec-infty}, Theorem \ref{prop-continuous-Fourier-multiplier-dec-infty}, Theorem \ref{prop-continuous-Fourier-multiplier-dec-infty2} and Theorem \ref{thm-convolutor-description-dec-infty}). Our method for this last point relies on some constructions of compatible bounded projections at the level $p=1$ and $p=\infty$ from the spaces of (weak* continuous if $p=\infty)$  completely bounded operators on $\L^p(\VN(G))$ onto the spaces $\mathfrak{M}^{p,\cb}(G)$ of completely bounded Fourier multipliers combined with an argument of interpolation. We highlight that the nature of the group $G$ seems to play a central role in this problem. Indeed, mysteriously, our results are better for a pro-discrete group $G$ than for a non-abelian nilpotent Lie group $G$. More precisely, let us consider the following definition\footnote{\thefootnote. The subscript w* means ``weak* continuous'' and ``$\CB$'' means completely bounded. The compatibility is taken in the sense of interpolation theory \cite{BeL,Tri}.}.

\begin{defi}
\label{Defi-complementation-G}
Let $G$ be a (unimodular) locally compact group. We say that $G$ has property $(\kappa)$ if there exist compatible bounded projections $P_G^\infty \co \CB_{\w^*}(\VN(G)) \to \CB_{\w^*}(\VN(G))$ and $P_G^1 \co \CB(\L^1(\VN(G))) \to \CB(\L^1(\VN(G)))$ onto $\mathfrak{M}^{\infty,\cb}(G)$ and $\mathfrak{M}^{1,\cb}(G)$ preserving complete positivity. In this case, we introduce the constant 
$$
\kappa(G)
\ov{\mathrm{def}}{=} \inf \max\Big\{\norm{P_G^\infty}_{\CB_{\w^*}(\VN(G)) \to \CB_{\w^*}(\VN(G))},\norm{P_G^1}_{\CB(\L^1(\VN(G))) \to \CB(\L^1(\VN(G)))} \Big\}
$$ 
where the infimum is taken over all admissible couples $(P_G^\infty,P_G^1)$ of compatible bounded projections and we let $\kappa(G)=\infty$ if $G$ does not have $(\kappa)$.
\end{defi}

Haagerup has essentially proved that $\kappa(G)=1$ if $G$ is a discrete group by a well-known average argument using the unimodularity and the compactness of the quantum group $\VN(G)$. The key novelty in our approach is the use of approximating methods by discrete groups in various senses to construct bounded projections for non-discrete groups beyond the case of a dual of a unimodular compact quantum group. 
 If $G$ is a second countable pro-discrete locally compact group, we are able to show that $\kappa(G)=1$ (see Theorem \ref{thm-complementation-quotient}). Another main result of the paper gives $\kappa(G) < \infty$ for a certain class of locally compact groups $G$ approximable by lattice subgroups, see Corollary \ref{cor-ADS-complementation-without-amenability}. Note that a straightforward duality argument combined with some results of Derighetti \cite[Theorem 5]{Der3}, Arendt and Voigt \cite[Theorem 1.1]{ArV} says that if $G$ is an abelian locally compact group then $\kappa(G)=1$ (see Proposition \ref{prop-ma-complementation-Fourier-multipliers-abelian}). Furthermore, in most cases, we will show the existence of compatible projections $P_G^p \co \CB(\L^p(\VN(G))) \to \CB(\L^p(\VN(G)))$ onto $\mathfrak{M}^{p,\cb}(G)$ for \textit{all} $1 \leq p \leq \infty$\footnote{\thefootnote. If $p=\infty$, replace $\CB(\L^p(\VN(G)))$ by $\CB_{\w^*}(\VN(G))$}. So we have a strengthening $(\kappa')$ of property $(\kappa)$ for some groups. It is an open question whether $(\kappa')$ is really different from $(\kappa)$. Finally, in a paper \cite{ArK1}, examples of locally compact groups without $(\kappa)$ will be described and important complementary results will be given.

Using classical results from approximation properties of discrete groups, it is not difficult to see that there exist completely bounded Fourier multipliers $M_\varphi \co \L^p(\VN(G)) \to \L^p(\VN(G))$ on some class of discrete groups which are not decomposable (Proposition \ref{Prop-cb-mais-pas-dec-Fourier-mult}). In Chapter \ref{sec:Existence-of-strongly}, we focus on a more difficult task. We examine the problem to construct completely bounded operators $T \co \L^p(M) \to \L^p(M)$ which cannot be approximated by decomposable operators, in the sense that $T$ does not belong to the closure $\ovl{\Dec(\L^p(M))}$ of the space $\Dec(\L^p(M))$ of decomposable operators on $\L^p(M)$ with respect to the operator norm $\norm{\cdot}_{\L^p(M) \to \L^p(M)}$ (or the completely bounded norm $\norm{\cdot}_{\cb,\L^p(M) \to \L^p(M)}$). 

We particularly investigate different types of multipliers.  We show the existence of such completely bounded Fourier multipliers, on large classes of locally compact groups, including all infinite abelian locally compact groups (see Theorem \ref{thm-existence-CB-strongly-non-decomposable-abelian-groups}). Note that it is impossible to find such bad multipliers on finite groups by an argument of finite dimensionality. Our strategy relies on the use of transference theorems which we prove and structure theorems on groups. It consists in dealing with all possible cases. In the abelian situation, the construction of our examples in the critical cases (e.g. if the dual group $\hat{G}$ is an infinite totally disconnected group or an infinite torsion discrete group) is proved by a Littlewood-Paley decomposition argument on the Bochner space $\L^p(G,X)$ where $X$ is a UMD Banach space, which allows us to obtain in addition the complete boundedness of multipliers. We also examine the case of Schur multipliers. In particular, we prove that the discrete noncommutative Hilbert transform $\mathcal{H} \co S^p \to S^p$ on the Schatten space $S^p$ is not approximable by decomposable operators (Corollary \ref{Cor-strongly-non-reg-truncation}). We equally deal with convolutors (Section \ref{sec:Existence-of-strongly-non-regular-convolutors}) and operators on arbitrary noncommutative $\L^p$-spaces associated with infinite dimensional approximately finite-dimensional von Neumann algebras (Theorem \ref{prop-hyperfinite-von-Neumann-strongly-non-decomposable}).

In the case of an amenable group $G$, transference methods \cite{BoF,CDS,NR} between Schur multipliers and Fourier multipliers can sometimes be used for proving theorems on selfadjoint completely bounded Fourier multipliers on $\VN(G)$, see e.g. \cite[Corollary 4.5]{Arh1} and \cite{Arh6}. We finish the paper by introducing a general procedure for proving positive results on selfadjoint contractively decomposable Fourier multipliers on \textit{non-amenable} discrete groups relying on the new characterization of Proposition \ref{prop-prop-P-amenable-groups-multipliers}. This result should allow with reasonable effort to generalize properties which are true for unital completely positive selfadjoint Fourier multipliers by using unital completely positive selfadjoint 2x2 block matrices of Fourier multipliers. Section \ref{Application-Matsaev} illustrates this method by describing Fourier multipliers which satisfy the noncommutative Matsaev inequality (Theorem \ref{Application-NC-Matsaev}), using the new result of factorizability of such 2x2 block matrices of Fourier multipliers (Theorem \ref{Th-Factorizable-matricial mutipliers}).  

The paper is organized as follows. Chapter \ref{sec:Preliminaries} gives background and preliminary results. Some relations between matricial orderings and norms in Section \ref{sec-matricial-orderings} are fundamental to reduce the problem of the comparison of the regular norm and the decomposable norm to the adjoint preserving case. Moreover, in passing, we identify completely positive maps on classical $\L^p$-spaces (Proposition \ref{prop-positive-imply-cp} and Proposition \ref{prop-positive-imply-cp-if-depart-commutative}). 

In Chapter \ref{sec:Regular vs decomposable}, we will investigate the notions of decomposable maps and regular maps on noncommutative $\L^p$-spaces.
We will see in Theorem \ref{thm-dec=reg-hyperfinite} that on approximately finite-dimensional semifinite von Neumann algebras, the notions of decomposable and regular operators coincide isometrically. The proof of this result requires several reduction intermediate steps, such as self-adjoint maps in place of general maps (Section \ref{subsec-Reduction-to-the-selfadjoint-case}) and Schatten spaces in place of general noncommutative $\L^p$-spaces (Theorem \ref{prop-dec-reg-matrix-case} in Section \ref{subsec-decomposable-vs-regular-on-Schatten-spaces}). Moreover, we investigate in this section the relation of the (completely) bounded norm on noncommutative $\L^p$-spaces with the decomposable norm. We will see in Theorem \ref{thm-norm=normcb-hyperfinite} that for completely positive maps on $\L^p$-spaces over approximately finite-dimensional algebras, the bounded norm and the completely bounded norm coincide. If the von Neumann algebra has $\QWEP$, then we will see in Proposition \ref{Prop-cb-leq-dec} that the completely bounded norm is dominated by the decomposable norm, so in case of completely positive maps, the complete bounded norm, the bounded norm and the decomposable norm all coincide (Proposition \ref{quest-cp-versus-dec}). However, we will exhibit a class of concrete examples where the decomposable norm is larger than the complete bounded norm
(Theorem \ref{thm-comparaison-cb-dec-free-group}). Finally, this section contains information on the infimum of the decomposable norm (Section \ref{subsec-infimum-decomposable-norm}), the absolute value $|T|$ and decomposability of an operator $T$ acting on a commutative $\L^p$-space (Section \ref{Modulus}) and examples of completely bounded but non decomposable Fourier multipliers on group von Neumann algebras (Proposition \ref{Prop-cb-mais-pas-dec-Fourier-mult}).
We also give explicit examples of computations of the decomposable norm, see Theorem \ref{thm-computation-norm-finite-factor}. 

In the following Chapter \ref{sec:Decomposable-Schur-multipliers-and-Fourier-multipliers}, we give a generalization of the average argument of Haagerup. 
We will show the existence of contractive projections from some spaces of completely bounded operators onto the spaces of Fourier multipliers, Schur multipliers or even a mix of both (Theorem \ref{Prop-complementation-Schur-Fourier} and Section \ref{Complementation-for-discrete-Schur-multipliers-and-Fourier-multipliers}).
This concerns \textit{discrete} groups, possibly deformed by a 2-cocycle and we will also show the independence of the completely bounded norm and the complete positivity with respect to that 2-cocycle, for a Fourier/Schur-multiplier.
So the natural framework will be the one of twisted (discrete) group von Neumann algebras, explained in Section \ref{section-twisted-von-Neumann}. In particular, this covers the case of noncommutative tori when the group equals $\Z^d$. As an application, we will describe the decomposable norm of such Fourier and Schur multipliers on the $\L^p$ level and see that in the framework of this section, this norm equals the (completely) bounded norm on the $\L^\infty$ level (Section \ref{subsec-description-decomposable-norm}).

In Chapter \ref{sec:approximation-discrete-groups}, we introduce and explore some approximation properties of locally compact groups. We connect these to some notions of approximation introduced by different authors. We clarify these properties in the large setting of second countable compactly generated locally compact groups, see Theorem \ref{Th-compactly-tout-equivalent}.

Hereafter, Chapter \ref{sec:Locally-compact-groups} contains an in-depth study of decomposability of Fourier multipliers on non-discrete locally compact groups.
After having introduced these Fourier multipliers and their basic properties in Section \ref{sec:Generalities-Fourier-multipliers}, we will show in Section \ref{sec-The completely bounded homomorphism theorem} how their completely bounded norm is changed under a continuous homomorphism between two locally compact groups. In Section \ref{sec-Extension-of-multipliers}, we describe an extension property of Fourier multipliers which passes from a lattice subgroup to the locally compact full group. In Section \ref{sec-Groups-approximable-by-lattice-subgroups}, we prove Theorem \ref{thm-complementation-Fourier-ADS-amenable} which gives a complementation for second countable unimodular locally compact groups which satisfy the \textit{approximation by lattice subgroups by shrinking} (ALSS) property of Definition \ref{defi-sequentially-ALS} together with a crucial density condition \eqref{equ-Haar-measure-convergence}. Then in Section \ref{subsec-computations-density}, we describe some concrete groups in which Theorem \ref{thm-complementation-Fourier-ADS-amenable} applies. These examples contain direct and semidirect products of groups, groups acting on trees, a large class of locally compact abelian groups and the semi-discrete Heisenberg group. In Section \ref{subsec-pro-discrete-groups}, we show the complementation result for pro-discrete groups by a similar method as in Theorem \ref{thm-complementation-Fourier-ADS-amenable}, but it turns out that there is no need of a density condition in this case.

There is another notion of generalization of Fourier multipliers on non-abelian groups $G$, but acting on classical $\L^p$-spaces $\L^p(G)$ instead of noncommutative $\L^p$-spaces $\L^p(\VN(G))$. These are the convolutors, that is, the bounded operators commuting with left translations. In Section \ref{sec:complementation-convolutor}, we show a complementation result for them on locally compact amenable groups.
Then in Section \ref{subsec-description-decomposable-norm-Fourier} we apply our complementation to describe the decomposable norm of multipliers.

In Chapter \ref{sec:Existence-of-strongly}, we construct completely bounded operators $T \co \L^p(M) \to \L^p(M)$ which cannot be approximated by decomposable operators.
In Proposition \ref{Prop-cb-mais-pas-dec-Fourier-mult}, we shall see that in general, the class of completely bounded operators on a noncommutative $\L^p$-space is larger than the class of decomposable operators. In Chapter \ref{sec:Existence-of-strongly}, we deepen this fact and show that in many situations of $\L^p$-spaces and classes of operators on them, there are (completely) bounded operators such that in a small (norm or $\CB$-norm) neighborhood of the operator, there is no decomposable map.
This notion of ($\CB$-)strongly non decomposable operator is defined in Section \ref{subsec:Existence-of-strongly-Definitions}.
Our first class of objects are the Fourier multipliers on abelian locally compact groups. We show in Theorem \ref{thm-existence-strongly-non-decomposable-abelian-groups} that on all infinite locally compact abelian groups, there always exists a ($\CB$-)strongly non decomposable Fourier multiplier on $\L^p(G)$. By a transference procedure, this theorem extends to convolutors acting on several non-abelian locally compact groups containing infinite locally compact abelian groups (Section \ref{sec:Existence-of-strongly-non-regular-convolutors}). Then our next goal are Schur multipliers. In Section \ref{sec:Existence-of-strongly-non-Schur-multipliers} (see Corollary \ref{Cor-strongly-non-reg-truncation}) we will show that the very classical discrete noncommutative Hilbert transform and the triangular truncation $\mathcal{T} \co S^p \to S^p$ are $\CB$-strongly non decomposable. Then we study $\CB$-strongly non decomposable Fourier multipliers on discrete non-abelian groups.
We establish some general results and apply them to Riesz transforms associated with cocycles and to free Hilbert transforms (Section \ref{sec:Existence-of-strongly-non-Fourier-multipliers}).
Finally, we enlarge the class of spaces and consider $\L^p$-spaces over general approximately finite-dimensional von Neumann algebras (Section \ref{sec:CB-strongly-non-decomposable-operators-on-afd-algebras}). Namely, in Theorem \ref{prop-hyperfinite-von-Neumann-strongly-non-decomposable}, we show that for $1 < p < \infty$, $p \neq 2$ and for any infinite dimensional approximately finite-dimensional von Neumann algebra $M$, there always exists a $\CB$-strongly non decomposable operator on $\L^p(M)$.

In Chapter \ref{sec:Property-P}, we study a certain property for operators on noncommutative $\L^p$-spaces which is a combination of contractively decomposable and selfadjointness on $\L^2(M)$. In general, this notion is more restrictive than being separately contractively decomposable and selfadjoint. However, in Proposition \ref{prop-prop-P-amenable-groups-multipliers}, we will see that for Fourier multipliers acting on twisted von Neumann algebras over discrete groups and a $\T$-valued $2$-cocycle, this difference disappears. As a consequence, we show in the last two Section \ref{sec:factorizability-of-some-matrix-block-multipliers} and Section \ref{Application-Matsaev} that for contractively decomposable and selfadjoint Fourier multipliers on twisted von Neumann algebras, the noncommutative Matsaev inequality holds.

\section{Preliminaries}
\label{sec:Preliminaries}
%

%

\subsection{Noncommutative $\L^p$-spaces and operator spaces}
\label{Section-noncomLp}

Let $M$ be a von Neumann algebra equipped with a semifinite normal faithful weight $\tau$. We denote by $\mathfrak{m}_\tau^+$ the set of all positive $x \in M$ such that $\tau(x)<\infty$ and $\mathfrak{m}_\tau$ its complex linear span which is a weak* dense $*$-subalgebra of $M$. If $\mathfrak{n}_\tau$ is the left ideal of all $x \in M$ such that $\tau(x^*x)<\infty$ then we have
\begin{equation}
	\label{Formule-mtau}
	\mathfrak{m}_\tau
	=\mathrm{span}\big\{y^*z : y,z \in \mathfrak{n}_\tau\big\}.
\end{equation}

Suppose $1 \leq p<\infty$. If $\tau$ is in addition a trace then for any $x \in \mathfrak{m}_\tau$, the operator $|x|^p$ belongs to $\mathfrak{m}_\tau^+$ and we set $\norm{x}_{\L^p(M)} \ov{\mathrm{def}}{=} \tau\big(|x|^p\big)^{\frac{1}{p}}$. The noncommutative $\L^p$-space $\L^p(M)$ is the completion of $\mathfrak{m}_\tau$ with respect to the norm $\norm{\cdot}_{\L^p(M)}$. One sets $\L^{\infty}(M) \ov{\mathrm{def}}{=} M$. We refer to \cite{PiX}, and the references therein, for more information on these spaces. The subspace $M \cap \L^p(M)$ is dense in $\L^p(M)$.  
The positive cone $\L^p(M)_+$ of $\L^p(M)$ is given by 
\begin{equation}
\label{Cone-LpM}
\L^p(M)_+
\ov{\mathrm{def}}{=}\big\{y^*y\ : \ y \in \L^{2p}(M) \big\}.
\end{equation}
We also have the following dual description.

\begin{prop}Let $M$ be a semifinite von Neumann algebra equipped with a normal semifinite faithful trace. Suppose $1 \leq p < \infty$.  We have
\begin{equation}
\label{equa-polar-Lp}
\L^p(M)_+
=\big\{x \in \L^p(M) \ : \ \langle x,y\rangle_{\L^p(M),\L^{p^*}(M)} \geq 0 \text{ for any }y \in \L^{p^*}(M)_+\big\}.
\end{equation}
\end{prop}

\begin{proof}
Let $x \in \L^p(M)$ such that $\langle x,y\rangle_{\L^p(M),\L^{p^*}(M)} \geq 0$ for any $y \in \L^{p^*}(M)_+$. We can write $x=x_1+\i x_2$ where $x_1,x_2$ are selfadjoint elements of $\L^p(M)$. On the one hand, for any $y \in \L^{p^*}(M)_+$, we have
$$
\langle x_1,y\rangle_{\L^p,\L^{p^*}}+\i \langle x_2,y\rangle_{\L^p,\L^{p^*}}
=\langle x_1+\i x_2,y\rangle_{\L^p,\L^{p^*}}
=\langle x,y\rangle_{\L^p,\L^{p^*}} 
\geq 0.
$$
On the other hand $\langle x_1,y\rangle_{\L^p,\L^{p^*}}$ and $\langle x_2,y\rangle_{\L^p,\L^{p^*}}$ are real numbers. We deduce the equality $\langle x_2,y\rangle_{\L^p,\L^{p^*}}=0$ for any $y \in \L^{p^*}(M)_+$. By duality, we infer that $x_2=0$. We conclude that $x$ is selfadjoint. 

Now, consider a decomposition $x=x_1-x_2$ with $x_1,x_2 \in \L^p(M)_+$ such that there exist\footnote{\thefootnote. If $x=w|x|$ is the polar decomposition of a selfadjoint element $x$ then it is known that $w^*=w$ and $w|x|=|x|w$. We can write $w=e-f$ where $e$ and $f$ are two projections such that $ef=0$. We have $e|x|=|x|e$ and $f|x|=|x|f$. We can take $x_1=e|x|$ and $x_2=f|x|$. See \cite[pages 138-139]{Schm} for useful information.} projections $e,f \in M$ such that $ef = 0$, $x_1=ex_1=x_1e$ and $x_2=x_2f=fx_2$. Suppose $x_2 \not=0$. There exists\footnote{\thefootnote. Any positive element of $\L^p(M)$ admits a positive norming functional.}  a positive element $z \in \L^{p^*}(M)$ such that $\langle x_2,z\rangle_{\L^p,\L^{p^*}} >0$. Then
\begin{align*}
\MoveEqLeft
\big\langle x,fzf \big\rangle_{\L^p,\L^{p^*}}
=\big\langle x_1-x_2,fzf \big\rangle_{\L^p,\L^{p^*}} 
=-\langle x_2,z \rangle_{\L^p,\L^{p^*}}
<0.
\end{align*}
That is impossible since $f z f$ is a positive element of $\L^{p^*}(M)$.
\end{proof}


At several times, we will use the following elementary\footnote{\thefootnote. Let $x$ be a positive element of $\L^p(M)$. We can write $x=y^*y$ for some $y \in \L^{2p}(M)$. Since $M \cap \L^{2p}(M)$ is dense in $\L^{2p}(M)$, there exists a sequence $(y_n)_{}$ of elements of $M \cap \L^{2p}(M)$ which approximate $y$ in $\L^{2p}(M)$. Then we have
$$
\norm{x-y_n^{*}y_n}_{\L^p(M)}
=\norm{y^*y-y_n^{*}y_n}_{\L^p(M)} 
\leq \norm{y^*(y-y_n)}_{\L^p(M)}+\norm{(y^*-y_n^{*})y_n}_{\L^p(M)}  
\xra[n \to+\infty]{} 0.
$$} result.

\begin{lemma}
\label{lemma-M+-dense-dans Lp+}
Let $M$ be a semifinite von Neumann algebra equipped with a normal semifinite faithful trace. Suppose $1 \leq p < \infty$. Then $M_+ \cap \L^p(M)$ is dense in $\L^p(M)_+$ for the topology of $\L^p(M)$. 
\end{lemma}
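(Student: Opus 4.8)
The plan is to prove the density by taking an arbitrary $x \in \L^p(M)_+$ and approximating it in two moves: first by an element still in $\L^p(M)_+$ but with nicer structure, then by one lying in $M_+ \cap \L^p(M)$. By the description \eqref{Cone-LpM}, write $x = y^* y$ with $y \in \L^{2p}(M)$. The argument in the footnote already shows that $y_n^* y_n \to y^* y = x$ in $\L^p(M)$ whenever $y_n \to y$ in $\L^{2p}(M)$, and each $y_n^* y_n$ is positive, so the only thing that needs care is to arrange that the approximants $y_n^* y_n$ actually lie in $M$, not merely in $\L^p(M)_+$. To that end I would choose the $y_n$ in $M \cap \L^{2p}(M)$, which is legitimate since that subspace is dense in $\L^{2p}(M)$ (as recalled after \eqref{Cone-LpM}, applied with $2p$ in place of $p$, noting $2p < \infty$ is not required — one only needs $M \cap \L^q(M)$ dense in $\L^q(M)$ for $1 \le q < \infty$, and here $q = 2p \in [2,\infty)$).

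First I would fix $x \in \L^p(M)_+$ and $\epsi > 0$, and use \eqref{Cone-LpM} to fix $y \in \L^{2p}(M)$ with $x = y^* y$. Second, by density of $M \cap \L^{2p}(M)$ in $\L^{2p}(M)$, pick $z \in M \cap \L^{2p}(M)$ with $\norm{y - z}_{\L^{2p}(M)}$ small. Third, set $w = z^* z$. Then $w \in M_+$ since $z \in M$, and $w \in \L^p(M)$ because $z \in \L^{2p}(M)$ and the product of two elements of $\L^{2p}(M)$ lies in $\L^p(M)$ with $\norm{z^* z}_{\L^p(M)} \leq \norm{z}_{\L^{2p}(M)}^2$ (noncommutative Hölder inequality). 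So $w \in M_+ \cap \L^p(M)$. Fourth, estimate, exactly as in the footnote,
$$
\norm{x - w}_{\L^p(M)} = \norm{y^* y - z^* z}_{\L^p(M)} \leq \norm{y^*(y - z)}_{\L^p(M)} + \norm{(y^* - z^*) z}_{\L^p(M)} \leq \big(\norm{y}_{\L^{2p}(M)} + \norm{z}_{\L^{2p}(M)}\big)\norm{y - z}_{\L^{2p}(M)},
$$
using Hölder again for each term. Since $\norm{z}_{\L^{2p}(M)}$ stays bounded (by $\norm{y}_{\L^{2p}(M)} + 1$, say) and $\norm{y-z}_{\L^{2p}(M)}$ can be made as small as we like, the right-hand side is $< \epsi$ for a suitable choice of $z$. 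This shows every element of $\L^p(M)_+$ is a limit of elements of $M_+ \cap \L^p(M)$, which is the claim.

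I do not expect any genuine obstacle here: the statement is essentially the footnote's computation repackaged, the only mild subtlety being to keep track that the chosen approximant lies in $M$ (hence the insistence on picking $z$ from $M \cap \L^{2p}(M)$ rather than an arbitrary element of $\L^{2p}(M)$), and to invoke the noncommutative Hölder inequality $\L^{2p} \cdot \L^{2p} \hookrightarrow \L^p$ to control the cross terms and to see $w \in \L^p(M)$. If one wants to be slightly more careful about the case $p \geq 1$ generally, one notes $2p \in [2, \infty)$ so $\L^{2p}(M)$ is a genuine noncommutative $\L^q$-space with $q < \infty$ and the density of $M \cap \L^{2p}(M)$ is exactly the fact quoted in the text. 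That is all that is needed.
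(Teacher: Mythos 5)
Your proof is correct and is essentially the same argument the paper gives in its footnote to this lemma: write $x = y^*y$ via \eqref{Cone-LpM}, approximate $y$ by elements of $M \cap \L^{2p}(M)$, and control $\norm{y^*y - z^*z}_{\L^p(M)}$ by the noncommutative H\"older inequality. The only difference is that you spell out explicitly why $z^*z$ lands in $M_+ \cap \L^p(M)$, which the paper leaves implicit.
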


The readers are referred to \cite{ER}, \cite{Pau} and \cite{Pis7} for details on operator spaces and completely bounded maps. If $T \co E \to F$ is a completely bounded map between two operators spaces $E$ and $F$, we denote by $\norm{T}_{\cb, E \to F}$ its completely bounded norm. If $E \otp F$ is the operator space projective tensor product of $E$ and $F$, we have a canonical complete isometry $(E \otp F)^*=\CB(E,F^*)$, see \cite[Chapter 7]{ER}. We will use the notations $E^\op$ and $\ovl{E}$ for the opposite operator space and the complex conjugate of an operator space $E$. 

The theory of vector-valued noncommutative $\L^p$-spaces was initiated by Pisier \cite{Pis5} for the case where the underlying von Neumann algebra is \textit{hyperfinite} and equipped with a normal semifinite faithful trace (see \cite{Jun2} for the case where the von Neumann algebra is $\QWEP$). Under these assumptions, according to \cite[page 37-38]{Pis5}, for any operator space $E$, the spaces $M \ot_{\min} E$ and $\L^1(M^{\op}) \widehat{\ot} E$ can be embedded by an injective continuous map into a common topological vector space, respecting hereby $\left(M \cap \L^1(M^{\op}) \right) \ot E$. This compatibility in the sense of interpolation theory, explained in \cite[page 37]{Pis5} and \cite[page 139]{Pis7} and based on results of Effros and Ruan \cite{ER2,ER3}, relies heavily on the fact that the von Neumann algebra is \textit{hyperfinite} (i.e. approximately finite-dimensional). Suppose $1 \leq p \leq \infty$. Then we can define by complex interpolation
\begin{equation}
\label{Def-vector-valued-Lp-non-com}
\L^p(M,E)
\overset{\mathrm{def}}=\big(M \ot_{\min} E, \L^1(M^{\op}) \widehat{\ot} E\big)_{\frac{1}{p}}
\end{equation}
where $\ot_{\min}$ and $\widehat{\ot}$ denote the injective and the projective tensor product of operator spaces. When $E=\C$, we get the noncommutative $\L^p$-space $\L^p(M)$.

If $\Omega$ is a measure space then we denote by $\B(\L^2(\Omega))$ the von Neumann algebra of bounded operators on the Hilbert space $\L^2(\Omega)$. Using its canonical trace, we obtain the vector-valued Schatten space $S^p_\Omega(E) \ov{\mathrm{def}}{=} \L^p(\B(\L^2(\Omega)),E)$. With $\Omega=\N$ or $\Omega=\{1,\ldots,n\}$ equipped with the counting measure and $E=\C$ we recover the classical Schatten spaces $S^p$ and $S^p_n$.  

Recall the following classical characterization of completely bounded maps, which is essentially \cite[Lemma 1.4]{Pis5}.

\begin{prop}
\label{lemma-charact-Lp-cb}
Let $E$ and $F$ be operator spaces. Suppose $1 \leq p \leq \infty$. A linear map $T \co E \to F$ is completely bounded if and only if $\Id_{S^p} \ot T$ extends to a bounded operator $\Id_{S^p} \ot T \co S^p(E) \to S^p(F)$. In this case, the completely bounded norm $\norm{T}_{\cb,E \to F}$ is given by
\begin{equation}
\label{defnormecb}
\norm{T}_{\cb,E \to F}
=\norm{\Id_{S^p} \ot T}_{S^p(E) \to S^p(F)}.
\end{equation}
\end{prop}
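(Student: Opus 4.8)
The plan is to prove Proposition \ref{lemma-charact-Lp-cb} by reducing it to the known characterization at the level $p=\infty$ via interpolation, exploiting the definition \eqref{Def-vector-valued-Lp-non-com} of the vector-valued spaces.

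First I would recall the case $p=\infty$: by definition of the operator space structure, $T\co E\to F$ is completely bounded with $\norm{T}_{\cb}=\norm{\Id_{S^\infty}\ot T}_{S^\infty(E)\to S^\infty(F)}$, where $S^\infty$ is the space of compact operators on $\ell^2$ with its standard operator space structure; more precisely one uses that $\norm{T}_{\cb}=\sup_n\norm{\Id_{S^\infty_n}\ot T}_{S^\infty_n(E)\to S^\infty_n(F)}$, and since $S^\infty$ is the closure of the union of the $S^\infty_n$ this supremum equals the norm on $S^\infty(E)$. The case $p=1$ is the dual statement: using $(E\otp F)^*=\CB(E,F^*)$ from \cite[Chapter 7]{ER} and the fact that $S^1(E)=S^1\otp E$, one gets $\norm{\Id_{S^1}\ot T}_{S^1(E)\to S^1(F)}=\norm{T}_{\cb,E\to F}$ by duality. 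So the two endpoint cases are essentially definitional.

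Next I would run the interpolation argument for $1<p<\infty$. One direction is the easy one: if $T$ is completely bounded, then $\Id_{S^\infty}\ot T\co S^\infty(E)\to S^\infty(F)$ and $\Id_{S^1}\ot T\co S^1(E)\to S^1(F)$ are both bounded with norm $\norm{T}_{\cb}$; applying complex interpolation with the definitions $S^p(E)=(S^\infty(E),S^1(E))_{1/p}$ (i.e. \eqref{Def-vector-valued-Lp-non-com} with $M=\B(\ell^2)$, noting $\B(\ell^2)^{\op}\cong\B(\ell^2)$) and likewise for $F$, one concludes that $\Id_{S^p}\ot T\co S^p(E)\to S^p(F)$ is bounded with $\norm{\Id_{S^p}\ot T}\le\norm{T}_{\cb}$. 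For the reverse inequality and the ``if'' direction, suppose $\Id_{S^p}\ot T$ is bounded on $S^p(E)$. I would use the key structural fact about vector-valued noncommutative $\L^p$-spaces: $S^p\big(S^q(E)\big)=S^q\big(S^p(E)\big)$ compatibly (Fubini-type identity for Schatten spaces, as in \cite{Pis5}), together with the identity $\L^p(M,E)$ applied to $M=\B(\ell^2)\otvn\B(\ell^2)$. Concretely, $\Id_{S^p}\ot\Id_{S^p}\ot T$ bounded on $S^p(S^p(E))$ should be massaged, using that the operator space structure of $E$ is recovered from its $S^p$-amplifications, to yield complete boundedness of $T$; alternatively, one notes that $\norm{\Id_{S^p_n}\ot T}_{S^p_n(E)\to S^p_n(F)}$ dominates $\norm{T}_{\cb}$ uniformly because $S^p_n$ is, for $p<\infty$, completely isomorphic in the relevant quantitative sense — actually the cleanest route is: complete boundedness of $T$ is witnessed at the $S^p$ level by the relation $\CB(E,F)\hookrightarrow\mathcal{B}(S^p(E),S^p(F))$ being isometric, which is exactly \cite[Lemma 1.4]{Pis5}.

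The main obstacle I anticipate is justifying the reverse inequality $\norm{T}_{\cb,E\to F}\le\norm{\Id_{S^p}\ot T}_{S^p(E)\to S^p(F)}$ rigorously: the interpolation argument gives the $\le$ in the other direction for free, but to recover the cb-norm from a single $S^p$-amplification one genuinely needs the Fubini/associativity identity $S^p(S^p(E))\cong S^p(S^p(E))$ with the right constants and a way to detect the operator space structure of $E$ inside $S^p(E)$ — this is where the weight of \cite[Lemma 1.4]{Pis5} lies, and I would simply cite it there rather than reprove it, since the whole proposition is stated as ``essentially'' that lemma. A secondary technical point is checking that $\B(\L^2(\Omega))^{\op}$ is trace-preservingly isomorphic to $\B(\L^2(\Omega))$ (via transposition) so that \eqref{Def-vector-valued-Lp-non-com} specializes correctly to the Schatten setting, and that the interpolation couples for $E$ and $F$ are compatible in the sense required; these are routine.
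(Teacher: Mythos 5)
The paper gives no proof of this proposition at all: it is recalled as a classical fact and attributed to \cite[Lemma 1.4]{Pis5}, so there is no argument of the authors' own to measure yours against. Your treatment of the endpoints and the interpolation step are fine, but they only yield the inequality $\norm{\Id_{S^p}\ot T}_{S^p(E)\to S^p(F)}\le \norm{T}_{\cb,E\to F}$ (compatibility of the two amplifications on $S^\infty(E)\cap S^1(E)$ holds because they agree on finite-rank tensors, as you note).

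The reverse inequality, which is the entire content of the ``if'' direction, is not actually established by what you write: the Fubini identity $S^p(S^q(E))=S^q(S^p(E))$ does not by itself let you detect the $\M_n(E)$-norm inside $S^p(E)$, and ``massaging'' $\Id_{S^p}\ot\Id_{S^p}\ot T$ leads nowhere without an additional input. The mechanism that makes this direction work is the pair of facts that $\norm{(\alpha\ot 1_E)\,y\,(\beta\ot 1_E)}_{S^p_n(E)}\le\norm{\alpha}_{S^{2p}_n}\norm{y}_{\M_n(E)}\norm{\beta}_{S^{2p}_n}$ (the inequality \eqref{Inequality-SpnE} of the paper) together with its converse, namely that $\norm{y}_{\M_n(E)}$ is recovered as the supremum of $\norm{(\alpha\ot 1_E)\,y\,(\beta\ot 1_E)}_{S^p_n(E)}$ over contractions $\alpha,\beta$ in $S^{2p}_n$; combined with the commutation $(\alpha\ot 1)(\Id\ot T)(y)(\beta\ot 1)=(\Id\ot T)\big((\alpha\ot 1)y(\beta\ot 1)\big)$ this gives $\norm{T}_{\cb,E\to F}\le\sup_n\norm{\Id_{S^p_n}\ot T}$ directly. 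You correctly sense that this is where the weight of \cite[Lemma 1.4]{Pis5} lies and you fall back on citing it --- which is legitimate, and is in fact exactly what the paper does --- but as a self-contained argument your proof of that direction is a gap.
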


We will  use the following result \cite[page 984]{Jun}, \cite{Jun2} (see \cite[Appendix]{Arh7} for a proof for approximately finite-dimensional von Neumann algebras).

\begin{thm}
\label{Th-tensor-Lp-non-com}
Let $M_1, M_2, N_1,N_2$ be $\QWEP$ von Neumann algebras. Suppose $1 \leq p \leq \infty$. Let $T_1 \co \L^p(M_1) \to \L^p(N_1)$ et $T_2 \co \L^p(M_2) \to \L^p(N_2)$ be completely bounded maps. Then the map $T_1 \ot T_2 \co \L^p(M_1 \ot N_2) \to \L^p(N_1 \ot N_2)$ is completely bounded and we have
\begin{align}
\label{multiplicativity-T}
\norm{T_1 \ot T_2}_{\cb,\L^p(\L^p) \to \L^p(\L^p)} 
\leq \norm{T_1}_{\cb,\L^p \to \L^p} \norm{T_2}_{\cb,\L^p \to \L^p}.
\end{align}
\end{thm}

A measure space $(\Omega,\mu)$ (also denoted $\Omega)$ is called localizable if its measure algebra\footnote{\thefootnote. The measure algebra \cite[Definition 321I]{Fre3} of a measure space is defined as the quotient of the ring of measurable sets by the ideal of null sets, with the measure of any residue class defined to be the measure of any representative of the class.} is semifinite and Dedekind complete, see \cite[Lemma 2.6]{OkR1}, \cite[Theorem 322B]{Fre3} and \cite[Corollary 3.2.1]{Seg}. By \cite[Theorem 243G]{Fre2}, this is equivalent to the bijectivity of the canonical map $\L^\infty(\Omega) \to \L^1(\Omega)^*$ (in which case it is an isometry). Recall that a $\sigma$-finite measure space \cite[Theorem 211L]{Fre2}, \cite[Corollary 3.2.1]{Seg} and a locally compact group equipped with a left Haar measure \cite[Corollary 5.2]{Seg}, \cite[443A (a)]{Fre4} are localizable. We warn that there are several notions of localizable measure spaces, see \cite{OkR1} and the recent paper \cite{BGL1} for more information. 

The importance of these measure spaces comes from \cite[Theorem 5.1]{Seg} which says that for a measure space $\Omega$, the algebra $\L^\infty(\Omega)$ is a von Neumann algebra if and only if $\Omega$ is a localizable measure space. Note that in this case, the integral defines a semifinite (normal, faithful) trace on the von Neumann algebra $\L^\infty(\Omega)$, and thus, $\L^p(\Omega)$ carries, as any other noncommutative $\L^p$ space, an operator space structure.
Thus, $S^p(\L^p(\Omega))$ is well-defined. Then, if $\Omega$ is a (localizable) measure space, the Banach space $S^p(\L^p(\Omega))$ is isometric to the Bochner space $\L^p(\Omega,S^p)$ of $S^p$-valued functions. Thus, in particular, if $\Omega'$ is another (localizable) measure space then a linear map $T \co \L^p(\Omega) \to \L^p(\Omega')$ is completely bounded if and only if $T \ot \Id_{S^p}$ extends to a bounded operator $T \ot \Id_{S^p} \co \L^p(\Omega,S^p) \to \L^p(\Omega',S^p)$. In this case, we have
\begin{equation}
\label{normecbcommutatif}
\norm{T}_{\cb,\L^p(\Omega) \to \L^p(\Omega')}
=\bnorm{T \ot \Id_{S^p}}_{\L^p(\Omega,S^p) \to \L^p(\Omega',S^p)}.
\end{equation}

If $E$ and $F$ are operator spaces and if $T \co E \to F$ is a linear map, we will use the map $T^\op \co E^\op \to F^\op$, $x \mapsto T(x)$. Of course, since the underlying Banach spaces of $E$ and $E^\op$ and of $F$ and $F^\op$ are identical, the map $T$ is bounded if and only if the map $T^\op$ is bounded. The following lemma shows that the situation is similar for the complete boundedness. Furthermore, this result is useful when we use duality since in the category of operator spaces we have $\L^p(M)^*=\L^{p^*}(M)^\op$ if $1 \leq p < \infty$. In passing, recall that $\L^p(M)^\op=\L^p(M^\op)$.

\begin{lemma}
\label{Lemma-op-mapping-1}
Let $T \co E \to F$ be a linear map between operators spaces. Then $T$ is completely bounded if and only if the map $T^\op \co E^\op \to F^\op$ is completely bounded. Moreover, in this case we have $\norm{T}_{\cb,E \to F}=\norm{T^\op}_{\cb,E^\op \to F^\op}$.
\end{lemma}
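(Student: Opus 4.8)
The plan is to reduce the statement about $T^\op$ to the original statement about $T$ by exhibiting a canonical complete isometry between $S^p(E^\op)$ and $S^p(F^\op)$ and the opposite spaces of $S^p(E)$ and $S^p(F)$, and then invoking Proposition \ref{lemma-charact-Lp-cb}. First I would recall that for any operator space $E$ one has a natural identification $S^p(E^\op) = S^p(E)^\op$ at the level of vector-valued noncommutative $\L^p$-spaces; indeed, passing to opposite algebras commutes with the complex interpolation defining \eqref{Def-vector-valued-Lp-non-com}, since $(M \ot_{\min} E)^\op = M^\op \ot_{\min} E^\op$ and $(\L^1(M^\op) \otp E)^\op = \L^1(M) \otp E^\op$, and the opposite functor is an isometric (indeed completely isometric) operation on each endpoint which is compatible with the interpolation couple. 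Applying this with $M = \B(\L^2)$ (whose opposite is again $\B(\L^2)$ via transposition) gives $S^p(E^\op) = S^p(E)^\op$ isometrically, and likewise for $F$.

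Next I would observe that, as underlying Banach spaces, $S^p(E^\op)$ and $S^p(E)$ coincide, and similarly for $F$; hence the map $\Id_{S^p} \ot T^\op \co S^p(E^\op) \to S^p(F^\op)$ is, as a map of Banach spaces, literally the same map as $\Id_{S^p} \ot T \co S^p(E) \to S^p(F)$ (one should check that the tensor action on the $S^p$-leg is unaffected by the passage to the opposite structure, which is clear since $S^p$ here is not being ``opposed''). Therefore
\[
\norm{\Id_{S^p} \ot T^\op}_{S^p(E^\op) \to S^p(F^\op)}
= \norm{\Id_{S^p} \ot T}_{S^p(E) \to S^p(F)}.
\]
By Proposition \ref{lemma-charact-Lp-cb}, the left-hand side is finite if and only if $T^\op$ is completely bounded, in which case it equals $\norm{T^\op}_{\cb, E^\op \to F^\op}$; the right-hand side is finite if and only if $T$ is completely bounded, in which case it equals $\norm{T}_{\cb, E \to F}$. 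Combining these gives both the equivalence and the equality of norms.

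The main obstacle I anticipate is the careful justification that the opposite operation is compatible with the interpolation definition \eqref{Def-vector-valued-Lp-non-com}, i.e. that $\big(M \ot_{\min} E, \L^1(M^\op) \otp E\big)^\op_{1/p}$ can be identified completely isometrically with $\big(M^\op \ot_{\min} E^\op, \L^1(M) \otp E^\op\big)_{1/p}$. This requires knowing that the opposite structure is a completely isometric functor on operator spaces commuting with $\ot_{\min}$ and $\otp$, and that it maps a compatible interpolation couple to a compatible interpolation couple with the induced interpolation norm — a standard fact, but one that must be cited (e.g. from \cite{Pis7} or \cite{ER}) rather than reproved. An alternative, possibly cleaner route avoiding vector-valued $\L^p$-spaces altogether is to work directly from the definition of complete boundedness via matrix amplifications: for each $n$, the map $\M_n(E^\op) \to \M_n(F^\op)$ induced by $T^\op$ is, up to the canonical identification $\M_n(E^\op) = \M_n(E)^\op$ (transpose of matrices combined with the opposite on entries), the same Banach-space map as the amplification of $T$ on $\M_n(E) \to \M_n(F)$, whence the matrix norms agree for every $n$ and the two cb-norms coincide. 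I would likely present this elementary matricial argument as the proof, since it requires no interpolation machinery and only the basic compatibility $\M_n(E^\op) = \M_n(E)^\op$.
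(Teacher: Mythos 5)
Your final matricial argument — using the identification $\M_n(E^\op)=\M_n(E)^\op$ via transposition so that the amplification of $T^\op$ has the same matrix norms as that of $T$, plus $(E^\op)^\op=E$ for the reverse inequality — is exactly the paper's proof. The first route through $S^p(E^\op)=S^p(E)^\op$ and interpolation is an unnecessary detour (and would require more care than the statement deserves), but since you correctly settle on the elementary argument, the proposal is sound and essentially identical to the paper's.
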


\begin{proof}
Assume that $T$ is completely bounded and let $[x_{ij}] \in \M_n(E^\op)$. Then
\begin{align*}
\bnorm{[T(x_{ij})]}_{\M_n(F^\op)} 
=\bnorm{[T(x_{ji})]}_{\M_n(F)} 
\leq \norm{T}_{\cb,E \to F} \bnorm{[x_{ji}]}_{\M_n(E)} 
=\norm{T}_{\cb,E \to F} \bnorm{[x_{ij}]}_{\M_n(E^\op)}.
\end{align*}
We infer that $\norm{T^\op}_{\cb,E^\op \to F^\op} \leq \norm{T}_{\cb,E \to F}$. Since $(E^\op)^\op=E$ completely isometrically, the reverse inequality follows by symmetry.
\end{proof}

\subsection{Matrix ordered operator spaces}


A complex vector space $V$ is matrix ordered \cite[page~173]{ChE} if
\begin{enumerate}
	\item $V$ is a $*$-vector space (hence so is $\M_n(V)$ for any $n \geq 1$),
	\item each $\M_n(V)$, $n \geq 1$, is partially ordered by a cone $\M_n(V)_+ \subset \M_n(V)_{\textrm{sa}}$, and
	\item if $\alpha=[\alpha_{ij}] \in \M_{n,m}$, then $\alpha^*\M_n(V)_+\alpha \subset \M_m(V)_+$.
\end{enumerate}
Now let $V$ and $W$ be matrix ordered vector spaces and let $T \co V \to W$ be
a linear map. If $n \geq 1$, we say that $T$ is $n$-positive if $\Id_{\M_n} \ot T\co \M_n(V) \to \M_n(W)$ is positive. We say that $T$ is completely positive if $T$ is $n$-positive for each $n \geq $. We denote the set of completely positive maps from $V$ to $W$ by $\mathrm{CP}(V,W)$.

An operator space $E$ is called a matrix ordered operator space \cite[page~143]{Schr1} if it is a matrix ordered vector space and if in addition 
\begin{enumerate}
	\item the $*$-operation is an isometry on $\M_n(E)$ for any integer $n \geq 1$ and 
	\item the cones $\M_n(E)_+$ are closed in the norm topology.
\end{enumerate}
For a matrix ordered operator space $E$ and its dual operator space $E^*$, we can define an involution on $E^*$ by $\varphi^*(v)=\ovl{\varphi(v^*)}$  for any $\varphi \in E^*$ and a cone on $\M_n(E^*)$ for each $n \geq 1$ by $\M_n(E^*)^+ = \CB(E,\M_n) \cap \mathrm{CP}(E,\M_n)$. Note that we have an isometric identification $\M_n(E^*)=\CB(E,\M_n)$. A lemma of Itoh \cite{Ito} (see \cite[Lemma 2.3.8]{Schr2} for a complete proof) says that if $E$ is a matrix ordered operator space, we have
\begin{equation}
\label{Dual-cone}
\M_n(E^*)_+
=\bigg\{[y_{ij}] \in \M_n(E^*) : \sum_{i,j=1}^{n} y_{ij}(x_{ij}) \geq 0 \text{ for any }[x_{ij}] \in \M_n(E)_+ \bigg\}.
\end{equation}

\begin{lemma}
\label{Lemma-Bipolar}
Let $E$ be a matrix ordered operator space. We have
$$
\M_n(E)_+
=\bigg\{ x \in \M_n(E) : \sum_{i,j=1}^{n} y_{ij}(x_{ij}) \geq 0 \text{ for any }[y_{ij}] \in \M_n(E^*)_+ \bigg\}.
$$
\end{lemma}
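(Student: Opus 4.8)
The plan is to prove this as a Hahn–Banach separation / bipolar argument. Write $C = \M_n(E)_+$ and let $C^{\circ\circ}$ denote the set on the right-hand side, i.e. the "double polar" of $C$ computed via the duality $\M_n(E)\times\M_n(E^*)$ given by the pairing $\langle x,y\rangle = \sum_{i,j} y_{ij}(x_{ij})$. One inclusion is immediate: by the very definition \eqref{Dual-cone} of $\M_n(E^*)_+$, every $x\in C$ satisfies $\sum_{i,j} y_{ij}(x_{ij})\geq 0$ for all $[y_{ij}]\in\M_n(E^*)_+$, so $C\subseteq C^{\circ\circ}$. The substance is the reverse inclusion $C^{\circ\circ}\subseteq C$.

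For the reverse inclusion I would argue by contraposition. Suppose $x\in\M_n(E)$ with $x\notin C$. Since $E$ is a matrix ordered operator space, the cone $C=\M_n(E)_+$ is closed in the norm topology and is contained in the real subspace $\M_n(E)_{\mathrm{sa}}$ of self-adjoint elements; it is moreover a convex cone. First observe we may assume $x$ is self-adjoint: if $x\notin\M_n(E)_{\mathrm{sa}}$ then using the isometric $*$-operation one separates $x$ from the real subspace $\M_n(E)_{\mathrm{sa}}$ by a self-adjoint functional, producing a suitable $[y_{ij}]$; so the interesting case is $x=x^*\notin C$. Now apply the Hahn–Banach separation theorem in the real Banach space $\M_n(E)_{\mathrm{sa}}$ to the closed convex cone $C$ and the point $x$: there is a real-linear continuous functional $f$ on $\M_n(E)_{\mathrm{sa}}$ and a real number $\alpha$ with $f(c)\geq\alpha$ for all $c\in C$ and $f(x)<\alpha$. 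Because $C$ is a cone containing $0$, we get $\alpha\leq 0$ and $f(c)\geq 0$ for all $c\in C$ (a standard cone argument: if $f(c_0)<0$ for some $c_0\in C$ then $f(tc_0)\to-\infty$ as $t\to\infty$, contradicting the lower bound); hence $f\geq 0$ on $C$ while $f(x)<0$.

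The final step is to promote the real functional $f$ to an element $[y_{ij}]\in\M_n(E^*)_+$ with $\sum_{i,j}y_{ij}(x_{ij})<0$, which then witnesses $x\notin C^{\circ\circ}$ and completes the contrapositive. Extend $f$ from $\M_n(E)_{\mathrm{sa}}$ to a self-adjoint complex-linear functional $F$ on all of $\M_n(E)$ by $F(z)=f\big(\tfrac{z+z^*}{2}\big)+\i f\big(\tfrac{z-z^*}{2\i}\big)$; then $F$ is bounded, i.e. $F\in\M_n(E)^*$. Under the identification $\M_n(E)^* = \M_n(E^*)$ (this is the standard identification $\M_n(E^*)=\CB(E,\M_n)$, or equivalently $\M_n(E)^*$, used right before the statement), $F$ corresponds to a matrix $[y_{ij}]\in\M_n(E^*)$ with $\sum_{i,j}y_{ij}(x_{ij})=F(x)=f(x)<0$. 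It remains to check $[y_{ij}]\in\M_n(E^*)_+$: by the description \eqref{Dual-cone} of the dual cone this means exactly $\sum_{i,j}y_{ij}(c_{ij})\geq 0$ for every $[c_{ij}]\in\M_n(E)_+$, which is precisely the condition $F(c)=f(c)\geq 0$ for all $c\in C$ established above. This gives the desired separating element and finishes the proof.

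The main obstacle I expect is purely a bookkeeping one: keeping straight the passage between the real picture (separation of the closed convex cone $C\subset\M_n(E)_{\mathrm{sa}}$ from a real point by a real functional) and the complex picture (the functional must land in $\M_n(E^*)$ with its given $*$-structure and positive cone), and in particular making sure the self-adjoint extension $F$ of $f$ genuinely corresponds to a matrix $[y_{ij}]$ that is self-adjoint in the sense $y_{ij}^*=y_{ji}$ and lies in $\M_n(E^*)_+$. All of this is routine once the matrix-ordered operator space axioms (closedness of the cones, isometry of the $*$-operation, and Itoh's identity \eqref{Dual-cone}) are invoked, but it is the only place where care is needed.
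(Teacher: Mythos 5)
Your overall route is the one the paper takes: identify the right-hand side as the double polar of $\M_n(E)_+$ for the duality between $\M_n(E)$ and $S^1_n(E^*)=\M_n(E^*)$, recognize the first polar as $\M_n(E^*)_+$ via Itoh's identity \eqref{Dual-cone}, and conclude from the fact that $\M_n(E)_+$ is a norm-closed, hence weakly closed, convex cone. The paper simply invokes the bipolar theorem at that point; your Hahn--Banach separation argument is precisely the proof of that bipolar statement, and for a \emph{self-adjoint} $x\notin\M_n(E)_+$ it is correct: the separating real functional $f$ on $\M_n(E)_{\mathrm{sa}}$ is nonnegative on the cone, its self-adjoint complex extension $F$ is bounded, hence an element of $\M_n(E^*)$, and \eqref{Dual-cone} certifies $F\in\M_n(E^*)_+$ while $F(x)=f(x)<0$.

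The gap is in your reduction to the self-adjoint case. A self-adjoint functional that detects $x\notin\M_n(E)_{\mathrm{sa}}$ is not ``a suitable $[y_{ij}]$'': the right-hand side of the lemma only tests against $[y_{ij}]\in\M_n(E^*)_+$, and self-adjointness of a functional does not place it in the positive cone. (Note also that one cannot separate $x$ from the subspace $\M_n(E)_{\mathrm{sa}}$ in the usual sense, i.e.\ by a complex-linear functional vanishing on it, since $\M_n(E)_{\mathrm{sa}}$ spans $\M_n(E)$ over $\C$; what exists is a self-adjoint $F$ with $F(x)\notin\R$, which is weaker than what you need.) A clean repair avoids the reduction altogether: if $x$ lies in the right-hand side, then since every $[y_{ij}]\in\M_n(E^*)_+$ is self-adjoint (as $E^*$ is matrix ordered) one has $\sum_{i,j}y_{ij}((x^*)_{ij})=\overline{\sum_{i,j}y_{ij}(x_{ij})}=\sum_{i,j}y_{ij}(x_{ij})$, so $x^*$, and hence the self-adjoint elements $\tfrac{1}{2}(x+x^*)$ and $\pm\tfrac{1}{2\mathrm{i}}(x-x^*)$, also lie in the right-hand side; your self-adjoint case then places them in $\M_n(E)_+$, and $\M_n(E)_+\cap(-\M_n(E)_+)=\{0\}$ forces $x=x^*\in\M_n(E)_+$. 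Without some argument of this kind the non-self-adjoint case is not covered.
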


\begin{proof}
Note that the dual cone $S^1_n(E^*)_+$ of $\M_n(E)_+$ is defined by $
S^1_n(E^*)_+=\big\{[y_{ij}] \in S^1_n(E^*) : \sum_{i,j=1}^{n} y_{ij}(x_{ij}) \geq 0 \text{ for any }[x_{ij}] \in \M_n(E)_+ \big\}$ and identifies to $\M_n(E^*)_+$ by \eqref{Dual-cone}. Since $\M_n(E)_+$ is closed in the norm topology, hence weakly closed, we conclude by the bipolar theorem.
\end{proof}

By \cite[Corollary 3.2]{Schr1}, the operator space dual $E^*$ with this positive cone is a matrix ordered operator space. The category of matrix ordered operator spaces contains the class of C$^*$-algebras.

Let $M$ be a von Neumann algebra equipped with a faithful normal semifinite trace. If $1 \leq p \leq \infty$, the noncommutative $\L^p$-space $\L^p(M)$ is canonically equipped with an isometric involution and we can define a cone on $\M_n(\L^p(M))$ by letting 
\begin{equation}
\label{Def-matricial-cones-Lp}
\M_n(\L^p(M))_+
\ov{\mathrm{def}}{=} \L^p(\M_n(M))_+\ (=S^p_n(\L^p(M))_+).
\end{equation}
Note the following easy\footnote{\thefootnote. Consider $x \in \M_n(\L^p(M))_+$, i.e. $x \in S^p_n(\L^p(M))_+$. There exists $y \in S^{2p}_n(\L^{2p}(M))$ such that $y^*y=x$. We can write $y=\sum_{i,j=1}^{n} e_{ij} \ot y_{ij}$ for some $y_{ij} \in \L^{2p}(M)$. For any matrix $\alpha \in \M_{n,m}$, we have
\begin{align*}
\MoveEqLeft
  \alpha^* \cdot x \cdot \alpha
	=\alpha^* \cdot y^*y \cdot \alpha
	=\alpha^* \cdot\bigg(\sum_{i,j=1}^{n} e_{ij}\ot y_{ij}\bigg)^*\bigg(\sum_{k,l=1}^{n} e_{kl}\ot y_{kl}\bigg) \cdot \alpha \\
		&=\alpha^* \cdot\bigg(\sum_{i,j=1}^{n} e_{ji}\ot y_{ij}^*\bigg)\bigg(\sum_{k,l=1}^{n} e_{kl}\ot y_{kl}\bigg) \cdot \alpha 
		= \sum_{i,j,k,l=1}^{n} \alpha^*e_{ji}e_{kl}\alpha \ot y_{ij}^*y_{kl} \\
		&=\bigg(\sum_{i,j=1}^{n} \alpha^*e_{ji} \ot y_{ij}^*\bigg)\bigg(\sum_{k,l=1}^{n}  e_{kl}\alpha \ot y_{kl}\bigg)
		=\bigg(\sum_{i,j=1}^{n}  e_{ij}\alpha \ot y_{ij}\bigg)^*\bigg(\sum_{k,l=1}^{n}  e_{kl}\alpha \ot y_{kl}\bigg).
\end{align*}
We conclude that $\alpha^* \cdot x \cdot \alpha$ is a positive element of $\M_n(\L^p(M))=S^{p}_n(\L^{p}(M))$. We conclude that $\L^p(M)$ is matrix ordered. Moreover, for any $x \in \M_n(\L^p(M))$, using \cite[Lemma 1.7]{Pis4} twice and the isometric involution, we see that 
\begin{align*}
\MoveEqLeft
  \norm{x^*}_{\M_n(\L^p(M))}  
		=\sup\big\{\norm{\alpha \cdot x^* \cdot \beta}_{S^p_n(\L^p(M))} \ :\ \norm{\alpha}_{S^{2p}_{n}} \leq 1, \ \norm{\beta}_{S^{2p}_{n}} \leq 1  \big\} \\
		&=\sup\big\{\norm{\beta^* \cdot x \cdot \alpha^*}_{S^p_n(\L^p(M))} \ :\ \norm{\alpha}_{S^{2p}_{n}} \leq 1, \ \norm{\beta}_{S^{2p}_{n}} \leq 1  \big\} \\
		&=\sup\big\{\norm{\beta \cdot x \cdot \alpha}_{S^p_n(\L^p(M))} \ :\ \norm{\alpha}_{S^{2p}_{n}} \leq 1, \ \norm{\beta}_{S^{2p}_{n}} \leq 1  \big\} 
		=\norm{x}_{\M_n(\L^p(M))}.
\end{align*}.} observation.

\begin{prop}
\label{prop-LpM-is-matrix-ordered}
Let $M$ be a von Neumann algebra equipped with a faithful normal semifinite trace. Suppose $1\leq p \leq\infty$. Then the noncommutative $\L^p$-space $\L^p(M)$ is a matrix ordered operator space.
\end{prop}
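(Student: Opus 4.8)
The plan is to verify the three defining axioms of a matrix ordered operator space (as in \cite{Schr1}) for $\L^p(M)$ equipped with the involution inherited from the trace and the cones $\M_n(\L^p(M))_+ \overset{\mathrm{def}}= S^p_n(\L^p(M))_+$ defined in \eqref{Def-matricial-cones-Lp}. Concretely, one must check: (a) $\L^p(M)$ is a $*$-vector space and each $\M_n(\L^p(M))$ is partially ordered by a cone sitting inside the selfadjoint part; (b) the compression condition $\alpha^*\M_n(\L^p(M))_+\alpha \subseteq \M_m(\L^p(M))_+$ for $\alpha \in \M_{n,m}$; (c) the $*$-operation is isometric on each $\M_n(\L^p(M))$ and the cones are norm-closed.

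First I would dispose of (a): the noncommutative $\L^p$-space carries the natural isometric involution $x \mapsto x^*$ coming from the trace (this is recalled just before \eqref{Def-matricial-cones-Lp}), and since $\M_n(\L^p(M)) = S^p_n(\L^p(M)) = \L^p(\M_n(M))$ isometrically as noncommutative $\L^p$-spaces, the cone $S^p_n(\L^p(M))_+$ is exactly the positive cone \eqref{Cone-LpM} of the $\L^p$-space over $\M_n(M)$, hence a genuine cone contained in the selfadjoint elements and inducing a partial order. Next, for (b), I would use the factorization description \eqref{Cone-LpM}: any $x \in \M_n(\L^p(M))_+$ can be written $x = y^*y$ with $y \in S^{2p}_n(\L^{2p}(M))$, expand $y = \sum_{i,j} e_{ij} \ot y_{ij}$ with $y_{ij} \in \L^{2p}(M)$, and compute directly that $\alpha^* \cdot x \cdot \alpha = \big(\sum_{k,l} e_{kl}\alpha \ot y_{kl}\big)^*\big(\sum_{k,l} e_{kl}\alpha \ot y_{kl}\big)$, which is manifestly of the form $z^*z$ and therefore lies in $\M_m(\L^p(M))_+$. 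This is exactly the computation carried out in the footnote attached to the statement, so it can be cited rather than reproduced.

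For (c), norm-closedness of the cones is immediate because $S^p_n(\L^p(M))_+$ is the positive cone of a noncommutative $\L^p$-space, which is always closed (for instance via the dual characterization \eqref{equa-polar-Lp}). For the isometry of the $*$-operation on $\M_n(\L^p(M))$, I would invoke the norm formula $\norm{x}_{\M_n(\L^p(M))} = \sup\{\norm{\alpha \cdot x \cdot \beta}_{S^p_n(\L^p(M))} : \norm{\alpha}_{S^{2p}_n} \leq 1, \norm{\beta}_{S^{2p}_n} \leq 1\}$ from \cite[Lemma 1.7]{Pis4}, together with the fact that the involution is isometric on $S^p_n(\L^p(M))$ itself (true in any noncommutative $\L^p$-space): then $\norm{x^*} = \sup \norm{\alpha x^* \beta} = \sup \norm{(\beta^* x \alpha^*)^*} = \sup \norm{\beta^* x \alpha^*} = \norm{x}$, using that $\alpha \mapsto \alpha^*$ preserves the Schatten $S^{2p}_n$ norm. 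Again this is precisely the second half of the footnote and can be cited.

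There is no genuine obstacle here — the statement is a bookkeeping assembly of facts already set up in Subsection \ref{Section-noncomLp} (the identifications $\M_n(\L^p(M)) = S^p_n(\L^p(M)) = \L^p(\M_n(M))$, the cone formula \eqref{Cone-LpM}, Pisier's norm lemma). The only point requiring a small amount of care is making sure the matricial cones defined via $\M_n(M)$ are internally consistent under the further passage to $\M_m$, i.e. that nesting $\M_n$ inside $\M_m$ does not change the cone; this follows because all these spaces are $\L^p$-spaces over nested matrix amplifications of $M$ and their positive cones are compatible. Accordingly the write-up will be short: state the three axioms, reduce each to a cited fact or to the displayed footnote computation, and conclude.
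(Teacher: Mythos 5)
Your proposal is correct and follows essentially the same route as the paper: the paper's ``proof'' of Proposition \ref{prop-LpM-is-matrix-ordered} is precisely the footnote computation you cite (the factorization $x=y^*y$ giving $\alpha^* x\alpha = z^*z$ for the compression axiom, and Pisier's norm formula for the isometry of the involution), with the remaining axioms treated as routine exactly as you do. Your added remark on norm-closedness via \eqref{equa-polar-Lp} is a harmless supplement the paper leaves implicit.
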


If $N$ is another von Neumann algebra equipped with a faithful normal semifinite trace then it is easy to see that a map $T \co \L^p(M) \to \L^p(N)$ is completely positive if the map $\Id_{S^p} \ot T$ induces a (completely) positive map $\Id_{S^p} \ot T \co S^p(\L^p(M)) \to S^p(\L^p(N))$. Moreover, for any matrix $\alpha \in \M_{n,m}$, the map  
\begin{equation}
	\label{conj-cp}
	\L^p(\M_n(M)) \to \L^p(\M_m(M)), \quad x \mapsto \alpha^* x \alpha
\end{equation}
is completely positive. 

\begin{lemma}
\label{equ-1-proof-op-mappings}
Let $M$ be a von Neumann algebra equipped with a faithful normal semifinite trace. Suppose $1 \leq p \leq \infty$. If $b \in \M_n(\L^p(M))$ and if $b^t$ is the transpose of $b$ we have $b \in \M_n(\L^p(M^\op))_+$ if and only if $b^t \in \M_n(\L^p(M))_+$.
\end{lemma}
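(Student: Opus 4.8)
The plan is to reduce the statement to the concrete description \eqref{Cone-LpM} of the positive cone of a noncommutative $\L^p$-space together with the definition \eqref{Def-matricial-cones-Lp}, which says that $\M_n(\L^p(M))_+$ \emph{is} $\L^p(\M_n(M))_+$, and likewise $\M_n(\L^p(M^\op))_+=\L^p(\M_n(M^\op))_+$ (note that $\tau$ remains a normal semifinite faithful trace on $M^\op$, so $\M_n(M^\op)$ carries the trace $\tr_n\ot\tau$ and $\L^p(\M_n(M^\op))$ makes sense).

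First I would record the purely algebraic fact that the transpose map $c=[c_{ij}]\mapsto c^t=[c_{ji}]$ is a trace-preserving $*$-isomorphism from $\M_n(M^\op)$ onto $(\M_n(M))^\op$. This is a routine verification: $\M_n(M^\op)$ reverses the entrywise products inside each term of the matrix product while keeping the index pattern of ordinary matrix multiplication, whereas $(\M_n(M))^\op$ reverses the whole matrix multiplication while keeping the order of the entrywise products, and transposition interchanges exactly these two reshufflings of indices; the involution and the trace $\tr_n\ot\tau$ are visibly left unchanged. By functoriality of $\L^p$ under trace-preserving $*$-isomorphisms, this extends to an isometry $\L^p(\M_n(M^\op))\to\L^p((\M_n(M))^\op)$, still implemented by transposition, which restricts to a bijection between the positive cones of the two spaces.

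Next I would observe that for any semifinite von Neumann algebra $N$ (here $N=\M_n(M)$) the cones $\L^p(N^\op)_+$ and $\L^p(N)_+$ coincide as subsets of the common underlying vector space: by \eqref{Cone-LpM}, an element lies in $\L^p(N^\op)_+$ iff it equals $y^*\cdot_{\op}y=y\cdot_N y^*$ for some $y\in\L^{2p}(N)$, and writing $z=y^*$ this is the same as being of the form $z^*z$ with $z\in\L^{2p}(N)$, i.e. lying in $\L^p(N)_+$; for $p=\infty$ this is the classical remark that an operator is positive in $N$ iff it is positive in $N^\op$. Combining the two steps yields, for $b\in\M_n(\L^p(M))$,
\[
b\in\M_n(\L^p(M^\op))_+=\L^p(\M_n(M^\op))_+
\;\Longleftrightarrow\;
b^t\in\L^p((\M_n(M))^\op)_+=\L^p(\M_n(M))_+=\M_n(\L^p(M))_+,
\]
which is the claim. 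Alternatively one can bypass the $*$-isomorphism and argue in a single computation directly from \eqref{Cone-LpM}: write $b=c^*c$ in $\L^p(\M_n(M^\op))$ with $c\in\L^{2p}(\M_n(M^\op))$, expand the matrix product using $x\cdot_{M^\op}y=yx$, and read off that $b^t=dd^*$ with $d=c^t$ computed in $\L^{2p}(\M_n(M))$; the converse is the same computation run backwards.

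The one point to be careful about — not difficult, but where a slip would make the transpose vanish from the statement — is the bookkeeping with the three a priori different algebra structures carried by the common vector space of $n\times n$ matrices over $M$, namely $\M_n(M)$, $\M_n(M^\op)$ and $(\M_n(M))^\op$: the second reverses only the entrywise multiplications, the third reverses the whole matrix multiplication, and it is precisely transposition that intertwines the second with the third. Everything else (trace-preservation, functoriality of $\L^p$, the $p=\infty$ endpoint) is standard.
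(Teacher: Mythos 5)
Your proof is correct. The core algebraic identity --- that transposition intertwines the product of $\M_n(M^\op)$ with the opposite product of $\M_n(M)$ --- is exactly the computation the paper performs, but the two arguments diverge in how they treat $1\leq p<\infty$. The paper carries out that computation only at the level $p=\infty$ (writing $b=c^*\circ_{\M_n(M^\op)}c$ with $c\in\M_n(M)$ and reading off $b^t=c^t\circ_{\M_n(M)}c^{t*}$), and then deduces the general case by approximating a positive element of $S^p_n(\L^p(M^\op))$ by elements of $\M_n(M^\op)_+\cap S^p_n(\L^p(M^\op))$ via Lemma \ref{lemma-M+-dense-dans Lp+} and passing to the limit, using that the cones are norm-closed. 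You instead work directly at the level of $\L^p$: either through the trace-preserving $*$-isomorphism $\M_n(M^\op)\cong(\M_n(M))^\op$ combined with the observation $\L^p(N^\op)_+=\L^p(N)_+$ (both consequences of \eqref{Cone-LpM} together with the identification $\L^p(N)^\op=\L^p(N^\op)$), or by running the same index computation on a factorization $b=c^*c$ with $c\in\L^{2p}(\M_n(M^\op))$ as provided by \eqref{Cone-LpM} and \eqref{Def-matricial-cones-Lp}. What your route buys is a proof with no density or limiting step, and it isolates $\L^p(N^\op)_+=\L^p(N)_+$ as a reusable fact; the paper's route is more pedestrian but leans only on the already-recorded density lemma and needs no appeal to functoriality of $\L^p$ under trace-preserving $*$-isomorphisms. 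Either way the bookkeeping point you flag --- distinguishing $\M_n(M^\op)$ from $(\M_n(M))^\op$ and noting that transposition is what exchanges them --- is precisely where the content lies.
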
 

\begin{proof}
We start with the case $p=\infty$. We can identify $M^\op$ with $M$ equipped with the opposed product. We will use the notation $\circ$ for some products where the subscript indicates the space. Let $b \in \M_n(M^\op)_+$. Then we can write $b = c^*\circ_{\M_n(M^\op)} c$ for some $c \in \M_n(M)$. For any $1 \leq i,j \leq n$, we have
\begin{align*}
\MoveEqLeft
  b_{ij}
	=\sum_{k=1}^{n} (c^*)_{ik} \circ_{M^\op}c_{kj} 
	=\sum_{k=1}^{n} c_{kj} (c^*)_{ik} 
	=\sum_{k=1}^{n} (c^t)_{jk}(c^{t*})_{ki}
	=\big(c^t \circ_{\M_n(M)} c^{t*}\big)^t.
\end{align*}  
Hence $b^t=c^t \circ_{\M_n(M)} c^{t*}$ belongs to $\M_n(M)_+$. The reverse implication follows by symmetry. Suppose that $b \in \M_n(\L^p(M^\op))_+$, i.e. $b \in S^p_n(\L^p(M^\op))_+$ by \eqref{Def-matricial-cones-Lp}. By Lemma \ref{lemma-M+-dense-dans Lp+}, there exists a sequence $(b_k)$ in $\M_n(M^\op)_+ \cap S^p_n(\L^p(M^\op))$ converging to $b$ for the topology of $S^p_n(\L^p(M))$. By the first part of the proof, each $(b_k)^t$ belongs to $\M_n(M)_+$ and of course to $S^p_n(\L^p(M))$. In particular, $(b_k)^t$ belongs to $\M_n(\L^p(M))_+$. Passing to the limit as $k$ approaches infinity yields $b^t \in \M_n(\L^p(M))_+$. Again, a symmetry argument completes the proof.
\end{proof}

We will often use the following observation.


\begin{lemma}
\label{Lemma-adjoint-cp}
Let $E$ and $F$ be matrix ordered operator spaces. A bounded map $T \co E \to F$ is (completely) positive if and only if the adjoint map $T^* \co F^* \to E^*$ is (completely) positive.
\end{lemma}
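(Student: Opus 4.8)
The plan is to use the duality between the matricial cones of $E$ and $F$ and those of their operator space duals, as recorded in Lemma \ref{Lemma-Bipolar} and equation \eqref{Dual-cone}. The statement is symmetric in a suitable sense: since $E$ and $F$ are matrix ordered operator spaces, so are $E^*$ and $F^*$, and moreover $E^{**}$ contains $E$ completely isometrically and in an order-preserving way (the canonical embedding $E \hookrightarrow E^{**}$ is completely positive with completely positive "inverse" on the image, by the description of the dual cones). So once I prove one implication in general, the other follows by applying it to $T^*$ and then transporting back along the canonical embeddings.

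First I would treat the "positive" (i.e. $n=1$, or rather the non-matricial) case to fix ideas, then immediately do the completely positive case since it is no harder. Assume $T \co E \to F$ is completely positive; I want $T^* \co F^* \to E^*$ completely positive, i.e. $\Id_{\M_n} \ot T^* \co \M_n(F^*) \to \M_n(E^*)$ is positive for every $n$. Take $[y_{ij}] \in \M_n(F^*)_+$. I must show $[T^*(y_{ij})] = [y_{ij} \circ T] \in \M_n(E^*)_+$. By \eqref{Dual-cone} applied to $E^*$, it suffices to check that $\sum_{i,j} (y_{ij} \circ T)(x_{ij}) \geq 0$ for every $[x_{ij}] \in \M_n(E)_+$. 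But $\sum_{i,j} y_{ij}(T(x_{ij})) = \sum_{i,j} y_{ij}((\Id_{\M_n} \ot T)([x_{ij}])_{ij})$, and $(\Id_{\M_n} \ot T)([x_{ij}]) \in \M_n(F)_+$ because $T$ is completely positive; then $\sum_{i,j} y_{ij}(\cdot_{ij}) \geq 0$ on $\M_n(F)_+$ precisely because $[y_{ij}] \in \M_n(F^*)_+$, again by \eqref{Dual-cone}. This gives the forward implication.

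For the converse, suppose $T^* \co F^* \to E^*$ is completely positive. By the implication just proved, $T^{**} \co E^{**} \to F^{**}$ is completely positive. Now consider the canonical complete isometries $j_E \co E \to E^{**}$ and $j_F \co F \to F^{**}$; one checks (using \eqref{Dual-cone} and Lemma \ref{Lemma-Bipolar}) that $j_E$ is a complete order isomorphism onto its range, and similarly $j_F$, and that $T^{**} \circ j_E = j_F \circ T$. Given $[x_{ij}] \in \M_n(E)_+$, we get $(\Id \ot j_E)([x_{ij}]) \in \M_n(E^{**})_+$, hence $(\Id \ot T^{**})(\cdot) = (\Id \ot j_F)((\Id \ot T)([x_{ij}])) \in \M_n(F^{**})_+$, and since $j_F$ reflects positivity this forces $(\Id \ot T)([x_{ij}]) \in \M_n(F)_+$. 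So $T$ is completely positive. The same argument with all "$\M_n$" replaced by scalars ($n=1$) handles the merely positive statement. I would also remark that boundedness of $T$ is used only to guarantee that $T^*$ is well-defined and bounded, so that the matricial amplifications make sense.

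The main obstacle — really the only point requiring care — is verifying that the canonical embedding $j_E \co E \to E^{**}$ is a \emph{complete order} embedding, i.e. that $[x_{ij}] \in \M_n(E)_+$ if and only if $[j_E(x_{ij})] \in \M_n(E^{**})_+$. The forward direction is immediate from \eqref{Dual-cone} (evaluating against functionals in $\M_n(E^*)_+$), while the reverse direction is exactly the content of Lemma \ref{Lemma-Bipolar}: if $[j_E(x_{ij})] \in \M_n(E^{**})_+$ then in particular $\sum_{i,j} y_{ij}(x_{ij}) \geq 0$ for all $[y_{ij}] \in \M_n(E^*)_+$, which by Lemma \ref{Lemma-Bipolar} says $[x_{ij}] \in \M_n(E)_+$. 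So in fact no separate work is needed beyond citing the two bipolar-type statements already established. An alternative, slightly slicker packaging avoids $E^{**}$ altogether: prove directly that $T$ completely positive $\Leftrightarrow$ $\sum_{i,j} y_{ij}(T(x_{ij})) \geq 0$ for all $n$, all $[x_{ij}] \in \M_n(E)_+$ and all $[y_{ij}] \in \M_n(F^*)_+$ (using \eqref{Dual-cone} for $F$ and Lemma \ref{Lemma-Bipolar} for $F$), and observe that this condition is manifestly symmetric under swapping $(E, [x_{ij}]) \leftrightarrow (F^*, [y_{ij}])$ together with $T \leftrightarrow T^*$, using $\M_n(F^{**})_+ \supseteq \M_n(F)_+$ via the embedding; I would likely present this symmetric-criterion version as it is the cleanest.
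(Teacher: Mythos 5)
Your proposal is correct, and the ``alternative, slightly slicker packaging'' you describe in your last sentences is in fact exactly the paper's proof: the paper characterizes positivity of $T$ by $\langle T(x),y\rangle_{F,F^*}\geq 0$ for $x\in E_+$, $y\in F^*_+$ via Lemma \ref{Lemma-Bipolar} (applied to $F$), characterizes positivity of $T^*$ by the same pairing condition via \eqref{Dual-cone} (applied to $E$), and observes these coincide since $\langle T(x),y\rangle=\langle x,T^*(y)\rangle$; the matricial case is identical. Your primary presentation differs in one respect: for the converse you pass to $T^{**}\co E^{**}\to F^{**}$ and verify that the canonical embedding $j_E\co E\to E^{**}$ is a complete order embedding. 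That verification is sound (the forward direction is the definition of the bidual cone via Itoh's description, the reverse is Lemma \ref{Lemma-Bipolar}), but the detour is unnecessary: the symmetric criterion already handles both implications at once, with no appeal to $E^{**}$ or $F^{**}$ at all. In particular, the clause ``using $\M_n(F^{**})_+\supseteq \M_n(F)_+$ via the embedding'' in your final formulation is not needed; both directions of the equivalence follow from \eqref{Dual-cone} for one space and Lemma \ref{Lemma-Bipolar} for the other, applied to the single bilinear expression $\sum_{i,j}y_{ij}(T(x_{ij}))$. So present the symmetric-criterion version, as you intended, and you recover the paper's argument verbatim.
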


\begin{proof}
By Lemma \ref{Lemma-Bipolar}, a map $T \co E \to F$ is positive if and only if $\langle T(x),y \rangle_{F,F^*} \geq 0$ for any $x \in E_+$ and any $y \in F^*_+$ if and only if $\langle x, T^*(y) \rangle_{E,E^*} \geq 0$ for all such $x,y$ if and only if $T^* \co F^* \to E^*$ is positive again by \eqref{Dual-cone}. The completely positive case is similar.
\end{proof}

For further use in Lemma \ref{lem-decomposable-weak-limit}, we record the following.

\begin{lemma}
\label{lem-completely-positive-weak-limit}
Let $E$ and $F$ be matrix ordered operator spaces.
\begin{enumerate}
	\item Let $(T_\alpha)$ be a net of positive (resp. $n$-positive or completely positive) mappings from $E$ into $F$. Suppose that $\lim_\alpha T_\alpha = T$ in the weak operator topology. Then $T$ is also positive (resp. $n$-positive or completely positive).
	
	\item Let $(T_\alpha)$ be a net of positive (resp. $n$-positive or completely positive) mappings from $E$ into $F^*$. Suppose that $\lim_\alpha T_\alpha = T$ in the point weak* topology\footnote{\thefootnote. If $X$ is a Banach space and $Y$ is a dual Banach space, a net $(T_\alpha)$ in $\B(X,Y)$ converges to an operator $T \in \B(X,Y)$ in the point weak* topology if and only if for any $x \in X$ and any $y_* \in Y_*$ we have $\langle T_\alpha(x),y_* \rangle_{Y,Y_*} \xra[\alpha]{} \langle T(x),y_* \rangle_{Y,Y_*}$.} of $\B(E,F^*)$. Then $T$ is also positive (resp. $n$-positive or completely positive).
\end{enumerate}
\end{lemma}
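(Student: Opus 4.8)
The plan is to reduce everything to the characterization of positivity via duality pairings provided by Lemma \ref{Lemma-Bipolar} and \eqref{Dual-cone}, which turns the closedness of the cone of positive maps into the closedness of a family of half-spaces — exactly the sort of thing preserved by weak limits. The two parts are structurally identical, so I would prove part (1) carefully and then indicate the trivial change for part (2). Throughout, the ``completely positive'' case is obtained from the ``positive'' case by replacing $E, F$ with $\M_n(E), \M_n(F)$ (using that the matrix levels of matrix ordered operator spaces are again matrix ordered, and that $\Id_{\M_n} \ot T_\alpha \to \Id_{\M_n} \ot T$ in the relevant topology whenever $T_\alpha \to T$ does), so I will only write out the positive case.

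\textbf{Part (1).} Suppose each $T_\alpha \co E \to F$ is positive and $T_\alpha \to T$ in the weak operator topology, i.e. $\langle T_\alpha(x), \varphi \rangle_{F,F^*} \to \langle T(x), \varphi \rangle_{F,F^*}$ for every $x \in E$ and every $\varphi \in F^*$. To show $T$ is positive, by Lemma \ref{Lemma-Bipolar} (at level $n=1$) it suffices to show $\langle T(x), \varphi \rangle_{F,F^*} \geq 0$ for every $x \in E_+$ and every $\varphi \in F^*_+$. Fix such $x$ and $\varphi$. By Lemma \ref{Lemma-adjoint-cp} applied to each $T_\alpha$ — or directly from $\varphi \geq 0$ and $T_\alpha \geq 0$ via \eqref{Dual-cone} — we have $\langle T_\alpha(x), \varphi \rangle_{F,F^*} \geq 0$ for every $\alpha$. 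Passing to the limit along the net and using the convergence hypothesis, $\langle T(x), \varphi \rangle_{F,F^*} = \lim_\alpha \langle T_\alpha(x), \varphi \rangle_{F,F^*} \geq 0$, since the set $[0,\infty)$ is closed in $\C$. This is exactly the condition in Lemma \ref{Lemma-Bipolar} characterizing $T(x) \in F_+$, so $T$ is positive. For complete positivity, repeat with $\M_n(E), \M_n(F)$ and $\Id_{\M_n} \ot T_\alpha$, noting that $\Id_{\M_n} \ot T_\alpha \to \Id_{\M_n} \ot T$ in the weak operator topology and that the matrix cones $\M_n(F)_+$ are again described by \eqref{Dual-cone} using $\M_n(F^*)_+$.

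\textbf{Part (2).} Here $T_\alpha, T \in \B(E, F^*)$ and convergence is in the point weak* topology: $\langle T_\alpha(x), y_* \rangle_{F^*, F^*_*} \to \langle T(x), y_* \rangle_{F^*, F^*_*}$ for all $x \in E$, $y_* \in F^*_* $. The only subtlety is that $F^*$ is a \emph{dual} matrix ordered operator space, so to test positivity of $T(x) \in F^*$ by Lemma \ref{Lemma-Bipolar} one would want to pair against elements of $(F^*)^*_+ = F^{**}_+$, which may be strictly larger than (the image of) $F_+$; but in fact for a dual object one can instead use that $F^*_+$ is weak* closed and is the \emph{polar} (in the sense of \eqref{Dual-cone}) of $F_+$ — equivalently, $\psi \in F^*$ is positive iff $\langle \psi, y \rangle_{F^*,F} \geq 0$ for all $y \in F_+$, and this pairing against $y \in F \subseteq F^*_*$ is precisely what the point weak* topology controls. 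So fix $x \in E_+$ and $y \in F_+$; each $T_\alpha$ is positive hence $T_\alpha(x) \in F^*_+$, so $\langle T_\alpha(x), y \rangle_{F^*,F} \geq 0$; letting $\alpha \to \infty$ gives $\langle T(x), y \rangle_{F^*,F} \geq 0$, whence $T(x) \in F^*_+$, and $T$ is positive. The completely positive case is again the matrix amplification. I expect the only real point requiring care — the ``main obstacle'' — is this bookkeeping in part (2) about which duality pairing to test against: one must pair $T(x)$ against predual elements $y \in F_+ \subseteq F^*_*$ rather than against arbitrary functionals on $F^*$, since only the former are seen by the point weak* topology; once this is observed the argument is a routine limit of inequalities.
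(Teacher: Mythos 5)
Your proof is correct and follows essentially the same route as the paper: part (1) reduces to the characterization of $F_+$ by pairing against $F^*_+$ (Lemma \ref{Lemma-Bipolar}), part (2) to the characterization of $F^*_+$ by pairing against $F_+$ (the Itoh description \eqref{Dual-cone}), and in both cases the relevant pairings are exactly those controlled by the given mode of convergence, so positivity passes to the limit; the completely positive case is the same argument at each matrix level. The subtlety you flag in part (2) — testing against predual elements $y \in F_+$ rather than against $F^{**}_+$ — is precisely the point the paper's proof relies on as well.
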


\begin{proof}
1. Suppose that each $T_\alpha \co E \to F$ is a positive map. By Lemma \ref{Lemma-Bipolar}, the map $T \co E \to F$ is positive if and only if $\langle T(x),y \rangle_{F,F^*} \geq 0$ for any $x \in E_+$ and any $y \in F^*_+$. Using again Lemma \ref{Lemma-Bipolar}, we infer that $\langle T(x),y \rangle_{F,F^*}
= \lim_\alpha \langle T_\alpha(x),y \rangle_{F,F^*} 
\geq 0$. Thus we conclude that $T$ is positive.

Suppose that each $T_\alpha$ is completely positive. By Lemma \ref{Lemma-Bipolar}, the map $\Id_{M_n} \ot T \co \M_n(E) \to \M_n(F)$ is positive if and only if $\sum_{i,j=1}^{n} \langle T(x_{ij}) , y_{ij}\rangle_{F,F^*}$ for any $[x_{ij}] \in \M_n(E)_+$ and any $[y_{ij}] \in \M_n(F^*)_+$. Using again Lemma \ref{Lemma-Bipolar}, we infer that 
\[ \sum_{i,j=1}^{n} \langle T(x_{ij}), y_{ij} \rangle_{F,F^*}
=\lim_\alpha  \sum_{i,j=1}^{n} \langle T_\alpha(x_{ij}), y_{ij} \rangle_{F,F^*} \geq 0 .\]
Letting $n$ run over all integers, we conclude that $T$ is completely positive. The argument is the same for the $n$-positive case.

2. Suppose that each $T_\alpha \co E \to F^*$ is a positive map. By \eqref{Dual-cone}, the map $T \co E \to F^*$ is positive if and only if $\langle T(x),y \rangle_{F^*,F} \geq 0$ for any $x \in E_+$ and any $y \in F_+$. Using again \eqref{Dual-cone}, we infer that $\langle T(x),y \rangle_{F^*,F}
= \lim_\alpha \langle T_\alpha(x),y \rangle_{F^*,F} \geq 0$. Thus we conclude that $T$ is positive.

Suppose that each $T_\alpha$ is completely positive. By \eqref{Dual-cone}, the map $\Id_{\M_n} \ot T \co \M_n(E) \to \M_n(F^*)$ is positive if and only if $\sum_{i,j=1}^{n}\langle T(x_{ij}), y_{ij} \rangle_{F^*,F}$ for any $[x_{ij}] \in \M_n(E)_+$ and any $[y_{ij}] \in \M_n(F)_+$. Using again \eqref{Dual-cone}, we infer that $\langle T(x_{ij}), y_{ij} \rangle_{F^*,F}
=\lim_\alpha  \sum_{i,j=1}^{n} \langle T_\alpha(x_{ij}), y_{ij} \rangle_{F^*,F} \geq 0$. Letting $n$ run over all integers, we conclude that $T$ is completely positive. The argument is the same for the $n$-positive case.
\end{proof}

If $E$ is a matrix ordered operator space, by \cite[page~80]{Schr2}, the vector-valued Schatten space $S^p_n(E)=\mathrm{R}_n(1-\frac{1}{p}) \ot_h E \ot_h \mathrm{R}_n(\frac{1}{p})$ admits a structure of a matrix ordered operator space. The cones are defined by the closures
\begin{align*}
\MoveEqLeft
  \M_k(S^p_n(E))_+
	=\ovl{\big\{x^* \odot y \odot x \in \M_k(S^p_n(E)) \,:\, x \in \M_{l,k}(\mathrm{R}_n(\tfrac{1}{p})),\, y \in \M_l(E)_+,\, l \in \N\big\}}.  
\end{align*}

\begin{lemma}
\label{Lemma-cp-S1E}
Suppose $1 \leq p \leq \infty$. Let $E$ and $F$ be matrix ordered operator spaces and let $T \co E \to F$ be a bounded completely positive map. Then for any integer $n$, the map $\Id_{S^p_n} \ot T \co S^p_n(E) \to S^p_n(F)$ is completely positive. 
\end{lemma}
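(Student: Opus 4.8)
The plan is to verify complete positivity of $\Id_{S^p_n} \ot T$ directly from Schreiber's description of the matrix cones of $S^p_n(E)$. Concretely, fixing an integer $k \geq 1$, I would show that $\Id_{\M_k} \ot \Id_{S^p_n} \ot T \co \M_k(S^p_n(E)) \to \M_k(S^p_n(F))$ maps $\M_k(S^p_n(E))_+$ into $\M_k(S^p_n(F))_+$; since $k$ is arbitrary, this is exactly complete positivity of $\Id_{S^p_n} \ot T$.

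First I would record that the map in question is continuous. Since $n$ is finite, the underlying vector space of $S^p_n(E)$ is $\M_n(E)$ and the $S^p_n(E)$-norm is equivalent to the $\M_n(E)$-norm; under this identification $\Id_{S^p_n} \ot T$ is nothing but $\Id_{\M_n} \ot T$, which is bounded because $T$ is (one has $\norm{\Id_{\M_n} \ot T}_{\M_n(E) \to \M_n(F)} \leq n \norm{T}$). Hence $\Id_{\M_k} \ot \Id_{S^p_n} \ot T$ is bounded, in particular continuous, for every $k$. Next, using the formula
\[
\M_k(S^p_n(E))_+
= \ovl{\big\{x^* \odot y \odot x \,:\, x \in \M_{l,k}(\mathrm{R}_n(\tfrac1p)),\ y \in \M_l(E)_+,\ l \in \N\big\}}
\]
and the analogous one for $F$, I would evaluate the map on a generator $x^* \odot y \odot x$. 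In the factorization $S^p_n(E) = \mathrm{R}_n(1-\tfrac1p) \ot_h E \ot_h \mathrm{R}_n(\tfrac1p)$, the operator $T$ acts only on the middle tensor factor (which carries the contribution of $y$), leaving the two $\mathrm{R}_n$-factors (which carry the contributions of $x^*$ and $x$) untouched; so the map sends $x^* \odot y \odot x$ to $x^* \odot \big((\Id_{\M_l} \ot T)(y)\big) \odot x$. Since $T$ is completely positive, $(\Id_{\M_l} \ot T)(y) \in \M_l(F)_+$, and therefore $x^* \odot \big((\Id_{\M_l} \ot T)(y)\big) \odot x$ is again a generator of $\M_k(S^p_n(F))_+$, in particular it lies in that cone. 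Thus the generating set of $\M_k(S^p_n(E))_+$ is mapped into the closed cone $\M_k(S^p_n(F))_+$, and by continuity so is its closure.

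The step I expect to be the main obstacle is the verification that $\Id_{\M_k} \ot \Id_{S^p_n} \ot T$ really acts on a generator $x^* \odot y \odot x$ by $x^* \odot \big((\Id_{\M_l} \ot T)(y)\big) \odot x$, i.e.\ that $T$, acting on the $E$-slot, commutes with the row/column module operations encoded in $\odot$; this requires unwinding the definition of the cones and of the Haagerup-tensor-product structure on $S^p_n(E)$. An alternative route would be to treat the endpoints $p = \infty$ (where $S^\infty_n(E) = \M_n(E)$ and the claim is the very definition of complete positivity of $T$) and $p = 1$ (by duality, using that $T^*$ is completely positive by Lemma \ref{Lemma-adjoint-cp} together with $(S^1_n(E))^* = \M_n(E^*)$) and then to interpolate; but since stability of complete positivity under complex interpolation of these cones is not immediate, the direct argument above seems preferable.
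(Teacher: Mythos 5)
Your argument is exactly the paper's proof: the paper evaluates $\Id_{S^p_n}\ot T$ on the generators $x^*\odot y\odot x$ of the cone, obtains $x^*\odot T(y)\odot x$, which is positive since $T$ is completely positive, and concludes by continuity. Your additional remarks on the boundedness of $\Id_{\M_n}\ot T$ only make explicit the continuity step the paper leaves implicit, so the proposal is correct and follows the same route.
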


\begin{proof}
For any $n \in \N$, any $x \in \M_{l,k}(\mathrm{R}_n(\tfrac{1}{p}))$ and any $y \in \M_l(E)_+$, the element $(\Id_{S^p_n} \ot T)(x^* \odot y \odot x)=x^* \odot T(y) \odot x$ belongs to $\M_k(S^p_n(E))_+$. An argument of continuity gives the result.
\end{proof}


\subsection{Relations between matricial orderings and norms}
\label{sec-matricial-orderings}

For any $x \in S_n^p(E)$ and any $a,b \in \M_n$, the result \cite[Lemma 1.6 (i)]{Pis5} says that
\begin{equation}
	\label{Inequality-SpnE}
	\norm{axb}_{S_n^p(E)} 
	\leq \norm{a}_{S^\infty_n}\norm{x}_{S^p_n(E)}\norm{b}_{S^\infty_n}.
\end{equation}
Moreover, for any diagonal matrix $x=\mathrm{diag}(x_1,\ldots,x_n) \in S^p_n(E)$, \cite[Corollary 1.3]{Pis5} gives
\begin{equation}
	\label{Block-diagonal}
	\norm{x}_{S_n^p(E)}
	=\bigg(\sum_{k=1}^{n} \norm{x_k}_{E}^p\bigg)^{\frac{1}{p}}.
\end{equation}
\begin{lemma}
\label{lem-regulier-et-selfadjoint1}
Let $E$ be an operator space. Suppose $1 \leq p < \infty$. Then for any $b,c \in E$, we have
$
\left\| 
\begin{bmatrix} 
0 & b \\ 
c & 0 
\end{bmatrix} 
\right\|_{S^p_2(E)} 
=\big(\norm{b}_{E}^p + \norm{c}_{E}^p \big)^{\frac1p}$ and $\left\| 
\begin{bmatrix} 
0 & b \\ 
c & 0 
\end{bmatrix} 
\right\|_{S^\infty_2(E)} 
=\max\big\{\norm{b}_{E},\norm{c}_{E} \big\}$.
\end{lemma}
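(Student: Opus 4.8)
The plan is to diagonalise the off\hyp diagonal matrix by right multiplication with the flip unitary, and then read off the norm from the block\hyp diagonal formula \eqref{Block-diagonal}. Set $U=\begin{bmatrix}0&1\\1&0\end{bmatrix}\in\M_2$. This is a selfadjoint unitary, so $\norm{U}_{S^\infty_2}=1$ and $U^{-1}=U$, and a direct computation gives
\[
\begin{bmatrix}0&b\\c&0\end{bmatrix}U
=\begin{bmatrix}b&0\\0&c\end{bmatrix}.
\]

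First I would use \eqref{Inequality-SpnE} with the factors $\Id_{\M_2}$ on the left and $U$ on the right to obtain $\norm{xU}_{S^p_2(E)}\leq\norm{x}_{S^p_2(E)}$ for $x=\begin{bmatrix}0&b\\c&0\end{bmatrix}$; applying \eqref{Inequality-SpnE} once more to $xU$ with right factor $U^{-1}=U$ yields the reverse inequality, so $\norm{x}_{S^p_2(E)}=\norm{xU}_{S^p_2(E)}$. This step works for every $1\leq p\leq\infty$. Hence
\[
\norm{\begin{bmatrix}0&b\\c&0\end{bmatrix}}_{S^p_2(E)}
=\norm{\begin{bmatrix}b&0\\0&c\end{bmatrix}}_{S^p_2(E)}.
\]
Then I would invoke \eqref{Block-diagonal} for the diagonal element $\mathrm{diag}(b,c)$: for $1\leq p<\infty$ this gives $\big(\norm{b}_E^p+\norm{c}_E^p\big)^{1/p}$, and for $p=\infty$ the same formula read with the $\ell^\infty$\hyp convention (equivalently, the elementary fact that the norm of a diagonal element of $\M_2(E)=S^\infty_2(E)$ is the maximum of the norms of its diagonal entries) gives $\max\{\norm{b}_E,\norm{c}_E\}$. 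Combining the two displays finishes the proof.

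There is essentially no obstacle here: both ingredients are already recorded in the excerpt as \eqref{Inequality-SpnE} and \eqref{Block-diagonal}. The only point requiring a little care is to get an isometry rather than a mere contraction in the first step, which is why \eqref{Inequality-SpnE} must be applied both to $x\mapsto xU$ and back via $U^{-1}=U$; and, if one prefers not to rely on \eqref{Block-diagonal} at the endpoint $p=\infty$, to note separately that diagonal elements of $\M_2(E)$ have norm equal to the maximum of the norms of their entries.
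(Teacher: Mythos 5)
Your proof is correct and follows essentially the same route as the paper's: both arguments relate the off-diagonal matrix to $\mathrm{diag}(b,c)$ via multiplication by the flip unitary, use \eqref{Inequality-SpnE} twice (once in each direction) to get an equality of norms rather than a mere inequality, and then conclude with the block-diagonal formula \eqref{Block-diagonal}. The only cosmetic difference is the direction in which you write the factorization, and your explicit remark about the $p=\infty$ endpoint, which the paper dispatches with "the case $p=\infty$ is similar."
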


\begin{proof}
Using the inequality \eqref{Inequality-SpnE}, we see that
\begin{align*}
\MoveEqLeft
 \left\| 
\begin{bmatrix} 
0 & b \\ 
c & 0 
\end{bmatrix} 
\right\|_{S^p_2(E)}    
		=\left\| 
\begin{bmatrix} 
b & 0 \\ 
0 & c 
\end{bmatrix} 
\begin{bmatrix} 
0 & 1 \\ 
1 & 0 
\end{bmatrix}\right\|_{S^p_2(E)}
\leq \left\| 
\begin{bmatrix} 
b & 0 \\ 
0 & c 
\end{bmatrix} 
\right\|_{S^p_2(E)} \left\| 
\begin{bmatrix} 
0 & 1 \\ 
1 & 0 
\end{bmatrix} \right\|_{S^\infty_2} 
=\left\| 
\begin{bmatrix} 
b & 0 \\ 
0 & c 
\end{bmatrix} 
\right\|_{S^p_2(E)}.
\end{align*}
By symmetry, we conclude that $\left\| 
\begin{bmatrix} 
0 & b \\ 
c & 0 
\end{bmatrix} 
\right\|_{S^p_2(E)}   =\left\| 
\begin{bmatrix} 
b & 0 \\ 
0 & c 
\end{bmatrix} 
\right\|_{S^p_2(E)}$. On the other hand, the equality \eqref{Block-diagonal} yields $\left\| 
\begin{bmatrix} 
b & 0 \\ 
0 & c 
\end{bmatrix} 
\right\|_{S^p_2(E)}= \big(\norm{b}_E^p + \norm{c}_E^p\big)^{\frac1p}$. The case $p=\infty$ is similar, so the lemma is proven. 
\end{proof}

\begin{lemma}
\label{Lemma-Matricial-inequality}
Let $M$ be a von Neumann algebra equipped with a faithful normal semifinite trace. Suppose $1 \leq p \leq \infty$. Let $a,b$ and $c$ be elements of $\L^p(M)$ such that the element
$
\begin{bmatrix}
a & b\\
b^* & c
\end{bmatrix}
$
of $S^p_2(\L^p(M))$ is positive. Then we have $\norm{b}_{\L^p(M)} \leq \sqrt{\norm{a}_{\L^p(M)} \norm{c}_{\L^p(M)}}$. So in particular $
\norm{b}_{\L^p(M)} \leq \frac{1}{2^{\frac{1}{p}}}\big(\norm{a}_{\L^p(M)}^p + \norm{c}_{\L^p(M)}^p \big)^{\frac1p}.
$
%
\end{lemma}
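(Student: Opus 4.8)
\emph{Proof plan.} I would argue by duality. Recall the Banach space identification $\L^p(M)^*=\L^{p^*}(M)$, which gives
$$
\norm{b}_{\L^p(M)}=\sup\big\{\,|\tau(by^*)| \ :\ y\in\L^{p^*}(M),\ \norm{y}_{\L^{p^*}(M)}\leq1\,\big\}
$$
(for $p=\infty$ one uses instead $M=(\L^1(M))^*$, so that $p^*=1$; the argument below is unchanged). The second ingredient is that, by \eqref{Def-matricial-cones-Lp}, $S^p_2(\L^p(M))_+=\L^p(\M_2(M))_+$, so that, applying the dual description \eqref{equa-polar-Lp} of the positive cone to the semifinite von Neumann algebra $\M_2(M)$, the trace duality pairing $\langle X,Y\rangle=(\tr\ot\tau)(XY)$ of a positive element $X$ of $S^p_2(\L^p(M))$ against a positive element $Y$ of $S^{p^*}_2(\L^{p^*}(M))$ is nonnegative.

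First I would turn a norming functional $y\in\L^{p^*}(M)$, $\norm{y}_{\L^{p^*}(M)}\leq1$, together with an auxiliary parameter $\mu>0$, into a well-chosen positive element of $S^{p^*}_2(\L^{p^*}(M))$. Writing the polar decomposition $y=u|y|$ (so that $|y^*|=u|y|u^*$ and $y^*=|y|u^*$) and using $|y|^{1/2}\in\L^{2p^*}(M)$, one has the factorization
$$
Y_\mu\overset{\mathrm{def}}{=}\begin{bmatrix}\mu\,|y^*| & -y \\ -y^* & \mu^{-1}|y|\end{bmatrix}
=W^*W, \qquad
W\overset{\mathrm{def}}{=}\begin{bmatrix}\mu^{1/2}|y|^{1/2}u^* & -\mu^{-1/2}|y|^{1/2} \\ 0 & 0\end{bmatrix}\in\L^{2p^*}(\M_2(M)),
$$
so that $Y_\mu$ is positive in $S^{p^*}_2(\L^{p^*}(M))=\L^{p^*}(\M_2(M))$ by \eqref{Cone-LpM}, and moreover $\norm{|y^*|}_{\L^{p^*}(M)}=\norm{|y|}_{\L^{p^*}(M)}=\norm{y}_{\L^{p^*}(M)}\leq1$. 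The essential choice here is the minus sign in the off-diagonal of $Y_\mu$: it is what makes the pairing below produce an \emph{upper} bound on $\mathrm{Re}\,\tau(by^*)$ instead of a vacuous inequality.

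Then I would pair the hypothesis element $X=\begin{bmatrix}a & b \\ b^* & c\end{bmatrix}$ of $S^p_2(\L^p(M))_+$ against $Y_\mu$. Using $\tau(b^*y)=\ovl{\tau(by^*)}$ one gets
$$
0\leq\langle X,Y_\mu\rangle=\mu\,\tau(a|y^*|)-2\,\mathrm{Re}\,\tau(by^*)+\mu^{-1}\tau(c|y|),
$$
whence, by the noncommutative Hölder inequality and $\norm{y}_{\L^{p^*}(M)}\leq1$,
$$
2\,\mathrm{Re}\,\tau(by^*)\leq\mu\,\tau(a|y^*|)+\mu^{-1}\tau(c|y|)\leq\mu\norm{a}_{\L^p(M)}+\mu^{-1}\norm{c}_{\L^p(M)}.
$$
Replacing $y$ by $\e^{\i\theta}y$ for a suitable $\theta\in\R$ — which changes neither $\norm{y}_{\L^{p^*}(M)}$ nor $|y|$, $|y^*|$, nor the positivity of $Y_\mu$ — I may assume $\mathrm{Re}\,\tau(by^*)=|\tau(by^*)|$, so that $2|\tau(by^*)|\leq\mu\norm{a}_{\L^p(M)}+\mu^{-1}\norm{c}_{\L^p(M)}$ for every $\mu>0$. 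Minimizing the right-hand side over $\mu$ gives $|\tau(by^*)|\leq\big(\norm{a}_{\L^p(M)}\norm{c}_{\L^p(M)}\big)^{1/2}$, and taking the supremum over all admissible $y$ yields the sharp inequality $\norm{b}_{\L^p(M)}\leq\big(\norm{a}_{\L^p(M)}\norm{c}_{\L^p(M)}\big)^{1/2}$. The claimed estimate then follows from the arithmetic--geometric mean inequality applied to $\norm{a}_{\L^p(M)}^p$ and $\norm{c}_{\L^p(M)}^p$.

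I expect the only genuinely delicate point to be the positivity of the test element $Y_\mu$ — taken care of above by the explicit factorization $Y_\mu=W^*W$ — together with getting the sign in front of the off-diagonal right; all the rest is the duality formula for the norm, Hölder's inequality, and the self-duality of the cones recorded in \eqref{Cone-LpM} and \eqref{equa-polar-Lp}. (Alternatively, one could first conjugate $X$ by $\mathrm{diag}(\lambda,\lambda^{-1})$, which leaves $b$ unchanged, in order to reduce to the case $\norm{a}_{\L^p(M)}=\norm{c}_{\L^p(M)}$ and take $\mu=1$ throughout.)
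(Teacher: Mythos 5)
Your proof is correct, but it follows a genuinely different route from the paper's. The paper conjugates the positive block matrix by $\mathrm{diag}(1,-1)$ to obtain the operator inequality $-\mathrm{diag}(a,c)\leq\begin{bmatrix}0&b\\b^*&0\end{bmatrix}\leq\mathrm{diag}(a,c)$ in $S^p_2(\L^p(M))$, and then concludes by combining the monotonicity of the $\L^p$-norm on such order intervals with the explicit norm formulas \eqref{Block-diagonal} and Lemma \ref{lem-regulier-et-selfadjoint1}, which give $2^{1/p}\norm{b}_{\L^p(M)}$ for the off-diagonal matrix and $(\norm{a}^p+\norm{c}^p)^{1/p}$ for the diagonal one. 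You instead dualize: you test the positive element against the explicitly factorized positive matrix $Y_\mu=W^*W$ and optimize over $\mu$, which yields the sharper Cauchy--Schwarz-type estimate $\norm{b}_{\L^p(M)}\leq(\norm{a}_{\L^p(M)}\norm{c}_{\L^p(M)})^{1/2}$, from which the stated bound follows by AM--GM. Your argument buys a stronger conclusion and sidesteps the (true but not entirely trivial) fact that $-D\leq X\leq D$ forces $\norm{X}_{\L^p}\leq\norm{D}_{\L^p}$, which the paper uses implicitly; the paper's argument is shorter and recycles lemmas already proved in Subsection \ref{sec-matricial-orderings}. Two minor points to make explicit if you write this up: for $p=\infty$ the dual description \eqref{equa-polar-Lp} is stated in the paper only for $p<\infty$, but the direction you need (positive paired with positive is nonnegative) is immediate from $\tau(xy)=\tau(x^{1/2}yx^{1/2})$ with $y\in\L^1(M)_+$; and the identities $u|y|u^*=|y^*|$ and $|y|^{1/2}u^*u|y|^{1/2}=|y|$ require taking $u$ to be the canonical partial isometry of the polar decomposition, with $u^*u$ the support projection of $|y|$.
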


\begin{proof}
By Lemma \ref{lemma-M+-dense-dans Lp+}, there exists a sequence $\bigg(\begin{bmatrix}
a_n & b_n\\

b_n^* & c_n
\end{bmatrix}\bigg)$ of elements in $\M_2(M)_+ \cap \L^p(\M_2(M))$ converging to the positive element $\begin{bmatrix}
a & b\\
b^* & c
\end{bmatrix}$ for the topology of $\L^p(\M_2(M))$. By adapting a classical argument \cite[Proposition 1.3.2]{Bha}, \cite[Lemma 1.21]{Zha}, for each integer $n$ there exists $x_n \in M$ with $\norm{x_n}_M \leq 1$ such that $b_n = a_n^{\frac12} x_n c_n^{\frac12}$. Thus $\norm{b_n}_{p}=\bnorm{a_n^{\frac12} x_n c_n^{\frac12}}_{p} \leq \bnorm{a_n^{\frac12}}_{2p} \bnorm{c_n^{\frac12}}_{2p} 
= \sqrt{\norm{a_n}_p \norm{c_n}_p}$. Passing to the limit as $n$ approaches infinity, we obtain the inequality. 

The last sentence of the statement follows from the inequality $\sqrt{xy} \leq 2^{-\frac{1}{p}} (x^p + y^p)^{\frac1p}$ for any reals $x,y \geq 0$.
\end{proof}

The following result is folklore. Unable to locate a proof in the literature, we give a very short proof based on Lemma \ref{Lemma-Matricial-inequality}. 

\begin{prop}
\label{Lemma-Matricial-inequality3}
Let $M$ be a von Neumann algebra equipped with a faithful normal semifinite trace. Suppose $1 \leq p \leq \infty$. Let $b$ be an element of $S^p_n(\L^p(M))$. Then $\norm{b}_{S^p_n(\L^p(M))} \leq 1$ if and only if there are $a, c \in S^p_n(\L^p(M))_+$ with $\norm{a}_{S^p_n(\L^p(M))} \leq 1$ and $\norm{c}_{S^p_n(\L^p(M))}\leq 1$ such that the element
$
\begin{bmatrix}
a   & b\\
b^* & c
\end{bmatrix}
$
of $S^p_{2n}(\L^p(M))$ is positive.
\end{prop}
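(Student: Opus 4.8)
The plan is to prove the two implications separately, with the forward direction ($\norm{b} \leq 1 \Rightarrow$ existence of $a,c$) being the substantive one. For the easy direction, suppose there are $a,c \in S^p_n(\L^p(M))_+$ with norm at most $1$ making $\begin{bmatrix} a & b \\ b^* & c\end{bmatrix}$ positive in $S^p_{2n}(\L^p(M))$. Then I would apply Lemma \ref{Lemma-Matricial-inequality} with $M$ replaced by $\M_n(M)$ (so that $S^p_n(\L^p(M)) = \L^p(\M_n(M))$, using \eqref{Def-matricial-cones-Lp}); this immediately gives $\norm{b}_{S^p_n(\L^p(M))} \leq \frac{1}{2^{1/p}}\big(\norm{a}^p + \norm{c}^p\big)^{1/p} \leq \frac{1}{2^{1/p}}(1+1)^{1/p} = 1$, as desired (with the obvious modification for $p=\infty$, where the bound reads $\max\{\norm{a},\norm{c}\} \leq 1$).

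For the forward direction, assume $\norm{b}_{S^p_n(\L^p(M))} \leq 1$. Working again inside $\M_n(M)$, write $x = b \in \L^p(\M_n(M))$ with $\norm{x}_{\L^p} \leq 1$, and take the polar decomposition $x = u|x|$ with $u$ a partial isometry in $\M_n(M)$ and $|x| = (x^*x)^{1/2} \in \L^p(\M_n(M))_+$. The natural candidates are $a = u|x|u^*$ (equivalently $|x^*| = (xx^*)^{1/2}$) and $c = |x| = (x^*x)^{1/2}$; then $\norm{a}_{\L^p} = \norm{c}_{\L^p} = \norm{x}_{\L^p} \leq 1$ since $|x|$ and $|x^*|$ have the same $\L^p$-norm as $x$. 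It remains to check positivity of $\begin{bmatrix} |x^*| & x \\ x^* & |x|\end{bmatrix}$ in $S^p_4(\L^p(M)) = \L^p(\M_2(\M_n(M)))$. Here I would use the factorization trick: formally this matrix equals $\begin{bmatrix} u|x|^{1/2} \\ |x|^{1/2}\end{bmatrix}\begin{bmatrix} |x|^{1/2}u^* & |x|^{1/2}\end{bmatrix}$, i.e. it is of the form $y^*y$ with $y = \begin{bmatrix} |x|^{1/2}u^* & |x|^{1/2}\end{bmatrix}$, hence positive by \eqref{Cone-LpM} applied in $\L^p(\M_2(\M_n(M)))$. One checks the entries: $(y^*y)_{11} = u|x|^{1/2}\cdot|x|^{1/2}u^* = u|x|u^* = |x^*|$, $(y^*y)_{12} = u|x|^{1/2}\cdot|x|^{1/2} = u|x| = x$, $(y^*y)_{21} = |x|^{1/2}\cdot|x|^{1/2}u^* = |x|u^* = x^*$, and $(y^*y)_{22} = |x|$, using $x = u|x|$ and $x^* = |x|u^*$.

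The main obstacle is making the manipulation with $|x|^{1/2}$ and the partial isometry $u$ rigorous at the $\L^p$-level: one must ensure $|x|^{1/2} \in \L^{2p}(\M_n(M))$ (which holds since $|x| \in \L^p$ and $|x|^{1/2} = (|x|)^{1/2}$ has $\L^{2p}$-norm equal to $\norm{x}_{\L^p}^{1/2}$), that $u|x|^{1/2} \in \L^{2p}$ as well (bounded operator times an $\L^{2p}$ element), and that the products and adjoints behave as in the bounded case — all of which are standard facts about noncommutative $\L^p$-spaces and polar decomposition of $\tau$-measurable operators. A small additional point for $p=\infty$: there the argument degenerates to the classical operator-space fact that $\norm{b}_{\M_{2n}} \leq 1$ iff such a positive completion exists, which can be handled the same way or cited directly. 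I would present the $1 \leq p < \infty$ case in detail and remark that $p = \infty$ is analogous.
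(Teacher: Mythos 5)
Your proof is correct, and it arrives at exactly the same witnesses as the paper: the easy direction is Lemma \ref{Lemma-Matricial-inequality} (applied after the reduction $S^p_n(\L^p(M)) = \L^p(\M_n(M))$, just as in the paper), and for the substantive direction both you and the paper take $a = |b^*|$ and $c = |b|$. Where you genuinely diverge is in how the positivity of $\begin{bmatrix} |b^*| & b \\ b^* & |b| \end{bmatrix}$ is established. The paper approximates $b$ by elements of $M \cap \L^p(M)$ (Lemma \ref{lemma-M+-dense-dans Lp+}), invokes the bounded-operator fact from \cite[Exercise 8.8 (vi)]{Pau} for each approximant, and passes to the limit using the continuity of the modulus on $\L^p(M)$. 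You instead factor the matrix directly as $y^*y$ with $y = \begin{bmatrix} |b|^{1/2}u^* & |b|^{1/2}\end{bmatrix}$ coming from the polar decomposition $b = u|b|$ of the $\tau$-measurable operator, and conclude by \eqref{Cone-LpM}. Your route is more self-contained: it needs only the standard facts that $|b|^{1/2} \in \L^{2p}$, that $\L^{2p}$ is an $M$-bimodule (so $u|b|^{1/2} \in \L^{2p}$), and the identities $u|b|u^* = |b^*|$, $u|b| = b$; it avoids both the external citation and the limit argument (in particular the continuity of $x \mapsto |x|$ on $\L^p(M)$, which is true but not free). The paper's route buys a shorter write-up by offloading the algebra to the bounded case, at the cost of an approximation step. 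Both are valid; your factorization is arguably the cleaner proof of this particular positivity.
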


\begin{proof}
The implication $\Leftarrow$ is Lemma \ref{Lemma-Matricial-inequality}. For the implication $\Rightarrow$, we only need the case $n=1$. Consider $b \in \L^p(M)$ with $\norm{b}_{\L^p(M)} \leq 1$. By Lemma \ref{lemma-M+-dense-dans Lp+}, there exists a sequence $(b_n)$ in $M_+ \cap \L^p(M)$ converging to $b$ for the topology of $\L^p(M)$. By \cite[Exercise 8.8 (vi)]{Pau}, the matrix $\begin{bmatrix} 
   |b_n^*|  &  b_n \\
   b_n^*  &  |b_n| \\
\end{bmatrix}$ is a positive element. Using the continuity of the modulus and passing to the limit as $n$ approaches infinity yields $\begin{bmatrix} 
   |b^*|  &  b \\
   b^*  &  |b| \\
\end{bmatrix} \geq 0$. Moreover, we have $\bnorm{|b|}_{\L^p(M)}=\bnorm{|b^*|}_{\L^p(M)}=\norm{b}_{\L^p(M)} \leq 1$.
\end{proof}

\begin{lemma}
\label{Lemma-Matricial-2}
Suppose $1 \leq p \leq \infty$. Let $M$ be a von Neumann algebra equipped with a faithful normal semifinite trace. Let $a$ and $b$ be selfadjoint elements of $\L^p(M)$ satisfying $-a \leq b \leq a$. Then, in $S^p_2(\L^p(M))$, we have 
$
\begin{bmatrix} 
   a  & b  \\
   b  & a  \\
 \end{bmatrix} \geq 0
$.
\end{lemma}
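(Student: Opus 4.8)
The plan is to diagonalize the symmetric $2\times 2$ block by conjugating with the scalar Hadamard unitary $u=\frac{1}{\sqrt2}\begin{bmatrix}1&1\\1&-1\end{bmatrix}\in\M_2$. By \eqref{conj-cp} (with $\alpha=u$, resp.\ $\alpha=u^*$) both $x\mapsto u^*xu$ and its inverse $x\mapsto uxu^*$ are completely positive on $S^p_2(\L^p(M))=\L^p(\M_2(M))$, so conjugation by $u$ both preserves and reflects positivity; this is the same device already used in the proof of Lemma \ref{Lemma-Matricial-inequality}, with $\mathrm{diag}(1,-1)$ there in place of $u$. Since $u$ is scalar it acts only on the matrix indices, and a direct computation gives $u^*\begin{bmatrix}0&1\\1&0\end{bmatrix}u=\mathrm{diag}(1,-1)$; hence for any selfadjoint $x,y\in\L^p(M)$ one obtains the key identity $u^*\begin{bmatrix}x&y\\y&x\end{bmatrix}u=\mathrm{diag}(x+y,\,x-y)$, which is positive if and only if $x+y\geq0$ and $x-y\geq0$ (compress by the diagonal matrix units, again completely positive by \eqref{conj-cp}).

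Applied to the matrix exactly as worded, with $x=a$ and $y=b$, this yields $u^*\begin{bmatrix}a&b\\b&a\end{bmatrix}u=\mathrm{diag}(a+b,\,a-b)$. Here $-b\leq a$ gives $a+b\geq0$, but $a\leq b$ gives $a-b\leq0$, so this diagonal block — and therefore $\begin{bmatrix}a&b\\b&a\end{bmatrix}$ itself — is not positive in general; already the scalar instance $a=0$, $b=1$ (which satisfies $-1\leq0\leq1$) produces $\begin{bmatrix}0&1\\1&0\end{bmatrix}$, of eigenvalues $\pm1$. Thus the dominating element must sit on the diagonal: the inequality that is both true and needed in the sequel is
\[
\begin{bmatrix}b&a\\a&b\end{bmatrix}\geq0 .
\]
Taking $x=b$, $y=a$ in the identity above gives $u^*\begin{bmatrix}b&a\\a&b\end{bmatrix}u=\mathrm{diag}(b+a,\,b-a)$, whose two diagonal entries are positive \emph{precisely} because $-b\leq a\leq b$; reflecting positivity back through $x\mapsto uxu^*$ then delivers $\begin{bmatrix}b&a\\a&b\end{bmatrix}\geq0$, with no further estimate required.

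The single obstacle is therefore bookkeeping rather than mathematics, and it cannot be evaded by a permutation: conjugating $\begin{bmatrix}b&a\\a&b\end{bmatrix}$ by the swap $\begin{bmatrix}0&1\\1&0\end{bmatrix}$ merely interchanges its two (equal) diagonal entries and its two (equal) off-diagonal entries, leaving the matrix invariant, so no scalar conjugation carries it to $\begin{bmatrix}a&b\\b&a\end{bmatrix}$. I would accordingly record the statement with $b$ on the diagonal, where the Hadamard conjugation converts the hypothesis $-b\leq a\leq b$ directly into the positivity of the two blocks $b\pm a$.
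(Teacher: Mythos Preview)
You are right that the statement as written is false; your scalar counterexample $a=0$, $b=1$ settles this at once. The paper itself confirms the slip: the case $p=\infty$ is delegated to \cite[Proposition~1.3.5]{ER}, and that proposition reads $-b\leq a\leq b \iff \begin{bmatrix} b & a\\ a & b\end{bmatrix}\geq 0$, with the dominating element on the diagonal. More tellingly, the only place the lemma is invoked (Lemma~\ref{Lemma-passage1}) uses precisely the corrected form, placing $S$ on the diagonal and $T$ off it when $-S\leq_{\cp} T\leq_{\cp} S$. So $a$ and $b$ have simply been interchanged in the displayed matrix of both the statement and the proof.

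For the corrected inequality your Hadamard-conjugation argument is valid and works uniformly in $1\leq p\leq\infty$. The paper takes a different route: it cites \cite[Proposition~1.3.5]{ER} for $p=\infty$, and for $1\leq p<\infty$ it approximates the positive elements $b\pm a$ by elements of $M_+\cap\L^p(M)$ via Lemma~\ref{lemma-M+-dense-dans Lp+}, rebuilds approximants $a_n,b_n\in M$ with $-b_n\leq a_n\leq b_n$, applies the $p=\infty$ case inside $M$, and passes to the limit using closedness of the cone. Your argument is more self-contained --- no external reference, no density step, no case split --- and makes transparent why the hypothesis $-b\leq a\leq b$ is exactly the positivity of the two diagonal blocks $b\pm a$ after conjugation. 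The one direction of your ``if and only if'' that you actually need (positive diagonal entries $\Rightarrow$ positive diagonal block) is immediate from \eqref{Cone-LpM}: write $b\pm a=e_\pm^*e_\pm$ with $e_\pm\in\L^{2p}(M)$ and observe $\mathrm{diag}(b+a,b-a)=\mathrm{diag}(e_+,e_-)^*\mathrm{diag}(e_+,e_-)$.
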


\begin{proof}
The case $p=\infty$ is well-known, see \cite[Proposition 1.3.5]{ER}. 
Let us turn to the case $1 \leq p < \infty$.
By Lemma \ref{lemma-M+-dense-dans Lp+}, there exists a sequence $(y_n)$ in $M_+ \cap \L^p(M)$ converging to the positive element $b-a$ for the topology of $\L^p(M)$ and a sequence $(z_n)$ of elements of $M_+ \cap \L^p(M)$ converging to the positive element $a+b$. Note that $a_n \ov{\mathrm{def}}{=} \frac{z_n-y_n}{2}$ converges to $a$ and that $b_n \ov{\mathrm{def}}{=} \frac{y_n+z_n}{2}$ converges to $b$. Moreover, we have $-b_n \leq a_n \leq b_n$. According to the case $p=\infty$, we have 
$\begin{bmatrix} 
a_n & b_n \\ 
b_n & a_n 
\end{bmatrix} \geq 0$. Finally passing to the limit as $n$ approaches infinity yields $\begin{bmatrix} 
a & b \\ 
b & a
\end{bmatrix} \geq 0$.
\end{proof}

\begin{lemma}
\label{Lemma-Matricial-3}
Let $M$ be a von Neumann algebra equipped with a faithful normal semifinite trace. Suppose $1 \leq p \leq \infty$.  Let $a$, $b$ and $c$ be elements of $\L^p(M)$ satisfying $
\begin{bmatrix} 
   a  & b  \\
   b  & c  \\
\end{bmatrix}  \geq 0$ in $S^p_2(\L^p(M))$. Then we have $
-\frac{1}{2}(a+c) \leq b \leq \frac{1}{2}(a+c)$.
\end{lemma}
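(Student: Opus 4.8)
The plan is to derive both inequalities by conjugating the positive element $\begin{bmatrix} a & b \\ b & c \end{bmatrix}$ of $S^p_2(\L^p(M))$ against the two scalar column vectors $\begin{bmatrix} 1 \\ 1 \end{bmatrix}$ and $\begin{bmatrix} 1 \\ -1 \end{bmatrix}$ of $\M_{2,1}$, invoking \eqref{conj-cp}: for a scalar matrix $\alpha \in \M_{n,m}$, the conjugation $x \mapsto \alpha^* x \alpha$ is a completely positive, hence positive, map $\L^p(\M_n(M)) \to \L^p(\M_m(M))$.

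First I would record that the hypothesis $\begin{bmatrix} a & b \\ b & c \end{bmatrix} \in S^p_2(\L^p(M))_+$ forces this element to be selfadjoint, whence $a = a^*$, $c = c^*$ and $b = b^*$; in particular the two-sided estimate to be proved is a genuine inequality between selfadjoint elements of $\L^p(M)$. Then, applying \eqref{conj-cp} with $\alpha = \begin{bmatrix} 1 \\ 1 \end{bmatrix} \in \M_{2,1}$, and using $\L^p(\M_1(M)) = \L^p(M)$ together with the identification \eqref{Def-matricial-cones-Lp} of the cone, one computes $\alpha^* \begin{bmatrix} a & b \\ b & c \end{bmatrix} \alpha = a + 2b + c \geq 0$, that is, $b \geq -\tfrac{1}{2}(a+c)$. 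Running the same argument with $\alpha = \begin{bmatrix} 1 \\ -1 \end{bmatrix}$ gives $a - 2b + c \geq 0$, i.e. $b \leq \tfrac{1}{2}(a+c)$; combining the two yields $-\tfrac{1}{2}(a+c) \leq b \leq \tfrac{1}{2}(a+c)$.

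There is essentially no obstacle here; the only point to verify is that \eqref{conj-cp} is available in exactly the needed generality of rectangular scalar matrices, which it is. If one wished to avoid \eqref{conj-cp}, one could instead imitate the proof of Lemma \ref{Lemma-Matricial-2}: dispatch the case $p = \infty$ by testing $\begin{bmatrix} a & b \\ b & c \end{bmatrix}(\xi,\pm\xi)$ against $(\xi,\pm\xi)$ for $\xi$ in the underlying Hilbert space, and then descend to $1 \leq p < \infty$ by approximating $a$, $b$, $c$ in $\L^p(M)$ by elements of $M$ via Lemma \ref{lemma-M+-dense-dans Lp+} and using the norm-closedness of the cones. The conjugation argument is shorter and works uniformly in $p$, so I would present that one.
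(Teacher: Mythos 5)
Your proof is correct, but it takes a different route from the paper's. The paper first settles the case $p=\infty$ by citing \cite[Proposition 1.3.5]{ER}, and then descends to $1 \leq p < \infty$ by approximating the positive matrix by elements of $\M_2(M)_+ \cap S^p_2(\L^p(M))$ via Lemma \ref{lemma-M+-dense-dans Lp+} and passing to the limit using the closedness of the cones --- exactly the fallback you sketch in your last paragraph. Your primary argument instead conjugates the positive element against the scalar columns $\begin{bmatrix} 1 \\ \pm 1 \end{bmatrix} \in \M_{2,1}$ and invokes \eqref{conj-cp} to get $a \pm 2b + c \geq 0$ directly in $\L^p(M)$. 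This is available in the stated generality (the footnote preceding Proposition \ref{prop-LpM-is-matrix-ordered} proves positivity of $x \mapsto \alpha^* x \alpha$ for rectangular $\alpha$, and the cone identification is \eqref{Def-matricial-cones-Lp}), and it is shorter: it treats all $1 \leq p \leq \infty$ uniformly, needs no density or limiting argument, and does not even require the external reference for $p=\infty$. What the paper's approach buys in exchange is consistency with the neighbouring Lemma \ref{Lemma-Matricial-2}, where the same approximation template is genuinely needed in the converse direction; for the present lemma your conjugation argument is the more economical one.
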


\begin{proof}
Let $A = \begin{bmatrix} 
   a  & b  \\
   b  & c  \\
\end{bmatrix}$.
Since $A \geq 0$, according to \eqref{conj-cp}, we have $uAu^* \geq 0$ for $u = \begin{bmatrix} 1 & 1 \end{bmatrix}$ and for $u = \begin{bmatrix} 1 & - 1 \end{bmatrix}$.
The first choice of $u$ then yields $a+2b+c \geq 0$, so that $b \geq - \frac12 (a+c)$.
The second choice of $u$ yields $a-2b+c \geq 0$, so that $b \leq \frac12 (a+c)$.
\end{proof}

\subsection{Positive and completely positive maps on noncommutative $\L^p$-spaces}
\label{sec:Positive maps noncommutative-Lp-spaces}

\begin{lemma}
\label{lem-op-mappings}
Let $M$ and $N$ be von Neumann algebras equipped with semifinite faithful normal traces. Suppose $1 \leq p \leq \infty$. Then a map $T \co \L^p(M) \to \L^p(N)$ is  completely positive if and only if $T^\op \co \L^p(M)^\op \to \L^p(N)^\op$ is completely positive.
\end{lemma}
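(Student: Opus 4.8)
The plan is to derive this directly from the transpose compatibility of matrix cones recorded in Lemma~\ref{equ-1-proof-op-mappings}. First I would note the harmless preliminary that $M^\op$ is again a semifinite von Neumann algebra equipped with the same normal faithful trace (the trace identity is symmetric in its two arguments), so that $\L^p(M^\op)$ is well defined and, as recalled in the preliminaries, coincides with $\L^p(M)^\op$ completely isometrically; the same applies to $N$. Thus $T^\op \co \L^p(M^\op) \to \L^p(N^\op)$ is meaningful and agrees with $T$ as a map of the underlying vector spaces. Consequently the only thing that changes when passing from $T$ to $T^\op$ is the family of matrix cones against which positivity is tested: $\M_n(\L^p(M))_+$ and $\M_n(\L^p(N))_+$ for $T$, versus $\M_n(\L^p(M^\op))_+$ and $\M_n(\L^p(N^\op))_+$ for $T^\op$. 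By Lemma~\ref{equ-1-proof-op-mappings}, for every $n$ the entrywise transpose on $\M_n$ carries $\M_n(\L^p(M^\op))_+$ bijectively onto $\M_n(\L^p(M))_+$, and likewise for $N$.

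Assuming $T$ completely positive, I would fix $n \geq 1$ and $b = [b_{ij}] \in \M_n(\L^p(M^\op))_+$. Then $b^t = [b_{ji}] \in \M_n(\L^p(M))_+$ by Lemma~\ref{equ-1-proof-op-mappings}, so complete positivity of $T$ gives $(\Id_{\M_n} \ot T)(b^t) = [T(b_{ji})]_{ij} \in \M_n(\L^p(N))_+$. Transposing this and invoking Lemma~\ref{equ-1-proof-op-mappings} on the $N$-side yields $[T(b_{ij})]_{ij} \in \M_n(\L^p(N^\op))_+$; but $[T(b_{ij})]_{ij} = (\Id_{\M_n} \ot T^\op)(b)$, so $\Id_{\M_n} \ot T^\op$ is positive. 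Letting $n$ range over all integers shows that $T^\op$ is completely positive. The converse direction requires nothing new: apply the implication just proved to $T^\op$ in place of $T$ and use $(T^\op)^\op = T$, $(M^\op)^\op = M$, $(N^\op)^\op = N$; the merely positive statement is the special case $n = 1$.

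I do not expect a genuine obstacle here: the content is entirely formal once Lemma~\ref{equ-1-proof-op-mappings} is in hand, and the only thing to be careful about is the bookkeeping with transposes of matrices of $\L^p$-elements — in particular keeping straight on which side ($M$ or $N$) Lemma~\ref{equ-1-proof-op-mappings} is being applied at each step, and matching $(\Id_{\M_n}\ot T^\op)(b)$ with the transpose of $(\Id_{\M_n}\ot T)(b^t)$. The endpoint $p=\infty$ deserves a second glance, but Lemma~\ref{equ-1-proof-op-mappings} is already proved for all $1 \leq p \leq \infty$ (with $p=\infty$ handled first via the opposite product on $\M_n(M)$), so no separate argument is needed.
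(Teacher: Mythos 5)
Your argument is correct and is essentially the paper's own proof: both reduce the statement to Lemma~\ref{equ-1-proof-op-mappings} applied twice, via the identity $(\Id_{\M_n}\ot T^\op)(b)=((\Id_{\M_n}\ot T)(b^t))^t$, and conclude by symmetry. The extra preliminary remarks on $M^\op$ and the $p=\infty$ endpoint are harmless and change nothing of substance.
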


\begin{proof}
Assume that $T \co \L^p(M) \to \L^p(M)$ is completely positive. Let $b \in (\M_n(\L^p(M)^\op))_+$. Then applying Lemma \ref{equ-1-proof-op-mappings} twice, we deduce that $(\Id_{\M_n} \ot T^\op)(b) =[T(b_{ij})]=[T((b^t)_{ij})]^t= ((\Id_{\M_n} \ot T)(b^t))^t$ belongs to $(\M_n(\L^p(M)^\op))_+$. We infer that $T^\op \co \L^p(M)^\op \to \L^p(M)^\op$ is completely positive. The reverse statement is obtained by symmetry.
\end{proof}

The boundedness assumption of \cite[Theorem 0.1 and Lemma 2.3]{Pis4} is unnecessary since we have the following elementary result.

\begin{prop} 
\label{prop-cp-imply-bounded}
Let $M$ be a von Neumann algebra equipped with a faithful normal semifinite trace. Suppose $1 \leq p \leq \infty$. Any positive map $T\co \L^p(M) \to \L^p(M)$ is bounded.
\end{prop}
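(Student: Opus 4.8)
The plan is to reduce the statement to a closed-graph-type argument, exploiting that a positive map automatically respects the order structure of $\L^p(M)$, which in turn controls norms. First I would recall that every $x \in \L^p(M)$ can be written as a linear combination $x = (x_1 - x_2) + \i(x_3 - x_4)$ of four positive elements $x_k \in \L^p(M)_+$ with $\norm{x_k}_{\L^p(M)} \leq C\norm{x}_{\L^p(M)}$ for an absolute constant $C$ (taking real and imaginary parts of the self-adjoint part, then positive and negative parts of each self-adjoint piece via the Jordan decomposition in $\L^p(M)$). Since $T$ is linear, it therefore suffices to bound $T$ on the positive cone: if $\sup\{\norm{T(y)}_{\L^p(M)} : y \in \L^p(M)_+,\ \norm{y}_{\L^p(M)} \leq 1\} < \infty$, then $T$ is bounded on all of $\L^p(M)$.

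Next I would establish this boundedness on the cone by a monotonicity estimate. For $y \in \L^p(M)_+$ we have $0 \leq y \leq \norm{y}_{\L^p(M)}^{1/p}\, z$ is not quite right in $\L^p$, so instead I would argue via duality using the dual description \eqref{equa-polar-Lp} of the positive cone: for $y \in \L^p(M)_+$ and $\eta \in \L^{p^*}(M)_+$ one has $\langle T(y), \eta\rangle \geq 0$, and more usefully $T$ maps the order interval $[0,y]$ into $[0,T(y)]$ because $0 \leq w \leq y$ implies $0 \leq T(w) \leq T(y)$ by positivity. The cleanest route is a closed graph argument: suppose $y_n \to 0$ in $\L^p(M)_+$ and $T(y_n) \to \xi$; I want $\xi = 0$. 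By positivity $T(y_n) \geq 0$, so $\xi \in \L^p(M)_+$ since the cone is closed (it is closed by \eqref{Cone-LpM} or directly). Now passing to a subsequence with $\sum_n \norm{y_n}_{\L^p(M)}^{} < \infty$ (after rescaling, using a subsequence $y_{n_k}$ with $\norm{y_{n_k}} \leq 2^{-k}$), the series $y := \sum_k 2^k y_{n_k}$ converges in $\L^p(M)$ to a positive element with $2^k y_{n_k} \leq y$ for every $k$, hence $0 \leq T(2^k y_{n_k}) \leq T(y)$, i.e. $0 \leq T(y_{n_k}) \leq 2^{-k} T(y) \to 0$ in norm. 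Thus $\xi = \lim_k T(y_{n_k}) = 0$, so the restriction of the graph of $T$ to the (closed, hence complete) cone — and then by the four-positive-elements decomposition to all of $\L^p(M)$ — is closed.

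To turn the closed graph on the cone into boundedness I would invoke a version of the closed graph theorem adapted to cones, or more simply observe that $\L^p(M)$ with its positive cone is a complete metric space on which $T|_{\L^p(M)_+}$ is a positively homogeneous, additive map with closed graph; additivity plus the Baire category argument (the sets $F_m = \{y \in \L^p(M)_+ : \norm{T(y)} \leq m\norm{y},\ \norm{y} \leq 1\}$ — after closing them up — cover a ball) gives local boundedness near $0$, hence boundedness. Alternatively, and perhaps most painlessly, apply the standard closed graph theorem directly to $T \co \L^p(M) \to \L^p(M)$ once one knows $T$ has closed graph: if $x_n \to x$ and $T(x_n) \to z$, write $x_n - x$ via four positive pieces tending to $0$, use the cone argument on each, and conclude $z = T(x)$. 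For $p = \infty$ the statement is the classical fact that positive functionals/maps on C$^*$- or von Neumann algebras are automatically bounded, which I would simply cite.

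The main obstacle I anticipate is the case $1 \le p < \infty$ making the "square-summable subsequence dominating" trick rigorous: one needs that $\sum_k 2^k y_{n_k}$ genuinely converges in $\L^p(M)$ and that the partial-sum inequalities pass to the limit to give $2^k y_{n_k} \leq \sum_j 2^j y_{n_j}$, which uses that the cone is closed and that addition of positive elements preserves order — both true but worth stating carefully. A secondary subtlety is ensuring the four-positive-element decomposition in noncommutative $\L^p$ has norm control uniform in $x$; this follows from boundedness of $x \mapsto x^*, x\mapsto \mathrm{Re}\,x$ and of the Jordan/triangle decomposition $x_{sa} = x_+ - x_-$ with $\norm{x_\pm}_{\L^p} \leq \norm{x_{sa}}_{\L^p}$, which holds in $\L^p(M)$.
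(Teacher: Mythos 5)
Your central mechanism --- summing a rescaled subsequence of positive elements to produce a single dominating positive element $y$, then using $0 \leq T(y_{n_k}) \leq 2^{-k}T(y)$ together with monotonicity of the $\L^p$-norm on the positive cone --- is exactly the paper's argument; the paper just runs it as a direct proof by contradiction (a sequence $x_n \geq 0$ with $\norm{x_n}_{\L^p(M)}\leq 1$ and $\norm{T(x_n)}_{\L^p(M)} \geq 4^n$, then $x = \sum_n 2^{-n}x_n$), followed by the same four-positive-pieces decomposition. Two points in your write-up need repair. First, the series $\sum_k 2^k y_{n_k}$ does \emph{not} converge under the hypothesis $\norm{y_{n_k}} \leq 2^{-k}$, since its terms then only have norm at most $1$; you need the subsequence to decay faster, e.g. $\norm{y_{n_k}} \leq 4^{-k}$, which is precisely the $4^n$ versus $2^n$ interplay in the paper's version. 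Second, the final appeal to the closed graph theorem on all of $\L^p(M)$ does not quite close as written: knowing $x_n \to x$ and $T(x_n) \to z$ and writing $x_n - x$ as four positive pieces tending to $0$ does not tell you that $T$ applied to each individual piece converges, which is what your cone lemma (``$y_n \to 0$ and $T(y_n)\to\xi$ imply $\xi=0$'') requires as a hypothesis. The fix is to notice that your domination trick actually proves the stronger statement that $T(y_n) \to 0$ for \emph{every} null sequence in the cone (every subsequence has a further subsequence along which this holds), which yields boundedness on the cone directly --- no closed graph or Baire category argument is needed --- after which the four-piece decomposition finishes exactly as in the paper.
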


\begin{proof} 
We first show that there exists a constant $K \geq 0$ satisfying for any $x \in \L^p(M)_+$ with $\norm{x}_{\L^p(M)} \leq 1$ the inequality $\norm{T(x)}_{\L^p(M)} \leq K$. Suppose that it is not the case then there exists a sequence $(x_n)$ of positive elements of $\L^p(M)$ with $\norm{x_n}_{\L^p(M)} \leq 1$ and $\norm{T(x_n)}_{\L^p(M)} \geq 4^n$. We have $\sum_{n=1}^{\infty} \bnorm{\frac{1}{2^n} x_n}_{\L^p(M)} \leq \sum_{n=1}^{\infty} \frac{1}{2^n}< \infty$. Hence the series $\sum_{n=1}^{\infty} \frac{1}{2^n}x_n$ is convergent and defines a positive element $x$ of $\L^p(M)$. Now, for any integer $n \geq 1$, we have $0 \leq \frac{1}{2^n}x_n \leq x$. We deduce that $0 \leq \frac{1}{2^n}T(x_n) \leq T(x)$. Hence we obtain $\frac{1}{2^n}\bnorm{T(x_n)}_{\L^p(M)} \leq \bnorm{T(x)}_{\L^p(M)}$ and finally $2^n \leq \bnorm{T(x)}_{\L^p(M)}$. Impossible. 

Now, if $x \in \L^p(M)$ we have a decomposition $x=x_1-x_2+\i(x_3-x_4)$ with $x_1,x_2,x_3,x_4 \in \L^p(M)_+$ and $\norm{x_1}_{\L^p(M)},\norm{x_2}_{\L^p(M)},\norm{x_3}_{\L^p(M)},\norm{x_4}_{\L^p(M)}$ less or equal to $\norm{x}_{\L^p(M)}$. Hence
\begin{align*}
\MoveEqLeft
  \norm{T(x)}_{\L^p(M)}
	=\norm{T(x_1)-T(x_2)+\i \big(T(x_3)-T(x_4)\big)}_{\L^p(M)}\\
	&\leq \norm{T(x_1)}_{\L^p(M)}+\norm{T(x_2)}_{\L^p(M)}+\norm{T(x_3)}_{\L^p(M)}+\norm{T(x_4)}_{\L^p(M)}\\
	&\leq K\big(\norm{x_1}_{\L^p(M)}+\norm{x_2}_{\L^p(M)}+\norm{x_3}_{\L^p(M)}+\norm{x_4}_{\L^p(M)}\big)
	\leq 4K\norm{x}_{\L^p(M)}.
\end{align*}
\end{proof}

This result will imply in particular that a decomposable map is bounded.

The following result is proved in \cite[Proposition 2.2 and Lemma 2.3]{Pis4} for $S^p$. It has been long announced in \cite[page 2]{Jun} for QWEP von Neumann algebras (but seems definitely postponed). We will give a proof for hyperfinite von Neumann algebras, see Theorem \ref{thm-norm=normcb-hyperfinite}. Only Proposition \ref{prop-decomposable-Banach-space}, Proposition \ref{Prop-cb-leq-dec} and Proposition \ref{quest-cp-versus-dec} depend on this result.

\begin{thm}
\label{quest-cp-versus-cb}
Suppose $1< p \leq \infty$. Let $M,N$ be $\QWEP$ von Neumann algebras equipped with faithful semifinite normal traces. Let $T \co \L^p(M) \to \L^p(N)$ be a completely positive map. Then $T$ is completely bounded and $\norm{T}_{\L^p(M) \to \L^p(N)}=\norm{T}_{\cb,\L^p(M) \to \L^p(N)}$.
\end{thm}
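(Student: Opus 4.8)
\medskip

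The reverse inequality $\norm{T}_{\L^p(M)\to\L^p(N)}\leq\norm{T}_{\cb,\L^p(M)\to\L^p(N)}$ holds for any bounded map, and $T$ is automatically bounded by the argument of Proposition \ref{prop-cp-imply-bounded} (which extends verbatim to a map between $\L^p$-spaces over two different von Neumann algebras); moreover the case $p=\infty$ is the classical fact that completely positive maps between C$^*$-algebras are completely bounded with $\norm{T}_{\cb}=\norm{T}$, see \cite{ER}, \cite{Pau}, \cite{Pis7}. So assume $1<p<\infty$: the point is that ampliating $T$ by matrices does not increase its norm, and the plan is to reduce this to Pisier's theorem for Schatten spaces \cite{Pis4} using $\QWEP$. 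First, by Proposition \ref{lemma-charact-Lp-cb}, the isometric identifications $S^p_n(\L^p(M))=\L^p(\M_n(M))$, and the density of $\bigcup_n S^p_n$ in $S^p$, one has $\norm{T}_{\cb}=\sup_{n\geq 1}\bnorm{\Id_{S^p_n}\ot T}_{\L^p(\M_n(M))\to\L^p(\M_n(N))}$, where each $\Id_{S^p_n}\ot T$ is completely positive by Lemma \ref{Lemma-cp-S1E}; so it suffices to prove $\bnorm{\Id_{S^p_n}\ot T}\leq\norm{T}$ for each fixed $n$.

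Second, I would linearise $\QWEP$ through Schatten spaces. Since $M$ is $\QWEP$, there is a completely positive complete isometry $J_M\co\L^p(M)\hookrightarrow\prod_{\mathcal{U}}S^p_{r_i}$ onto a completely positively complemented subspace, i.e.\ there is a completely positive complete contraction $P_M\co\prod_{\mathcal{U}}S^p_{r_i}\to\L^p(M)$ with $P_M\circ J_M=\Id_{\L^p(M)}$; this comes from a trace-preserving embedding of $M$ into an ultraproduct of matrix algebras, the associated trace-preserving conditional expectation, and Raynaud's identification of the $\L^p$-space of a von Neumann ultraproduct with the ultraproduct of the $\L^p$-spaces (the semifinite case reducing to the finite one in the standard way). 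Fixing such data $(J_M,P_M)$ and $(J_N,P_N)$ and setting $\widetilde{T}=J_N\circ T\circ P_M$, the map $\widetilde{T}$ is completely positive with $\norm{\widetilde{T}}\leq\norm{T}$ and $T=P_N\circ\widetilde{T}\circ J_M$. Since tensoring with $\Id_{S^p_n}$ keeps $J_M$ a complete isometry and $P_N$ a complete contraction and turns $\prod_{\mathcal{U}}S^p_{r_i}$ into $\prod_{\mathcal{U}}S^p_{nr_i}$, we obtain $\bnorm{\Id_{S^p_n}\ot T}\leq\bnorm{\Id_{S^p_n}\ot\widetilde{T}}$, and since $\Id_{S^p_n}\ot\widetilde{T}$ is again a completely positive map between ultraproducts of finite-dimensional Schatten spaces, it remains to prove $\bnorm{\Id_{S^p_m}\ot u}\leq\norm{u}$ for an arbitrary completely positive $u\co\prod_{\mathcal{U}}S^p_{r_i}\to\prod_{\mathcal{V}}S^p_{s_j}$.

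Third, I would invoke \cite[Proposition 2.2, Lemma 2.3]{Pis4}, which gives $\norm{v}_{\cb}=\norm{v}$ for completely positive $v\co S^p_a\to S^p_b$, and transport it through the ultrafilters. For $\epsi>0$ and $x$ of norm $\leq1$ in $S^p_m\big(\prod_{\mathcal{U}}S^p_{r_i}\big)=\prod_{\mathcal{U}}S^p_{mr_i}$, one would produce, for $\mathcal{U}$-almost every $i$, a completely positive $v_i\co S^p_{mr_i}\to S^p_{b_i}$ with $\norm{v_i}\leq\norm{u}+\epsi$ whose value on a representative of $x$ is $\epsi$-close to $(\Id_{S^p_m}\ot u)(x)$; applying the Schatten estimate to each $v_i$ and passing to the ultralimit then gives $\bnorm{(\Id_{S^p_m}\ot u)(x)}\leq\norm{u}$, hence $\bnorm{\Id_{S^p_m}\ot u}\leq\norm{u}$, and unwinding the reductions yields $\norm{T}_{\cb}\leq\norm{T}$.

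The hard part is exactly this last localisation: a completely positive map between ultraproducts of Schatten spaces need not act coordinatewise, so producing the maps $v_i$ with controlled norm and complete positivity requires a complete-order lifting property of noncommutative $\L^p$-spaces — a form of local reflexivity living in Junge's framework \cite{Jun}. This is why in Theorem \ref{thm-norm=normcb-hyperfinite} we instead give a self-contained proof for $M$ and $N$ hyperfinite — which is what all applications in the paper actually require — where ultraproducts are bypassed altogether: one approximates $M$ and $N$ by an increasing net of finite-dimensional subalgebras equipped with trace-preserving conditional expectations, reduces to Schatten spaces by a genuine net limit using the lower semicontinuity of the completely bounded norm (as in Lemma \ref{lem-completely-positive-weak-limit}), and quotes \cite{Pis4}.
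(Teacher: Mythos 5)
Your proposal does not close the argument, and you say so yourself: the localisation of a completely positive map between ultraproducts of Schatten classes to coordinatewise completely positive maps of controlled norm is exactly the missing ingredient, and it is a genuine complete-order lifting/local-reflexivity statement that is not available off the shelf. So, as a proof of Theorem \ref{quest-cp-versus-cb} as stated, there is a gap at your third step; the earlier reductions (to $1<p<\infty$, to a fixed matrix size, and to completely positive maps between ultraproducts of Schatten spaces via a trace-preserving embedding of a QWEP algebra into an ultraproduct of matrix algebras, the trace-preserving conditional expectation, and Raynaud's identification of the $\L^p$-spaces) are sound but do not by themselves yield the conclusion.

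You should know, however, that the paper does not prove this theorem either. The sentence preceding the statement says the result ``has been long announced in \cite[page 2]{Jun} for QWEP von Neumann algebras (but seems definitely postponed)'', and the paper only supplies a proof in the approximately finite-dimensional case, namely Theorem \ref{thm-norm=normcb-hyperfinite}; it also records that only Propositions \ref{prop-decomposable-Banach-space}, \ref{Prop-cb-leq-dec} and \ref{quest-cp-versus-dec} depend on the general statement. Your closing paragraph describes that hyperfinite proof accurately: reduction to finite-dimensional subalgebras via trace-preserving $*$-homomorphisms and conditional expectations (with a separate treatment of non-integral trace weights), passage to the limit along the net using lower semicontinuity of the completely bounded norm (the paper quotes \cite[Theorem 7.4]{Pau} for this rather than Lemma \ref{lem-completely-positive-weak-limit}), and Pisier's equality $\norm{v}_{\cb}=\norm{v}$ for completely positive maps on Schatten classes from \cite{Pis4}. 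So the honest summary is: your ultraproduct route is the natural attack on the general QWEP statement and correctly isolates the obstruction, but neither you nor the paper overcomes it; the only case actually proved is the approximately finite-dimensional one, by the approximation argument you sketch at the end.
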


The next lemmas are important for the proof of Theorem \ref{thm-dec=reg-hyperfinite}.

\begin{lemma}
\label{Lemma-passage1}
Let $M$ and $N$ be von Neumann algebras equipped with faithful normal semifinite traces. Suppose $1 \leq p \leq \infty$. Let $T, S \co \L^p(M) \to \L^p(N)$ be adjoint preserving maps\footnote{\thefootnote. This means that $T(x^*)=T(x)^*$ and $S(x^*)=S(x)^*$.} maps such that $-S \leq_{\cp} T \leq_{\cp} S$. Then the map $
  \begin{bmatrix}
   S  & T  \\
   T  & S  \\
  \end{bmatrix}
\co \L^p(M) \to S^p_2(\L^p(N))$ is completely positive.
\end{lemma}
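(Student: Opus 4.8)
The plan is to reduce the claim about the $2\times 2$ block map $\begin{bmatrix} S & T \\ T & S \end{bmatrix}$ to a standard fact about completely positive maps valued in a matrix-amplified space, exactly as in the $p=\infty$ case, but being careful to pass through the approximation-by-$M_+$ technology that the preceding lemmas have set up. First I would observe that the claim is equivalent to saying that the map $\L^p(M) \to S^p_2(\L^p(N))$, $x \mapsto \begin{bmatrix} S(x) & T(x) \\ T(x) & S(x) \end{bmatrix}$ sends $\M_n(\L^p(M))_+$ into $\M_n(S^p_2(\L^p(N)))_+ = S^p_{2n}(\L^p(N))_+$ for every $n$. Conjugating by the unitary permutation matrix that interleaves the two blocks, $\M_n(S^p_2(\cdot)) \cong S^p_2(\M_n(\cdot))$ compatibly with cones (this is just a reindexing, and conjugation by a scalar unitary preserves positivity by the matrix-ordering axiom), so it suffices to prove: for every $x \in \M_n(\L^p(M))_+$, the element $\begin{bmatrix} (\Id \ot S)(x) & (\Id \ot T)(x) \\ (\Id \ot T)(x) & (\Id \ot S)(x) \end{bmatrix}$ of $S^p_2(\M_n(\L^p(N)))$ is positive.

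The key step is then to invoke Lemma \ref{Lemma-Matricial-2}: that lemma says precisely that if $\alpha, \beta$ are selfadjoint elements of a noncommutative $\L^p$-space (here $\M_n(\L^p(N)) = S^p_n(\L^p(N))$, itself a noncommutative $\L^p$-space over $\M_n(N)$) with $-\beta \leq \alpha \leq \beta$, then $\begin{bmatrix} \alpha & \beta \\ \beta & \alpha \end{bmatrix} \geq 0$. So I need two things: (i) that $(\Id_{\M_n} \ot T)(x)$ and $(\Id_{\M_n} \ot S)(x)$ are selfadjoint elements of $S^p_n(\L^p(N))$, which follows from the selfadjointness hypothesis on $T$ and $S$ together with the fact that $x$, being positive, is selfadjoint (so $(\Id \ot T)(x)^* = (\Id \ot T)(x^*) = (\Id \ot T)(x)$, using that $\Id_{\M_n} \ot T$ is the obvious entrywise map and the involution on $S^p_n$ is $[y_{ij}]^* = [y_{ji}^*]$); and (ii) the operator inequality $-(\Id \ot S)(x) \leq (\Id \ot T)(x) \leq (\Id \ot S)(x)$ in $S^p_n(\L^p(N))$. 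Part (ii) is exactly the assertion that $\Id_{\M_n} \ot (S - T)$ and $\Id_{\M_n} \ot (S + T)$ are positive on $\M_n(\L^p(M))_+$, i.e. that $S - T$ and $S + T$ are completely positive — which is precisely the hypothesis $-S \leq_{\cp} T \leq_{\cp} S$ unwound. Applying Lemma \ref{Lemma-Matricial-2} with $\alpha = (\Id \ot T)(x)$, $\beta = (\Id \ot S)(x)$ finishes the proof, and letting $n$ range over all integers gives complete positivity.

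The main obstacle, such as it is, will be bookkeeping rather than a genuine difficulty: one must be confident that the identification $\M_n(S^p_2(E)) \cong S^p_2(\M_n(E))$ is an order isomorphism and that it carries the "$x \mapsto \begin{bmatrix} S(x) & T(x) \\ T(x) & S(x) \end{bmatrix}$ applied entrywise" map to "$x \mapsto \begin{bmatrix} (\Id \ot S)(x) & (\Id \ot T)(x) \\ (\Id \ot T)(x) & (\Id \ot S)(x) \end{bmatrix}$", which is a matter of chasing indices through the definition \eqref{Def-matricial-cones-Lp} of the matrix cones and the Fubini-type identity $S^p_n(S^p_2(\L^p(N))) = S^p_{2n}(\L^p(N))$. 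One could also circumvent even this by directly using Lemma \ref{Lemma-cp-S1E}: if I first establish the non-amplified statement that $x \mapsto \begin{bmatrix} S(x) & T(x) \\ T(x) & S(x) \end{bmatrix}$ is positive as a map $\L^p(M) \to S^p_2(\L^p(N))$ (via Lemma \ref{Lemma-Matricial-2} applied in $\L^p(N)$ itself), the full complete positivity is not immediate from that alone; so the cleanest route really is the amplified version of Lemma \ref{Lemma-Matricial-2} as above, which holds verbatim since $S^p_n(\L^p(N)) = \L^p(\M_n(N))$ is again of the required form. I expect the author's proof to be two or three lines invoking exactly Lemma \ref{Lemma-Matricial-2}.
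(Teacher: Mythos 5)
Your proof is correct and follows exactly the paper's argument: for $x \in S^p_n(\L^p(M))_+$ the hypothesis $-S \leq_{\cp} T \leq_{\cp} S$ gives $-(\Id_{S^p_n}\ot S)(x) \leq (\Id_{S^p_n}\ot T)(x) \leq (\Id_{S^p_n}\ot S)(x)$, and Lemma \ref{Lemma-Matricial-2} applied in $S^p_n(\L^p(N))=\L^p(\M_n(N))$ yields positivity of the $2\times 2$ block. Your closing prediction that the author's proof is a two-line invocation of Lemma \ref{Lemma-Matricial-2} is accurate.
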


\begin{proof}
Suppose $x \in S^p_n(\L^p(M))_+$. Then $-(\Id_{S^p_n} \ot S)(x) \leq (\Id_{S^p_n} \ot T)(x) \leq (\Id_{S^p_n} \ot S)(x)$. By Lemma \ref{Lemma-Matricial-2}, we deduce that 
$
\left(\Id_{S^p_n} \ot 
\begin{bmatrix}
   S  & T  \\
   T  & S  \\
\end{bmatrix} \right)(x)
=
\begin{bmatrix}
   (\Id_{S^p_n} \ot S)(x)  & (\Id_{S^p_n} \ot T)(x)  \\
   (\Id_{S^p_n} \ot T)(x)  & (\Id_{S^p_n} \ot S)(x)  \\
\end{bmatrix} \geq 0$.
\end{proof}

\begin{lemma}
\label{Lemma-passage2}
Let $M$ and $N$ be von Neumann algebras equipped with faithful normal semifinite traces. Suppose $1 \leq p \leq \infty$. Let $T,S_1, S_2 \co \L^p(M) \to \L^p(N)$ be adjoint preserving maps. If the map $
\begin{bmatrix}
   S_1  & T  \\
    T   & S_2 \\
\end{bmatrix}
\co \L^p(M) \to S^p_2(\L^p(N))$ is completely positive then $
-\frac{1}{2}(S_1+S_2) \leq_{\cp} T \leq_{\cp} \frac{1}{2}(S_1+S_2)$.
\end{lemma}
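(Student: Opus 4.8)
The plan is to reduce this to Lemma \ref{Lemma-passage1} together with the elementary matrix identity that conjugates the $2\times 2$ block matrix $\begin{bmatrix} S_1 & T \\ T & S_2 \end{bmatrix}$ into the form $\begin{bmatrix} S & T \\ T & S \end{bmatrix}$. Concretely, I would first observe that for any $n$ and any $x \in S^p_n(\L^p(M))_+$, complete positivity of $\Phi = \begin{bmatrix} S_1 & T \\ T & S_2 \end{bmatrix} \co \L^p(M) \to S^p_2(\L^p(N))$ means $(\Id_{S^p_n} \ot \Phi)(x) = \begin{bmatrix} (\Id \ot S_1)(x) & (\Id \ot T)(x) \\ (\Id \ot T)(x) & (\Id \ot S_2)(x) \end{bmatrix} \geq 0$ in $S^p_{2n}(\L^p(N))$.

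Next I would symmetrize. Conjugating a positive $2\times 2$ operator matrix $\begin{bmatrix} A & B \\ B & C \end{bmatrix}$ by the flip $J = \begin{bmatrix} 0 & 1 \\ 1 & 0 \end{bmatrix}$ (tensored with identities) — which is a completely positive operation of the form \eqref{conj-cp} — gives $\begin{bmatrix} C & B \\ B & A \end{bmatrix} \geq 0$; adding the two yields $\begin{bmatrix} A+C & 2B \\ 2B & A+C \end{bmatrix} \geq 0$, equivalently $\begin{bmatrix} \frac12(A+C) & B \\ B & \frac12(A+C) \end{bmatrix} \geq 0$. Applying this with $A = (\Id \ot S_1)(x)$, $B = (\Id\ot T)(x)$, $C = (\Id \ot S_2)(x)$ shows that the map $\begin{bmatrix} \frac12(S_1+S_2) & T \\ T & \frac12(S_1+S_2) \end{bmatrix} \co \L^p(M) \to S^p_2(\L^p(N))$ is completely positive. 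Then I would invoke Lemma \ref{Lemma-Matricial-3} (applied after tensoring with $\Id_{S^p_n}$ and using that $\Id_{S^p_n} \ot T$, $\Id_{S^p_n} \ot S_i$ are selfadjoint on $S^p_n(\L^p(M))$, which reduces to $\L^p(\M_n(M))$): positivity of $\begin{bmatrix} a & b \\ b & c \end{bmatrix}$ forces $-\frac12(a+c) \leq b \leq \frac12(a+c)$. With $a = c = (\Id \ot \frac12(S_1+S_2))(x)$ and $b = (\Id \ot T)(x)$ this gives $-(\Id_{S^p_n} \ot \tfrac12(S_1+S_2))(x) \leq (\Id_{S^p_n} \ot T)(x) \leq (\Id_{S^p_n} \ot \tfrac12(S_1+S_2))(x)$ for every $n$ and every positive $x$, which is precisely $-\frac12(S_1+S_2) \leq_{\cp} T \leq_{\cp} \frac12(S_1+S_2)$.

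I do not anticipate a serious obstacle here; the only point requiring minor care is the bookkeeping identifying $S^p_n(S^p_2(\L^p(N)))$ with $S^p_{2n}(\L^p(N))$ (and the associated reshuffling of indices when moving the factor $\M_n$ inside the $2\times2$ block), and checking that the flip conjugation is genuinely of the completely positive form \eqref{conj-cp} at the matricial level. Both are routine, so this lemma is essentially the formal converse of Lemma \ref{Lemma-passage1} obtained by running the same matrix manipulations backwards.
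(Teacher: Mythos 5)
Your argument is correct and follows essentially the same route as the paper: both apply Lemma \ref{Lemma-Matricial-3} to the positive element $\left(\Id_{S^p_n} \ot \begin{bmatrix} S_1 & T \\ T & S_2 \end{bmatrix}\right)(x)$ for $x \in S^p_n(\L^p(M))_+$ and read off $-\tfrac12(S_1+S_2) \leq_{\cp} T \leq_{\cp} \tfrac12(S_1+S_2)$. The only difference is that your flip-and-average symmetrization step is superfluous, since Lemma \ref{Lemma-Matricial-3} is already stated for unequal diagonal entries $a,c$ and yields the bound $-\tfrac12(a+c) \leq b \leq \tfrac12(a+c)$ directly.
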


\begin{proof}
Suppose $x \in S^p_n(\L^p(M))_+$. We have
$$
\begin{bmatrix}
   (\Id_{S^p_n} \ot S_1)(x)  & (\Id_{S^p_n} \ot T)(x)  \\
   (\Id_{S^p_n} \ot T)(x)  & (\Id_{S^p_n} \ot S_2)(x)  \\
\end{bmatrix} 
=\left(\Id_{S^p_n} \ot 
\begin{bmatrix}
   S_1  & T  \\
   T  & S_2  \\
\end{bmatrix} \right)(x)
\geq 0.
$$
By Lemma \ref{Lemma-Matricial-3}, we deduce that 
$$
-\frac{1}{2}\big((\Id_{S^p_n} \ot S_1)(x)+(\Id_{S^p_n} \ot S_2)(x)\big) 
\leq (\Id_{S^p_n} \ot T)(x) 
\leq \frac{1}{2}\big((\Id_{S^p_n} \ot S_1)(x)+(\Id_{S^p_n} \ot S_2)(x)\big).
$$
Hence we obtain
$$
-\frac{1}{2}\big((\Id_{S^p_n} \ot (S_1 +S_2))(x)\big) 
\leq (\Id_{S^p_n} \ot T)(x) 
\leq \frac{1}{2}\big((\Id_{S^p_n} \ot (S_1+ S_2))(x)\big).
$$
We conclude that $-\frac{1}{2}(S_1+S_2) \leq_{\cp} T \leq_{\cp} \frac{1}{2}(S_1+S_2)$.
\end{proof}

\subsection{Completely positive maps on commutative $\L^p$-spaces}

We start with a characterization of the positive cone of $S^p_n(\L^p(\Omega))$ where $\Omega$ is a measure space.

\begin{lemma}
\label{lemma-description-con-commutative-Lp}
Let $\Omega$ be a (localizable) measure space. Suppose $1 \leq p <\infty$. Then an element $[f_{ij}]$ of $S^p_n(\L^p(\Omega))$ is positive if and only if $[f_{ij}(\omega)]$ is a positive element of $\M_n$ for almost every $\omega \in \Omega$.
\end{lemma}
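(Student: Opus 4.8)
The plan is to realise $S^p_n(\L^p(\Omega))$ concretely as a space of matrix-valued functions and then to reduce the statement to the elementary description \eqref{Cone-LpM} of the positive cone of a noncommutative $\L^p$-space.

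First I would record that, combining \eqref{Def-matricial-cones-Lp} (with $M=\L^\infty(\Omega)$) with the isometric identification $S^p(\L^p(\Omega))=\L^p(\Omega,S^p)$ recalled in the preliminaries (applied with $S^p_n$ in place of $S^p$), one has $S^p_n(\L^p(\Omega))=\L^p\big(\M_n(\L^\infty(\Omega))\big)=\L^p(\Omega,S^p_n)$, where $\M_n(\L^\infty(\Omega))=\L^\infty(\Omega,\M_n)$ is semifinite with trace $a\mapsto\int_\Omega \tr a(\omega)\,\d\omega$, and where the matricial cone $S^p_n(\L^p(\Omega))_+$ of \eqref{Def-matricial-cones-Lp} is exactly the positive cone of this noncommutative $\L^p$-space. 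Under this identification an element $[f_{ij}]$ is represented by a measurable function $\omega\mapsto[f_{ij}(\omega)]\in\M_n$ with $\int_\Omega\norm{[f_{ij}(\omega)]}_{S^p_n}^p\,\d\omega<\infty$, and products, adjoints and the continuous functional calculus on self-adjoint elements are computed pointwise almost everywhere.

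For the implication ``$\Rightarrow$'', if $[f_{ij}]\in S^p_n(\L^p(\Omega))_+$ then \eqref{Cone-LpM} gives $g\in S^{2p}_n(\L^{2p}(\Omega))=\L^{2p}(\Omega,S^{2p}_n)$ with $[f_{ij}]=g^*g$; choosing a measurable representative of $g$ and using that the product $\L^{2p}\times\L^{2p}\to\L^p$ is pointwise a.e. yields $[f_{ij}(\omega)]=g(\omega)^*g(\omega)\geq 0$ in $\M_n$ for almost every $\omega$. Conversely, if $[f_{ij}(\omega)]\geq 0$ for almost every $\omega$, set $g(\omega):=[f_{ij}(\omega)]^{\frac12}$, the positive square root in $\M_n$; then $g$ is measurable since $t\mapsto t^{\frac12}$ is continuous on the positive matrices, and from $\tr\big(g(\omega)^{2p}\big)=\tr\big([f_{ij}(\omega)]^p\big)$ one gets, after integrating, $\norm{g}_{\L^{2p}(\Omega,S^{2p}_n)}^{2p}=\norm{[f_{ij}]}_{S^p_n(\L^p(\Omega))}^p<\infty$, so $g\in S^{2p}_n(\L^{2p}(\Omega))$ and $g^*g=[f_{ij}]$; hence $[f_{ij}]\in S^p_n(\L^p(\Omega))_+$ by \eqref{Cone-LpM}.

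The point to be careful about — more a technical matter than a deep obstacle — is the first step: identifying $\L^p(\M_n(\L^\infty(\Omega)))$ with a space of $\M_n$-valued functions on which the algebraic operations and the functional calculus act pointwise. For a localizable but possibly non-$\sigma$-finite $\Omega$ this is handled by decomposing $\Omega$ into $\sigma$-finite pieces. Once this is in place, the only genuine computation is the norm identity in the ``$\Leftarrow$'' direction which guarantees that the pointwise square root stays in $\L^{2p}(\Omega,S^{2p}_n)$.
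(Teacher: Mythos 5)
Your proof is correct, and the forward implication is essentially the paper's: both extract $h\in \L^{2p}(\Omega,S^{2p}_n)$ with $h^*h=f$ from \eqref{Cone-LpM} and evaluate pointwise. The converse is where you genuinely diverge. You construct the positive square root $g(\omega)=[f_{ij}(\omega)]^{1/2}$ pointwise, check measurability and the norm identity $\norm{g}_{\L^{2p}(\Omega,S^{2p}_n)}^{2p}=\norm{[f_{ij}]}_{S^p_n(\L^p(\Omega))}^{p}$, and conclude directly from the definition \eqref{Cone-LpM} of the cone. The paper instead dualizes: it takes an arbitrary $g\in \L^{p^*}(\Omega,S^{p^*}_n)_+$, applies the already-proved forward direction (for the exponent $p^*$) to get $g(\omega)\geq 0$ a.e., deduces $\tr\big(f(\omega)g(\omega)\big)\geq 0$ pointwise from \eqref{equa-polar-Lp} in $\M_n$, integrates, and invokes \eqref{equa-polar-Lp} again. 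Your route is more constructive and entirely avoids the dual-cone description, so it causes no friction at $p=1$ (where $p^*=\infty$ and the paper's appeal to ``the first part of the proof'' is slightly out of its stated range); the price is the measurability-of-the-square-root and localizability bookkeeping you flag, which the duality argument sidesteps entirely since it only ever multiplies and integrates measurable functions. Both arguments are sound and of comparable length.
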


\begin{proof}
We have $S^p_n(\L^p(\Omega))=\L^p(\Omega,S^p_n)$ isometrically. Consider $f \in \L^p(\Omega,S^p_n)_{+}$. Using \eqref{Cone-LpM}, there exists $h \in \L^{2p}(\Omega,S^{2p}_n)$ such that $h^*h=f$. Hence, for almost any $\omega \in \Omega$, we have $h(\omega)^*h(\omega)= f(\omega)$ in the space $S^p_n$. Consequently, for almost any $\omega \in \Omega$, we have $f(\omega)\in (S^p_{n})_+$.

For the converse, consider an element $f$ of $\L^p(\Omega,S^p_n)$ such that for almost any $\omega \in \Omega$ we have $f(\omega) \in (S^p_{n})_+$. Let $g \in \L^{p^*}(\Omega,S^{p^*}_n)_+$. By the first part of the proof, for almost any $\omega \in \Omega$, we have  $g(\omega) \in (S^{p^*}_{n})_+$. Using \eqref{equa-polar-Lp}, we deduce that for almost any $\omega \in \Omega$ we have $\tr( f(\omega) g(\omega)) \geq 0$. We infer that
$\dsp
\bigg(\int_{\Omega} \ot \tr \bigg)(fg)=\int_{\Omega} \tr( f(\omega) g(\omega))\d \omega\geq 0.
$ 
Using again \eqref{equa-polar-Lp}, we conclude that $f \in \L^p(\Omega,S^p_n)_+$.
\end{proof}

\begin{prop} 
\label{prop-positive-imply-cp}
Let $\Omega$ be a (localizable) measure space and let $M$ be a von Neumann algebra equipped with a faithful normal semifinite trace.  Suppose $1 \leq p \leq \infty$. A positive map $T \co \L^p(M) \to \L^p(\Omega)$ into a commutative $\L^p$-space is completely positive.
\end{prop}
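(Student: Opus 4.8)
The plan is to use that the target is a commutative $\L^p$-space, so that — via Lemma~\ref{lemma-description-con-commutative-Lp} — positivity of an element of $S^p_n(\L^p(\Omega))$ amounts to pointwise (a.e.) positivity of an $\M_n$-valued function, and then to produce this pointwise positivity by a one-dimensional compression argument. First I would dispose of $p=\infty$: there $\L^\infty(M)=M$ is a C$^*$-algebra and $\L^\infty(\Omega)$ is a commutative C$^*$-algebra, so $T$ is completely positive by the classical fact that positive maps into commutative C$^*$-algebras are completely positive (the argument below also adapts, using the obvious pointwise description of the cone of $\M_n(\L^\infty(\Omega))$). So assume $1\leq p<\infty$, fix $n\geq 1$ and take $x=[x_{ij}]\in S^p_n(\L^p(M))_+$; the goal is to show that $(\Id_{\M_n}\ot T)(x)=[T(x_{ij})]$ lies in $S^p_n(\L^p(\Omega))_+$, equivalently (Lemma~\ref{lemma-description-con-commutative-Lp}) that $[T(x_{ij})(\omega)]\in\M_n$ is a positive matrix for almost every $\omega\in\Omega$.

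The heart of the matter is a scalar compression. Given $\xi=(\xi_1,\ldots,\xi_n)\in\C^n$, let $\alpha\in\M_{n,1}$ be the associated column; since $\L^p(M)$ is a matrix ordered operator space (Proposition~\ref{prop-LpM-is-matrix-ordered}, see also \eqref{conj-cp}), the compression $\alpha^*x\alpha=\sum_{i,j}\ovl{\xi_i}\xi_j\,x_{ij}$ is a positive element of $\M_1(\L^p(M))=\L^p(M)$ (one may also read this off directly from the square-root description \eqref{Cone-LpM} of the cone). Applying the positive map $T$ gives $\sum_{i,j}\ovl{\xi_i}\xi_j\,T(x_{ij})\in\L^p(\Omega)_+$, i.e.\ there is a null set $N_\xi\subset\Omega$ outside of which the scalar $\sum_{i,j}\ovl{\xi_i}\xi_j\,T(x_{ij})(\omega)=\langle[T(x_{ij})(\omega)]\xi,\xi\rangle_{\C^n}$ is nonnegative.

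The only point needing care is the exchange of the quantifiers ``for every $\xi$'' and ``for a.e.\ $\omega$'', since the exceptional set $N_\xi$ depends on $\xi$. I would handle it by a standard separability argument: choose a countable dense $D\subset\C^n$ and set $N=\bigcup_{\xi\in D}N_\xi$, which is still null; for $\omega\notin N$ the Hermitian form $\xi\mapsto\langle[T(x_{ij})(\omega)]\xi,\xi\rangle$ is nonnegative on $D$, hence on all of $\C^n$ by continuity, so $[T(x_{ij})(\omega)]$ is a positive matrix. Thus $[T(x_{ij})(\omega)]\geq 0$ for a.e.\ $\omega$, and Lemma~\ref{lemma-description-con-commutative-Lp} yields $(\Id_{\M_n}\ot T)(x)\in S^p_n(\L^p(\Omega))_+$. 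Since $n$ is arbitrary, $T$ is completely positive. I expect no genuine obstacle beyond this measure-theoretic bookkeeping; note in particular that boundedness of $T$ is not needed for the argument (here complete positivity asks only that each $\Id_{\M_n}\ot T$ preserve the cone), although it holds a posteriori.
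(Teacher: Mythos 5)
Your proof is correct and follows essentially the same route as the paper's: compress the positive matrix $[x_{ij}]$ by a column vector, apply $T$, and use Lemma~\ref{lemma-description-con-commutative-Lp} to read off pointwise positivity of $[T(x_{ij})(\omega)]$. Your explicit handling of the quantifier exchange via a countable dense subset of $\C^n$ is a point the paper's proof glosses over, but it is the same argument.
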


\begin{proof}
The case $p = \infty$ is a particular case of \cite[Theorem 5.1.4]{ER}, so we can suppose $1 \leq p < \infty$. Let $x=[x_{ij}]$ be a positive element of $S^p_n(L^{p}(M))$. Note that in $S^p_n$, for almost any $\omega \in \Omega$, we have
$$
\big((\Id_{S^p_n} \ot T)([x_{ij}])\big)(\omega)=\big([T(x_{ij})]\big)(\omega)=\big[T(x_{ij})(\omega)\big].
$$
By Proposition \ref{prop-LpM-is-matrix-ordered}, for any matrix $u \in \M_{n,1}$, the element $u^*[x_{ij}]u$ of $\L^p(M)$ is positive. By the positivity of $T$, we see that $T\big(u^*[x_{ij}]u\big)$ is a positive element of $\L^p(\Omega)$. Using Lemma \ref{lemma-description-con-commutative-Lp}, we deduce that for almost every $\omega \in \Omega$
\begin{align*}
\MoveEqLeft
u^*\big[T(x_{ij})(\omega)\big]u
=\sum_{i,j=1}^{n}\ovl{u_i}T(x_{ij})(\omega)u_j
=T\bigg(\sum_{i,j=1}^{n}\ovl{u_i}x_{ij}u_j\bigg)(\omega)
=T\big(u^*[x_{ij}]u\big)(\omega) \geq 0.
\end{align*}
We infer that for almost every $\omega \in \Omega$, the matrix $\big[T(x_{ij})(\omega)\big]$ is a positive element of $\M_n$. By Lemma \ref{lemma-description-con-commutative-Lp}, we conclude that $\big[T(x_{ij})\big]$ is a positive element of $S^p_n(\L^p(\Omega))$.
\end{proof}

Using duality, we also have the following variant.

\begin{prop}
\label{prop-positive-imply-cp-if-depart-commutative}
Let $\Omega$ be a (localizable) measure space and let $M$ be a von Neumann algebra equipped with a faithful normal semifinite trace. Suppose $1 \leq p \leq \infty$. A positive mapping $T \co \L^p(\Omega) \to \L^p(M)$ defined on a commutative $\L^p$-space is completely positive.
\end{prop}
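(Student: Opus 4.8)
The plan is to obtain this as the dual statement of Proposition \ref{prop-positive-imply-cp}, as the sentence preceding it announces. First I would dispose of the endpoint $p=\infty$ separately: there $T \co \L^\infty(\Omega) \to M$ is a positive map out of a commutative C$^*$-algebra, hence automatically completely positive by the classical fact about positive maps on abelian C$^*$-algebras (see e.g. \cite[Theorem 3.9]{Pau}). Assume henceforth $1 \leq p < \infty$. The argument of Proposition \ref{prop-cp-imply-bounded} — which nowhere uses that the source and target algebras agree — applies verbatim to show that the positive map $T$ is bounded, so that its adjoint $T^* \co \L^p(M)^* \to \L^p(\Omega)^*$ is a well-defined bounded map between matrix ordered operator spaces.

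Next I would pass to the adjoint. In the operator space category we have $\L^p(M)^* = \L^{p^*}(M)^{\op} = \L^{p^*}(M^{\op})$ and, since $\Omega$ is commutative, $\L^p(\Omega)^* = \L^{p^*}(\Omega)^{\op} = \L^{p^*}(\Omega)$, where now $1 < p^* \leq \infty$. By Lemma \ref{Lemma-adjoint-cp}, positivity of $T$ is equivalent to positivity of $T^* \co \L^{p^*}(M^{\op}) \to \L^{p^*}(\Omega)$. This is precisely a positive map into a commutative $\L^{p^*}$-space, with the von Neumann algebra $M^{\op}$ — again equipped with a faithful normal semifinite trace — playing the role of $M$, so Proposition \ref{prop-positive-imply-cp} shows that $T^*$ is completely positive. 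Applying Lemma \ref{Lemma-adjoint-cp} once more, we conclude that the bidual map $T^{**}=(T^*)^* \co \L^p(\Omega)^{**} \to \L^p(M)^{**}$ is completely positive.

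It remains to come back down to $T$. For $1 < p < \infty$ both noncommutative $\L^p$-spaces are reflexive, the canonical identifications $\L^p(\Omega)^{**}=\L^p(\Omega)$ and $\L^p(M)^{**}=\L^p(M)$ are complete order isomorphisms, and under them $T^{**}=T$; hence $T$ is completely positive. For $p=1$, let $\kappa_\Omega$ and $\kappa_M$ denote the canonical embeddings into the biduals, so that $T^{**}\circ\kappa_\Omega=\kappa_M\circ T$. Each $\kappa$ is a complete order embedding: this follows by combining the bipolar description of the matrix cones in Lemma \ref{Lemma-Bipolar} with the description \eqref{Dual-cone} of the dual matrix cones, which together say that an element of $\M_n(\L^1(\cdot))$ is positive if and only if its image under $\kappa$ in $\M_n(\L^1(\cdot)^{**})$ is positive. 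Consequently $\kappa_M\circ T=T^{**}\circ\kappa_\Omega$ is completely positive, and then $T$ is completely positive because $\kappa_M$ reflects positivity at every matrix level. The only point requiring any care is this last, purely formal, bidual bookkeeping at $p=1$; everything else is a direct invocation of the cited results.
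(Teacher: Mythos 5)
Your proof is correct and follows essentially the same route as the paper's: settle $p=\infty$ by the classical fact about positive maps out of abelian C$^*$-algebras, then dualize via Lemma \ref{Lemma-adjoint-cp} so that $T^*$ becomes a positive map \emph{into} a commutative $\L^{p^*}$-space and Proposition \ref{prop-positive-imply-cp} applies. The only difference is cosmetic: since Lemma \ref{Lemma-adjoint-cp} is an equivalence, the paper passes directly from ``$T^*$ is completely positive'' back to ``$T$ is completely positive,'' so your second adjoint and the bidual bookkeeping at $p=1$ are an unnecessary (though correct) detour.
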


\begin{proof}
The case $p = \infty$ follows from \cite[Theorem 5.1.5]{ER}, so we can suppose $1 \leq p < \infty$. According to Lemma \ref{Lemma-adjoint-cp}, the map $T \co \L^p(\Omega) \to \L^p(M)$ is positive if and only if $T^* \co \L^{p^*}(M) \to \L^{p^*}(\Omega)$ is positive. Thus, by Proposition \ref{prop-positive-imply-cp}, the map $T^*$ is completely positive. Using again Lemma \ref{Lemma-adjoint-cp}, we conclude that $T$ is completely positive.
\end{proof}

\begin{remark} \normalfont
Note that the situation is different for the complete boundedness between commutative $\L^p$-spaces. Indeed, there exists some example of a measure space $\Omega$ and a bounded operator $T \co \L^p(\Omega) \to \L^p(\Omega)$ which is not completely bounded, see\footnote{\thefootnote. We warn the reader that the proof of \cite{DMT} is false. Indeed, the main argument of the paper which begins page 7 with ``therefore we can get a $\L^p(H)$ multiplier'' is really problematic since $H$ can be a finite subgroup (for example, consider the case $G=\Z$).} \cite[Proposition 8.1.3]{Pis5} and \cite{Arh5}.
\end{remark}

\subsection{Markov maps and selfadjoint maps}
\label{sec:Markov-maps}

%

Let $M$ and $N$ be von Neumann algebras equipped with faithful normal semifinite traces $\tau_M$ and $\tau_N$. We say that a linear map $T \co M \to N$ is a $(\tau_M,\tau_N)$-Markov map if $T$ is a normal unital completely positive map which is trace preserving, i.e. for any $x \in \mathfrak{m}_{\tau_M}^+$ we have $\tau_N(T(x))=\tau_M(x)$. When $(M,\tau_M)=(N,\tau_N)$, we say that $T$ is a $\tau_M$-Markov map. It is not difficult to check that a $(\tau_M,\tau_N)$-Markov map $T$ induces a completely positive and completely contractive map $T_p \co \L^p(M) \to \L^p(N)$ on the associated noncommutative $\L^p$-spaces $\L^p(M)$ and $\L^p(N)$ for any $1 \leq p \leq \infty$. 
Moreover, it is easy to prove that there exists a unique normal map $T^* \co N \to M$ such that
\begin{equation}
\label{Markov-dual}
\tau_N\big(T(x)y\big)=\tau_M\big(xT^*(y)\big), \quad x\in M \cap \L^1(M), y \in N \cap \L^1(N). 
\end{equation}
It is easy to show that $T^*$ is a $(\tau_N,\tau_M)$-Markov map. In this case, by density, we have 
\begin{equation}
\label{Markov-dual-Lp}
\tau_N\big(T_p(x)y\big)=\tau_M\big(x(T^*)_{p^*}(y)\big), \quad x \in \L^p(M), y \in \L^{p^*}(N). 
\end{equation}

Let $M$ be a von Neumann algebra equipped with a normal semifinite faithful trace $\tau$. Let $T \co M \to M$ be a normal contraction. We say that $T$ is selfadjoint
if
\begin{equation}
\label{9self}
\tau(T(x)y^*)=\tau(xT(y)^*), \quad x,y \in M \cap \L^1(M).
\end{equation}
In this case, for any $x,y$ in $M \cap \L^1(M)$, we have
$$
\bigl\vert \tau\big(T(x)y\big)\bigr\vert 
= \bigl\vert \tau\big(xT(y^*)^*\big)\bigr\vert
\leq \norm{x}_{\L^1(M)} \norm{T(y^*)^*}_{M}
\leq  \norm{x}_{\L^1(M)} \norm{y}_{M}.
$$
Hence the restriction of $T$ to $M \cap \L^1(M)$ extends to a contraction $T_1 \co \L^1(M)\to \L^1(M)$. It also extends by interpolation to a contraction $T_p \co \L^p(M) \to \L^p(M)$ for any $1 \leq p \leq \infty$. Moreover, for any $1 \leq p <\infty$, we have $(T_p)^*=(T_{p^*})^\circ$. Furthermore, the operator $T_2 \co \L^2(M) \to \L^2(M)$ is selfadjoint. If $T$ is positive then each $T_p$ is positive and hence $(T_p)^\circ=T_p$. Thus in this case, for any $1 \leq p <\infty$, we have $(T_p)^*=T_{p^*}$. Finally, if $T \co M \to M$ is a normal complete contraction, then each $T_p$ is completely contractive.

Finally, it is easy to check that a $\tau_M$-Markov map $T \co M \to M$ is selfadjoint if and only if $T^*=T^\circ$.

\section{Decomposable maps and regular maps}
\label{sec:Regular vs decomposable}

In this section, we start by analyzing decomposable maps on noncommutative $\L^p$-spaces. In particular, in Section \ref{subsec-infimum-decomposable-norm}, we prove that the infimum of the decomposable norm is actually a minimum.
In Section \ref{Decomposable-approx}, we state our first main result, Theorem \ref{thm-dec=reg-hyperfinite}, and give the end of the proof of this result. In passing, we prove that completely positive maps on noncommutative $\L^p$-spaces of approximately finite-dimensional algebras are necessarily completely bounded. In Section \ref{section-Decomposable-cb}, we compare the space of completely bounded operators and the space of decomposable operators. We show that these are different in general. We also give explicit examples of computations of the decomposable norm, see Theorem \ref{thm-computation-norm-finite-factor}. 

\subsection{Preliminary results}
\label{Sec-Preliminary-results}


We need some background on second dual algebras and we refer to \cite{Bin}, \cite{BLM}, \cite{KaR}, \cite{Tak1} and \cite{Tom1} for more information. Let $M$ be a von Neumann algebra of predual $M_*$. We can see $M^{**}$ as a von Neumann algebra. Since we have a canonical inclusion $M_* \subset M^{*}$, we can consider the annihilator 
$$
(M_*)^\perp 
\ov{\mathrm{def}}{=} \big\{\nu \in M^{**} : \langle \varphi,\nu \rangle_{M^{*},M^{**}}=0 \text{ for any } \varphi \in M_* \big\}
$$ 
of $M_*$ in $M^{**}$. It is well-known \cite[Proposition 4.2.3]{Bin} that there exists a unique central projection $e$ of $M^{**}$ such that $(M_*)^\perp=(1-e)M^{**}$. Using the notation $(R_{x}\varphi)(y) \ov{\mathrm{def}}{=} \varphi(yx)$ for any $x,y \in M$ and any $\varphi \in M^*$, we have $M_*=R_e(M^*)$ and\footnote{\thefootnote. That means that preduals of von Neumann algebras are L-summands in their biduals.} $M^*=M_*\oplus_1 R_{1-e}(M^*)$. The non-zero elements of $R_{1-e}M^*$ are the singular functionals. 

A bounded map $T \co M \to N$ is called singular \cite[p. 128]{Tak1} \cite{Tom1} if $T^*(N_*) \subset R_{1-e}M^*$. By \cite[Theorem 1]{Tom1}, for any bounded map $T \co M \to N$ there exists a unique couple $(T_{\w^*} \co M \to N,T_{\sing} \co M \to N)$ of bounded maps with $T_{\w^*}$ weak* continuous, $T_{\sing}$ singular and such that 
$$
T
=T_{\w^*}+T_{\sing}.
$$ 
Consider the completely contractive and completely positive map $\Phi_M \co M^{**} \to M^{**}$, $\eta \mapsto \eta e=e\eta e$ and the completely isometric canonical map $i_{N_*} \co N_* \to N^*$. By the proof of \cite[Theorem 1]{Tom1}, the map $T_{\w^*}$ is given by
\begin{equation*}
T_{\w^*}
\ov{\mathrm{def}}{=} \tilde{T} \circ \Phi_M \circ i_M
\end{equation*}
where $i_M \co M \to M^{**}$, $\tilde{T} \ov{\mathrm{def}}{=} (i_{N_*})^*\circ T^{**} \co M^{**} \to N$ is the unique weak* continuous extension of $T$ given by \cite[Lemma A.2.2]{BLM} (and its proof). The formula of the weak* extension of the proof of \cite[Theorem 1]{Tom1} is formally different but equivalent to ours. Indeed, in \cite[Theorem 1]{Tom1}, the weak* continuous extension $\tilde{T}$ is given by $\tilde{T}=(T^*|N_*)^*$ and we have $(T^*|N_*)^*=(T^* \circ i_{N_*})^*= (i_{N_*})^*\circ T^{**}$.


\begin{prop}
\label{Prop-recover-weak-star-continuity}
Let $M$ and $N$ be von Neumann algebras. Then the map $P_{\w^*} \co \B(M,N) \to \B(M,N)$, $T \mapsto T_{\w^*}$ is a contractive projection. Moreover, if $T\co M \to N$ is completely positive then the map $P_{\w^*}(T)$ is completely positive. Finally, if $T \co M \to N$ is completely bounded then $P_{\w^*}(T)$ is also completely bounded and $P_{\w^*} \co \B(M,N) \to \B(M,N)(T)$ is contractive projection. 
\end{prop}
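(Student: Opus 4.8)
The plan is to verify the three assertions in order, leaning on the explicit formula $T_{\w^*} = \tilde{T} \circ \Phi_M \circ i_M$ recalled just above the statement, together with the decomposition $T = T_{\w^*} + T_{\sing}$ from \cite[Theorem 1]{Tom1}. First I would check that $P_{\w^*}$ is a projection: a weak* continuous map $S \co M \to N$ has a trivial Tomiyama decomposition $S = S + 0$ (its singular part vanishes), so $P_{\w^*}(S) = S$; applying this to $S = T_{\w^*} = P_{\w^*}(T)$, which is weak* continuous by construction, gives $P_{\w^*}^2 = P_{\w^*}$. Linearity of $P_{\w^*}$ is clear from uniqueness of the Tomiyama splitting (the weak* continuous and singular maps each form a linear subspace, and the decomposition is unique).

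For contractivity, I would read off $\norm{T_{\w^*}}_{M \to N} \leq \norm{T}_{M \to N}$ from the formula: $\Phi_M \co M^{**} \to M^{**}$, $\eta \mapsto e\eta e$, is a contraction (it is multiplication by a central projection, hence completely contractive and completely positive — this is standard, or can be seen as a corner compression $\eta \mapsto e\eta e$); $i_M \co M \to M^{**}$ is a complete isometry; and $\tilde{T} = (i_{N_*})^* \circ T^{**}$ is the canonical weak* continuous extension of $T$, which is a complete contraction whenever $T$ is (since $T^{**}$ has the same completely bounded norm as $T$, and $(i_{N_*})^*$ is a complete contraction — indeed a complete quotient map onto $N \subset N^{**\,}$, identifying $N$ with $(N_*)^*$). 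Composing these three complete contractions (or, for the bounded statement, using the ordinary operator norms) yields both $\norm{P_{\w^*}(T)}_{M\to N} \leq \norm{T}_{M\to N}$ and, in the completely bounded case, $\norm{P_{\w^*}(T)}_{\cb, M\to N} \leq \norm{T}_{\cb, M\to N}$.

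For preservation of complete positivity, the same factorization does the work: if $T$ is completely positive, then $T^{**}$ is completely positive (biduals of completely positive maps are completely positive), $(i_{N_*})^*$ is completely positive, $\Phi_M$ is completely positive, and $i_M$ is a $*$-homomorphism hence completely positive; the composition $\tilde{T} \circ \Phi_M \circ i_M = P_{\w^*}(T)$ is therefore completely positive. The only mild subtlety — and the step I'd expect to need the most care — is confirming that the formula $T_{\w^*} = \tilde{T} \circ \Phi_M \circ i_M$ really coincides with the weak* part produced by \cite[Theorem 1]{Tom1}; this is exactly the content of the footnote in the excerpt (the formula $\tilde{T} = (T^*|_{N_*})^* = (i_{N_*})^* \circ T^{**}$), so I would simply cite that identification and then the three norm/positivity estimates fall out mechanically.
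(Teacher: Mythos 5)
Your proposal is correct and follows essentially the same route as the paper: both arguments rest on the factorization $T_{\w^*} = \tilde{T} \circ \Phi_M \circ i_M$ and deduce contractivity, preservation of complete positivity, and the completely bounded norm estimate by composing the corresponding properties of the three factors (the paper cites \cite[Theorem 4.9]{Schr1} together with Lemma \ref{Lemma-adjoint-cp} for the complete positivity of $\tilde{T}$, and \cite[Section 1.4.8]{BLM} for $\norm{\tilde{T}}_{\cb} = \norm{T}_{\cb}$, which is exactly what you spell out via $T^{**}$ and $(i_{N_*})^*$). Your explicit verification that $P_{\w^*}$ is a projection via the uniqueness of the Tomiyama decomposition is slightly more detailed than the paper's ``it is obvious,'' but the content is identical.
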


\begin{proof}
It is obvious that $P_{\w^*}$ is a projection. Note that by \cite[Lemma A.2.2]{BLM}, we have $\norm{\tilde{T}}_{M^{**} \to N}=\norm{T}_{M \to N}$. 
Now, it is clear that, by composition, $P_{\w^*}$ is contractive. If $T \co M \to N$ is completely positive, using Lemma \ref{Lemma-adjoint-cp}, it is immediate to see that $T_{\w^*}$ is completely positive. By \cite[Section 1.4.8]{BLM}, if $T \co M \to N$ is completely bounded, then $\tilde{T} \co M^{**} \to N$ is completely bounded with the same completely bounded norm. By composition, we deduce that $P_{\w^*}(T)$ is completely bounded and that $\norm{P_{\w^*}(T)}_{\cb,M \to N} \leq \norm{T}_{\cb,M \to N}$.
\end{proof}



\begin{lemma}
\label{Lemma-op-mappings-3}
Let $M$ and $N$ be von Neumann algebras equipped with semifinite faithful normal traces. Suppose $1 \leq p \leq \infty$. Let $T \co \L^p(M) \to \L^p(N)$ be a linear map. Then $T$ is decomposable if and only if $T^\op$ is decomposable. In the case, we have $\norm{T}_{\dec, \L^p(M) \to \L^p(N)} = \norm{T^\op}_{\dec,\L^p(M)^\op \to \L^p(N)^\op}$.
\end{lemma}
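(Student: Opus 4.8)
The plan is to reduce everything to Lemma \ref{lem-op-mappings}, applied to the $2\times 2$ amplifications $\M_2(M)$ and $\M_2(N)$, together with the compatibility of the transpose with positive cones from Lemma \ref{equ-1-proof-op-mappings} and the fact, recorded in \eqref{conj-cp}, that conjugation by a scalar matrix is completely positive. Two preliminary observations will be used: first, $(T^\op)^\circ$ and $(T^\circ)^\op$ coincide as linear maps, since $\L^p(M)$ and $\L^p(M)^\op = \L^p(M^\op)$ (and likewise for $N$) carry the same involution; second, $\norm{v^\op}_{\L^p(M)^\op \to \L^p(N)^\op} = \norm{v}_{\L^p(M) \to \L^p(N)}$ for any bounded $v$, because the underlying Banach spaces are unchanged.

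For the substantial implication, suppose $T$ is decomposable and pick $v_1,v_2 \co \L^p(M) \to \L^p(N)$ making the matrix $\Phi$ of \eqref{Matrice-2-2-Phi} completely positive as a map $S^p_2(\L^p(M)) \to S^p_2(\L^p(N))$. Reading $S^p_2(\L^p(M)) = \L^p(\M_2(M))$ and $S^p_2(\L^p(N)) = \L^p(\M_2(N))$, Lemma \ref{lem-op-mappings} shows that the same underlying map is completely positive as $\Phi^\op \co \L^p(\M_2(M))^\op \to \L^p(\M_2(N))^\op$. I would then transport $\Phi^\op$ through the identifications $\L^p(\M_2(M))^\op = \L^p(\M_2(M^\op)) = S^p_2(\L^p(M^\op))$ (and likewise for $N$) coming from the trace-preserving transpose $*$-isomorphism $\M_2(M)^\op \to \M_2(M^\op)$, which lifts to a complete order isomorphism on the $\L^p$-level by the reasoning of Lemma \ref{equ-1-proof-op-mappings} (the $p=\infty$ case plus density, Lemma \ref{lemma-M+-dense-dans Lp+}). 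A direct computation shows that the resulting completely positive map $S^p_2(\L^p(M^\op)) \to S^p_2(\L^p(N^\op))$ is $\begin{bmatrix} a & b \\ c & d \end{bmatrix} \mapsto \begin{bmatrix} v_1(a) & T^\circ(b) \\ T(c) & v_2(d) \end{bmatrix}$; conjugating it by the selfadjoint flip $w = \begin{bmatrix} 0 & 1 \\ 1 & 0 \end{bmatrix}$, which is completely positive by \eqref{conj-cp}, turns it into $\begin{bmatrix} a & b \\ c & d \end{bmatrix} \mapsto \begin{bmatrix} v_2^\op(a) & T^\op(b) \\ (T^\op)^\circ(c) & v_1^\op(d) \end{bmatrix}$, which is exactly the matrix \eqref{Matrice-2-2-Phi} attached to $T^\op$ with the witnesses $v_2^\op$ and $v_1^\op$. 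Hence $T^\op$ is decomposable and $\norm{T^\op}_{\dec} \leq \max\{\norm{v_2^\op},\norm{v_1^\op}\} = \max\{\norm{v_1},\norm{v_2}\}$; taking the infimum over admissible pairs $(v_1,v_2)$ gives $\norm{T^\op}_{\dec,\L^p(M)^\op \to \L^p(N)^\op} \leq \norm{T}_{\dec,\L^p(M) \to \L^p(N)}$. Since $(T^\op)^\op = T$, applying the same to $T^\op$ yields the reverse inequality, hence equality; in particular $T$ is decomposable precisely when $T^\op$ is.

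The one genuinely delicate point is the bookkeeping around the opposite-algebra identifications: checking that the transpose isomorphism $\M_2(M)^\op \cong \M_2(M^\op)$ indeed lifts to a complete order isomorphism on $\L^p$ (this is where Lemma \ref{equ-1-proof-op-mappings} intervenes), and tracking how the four entries of the $2\times 2$ block are permuted — which is why the auxiliary conjugation by the flip $w$ is needed to restore the precise shape of \eqref{Matrice-2-2-Phi}. Everything else is formal.
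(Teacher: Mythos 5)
Your argument is correct and is essentially the paper's proof: both hinge on the transpose/positive-cone compatibility of Lemma \ref{equ-1-proof-op-mappings} and on conjugation by the flip matrix (completely positive by \eqref{conj-cp}), and both arrive at the same witness pair $(v_2,v_1)$ for $T^\op$, so the norm inequality and the symmetry step $(T^\op)^\op=T$ close the argument identically. The only cosmetic difference is that you route the transpose step through Lemma \ref{lem-op-mappings} applied to the amplified algebra $\M_2(M)$, whereas the paper performs the same transpose computation by hand at the level of $\M_n(S^p_2(\L^p(M)))$ — the underlying calculation is the same.
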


\begin{proof}
Assume that $T \co \L^p(M) \to \L^p(N)$ is decomposable. By \eqref{Norm-dec}, there exist linear maps $v_1,v_2 \co \L^p(M) \to \L^p(N)$ such that 
$\begin{bmatrix} 
v_1 & T \\ 
T^\circ & v_2 
\end{bmatrix} 
\co S^p_2(\L^p(M)) \to S^p_2(\L^p(N))$ is completely positive with $\max\{\norm{v_1},\norm{v_2}\} \leq \norm{T}_{\dec, \L^p(M) \to \L^p(N)} + \epsi$. We claim that $\begin{bmatrix} 
v_2 & T \\ 
T^\circ & v_1 
\end{bmatrix} 
\co S^p_2(\L^p(M)^\op) \to S^p_2(\L^p(N)^\op)$ is also completely positive. Indeed, let $b \in \M_n(S^p_2(\L^p(M)^\op))_+ =S_{2n}^p(\L^p(M^\op))_+$. Denoting $b^t$ the transposed matrix, where transposition is executed in $S^p_{2n}$, i.e. both in the $\M_n$ and in the $S^p_2$ component, an obvious computation gives
$$
\left(\Id_{\M_n} \ot 
\begin{bmatrix} 
v_2 & T \\ 
T^\circ & v_1 
\end{bmatrix}\right)(b) 
=\left(\left(\Id_{\M_n} \ot 
\begin{bmatrix} 
v_2 & T^\circ \\ 
T & v_1 
\end{bmatrix}\right)(b^t) \right)^t
$$
which is positive in $\M_n(S^p_2(\L^p(M)^\op))$ according to Lemma \ref{equ-1-proof-op-mappings}, applied twice, provided that we show that the map $\begin{bmatrix} 
v_2 & T^\circ \\ 
T & v_1 
\end{bmatrix} 
\co S^p_2(\L^p(M)) \to S^p_2(\L^p(N))$ is completely positive.
But this can be seen using the identity
$$
\begin{bmatrix} 
v_2  & T^\circ \\ 
T & v_1 
\end{bmatrix} 
=\mathcal{F}_N 
\begin{bmatrix} 
v_1 & T \\ 
T^\circ & v_2 
\end{bmatrix} \mathcal{F}_M ,
$$
where $\mathcal{F}_M \co S^p_2(\L^p(M)) \to S^p_2(\L^p(M))$ denotes the flip mapping 
$$
\begin{bmatrix} 
a & b \\ 
c & d 
\end{bmatrix} 
\mapsto 
\begin{bmatrix} 
0 & 1 \\ 
1 & 0 \end{bmatrix} 
\begin{bmatrix} 
a & b \\ 
c & d 
\end{bmatrix}  
\begin{bmatrix} 
0 & 1 \\ 
1 & 0 
\end{bmatrix} 
=\begin{bmatrix} 
d & c \\ 
b & a 
\end{bmatrix} 
$$
which is completely positive according to \eqref{conj-cp} (and similarly for $\mathcal{F}_N$). We infer that the linear map $T^\op \co \L^p(M)^\op \to \L^p(N)^\op$ is decomposable and that $\norm{T^\op}_{\dec,\L^p(M)^\op \to \L^p(N)^\op} \leq \max\{ \norm{v_2},\norm{v_1} \} \leq \norm{T}_{\dec,\L^p(M) \to \L^p(N)} + \epsi$. Letting $\epsi \to 0$ and using symmetry, we can finish the proof of the lemma.
\end{proof}

We will use the following easy\footnote{\thefootnote. The first part is a consequence of the following computation (and the second part can be proved similarly):
\begin{align*}
\MoveEqLeft
\left\langle 
\begin{bmatrix} 
    T_{11} & T_{12} \\
    T_{21} & T_{22} \\
\end{bmatrix}
\left(\begin{bmatrix} 
a & b \\ 
c & d
\end{bmatrix}
\right),
\begin{bmatrix} 
x & y \\ 
z & w 
\end{bmatrix}
\right\rangle_{S_2^p(\L^p(N)),S_2^{p^*}(\L^{p^*}(N))}
=\left\langle 
\begin{bmatrix} 
T_{11}(a) & T_{12}(b) \\ 
T_{21}(c) & T_{22}(d)
\end{bmatrix} 
,
\begin{bmatrix} 
x & y \\ 
z & w 
\end{bmatrix}
\right\rangle\\
&=\tau(T_{11}(a) x) + \tau(T_{12}(b) y) + \tau(T_{21}(c) z) + \tau(T_{22}(d) w) 
=\tau(a T_{11}^*(x)) + \tau(bT_{12}^*(y)) + \tau(cT_{21}^*(z)) + \tau(dT_{22}^*(w))  \\
&=\left\langle 
\begin{bmatrix} 
a & b \\ 
c & d
\end{bmatrix}
,
\begin{bmatrix} 
T_{11}^* & T_{12}^* \\ 
T_{21}^* & T_{22}^* 
\end{bmatrix} 
\left(\begin{bmatrix} 
x & y \\ 
z & w 
\end{bmatrix}\right)
\right\rangle_{S_2^p(\L^p(M)),S_2^{p^*}(\L^{p^*}(M))}.
\end{align*}}  lemma several times.

\begin{lemma}
\label{Lemma-dualite-calcul-adjoint}
Let $M$ and $N$ be von Neumann algebras equipped with semifinite faithful normal traces. Suppose $1 \leq p < \infty$. The Banach adjoint of a bounded operator 
\[\begin{bmatrix} 
    T_{11} & T_{12} \\
    T_{21} & T_{22} \\
\end{bmatrix} \co S_2^p(\L^p(M)) \to S_2^p(\L^p(N))\]
identifies to $\begin{bmatrix} 
    (T_{11})^* & (T_{12})^* \\
    (T_{21})^* & (T_{22})^* \\
\end{bmatrix} \co S_2^{p^*}(\L^{p^*}(N)) \to S_2^{p^*}(\L^{p^*}(M))$ and the Banach preadjoint of a weak* continuous operator $\begin{bmatrix} 
    T_{11} & T_{12} \\
    T_{21} & T_{22} \\
\end{bmatrix} \co \M_2(M) \to \M_2(N)$ identifies to the bounded operator $\begin{bmatrix} 
    (T_{11})_* & (T_{12})_* \\
    (T_{21})_* & (T_{22})_* \\
\end{bmatrix} \co S_2^1(\L^1(N)) \to S_2^1(\L^1(M))$.
\end{lemma}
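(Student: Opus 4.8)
The plan is to prove both identifications by the same elementary duality computation, which is in essence the bracket manipulation already displayed in the footnote preceding the statement. First I would fix the bilinear duality bracket: for $1 \leq p < \infty$ one has $S_2^p(\L^p(M))^* = S_2^{p^*}(\L^{p^*}(M))$ (up to the usual op-twist of the operator space structure, which is immaterial at the level of Banach space duality), the pairing of $[a_{k\ell}] \in S_2^p(\L^p(M))$ with $[x_{k\ell}] \in S_2^{p^*}(\L^{p^*}(M))$ being the entrywise trace form $\sum_{k,\ell} \tau_M(a_{k\ell} x_{k\ell})$, where $\tau_M$ is the trace on $M$; similarly $\M_2(N)_* = S_2^1(\L^1(N))$ with the analogous pairing using $\tau_N$. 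Before applying it I would record the (trivial) fact that each matrix entry $T_{k\ell}$ of a bounded block operator is itself a bounded operator $\L^p(M) \to \L^p(N)$, since it factors as the $(k,\ell)$-coordinate projection composed with $\bigl[\begin{smallmatrix} T_{11} & T_{12} \\ T_{21} & T_{22} \end{smallmatrix}\bigr]$ composed with the embedding into the $(k,\ell)$-corner, both of these compressions by matrix units being contractive by \eqref{Inequality-SpnE} (and, in the von Neumann algebra case, weak* continuous); hence each Banach adjoint $(T_{k\ell})^*$, resp.\ each Banach preadjoint $(T_{k\ell})_*$, is well defined.

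Then for the first assertion I would simply expand the bracket: for $[a_{k\ell}] \in S_2^p(\L^p(M))$ and $[x_{k\ell}] \in S_2^{p^*}(\L^{p^*}(N))$,
\begin{align*}
\MoveEqLeft \Bigl\langle \bigl[T_{k\ell}\bigr]\bigl([a_{k\ell}]\bigr) , [x_{k\ell}] \Bigr\rangle = \sum_{k,\ell} \tau_N\bigl(T_{k\ell}(a_{k\ell})\, x_{k\ell}\bigr) \\
&= \sum_{k,\ell} \tau_M\bigl(a_{k\ell}\, (T_{k\ell})^*(x_{k\ell})\bigr) = \Bigl\langle [a_{k\ell}] , \bigl[(T_{k\ell})^*\bigr]\bigl([x_{k\ell}]\bigr) \Bigr\rangle ,
\end{align*}
where the middle equality is the defining relation of each componentwise Banach adjoint. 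By uniqueness of the Banach adjoint this yields $\bigl[T_{k\ell}\bigr]^* = \bigl[(T_{k\ell})^*\bigr]$, which is the claimed identification. The second assertion follows verbatim from the same computation, now pairing $\M_2(M)$ against $S_2^1(\L^1(M))$ and $\M_2(N)$ against $S_2^1(\L^1(N))$, and replacing each componentwise adjoint by the preadjoint $(T_{k\ell})_* \co \L^1(N) \to \L^1(M)$ of the weak* continuous entry $T_{k\ell} \co M \to N$; uniqueness of the Banach preadjoint then closes the argument.

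I do not expect any real obstacle: the whole content is the bilinearity of the trace pairing together with the defining property of the componentwise (pre)adjoints. The only two points that genuinely need a line of justification are (i) that each entry $T_{k\ell}$ inherits boundedness, resp.\ weak* continuity, from the block operator — handled by the factorization through matrix-unit compressions mentioned above — and (ii) the bookkeeping of the op-twist in the operator space duality $\L^p(M)^* = \L^{p^*}(M)^\op$; since the lemma concerns only the underlying Banach space adjoint and preadjoint, this twist plays no role and may be safely ignored.
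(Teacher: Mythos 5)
Your proof is correct and is essentially identical to the paper's own argument, which is precisely the entrywise trace-pairing computation displayed in the footnote attached to the sentence introducing the lemma. The extra remarks you add (boundedness of each entry via matrix-unit compressions, and the irrelevance of the op-twist at the Banach-space level) are sound but not part of the paper's proof, which leaves these points implicit.
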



The following complements \cite[Lemma 3.2]{JuR} and completes a gap in the proof of the case $p=1$.

\begin{prop}
\label{Prop-Duality-dec}
Let $M$ and $N$ be two von Neumann algebras equipped with faithful normal semifinite traces. Suppose $1 \leq p < \infty$. A bounded map $T \co \L^p(M) \to \L^p(N)$ is decomposable if and only if the Banach adjoint $T^* \co \L^{p^*}(N) \to \L^{p^*}(M)$ is decomposable. In this case, we have  
\begin{equation}
	\label{Duality-dec}
	\norm{T}_{\dec, \L^p(M) \to \L^p(N)}
	=\norm{T^*}_{\dec,\L^{p^*}(N) \to \L^{p^*}(M)}.
\end{equation}
\end{prop}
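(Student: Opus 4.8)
The plan is to prove both implications simultaneously using a duality between the completely positive condition defining decomposability for $T$ at level $p$ and the one for $T^*$ at level $p^*$. Suppose first that $T \co \L^p(M) \to \L^p(N)$ is decomposable, and fix $v_1, v_2 \co \L^p(M) \to \L^p(N)$ realizing the infimum \eqref{Norm-dec} up to $\epsi$, so that
$$
\Phi = \begin{bmatrix} v_1 & T \\ T^\circ & v_2 \end{bmatrix} \co S^p_2(\L^p(M)) \to S^p_2(\L^p(N))
$$
is completely positive with $\max\{\norm{v_1}, \norm{v_2}\} \leq \norm{T}_{\dec} + \epsi$. By Lemma \ref{Lemma-dualite-calcul-adjoint}, the Banach adjoint of $\Phi$ identifies with
$$
\Phi^* = \begin{bmatrix} (v_1)^* & (T^\circ)^* \\ T^* & (v_2)^* \end{bmatrix} \co S^{p^*}_2(\L^{p^*}(N)) \to S^{p^*}_2(\L^{p^*}(M)),
$$
and by Lemma \ref{Lemma-adjoint-cp} the complete positivity of $\Phi$ transfers to $\Phi^*$. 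The first point to check is that $(T^\circ)^* = (T^*)^\circ$: this is a routine computation with the pairing $\langle \cdot, \cdot\rangle_{\L^{p^*}(M), \L^p(M)}$ and the definition $S^\circ(c) = S(c^*)^*$, using that the involution on $\L^p(M)$ is isometric and compatible with the trace pairing. Once this is known, $\Phi^*$ has exactly the block shape from \eqref{Matrice-2-2-Phi} certifying that $T^* \co \L^{p^*}(N) \to \L^{p^*}(M)$ is decomposable with witnesses $(v_1)^*$ and $(v_2)^*$; since taking Banach adjoints is isometric, $\max\{\norm{(v_1)^*}, \norm{(v_2)^*}\} \leq \norm{T}_{\dec} + \epsi$, and letting $\epsi \to 0$ gives $\norm{T^*}_{\dec} \leq \norm{T}_{\dec}$.

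For the reverse inequality I would use the same argument applied to $T^*$, together with the fact that the canonical embedding $\L^p(M) \hookrightarrow \L^p(M)^{**} = \L^{p^*}(M)^{*}$ (via $\L^{p^*}(M)^* = \L^p(M)^\op$ or, more carefully, the reflexivity of $\L^p(M)$ when $1 < p < \infty$) identifies $T^{**}$ with $T$, and identifies the relevant block maps on $S^{p^*}_2$-spaces with their biadjoints. In the reflexive range $1 < p < \infty$ this is immediate: applying the first implication to $T^*$ yields $\norm{T^{**}}_{\dec} \leq \norm{T^*}_{\dec}$, and $T^{**} = T$ under the canonical identifications, so $\norm{T}_{\dec} \leq \norm{T^*}_{\dec}$ and \eqref{Duality-dec} follows.

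The genuinely delicate case is $p = 1$ (the case the proposition is designed to repair), since $\L^1(M)$ is not reflexive: here $T^* \co \L^\infty(N) = N \to M = \L^\infty(M)$, but $(T^*)^* \co M^* \to N^*$ acts on the full duals, not on $\L^1$. The fix, which I expect to be the main obstacle, is to compose with the projection $P_{\w^*}$ of Proposition \ref{Prop-recover-weak-star-continuity} onto weak* continuous maps. Starting from a decomposition of $T^* \co N \to M$ into a completely positive $2\times2$ block matrix $\Psi$ over $\M_2(N) \to \M_2(M)$, one first notes that the weak* continuous part: applying $P_{\w^*}$ entrywise — equivalently, applying the $\M_2$-ampliation of $P_{\w^*}$, which by Proposition \ref{Prop-recover-weak-star-continuity} preserves complete positivity and does not increase the completely bounded (hence the relevant) norms — produces a completely positive block matrix $\Psi_{\w^*} \co \M_2(N) \to \M_2(M)$ whose off-diagonal entry is still $T^*$ (because $T^* = (T)^*$ is already the adjoint of a bounded map on $\L^1$, hence automatically weak* continuous as the adjoint of a bounded operator $\L^1(N) \to \L^1(M)$), and whose diagonal entries $(\tilde v_1), (\tilde v_2)$ are weak* continuous completely positive maps bounded by the original ones. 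Then $\Psi_{\w^*}$ has a Banach preadjoint, which by Lemma \ref{Lemma-dualite-calcul-adjoint} is a block matrix $S^1_2(\L^1(M)) \to S^1_2(\L^1(N))$ with off-diagonal entry $T$ (using again $(T^*)_* = T$ and the compatibility of $\circ$ with preadjoints), and by Lemma \ref{Lemma-adjoint-cp} it is completely positive; its diagonal entries are $(\tilde v_1)_*, (\tilde v_2)_*$ with norms equal to those of $\tilde v_1, \tilde v_2$. This certifies $T$ decomposable with $\norm{T}_{\dec} \leq \norm{T^*}_{\dec}$, completing the remaining inequality. Combining with the first implication (which works for all $1 \leq p < \infty$) gives the equality \eqref{Duality-dec} in all cases.
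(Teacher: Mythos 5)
Your proof is correct and follows essentially the same route as the paper: for the forward implication you take the Banach adjoint of the $2\times 2$ block matrix via Lemma \ref{Lemma-dualite-calcul-adjoint}, use $(T^\circ)^* = (T^*)^\circ$ and Lemma \ref{Lemma-adjoint-cp} (the paper additionally records that the adjoint is a priori completely positive on the opposite spaces and invokes Lemma \ref{lem-op-mappings} to remove the $\op$); the reverse inequality for $1<p<\infty$ is obtained by symmetry/reflexivity exactly as in the paper, and your treatment of $p=1$ — applying the projection $P_{\w^*}$ of Proposition \ref{Prop-recover-weak-star-continuity} to the block decomposition of $T^*$, noting that $T^*$ and $(T^*)^\circ$ are already weak* continuous, and then passing to preadjoints — is precisely the paper's argument. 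One cosmetic remark: with the entrywise pairing underlying Lemma \ref{Lemma-dualite-calcul-adjoint} the adjoint block matrix is $\begin{bmatrix} v_1^* & T^* \\ (T^*)^\circ & v_2^* \end{bmatrix}$ (each entry stays in its position), not the transposed arrangement you displayed, though either form certifies decomposability of $T^*$ after conjugating by the flip as in Lemma \ref{Lemma-T-cic-decomposable}.
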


\begin{proof}
Suppose $1 \leq p<\infty$. Suppose that $T \co \L^p(M) \to \L^p(N)$ is decomposable.
There exist some maps $v_1,v_2 \co \L^p(M) \to \L^p(N)$ such that $
\begin{bmatrix} 
   v_1  &  T \\
   T^\circ  &  v_2  \\
\end{bmatrix}$ is completely positive. Using Lemma \ref{Lemma-dualite-calcul-adjoint}, we obtain
that $\left(\begin{bmatrix} 
   v_1  &  T\\
   T^\circ  &  v_2  \\
\end{bmatrix}\right)^*
=\begin{bmatrix} 
   v_1^*  &  T^* \\
   (T^\circ)^*  &  v_2^*  \\
\end{bmatrix}
=\begin{bmatrix} 
   v_1^*  &  T^* \\
   (T^*)^\circ  &  v_2^*  \\
\end{bmatrix}$. By Lemma \ref{Lemma-adjoint-cp}, this operator is completely positive as a map $S^{p^*}_2(\L^{p^*}(M))^\op \to S^{p^*}_2(\L^{p^*}(N))^\op$. So by Lemma \ref{lem-op-mappings}, it also define a completely positive map $S^{p^*}_2(\L^{p^*}(M)) \to S^{p^*}_2(\L^{p^*}(N))$. We conclude that $T^* \co \L^p(M) \to \L^p(N)$ is decomposable with $\norm{T^*}_{\dec, \L^p(M) \to \L^p(N)} \leq \max\{\norm{v_1^*},\norm{v_2^*}\}=\max\{\norm{v_1},\norm{v_2}\}$. Taking the infimum, we obtain $\norm{T^*}_{\dec, \L^p(M) \to \L^p(N)} \leq \norm{T}_{\dec, \L^p(M) \to \L^p(N)}$. If $p\not=1$, a symmetric argument gives the result.

Suppose $p=1$ and that the map $T^* \co N \to M$ is decomposable. There exist some maps $v_1,v_2 \co N \to M$ such that $
\begin{bmatrix} 
   v_1  &  T^* \\
   (T^*)^\circ  &  v_2  \\
\end{bmatrix}$ is completely positive. Note that $v_1$ and $v_2$ are not necessarily weak* continuous. However, it is not difficult to see by uniqueness that 
$
P_{\mathrm{w}^*}\left(\begin{bmatrix} 
   v_1  &  T^* \\
   (T^*)^\circ  &  v_2  \\
\end{bmatrix}\right)
=\begin{bmatrix} 
   (v_1)_{\w^*}  &     T^* \\
   (T^*)^\circ  &  (v_2)_{\w^*}  \\
\end{bmatrix}
$ 
where $P_{\w^*} \co \B(\M_2(N),\M_2(M)) \to \B(\M_2(N),\M_2(M))$ is the projection of Proposition \ref{Prop-recover-weak-star-continuity}. Moreover, the same result says that 
$\begin{bmatrix} 
   (v_1)_{\w^*}  &     T^* \\
   (T^*)^\circ  &  (v_2)_{\w^*}  \\
\end{bmatrix}$ is still completely positive and that $\max\{\norm{(v_1)_{\w^*}},\norm{(v_2)_{\w^*}}\} \leq \max\{\norm{v_1},\norm{v_2}\}$. Using Lemma \ref{Lemma-dualite-calcul-adjoint}, we obtain
that $\left(
\begin{bmatrix} 
   (v_1)_{\w^*}  &     T^* \\
   (T^*)^\circ  &  (v_2)_{\w^*}  \\
\end{bmatrix}
\right)_*
=\begin{bmatrix} 
   ((v_1)_{\mathrm{w}^*})_*  &     T \\
   T^\circ  &  ((v_2)_{\w^*})_*  \\
\end{bmatrix}$. By Lemma \ref{Lemma-adjoint-cp} and Lemma \ref{lem-op-mappings}, this operator is completely positive as a map $S^1_2(\L^1(M)) \to S^1_2(\L^1(N))$. We conclude that $T$ is decomposable with $\norm{T}_{\dec, \L^1(M) \to \L^1(N)} \leq \max\{\norm{((v_1)_{\w^*})_*},\norm{((v_2)_{\w^*})_*}\}=\max\{\norm{(v_1)_{\w^*}},\norm{(v_2)_{\w^*}}\} \leq \max\{\norm{v_1},\norm{v_2}\}$. Taking the infimum, we obtain the inequality $\norm{T}_{\dec, \L^1(M) \to \L^1(N)} \leq \norm{T^*}_{\dec,N \to M}$. 
\end{proof}

Let $M_1$, $M_2$ and $M_3$ be von Neumann algebras equipped with faithful normal semifinite traces. Suppose $1 \leq p \leq \infty$. Let $T_1 \co \L^p(M_1) \to \L^p(M_2)$ and $T_2 \co \L^p(M_2) \to \L^p(M_3)$ be some decomposable maps. It is easy to see
that the composition $T_2 \circ T_1$ is decomposable and that 
\begin{equation}
	\label{Composition-dec}
	\norm{T_2 \circ T_1}_{\dec} \leq \norm{T_2}_{\dec} \norm{T_1}_{\dec}.
\end{equation}

Let $M_1$, $M_2$ and $M_3$ be approximately finite-dimensional von Neumann algebras equipped with  normal semifinite faithful traces. Suppose $1 \leq p \leq \infty$. Let $T_1 \co \L^p(M_1) \to \L^p(M_2)$ and $T_2 \co \L^p(M_2) \to \L^p(M_3)$ be some regular maps. It is easy to see that the composition $T_2 \circ T_1$ is regular and that 
\begin{equation}
	\label{Composition-reg}
	\norm{T_2 \circ T_1}_{\reg} 
	\leq \norm{T_2}_{\reg} \norm{T_1}_{\reg}.
\end{equation}
Let $M$ and $N$ be approximately finite-dimensional von Neumann algebras equipped with normal semifinite faithful traces. Suppose $1 < p < \infty$. According to \cite[Corollary 3.3]{Pis4} and \cite[Theorem 3.7]{Pis4} (see also \cite[(6) page 264]{Pis14}), we have the isometric interpolation identity \footnote{\thefootnote. The compatibility means, roughly speaking, that the elements of $\CB(M,N) \cap \CB(\L^1(M),\L^1(N))$ are the maps simultaneous bounded from $M$ into $N$ and from $\L^1(M)$ into $\L^1(N)$.} 
\begin{equation}
\label{Regular-as-interpolation-space}
\Reg\big(\L^p(M),\L^p(N)\big) 
=\big(\CB_{\w^*}(M,N),\CB(\L^1(M),\L^1(N))\big)^\frac{1}{p}
\end{equation}
where we use the Cald\'eron's second method or upper method \cite[page~88]{BeL} and where the subscript w* means ``weak* continuous''. The replacement of the space $\CB(M,N)$ of \cite[Corollary 3.3]{Pis4} $\CB_{\w^*}(M,N)$ by is irrelevant thanks to Proposition \ref{Prop-recover-weak-star-continuity}. We prefer use weak* continuous maps on von Neumann algebras in the sequel.

By Lemma \ref{Lemma-op-mapping-1} and \eqref{Regular-as-interpolation-space}, note that we have isometrically
\begin{align*}
\MoveEqLeft
 \Reg(\L^p(M^\op),\L^p(N^\op))
=(\CB_{\w^*}(M^\op,N^\op),\CB(\L^1(M^\op),\L^1(N^\op)))^\frac{1}{p}\\
&=(\CB_{\w^*}(M,N),\CB(\L^1(M),\L^1(N)))^\frac{1}{p}
=\Reg(\L^p(M),\L^p(N)).           
\end{align*} 
So a map $T \co \L^p(M) \to \L^p(N)$ is regular if and only if the opposite map $T^\op \co \L^p(M^\op) \to \L^p(N^\op)$ is regular with equality of regular norms.

Suppose $1 \leq p < \infty$. Let $M$ and $N$ be hyperfinite von Neumann algebras equipped with normal faithful semifinite traces. A bounded map $T \co \L^p(M) \to \L^p(N)$ is regular if and only if the Banach adjoint map $T^* \co \L^{p^*}(N) \to \L^{p^*}(M)$ is regular. In this case, we have 
\begin{equation}
	\label{Duality-reg}
	\norm{T}_{\reg, \L^p(M) \to \L^p(N)}=\norm{T^*}_{\reg,\L^{p^*}(N)^\op \to \L^{p^*}(M)^\op}.
\end{equation}

\subsection{On the infimum of the decomposable norm}
\label{subsec-infimum-decomposable-norm}

\begin{prop}
\label{Prop-dec-inf-atteint}
Let $M$ and $N$ be two von Neumann algebras equipped with faithful normal semifinite traces. Suppose $1 \leq p \leq \infty$. Let $T \co \L^p(M) \to \L^p(N)$ be a decomposable map. Then the infimum in the definition of $\norm{T}_\dec$ is actually a minimum i.e. we can choose $v_1$ and $v_2$ in \eqref{Norm-dec} such that $\norm{T}_{\dec,\L^p(M) \to \L^p(N)}=\max\{\norm{v_1},\norm{v_2}\}$. 
\end{prop}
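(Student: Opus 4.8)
The plan is a weak*-compactness argument applied to a minimizing sequence of decompositions. Write $d=\norm{T}_{\dec,\L^p(M)\to\L^p(N)}$ and, for each $k\geq 1$, choose maps $v_1^{(k)},v_2^{(k)}\co\L^p(M)\to\L^p(N)$ for which $\Phi_k=\begin{bmatrix}v_1^{(k)} & T\\ T^\circ & v_2^{(k)}\end{bmatrix}$ is completely positive and $\max\{\norm{v_1^{(k)}},\norm{v_2^{(k)}}\}\leq d+\frac1k$. All these maps lie in a fixed ball of $\B(\L^p(M),\L^p(N))$, and the aim is to extract a limit pair $(v_1,v_2)$ that is still a valid decomposition and whose norm is $\leq d$.

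I would first treat the case $1<p\leq\infty$, where $\L^p(N)$ is a dual Banach space (with predual $\L^{p^*}(N)$ if $p<\infty$, and $N_*$ if $p=\infty$), hence so is $S^p_2(\L^p(N))$. By Banach--Alaoglu and Tychonoff, balls of $\B(\L^p(M),\L^p(N))$ are compact in the point weak* topology, so after passing to a subnet one may assume $v_i^{(k)}\to v_i$ in the point weak* topology, with $\norm{v_i}$ bounded. Since the off-diagonal entries $T$ and $T^\circ$ are fixed, pairing entrywise shows $\Phi_k\to\Phi:=\begin{bmatrix}v_1 & T\\ T^\circ & v_2\end{bmatrix}$ in the point weak* topology of $\B(S^p_2(\L^p(M)),S^p_2(\L^p(N)))$; viewing $S^p_2(\L^p(N))$ as a matrix ordered operator space dual (using Lemma \ref{lem-op-mappings} to absorb the opposite algebra arising from $\L^p(\cdot)^*=\L^{p^*}(\cdot)^\op$), Lemma \ref{lem-completely-positive-weak-limit}(2) gives that $\Phi$ is completely positive, so $(v_1,v_2)$ is admissible in \eqref{Norm-dec} and $d\leq\max\{\norm{v_1},\norm{v_2}\}$. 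Conversely, lower semicontinuity of the operator norm for the point weak* topology yields $\norm{v_i}\leq\liminf_k\norm{v_i^{(k)}}\leq d$; hence the two bounds coincide and the infimum is attained.

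For $p=1$ the target $\L^1(N)$ is not a dual space, so I would reduce to the previous case by duality. By Proposition \ref{Prop-Duality-dec}, $T^*\co N\to M$ is decomposable at the level $p=\infty$ with $\norm{T^*}_{\dec}=d$, and the case just proved provides $w_1,w_2$ realizing this decomposable norm for $T^*$. Then, exactly as in the $p=1$ part of the proof of Proposition \ref{Prop-Duality-dec}, I would replace the $w_i$ by $(w_i)_{\w^*}$ via the projection $P_{\w^*}$ of Proposition \ref{Prop-recover-weak-star-continuity} (which preserves complete positivity, does not increase norms, and fixes the already weak* continuous entries $T^*$ and $(T^*)^\circ$), and then pass to the Banach preadjoint, which by Lemma \ref{Lemma-dualite-calcul-adjoint} has the form $\begin{bmatrix}((w_1)_{\w^*})_* & T\\ T^\circ & ((w_2)_{\w^*})_*\end{bmatrix}$ and is completely positive by Lemma \ref{Lemma-adjoint-cp} together with Lemma \ref{lem-op-mappings}. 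Taking $v_i=((w_i)_{\w^*})_*$ then gives an admissible pair for $T$ with $\norm{v_i}\leq d$, so again the infimum is attained.

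The step I expect to require the most care is passing complete positivity to the point weak* limit: this needs $S^p_2(\L^p(N))$ to be recognized as a matrix ordered operator space dual so that Lemma \ref{lem-completely-positive-weak-limit}(2) applies, with attention to the opposite von Neumann algebras appearing in operator space duality. The other delicate point, in the $p=1$ reduction, is checking that the preadjoint of the $2\times2$ block matrix is again of the shape ``$v_1,T;\,T^\circ,v_2$'' with complete positivity intact (that is, $((T^*)^\circ)_*=T^\circ$), which is precisely the computation already carried out in the proof of Proposition \ref{Prop-Duality-dec}.
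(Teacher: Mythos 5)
Your proof is correct and follows essentially the same route as the paper's: take a minimizing sequence of admissible pairs, extract a subnet converging in a weak topology (the paper uses the weak operator topology via reflexivity for $1<p<\infty$ and cites Haagerup for $p=\infty$, which on bounded sets amounts to the same point weak* compactness you invoke), pass complete positivity to the limit via Lemma \ref{lem-completely-positive-weak-limit}, and conclude by lower semicontinuity of the norm. Your $p=1$ reduction by duality through Proposition \ref{Prop-Duality-dec} and the projection $P_{\w^*}$ is exactly the argument the paper indicates.
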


\begin{proof} 
See \cite[page 184]{Haa} for the case $p=\infty$. Suppose $1<p<\infty$. For any integer $n$, let $v_n, w_n \co \L^p(M) \to \L^p(N)$ be bounded maps such that the map $
\begin{bmatrix} 
v_n & T \\ 
T^\circ & w_n
\end{bmatrix} 
\co S^p_2(\L^p(M)) \to S^p_2(\L^p(N))
$ 
is completely positive with $\max\{\norm{v_n},\norm{w_n}\} \leq \norm{T}_\dec + \frac{1}{n}$. Note that since $\L^p(N)$ is reflexive, the closed unit ball of the space $\B(\L^p(M),\L^p(N))$ of bounded operators in the weak operator topology is compact. Hence the bounded sequences $(v_{n})$ and $(w_n)$ admit convergent subnets $(v_{\alpha})$ and $(w_{\alpha})$ in the weak operator topology which converge to some $v,w \in \B(\L^p(M),\L^p(N))$. Now, it is easy to see that 
$ 
\begin{bmatrix} 
v & T \\ 
T^\circ & w 
\end{bmatrix} 
= \lim_\alpha 
\begin{bmatrix} 
v_{\alpha} & T \\ 
T^\circ & w_{\alpha}
\end{bmatrix}
$ 
in the weak operator topology of $\B(S_2^p(\L^p(M)),S_2^p(\L^p(N)))$. By Lemma \ref{lem-completely-positive-weak-limit}, the operator on the left hand side is completely positive as a weak limit of completely positive mappings. Moreover, using the weak lower semicontinuity of the norm, we see that $\norm{v} \leq \liminf_\alpha \norm{v_{\alpha}} \leq \norm{T}_\dec$ and $\norm{w} \leq \liminf_\alpha \norm{w_{\alpha}} \leq \norm{T}_\dec$. Hence, we have $\max\{\norm{v},\norm{w}\}=\norm{T}_{\dec}$. 

The case $p=1$ can be proved by duality using the proof of Proposition \ref{Prop-Duality-dec}.
\end{proof}

\begin{remark} \normalfont
Suppose $1<p<\infty$. If $T \co \L^p(M) \to \L^p(N)$ is a contractively decomposable map, we ignore if we can find some linear maps $v_1,v_2$ such that the map $\Phi$ of \eqref{Matrice-2-2-Phi} is completely positive \textit{and} contractive.
\end{remark}

\subsection{The Banach space of decomposable operators}

\begin{prop}
\label{prop-dec-homogeneous}
Let $M$ and $N$ be von Neumann algebras equipped with faithful normal semifinite traces. Suppose $1 \leq p\leq \infty$. If $\lambda\in \C$ and $T \co \L^p(M) \to \L^p(N)$ is decomposable then the map $\lambda T$ is decomposable and $\norm{\lambda T}_{\dec,\L^p(M) \to \L^p(N)}=|\lambda|\, \norm{T}_{\dec,\L^p(M) \to \L^p(N)}$.
\end{prop}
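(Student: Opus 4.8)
The plan is to absorb both the modulus and the phase of $\lambda$ into the completely positive matrix $\Phi$ witnessing decomposability, using the conjugation maps of \eqref{conj-cp}. The case $\lambda=0$ is trivial: the zero map is decomposable with $v_1=v_2=0$ (so that $\Phi=0$ is completely positive) and $\norm{0}_\dec=0=|0|\,\norm{T}_\dec$. So I may assume $\lambda\neq 0$ and write $u=\lambda/|\lambda|\in\T$.

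First I would record the elementary identity $(\mu S)^\circ=\ovl{\mu}\,S^\circ$, valid for any scalar $\mu$ and any linear map $S$, which follows at once from $S^\circ(c)=S(c^*)^*$ together with linearity of $S$; in particular $(\lambda T)^\circ=\ovl\lambda\,T^\circ$ is again linear. Next, given $\epsi>0$, choose $v_1,v_2\co\L^p(M)\to\L^p(N)$ with $\Phi=\begin{bmatrix} v_1 & T \\ T^\circ & v_2 \end{bmatrix}\co S^p_2(\L^p(M))\to S^p_2(\L^p(N))$ completely positive and $\max\{\norm{v_1},\norm{v_2}\}\leq\norm{T}_\dec+\epsi$ (one could equally well take $v_1,v_2$ realizing the infimum, by Proposition \ref{Prop-dec-inf-atteint}). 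Let $D=\mathrm{diag}(1,u)\in\M_2$ and let $\mathcal{D}_M\co S^p_2(\L^p(M))\to S^p_2(\L^p(M))$, $x\mapsto D^*xD$, which is completely positive by \eqref{conj-cp}, being of the form $x\mapsto\alpha^*x\alpha$ with $\alpha\in\M_2$. A direct computation gives $\mathcal{D}_M\!\left(\begin{bmatrix} a & b \\ c & d\end{bmatrix}\right)=\begin{bmatrix} a & ub \\ \ovl{u}c & d\end{bmatrix}$, whence, using the linearity of $T$ and of $T^\circ$,
\[
|\lambda|\cdot\big(\Phi\circ\mathcal{D}_M\big)
=\begin{bmatrix} |\lambda|\,v_1 & \lambda T \\ \ovl{\lambda}\,T^\circ & |\lambda|\,v_2 \end{bmatrix}
=\begin{bmatrix} |\lambda|\,v_1 & \lambda T \\ (\lambda T)^\circ & |\lambda|\,v_2 \end{bmatrix}.
\]
The left-hand side is completely positive, being the composition of two completely positive maps post-multiplied by the nonnegative scalar $|\lambda|$. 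This exhibits $\lambda T$ as decomposable with $\norm{\lambda T}_\dec\leq|\lambda|\max\{\norm{v_1},\norm{v_2}\}\leq|\lambda|\,(\norm{T}_\dec+\epsi)$; letting $\epsi\to0$ yields $\norm{\lambda T}_\dec\leq|\lambda|\,\norm{T}_\dec$.

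For the reverse inequality I would apply the inequality just proved to the scalar $\lambda^{-1}$ and the decomposable map $\lambda T$, obtaining $\norm{T}_\dec=\norm{\lambda^{-1}(\lambda T)}_\dec\leq|\lambda|^{-1}\norm{\lambda T}_\dec$, i.e. $|\lambda|\,\norm{T}_\dec\leq\norm{\lambda T}_\dec$; combining the two gives equality. I do not anticipate a genuine obstacle here. The argument is uniform in $p$, including $p=\infty$, since \eqref{conj-cp} holds in that range, so no separate endpoint case is needed. The only point requiring care is the bookkeeping of complex conjugates: one must check that after conjugating by $D$ the $(2,1)$-entry becomes precisely $(\lambda T)^\circ=\ovl{\lambda}\,T^\circ$ rather than $\lambda T^\circ$, so that the resulting matrix has exactly the shape demanded in the definition \eqref{Matrice-2-2-Phi} of a decomposition.
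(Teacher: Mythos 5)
Your argument is correct and is essentially the paper's own proof: both absorb the phase of $\lambda$ by conjugating the witnessing completely positive map with $\mathrm{diag}(1,\lambda/|\lambda|)$ via \eqref{conj-cp} (conjugating on the domain as you do, or on the codomain as the paper does, gives the same matrix since $T$ and $T^\circ$ are linear), then scale by $|\lambda|$ and get the reverse inequality by applying the first one to $\lambda^{-1}$ and $\lambda T$. Your explicit treatment of $\lambda=0$ and the check that $(\lambda T)^\circ=\ovl{\lambda}\,T^\circ$ are the only (welcome) additions.
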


\begin{proof}
By symmetry, it suffices to prove $\norm{\lambda T}_{\dec} \leq |\lambda| \norm{T}_{\dec}$, since then $\norm{T}_{\dec} = \bnorm{\frac{1}{\lambda} \lambda T}_{\dec} \leq \frac{1}{|\lambda|} \norm{\lambda T}_{\dec}$. We can write $\lambda=|\lambda| \theta$ where $\theta$ is a complex number such that $|\theta|=1$. Assume that $v_1,v_2 \co \L^p(M) \to \L^p(N)$ are linear maps such that the map 
$\begin{bmatrix} 
v_1 & T \\ 
T^\circ & v_2
\end{bmatrix} 
\co S^p_2(\L^p(M)) \to S^p_2(\L^p(N))$ 
is completely positive. By \eqref{conj-cp}, the linear map 
$\begin{bmatrix} 
1 & 0 \\ 
0 & \theta 
\end{bmatrix} ^* 
\begin{bmatrix} 
v_1(\cdot) & T(\cdot) \\ 
T^\circ(\cdot) & v_2(\cdot) 
\end{bmatrix} 
\begin{bmatrix} 
1 & 0 \\ 
0 & \theta 
\end{bmatrix} $ 
is also completely positive on $S^p_2(\L^p(M))$. But it is easy to check that the latter operator equals 
$\begin{bmatrix} 
v_1 & \theta T \\ 
\ovl{\theta}T^\circ & v_2
\end{bmatrix} $. Thus the map $|\lambda| 
\cdot 
\begin{bmatrix} 
v_1 & \theta T \\ 
\ovl{\theta}T^\circ & v_2
\end{bmatrix} 
=\begin{bmatrix} 
|\lambda| v_1 & \lambda T \\ 
(\lambda T)^\circ & |\lambda| v_2
\end{bmatrix}$ is also completely positive. We deduce that $T$ is decomposable and that $\norm{\lambda T}_{\dec} \leq \max\big\{ \norm{|\lambda| v_1}, \norm{|\lambda| v_2} \big\} = |\lambda| \max\big\{ \norm{v_1}, \norm{v_2} \big\}$. Passing to the infimum yields the desired inequality $\norm{\lambda T}_{\dec} \leq |\lambda| \norm{T}_{\dec}$.
\end{proof}

It is not proved in \cite{JuR} that $\norm{\cdot}_{\dec,\L^p(M) \to \L^p(N)}$ is a norm.

\begin{prop}
\label{Prop-norm-dec}
Let $M$ and $N$ be two von Neumann algebras equipped with faithful normal semifinite traces. Suppose $1 \leq p \leq \infty$. Then $\Dec(\L^p(M),\L^p(N))$ is a vector space and $\norm{\cdot}_{\dec,\L^p(M) \to \L^p(N)}$ is a norm on $\Dec(\L^p(M),\L^p(N))$.
\end{prop}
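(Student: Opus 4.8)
The plan is to verify the three axioms of a norm, building entirely on results already established in the excerpt; this is essentially a matter of assembling pieces. Positive homogeneity, and in particular stability under multiplication by scalars, is precisely Proposition \ref{prop-dec-homogeneous}, and the zero map is decomposable (witnessed by $v_1=v_2=0$), so the work reduces to showing that $\Dec(\L^p(M),\L^p(N))$ is closed under addition with the triangle inequality, and that $\norm{\cdot}_{\dec,\L^p(M) \to \L^p(N)}$ is definite and finite-valued.

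For additivity I would argue as follows. Let $S,T \co \L^p(M) \to \L^p(N)$ be decomposable and fix $\epsi > 0$. Choose bounded maps $v_1,v_2$ and $w_1,w_2$ such that $\begin{bmatrix} v_1 & S \\ S^\circ & v_2 \end{bmatrix}$ and $\begin{bmatrix} w_1 & T \\ T^\circ & w_2 \end{bmatrix}$, viewed as maps $S^p_2(\L^p(M)) \to S^p_2(\L^p(N))$, are completely positive, with $\max\{\norm{v_1},\norm{v_2}\} \leq \norm{S}_{\dec} + \epsi$ and $\max\{\norm{w_1},\norm{w_2}\} \leq \norm{T}_{\dec} + \epsi$. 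Since $R \mapsto R^\circ$ is additive, namely $(S+T)^\circ(c) = (S+T)(c^*)^* = S(c^*)^* + T(c^*)^* = S^\circ(c) + T^\circ(c)$, and since a sum of completely positive maps is completely positive, the map $\begin{bmatrix} v_1 + w_1 & S + T \\ (S+T)^\circ & v_2 + w_2 \end{bmatrix}$ is completely positive; thus $S+T$ is decomposable. Using the elementary inequality $\max\{x+y,z+w\} \leq \max\{x,z\} + \max\{y,w\}$ one gets
\[
\norm{S+T}_{\dec} \leq \max\big\{\norm{v_1}+\norm{w_1},\, \norm{v_2}+\norm{w_2}\big\} \leq \norm{S}_{\dec} + \norm{T}_{\dec} + 2\epsi ,
\]
and letting $\epsi \to 0$ gives the triangle inequality. (Alternatively one could invoke Proposition \ref{Prop-dec-inf-atteint} and argue with optimal $v_i,w_i$, dispensing with $\epsi$ altogether.)

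It remains to prove definiteness, for which I would first record that $\norm{T}_{\dec}$ dominates the operator norm. Let $\Phi = \begin{bmatrix} v_1 & T \\ T^\circ & v_2 \end{bmatrix} \co S^p_2(\L^p(M)) \to S^p_2(\L^p(N))$ be completely positive and let $x \in \L^p(M)$ with $\norm{x}_{\L^p(M)} \leq 1$. By Proposition \ref{Lemma-Matricial-inequality3} with $n=1$, there exist $a,c \in \L^p(M)_+$ with $\norm{a}_{\L^p(M)} \leq 1$, $\norm{c}_{\L^p(M)} \leq 1$ and $\begin{bmatrix} a & x \\ x^* & c \end{bmatrix} \geq 0$ in $S^p_2(\L^p(M))$. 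Applying the positive map $\Phi$ and using $T^\circ(x^*) = T(x)^*$, the matrix $\begin{bmatrix} v_1(a) & T(x) \\ T(x)^* & v_2(c) \end{bmatrix}$ is positive in $S^p_2(\L^p(N))$, so Lemma \ref{Lemma-Matricial-inequality} yields
\[
\norm{T(x)}_{\L^p(N)} \leq \frac{1}{2^{1/p}}\big(\norm{v_1(a)}_{\L^p(N)}^p + \norm{v_2(c)}_{\L^p(N)}^p\big)^{1/p} \leq \max\{\norm{v_1},\norm{v_2}\}
\]
(read as the maximum when $p=\infty$). Taking the supremum over $x$ and then the infimum over all admissible $(v_1,v_2)$ gives $\norm{T}_{\L^p(M) \to \L^p(N)} \leq \norm{T}_{\dec,\L^p(M) \to \L^p(N)}$; in particular $\norm{T}_{\dec} = 0$ forces $T=0$. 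Finiteness of $\norm{T}_{\dec}$ for decomposable $T$ is automatic, since in any decomposition the maps $v_1,v_2$ are completely positive, hence bounded. No genuine obstacle arises; the only point demanding a little care is the degenerate value of the constant $2^{-1/p}$ in Lemma \ref{Lemma-Matricial-inequality} at $p=\infty$, which is already handled there.
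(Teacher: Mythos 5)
Your proof is correct, and the additivity and homogeneity steps are exactly the paper's: sum the two completely positive $2\times 2$ block maps and use $\max\{x+y,z+w\}\leq\max\{x,z\}+\max\{y,w\}$, then quote Proposition \ref{prop-dec-homogeneous}. Where you genuinely diverge is the separation property. The paper splits into cases: for $\QWEP$ algebras it invokes Proposition \ref{Prop-cb-leq-dec} (which needs Theorem \ref{quest-cp-versus-cb} to pass from $\norm{v_i}_{\cb}$ to $\norm{v_i}$), and otherwise it assumes $\norm{T}_{\dec}=0$, uses Proposition \ref{Prop-dec-inf-atteint} to realize the infimum with $v_1=v_2=0$, and then deduces $T(b)=0$ from Proposition \ref{Lemma-Matricial-inequality3} and Lemma \ref{Lemma-Matricial-inequality}. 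You instead run the same two matrix lemmas at the level $n=1$ with an \emph{arbitrary} admissible pair $(v_1,v_2)$, obtaining the quantitative inequality $\norm{T}_{\L^p(M)\to\L^p(N)}\leq\norm{T}_{\dec,\L^p(M)\to\L^p(N)}$ for every decomposable $T$. This buys three things: you never need the infimum to be attained (so Proposition \ref{Prop-dec-inf-atteint}, whose $p=1$ case is itself nontrivial, is not used); you need no $\QWEP$ hypothesis, since for $n=1$ the estimate $\norm{v_i(a)}\leq\norm{v_i}\,\norm{a}$ replaces the completely bounded norm estimate that forces $\QWEP$ in Proposition \ref{Prop-cb-leq-dec}; and you record a useful inequality (operator norm dominated by decomposable norm, for arbitrary von Neumann algebras) rather than only the qualitative fact $\norm{T}_{\dec}=0\Rightarrow T=0$. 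The only cost is that your argument is a genuine $\epsi$/infimum argument rather than a direct evaluation at the optimizer, which is harmless. All the ingredients you cite (complete positivity of $v_1,v_2$, their boundedness via Proposition \ref{prop-cp-imply-bounded}, the reading of the constant at $p=\infty$) are correctly handled.
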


\begin{proof}
Let $T_1,T_2 \co \L^p(M) \to \L^p(N)$ be decomposable maps. There exist some linear maps $v_1,v_2,w_1,w_2 \co \L^p(M) \to \L^p(N)$ such that $\begin{bmatrix} 
   v_1  &  T_1 \\
   T_1^\circ  &  v_2  \\
\end{bmatrix}$ and $\begin{bmatrix} 
    w_1  &   T_2 \\
    T_2^\circ  &  w_2 \\
\end{bmatrix}$ are completely positive. We can write 
$
\begin{bmatrix} 
   v_1  &  T_1 \\
   T_1^\circ  &  v_2  \\
\end{bmatrix}
+
\begin{bmatrix} 
    w_1  &   T_2 \\
    T_2^\circ  &  w_2 \\
\end{bmatrix}
=
\begin{bmatrix} 
   v_1+ w_1  &  T_1+T_2 \\
   T_1^\circ + T_2^\circ  &   v_2 + w_2  \\
\end{bmatrix}
=
\begin{bmatrix} 
   v_1+w_1              &  T_1 + T_2 \\
   (T_1+T_2)^\circ  &   v_2+w_2  \\
\end{bmatrix}
$. 
Moreover, this map is completely positive. Hence $T_1+T_1$ is decomposable. Furthermore, we deduce that
\begin{align*}
\MoveEqLeft
  \norm{T_1+T_2}_{\dec}  
		\leq \max\big\{\norm{v_1+w_1},\norm{v_2+w_2}\big\}\\
		&\leq \max\big\{\norm{v_1}+\norm{w_1}, \norm{v_2}+\norm{w_2}\big\}
		\leq \max\big\{\norm{v_1},\norm{v_2}\big\}+ \max\big\{\norm{w_1},\norm{w_2}\big\}.
\end{align*}
Passing to the infimum, we conclude that the sum $T_1+T_2$ is decomposable and we obtain the inequality $\label{prop-dec-triangle-inequality} \norm{T_1+T_2}_{\dec} \leq \norm{T_1}_{\dec}+\norm{T_2}_{\dec}$. The absolute homogeneity is Proposition \ref{prop-dec-homogeneous}. For the separation property, we can use Proposition \ref{Prop-cb-leq-dec} if the von Neumann algebras are $\QWEP$. If it is not the case, suppose $\norm{T}_\dec=0$. By Proposition \ref{Prop-dec-inf-atteint}, the map $
\begin{bmatrix} 
0 & T \\ 
T^\circ & 0
\end{bmatrix} 
\co S^p_2(\L^p(M)) \to S^p_2(\L^p(N))
$ is completely positive. Now, let $b \in \L^p(M)$ with $\norm{b}_{\L^p(M)} \leq 1$. By Proposition \ref{Lemma-Matricial-inequality3} there exist some $a, c \in \L^p(M)$ with $\norm{a}_{\L^p(M)} \leq 1$ and $\norm{c}_{\L^p(M)} \leq 1$ such that the element
$
\begin{bmatrix}
a   & b\\
b^* & c
\end{bmatrix}
$
of $S^p_{2}(\L^p(M))$ is positive. We deduce that the element $
\begin{bmatrix}
0   & T(b)\\
T(b)^* & 0
\end{bmatrix}$ 
is also positive. Using Lemma \ref{Lemma-Matricial-inequality}, we infer that $T(b)=0$. We conclude that $T=0$.
\end{proof}

\begin{lemma}
\label{Lemma-T-cic-decomposable}
Let $M$ and $N$ be von Neumann algebras equipped with faithful normal semifinite traces. Suppose $1 \leq p \leq \infty$ and let $T \co \L^p(M) \to \L^p(N)$ be a decomposable map. Then $T^\circ \co  \L^p(M) \to \L^p(N)$ defined by $T^\circ(x) = (T(x^*))^*$ is also decomposable and we have $\norm{T}_{\dec} = \|T^{\circ}\|_{\dec}$.
\end{lemma}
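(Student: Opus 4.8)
The plan is to exploit the elementary identity $(T^\circ)^\circ = T$ together with the "flip trick" already used in the proof of Lemma \ref{Lemma-op-mappings-3}. First I would unfold the definition: if $T \co \L^p(M) \to \L^p(N)$ is decomposable, then for every $\epsi > 0$ there are linear maps $v_1, v_2 \co \L^p(M) \to \L^p(N)$ such that the map
$\Phi = \begin{bmatrix} v_1 & T \\ T^\circ & v_2 \end{bmatrix} \co S^p_2(\L^p(M)) \to S^p_2(\L^p(N))$
from \eqref{Matrice-2-2-Phi} is completely positive with $\max\{\norm{v_1}, \norm{v_2}\} \leq \norm{T}_{\dec} + \epsi$.

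Next I would introduce the flip maps $\mathcal{F}_M \co S^p_2(\L^p(M)) \to S^p_2(\L^p(M))$ and $\mathcal{F}_N$, sending $\begin{bmatrix} a & b \\ c & d \end{bmatrix}$ to $\begin{bmatrix} d & c \\ b & a \end{bmatrix}$, exactly as in the proof of Lemma \ref{Lemma-op-mappings-3}. These are conjugations by the permutation matrix $\begin{bmatrix} 0 & 1 \\ 1 & 0 \end{bmatrix}$, hence completely positive by \eqref{conj-cp} (using the identification $S^p_2(\L^p(M))_+ = \M_2(\L^p(M))_+$ from \eqref{Def-matricial-cones-Lp}). A direct computation then gives
$$
\mathcal{F}_N \circ \Phi \circ \mathcal{F}_M
= \begin{bmatrix} v_2 & T^\circ \\ T & v_1 \end{bmatrix}
= \begin{bmatrix} v_2 & T^\circ \\ (T^\circ)^\circ & v_1 \end{bmatrix},
$$
which is completely positive as a composition of completely positive maps. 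This is precisely the defining datum (with $w_1 = v_2$, $w_2 = v_1$) witnessing that $T^\circ$ is decomposable, and it yields $\norm{T^\circ}_{\dec} \leq \max\{\norm{v_2}, \norm{v_1}\} \leq \norm{T}_{\dec} + \epsi$. Letting $\epsi \to 0$ gives $\norm{T^\circ}_{\dec} \leq \norm{T}_{\dec}$.

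Finally, since $(T^\circ)^\circ = T$, I would apply the inequality just obtained to $T^\circ$ in place of $T$, which gives the reverse inequality $\norm{T}_{\dec} = \norm{(T^\circ)^\circ}_{\dec} \leq \norm{T^\circ}_{\dec}$, and hence equality. There is no genuine obstacle in this argument; the only point meriting a line of verification is that the flip is completely positive on the vector-valued Schatten space $S^p_2(\L^p(\cdot))$, which is immediate from \eqref{conj-cp}.
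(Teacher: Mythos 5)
Your proof is correct and follows essentially the same route as the paper: conjugate the completely positive map $\begin{bmatrix} v_1 & T \\ T^\circ & v_2 \end{bmatrix}$ by the flip maps $\mathcal{F}_M$, $\mathcal{F}_N$ (completely positive by \eqref{conj-cp}) to obtain $\begin{bmatrix} v_2 & T^\circ \\ T & v_1 \end{bmatrix}$, giving $\|T^\circ\|_{\dec} \leq \norm{T}_{\dec}$, and then use $(T^\circ)^\circ = T$ for the equality. The only cosmetic difference is that you phrase the infimum via an explicit $\epsi$ while the paper passes to the infimum directly.
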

\begin{proof}
Consider some completely positive maps $v_1,v_2 \co \L^p(M) \to \L^p(N)$ such that
$\begin{bmatrix}
v_1 & T \\ 

T^{\circ} & v_2
\end{bmatrix}$ is completely positive. Using \eqref{conj-cp}, note that the map
$$ 
\mathcal{F}_M \co S^p_2(\L^p(M)) \to S^p_2(\L^p(M)),\: 
\begin{bmatrix}
a & b \\ c & d
\end{bmatrix}
\mapsto
\begin{bmatrix}
0 & 1 \\ 
1 & 0
\end{bmatrix}
\begin{bmatrix}
a & b \\ 
c & d
\end{bmatrix}
\begin{bmatrix}
0 & 1 \\ 
1 & 0
\end{bmatrix}
=\begin{bmatrix}
d & c \\ 
b & a
\end{bmatrix}
$$
is completely positive and similarly $\mathcal{F}_N \co S^p_2(\L^p(N)) \to S^p_2(\L^p(N))$. We deduce that the map
$$
\begin{bmatrix}
v_2 & T^{\circ} \\ 
T & v_1
\end{bmatrix}
= \mathcal{F}_N \circ
\begin{bmatrix}
v_1 & T \\ 
T^{\circ} & v_2
\end{bmatrix}
\circ \mathcal{F}_M
$$
is completely positive. Hence $T^\circ$ is decomposable and $\norm{T^\circ}_{\dec} \leq \max\{\norm{v_1},\norm{v_2}\}$. Passing to the infimum gives $\|T^{\circ}\|_{\dec} \leq \norm{T}_{\dec}$. Since $(T^\circ)^\circ = T$, we even have $\|T^\circ\|_{\dec} = \norm{T}_{\dec}$.
\end{proof}

\begin{prop}
\label{prop-decomposable-Banach-space}
Let $M$ and $N$ be two von Neumann algebras equipped with faithful normal semifinite traces. Suppose $1 \leq p \leq \infty$. Then the space $\Dec(\L^p(M),\L^p(N))$ is a Banach space with respect to the norm $\norm{\cdot}_{\dec,\L^p(M) \to \L^p(N)}$.
\end{prop}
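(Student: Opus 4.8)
The statement to prove is that $\Dec(\L^p(M),\L^p(N))$ is a Banach space when $M,N$ are $\QWEP$, i.e.\ that the normed space established in Propositions~\ref{prop-dec-homogeneous}--\ref{Prop-norm-dec} is complete. The natural strategy is to show that $\Dec(\L^p(M),\L^p(N))$ is a closed subspace of a complete space, or directly that Cauchy sequences in $\norm{\cdot}_\dec$ converge within $\Dec$. The key comparison we may invoke is Proposition~\ref{Prop-cb-leq-dec}, which (for $\QWEP$ von Neumann algebras) gives $\norm{T}_{\cb,\L^p(M)\to\L^p(N)} \leq \norm{T}_{\dec,\L^p(M)\to\L^p(N)}$; in particular $\norm{T}_{\L^p(M)\to\L^p(N)} \leq \norm{T}_\dec$, so a $\norm{\cdot}_\dec$-Cauchy sequence is automatically Cauchy in operator norm and converges to some bounded $T$. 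The real content is to verify that the limit $T$ is again decomposable with $\norm{T}_\dec \leq \liminf_n \norm{T_n}_\dec$.

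Here is how I would carry it out. Let $(T_n)$ be a $\norm{\cdot}_\dec$-Cauchy sequence. By passing to a subsequence we may assume $\norm{T_{n+1}-T_n}_\dec \leq 2^{-n}$, so $\sum_n (T_{n+1}-T_n)$ converges absolutely in $\norm{\cdot}_\dec$; by the operator-norm domination just mentioned, $T_n \to T$ in operator norm (and in $\cb$-norm) for some $T \in \CB(\L^p(M),\L^p(N))$. For each $n$, using Proposition~\ref{Prop-dec-inf-atteint} (the infimum is attained), choose completely positive $v_1^n, v_2^n$ with
\[
\Phi_n = \begin{bmatrix} v_1^n & T_n \\ T_n^\circ & v_2^n \end{bmatrix} \co S^p_2(\L^p(M)) \to S^p_2(\L^p(N))
\]
completely positive and $\max\{\norm{v_1^n},\norm{v_2^n}\} = \norm{T_n}_\dec =: c_n$, where $c_n \to c$ for some finite $c$ (the sequence $c_n$ is itself Cauchy in $\R$ by the triangle inequality in Proposition~\ref{Prop-norm-dec}). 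Since $\L^p(N)$ is reflexive (for $1<p<\infty$), the closed balls of $\B(\L^p(M),\L^p(N))$ are compact in the weak operator topology, so after passing to a further subnet $v_i^n \to v_i$ WOT for $i=1,2$, with $\norm{v_i} \leq \liminf_n c_n = c$. Then $\Phi_n \to \begin{bmatrix} v_1 & T \\ T^\circ & v_2 \end{bmatrix}$ in the weak operator topology of $\B(S^p_2(\L^p(M)),S^p_2(\L^p(N)))$ (here one uses that $T_n^\circ \to T^\circ$, which is immediate from $T_n \to T$). By Lemma~\ref{lem-completely-positive-weak-limit}, the limit map is completely positive, so $T$ is decomposable with $\norm{T}_\dec \leq \max\{\norm{v_1},\norm{v_2}\} \leq c = \lim_n \norm{T_n}_\dec$. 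Applying the same argument to tails of the sequence gives $\norm{T-T_n}_\dec \to 0$, so $T_n \to T$ in $\Dec$, proving completeness in the range $1<p<\infty$.

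The endpoint cases $p=1$ and $p=\infty$ need a small modification because $\L^p(N)$ is no longer reflexive, so WOT-compactness of operator balls fails. For $p=\infty$ one works instead with the point-weak* topology on $\CB_{\w^*}(M,N)$ (or $\B(M,N)$), using that bounded balls of $\B(M,N)$ are point-weak* compact, and applies part~(2) of Lemma~\ref{lem-completely-positive-weak-limit} together with the weak*-continuity projection $P_{\w^*}$ of Proposition~\ref{Prop-recover-weak-star-continuity} to keep the limiting maps weak* continuous. For $p=1$ one dualizes: by Proposition~\ref{Prop-Duality-dec} the map $T \mapsto T^*$ is an isometric (bijective) correspondence between $\Dec(\L^1(M),\L^1(N))$ and $\Dec(N,M)$, so completeness in the case $p=1$ follows from completeness in the case $p=\infty$ already treated. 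I expect the main obstacle to be precisely the non-reflexive endpoints: one must be careful that the subnet limits of $v_1^n,v_2^n$ remain in the correct space (weak* continuous maps, resp.\ preduals), which is exactly what $P_{\w^*}$ and the duality of Proposition~\ref{Prop-Duality-dec} are designed to handle, mirroring the structure already used in the proofs of Propositions~\ref{Prop-Duality-dec} and~\ref{Prop-dec-inf-atteint}.
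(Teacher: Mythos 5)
Your proof is correct, but it follows a genuinely different route from the paper's. You extract weak limits of the $2\times 2$ witnesses: choose $v_1^n,v_2^n$ realizing $\norm{T_n}_{\dec}$ via Proposition~\ref{Prop-dec-inf-atteint}, use reflexivity of $\L^p(N)$ (resp.\ weak* compactness of balls of $\B(M,N)$ at $p=\infty$, duality at $p=1$) to pass to a WOT-convergent subnet, and invoke Lemma~\ref{lem-completely-positive-weak-limit}; in effect you are reproving the lower-semicontinuity statement that the paper records separately as Lemma~\ref{lem-decomposable-weak-limit} and then applying it to the tails of the Cauchy sequence. The paper instead avoids all compactness arguments: it reduces to selfadjoint $T_n$ by writing $T_n=U_n+\i V_n$ with $U_n=\tfrac12(T_n+T_n^\circ)$, $V_n=\tfrac{1}{2\i}(T_n-T_n^\circ)$ (using Lemma~\ref{Lemma-T-cic-decomposable} to control their $\dec$-norms), then uses the characterization of Proposition~\ref{prop-decomposable-et-selfadjoint} to dominate each $T_n$ by a completely positive $S_n$ with $\norm{S_n}\leq 2\norm{T_n}_{\dec}$, and controls the tails by the absolutely convergent series $R_k=\sum_{n>k}S_n$, giving $-R_k\leq_{\cp} T-\sum_{n\leq k}T_n\leq_{\cp} R_k$ and hence $\norm{T-\sum_{n\leq k}T_n}_{\dec}\leq\norm{R_k}\to 0$ uniformly in $p$, with no case distinction at the endpoints. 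Your approach has the merit of isolating the fact that $\Dec$ is closed under bounded WOT limits (which holds without $\QWEP$; $\QWEP$ enters only through $\norm{\cdot}\leq\norm{\cdot}_{\dec}$), while the paper's buys a single argument valid for all $1\leq p\leq\infty$ at the cost of the selfadjoint-reduction machinery. One small point to tighten in your $p=1$ case: $T\mapsto T^*$ maps $\Dec(\L^1(M),\L^1(N))$ onto the \emph{weak* continuous} decomposable maps $N\to M$, not onto all of $\Dec(N,M)$; this is harmless because the $\dec$-limit $S$ of $(T_n^*)$ is also the operator-norm limit, hence equals $T^*$ and is automatically weak* continuous, but the word ``bijective'' as stated is not accurate.
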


\begin{proof}
Note first that $\norm{T}_{\L^p(M) \to \L^p(N)} \leq \norm{T}_{\dec,\L^p(M) \to \L^p(N)}$ for any decomposable map $T$. Indeed, for given $\epsi > 0$, let $v_1,v_2 \co \L^p(M) \to \L^p(N)$ be completely positive maps such that $\begin{bmatrix} v_1 & T \\ T^\circ & v_2 \end{bmatrix}$ is completely positive and $\max\{\norm{v_1},\norm{v_2}\} \leq \norm{T}_{\dec} + \epsi$.
Let $b \in \L^p(M)$ of norm less than one.
According to Proposition \ref{Lemma-Matricial-inequality3}, there exist $a,c \in \L^p(M)$ of norm less than one such that $\begin{bmatrix} a & b \\b^* & c \end{bmatrix}$ is positive.
Thus, $\begin{bmatrix}v_1(a) & T(b) \\T^\circ(b^*) & v_2(c) \end{bmatrix}$ is positive.
Then by Lemma \ref{Lemma-Matricial-inequality}, $\norm{T(b)}_p \leq \sqrt{\norm{v_1(a)}_p \norm{v_2(c)}_p} \leq \max\{\norm{v_1},\norm{v_2}\}\sqrt{\norm{a}_p \norm{c}_p} \leq \norm{T}_{\dec} + \epsi$.
Letting $\epsi \to 0$ shows that $\norm{T}_{\L^p(M) \to \L^p(N)} \leq \norm{T}_{\dec,\L^p(M) \to \L^p(N)}$.

Thus, if $(T_n)$ is a sequence in $\Dec(\L^p(M),\L^p(N))$ such that $\sum_{n = 1}^\infty \norm{T_n}_{\dec} < \infty$, we have that $\sum_{n = 1}^\infty T_n$ converges in $\B(\L^p(M),\L^p(N))$ with sum $T$. Let $v_{1,n},v_{2,n}$ be maps such that $\begin{bmatrix} v_{1,n} & T_n \\ 
T_n^\circ & v_{2,n} \end{bmatrix}$ is completely positive with $\max\{\norm{v_{1,n}},\norm{v_{2,n}}\} \leq \norm{T_n}_{\dec} + \epsi 2^{-n}$.
Then the series $\dsp \sum_{n = 1}^\infty \begin{bmatrix} v_{1,n} & T_n \\T_n^\circ & v_{2,n} \end{bmatrix}$ converges in $\B(S^p_2(\L^p(M)),S^p_2(\L^p(N)))$ and is completely positive by Lemma \ref{lem-completely-positive-weak-limit}. 
 With $v_i \ov{\mathrm{def}}{=} \sum_{n = 1}^\infty v_{i,n}$ where $i=1,2$, we infer that $\begin{bmatrix} v_1 & T \\ 
T^\circ & v_2 \end{bmatrix}$ 
is completely positive. So $T$ is decomposable with $\norm{T}_{\dec} \leq \max\{\norm{v_1},\norm{v_2}\} \leq \sum_{n = 1}^\infty \max\{\norm{v_{1,n}},\norm{v_{2,n}}\} \leq \epsi + \sum_{n = 1}^\infty \norm{T_n}_{\dec}$.
Finally, replacing $T$ by $T - \sum_{n = 1}^N T_n$ in the previous argument shows that $\norm{T-\sum_{n=1}^N T_n}_\dec \leq \epsi+\sum_{n=N+1}^{\infty} \norm{T_n}_\dec$. Hence $(\sum_{n=1}^N T_n)$ converges in $\Dec(\L^p(M),\L^p(N))$ to $T$.
\end{proof}

\begin{prop}
\label{quest-cp-versus-dec1}
Let $M$ and $N$ be two von Neumann algebras equipped with faithful normal semifinite traces. Suppose $1 \leq p \leq \infty$. Let $T \co \L^p(M) \to \L^p(N)$ be a completely positive map. Then $T$ is decomposable and $\norm{T}_{\dec,\L^p(M) \to \L^p(N)} \leq \norm{T}_{\L^p(M) \to \L^p(N)}$. 
\end{prop}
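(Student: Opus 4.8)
The plan is to exhibit an explicit decomposition of $T$: I would take $v_1 = v_2 = T$ in \eqref{Matrice-2-2-Phi} and check that the resulting $2 \times 2$ block map is completely positive, which is almost immediate from results already established.

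First I would record two elementary facts about a completely positive $T \co \L^p(M) \to \L^p(N)$. It is bounded: the argument of Proposition \ref{prop-cp-imply-bounded} applies verbatim to maps with distinct source and target, using only positivity of $T$ (so in particular the quantity $\norm{T}_{\L^p(M) \to \L^p(N)}$ on the right-hand side makes sense). It is also selfadjoint in the sense that $T(x^*) = T(x)^*$ for every $x \in \L^p(M)$: writing a selfadjoint $x$ as $x = x_+ - x_-$ with $x_\pm \in \L^p(M)_+$, positivity of $T$ forces $T(x_\pm) \in \L^p(N)_+ \subset \L^p(N)_{\mathrm{sa}}$, so $T(x)$ is selfadjoint, and the general case follows by writing $x = a + \i b$ with $a,b$ selfadjoint. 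In particular $T^\circ = T$.

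Next I would apply Lemma \ref{Lemma-cp-S1E} with $n = 2$ to the matrix ordered operator spaces $E = \L^p(M)$ and $F = \L^p(N)$ (which carry this structure by Proposition \ref{prop-LpM-is-matrix-ordered}): since $T$ is bounded and completely positive, the ampliation $\Id_{S^p_2} \ot T \co S^p_2(\L^p(M)) \to S^p_2(\L^p(N))$ is completely positive. Because $T^\circ = T$, this ampliation is precisely the map $\Phi$ of \eqref{Matrice-2-2-Phi} corresponding to the choice $v_1 = v_2 = T$ (both send a block matrix $[a,b;c,d]$ to the block matrix with entries $T(a)$, $T(b)$, $T^\circ(c)=T(c)$, $T(d)$). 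Hence $T$ is decomposable, and \eqref{Norm-dec} gives $\norm{T}_{\dec, \L^p(M) \to \L^p(N)} \leq \max\{\norm{T}, \norm{T}\} = \norm{T}_{\L^p(M) \to \L^p(N)}$.

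There is no real obstacle here; the only point deserving a moment's attention is the identification of $\Id_{S^p_2} \ot T$ with the block map $\Phi$, which relies precisely on the equality $T^\circ = T$ for completely positive $T$ and on having Lemma \ref{Lemma-cp-S1E} available for noncommutative $\L^p$-spaces regarded as matrix ordered operator spaces.
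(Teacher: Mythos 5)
Your proof is correct and follows essentially the same route as the paper: the paper's own argument also takes $v_1=v_2=T$ and invokes Lemma \ref{Lemma-cp-S1E} to see that the block map $\begin{bmatrix} T & T \\ T & T \end{bmatrix}$ is completely positive, from which decomposability and the norm bound follow. You merely make explicit two points the paper leaves implicit (boundedness via the argument of Proposition \ref{prop-cp-imply-bounded} and the identity $T^\circ = T$ needed to match this block map with \eqref{Matrice-2-2-Phi}), both of which are handled correctly.
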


\begin{proof}
Using Lemma \ref{Lemma-cp-S1E}, we see that the linear map $\begin{bmatrix} 
T & T \\ 
T & T
\end{bmatrix} 
\co S^p_2(\L^p(M)) \to S^p_2(\L^p(N))$ is completely positive. We infer that $T$ is decomposable and that the inequality is true.
\end{proof}

\begin{prop}
\label{prop-linearcp-imply-decomposable}\label{prop-decomposable-se-decompose-en-cp}
Let $M$ and $N$ be von Neumann algebras equipped with faithful normal semifinite traces. Suppose $1 \leq p \leq \infty$. Let $T \co \L^p(M) \to \L^p(N)$ be a linear map. Then the following are equivalent.
\begin{enumerate}
	\item The map $T$ is decomposable.
	\item The map $T$ belongs to the span of the completely positive maps from $\L^p(M)$ into $\L^p(N)$.
	\item There exist some completely positive maps $T_1,T_2,T_3,T_4 \co \L^p(M) \to \L^p(N)$ such that 
	$$
	T
	=T_1-T_2+\i(T_3-T_4).
	$$
\end{enumerate}
If the latter case is satisfied, we have $\norm{T}_{\dec,\L^p(M) \to \L^p(N)} \leq \norm{T_1+T_2+T_3+T_4}_{\L^p(M) \to \L^p(N)}$.
\end{prop}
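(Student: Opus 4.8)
The plan is to establish the cycle (3) $\Rightarrow$ (2) $\Rightarrow$ (1) $\Rightarrow$ (3), the last implication being the only one requiring genuine work. The implication (3) $\Rightarrow$ (2) is immediate from the definition of the span. For (2) $\Rightarrow$ (1), write $T = \sum_k \lambda_k T_k$ with each $T_k$ completely positive; by Proposition \ref{quest-cp-versus-dec1} each $T_k$ is decomposable, and by Proposition \ref{Prop-norm-dec} the space $\Dec(\L^p(M),\L^p(N))$ is a vector space, so $T$ is decomposable. It remains to prove (1) $\Rightarrow$ (3) together with the norm estimate.

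For (1) $\Rightarrow$ (3): suppose $T$ is decomposable. By Proposition \ref{Prop-dec-inf-atteint} the infimum defining $\norm{T}_\dec$ is attained, so there exist completely positive maps $v_1,v_2 \co \L^p(M) \to \L^p(N)$ such that
$$
\Phi=\begin{bmatrix} v_1 & T \\ T^\circ & v_2 \end{bmatrix} \co S^p_2(\L^p(M)) \to S^p_2(\L^p(N))
$$
is completely positive. The idea is to extract $T$ from $\Phi$ by compressing with suitable row/column matrices and to split into real and imaginary parts. Concretely, for $\alpha,\beta \in \M_{1,2}$ the map $x \mapsto \alpha^* \Phi(\beta x \beta^*) \alpha$-type compressions are completely positive by \eqref{conj-cp}; choosing $\beta = \begin{bmatrix} 1 & 0 \end{bmatrix}$ and $\beta = \begin{bmatrix} 0 & 1 \end{bmatrix}$ recovers $v_1,v_2$, while off-diagonal compressions involving $\begin{bmatrix} 1 & 1 \end{bmatrix}$ and $\begin{bmatrix} 1 & \i \end{bmatrix}$ produce completely positive maps of the form $v_1 + v_2 \pm (T + T^\circ)$ and $v_1 + v_2 \pm \i(T - T^\circ)$ (up to scalar factors). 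Subtracting pairs of these and dividing by $2$ yields completely positive maps $T_1,T_2$ with $T_1 - T_2 = \frac12(T + T^\circ)$ and completely positive maps $T_3,T_4$ with $T_3 - T_4 = \frac{1}{2\i}(T - T^\circ)$, hence $T = T_1 - T_2 + \i(T_3 - T_4)$, which gives (3). Here one must be slightly careful that compression by a matrix $\begin{bmatrix} 1 & \i \end{bmatrix}$ involves its adjoint $\begin{bmatrix} 1 & -\i \end{bmatrix}$ on the other side, so that the $T^\circ$ entry contributes with the correct conjugated scalar; tracking these scalars is the main (though routine) bookkeeping obstacle.

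For the norm estimate $\norm{T}_\dec \leq \norm{T_1 + T_2 + T_3 + T_4}$, given completely positive maps $T_1,\dots,T_4$ with $T = T_1 - T_2 + \i(T_3 - T_4)$, set $S = T_1 + T_2 + T_3 + T_4$, which is completely positive, hence bounded with $\norm{S} = \norm{T_1+T_2+T_3+T_4}$. The goal is to produce a witness $\begin{bmatrix} S' & T \\ T^\circ & S'' \end{bmatrix}$ that is completely positive with $\max\{\norm{S'},\norm{S''}\} \leq \norm{S}$. The natural candidate is to take $v_1 = v_2 = S$ and check that $\begin{bmatrix} S & T \\ T^\circ & S \end{bmatrix}$ is completely positive; by Lemma \ref{Lemma-cp-S1E} and Lemma \ref{Lemma-passage1} it suffices to verify $-S \leq_\cp T \leq_\cp S$ in the appropriate self-adjoint sense — but $T$ need not be self-adjoint, so instead one writes $\begin{bmatrix} S & T \\ T^\circ & S \end{bmatrix}$ as a sum over $k$ of the blocks $\begin{bmatrix} T_k & \pm T_k \text{ or } \pm \i T_k \\ \text{conj} & T_k \end{bmatrix}$, each of which is a compression $w^* (\Id \ot T_k) w$ of the completely positive $\begin{bmatrix} T_k & T_k \\ T_k & T_k \end{bmatrix}$ (which is completely positive by Lemma \ref{Lemma-cp-S1E}) by a suitable $2\times 2$ scalar matrix $w$ with entries in $\{1,\i\}$, hence completely positive by \eqref{conj-cp}. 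Summing gives complete positivity of the witness, and the definition of $\norm{T}_\dec$ as an infimum (a minimum, by Proposition \ref{Prop-dec-inf-atteint}) yields $\norm{T}_\dec \leq \norm{S}$. The only delicate point is again matching the scalar $\pm 1, \pm\i$ in the off-diagonal entries with the conjugated scalar forced by the adjoint in the compression, which determines the precise $w$ to use in each summand.
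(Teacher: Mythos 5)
Your proposal is correct and follows essentially the same route as the paper: the implications (3) $\Rightarrow$ (2) $\Rightarrow$ (1) via Propositions \ref{quest-cp-versus-dec1} and \ref{Prop-norm-dec}, the implication (1) $\Rightarrow$ (3) by compressing $\Phi$ with the row vectors $\begin{bmatrix} 1 & \pm 1 \end{bmatrix}$ and $\begin{bmatrix} 1 & \pm\i \end{bmatrix}$ using \eqref{conj-cp}, and the norm estimate by exhibiting $v_1=v_2=T_1+T_2+T_3+T_4$ as a witness built from diagonal conjugations of the completely positive blocks $\begin{bmatrix} T_k & T_k \\ T_k & T_k \end{bmatrix}$ (the paper only sketches this last step in a footnote, so your explicit verification is a welcome expansion, not a deviation). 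The only superfluous step is invoking Proposition \ref{Prop-dec-inf-atteint} in (1) $\Rightarrow$ (3): the mere definition of decomposability already supplies the required $v_1,v_2$.
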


\begin{proof}
If there exist some completely positive maps $T_1,T_2,T_3,T_4 \co \L^p(M) \to \L^p(N)$ such that $T=T_1-T_2+\mathrm{i}(T_3-T_4)$ then $T$ belongs to to the span of the completely positive maps from $\L^p(M)$ into $\L^p(N)$. If $T$ belongs to the span of the completely positive maps from $\L^p(M)$ into $\L^p(N)$, by Proposition \ref{quest-cp-versus-dec1} and  Proposition \ref{Prop-norm-dec}, we deduce that $T$ is decomposable. Moreover, the proof of these results shows that if $T=T_1-T_2+\i(T_3-T_4)$ for some completely positive maps $T_1,T_2,T_3,T_4$ then we can use\footnote{\thefootnote. The argument is similar to the one of \cite[Proposition 5.4.1]{ER} and use a straightforward generalization of a part of \cite[Proposition 1.3.5]{ER}.} $v_1=v_2=T_1+T_2+T_3+T_4$ in \eqref{Matrice-2-2-Phi}. Hence we have $\norm{T}_{\dec,\L^p(M) \to \L^p(N)} \leq \norm{T_1+T_2+T_3+T_4}_{\L^p(M) \to \L^p(N)}$.

Now, suppose that the map $T$ is decomposable. There exist some completely positive maps $v_1,v_2 \co \L^p(M) \to \L^p(N)$ such that
$\Phi=\begin{bmatrix}
v_1 & T \\ 
T^{\circ} & v_2
\end{bmatrix}$ is completely positive. By \eqref{conj-cp}, the maps 
$
T_1=\frac{1}{4}\begin{bmatrix}
1 & 1 \\ 
\end{bmatrix}
\Phi
\begin{bmatrix}
1  \\
1\\ 
\end{bmatrix}
$,
$
T_2=\frac{1}{4}\begin{bmatrix}
1 & -1 \\
\end{bmatrix}
\Phi
\begin{bmatrix}
1  \\
-1\\ 
\end{bmatrix}
$,
$
T_3=\frac{1}{4}\begin{bmatrix}
1 & \i \\ 
\end{bmatrix}
\Phi
\begin{bmatrix}
1  \\
-\i\\ 
\end{bmatrix}
$
and
$
T_4=\frac{1}{4}\begin{bmatrix}
1 & -\i \\ 
\end{bmatrix}
\Phi
\begin{bmatrix}
1  \\
\i\\ 
\end{bmatrix}
$ are completely positive from $\L^p(M)$ into $\L^p(N)$ and it is easy to check that $T=T_1-T_2+\i(T_3-T_4)$.
\end{proof}

\begin{remark} \normalfont
Suppose $1 \leq p \leq \infty$. Let $T \co \L^p(M) \to \L^p(N)$ be a decomposable operator. We can define
$$
\norm{T}_{[d]}
\ov{\mathrm{def}}{=} \inf \big\{\norm{T_1}+\norm{T_2}+\norm{T_3}+\norm{T_4}\big\}
$$
where the infimum runs over all the previous possible decompositions of $T$ as $T=T_1-T_2+\mathrm{i}(T_3-T_4)$ where each $T_i$ is completely positive. It is stated in \cite[page~230]{Pis7} that $\norm{\cdot}_{[d]}$ is a norm, but it is not correct. Indeed, let $M = \C$. We have $\L^p(M)=\C$. Let $T \co \C \to \C$, $x \mapsto x$. Then we will prove that $\norm{T}_{[d]} = 1$ and that $\norm{(1+\mathrm{i})T}_{[d]} =2 \neq \sqrt{2} = |1+\mathrm{i}| \, \norm{T}_{[d]}$. First, since $T$ is completely positive, we have 
$$
\bnorm{T}_{[d]}
= \inf \bigg\{ a_1 + a_2 + a_3 + a_4 :\: a_k \geq 0,\:  1=a_1-a_2+\mathrm{i}(a_3-a_4) \bigg\}.
$$
For such a decomposition, we have $1=\Re(a_1-a_2+\mathrm{i}(a_3-a_4)) = a_1 - a_2$. We deduce that $\norm{T}_{[d]} \geq a_1 = 1 + a_2 \geq 1$. The decomposition $1=1-0+\mathrm{i}(0-0)$ gives the reverse inequality. Moreover, we have 
$$
\bnorm{(1+\mathrm{i})T}_{[d]}
= \inf \bigg\{ a_1 + a_2 + a_3 + a_4 :\: a_k \geq 0,\:  1 + \mathrm{i}=a_1-a_2+\mathrm{i}(a_3-a_4) \bigg\}.
$$
For such a decomposition, we have $1=\Re(a_1-a_2+\mathrm{i}(a_3-a_4)) = a_1 - a_2$ and $1=\Im (a_1-a_2+\mathrm{i}(a_3-a_4)) = a_3 - a_4$. We deduce that $a_1 = 1 + a_2 \geq 1$ and $a_3 = 1 + a_4 \geq 1$. Then $\norm{(1+\mathrm{i})T}_{[d]} \geq a_1 + a_3 \geq 1 + 1 = 2$. The decomposition $1+\mathrm{i}=1-0+\mathrm{i}(1-0)$ gives the reverse inequality.

However, it seems that $\norm{\cdot}_{[d]}$ is a norm on the real vector space of decomposable operators. The verification is left to the reader.
\end{remark}

\begin{prop}
\label{Prop-dec-finite-rank}
Let $M$ and $N$ be von Neumann algebras equipped with faithful normal semifinite traces. Suppose $1 \leq p \leq \infty$. Any finite rank bounded map $T \co \L^p(M) \to \L^p(N)$ is decomposable.
\end{prop}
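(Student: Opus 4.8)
The plan is to reduce the problem to rank-one maps and then express such a map as a complex linear combination of completely positive maps. Since $\Dec(\L^p(M),\L^p(N))$ is a vector space (Proposition \ref{Prop-norm-dec}), and, more precisely, a bounded map is decomposable as soon as it lies in the span of the completely positive maps from $\L^p(M)$ into $\L^p(N)$ (Proposition \ref{prop-linearcp-imply-decomposable}), it suffices to treat rank-one maps and to check complete positivity of a single elementary building block.

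First I would write the finite rank bounded operator as $T = \sum_{k=1}^m \varphi_k(\cdot)\, z_k$, where the $z_k$ span the (finite dimensional) range of $T$ and the coordinate functionals $\varphi_k \co \L^p(M) \to \C$ are bounded because $T$ is. Then I would decompose each $z_k$ as $z_k = z_k^{(1)} - z_k^{(2)} + \i (z_k^{(3)} - z_k^{(4)})$ with $z_k^{(j)} \in \L^p(N)_+$ (real and imaginary parts followed by positive and negative parts), and likewise each bounded functional $\varphi_k$ as $\varphi_k = \varphi_k^{(1)} - \varphi_k^{(2)} + \i(\varphi_k^{(3)} - \varphi_k^{(4)})$ with each $\varphi_k^{(j)}$ a positive functional on $\L^p(M)$; for $1 \leq p < \infty$ this is just the decomposition of the representing element of $\L^{p^*}(M)$ into positive parts (using \eqref{equa-polar-Lp}), while for $p = \infty$ it is the Jordan decomposition of a functional on the C$^*$-algebra $M$. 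Expanding, $T$ becomes a complex linear combination of maps $S_{\psi,w} \co x \mapsto \psi(x)\, w$ with $\psi$ a positive functional on $\L^p(M)$ and $w \in \L^p(N)_+$.

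It then remains to observe that each $S_{\psi,w}$ is completely positive, because it factors as the positive functional $\psi \co \L^p(M) \to \C$ followed by the map $\mu_w \co \C \to \L^p(N)$, $\lambda \mapsto \lambda w$: the positive functional $\psi$ is completely positive by Proposition \ref{prop-positive-imply-cp} applied with $\Omega$ a one-point space (so $\L^p(\Omega) = \C$), the map $\mu_w$ is a positive map out of the commutative $\L^p$-space $\C$ hence completely positive by Proposition \ref{prop-positive-imply-cp-if-depart-commutative}, and a composition of completely positive maps is completely positive. Therefore $T$ lies in the span of the completely positive maps, and Proposition \ref{prop-linearcp-imply-decomposable} (or Proposition \ref{quest-cp-versus-dec1} combined with Proposition \ref{Prop-norm-dec}) yields that $T$ is decomposable. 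I do not expect a real obstacle here; the only points needing minor care are the identification of positive bounded functionals on $\L^p(M)$ with positive elements of the dual, which is \eqref{equa-polar-Lp} together with its symmetric version, and, when $p = \infty$, the harmless fact that the resulting $v_1 = v_2$ (a finite sum of maps $S_{\psi,w}$) need not be weak$^*$ continuous, which does not affect decomposability.
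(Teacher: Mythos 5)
Your proposal is correct and follows essentially the same route as the paper: reduce to rank-one maps, split the vector and the functional into positive parts, and observe that a positive functional on $\L^p(M)$ (respectively the map $\lambda \mapsto \lambda w$ with $w \geq 0$) is completely positive via Proposition \ref{prop-positive-imply-cp} (respectively Proposition \ref{prop-positive-imply-cp-if-depart-commutative}), so the elementary building blocks are completely positive and Proposition \ref{prop-linearcp-imply-decomposable} applies. The only difference is that you spell out the factorization through $\C$ and the $p=\infty$ Jordan decomposition, which the paper leaves as "easy to deduce" and "similar".
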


\begin{proof}
Suppose $1 \leq p<\infty$. It suffices to prove that a rank one operator $T=\tr(y\cdot) \ot x$ is decomposable where $y \in \L^{p^*}(M)$ and $x \in \L^p(N)$. We can write $x=x_1-x_2+\i(x_3-x_4)$ and $y=y_1-y_2+\mathrm{i}(y_3-y_4)$ with $x_k,y_k \geq 0$. Hence we can suppose that $y \geq 0$ and $x \geq 0$. By Proposition \ref{prop-positive-imply-cp}, we deduce that the linear form $\tr(y\cdot)  \co \L^p(M) \to \C$ is completely positive. It is easy to deduce that $\tr(y\cdot) \ot x$ is completely positive, hence decomposable by Proposition \ref{quest-cp-versus-dec1}. The case $p=\infty$ is similar.
\end{proof}

\subsection{Reduction to the adjoint preserving case}
\label{subsec-Reduction-to-the-selfadjoint-case}

\begin{lemma}
\label{lem-regulier-et-selfadjoint3}
Let $E$ be an operator space and suppose $1 \leq p \leq \infty$. Then for any $a,b,c,d \in E$, we have
$$
\left\| 
\begin{bmatrix} 
0 & b \\ 
c & 0 
\end{bmatrix} 
\right\|_{S^p_2(E)} 
\leq 
\left\| 
\begin{bmatrix} 
a & b \\ 
c & d 
\end{bmatrix} \right\|_{S^p_2(E)}.
$$
\end{lemma}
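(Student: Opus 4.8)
The plan is to exhibit the off-diagonal matrix as a signed average of the full matrix, obtained by conjugating with a diagonal unitary, and then to invoke the module inequality (\ref{Inequality-SpnE}).

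First I would introduce the self-adjoint unitary $u = \begin{bmatrix} 1 & 0 \\ 0 & -1 \end{bmatrix} \in \M_2$, which has $\norm{u}_{S^\infty_2} = 1$. A direct computation gives
$$
u \begin{bmatrix} a & b \\ c & d \end{bmatrix} u
= \begin{bmatrix} a & -b \\ -c & d \end{bmatrix},
\qquad\text{hence}\qquad
\begin{bmatrix} 0 & b \\ c & 0 \end{bmatrix}
= \frac{1}{2}\left( \begin{bmatrix} a & b \\ c & d \end{bmatrix} - u\begin{bmatrix} a & b \\ c & d \end{bmatrix}u \right).
$$
By the triangle inequality in $S^p_2(E)$ and two applications of (\ref{Inequality-SpnE}) (once with $u$ on the left, once with $u$ on the right, both acting on $S^p_2(E)$), each of the two terms on the right has $S^p_2(E)$-norm at most $\left\| \begin{bmatrix} a & b \\ c & d \end{bmatrix} \right\|_{S^p_2(E)}$, and the factor $\frac12$ in front then yields exactly the desired inequality.

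The argument works uniformly in $1 \le p \le \infty$ since (\ref{Inequality-SpnE}) holds in that whole range (for $p=\infty$ it is the usual bimodule estimate on $\M_2(E)$). I expect no real obstacle here: the only point to keep in mind is that (\ref{Inequality-SpnE}) is stated precisely for scalar matrices from $\M_n$ acting on $S^p_n(E)$, which is exactly the situation we are in, so no extra justification is needed. (Alternatively, one could replace the two-point average by an average over the diagonal unitaries $\mathrm{diag}(1,\pm 1)$, but that is the same computation.)
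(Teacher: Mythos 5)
Your proof is correct, but it follows a genuinely different route from the paper. You write the off-diagonal compression as the signed average $\frac12\bigl(X - uXu\bigr)$ with $u = \mathrm{diag}(1,-1)$ and conclude by the triangle inequality together with two applications of the bimodule estimate \eqref{Inequality-SpnE}; this is a short, self-contained argument that works uniformly for all $1 \leq p \leq \infty$ and all operator spaces $E$ using nothing beyond \eqref{Inequality-SpnE}. The paper instead identifies the map
$$
\begin{bmatrix} a & b \\ c & d \end{bmatrix} \mapsto \begin{bmatrix} 0 & b \\ c & 0 \end{bmatrix}
$$
with the Schur multiplier $M_A$, $A = \begin{bmatrix} 0 & 1 \\ 1 & 0 \end{bmatrix}$, verifies by a direct norm computation that $M_A$ is a (complete) contraction on $S^\infty_2$, passes to $S^1_2$ by duality, and then invokes Lemma \ref{lem-sufficient-condition-reg} to conclude that $M_A$ is contractively regular on $S^p_2$, which gives the contractivity of $M_A \ot \Id_E$ on $S^p_2(E)$ for every $E$. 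Your approach is the more elementary of the two, since it bypasses the interpolation machinery behind Lemma \ref{lem-sufficient-condition-reg} (and the restriction of that lemma's proof to the regular-operator framework), whereas the paper's approach fits the surrounding theme of regular maps and reuses a lemma already needed elsewhere in the section. Both yield exactly the stated inequality, so there is nothing to fix.
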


\begin{proof}
Consider the Schur multiplier $M_A \co S^\infty_2 \to S^\infty_2$ where $A=\begin{bmatrix} 0 & 1 \\ 1 & 0 \end{bmatrix}$. Using Lemma \ref{lem-regulier-et-selfadjoint1} with $E = \C$ and $p = \infty$, we note that for any $a,b,c,d \in \C$
\begin{align*}
\left\| \begin{bmatrix} 
a & b \\ 
c & d \end{bmatrix} \right\|_{S^\infty_2}^2 
& = \left\| \begin{bmatrix} 
a & b \\ 
c & d \end{bmatrix}^* 
\begin{bmatrix} 
a & b \\ 
c & d \end{bmatrix} \right\|_{S^\infty_2} 
= \left\| \begin{bmatrix} 
|a|^2 + |c|^2 & \ovl{a}b + \ovl{c}d \\ 
a \ovl{b} + c \ovl{d} & |b|^2 + |d|^2 
\end{bmatrix} \right\|_{S^\infty_2} \\
& \geq \max\big\{ |a|^2 + |c|^2 , |b|^2 + |d|^2 \big\} 
\geq \max\big\{|c|,|b|\big\}^2
=\left\| \begin{bmatrix} 
0 & b \\ 
c & 0 
\end{bmatrix} \right\|_{S^\infty_2}^2.
\end{align*}
We deduce that the Schur multiplier $M_A$ is a contraction, hence a complete contraction.  By duality, $M_A \co S^1_2 \to S^1_2$ is also a complete contraction. Using Lemma \ref{lem-sufficient-condition-reg}, we deduce that $M_A$ is contractively regular on $S^p_2$ and the lemma follows.
\end{proof}

\begin{lemma}
\label{lem-regulier-et-selfadjoint2}
Let $M$ and $N$ be approximately finite-dimensional von Neumann algebras equipped with faithful normal semifinite traces. Suppose $1 \leq p \leq \infty$ and let $T \co \L^p(M) \to \L^p(N)$ be a regular map. Then $T^\circ \co  \L^p(M) \to \L^p(N)$ defined by $T^\circ(x) = (T(x^*))^*$ is also regular and we have $\|T^\circ\|_{\reg} = \norm{T}_{\reg}$.
\end{lemma}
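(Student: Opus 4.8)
The plan is to deduce this from the behaviour of the regular norm under complex conjugation of operator spaces, combined with the invariance of the regular norm under the map $S \mapsto S^\op$ recorded right after \eqref{Regular-as-interpolation-space}.

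First I would set up the reduction. Since $(T^\circ)^\op$ is literally the same set map as $T^\circ$, applying the identity $\norm{S}_{\reg,\L^p(M) \to \L^p(N)} = \norm{S^\op}_{\reg,\L^p(M^\op) \to \L^p(N^\op)}$ to $S = T^\circ$ shows that it suffices to prove that $T^\circ$, regarded as a map $\L^p(M^\op) \to \L^p(N^\op)$, is regular with $\norm{T^\circ}_{\reg,\L^p(M^\op) \to \L^p(N^\op)} = \norm{T}_{\reg,\L^p(M) \to \L^p(N)}$. (For $1<p<\infty$ the $\op$-invariance of $\norm{\cdot}_\reg$ is exactly what is observed after \eqref{Regular-as-interpolation-space}; for $p=\infty$ it follows from $\norm{\cdot}_\reg=\norm{\cdot}_\cb$ together with Lemma \ref{Lemma-op-mapping-1}; and for $p=1$ it follows by duality from the case $p=\infty$ via \eqref{Duality-reg} and the elementary identity $(S^\circ)^*=(S^*)^\circ$.)

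Next I would exploit conjugation. Writing $\ovl{F}$ for the complex conjugate operator space of $F$, I would use that complex conjugation commutes with $\ot_{\min}$, $\otp$ and complex interpolation, together with the completely isometric identifications $\ovl{M}=M^\op$, $\ovl{N}=N^\op$ (implemented by $\ovl{x} \mapsto x^*$) and $\ovl{\L^1(M^\op)}=\L^1(M)$, $\ovl{\L^1(N^\op)}=\L^1(N)$, in order to deduce from \eqref{Def-vector-valued-Lp-non-com}, for every operator space $E$, a completely isometric identification
$$
\ovl{\L^p(M,E)}
=\big(\ovl{M \ot_{\min} E},\ovl{\L^1(M^\op) \otp E}\big)_{\frac1p}
=\big(M^\op \ot_{\min} \ovl{E},\L^1(M) \otp \ovl{E}\big)_{\frac1p}
=\L^p(M^\op,\ovl{E}),
$$
under which $\ovl{x \ot e}$ corresponds to $x^* \ot \ovl{e}$ (for $E=\C$ this is just the statement that the involution is a conjugate-linear complete isometry $\L^p(M) \to \L^p(M^\op)$). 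Then, for a regular $T$ and each operator space $E$, the conjugate operator $\ovl{T \ot \Id_E} \co \ovl{\L^p(M,E)} \to \ovl{\L^p(N,E)}$ has norm $\norm{T \ot \Id_E}_{\L^p(M,E) \to \L^p(N,E)}$, and through the identification above it is carried to the extension of the algebraic map $T^\circ \ot \Id_{\ovl{E}}$, since $x^* \ot \ovl{e} \mapsto T(x)^* \ot \ovl{e} = T^\circ(x^*) \ot \ovl{e}$. Taking the supremum over all operator spaces $E$, and noting that $\ovl{E}$ ranges over all operator spaces as $E$ does, yields $\norm{T^\circ}_{\reg,\L^p(M^\op) \to \L^p(N^\op)} = \norm{T}_{\reg,\L^p(M) \to \L^p(N)} < \infty$, which together with the reduction step of the previous paragraph completes the argument.

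I expect the main obstacle to be bookkeeping rather than a genuine difficulty: one must carefully track where the adjoints $(\cdot)^*$ and the opposite algebras $M^\op$, $N^\op$ appear when transporting everything through the conjugation isomorphisms — it is precisely the conjugation that forces the $\op$'s to appear, which is why the $\op$-invariance of $\norm{\cdot}_\reg$ is needed — and one must handle the endpoints $p\in\{1,\infty\}$, where \eqref{Regular-as-interpolation-space} is unavailable, by the separate arguments indicated above.
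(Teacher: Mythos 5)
Your argument is correct, but it is a genuinely different proof from the one in the paper. The paper works directly with the interpolation description \eqref{Regular-as-interpolation-space} of $\Reg(\L^p(M),\L^p(N))$ via Calder\'on's upper method: it picks an analytic function $F$ on the strip with $F'(\theta)=T$ and $\norm{F}_{\mathcal{G}}\leq \norm{T}_{\reg}+\epsi$, and observes that $G(z)=F(\ovl{z})^\circ$ is again admissible with the same norm and $G'(\theta)=T^\circ$ — a two-line trick, at the price of resting on \eqref{Regular-as-interpolation-space} (hence, strictly speaking, on $1<p<\infty$, with the endpoints left implicit). You instead work at the level of the vector-valued spaces themselves, from the defining formula \eqref{Norm-reg-nc}: complex conjugation of operator spaces identifies $\ovl{\L^p(M,E)}$ with $\L^p(M^\op,\ovl{E})$ and carries $T\ot\Id_E$ to $T^\circ\ot\Id_{\ovl{E}}$, which handles all $1\leq p\leq\infty$ uniformly at that step but forces the opposite algebras to appear, so you must pay for it with the $\op$-invariance of $\norm{\cdot}_\reg$ (itself requiring separate endpoint arguments). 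It is worth noting that the two proofs rest on the same underlying symmetry $z\mapsto\ovl{z}$: in the paper it is explicit in $G(z)=F(\ovl{z})^\circ$, while in yours it is hidden inside the fact that complex conjugation commutes with complex interpolation. Your route is longer but more self-contained relative to the definition of regularity as a supremum over operator spaces; the paper's is shorter but leans harder on the interpolation identity.
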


\begin{proof}
We recall that by \eqref{Regular-as-interpolation-space}, $\Reg(\L^p(M),\L^p(N))$ is a complex interpolation space following Calder\'on's upper method.
Choose now an analytic function $F \co S \to \CB(M,N) + \CB(\L^1(M),\L^1(N))$ of $\mathcal{G}$ defined on the usual complex interpolation strip $S = \{ z \in \C : 0 \leq \Re z \leq 1 \}$, such that $F'(\theta) = T$ with $\norm{F}_{\mathcal{G}} \leq \norm{T}_{\reg} + \epsi$. Put $G(z) = F(\ovl{z})^\circ$. Then the function $G$ also belongs to $\mathcal{G}$ with $\norm{G}_{\mathcal{G}} = \norm{F}_{\mathcal{G}}$ and we have $G'(\theta) = T^\circ$. Thus the map $T^\circ$ is regular and $\|T^\circ\|_{\reg} \leq \norm{T}_{\reg} + \epsi$. Letting $\epsi \to 0$ we obtain $\|T^\circ\|_{\reg} \leq \norm{T}_{\reg}$. Since $(T^\circ)^\circ = T$, we even have $\|T^\circ\|_{\reg} = \norm{T}_{\reg}$. 
\end{proof}

\begin{prop}
\label{prop-regulier-et-selfadjoint}
Let $M$ and $N$ be approximately finite-dimensional von Neumann algebras equipped with faithful normal semifinite traces. Suppose $1 \leq p \leq \infty$ and that $T \co \L^p(M) \to \L^p(N)$ is a linear mapping. Define $\tilde{T} \co S^p_2(\L^p(M)) \to S^p_2(\L^p(N))$ by
$$
\tilde{T} \left(\begin{bmatrix} 
a & b \\ 
c & d 
\end{bmatrix}\right) 
= \begin{bmatrix} 
0 & T(b) \\ 
T^\circ(c) & 0 
\end{bmatrix}.
$$
Then $\tilde{T}$ is adjoint preserving in the sense that $\tilde{T}(x^*) = \left( \tilde{T}(x) \right)^*$. Moreover, $T$ is regular if and only if the map $\tilde{T} \co S^p_2(\L^p(M)) \to S^p_2(\L^p(N))$ is regular and in this case, we have $\norm{T}_{\reg, \L^p(M) \to \L^p(N)}=\|\tilde{T}\|_{\reg, S^p_2(\L^p(M)) \to S^p_2(\L^p(N))}$.
\end{prop}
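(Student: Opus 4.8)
The plan is to verify the selfadjointness of $\tilde T$ by a direct matrix computation and then to obtain $\norm{T}_{\reg}=\|\tilde T\|_{\reg}$ as elements of $[0,+\infty]$ (whence also the equivalence) by proving the two inequalities separately. For selfadjointness, write $X=\begin{bmatrix} a & b \\ c & d\end{bmatrix}$, so that $X^*=\begin{bmatrix} a^* & c^* \\ b^* & d^*\end{bmatrix}$ and hence $\tilde T(X^*)=\begin{bmatrix} 0 & T(c^*) \\ T^\circ(b^*) & 0\end{bmatrix}$, while $\big(\tilde T(X)\big)^*=\begin{bmatrix} 0 & (T^\circ(c))^* \\ (T(b))^* & 0\end{bmatrix}$; the two matrices coincide since $(T^\circ(c))^*=\big(T(c^*)^*\big)^*=T(c^*)$ and $(T(b))^*=T^\circ(b^*)$, directly from the formula $T^\circ(x)=(T(x^*))^*$.

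For the inequality $\norm{T}_{\reg}\leq\|\tilde T\|_{\reg}$, I would realize $T$ as a composition of $\tilde T$ with corner maps. Let $I_{12}\co\L^p(M)\to S^p_2(\L^p(M))$, $x\mapsto\begin{bmatrix}0&x\\0&0\end{bmatrix}$, and $\Pi_{12}\co S^p_2(\L^p(N))\to\L^p(N)$, $[y_{ij}]\mapsto y_{12}$. These satisfy $\norm{I_{12}\ot\Id_E}\leq 1$ and $\norm{\Pi_{12}\ot\Id_E}\leq 1$ for every operator space $E$, both reducing to the identity $\bnorm{\begin{bmatrix}0&z\\0&0\end{bmatrix}}_{S^p_2(F)}=\norm{z}_F$, which is a consequence of \eqref{Inequality-SpnE} and \eqref{Block-diagonal} (equivalently, $I_{12}$ and $\Pi_{12}$ are completely contractive both at the level $p=\infty$ and at the level $p=1$, hence contractively regular by Lemma~\ref{lem-sufficient-condition-reg}). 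Since $\M_2(M)$ and $\M_2(N)$ are again approximately finite-dimensional and $\Pi_{12}\circ\tilde T\circ I_{12}=T$, the composition estimate \eqref{Composition-reg} gives $\norm{T}_{\reg}\leq\norm{\Pi_{12}}_{\reg}\,\|\tilde T\|_{\reg}\,\norm{I_{12}}_{\reg}\leq\|\tilde T\|_{\reg}$; in particular $\tilde T$ regular forces $T$ regular.

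For the reverse inequality $\|\tilde T\|_{\reg}\leq\norm{T}_{\reg}$, I would use the definition of the regular norm together with the Fubini identification $\L^p(\M_2(M),E)=S^p_2(\L^p(M,E))$ (and likewise for $N$), which identifies $\tilde T\ot\Id_E$ with the map $S^p_2(\L^p(M,E))\to S^p_2(\L^p(N,E))$ sending $[x_{ij}]$ to $\begin{bmatrix}0&(T\ot\Id_E)(x_{12})\\(T^\circ\ot\Id_E)(x_{21})&0\end{bmatrix}$; here one uses that $\tilde T\ot\Id_E$ acts on the $(1,2)$- and $(2,1)$-corners precisely by $T\ot\Id_E$ and $T^\circ\ot\Id_E$, and that $T^\circ$ is regular with $\|T^\circ\|_{\reg}=\norm{T}_{\reg}$ by Lemma~\ref{lem-regulier-et-selfadjoint2}. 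For $[x_{ij}]$ of norm at most $1$, Lemma~\ref{lem-regulier-et-selfadjoint1} computes the norm of the image as $\big(\norm{(T\ot\Id_E)(x_{12})}^p+\norm{(T^\circ\ot\Id_E)(x_{21})}^p\big)^{1/p}$; bounding the two entries by $\norm{T}_{\reg}\norm{x_{12}}$ and $\|T^\circ\|_{\reg}\norm{x_{21}}=\norm{T}_{\reg}\norm{x_{21}}$ and recombining via Lemma~\ref{lem-regulier-et-selfadjoint1} and Lemma~\ref{lem-regulier-et-selfadjoint3} yields a bound $\leq\norm{T}_{\reg}\,\norm{[x_{ij}]}_{S^p_2(\L^p(M,E))}\leq\norm{T}_{\reg}$. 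Taking the supremum over all operator spaces $E$ gives $\|\tilde T\|_{\reg}\leq\norm{T}_{\reg}$, and combined with the previous step this gives both the equivalence and the isometric equality.

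I expect the only delicate points to be the bookkeeping ones: the Fubini identification $\L^p(\M_2(M),E)\cong S^p_2(\L^p(M,E))$, and the precise corner-wise action of $\tilde T\ot\Id_E$ --- which is exactly where the approximate finite-dimensionality hypothesis enters, through the regularity of $T^\circ$ (Lemma~\ref{lem-regulier-et-selfadjoint2}). No separate treatment of the endpoints $p=1$ and $p=\infty$ is required, since the whole argument only invokes the definition of the regular norm, the corner estimates, and Lemmas~\ref{lem-regulier-et-selfadjoint1}--\ref{lem-regulier-et-selfadjoint3}, all valid for every $1\leq p\leq\infty$.
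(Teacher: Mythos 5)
Your proof is correct and follows essentially the same route as the paper's: the same direct selfadjointness computation, the same corner-embedding argument for $\norm{T}_{\reg}\leq\|\tilde T\|_{\reg}$ (the paper writes it inline rather than via explicit maps $I_{12},\Pi_{12}$ and \eqref{Composition-reg}, but the content is identical), and the same estimate via Lemmas \ref{lem-regulier-et-selfadjoint1}--\ref{lem-regulier-et-selfadjoint3} together with $\|T^\circ\|_{\reg}=\norm{T}_{\reg}$ for the reverse inequality. The only caveat concerns your closing remark that no endpoint treatment is needed: at $p=\infty$ the displayed $\ell^p$-sum formula from Lemma \ref{lem-regulier-et-selfadjoint1} must be replaced by the maximum, exactly as the paper notes, though this changes nothing structurally.
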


\begin{proof}
Let $x = 
\begin{bmatrix} 
a & b \\ 
c & d 
\end{bmatrix} \in S^p_2(\L^p(M))$.
We have 
$$
\tilde{T}(x^*) 
= \tilde{T} \left(\begin{bmatrix} 
a^* & c^* \\ 
b^* & d^* 
\end{bmatrix} \right)
= \begin{bmatrix} 
0 & T(c^*) \\ 
T^\circ(b^*) & 0 
\end{bmatrix} 
= \begin{bmatrix} 
0 & T(c^*) \\ 
T(b)^* & 0 
\end{bmatrix}
$$
and also
$$
\left(\tilde{T}(x)\right)^* 
= \begin{bmatrix} 
0 & T(b) \\ 
T^\circ(c) & 0 
\end{bmatrix}^* 
= \begin{bmatrix} 
0 & T^\circ(c)^* \\ 
T(b)^* & 0 
\end{bmatrix} =  
\begin{bmatrix} 

0 & T(c^*) \\ 
T(b)^* & 0 
\end{bmatrix}.
$$
We conclude that $\tilde{T}$ is adjoint preserving, i.e. $\tilde{T}^\circ = \tilde{T}$.
Assume first that $1 \leq p < \infty$.
Let $E$ be any operator space. Assume first that $T$ is regular. For any 
$\begin{bmatrix} 
a & b \\ 
c & d 
\end{bmatrix} \in S^p_2(\L^p(M,E))$, according to Lemma \ref{lem-regulier-et-selfadjoint1} with $E$ replaced by $\L^p(N,E)$, we have
\begin{align*}
\left\| \left( \tilde{T} \ot \Id_E \right) 
\begin{bmatrix}
a & b \\ 
c & d 
\end{bmatrix} \right\|_{S^p_2(\L^p(N,E))}
& = 
\left\| \begin{bmatrix} 
0 & (T \ot \Id_E)(b) \\ 
(T^\circ \ot \Id_E)(c) & 0 
\end{bmatrix} 
\right\|_{S^p_2(\L^p(N,E))} \\
& = \left( \bnorm{(T \ot \Id_E)(b)}_{\L^p(N,E)}^p + \bnorm{(T^\circ \ot \Id_E)(c)}_{\L^p(N,E)}^p \right)^{\frac1p}.
\end{align*}
The previous quantity can be estimated by $\norm{T}_{\reg} \left(\norm{b}_{\L^p(M,E)}^p + \norm{c}_{\L^p(M,E)}^p \right)^{\frac1p}$ due to Lemma \ref{lem-regulier-et-selfadjoint2}. According to Lemmas \ref{lem-regulier-et-selfadjoint1} and \ref{lem-regulier-et-selfadjoint3} with $E$ replaced by $\L^p(M,E)$, this in turn can be estimated by
$$
\norm{T}_{\reg} \left\| 
\begin{bmatrix} 
0 & b \\ 
c & 0 
\end{bmatrix} \right\|_{S^p_2(\L^p(M,E))} 
\leq \| T \|_{\reg} \left\| 
\begin{bmatrix} 
a & b \\ 
c & d 
\end{bmatrix} 
\right\|_{S^p_2(\L^p(M,E))}.
$$
This shows that $\|\tilde{T} \ot \Id_E\|_{S^p_2(\L^p(M,E)) \to S^p_2(\L^p(N,E))} \leq \norm{T}_{\reg}$. Passing to the supremum over all operator spaces $E$, we deduce that $\tilde{T}$ is regular and that $\|\tilde{T}\|_{\reg} \leq \norm{T}_{\reg}$.

For the converse inequality, assume that $\tilde{T}$ is regular and let $x \in \L^p(M,E)$. Applying Lemma \ref{lem-regulier-et-selfadjoint1} twice, we have
\begin{align*}
 \MoveEqLeft
\bnorm{(T \ot \Id_E)(x)}_{\L^p(N,E)} 
 = \left\| 
\begin{bmatrix} 
0 & (T \ot \Id_E)(x) \\ 
0 & 0 
\end{bmatrix} 
\right\|_{S^p_2(\L^p(N,E))} \\
&= \left\| 
\bigg(\begin{bmatrix} 
0 & T \\ 
T^\circ & 0 
\end{bmatrix} 
\ot \Id_E \bigg)
\begin{bmatrix} 
0 & x \\ 
0 & 0 
\end{bmatrix} \right\|_{S^p_2(\L^p(N,E))} 
\leq \| \tilde{T} \|_{\reg} 
\left\| \begin{bmatrix} 
0 & x \\ 
0 & 0 
\end{bmatrix} \right\|_{S^p_2(\L^p(M,E))} \\
&= \| \tilde{T} \|_{\reg} \norm{x}_{\L^p(M,E)}.
\end{align*}
We conclude that $T$ is regular and that $\norm{T}_{\reg} \leq \| \tilde{T} \|_{\reg}$.

The case $p = \infty$ is similar, using in the second part of Lemma \ref{lem-regulier-et-selfadjoint1} each time.
\end{proof}

\begin{prop}
\label{prop-decomposable-et-selfadjoint3}
Let $M$ and $N$ be von Neumann algebras equipped with faithful normal semifinite traces. Suppose $1 \leq p \leq \infty$. Let $T \co \L^p(M) \to \L^p(N)$ be a linear map. Then $T$ is decomposable if and only if the map $\tilde{T} \co S^p_2(\L^p(M)) \to S^p_2(\L^p(N))$ from Proposition \ref{prop-regulier-et-selfadjoint} is decomposable, and in this case, we have $\norm{T}_{\dec,\L^p(M) \to \L^p(N)} = \|\tilde{T}\|_{\dec,S^p_2(\L^p(M)) \to S^p_2(\L^p(N))}$.
\end{prop}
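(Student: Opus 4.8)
The plan is to run the decomposable counterpart of the argument in Proposition \ref{prop-regulier-et-selfadjoint}, but working entirely with complete positivity of $2\times 2$ block matrices of maps, which has the advantage of requiring no approximate finite-dimensionality. Recall that $\tilde{T}\big(\left[\begin{smallmatrix} a & b \\ c & d\end{smallmatrix}\right]\big)=\left[\begin{smallmatrix} 0 & T(b) \\ T^\circ(c) & 0\end{smallmatrix}\right]$, and note first that $\tilde{T}$ is selfadjoint in the sense $\tilde{T}^\circ=\tilde{T}$: this follows at once from $(T(c^*))^*=T^\circ(c)$ and $(T^\circ(b^*))^*=T(b)$, so a decomposability scheme for $\tilde{T}$ has the shape $\left[\begin{smallmatrix} W_1 & \tilde{T} \\ \tilde{T} & W_2\end{smallmatrix}\right]$. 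Next fix notation: identify $S^p_2(S^p_2(\L^p(M)))$ with $\L^p(\M_4(M))$, the four double indices ordered as $(1,1),(1,2),(2,1),(2,2)$ (outer index, then inner index), and let $\gamma_1,\gamma_2\in\M_{4,2}$ be the isometries with $\gamma_1e_1=e_{(1,1)}$, $\gamma_1e_2=e_{(2,2)}$, $\gamma_2e_1=e_{(1,2)}$, $\gamma_2e_2=e_{(2,1)}$. Put $P_i(X)=\gamma_i^*X\gamma_i$ and $J_i(Y)=\gamma_iY\gamma_i^*$; by \eqref{conj-cp} all four maps are completely positive and contractive.

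\emph{From $T$ to $\tilde{T}$.} Let $\epsi>0$ and choose completely positive $v_1,v_2\co\L^p(M)\to\L^p(N)$ with $\Phi:=\left[\begin{smallmatrix} v_1 & T \\ T^\circ & v_2\end{smallmatrix}\right]$ completely positive and $\max\{\norm{v_1},\norm{v_2}\}\le\norm{T}_\dec+\epsi$. Let $\Phi_N:=\mathcal{F}_N\Phi\mathcal{F}_M=\left[\begin{smallmatrix} v_2 & T^\circ \\ T & v_1\end{smallmatrix}\right]$, which is completely positive by \eqref{conj-cp} exactly as in Lemma \ref{Lemma-T-cic-decomposable}, and define $W\co S^p_2(\L^p(M))\to S^p_2(\L^p(N))$ by $W\big(\left[\begin{smallmatrix} a & b \\ c & d\end{smallmatrix}\right]\big)=\left[\begin{smallmatrix} v_1(a) & 0 \\ 0 & v_2(d)\end{smallmatrix}\right]$. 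Using \eqref{Block-diagonal} together with the contractivity of the diagonal compression on $S^p_2(\L^p(M))$ one gets $\norm{W}\le\max\{\norm{v_1},\norm{v_2}\}$. The heart of the argument is the identity
$$
\begin{bmatrix} W & \tilde{T} \\ \tilde{T} & W\end{bmatrix}
= J_1\circ\Phi\circ P_1 \;+\; J_2\circ\Phi_N\circ P_2
\co S^p_2(S^p_2(\L^p(M)))\to S^p_2(S^p_2(\L^p(N))),
$$
which one verifies by comparing the (at most eight) nonzero entries on each side. The right-hand side is a sum of compositions of completely positive maps, hence completely positive; thus $\tilde{T}$ is decomposable with $\norm{\tilde{T}}_\dec\le\norm{W}\le\norm{T}_\dec+\epsi$, and letting $\epsi\to0$ gives $\norm{\tilde{T}}_\dec\le\norm{T}_\dec$.

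\emph{From $\tilde{T}$ to $T$.} Let $\epsi>0$ and choose completely positive $W_1,W_2\co S^p_2(\L^p(M))\to S^p_2(\L^p(N))$ with $\Theta:=\left[\begin{smallmatrix} W_1 & \tilde{T} \\ \tilde{T} & W_2\end{smallmatrix}\right]$ completely positive and $\max\{\norm{W_1},\norm{W_2}\}\le\norm{\tilde{T}}_\dec+\epsi$. Since $J_1$ embeds $\left[\begin{smallmatrix} a & b \\ c & d\end{smallmatrix}\right]$ as the matrix in $\L^p(\M_4(M))$ whose four outer blocks are $\left[\begin{smallmatrix} a & 0 \\ 0 & 0\end{smallmatrix}\right]$, $\left[\begin{smallmatrix} 0 & b \\ 0 & 0\end{smallmatrix}\right]$, $\left[\begin{smallmatrix} 0 & 0 \\ c & 0\end{smallmatrix}\right]$, $\left[\begin{smallmatrix} 0 & 0 \\ 0 & d\end{smallmatrix}\right]$, on which $\Theta$ acts blockwise through $W_1,\tilde{T},\tilde{T},W_2$, and since $P_1$ reads off the four corners indexed $((1,1),(1,1))$, $((1,1),(2,2))$, $((2,2),(1,1))$, $((2,2),(2,2))$, one obtains
$$
P_1\circ\Theta\circ J_1 = \begin{bmatrix} v_1 & T \\ T^\circ & v_2\end{bmatrix}\co S^p_2(\L^p(M))\to S^p_2(\L^p(N)),
$$
where $v_1(a)=\big[W_1\big(\left[\begin{smallmatrix} a & 0 \\ 0 & 0\end{smallmatrix}\right]\big)\big]_{11}$ and $v_2(d)=\big[W_2\big(\left[\begin{smallmatrix} 0 & 0 \\ 0 & d\end{smallmatrix}\right]\big)\big]_{22}$. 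The left-hand side is completely positive, so $T$ is decomposable; moreover $a\mapsto\left[\begin{smallmatrix} a & 0 \\ 0 & 0\end{smallmatrix}\right]$ is isometric by \eqref{Block-diagonal} and a single matrix entry depends contractively on the matrix, so $\norm{v_i}\le\norm{W_i}$ and hence $\norm{T}_\dec\le\max\{\norm{v_1},\norm{v_2}\}\le\norm{\tilde{T}}_\dec+\epsi$; let $\epsi\to0$.

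Combining the two inequalities yields $\norm{T}_\dec=\norm{\tilde{T}}_\dec$, and either implication gives the stated equivalence; the case $p=\infty$ needs no separate treatment since all the cited facts hold there. The only point requiring genuine care is the verification of the two displayed identities: it is purely a matter of tracking indices, but one must be attentive to the placement of $T$ versus $T^\circ$ in the off-diagonal corners and of $v_1$ versus $v_2$ on the diagonal — this is precisely why the twisted copy $\Phi_N=\mathcal{F}_N\Phi\mathcal{F}_M$, rather than $\Phi$ itself, is forced to appear in the second summand of the first identity.
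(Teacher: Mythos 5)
Your proof is correct and follows essentially the same route as the paper: both directions rest on moving between the $2\times 2$ and $4\times 4$ block pictures via scalar-matrix compressions and embeddings that are completely positive by \eqref{conj-cp}, with the same corner-extraction maps $v_i$ and the same norm estimates via \eqref{Block-diagonal}. The only cosmetic difference is in the forward direction, where you realize $\left[\begin{smallmatrix} W & \tilde{T} \\ \tilde{T} & W\end{smallmatrix}\right]$ as the sum $J_1\circ\Phi\circ P_1+J_2\circ\Phi_N\circ P_2$ (which I checked entry by entry), whereas the paper obtains the same completely positive map by conjugating the direct sum $\Phi\oplus\Phi$ with a permutation matrix.
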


\begin{proof}
Suppose that $T$ is decomposable. Choose some maps $v_1,v_2 \co \L^p(M) \to \L^p(N)$ such that $
\begin{bmatrix} 
v_1 & T \\ 
T^\circ & v_2 
\end{bmatrix} 
\co S^p_2(\L^p(M)) \to S^p_2(\L^p(N))
$ is completely positive. By \eqref{conj-cp}, the mapping
\begin{align*}
& \begin{bmatrix} 0 & 0 & 1 & 0 \\ 0 & 1 & 0 & 0 \\ 1 & 0 & 0 & 0 \\ 0 & 0 & 0 & 1 \end{bmatrix} \begin{bmatrix} v_1 & T & 0 & 0 \\ T^\circ & v_2 & 0 & 0 \\ 0 & 0 & v_1 & T \\ 0 & 0 & T^\circ & v_2 \end{bmatrix} ( \cdot ) 
\begin{bmatrix} 
0 & 0 & 1 & 0 \\ 
0 & 1 & 0 & 0 \\ 
1 & 0 & 0 & 0 \\ 
0 & 0 & 0 & 1 
\end{bmatrix} 
 = \begin{bmatrix} 
\begin{bmatrix} v_1 & 0 \\ 
0 & v_2 \end{bmatrix} & \tilde{T} \\ 
\tilde{T} &  
\begin{bmatrix} 
v_1 & 0 \\ 
0 & v_2 
\end{bmatrix} 
\end{bmatrix}  
\end{align*}
is also completely positive from $S^p_4(\L^p(M))$ into $S^p_4(\L^p(N))$. Therefore the map $\tilde{T}$ is decomposable and $\| \tilde{T} \|_{\dec} \leq \left\| \begin{bmatrix} v_1 & 0 \\ 0 & v_2 \end{bmatrix} \right\| = \max\{ \norm{v_1}, \norm{v_2}\}$, the latter according to \cite[Corollary 1.3]{Pis5}. By passing to the infimum over all admissible $v_1,v_2$, we see that $\| \tilde{T} \|_{\dec} \leq \norm{T}_{\dec}$.

Now suppose that the map $\tilde{T}$ is decomposable. Let $v_1,v_2 \co S^p_2(\L^p(M)) \to S^p_2(\L^p(N))$ such that the map $
\begin{bmatrix} 
v_1 & \tilde{T} \\ 
\tilde{T} & v_2 
\end{bmatrix} 
\co S^p_4(\L^p(M)) \to S^p_4(\L^p(N))$ is completely positive. Put $w_1 \co \L^p(M) \to \L^p(N)$, $a \mapsto 
\left(v_1\left(\begin{bmatrix} 
a & 0 \\ 
0 & 0 
\end{bmatrix}\right) \right)_{11}$ and $w_2 \co \L^p(M) \to \L^p(N)$, $d \mapsto 
\left(v_2\left(\begin{bmatrix} 
0 & 0 \\ 
0 & d 
\end{bmatrix}\right) \right)_{22}$. Then each $w_i$ is also completely positive as a composition of completely positive mappings. We also define 
\begin{equation*}
\begin{array}{cccc}
   J   \co &   S^p_2(\L^p(M))  &  \longrightarrow   &  S^p_4(\L^p(M))  \\
           &  \begin{bmatrix} 
a & b \\ 
c & d 
\end{bmatrix}   &  \longmapsto       &  \begin{bmatrix} 
a & 0 & 0 & b\\
0 & 0 & 0 & 0\\
0 & 0 & 0 & 0\\
c & 0 & 0 & d\\
\end{bmatrix}   \\
\end{array}.
\end{equation*}
It is easy to see that $J$ is a completely positive and completely isometric embedding. Then an easy computation gives
\begin{align*}
\MoveEqLeft
\begin{bmatrix} 
1 & 0 & 0 & 0 \\ 
0 & 0 & 0 & 1 
\end{bmatrix} 
\cdot 
\Bigg(
\begin{bmatrix} 
v_1 & \tilde{T} \\ 
\tilde{T} & v_2 
\end{bmatrix} 
\left( J\left( 
\begin{bmatrix} 
a & b \\ 
c & d 
\end{bmatrix} 
\right)\right)\Bigg)\cdot \begin{bmatrix} 
1 & 0 \\ 0 & 0 \\ 
0 & 0 \\ 0 & 1 
\end{bmatrix}
\\
&=\begin{bmatrix} 
1 & 0 & 0 & 0 \\ 
0 & 0 & 0 & 1 
\end{bmatrix} 
\cdot 
\left(
\begin{bmatrix} 
v_1 & \tilde{T} \\ 
\tilde{T} & v_2 
\end{bmatrix} 
\left(  \begin{bmatrix} 
a & 0 & 0 & b\\
0 & 0 & 0 & 0\\
0 & 0 & 0 & 0\\
c & 0 & 0 & d\\
\end{bmatrix}\right)\right)\cdot \begin{bmatrix} 
1 & 0 \\ 0 & 0 \\ 
0 & 0 \\ 0 & 1 
\end{bmatrix}\\
&=\begin{bmatrix} 
1 & 0 & 0 & 0 \\ 
0 & 0 & 0 & 1 
\end{bmatrix} 
\cdot 
\begin{bmatrix} 
v_1\left(\begin{bmatrix} 
a & 0 \\ 0 & 0 \end{bmatrix}\right) & \begin{bmatrix} 
0 & T(b) \\ 
0 & 0 
\end{bmatrix} \\ 
 \begin{bmatrix} 
0 & 0 \\ 
T^\circ(c) & 0 
\end{bmatrix} &  v_2\left(\begin{bmatrix} 
0 & 0 \\ 0 & d \end{bmatrix}\right)
\end{bmatrix}\cdot 
\begin{bmatrix} 
1 & 0 \\ 0 & 0 \\ 
0 & 0 \\ 0 & 1 
\end{bmatrix}
=\begin{bmatrix} 
w_1(a) & T(b) \\ 
T^\circ(c) & w_2(d) 
\end{bmatrix}.
\end{align*}
Using \eqref{conj-cp}, we deduce by composition that the map $\begin{bmatrix} 
w_1 & T \\ 
\tilde{T} & w_2 
\end{bmatrix}$ is completely positive. We infer that $T$ is decomposable and that $\norm{T}_{\dec} \leq \max\{\norm{w_1},\norm{w_2}\} \leq \max\{\norm{v_1},\norm{v_2}\}$. Passing to the infimum over all admissible $v_1,v_2$, we obtain that $\norm{T}_{\dec} \leq \| \tilde{T} \|_{\dec}$.
\end{proof}

\begin{prop}
\label{prop-decomposable-et-selfadjoint}
Let $M$ and $N$ be von Neumann algebras equipped with faithful normal semifinite traces. Suppose $1 \leq p \leq \infty$. An adjoint preserving\footnote{\thefootnote. That means that $T(x^*)=T(x)^*$ for any $x \in \L^p(M)$.} map $T \co \L^p(M) \to \L^p(N)$ is decomposable if and only if one of the two following infimums is finite. In this case, we have
\begin{align*}
\norm{T}_{\dec,\L^p(M) \to \L^p(N)}
&=\inf\big\{\norm{S}\ :\ S \co \L^p(M) \to \L^p(N) \ \cp,\ -S \leq_{\cp} T \leq_{\cp} S \big\}\\    
&=\inf\big\{\norm{T_1+T_2}\ :\ T_1,T_2 \co \L^p(M) \to \L^p(N)\ \cp,\ T=T_1-T_2 \big\}.
\end{align*}
\end{prop}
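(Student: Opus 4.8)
The plan is to introduce the two infimums appearing in the statement — call them $\mathcal{S}(T)$ (the infimum over completely positive $S$ with $-S \leq_{\cp} T \leq_{\cp} S$) and $\mathcal{D}(T)$ (the infimum over decompositions $T = T_1 - T_2$ into completely positive maps, of $\norm{T_1 + T_2}$), with the convention that an empty infimum is $+\infty$ — and to prove the chain
$$
\mathcal{S}(T) \leq \norm{T}_{\dec,\L^p(M) \to \L^p(N)} \leq \mathcal{D}(T) = \mathcal{S}(T),
$$
from which both the equivalence and the two announced identities follow immediately. Throughout I would use the elementary fact that a completely positive map between noncommutative $\L^p$-spaces is selfadjoint (it preserves the selfadjoint part of the space, via the Jordan decomposition); in particular $T^\circ = T$ by hypothesis, and $S^\circ = S$ for every completely positive $S$.

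First I would check $\mathcal{D}(T) = \mathcal{S}(T)$: given a completely positive $S$ with $-S \leq_{\cp} T \leq_{\cp} S$, the maps $T_1 = \tfrac12(S+T)$ and $T_2 = \tfrac12(S-T)$ are completely positive, $T = T_1 - T_2$, and $T_1 + T_2 = S$; conversely, from completely positive $T_1,T_2$ with $T = T_1 - T_2$, the map $S = T_1 + T_2$ is completely positive and $S - T = 2T_2$, $S + T = 2T_1$ are completely positive, i.e. $-S \leq_{\cp} T \leq_{\cp} S$. Since $\norm{S} = \norm{T_1+T_2}$ under this correspondence, $\mathcal{S}(T) = \mathcal{D}(T)$. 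Next, the bound $\norm{T}_{\dec} \leq \mathcal{D}(T)$ follows directly from Proposition~\ref{prop-linearcp-imply-decomposable}: any decomposition $T = T_1 - T_2$ into completely positive maps has the form considered there with $T_3 = T_4 = 0$, hence $\norm{T}_{\dec} \leq \norm{T_1+T_2+T_3+T_4} = \norm{T_1+T_2}$; taking the infimum gives the claim, and in particular finiteness of $\mathcal{D}(T)$ (equivalently of $\mathcal{S}(T)$) forces $T$ to be decomposable.

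The heart of the argument is the remaining inequality $\mathcal{S}(T) \leq \norm{T}_{\dec}$. Assuming $T$ decomposable, fix $\epsi > 0$ and pick completely positive $v_1,v_2 \co \L^p(M) \to \L^p(N)$ with $\bigl[\begin{smallmatrix} v_1 & T \\ T^\circ & v_2 \end{smallmatrix}\bigr] \co S^p_2(\L^p(M)) \to S^p_2(\L^p(N))$ completely positive and $\max\{\norm{v_1},\norm{v_2}\} \leq \norm{T}_{\dec} + \epsi$. By \eqref{conj-cp} the amplification $R \co \L^p(M) \to S^p_2(\L^p(M))$, $a \mapsto \bigl[\begin{smallmatrix} a & a \\ a & a \end{smallmatrix}\bigr] = \bigl[\begin{smallmatrix} 1 \\ 1 \end{smallmatrix}\bigr] a \bigl[\begin{smallmatrix} 1 & 1 \end{smallmatrix}\bigr]$ is completely positive; composing it with $\bigl[\begin{smallmatrix} v_1 & T \\ T^\circ & v_2 \end{smallmatrix}\bigr]$ and using $T^\circ = T$ shows that the map $a \mapsto \bigl[\begin{smallmatrix} v_1(a) & T(a) \\ T(a) & v_2(a) \end{smallmatrix}\bigr] \co \L^p(M) \to S^p_2(\L^p(N))$ is completely positive. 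Since $v_1,v_2$ (being completely positive) are selfadjoint, Lemma~\ref{Lemma-passage2} applies with $S_1 = v_1$, $S_2 = v_2$ and yields $-\tfrac12(v_1+v_2) \leq_{\cp} T \leq_{\cp} \tfrac12(v_1+v_2)$. Thus $S := \tfrac12(v_1+v_2)$ is completely positive with $\norm{S} \leq \max\{\norm{v_1},\norm{v_2}\} \leq \norm{T}_{\dec} + \epsi$, so $\mathcal{S}(T) \leq \norm{T}_{\dec} + \epsi$; letting $\epsi \to 0$ completes this step and shows $\mathcal{S}(T) < \infty$ whenever $T$ is decomposable. Combining the three steps gives $\mathcal{S}(T) = \norm{T}_{\dec} = \mathcal{D}(T)$ together with the equivalence.

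The only genuinely non-formal point is the passage between the definition of decomposability — complete positivity of a $2\times 2$ \emph{matrix of maps} acting on $S^p_2(\L^p(M))$ — and the ``diagonal'' formulation of Lemmas~\ref{Lemma-passage1}--\ref{Lemma-passage2}, which concerns a single completely positive map into $S^p_2(\L^p(N))$; the bridge is precomposition with the completely positive amplification $R$. Everything else is routine bookkeeping with completely positive maps, using $\norm{\tfrac12(v_1+v_2)} \leq \max\{\norm{v_1},\norm{v_2}\}$ and the fact that sums and nonnegative scalar multiples of completely positive maps are completely positive.
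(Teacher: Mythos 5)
Your proof is correct and follows essentially the same route as the paper's: the second identity comes from the same correspondence $T_1=\tfrac12(S+T)$, $T_2=\tfrac12(S-T)$, and the first is extracted from Lemmas \ref{Lemma-passage1}--\ref{Lemma-passage2}. The only cosmetic differences are that you obtain $\norm{T}_{\dec}\leq\norm{T_1+T_2}$ via Proposition \ref{prop-linearcp-imply-decomposable} rather than via Lemma \ref{Lemma-passage1}, and that you make explicit the completely positive amplification $a\mapsto\bigl[\begin{smallmatrix}a&a\\a&a\end{smallmatrix}\bigr]$ needed to pass from the matrix-of-maps witness of decomposability to the hypothesis of Lemma \ref{Lemma-passage2} --- a step the paper leaves implicit.
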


\begin{proof}
The first equality is a consequence of Lemma \ref{Lemma-passage1} and Lemma \ref{Lemma-passage2}. To prove the second equality, first assume that there exists some completely positive map $S \co \L^p(M) \to \L^p(N)$ such that
$$
-S \leq_{\cp} T \leq_{\cp} S.
$$
Then $T_1 = \frac{1}{2}(S+T)$ and $T_2 = \frac{1}{2}(S-T)$ are completely positive and we have $T_1+T_2 =\frac{1}{2}(S+T)+\frac{1}{2}(S-T)= S$ and $T_1-T_2=\frac{1}{2}(S+T)-\frac{1}{2}(S-T)=T$. 

Conversely, suppose that we can write $T=T_1-T_2$ for some completely positive maps $T_1,T_2 \co \L^p(M) \to \L^p(N)$. Then we have
$$
-(T_1+T_2) \leq_{\cp} T \leq_{\cp} (T_1+T_2).
$$
This proves the second equality.
\end{proof}

\subsection{Decomposable vs regular on Schatten spaces}
\label{subsec-decomposable-vs-regular-on-Schatten-spaces}

Similarly to the commutative case, an absolute contraction between noncommutative $\L^p$-spaces is contractively regular.


\begin{lemma}
\label{lem-sufficient-condition-reg}
Let $M$ and $N$ be approximately finite-dimensional von Neumann algebras which are equipped with faithful normal semifinite traces. Let $T \co M \to N$ be a completely contractive map such that the restriction to $M \cap \L^1(M)$ induces a completely contractive map from $\L^1(M)$ into $\L^1(N)$. Then for any $1 \leq p \leq \infty$, we have $\norm{T}_{\reg,\L^p(M) \to \L^p(N)} \leq 1$.
\end{lemma}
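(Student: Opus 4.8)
The plan is to read off $\norm{T}_\reg$ directly from the definition \eqref{Norm-reg-nc} of the regular norm together with the interpolation definition \eqref{Def-vector-valued-Lp-non-com} of the vector-valued spaces $\L^p(M,E)$, and to tensorize at the two endpoints $p=\infty$ and $p=1$.

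First I would fix the induced maps. Writing $u \co M \cap \L^1(M) \to N$ for the restriction of $T$, the hypotheses say that $u$ extends to a complete contraction $M \to N$ (namely $T$) and to a complete contraction $\L^1(M) \to \L^1(N)$ (call it $T_1$); hence $u$ extends by interpolation to a bounded map $T_p \co \L^p(M) \to \L^p(N)$ for every $1 \leq p \leq \infty$ (with $T_\infty = T$), and it is $\norm{T_p}_\reg$ that must be bounded. At $p=\infty$ we have $\L^\infty(M,E) = M \ot_{\min} E$, and complete contractivity of $T$ means precisely that $T \ot \Id_E$ is contractive on $M \ot_{\min} E$ for every operator space $E$, so $\norm{T}_{\reg,\L^\infty(M) \to \L^\infty(N)} \leq 1$ by \eqref{Norm-reg-nc}. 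At $p=1$ we have $\L^1(M,E) = \L^1(M^\op) \otp E$; by Lemma \ref{Lemma-op-mapping-1} together with $\L^1(M)^\op = \L^1(M^\op)$, the map $(T_1)^\op \co \L^1(M^\op) \to \L^1(N^\op)$ is again completely contractive, and the operator space projective tensor product is functorial for complete contractions (for $v \co X \to Y$ completely contractive, the adjoint $(v \ot \Id_E)^* \co \CB(Y,E^*) \to \CB(X,E^*)$, $w \mapsto w \circ v$, is a contraction, using $(X \otp E)^* = \CB(X,E^*)$), so $(T_1)^\op \ot \Id_E$ is contractive and $\norm{T_1}_{\reg,\L^1(M) \to \L^1(N)} \leq 1$.

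Then for $1 < p < \infty$ and an operator space $E$, \eqref{Def-vector-valued-Lp-non-com} reads $\L^p(M,E) = \big(M \ot_{\min} E, \L^1(M^\op) \otp E\big)_{\frac1p}$, and likewise for $N$. The two endpoint operators $T \ot \Id_E$ on $M \ot_{\min} E$ and $(T_1)^\op \ot \Id_E$ on $\L^1(M^\op) \otp E$ from the previous step are both contractions, and they agree on the intersection of the couple: both act by post-composition with a map ($T$, resp. $T_1$), and since every element of the intersection takes, at each argument of $E^*$, a value lying in $M \cap \L^1(M)$, on which $T$ and $T_1$ coincide, the two operators coincide there. By the interpolation property of Calderón's complex method they therefore restrict to a contraction $T_p \ot \Id_E \co \L^p(M,E) \to \L^p(N,E)$. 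Taking the supremum over all operator spaces $E$ and using \eqref{Norm-reg-nc} yields $\norm{T}_{\reg,\L^p(M) \to \L^p(N)} \leq 1$.

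The step deserving real care is the compatibility of the two endpoint tensorized maps on the intersection of the interpolation couple — which, as just indicated, comes down to the hypothesis that $T$ and $T_1$ agree on $M \cap \L^1(M)$ — together with the bookkeeping with opposite von Neumann algebras forced by the shape of \eqref{Def-vector-valued-Lp-non-com}, which is dealt with by Lemma \ref{Lemma-op-mapping-1}. Everything else is routine functoriality of the minimal and projective operator space tensor products and of complex interpolation. One could instead invoke the interpolation identity \eqref{Regular-as-interpolation-space}, but then $T$ would first have to be replaced by its weak* continuous part from Proposition \ref{Prop-recover-weak-star-continuity}, with a check that this part still agrees with $T_1$ on $M \cap \L^1(M)$; the direct tensorization avoids that detour.
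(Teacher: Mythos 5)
Your proof is correct and follows essentially the same route as the paper's: tensorize with $\Id_E$ at the two endpoints (the minimal tensor product at $p=\infty$, the operator space projective tensor product at $p=1$), both by functoriality for complete contractions, and then interpolate before taking the supremum over $E$. The extra care you take with the opposite algebra in \eqref{Def-vector-valued-Lp-non-com} and with compatibility on the intersection is sound bookkeeping that the paper's proof leaves implicit.
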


\begin{proof}
Let $E$ be any operator space. According to \cite[Proposition 8.1.5]{ER}, the map $T \ot \Id_E \co \L^\infty(M,E) = M \ot_{\min} E \to \L^\infty(N,E) = N \ot_{\min} E$ is completely contractive. Moreover, by \cite[Corollary 7.1.3]{ER} the map $T \ot \Id_E \co \L^1(M,E) = \L^1(M) \otp E \to \L^1(N,E) = \L^1(N) \otp E$ is also completely contractive, where $\otp$ denotes the operator space projective tensor product. By interpolation, we infer that the map $T \ot \Id_E \co \L^p(M,E) \to \L^p(N,E)$ is completely contractive for any $1 \leq p \leq \infty$. Passing over the supremum of all operator spaces, we obtain the lemma.
\end{proof}

Suppose $1\leq p<\infty$. If $n$ and $d$ are integers then a particular case of \cite[Theorem 1.5]{Pis4} gives for any $x \in S^p_n(\M_d)$
\begin{equation}
	\label{Norm-Sp(Md)}
	\norm{x}_{S^p_n(\M_d)} 
= \inf\big\{ \norm{\alpha}_{S^{2p}_n} \norm{y}_{\M_n(\M_d)} \norm{\beta}_{S^{2p}_n} : x=(\alpha \ot \I_d)y(\beta \ot \I_d)\big\}.
\end{equation}

\begin{thm}
\label{prop-dec-reg-matrix-case}
Let $n,m \in \N$ and $1 \leq p \leq \infty$. Then any linear mapping $T \co S^p_m \to S^p_n$ satisfies
$$
\norm{T}_{\reg,S^p_m \to S^p_n} 
=\norm{T}_{\dec,S^p_m \to S^p_n}.
$$
\end{thm}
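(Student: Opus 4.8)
The plan is to prove the two inequalities $\norm{T}_{\dec} \leq \norm{T}_{\reg}$ and $\norm{T}_{\reg} \leq \norm{T}_{\dec}$ separately, in both cases using the reduction machinery assembled in the previous subsections so that only the selfadjoint case needs genuine work. First I would observe that the case $p=\infty$ (and $p=1$ by duality, using \eqref{Duality-dec} and \eqref{Duality-reg}) is essentially classical: on finite-dimensional von Neumann algebras the decomposable norm of a map $M_m \to M_n$ coincides with the regular norm, a fact going back to Haagerup and Paulsen; so the new content is the range $1 < p < \infty$, which we treat by interpolation after settling the endpoints. Actually it is cleaner to note that the direction $\norm{T}_{\reg} \leq \norm{T}_{\dec}$ follows in all cases from Lemma \ref{lem-sufficient-condition-reg} together with the decomposition of Proposition \ref{prop-linearcp-imply-decomposable}: a completely positive map between the finite-dimensional spaces $M_m$ and $M_n$ extends compatibly to an $\L^1$--$\L^\infty$ couple of completely bounded maps, hence is contractively regular up to the relevant constant, and then one checks that $\norm{\cdot}_{\reg} \leq \norm{\cdot}_{\dec}$ by applying the $2\times2$ block definition of the decomposable norm directly to the regular norm — indeed if $\Phi = \left[\begin{smallmatrix} v_1 & T \\ T^\circ & v_2 \end{smallmatrix}\right]$ is completely positive with $\max\{\norm{v_1},\norm{v_2}\} \leq \norm{T}_{\dec}+\epsi$, then $\Phi$ is completely bounded on the $\L^\infty$ and $\L^1$ levels with norm $\max\{\norm{v_1},\norm{v_2}\}$ (positivity plus the block structure controls the norm), hence regular on $S^p_2(\L^p)$ by interpolation, and Lemma \ref{lem-regulier-et-selfadjoint3} extracts the off-diagonal entry $T$.

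For the reverse inequality $\norm{T}_{\dec} \leq \norm{T}_{\reg}$, the plan is to reduce to selfadjoint maps via Proposition \ref{prop-regulier-et-selfadjoint} and Proposition \ref{prop-decomposable-et-selfadjoint3}: replacing $T \co S^p_m \to S^p_n$ by $\tilde T \co S^p_2(S^p_m) \to S^p_2(S^p_n)$ (which is again a map between finite-dimensional Schatten spaces, $S^p_{2m} \to S^p_{2n}$), we have simultaneously $\norm{T}_{\reg} = \|\tilde T\|_{\reg}$ and $\norm{T}_{\dec} = \|\tilde T\|_{\dec}$, and $\tilde T$ is selfadjoint. So it suffices to prove: for a selfadjoint map $S \co S^p_m \to S^p_n$ one has $\norm{S}_{\dec} \leq \norm{S}_{\reg}$. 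By Proposition \ref{prop-decomposable-et-selfadjoint} this amounts to producing, for each $\epsi > 0$, a completely positive map $R \co S^p_m \to S^p_n$ with $-R \leq_{\cp} S \leq_{\cp} R$ and $\norm{R} \leq \norm{S}_{\reg} + \epsi$. Here is where the actual argument lives: one uses the definition of the regular norm, $\norm{S}_{\reg} = \sup_E \norm{S \ot \Id_E}$, specialized to operator spaces $E$ that detect complete positivity. Concretely, take $E = S^{p^*}_N$ for large $N$ (or a suitable column/row Hilbert space), so that $S^p_n(E) = S^p_n(S^{p^*}_N)$ and the positive cone of $S^p(\L^p(M))$ is tested against matrix amplifications; the operator $S \ot \Id_E$ being bounded with norm $\leq \norm{S}_{\reg}$ lets us build the dominating positive map $R$ as a kind of ``numerical radius'' or absolute value construction, exploiting that on $S^p_n$ the positive cone is reproduced by conjugations $x \mapsto \alpha^* x \alpha$ and the block-matrix characterization of Proposition \ref{Lemma-Matricial-inequality3}. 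In the endpoint cases $p = 1, \infty$ this dominating $R$ is classical; for $1 < p < \infty$ one obtains it by interpolating between an $\L^\infty$-dominator and an $\L^1$-dominator, using \eqref{Regular-as-interpolation-space} which identifies $\Reg(S^p_m,S^p_n)$ with the Calderón interpolation space of the couple $(\CB(M_m,M_n), \CB(S^1_m,S^1_n))$, together with the analytic-function description of this space already used in the proof of Lemma \ref{lem-regulier-et-selfadjoint2}.

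Spelling out the interpolation step: given $\tilde T = S$ selfadjoint and regular, choose an analytic family $F(z)$ in the relevant space $\mathcal G$ with $F(\theta) = S$ (where $\theta = 1/p$) and $\norm{F}_{\mathcal G} \leq \norm{S}_{\reg} + \epsi$; by averaging $F(z)$ with the conjugate-reflected family $F(\bar z)^\circ$ as in Lemma \ref{lem-regulier-et-selfadjoint2} we may assume each $F(z)$ is ``selfadjoint along the strip''. On the boundary lines the maps $F(it)$ and $F(1+it)$ are completely bounded and selfadjoint on $M_n$ respectively $S^1_n$, hence each admits (by the finite-dimensional classical theory, e.g. Paulsen's book) a completely positive dominator $G_0(it)$, resp. $G_1(it)$, of the same norm, and with a measurable/analytic choice one assembles an analytic family $G(z)$ with $-G \leq_{\cp} F \leq_{\cp} G$ on the boundary and $\norm{G}_{\mathcal G} \leq \norm{F}_{\mathcal G}$; then $R := G(\theta)$ is completely positive, satisfies $-R \leq_{\cp} S \leq_{\cp} R$, and $\norm{R} \leq \norm{S}_{\reg} + \epsi$, which is exactly what Proposition \ref{prop-decomposable-et-selfadjoint} needs. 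Letting $\epsi \to 0$ gives $\norm{S}_{\dec} \leq \norm{S}_{\reg}$, hence $\norm{T}_{\dec} \leq \norm{T}_{\reg}$, and combined with the first paragraph we get equality. I expect the main obstacle to be precisely this construction of an \emph{analytic} family of completely positive dominators $G(z)$ from the boundary data — i.e., showing the domination relation $-F \leq_{\cp} F \leq_{\cp} G$ can be arranged to hold with analytic dependence on $z$, rather than just pointwise; one either invokes a selection argument (the set of dominators is a closed convex set varying upper-semicontinuously) or, more robustly, reduces first to the case where $F$ itself is a ``polynomial'' family so that an explicit $G$ built from the $2\times2$ trick of Proposition \ref{prop-decomposable-et-selfadjoint3} is manifestly analytic, and then passes to the limit using the completeness of $\Dec$ (Proposition \ref{prop-decomposable-Banach-space}, valid here since finite-dimensional algebras are QWEP).
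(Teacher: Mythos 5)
Your reduction to the selfadjoint case via $\tilde T$ and Propositions \ref{prop-regulier-et-selfadjoint} and \ref{prop-decomposable-et-selfadjoint3} matches the paper, but both remaining inequalities have genuine gaps. For $\norm{T}_{\reg}\leq\norm{T}_{\dec}$, your key claim is that the completely positive block map $\Phi=\left[\begin{smallmatrix} v_1 & T \\ T^\circ & v_2\end{smallmatrix}\right]$, given as a map on $S^p_2(S^p_m)$, is ``completely bounded on the $\L^\infty$ and $\L^1$ levels with norm $\max\{\norm{v_1},\norm{v_2}\}$'' and hence regular by interpolation. This is false: the operator norm of a completely positive map between Schatten classes depends genuinely on $p$ (for instance $v(x)=\tr(x)\,e_{11}$ has norm $1$ on $S^1_m$ but norm $m$ on $\M_m$), so knowing $\norm{v_i}_{S^p_m\to S^p_n}$ gives no control at the endpoints; and the identity \eqref{Regular-as-interpolation-space} asserts membership in a Calder\'on interpolation space, not compatible boundedness of the individual map at $p=1,\infty$. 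The paper proves this direction instead by writing the selfadjoint $T$ as $T_1-T_2$ with $T_1,T_2$ completely positive and $\norm{T_1+T_2}\leq\norm{T}_{\dec}+\epsi$ (Proposition \ref{prop-decomposable-et-selfadjoint}), taking Kraus decompositions $T_i(x)=\sum_k a_k^*xa_k$, $\sum_k b_k^*xb_k$ via Choi's theorem, and running an explicit matrix computation with the factorization formula \eqref{Norm-Sp(Md)} to bound $\norm{T\ot\Id_{\M_d}}_{S^p_m(\M_d)\to S^p_n(\M_d)}$ by $\norm{T_1+T_2}_{S^p_m\to S^p_n}$. Some such $p$-specific computation is unavoidable here.

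For $\norm{T}_{\dec}\leq\norm{T}_{\reg}$ you correctly identify the crux — producing from the analytic family $F(z)$ a completely positive dominator of $S=F(\theta)$ with norm $\leq\norm{S}_{\reg}+\epsi$ — but you leave it unresolved: neither an ``analytic selection from a convex-set-valued map'' nor the proposed reduction to polynomial families is a worked argument, and there is no reason the boundary dominators $G_0(it)$, $G_1(it)$ can be glued into an analytic family. The paper does not attempt this; it identifies $\CB(S^\infty_n)=\M_n\ot_h\M_n$ and $\CB(S^1_n)=\M_n^{\op}\ot_h\M_n^{\op}$ via Haagerup tensor product manipulations, notes that $\norm{T}_{\reg}\leq 1$ gives $\gamma_\theta(T)\leq 1$ with $\theta=1/p$ by \cite[Corollary 3.3]{Pis4}, and then invokes Pisier's structural result \cite[Corollary 8.7]{Pis13}, which for a \emph{selfadjoint} $T$ with $\gamma_\theta(T)\leq 1$ directly produces completely positive $T_1,T_2$ with $T=T_1-T_2$ and $\norm{T_1+T_2}_{S^p_m\to S^p_n}\leq 1$. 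That citation is precisely the missing ingredient your sketch would need; without it (or a proof of it) this direction is not established.
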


\begin{proof}
Assume that the theorem is true for all adjoint preserving maps $T \co S^p_m \to S^p_n$, i.e. $T(x^*) = T(x)^*$. Then we can deduce from Propositions \ref{prop-decomposable-et-selfadjoint3} and \ref{prop-regulier-et-selfadjoint}, with the adjoint preserving mapping $\tilde{T} \co S^p_{2m} \to S^p_{2n}$, that $\norm{T}_\dec = \|\tilde{T}\|_\dec = \|\tilde{T}\|_\reg = \norm{T}_\reg$. Hence we can assume in addition that $T$ is adjoint preserving.

First we show $\norm{T}_{\reg} \leq \norm{T}_{\dec}$.
The following proof is inspired by the proof of \cite[Lemma 2.3]{Pis4}. 
Let $\epsi > 0$. According to Proposition \ref{prop-decomposable-et-selfadjoint}, there exist some completely positive maps $T_1,T_2 \co S^p_n \to S^p_n$ such that $T = T_1 - T_2$ and $\norm{T_1 + T_2} \leq \norm{T}_\dec + \epsi$. According to Choi's characterization \cite[Theorem 1]{Choi}, there exist $a_1,\ldots,a_l,b_1,\ldots,b_l \in \M_{m,n}$ such that $T_1(x) = \sum_{k=1}^{l} a_k^* x a_k$ and $T_2(x) = \sum_{k=1}^{l} b_k^* x b_k$. Let $x$ be an element of $S^p_n(\M_d)$ with $\norm{x}_{S^p_n(\M_d)} < 1$. By \eqref{Norm-Sp(Md)}, there exists a decomposition $x=(\alpha \ot \I_d)y(\beta \ot \I_d)$ with $\alpha,\beta \in S^{2p}_m$ of norm less than 1 and $y \in \M_m(\M_d)$ which is also of norm less than 1. Using the notations 
$$
\alpha_1 \ov{\mathrm{def}}{=} [a_{1}^{*}\alpha,\ldots,a_{l}^{*}\alpha], \quad 
\beta_1 \ov{\mathrm{def}}{=} (a_1^* \beta^*, \ldots, a_l^* \beta^*),
$$
and 
$$
\alpha_2 \ov{\mathrm{def}}{=} [b_{1}^{*}\alpha,\ldots,b_{l}^{*}\alpha], \quad
\beta_2 \ov{\mathrm{def}}{=} (b_1^* \beta^*, \ldots,b_l^* \beta^*)
$$ 
of $\M_{1,l}(\M_{n,m})$, we can write
\begin{align*}
\MoveEqLeft
  (T \ot \Id_{\M_d})(x) 
	=(T \ot \Id_{\M_d})\big((\alpha \ot \I_d)y(\beta \ot \I_d)\big)\\
  & =(T \ot \Id_{\M_d})\Bigg((\alpha \ot \I_{d}) \bigg(\sum_{i,j=1}^{n} e_{ij} \ot y_{ij}\bigg) (\beta \ot \I_{d})\Bigg)
	=  \sum_{i,j=1}^{n} (T \ot \Id_{\M_d})(\alpha e_{ij}\beta \ot y_{ij}) \\ 
		&=  \sum_{i,j=1}^{n} T(\alpha e_{ij} \beta) \ot y_{ij} 
		=  \sum_{i,j=1}^{n} T_1(\alpha e_{ij}\beta) \ot y_{ij}-T_2(\alpha e_{ij}\beta) \ot y_{ij}\\
  & =  \sum_{i,j=1}^{n}\sum_{k=1}^{l} a_{k}^{*}\alpha e_{ij}\beta a_{k} \ot y_{ij}
	- b_{k}^{*}\alpha e_{ij}\beta b_{k} \ot y_{ij}\\
	&=  \sum_{i,j=1}^{n} \sum_{k=1}^{l} (a_{k}^*\alpha \ot \I_d) (e_{ij} \ot y_{ij}) (\beta a_{k} \ot \I_d) 
	-(b_{k}^*\alpha \ot \I_d) (e_{ij} \ot y_{ij}) (\beta b_{k} \ot \I_d)\\
  & =  \sum_{k=1}^{l} (a_{k}^{*}\alpha \ot \I_d) \Bigg(\sum_{i,j=1}^{n} e_{ij} \ot y_{ij} \Bigg) (\beta a_{k} \ot \I_d)
	-(b_{k}^{*}\alpha \ot \I_d) \Bigg(\sum_{i,j=1}^{n} e_{ij} \ot y_{ij} \Bigg) (\beta b_{k} \ot \I_d)\\
  & =  \sum_{k=1}^{l} (a_{k}^{*}\alpha \ot \I_d)y (\beta a_{k} \ot \I_d)
	-(b_{k}^{*}\alpha \ot \I_d)y (\beta b_{k} \ot \I_d)\\
& =  \Big([a_{1}^{*}\alpha,\ldots,a_{l}^{*}\alpha] \ot \I_d\Big)
\begin{bmatrix}
y       &   0     &  \cdots   &   0   \\
 0      &  \ddots &           &  \vdots     \\
\vdots  &         &  \ddots   &   0   \\
 0      &  \cdots &     0     &   y  \\
\end{bmatrix}
\left(
\begin{bmatrix}
\beta a_{1} \\
\vdots\\
\vdots\\
\beta a_{l} \\
\end{bmatrix}
\ot \I_d \right)\\
&-\Big([b_{1}^{*}\alpha,\ldots,b_{l}^{*}\alpha] \ot \I_d\Big)
\begin{bmatrix}
y       &   0     &  \cdots   &   0   \\
 0      &  \ddots &           &  \vdots     \\
\vdots  &         &  \ddots   &   0   \\
 0      &  \cdots &     0     &   y  \\
\end{bmatrix}
 \left(
\begin{bmatrix}
\beta b_{1} \\
\vdots\\
\vdots\\
\beta b_{l} \\
\end{bmatrix}
\ot \I_d \right) \\
&= (\alpha_1 \ot \I_d)\cdot (\I_{l} \ot y) \cdot (\beta_1^*\ot \I_d) - (\alpha_2 \ot \I_d)\cdot (\I_{l} \ot y) \cdot (\beta_2^*\ot \I_d).
\end{align*}
The matrix $\I_{l} \ot y\in \M_l(\M_n(\M_d))$ is of norm less than 1. A simple computation shows that 
\begin{align*}
\MoveEqLeft
   \begin{bmatrix} 
(T \ot \Id_{\M_d})(x) & 0 \\ 
0 & 0 
\end{bmatrix}
=\begin{bmatrix} 
(\alpha_1 \ot \I_d)\cdot (\I_{l} \ot y) \cdot (\beta_1^*\ot \I_d) - (\alpha_2 \ot \I_d)\cdot (\I_{l} \ot y) \cdot (\beta_2^*\ot \I_d)  & 0 \\ 
0 & 0 
\end{bmatrix}\\
&=\left(\begin{bmatrix} 
\alpha_1 & - \alpha_2 \\ 
0 & 0 
\end{bmatrix} \ot \I_d\right)
\begin{bmatrix} 
\I_{l} \ot y & 0 \\ 
0 & \I_{l} \ot y
\end{bmatrix} 
\left(\begin{bmatrix} 
\beta_1^* & 0 \\ 
\beta_2^* & 0 
\end{bmatrix} \ot \I_d\right).
\end{align*}
On the other hand, we have
\begingroup
\allowdisplaybreaks
\begin{align*}
\MoveEqLeft
\left\| \begin{bmatrix} 
\alpha_1 & - \alpha_2 \\ 
0 & 0 
\end{bmatrix} 
\right\|_{S_{2n}^{2p}}
 = \left\| 
\begin{bmatrix} 
\alpha_1 & - \alpha_2 \\ 
0 & 0 
\end{bmatrix}^* \right\|_{S^{2p}_{2n}}
=\tr\left( \left( 
\begin{bmatrix} 
\alpha_1 & - \alpha_2 \\ 
0 & 0 
\end{bmatrix} 
\begin{bmatrix} 
\alpha_1^* & 0 \\ 
- \alpha_2^* & 0 
\end{bmatrix} \right)^p \right)^{\frac{1}{2p}}\\
&= \tr \left( 
\begin{bmatrix} 
\alpha_1 \alpha_1^* + \alpha_2 \alpha_2^* & 0 \\ 
0 & 0 \end{bmatrix}^p \right)^{\frac{1}{2p}} 
 =\tr\big( (\alpha_1 \alpha_1^* + \alpha_2 \alpha_2^*)^p \big)^{\frac{1}{2p}} \\
 &= \tr\bigg( \bigg( \sum_{k = 1}^l a_k^* \alpha \alpha^* a_k + b_k^* \alpha \alpha^* b_k  \bigg)^p \bigg)^{\frac{1}{2p}} 
= \bnorm{T_1(\alpha \alpha^*) + T_2(\alpha \alpha^*)}_{S^p_n}^{\frac12} \\
& \leq \norm{T_1 + T_2}_{S^p_m \to S^p_n}^{\frac12} \norm{\alpha \alpha^*}_{S^p_m}^{\frac12} = \norm{T_1 + T_2}_{S^p_m \to S^p_n}^{\frac12} \norm{\alpha}_{S^{2p}_m} \\
& \leq \norm{T_1 + T_2}_{S^p_m \to S^p_n}^{\frac12}.
\end{align*}
\endgroup
In the same way, it follows that $ 
\left\| \begin{bmatrix} 
\beta_1^* & 0 \\ 
\beta_2^* & 0 
\end{bmatrix} \right\|_{S_{2n}^{2p}} 
\leq \norm{T_1 + T_2}_{S^p_m \to S^p_n}^{\frac12}$. Using \eqref{Norm-Sp(Md)}, we infer that
$$
\bnorm{(T \ot \Id_{\M_d})(x)}_{S^p_n(\M_d)} 
= \left\| \begin{bmatrix} 
(T \ot \Id_{\M_d})(x) & 0 \\ 
0 & 0 
\end{bmatrix} \right\|_{S^p_{2n}(\M_d)} 
\leq \norm{T_1 + T_2}_{S^p_m \to S^p_n}^{\frac12} \norm{T_1 + T_2}_{S^p_m \to S^p_n}^{\frac12}.
$$
This yields $\norm{T \ot \Id_{\M_d}}_{S^p_m(\M_d) \to S^p_n(\M_d)} \leq \norm{T_1 + T_2}_{S^p_m \to S^p_n} \leq \norm{T}_{\dec} + \epsi$, hence $\norm{T}_{\reg} \leq \norm{T}_{\dec} + \epsi$. Passing $\epsi \to 0$ yields one of the desired estimates $\norm{T}_{\reg} \leq \norm{T}_{\dec}$.

%

Finally we shall show $\norm{T}_{\dec} \leq \norm{T}_{\reg}$. Assume that $\norm{T}_{\reg} \leq 1$. According to \cite[Theorem 5.12]{Pis7}, note that we have isometrically
\begin{equation}
\label{Produits-tensoriels}
	\CB(S^\infty_n) = \M_n \ot_h \M_n
\end{equation}
where $\ot_h$ denotes the Haagerup tensor product. Moreover, using the properties of this tensor product \cite[pages 95-97]{Pis5}, we obtain
\begin{align*}
\MoveEqLeft
  \M_n^\op \ot_h \M_n^\op  
		=(\mathrm{C}_n \ot_h \mathrm{R}_n)^\op \ot_h ( \mathrm{C}_n\ot_h \mathrm{R}_n)^\op
		=\mathrm{R}_n^\op \ot_h \mathrm{C}_n^\op \ot_h \mathrm{R}_n^\op \ot_h \mathrm{C}_n^\op\\
		&=\mathrm{C}_n \ot_h \mathrm{R}_n \ot_h \mathrm{C}_n \ot_h \mathrm{R}_n
		=\mathrm{C}_n \ot_h S^1_n \ot_h \mathrm{R}_n
		=\M_n(S^1_n)
		=\M_n \ot_{\textrm{min}} S^1_n
		=\CB(S^1_n). \nonumber
\end{align*}
We have $\gamma_\theta(T) \leq 1$ with $\theta = \frac1p$ and $\gamma_\theta$ defined in \cite[Theorem 8.5]{Pis13}, according to \cite[Corollary 3.3]{Pis4}. Then since $T$ is adjoint preserving, \cite[Corollary 8.7]{Pis13} yields that $\norm{T}_\dec \leq \norm{T_1 + T_2}_{S^p_m \to S^p_n} \leq 1$ where $T = T_1 - T_2$ and $T_1, \: T_2$ are completely positive mappings $\M_n \to \M_n$ given there. The proof of the theorem is complete.
\end{proof}

\subsection{Decomposable vs regular on approximately finite-dimensional algebras}
\label{Decomposable-approx}

In this section, we will extend by approximation Theorem \ref{prop-dec-reg-matrix-case} to approximately finite-dimensional von Neumann algebras. We start with two lemmas which show that, under suitable assumptions, the decomposability or the regularity of maps is preserved under a passage to the limit. 

\begin{lemma}
\label{lem-decomposable-weak-limit}
Let $M$ and $N$ be von Neumann algebras equipped with faithful normal semifinite traces. Suppose $1 \leq p \leq \infty$. Let $(T_\alpha)$ be a net of decomposable operators from $\L^p(M)$ into $\L^p(N)$ such that $\norm{T_\alpha}_{\dec,\L^p(M) \to \L^p(N)} \leq C$ for some constant $C$ which converges to some $T \co \L^p(M) \to \L^p(N)$ in the weak operator topology (in the point weak* topology of $\B(M,N)$ if $p=\infty$). Then $T$ is decomposable and $\norm{T}_{\dec,\L^p(M) \to \L^p(N)} \leq \liminf_\alpha \norm{T_\alpha}_{\dec,\L^p(M) \to \L^p(N)}$.
\end{lemma}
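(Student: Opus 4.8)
The plan is to combine three ingredients: the fact that the decomposable norm is attained (Proposition \ref{Prop-dec-inf-atteint}), compactness of bounded sets of operators into $\L^p(N)$ in a suitable weak topology, and the stability of (complete) positivity under weak limits (Lemma \ref{lem-completely-positive-weak-limit}). Put $L = \liminf_\alpha \norm{T_\alpha}_{\dec,\L^p(M) \to \L^p(N)}$; since $L \leq C < \infty$, after passing to a subnet we may assume $\norm{T_\alpha}_{\dec} \to L$.

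Assume first $1 < p < \infty$. For each $\alpha$, Proposition \ref{Prop-dec-inf-atteint} provides completely positive maps $v_1^\alpha, v_2^\alpha \co \L^p(M) \to \L^p(N)$ with $\Phi_\alpha = \begin{bmatrix} v_1^\alpha & T_\alpha \\ T_\alpha^\circ & v_2^\alpha \end{bmatrix} \co S^p_2(\L^p(M)) \to S^p_2(\L^p(N))$ completely positive and $\max\{\norm{v_1^\alpha}, \norm{v_2^\alpha}\} = \norm{T_\alpha}_{\dec} \leq C$. Since $\L^p(N)$ is reflexive, the $C$-ball of $\B(\L^p(M), \L^p(N))$ is compact in the weak operator topology, so after passing to a further subnet I may assume $v_i^\alpha \to v_i$ in the weak operator topology for $i = 1,2$. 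One checks that $T_\alpha^\circ \to T^\circ$ in the weak operator topology as well, using that $x \mapsto x^*$ is an isometric involution on the noncommutative $\L^p$-spaces, so that $\langle T_\alpha^\circ(x), y\rangle = \overline{\langle T_\alpha(x^*), y^*\rangle}$. Hence $\Phi_\alpha \to \Phi := \begin{bmatrix} v_1 & T \\ T^\circ & v_2\end{bmatrix}$ in the weak operator topology of $\B(S^p_2(\L^p(M)), S^p_2(\L^p(N)))$, and Lemma \ref{lem-completely-positive-weak-limit}(1) shows $\Phi$ is completely positive; thus $T$ is decomposable. By weak lower semicontinuity of the operator norm, $\norm{v_i} \leq \liminf_\alpha \norm{v_i^\alpha} \leq \liminf_\alpha \norm{T_\alpha}_{\dec} = L$, whence $\norm{T}_{\dec} \leq \max\{\norm{v_1}, \norm{v_2}\} \leq L$. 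The case $p = \infty$ is the same argument with the weak operator topology replaced throughout by the point weak* topology: the $C$-ball of $\B(M,N)$ is compact in that topology by Banach--Alaoglu together with Tychonoff (a pointwise weak* limit of linear maps of norm $\leq C$ is again such a map, by weak* lower semicontinuity of the norm on $N$), one again has $T_\alpha^\circ \to T^\circ$ point weak*, and one invokes Lemma \ref{lem-completely-positive-weak-limit}(2) with $F = S^1_2(\L^1(N))$, so that $F^* = \M_2(N)$, to conclude that the limiting $2\times 2$ matrix map is completely positive.

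For $p = 1$ reflexivity fails, and I would argue by duality. If $T_\alpha \to T$ in the weak operator topology of $\B(\L^1(M), \L^1(N))$, then by definition of the Banach adjoint $T_\alpha^* \to T^*$ in the point weak* topology of $\B(N, M)$, and $\norm{T_\alpha^*}_{\dec, N \to M} = \norm{T_\alpha}_{\dec, \L^1(M) \to \L^1(N)} \leq C$ by Proposition \ref{Prop-Duality-dec}. Applying the case $p = \infty$ (with the roles of $M$ and $N$ exchanged) shows that $T^*$ is decomposable with $\norm{T^*}_{\dec, N \to M} \leq \liminf_\alpha \norm{T_\alpha^*}_{\dec, N \to M} = \liminf_\alpha \norm{T_\alpha}_{\dec}$, and a second application of Proposition \ref{Prop-Duality-dec} gives that $T$ is decomposable with $\norm{T}_{\dec, \L^1(M) \to \L^1(N)} = \norm{T^*}_{\dec, N \to M} \leq \liminf_\alpha \norm{T_\alpha}_{\dec}$.

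The main points requiring care are: keeping track of which topology the net converges in and matching it with the hypotheses of Lemma \ref{lem-completely-positive-weak-limit} (weak operator topology when $1 < p < \infty$, point weak* topology when $p = \infty$ and for the adjoints when $p = 1$); checking that the off-diagonal entries $T_\alpha$ and $T_\alpha^\circ$ still converge to $T$ and $T^\circ$ along the successively extracted subnets; and the breakdown of reflexivity at $p = 1$, which is why one must pass through Proposition \ref{Prop-Duality-dec} there rather than run a direct compactness argument.
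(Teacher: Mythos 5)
Your proposal is correct and follows essentially the same route as the paper: Proposition \ref{Prop-dec-inf-atteint} to realize the decomposable norm, compactness of the bounded ball (reflexivity for $1<p<\infty$, a weak*/point-weak* compactness argument for $p=\infty$), Lemma \ref{lem-completely-positive-weak-limit} to pass complete positivity to the limit, lower semicontinuity of the norm, and duality via Proposition \ref{Prop-Duality-dec} for $p=1$. The only cosmetic difference is that the paper obtains the $p=\infty$ compactness by viewing $\B(M,N)$ as the dual of $M\hat{\ot}\L^1(N)$ rather than via Tychonoff directly, which amounts to the same thing on bounded sets.
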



\begin{proof}
We assume first that $1 < p < \infty$. By Proposition \ref{Prop-dec-inf-atteint}, for any $\alpha$, there exist some maps $v_\alpha, w_\alpha \co \L^p(M) \to \L^p(N)$ such that the map $
\begin{bmatrix} 
v_\alpha & T_\alpha \\ 
T_\alpha^\circ & w_\alpha 
\end{bmatrix} \co S^p_2(\L^p(M)) \to S^p_2(\L^p(N))$ 
is completely positive with $\max\{\norm{v_\alpha},\norm{w_\alpha}\} = \norm{T_\alpha}_\dec \leq C$. Note that since $\L^p(N)$ is reflexive, the closed unit ball of the space $\B(\L^p(M),\L^p(N))$ of bounded operators in the weak operator topology is compact. Hence the bounded nets $(v_{\alpha})$ and $(w_\alpha)$ admit convergent subnets $(v_{\beta})$ and $(w_{\beta})$ in the weak operator topology which converge to some $v,w \in \B(\L^p(M),\L^p(N))$. Now, it is easy to see that
$$ 
\begin{bmatrix} 
v & T \\ 
T^\circ & w 
\end{bmatrix} 
= \lim_\beta 
\begin{bmatrix} 
v_{\beta} & T_{\beta} \\ 
T_{\beta}^\circ & w_{\beta}
\end{bmatrix}
$$
in the weak operator topology of $\B(S_2^p(\L^p(M)),S_2^p(\L^p(N)))$. By Lemma \ref{lem-completely-positive-weak-limit}, the operator on the left hand side is completely positive as a weak limit of completely positive mappings. Hence the operator $T$ is decomposable. Moreover, using the weak lower semicontinuity of the norm, we see that $\norm{v} \leq \liminf_\beta \norm{v_{\beta}} \leq \liminf_\beta \norm{T_\beta}_\dec$ and $\norm{w} \leq \liminf_\beta \norm{w_{\beta}} \leq \liminf_\beta \norm{T_\beta}_\dec$. Hence, we have $\norm{T}_{\dec} \leq \max\{\norm{v},\norm{w}\} \leq \liminf_\beta \norm{T_\beta}_\dec$.
By considering a priori only subnets $\beta$ of $\alpha$ such that $\lim_\beta \norm{T_\beta}_\dec = \liminf_\alpha \norm{T_\alpha}_\dec$ (see \cite[Exercise 2.55 (f)]{Meg1}), we finish the proof in the case $1 < p < \infty$.

Assume now that $p = \infty$. Then the Banach space $\B(M,N)$ is still a dual space, namely that of the projective tensor product $M  \hat{\ot} \L^1(N)$. Consequently, the bounded nets $(v_{\alpha})$ and $(w_\alpha)$ admit convergent subnets $(v_{\beta})$ and $(w_{\beta})$ which converge in the weak* topology of $\B(M,N)$ to some $v,w$, where $v_\beta,w_\beta$ are constructed as previously. Note that the weak* convergence implies the point weak* convergence and thus allows us to apply Lemma \ref{lem-completely-positive-weak-limit} and deduce that $\begin{bmatrix} 
v & T \\ 
T^\circ & w 
\end{bmatrix}\co \M_2(M) \to \M_2(N)$ is completely positive. Using the weak* lower semicontinuity of the norm, we infer that $\norm{v} \leq \liminf_\beta \norm{v_{\beta}}\leq \liminf_\beta \norm{T_\beta}_\dec$ and similarly  $\norm{w} \leq \liminf_\beta \norm{T_\beta}_\dec$ and thus $\norm{T}_{\dec} \leq \max\{\norm{v},\norm{w}\} \leq \liminf_\beta \norm{T_\beta}_\dec = \liminf_\alpha \norm{T_\alpha}_{\dec}$, again under suitable choices of subnets $\beta$ of $\alpha$.

Assume finally that $p = 1$. According to \eqref{Duality-dec}, we note that the case $p = \infty$ is applicable\footnote{\thefootnote. \label{footnote-14}If $X$ is a dual Banach space $X$ with predual $X_*$, it is well-known that the mapping $\B(X_*) \to \B_{\mathrm{w}^*}(X)$, $T \mapsto T^*$ is a weak operator-point weak* homeomorphism onto the space $\B_{\mathrm{w}^*}(X)$ of weak* continuous operators of $\B(X)$ and the point weak* topology and the weak* topology coincide on bounded sets by \cite[Lemma 7.2]{Pau}.} to $T_\alpha^*$ and $T^*$ and thus $\norm{T}_{\dec,\L^1(M) \to \L^1(N)} = \norm{T^*}_{\dec,N \to M} \leq \liminf_\alpha \|T_\alpha^*\|_{\dec,N \to M} = \liminf_\alpha \norm{T_\alpha}_{\dec,\L^1(M) \to \L^1(N)}$ where we used again \eqref{Duality-dec} in the last equality.
\end{proof}

\begin{lemma}
\label{Lemma-regular-strong-limit}
Let $M$ and $N$ be approximately finite-dimensional von Neumann algebras which are equipped with faithful normal semifinite traces. Suppose $1 < p < \infty$. Let $(T_\alpha)$ be a net of maps from $\L^p(M)$ into $\L^p(N)$ such that $\norm{T_\alpha}_{\reg,\L^p(M) \to \L^p(N)} \leq C$ for some constant $C$ which converges to some $T \co \L^p(M) \to \L^p(N)$ in the strong operator topology. Then the map $T$ is regular and $\norm{T}_{\reg,\L^p(M) \to \L^p(N)} \leq \liminf_\alpha \norm{T_\alpha}_{\reg,\L^p(M) \to \L^p(N)}$.
\end{lemma}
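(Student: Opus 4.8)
The plan is to argue directly from Pisier's definition of the regular norm, $\norm{T}_{\reg,\L^p(M)\to\L^p(N)}=\sup_E\norm{T\ot\Id_E}_{\L^p(M,E)\to\L^p(N,E)}$, the supremum running over all operator spaces $E$ (this makes sense precisely because $M$ and $N$ are approximately finite-dimensional). Fix an operator space $E$; the goal is to show that $T\ot\Id_E$ extends to a bounded operator on $\L^p(M,E)$ with $\norm{T\ot\Id_E}_{\L^p(M,E)\to\L^p(N,E)}\leq\liminf_\alpha\norm{T_\alpha}_{\reg}$, and then to take the supremum over $E$.

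First I would collect two elementary inputs. Since $\norm{T_\alpha}_{\reg}\leq C$, each $T_\alpha\ot\Id_E$ is bounded with $\norm{T_\alpha\ot\Id_E}_{\L^p(M,E)\to\L^p(N,E)}\leq\norm{T_\alpha}_{\reg}\leq C$, so the net $(T_\alpha\ot\Id_E)$ is uniformly bounded. Also, for a fixed $e\in E$ the canonical map $\L^p(N)\to\L^p(N,E)$, $y\mapsto y\ot e$, is bounded (with norm $\norm{e}_E$), which follows by interpolation from the facts that the injective tensor norm on $N\ot_{\min}E$ and the operator space projective tensor norm on $\L^1(N^{\op})\otp E$ are cross norms. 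Hence, on the algebraic tensor product $\L^p(M)\ot E$ (which is dense in $\L^p(M,E)$ because $p<\infty$), we have for $\xi=\sum_{k=1}^n x_k\ot e_k$ that $(T_\alpha\ot\Id_E)(\xi)=\sum_k T_\alpha(x_k)\ot e_k\to\sum_k T(x_k)\ot e_k=(T\ot\Id_E)(\xi)$ in $\L^p(N,E)$, using that $T_\alpha(x_k)\to T(x_k)$ in $\L^p(N)$ by the strong operator convergence of $(T_\alpha)$.

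Next comes the usual $\epsi/3$ argument, carried out with nets: a uniformly bounded net of operators that converges pointwise on a dense subspace converges pointwise on the whole space, and the pointwise limit is a bounded operator. Applying this to $(T_\alpha\ot\Id_E)$ produces a bounded operator $S_E\co\L^p(M,E)\to\L^p(N,E)$ with $S_E(\xi)=\lim_\alpha(T_\alpha\ot\Id_E)(\xi)$ for every $\xi$; since $S_E$ agrees with $T\ot\Id_E$ on $\L^p(M)\ot E$, this is exactly the statement that $T\ot\Id_E$ induces a bounded operator, equal to $S_E$. For the norm estimate, for each $\xi\in\L^p(M,E)$ the net $\bnorm{(T_\alpha\ot\Id_E)(\xi)}_{\L^p(N,E)}$ converges, hence its limit equals its $\liminf$; combined with $\bnorm{(T_\alpha\ot\Id_E)(\xi)}\leq\norm{T_\alpha\ot\Id_E}\,\norm{\xi}\leq\norm{T_\alpha}_{\reg}\norm{\xi}$ this gives
$$
\bnorm{(T\ot\Id_E)(\xi)}_{\L^p(N,E)}=\lim_\alpha\bnorm{(T_\alpha\ot\Id_E)(\xi)}_{\L^p(N,E)}\leq\Big(\liminf_\alpha\norm{T_\alpha}_{\reg}\Big)\norm{\xi},
$$
so that $\norm{T\ot\Id_E}_{\L^p(M,E)\to\L^p(N,E)}\leq\liminf_\alpha\norm{T_\alpha}_{\reg}$.

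Finally I would take the supremum over all operator spaces $E$, which yields $\norm{T}_{\reg,\L^p(M)\to\L^p(N)}\leq\liminf_\alpha\norm{T_\alpha}_{\reg,\L^p(M)\to\L^p(N)}\leq C<\infty$; in particular $T$ is regular and the claimed inequality holds. The only points requiring care are the two standard facts about Pisier's vector-valued $\L^p$-spaces used above — continuity of $y\mapsto y\ot e$ from $\L^p(N)$ to $\L^p(N,E)$, and density of $\L^p(M)\ot E$ in $\L^p(M,E)$ — together with the bookkeeping needed to run the $\epsi/3$ argument and the $\liminf$ comparison with nets rather than sequences; approximate finite-dimensionality is used only to give meaning to $\L^p(M,E)$ and $\L^p(N,E)$ and plays no further role. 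This is the expected main obstacle, but it is routine.
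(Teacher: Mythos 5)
Your proof is correct and follows essentially the same route as the paper's: fix an operator space $E$, use the uniform bound $\norm{T_\alpha\ot\Id_E}\leq\norm{T_\alpha}_{\reg}\leq C$ together with convergence on the dense subspace $\L^p(M)\ot E$ (via the cross-norm estimate on elementary tensors) and the $\epsi/3$ argument for bounded nets, then invoke lower semicontinuity of the norm and take the supremum over $E$. The paper compresses exactly these steps into a computation on elementary tensors plus a footnoted density lemma, so nothing is missing from your version.
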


\begin{proof}
Let $E$ be an operator space. For any $x \in \L^p(M) \ot E$, an easy computation gives\footnote{\thefootnote. If $\sum_{k=1}^{n} x_k \ot y_k \in \L^p(M) \ot E$ then
\begin{align*}
\MoveEqLeft
  \norm{(T_\alpha \ot \Id_E)\Bigg(\sum_{k=1}^{n} x_k \ot y_k\Bigg)-(T \ot \Id_E)\Bigg(\sum_{k=1}^{n} x_k \ot y_k\Bigg)}_{\L^p(M,E)}    \\
		&=\norm{\sum_{k=1}^{n} T_\alpha(x_k) \ot y_k-\sum_{k=1}^{n} T(x_k) \ot y_k}_{\L^p(M,E)} \\
		&=\norm{\sum_{k=1}^{n} (T_\alpha(x_k)-T(x_k)) \ot y_k}_{\L^p(M,E)}
		\leq \sum_{k=1}^{n}\norm{T_\alpha(x_k)- T(x_k)}_{\L^p(M)} \norm{y_k}_{E}
		\xra[\alpha]{} 0.
\end{align*}
}
$$
\lim_\alpha (T_\alpha \ot \Id_E)(x) 
= (T \ot \Id_E)(x). 
$$
We deduce\footnote{\thefootnote. Let $X$ be a Banach space and $D$ a dense subset of $X$.  Let $(T_\alpha)$ be a bounded net of bounded linear operators in $\B(X)$. Suppose that, for each $x \in D$, the net $(T_\alpha(x))$  is convergent in $X$. By \cite[page 55]{DuS}, there exists a bounded linear operator $T \co X \to X$ such that $(T_\alpha)$ converges strongly to $T$.} that $T \ot \Id_E$ induces a bounded operator on $\L^p(M,E)$ and that the net $(T_\alpha \ot \Id_E)$ converges strongly to $T \ot \Id_E$. By the strong lower semicontinuity of the norm, we deduce that
$$
\norm{T \ot \Id_E}_{\L^p(M,E) \to \L^p(N,E)} 
\leq \liminf_\alpha \norm{T_\alpha \ot \Id_E}_{\L^p(M,E) \to \L^p(N,E)}
\leq \liminf_\alpha \norm{T_\alpha}_{\reg,\L^p(M) \to \L^p(N)}.
$$
Taking the supremum, we get the desired conclusion.
\end{proof}


\begin{thm}
\label{thm-dec=reg-hyperfinite}
Let $M$ and $N$ be approximately finite-dimensional von Neumann algebras which are equipped with faithful normal semifinite traces. Suppose $1 \leq p \leq \infty$. Let $T \co \L^p(M) \to \L^p(N)$ be a linear mapping. Then $T$ is regular if and only if $T$ is decomposable. In this case, we have 
$$
\norm{T}_{\dec,\L^p(M) \to \L^p(N)} = \norm{T}_{\reg,\L^p(M) \to \L^p(N)}.
$$
\end{thm}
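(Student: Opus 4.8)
The plan is to reduce the general case of approximately finite-dimensional von Neumann algebras to the already-established matrix case of Theorem~\ref{prop-dec-reg-matrix-case} by a double approximation argument, using the two stability lemmas (Lemma~\ref{lem-decomposable-weak-limit} and Lemma~\ref{Lemma-regular-strong-limit}) just proved. First I would treat the range $1 < p < \infty$, and then recover the endpoints $p = 1$ and $p = \infty$ by duality via \eqref{Duality-dec} and \eqref{Duality-reg} together with Lemma~\ref{Lemma-op-mapping-1}, Proposition~\ref{Prop-Duality-dec} and the interpolation identity \eqref{Regular-as-interpolation-space}.

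For $1 < p < \infty$: since $M$ and $N$ are approximately finite-dimensional, there are increasing nets of finite-dimensional subalgebras $(M_i)$ and $(N_j)$ whose unions are weak* dense, and the corresponding trace-preserving conditional expectations $\E_{M_i} \co M \to M_i$, $\E_{N_j} \co N \to N_j$ are $\tau$-Markov maps, hence induce completely positive complete contractions (in particular contractively decomposable and contractively regular) on the $\L^p$-level, with $\E_{M_i} \to \Id$ and $\E_{N_j} \to \Id$ strongly on $\L^p$ for $1 \le p < \infty$. Now consider $T_{i,j} = \E_{N_j}^p \circ T \circ \E_{M_i}^p$. One direction is easy: if $T$ is decomposable, then each $T_{i,j}$ is decomposable by \eqref{Composition-dec} with $\norm{T_{i,j}}_{\dec} \le \norm{T}_{\dec}$; each $T_{i,j}$ factors through the finite-dimensional Schatten space $S^p_{n_j}$ (after identifying $\L^p(N_j)$ with a direct sum of matrix blocks and using that $S^p$ of a finite direct sum of matrix algebras embeds completely isometrically and completely positively in a single $S^p_n$), so Theorem~\ref{prop-dec-reg-matrix-case} gives $\norm{T_{i,j}}_{\reg} = \norm{T_{i,j}}_{\dec} \le \norm{T}_{\dec}$; and since $T_{i,j} \to T$ in the strong operator topology on $\L^p$, Lemma~\ref{Lemma-regular-strong-limit} yields that $T$ is regular with $\norm{T}_{\reg} \le \liminf \norm{T_{i,j}}_{\reg} \le \norm{T}_{\dec}$. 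Conversely, if $T$ is regular, the same $T_{i,j}$ are regular with $\norm{T_{i,j}}_{\reg} \le \norm{T}_{\reg}$ by \eqref{Composition-reg}, hence decomposable with $\norm{T_{i,j}}_{\dec} = \norm{T_{i,j}}_{\reg} \le \norm{T}_{\reg}$ by Theorem~\ref{prop-dec-reg-matrix-case}; now $T_{i,j} \to T$ in the strong, hence in the weak operator topology, so Lemma~\ref{lem-decomposable-weak-limit} gives that $T$ is decomposable with $\norm{T}_{\dec} \le \liminf \norm{T_{i,j}}_{\dec} \le \norm{T}_{\reg}$. Combining the two inequalities gives the isometric identity for $1 < p < \infty$.

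For the endpoint $p = \infty$: if $T \co M \to N$ is decomposable, then each $T_{i,j} = \E_{N_j} \circ T \circ \E_{M_i}$ is decomposable with $\norm{T_{i,j}}_{\dec} \le \norm{T}_{\dec}$ and factors through a finite-dimensional matrix algebra, so Theorem~\ref{prop-dec-reg-matrix-case} (case $p=\infty$) applies; here $T_{i,j} \to T$ in the point weak* topology of $\B(M,N)$, and Lemma~\ref{lem-decomposable-weak-limit} (the $p = \infty$ clause) gives $T$ regular with $\norm{T}_{\reg} \le \norm{T}_{\dec}$. For the reverse direction at $p = \infty$, I would pass through $p = 1$: a regular (equivalently, by what precedes, decomposable, once we know the two notions agree at $p=1$) map is handled by the interpolation identity \eqref{Regular-as-interpolation-space} or simply by noting that for $T \co M \to N$ decomposable the witnessing maps $v_1, v_2$ can be taken weak* continuous after applying $P_{\w^*}$ of Proposition~\ref{Prop-recover-weak-star-continuity}, then using that weak* continuous completely positive maps on $M$ are automatically regular. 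The case $p = 1$ then follows by duality: by Proposition~\ref{Prop-Duality-dec}, $T \co \L^1(M) \to \L^1(N)$ is decomposable iff $T^* \co N \to M$ is decomposable with equal norms, and by the remark preceding Subsection~\ref{subsec-infimum-decomposable-norm} (equation \eqref{Duality-reg}), $T$ is regular iff $T^*$ is regular with equal norms; applying the already-proven case $p = \infty$ to $T^*$ closes the argument.

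The main obstacle I anticipate is the bookkeeping in the reduction of $T_{i,j}$ to a genuine finite-dimensional Schatten space $S^p_n$: a finite-dimensional approximately finite-dimensional von Neumann algebra is a finite direct sum $\bigoplus_k \M_{d_k}$, so $\L^p$ of it is $\bigoplus_k S^p_{d_k}$ with its natural matrix-ordered operator space structure, and one must check that the decomposable norm and the regular norm of a map between two such direct sums are unchanged when the map is transported (via completely positive, completely isometric inclusions and the trace-preserving conditional expectation back onto the block-diagonal) into $\CB(S^p_m, S^p_n)$ for suitable $m, n$. This is routine — inclusions of a corner into a larger matrix algebra are $\tau$-Markov and their adjoints are compressions of the form \eqref{conj-cp}, so both the $\dec$ and $\reg$ norms are preserved — but it requires care to state cleanly. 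A secondary point is that Lemma~\ref{Lemma-regular-strong-limit} is stated only for $1 < p < \infty$, which is exactly why the endpoints must be routed through duality rather than handled by a direct strong-limit argument; one should make sure the duality formula \eqref{Duality-reg} is in place for hyperfinite algebras, which it is, stated just before Subsection~\ref{subsec-infimum-decomposable-norm}.
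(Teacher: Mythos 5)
Your overall strategy --- double approximation by finite-dimensional subalgebras, reduction to Theorem \ref{prop-dec-reg-matrix-case}, the two stability lemmas, and duality for the endpoints --- is the paper's, but two steps you describe as routine are where the real work lies, and as written they fail. First, for a genuinely \emph{semifinite} (non-finite) trace there is no trace-preserving conditional expectation $\E_{M_i} \co M \to M_i$ onto a unital finite-dimensional subalgebra: it would force $\tau(1)=\tau(\E_{M_i}(1))<\infty$. So your net $T_{i,j}=\E_{N_j}^p\, T\, \E_{M_i}^p$ is only defined when the traces are finite. The paper inserts an extra reduction (its Case 3): compress first by an increasing net of projections $e_i$ with $\tau(e_i)<\infty$ via the completely positive maps $x \mapsto e_i x e_i$, use that $e_i x e_i \to x$ in $\L^p(M)$ to pass to the finite case, and only then run the conditional-expectation approximation.

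Second, transporting a map between finite-dimensional algebras into $\CB(S^p_m,S^p_n)$ is not routine. A finite-dimensional subalgebra carries the restricted trace $\lambda_1 \tr_{d_1}\oplus\cdots\oplus\lambda_K\tr_{d_K}$ with arbitrary weights $\lambda_k>0$, and the block-diagonal embedding into $(\M_m,\tr_m)$ you invoke is trace-preserving (hence completely isometric on $\L^p$) only when the $\lambda_k$ are integers; for irrational weights no such Markov embedding exists, so your claim that "inclusions of a corner into a larger matrix algebra are $\tau$-Markov" is false in general. The paper spends its Cases 1.2 and 1.3 on exactly this: rescale to pass from rational to integer weights, then approximate an arbitrary trace by a rational one using non-isometric identity maps $\Id_{M}^\epsi$ whose completely bounded, decomposable and regular norms all tend to $1$. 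Without this perturbation the equality $\norm{T_{i,j}}_{\reg}=\norm{T_{i,j}}_{\dec}$ is unjustified. A smaller slip: at $p=\infty$ you invoke Lemma \ref{lem-decomposable-weak-limit} to conclude \emph{regularity} of a point-weak* limit, but that lemma only preserves decomposability; the paper sidesteps $p=\infty$ entirely by quoting Haagerup's identity $\norm{\cdot}_{\dec}=\norm{\cdot}_{\cb}$ on approximately finite-dimensional algebras together with the fact that $\L^\infty(M,E)=M\ot_{\min}E$ makes the regular and completely bounded norms coincide there.
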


\begin{proof}
The case $p=\infty$ is \cite[Lemma 5.4.3]{ER} and a straightforward generalization of \cite[Remark following Definition 2.1]{Pis4} since $\L^\infty(M,E)=M \ot_{\min} E$. The case $p=1$ is also true by duality using Lemma \ref{lem-op-mappings}, \eqref{Duality-dec} and \eqref{Duality-reg}. 

Let us now turn to the case $1 < p < \infty$. We denote by $\tau$ and $\sigma$ the traces of $M$ and $N$.

\noindent
\textit{Case 1: $M$ and $N$ are finite-dimensional.}
By \cite[Theorem 11.2]{Tak1} and \cite[proof of Proposition 7 page 109, Theorem 5 page 105, Corollary page 103]{Dix2}, there exist $m_1,\ldots,m_K,n_1,\ldots,n_L \in \N$ and $\lambda_1,\ldots,\lambda_K, \mu_1,\ldots,\mu_L \in (0,\infty)$ such that $(M,\tau) = (\M_{m_1} \oplus \cdots \oplus \M_{m_K}, \lambda_1 \tr_{m_1} \oplus \cdots \oplus \lambda_K \tr_{m_K})$ and $(N,\sigma) = (\M_{n_1} \oplus \cdots \oplus \M_{n_L}, \mu_1 \tr_{n_1} \oplus \cdots \oplus \mu_L \tr_{n_L})$.

\vspace{0.2cm}

\noindent
\textit{Case 1.1: All $\lambda_k$ and $\mu_l$ belong to $\N$.}
Then let $m = \sum_{k = 1}^K \lambda_k m_k$ and $n = \sum_{l = 1}^L \mu_l n_l$. Let further $J \co M \to \M_m$ be the normal unital trace preserving $*$-homomorphism defined by
\[
J(x_1 \oplus \cdots \oplus x_K) 
= \begin{bmatrix} x_1 & & & & & &  \\ 
& \ddots & & & & & \\ & & x_1 & & &0 & \\ 
& & & \ddots & & & \\ & & & & x_K & & \\ 
& 0& & & & \ddots & \\ 
 & & & & & & x_K \end{bmatrix} ,
\]
where $x_k$ appears $\lambda_k$ times on the diagonal, $k = 1,\ldots, K$. Let moreover $\E \co \M_m \to M$ be the associated conditional expectation. Moreover, we introduce similar maps $J' \co N \to \M_n$ and $\E' \co \M_n \to N$. We denote by the same symbols the induced maps on the associated $\L^p$-spaces.


Lemma \ref{lem-sufficient-condition-reg} is applicable for both $J'$ and $\E$ and we obtain the estimates $\norm{J'}_{\reg,\L^p(N) \to S^p_n} \leq 1$ and $\norm{\E}_{\reg,S^p_m \to \L^p(M)} \leq 1$. Moreover, by Proposition \ref{quest-cp-versus-dec1}, we also infer that $\norm{J}_{\dec,\L^p(M) \to S^p_m} \leq 1$ and $\norm{\E'}_{\dec,S^p_n \to \L^p(N)} \leq 1$. Suppose that $T \co \L^p(M) \to \L^p(N)$ is regular. By Theorem \ref{prop-dec-reg-matrix-case} applied to $J'T\E \co S^p_m \to S^p_n$ together with \eqref{Composition-dec} and \eqref{Composition-reg}, we obtain that $T=\E'(J'T\E)J$ is decomposable and that
\begin{align*}
\MoveEqLeft
    \norm{T}_{\dec,\L^p(M) \to \L^p(N)}
= \bnorm{\E'J' T \E J}_\dec 
\leq \norm{\E'}_\dec \norm{J'T\E}_\dec \norm{J}_\dec \\
&\leq \norm{J'T\E}_\reg \leq \norm{J'}_\reg \norm{T}_\reg \norm{\E}_\reg 
\leq \norm{T}_{\reg,\L^p(M) \to \L^p(N)}.
\end{align*}
Let $T \co \L^p(M) \to \L^p(N)$ be a decomposable map. In a similar manner, we obtain the inequalities $\norm{J}_\reg,\norm{\E'}_\reg,\norm{J'}_\dec,\norm{\E}_\dec \leq 1$ and that $T$ is regular and we have
\begin{align*}
\MoveEqLeft
\norm{T}_{\reg,\L^p(M) \to \L^p(N)}
= \norm{\E' J' T \E J}_\reg \leq \norm{\E'}_\reg \norm{J'T\E}_\reg \norm{J}_\reg \\
&\leq \norm{J'T\E}_\dec \leq \norm{J'}_\dec \norm{T}_\dec \norm{\E}_\dec 
\leq \norm{T}_{\dec,\L^p(M) \to \L^p(N)}.
\end{align*}

\noindent
\textit{Case 1.2: All $\lambda_k$ and $\mu_l$ belong to $\Q_{+}$.}
Then there exists a common denominator of the $\lambda_k$'s and the $\mu_l$'s, that is, there exists $t \in \N$ such that $\lambda_k = \frac{\lambda_k'}{t}$, $\mu_l =\frac{\mu_l'}{t}$ for some integers $\lambda_k'$ and $\mu_l'$. Since we have $\norm{x}_{\L^p(M_1,t \tau_1)} = t^{\frac1p}\norm{x}_{\L^p(M_1,\tau_1)}$ for any semifinite von Neumann algebra $(M_1,\tau_1),$ it is easy to deduce that 
$$
\norm{T}_{\dec,\L^p(M,t\tau) \to \L^p(N,t\sigma)} 
= \norm{T}_{\dec,\L^p(M,\tau) \to \L^p(N,\sigma)}
$$ 
and also that $T \co \L^p(M,t\tau) \to \L^p(N,t\sigma)$ is regular if and only if $T \co \L^p(M,\tau) \to \L^p(N,\sigma)$ is regular with equal regular norms in this case. Thus, Case 1.2 follows from Case 1.1. 

\vspace{0.2cm}

\noindent

\textit{Case 1.3: $\lambda_k,\:\mu_l \in (0,\infty)$.}
For $\epsi > 0$, let $\lambda_{k,\epsi}$, $\mu_{l,\epsi} \in \Q_+$ be $\epsi$-close to $\lambda_k$ and $\mu_l$ in the sense that $(1+\epsi)^{-1} \lambda_k \leq \lambda_{k,\epsi} \leq (1+\epsi) \lambda_k$, and similarly for $\mu_l,\mu_{l,\epsi}$. We introduce the trace $\tau_\epsi = \lambda_{1,\epsi} \tr_{m_1} \oplus \cdots \oplus \lambda_{K,\epsi} \tr_{m_K}$ on $M=\M_{m_1} \oplus \cdots \oplus \M_{m_K}$. Consider the (non-isometric) identity mapping $\Id_{M}^\epsi \co \L^p(M,\tau) \to \L^p(M,\tau_\epsi)$. Note that for any element $x = x_1 \oplus \ldots \oplus x_K$ of $\L^p(M,\tau)$, the definition of multiplication and adjoint in the sum space $\M_{m_1} \oplus \cdots \oplus \M_{m_K}$ yields immediately that $|x|^p = |x_1|^p \oplus \ldots \oplus |x_K|^p$. Thus, $\norm{x}_{\L^p(M,\tau)}^p = \tau(|x|^p) = \sum_{k=1}^K \lambda_k \tr_{m_k}(|x_k|^p)$. By the same argument, $\|x\|_{\L^p(M,\tau_\epsi)} = \sum_{k=1}^K \lambda_{k,\epsi} \tr_{m_k}(|x_k|^p)$. Thus,
\begin{align*}
\MoveEqLeft
\norm{ \Id_{M}^\epsi}_{\L^p(M,\tau) \to \L^p(M,\tau_\epsi)}^p 
= \sup_{x \in \L^p(M,\tau) \backslash \{ 0 \}} \frac{ \sum_{k=1}^K \lambda_{k,\epsi} \tr_{m_k}(|x_k|^p) }{ \sum_{k=1}^K \lambda_k \tr_{m_k}(|x_k|^p) } \\
& \leq \sup_{x \in \L^p(M,\tau) \backslash \{ 0 \}} \frac{ \sum_{k=1}^K (1+\epsi)\lambda_{k} \tr_{m_k}(|x_k|^p) }{ \sum_{k=1}^K \lambda_k \tr_{m_k}(|x_k|^p) }
 = 1 + \epsi.
\end{align*}
In the same manner, using $(1+\epsi)^{-1} \lambda_k \leq \lambda_{k,\epsi}$, one obtains that $\norm{(\Id_{M}^\epsi)^{-1}}^p_{\cb,\L^p(M,\tau_\epsi) \to \L^p(M,\tau)} \leq 1 + \epsi$. We infer that $\|\Id_M^\epsi\|_{\cb},\norm{(\Id_M^\epsi)^{-1}}_{\cb} \to 1$ as $\epsi \to 0$. In the case $p = \infty$, this convergence also holds, since $\norm{x}_{\L^\infty(M,\tau^\epsi)} = \norm{x}_{\L^\infty(M,\tau)}$. We also define the trace $\sigma_\epsi = \mu_{1,\epsi} \tr_{n_1} \oplus \cdots \oplus \mu_{L,\epsi} \tr_{m_L}$ on the algebra $N$. Moreover, we also have a map $\Id_N^\epsi \co \L^p(N,\sigma) \to \L^p(N,\sigma^\epsi)$ and $\norm{\Id_{N}^\epsi}_{\cb},\norm{(\Id_{N}^\epsi)^{-1}}_{\cb}$ go to 1 when $\epsi$ approaches $0$. Since $\Id_{M}^\epsi$, $\Id_{N}^\epsi$ and their inverses are completely positive (since they are identity mappings and complete positivity is independent of the trace), by Proposition \ref{quest-cp-versus-dec1}, their decomposable norms approach $1$  when $\epsi$ approaches $0$. Moreover, interpolating between $p = 1$ and $p = \infty$, using Lemma \ref{lem-sufficient-condition-reg}, we also infer that their regular norms approach $1$ as $\epsi$ goes to $0$. Suppose that $T \co \L^p(M,\tau) \to \L^p(N,\sigma)$ is regular. Using Case 1.2 with the map $\Id_{N}^\epsi T (\Id_{M}^\epsi)^{-1} \co \L^p(M,\tau_\epsi)\to \L^p(N,\sigma_\epsi)$, \eqref{Composition-dec} and \eqref{Composition-reg}, we see that
\begin{align*}
\MoveEqLeft
 \norm{T}_{\dec,\L^p(M,\tau) \to \L^p(N,\sigma)} 
 = \bnorm{(\Id_{N}^\epsi)^{-1} \Id_{N}^\epsi T (\Id_{M}^\epsi)^{-1} \Id_{M}^\epsi}_{\dec,\L^p(M,\tau) \to \L^p(N,\sigma)}\\ 
 &\leq \norm{(\Id_{N}^\epsi)^{-1}}_\dec \norm{\Id_{N}^\epsi T (\Id_{M}^\epsi)^{-1}}_\dec \norm{\Id_{M}^\epsi}_\dec \\
& = \norm{(\Id_{N}^\epsi)^{-1}}_\dec \norm{\Id_{N}^\epsi T (\Id_{M}^\epsi)^{-1}}_\reg \norm{\Id_{M}^\epsi}_\dec \\
 &\leq \norm{(\Id_{N}^\epsi)^{-1}}_\dec \norm{\Id_{N}^\epsi}_\reg \norm{T}_\reg \norm{(\Id_{M}^\epsi)^{-1}}_\reg \norm{\Id_{M}^\epsi}_\dec.
\end{align*}
Going to the limit, we obtain $\norm{T}_\dec \leq \norm{T}_\reg$. In the same vein, one shows that any map $T \co \L^p(M,\tau) \to \L^p(N,\sigma)$ is regular and that we have $\norm{T}_\reg \leq \norm{T}_\dec$. The proof of Case 1.3, and thus of Case 1, is complete.

\vspace{0.2cm}

\noindent
\textit{Case 2: $M$ and $N$ are approximately finite-dimensional and finite.}
In this case \cite[page 291]{Bla}, $M = \ovl{\bigcup_{\alpha} M_\alpha}^{\textrm{w}^*}$ and $N = \ovl{\bigcup_{\beta} N_\beta}^{\textrm{w}^*}$ where $(M_\alpha)$ and $(N_\beta)$ are nets directed by inclusion of finite dimensional unital $*$-subalgebras (as in Case 1). Moreover, we denote by $J_\alpha \co M_\alpha \to M$, $J_\beta' \co N_\beta \to N$ the canonical unital $*$-homomorphisms and by $\E_\alpha \co M \to M_\alpha$ and $\E_\beta' \co N \to N_\beta$ the associated conditional expectations given by \cite[Corollary 10.6]{Str} since the traces are finite. All these maps induce completely contractive and completely positive maps on the associated $\L^p$-spaces denoted by the same notations such that\footnote{\thefootnote. 
Recall that $\cup_{\alpha} \L^p(M_\alpha)$ is dense in $\L^p(M)$. Let $x \in \L^p(M)$ and $\epsi >0$. There exists $\alpha_0$ and $y \in \L^p(M_{\alpha_0})$ such that $\norm{x-y}_{\L^p(M)} \leq \epsi$. Hence for any $\alpha \geq \alpha_0$, since $y \in \L^p(M_{\alpha})$,  we have
\begin{align*}
\MoveEqLeft
  \norm{x-J_\alpha \E_\alpha(x)}_{\L^p(M)}  
		\leq\norm{x-y}_{\L^p(M)}+\norm{y-J_\alpha \E_\alpha(x)}_{\L^p(M)}
		\leq \epsi+\norm{J_\alpha\E_\alpha(y-x)}_{\L^p(M)}
		\leq 2\epsi.
\end{align*}}
\begin{equation}
\label{equ-Lp-convergence-approx-finite-dim}
\lim_\alpha J_\alpha \E_\alpha(x) = x 
\quad \text{ and } \quad
\lim_\beta J_\beta' \E_\beta'(y) = y
\end{equation}
(for the $\L^p$-norm) for any $x \in \L^p(M)$ and any $y \in \L^p(N)$.
Let $T \co \L^p(M) \to \L^p(N)$ be a bounded map. The net\footnote{\thefootnote. The index set $A \times B$ is directed by letting $(\alpha,\beta) \leq (\alpha',\beta')$ if $\alpha \leq \alpha'$ and $\beta \leq \beta'$.} $(J_\beta' \E_\beta',J_\alpha \E_\alpha)_{(\alpha,\beta)}$ of $\B(\L^p(N)) \times \B(\L^p(M))$ is obviously convergent to $(\Id_{\L^p(N)},\Id_{\L^p(M)})$ where each factor is equipped with the strong topology. Using the strong continuity of the product on bounded sets, we infer that the net $(J_\beta'\E_\beta' T J_\alpha\E_\alpha)$ converges strongly to $T$. Suppose that $T$ is decomposable. Using Case 1 with the operator $\E_\beta' T J_\alpha \co \L^p(M_\alpha) \to \L^p(N_\beta)$, we deduce that $T$ is regular and that, using \eqref{Composition-dec} and \eqref{Composition-reg}
\begin{align*}
\MoveEqLeft
\bnorm{T}_{\reg,\L^p(M) \to \L^p(N)} 
 \leq \liminf_{\alpha,\beta} \norm{J_\beta'\E_\beta' T J_\alpha \E_\alpha}_{\reg}  
 \leq \liminf_{\alpha,\beta} \norm{J_\beta'}_\reg \norm{\E_\beta' T J_\alpha}_\reg \|\E_\alpha'\|_\reg \\
 &\leq \liminf_{\alpha,\beta} \norm{\E_\beta' T J_\alpha}_\dec \leq
\liminf_{\alpha,\beta} \norm{\E_\beta'}_{\dec} \norm{T}_\dec \norm{J_\alpha}_{\dec}
\leq \norm{T}_{\dec,\L^p(M) \to \L^p(N)}.
\end{align*}
For the converse inequality, suppose that the map $T \co \L^p(M) \to \L^p(N)$ is regular. Since $T = \lim_{\alpha,\beta} J_\beta' \E_\beta' T J_\alpha \E_\alpha$ is the strong, hence weak, limit of decomposable operators, hence decomposable by Proposition \ref{lem-decomposable-weak-limit}, we obtain, using again \eqref{Composition-dec} and \eqref{Composition-reg},
\begin{align*}
\MoveEqLeft
\norm{T}_{\dec,\L^p(M) \to \L^p(N)} 
\leq \liminf_{\alpha,\beta} \norm{J_\beta' \E_\beta' T J_\alpha \E_\alpha}_\dec 
\leq \liminf_{\alpha,\beta} \norm{J_\beta'}_\dec \|\E_\beta' T J_\alpha \|_\dec \|\E_\alpha' \|_\dec \\
& \leq \liminf_{\alpha,\beta} \norm{\E_\beta' T J_\alpha}_\reg 
\leq \liminf_{\alpha,\beta} \norm{\E_\beta'}_\reg \norm{T}_\reg \norm{J_\alpha}_{\reg} 
 \leq \norm{T}_{\reg,\L^p(M) \to \L^p(N)}.
\end{align*}
Thus, Case 2 is proved.

\vspace{0.2cm}

\noindent
\textit{Case 3: $M$ and $N$ are general approximately finite-dimensional semifinite von Neumann algebras.}
By \cite[page 57]{Sun}, there exist an increasing net of projections $(e_i)$ which is strongly convergent to $1$ with $\tau(e_i)<\infty$ for any $i$. We set $M_i \overset{\mathrm{def}}= e_iMe_i$. The trace $\tau|_{M_i}$ is obviously finite. Moreover, it is well-known\footnote{\thefootnote. This observation relies on the equivalence between ``injective'' and ``approximately finite-dimensional''.} that $M_i$ is approximately finite-dimensional. We conclude that $M_i$ is a von Neumann algebra satisfying the properties of Case 2. We also introduce the completely positive and completely contractive adjoint preserving normal map $Q_i \co M \to M_i$, $x \mapsto e_ixe_i$ and the canonical inclusion map $J_i \co M_i \to M$. We do the same construction on $N$ and obtain some maps $Q_j' \co N \to N_j$ and $J_j' \co N_j \to N$. All these maps induce completely positive and completely contractive maps on all $\L^p$ levels, $1 \leq p \leq \infty$. Moreover, for any $1 \leq p < \infty$ and any $x \in \L^p(M)$ we have\footnote{\thefootnote. Since the product of strongly convergent bounded nets of bounded operators on $\L^p(M)$ define a strongly convergent net, it suffices to prove that the net $(e_ix)$ converges to $x$ in $\L^p(M)$. Now using the GNS representation $\pi \co M \to \B(\L^2(M))$ and \cite[Corollary 7.1.16]{KaR}, we deduce that for any $x \in \L^2(M)$, the net $(e_ix)$ converges to $x$ in $\L^2(M)$. Using interpolation between 2 and $\infty$, we obtain the convergence for $2 < p <\infty$. For the case $1 \leq p<2$, it suffices to write an element $x \in \L^p(M)$ as $x=yz$ with $y,z \in \L^{2p}(M)$ and use H\"older inequality.} $x =\lim_{i} e_ixe_i =\lim_{i} J_iQ_i(x)$ and similarly $y = \lim_{j} J_j'Q_j'(y)$ for any $y \in \L^p(N)$. We conclude by the same arguments as in Case 2.
\end{proof}

\begin{remark}\normalfont
Using Proposition \ref{prop-linearcp-imply-decomposable}, this theorem also shows that the space of regular operators between $\L^p(M)$ and $\L^p(N)$ is precisely the span of the completely positive maps from $\L^p(M)$ into $\L^p(N)$. This assertion is alluded in \cite[Theorem 3.7]{Pis4} and proved\footnote{\thefootnote. The proof of \cite[Theorem 8.8]{Pis13} for Schatten spaces does not generalize in a straightforward manner  to the case of noncommutative $\L^p$-spaces. Indeed, the equality \eqref{Produits-tensoriels} is not true with a von Neumann algebra $M$ instead of $\M_n$. For example, by \cite[page 97]{Pis7}, the space $\ell^\infty_n \ot_h \ell^\infty_n$ is isometric to the space $\mathfrak{M}_n^\infty$ of Schur multipliers on $\M_n$ and the space $\CB(\ell^\infty_n)$ is isometric to $\B(\ell^\infty_n)$ by \cite[Proposition 2.2.6]{ER} and it is easy to see that $\mathfrak{M}_n^\infty$ is not isometric to $\B(\ell^\infty_n)$.} in \cite[Lemma 2.3]{Pis4} and \cite[Theorem 8.8]{Pis13} for $\L^p(M)=\L^p(N)=S^p$.
\end{remark}

With the same method, we can prove the particular case of Theorem \ref{quest-cp-versus-cb}. Using the same notations, we only indicate the changes.
 
\begin{thm}
\label{thm-norm=normcb-hyperfinite}
Let $M$ and $N$ be approximately finite-dimensional von Neumann algebras which are equipped with faithful normal semifinite traces. Suppose $1 \leq p \leq \infty$. Let $T \co \L^p(M) \to \L^p(N)$ be a completely positive map. Then $T$ is completely bounded and we have 
$$
\norm{T}_{\L^p(M) \to \L^p(N)}
=\norm{T}_{\cb,\L^p(M) \to \L^p(N)}.
$$
\end{thm}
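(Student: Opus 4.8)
The plan is to mimic the structure of the proof of Theorem~\ref{thm-dec=reg-hyperfinite}, reducing the general approximately finite-dimensional case to finite-dimensional Schatten spaces where the result is already known from \cite{Pis4} (Proposition~2.2 and Lemma~2.3), and then transporting it along $*$-homomorphisms, conditional expectations and approximation limits, using at each stage that completely positive maps compose with completely positive maps to stay completely positive, and that $J$, $\E$ etc. are completely contractive on every $\L^p$ level. The cases $p = \infty$ and $p = 1$ are immediate: for $p = \infty$ a completely positive map $T \co M \to N$ satisfies $\norm{T}_{\cb} = \norm{T(1)} = \norm{T}$ by the standard fact on completely positive maps between C$^*$-algebras, and for $p = 1$ one passes to the Banach adjoint $T^* \co N \to M$, which is completely positive by Lemma~\ref{Lemma-adjoint-cp} and Lemma~\ref{lem-op-mappings} (using $\L^1(M)^* = M^\op$), reducing to the $p = \infty$ case, since $\norm{T}_{\cb,\L^1(M) \to \L^1(N)} = \norm{T^*}_{\cb,N \to M}$ and likewise for the bounded norms.

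So assume $1 < p < \infty$. First I would treat Case~1, where $M$ and $N$ are finite-dimensional, hence direct sums $\M_{m_1} \oplus \cdots \oplus \M_{m_K}$ and $\M_{n_1} \oplus \cdots \oplus \M_{n_L}$ with traces weighted by positive scalars $\lambda_k$, $\mu_l$. When all weights are positive integers, one builds the normal unital trace-preserving $*$-homomorphism $J \co M \to \M_m$ and $J' \co N \to \M_n$ with their conditional expectations $\E \co \M_m \to M$, $\E' \co \M_n \to N$ exactly as in the proof of Theorem~\ref{thm-dec=reg-hyperfinite}. These maps and their $\L^p$ versions are completely positive and completely contractive, so for a completely positive $T$ one writes $T = \E' (J' T \E) J$, applies the known Schatten-space equality $\norm{J'T\E}_{S^p_m \to S^p_n} = \norm{J'T\E}_{\cb,S^p_m \to S^p_n}$ (this is where \cite[Proposition 2.2, Lemma 2.3]{Pis4} enter, noting that $J'T\E$ is completely positive as a composition), and then composes: $\norm{T}_{\cb} \leq \norm{\E'}_{\cb} \norm{J'T\E}_{\cb} \norm{J}_{\cb} = \norm{J'T\E}_{\cb} = \norm{J'T\E}_{S^p_m \to S^p_n} \leq \norm{J'}\,\norm{T}\,\norm{\E} \leq \norm{T}$, while the reverse inequality $\norm{T} \leq \norm{T}_{\cb}$ is trivial. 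The rational-weight case reduces to the integer case by rescaling the traces (which multiplies all $\L^p$-norms by a common factor $t^{1/p}$ and leaves both the $\cb$-norm and the bounded norm unchanged). The general positive-weight case follows by approximating the weights by rationals: the resulting identity maps $\Id_M^\epsi$ and their inverses are completely positive with $\cb$-norms tending to $1$ (the estimate $\norm{\Id_M^\epsi}_{\cb}^p \leq 1+\epsi$ and similarly for the inverse is the one already computed in the proof of Theorem~\ref{thm-dec=reg-hyperfinite}), and conjugating $T$ by these maps and letting $\epsi \to 0$ gives the equality.

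Then in Case~2, with $M$, $N$ approximately finite-dimensional and \emph{finite}, one writes $M = \ovl{\bigcup_\alpha M_\alpha}^{\w^*}$, $N = \ovl{\bigcup_\beta N_\beta}^{\w^*}$ with finite-dimensional $*$-subalgebras as in Case~1, uses the conditional expectations $\E_\alpha \co M \to M_\alpha$, $\E_\beta' \co N \to N_\beta$ and inclusions $J_\alpha$, $J_\beta'$, and observes that $J_\beta' \E_\beta' T J_\alpha \E_\alpha \to T$ strongly on $\L^p$. Here $\E_\beta' T J_\alpha$ is completely positive on the finite-dimensional pieces, so Case~1 applies to give $\norm{\E_\beta' T J_\alpha}_{\cb} = \norm{\E_\beta' T J_\alpha}_{\L^p(M_\alpha) \to \L^p(N_\beta)}$; composing with the completely contractive $J_\beta'$, $\E_\alpha$ and taking $\liminf$ (together with the analogue of Lemma~\ref{Lemma-regular-strong-limit}, i.e.\ lower semicontinuity of the $\cb$-norm under strong convergence, which follows from Proposition~\ref{lemma-charact-Lp-cb} since $\norm{T}_{\cb} = \norm{\Id_{S^p} \ot T}_{S^p(\L^p(M)) \to S^p(\L^p(N))}$ and $\Id_{S^p} \ot (J_\beta' \E_\beta' T J_\alpha \E_\alpha) \to \Id_{S^p} \ot T$ strongly on $S^p(\L^p(M))$) gives $\norm{T}_{\cb,\L^p(M) \to \L^p(N)} \leq \norm{T}_{\L^p(M) \to \L^p(N)}$. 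Finally Case~3, with $M$, $N$ general approximately finite-dimensional semifinite, follows by the cut-down trick: an increasing net of finite-trace projections $(e_i)$ strongly convergent to $1$, the finite approximately finite-dimensional algebras $M_i = e_i M e_i$, the completely positive completely contractive maps $Q_i \co M \to M_i$, $x \mapsto e_i x e_i$ and inclusions $J_i \co M_i \to M$ with $J_i Q_i \to \Id$ strongly on $\L^p(M)$; Case~2 applies to $Q_j' T J_i \co \L^p(M_i) \to \L^p(N_j)$ and one concludes as in Case~2. The main obstacle is the same one as in Theorem~\ref{thm-dec=reg-hyperfinite}: making the strong-limit / lower-semicontinuity argument for the $\cb$-norm precise --- one must be careful that complete boundedness is detected by a \emph{single} space $S^p$ (Proposition~\ref{lemma-charact-Lp-cb}) so that strong convergence of $T_\alpha$ implies strong convergence of $\Id_{S^p} \ot T_\alpha$, and hence the lower semicontinuity of the operator norm on $\B(S^p(\L^p(M)), S^p(\L^p(N)))$ can be invoked --- but this is routine once set up correctly.
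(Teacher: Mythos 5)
Your proposal follows essentially the same route as the paper's own proof: reduction to finite-dimensional Schatten blocks via the trace-preserving embeddings $J$, $J'$ and conditional expectations $\E$, $\E'$ (with the integer/rational/real weight subcases), invocation of \cite[Proposition 2.2 and Lemma 2.3]{Pis4} for the Schatten-space case, and then passage to the approximately finite-dimensional finite and semifinite cases by strong approximation together with lower semicontinuity of the completely bounded norm (for which the paper cites \cite[Theorem 7.4]{Pau}, while your argument via Proposition~\ref{lemma-charact-Lp-cb} and $\Id_{S^p}\ot T$ is an equally valid justification). The only cosmetic difference is that you dispose of $p=\infty$ and $p=1$ separately at the outset, which is harmless.
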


\begin{proof}

\noindent
\textit{Case 1: $M$ and $N$ are finite-dimensional.}
Then as explained in the proof of Theorem \ref{thm-dec=reg-hyperfinite}, we can write $(M,\tau) = (\M_{m_1} \oplus \cdots \oplus \M_{m_K}, \lambda_1 \tr_{m_1} \oplus \cdots \oplus \lambda_k \tr_{m_K})$ and $(N,\sigma) = (\M_{n_1} \oplus \cdots \oplus \M_{n_L}, \mu_1 \tr_{n_1} \oplus \cdots \oplus \mu_L \tr_{n_L})$.

\noindent
\textit{Case 1.1: All $\lambda_k$ and $\mu_l$ belong to $\N$.}
We thus have, as in the proof of Theorem \ref{thm-dec=reg-hyperfinite}, unital trace preserving $*$-homomorphisms $J \co M \to \M_m$ and $J' \co N \to \M_n$ as well as associated conditional expectations $\E \co \M_m \to M$ and $\E' \co \M_n \to N$. Suppose that $T \co \L^p(M) \to \L^p(N)$ is completely positive. By a straightforward extension of \cite[Proposition 2.2 and Lemma 2.3]{Pis4} applied to $J'T\E \co S^p_m \to S^p_n$, we obtain that $T=\E'(J'T\E)J$ is completely bounded and that
\begin{align*}
\MoveEqLeft
\norm{T}_{\cb,\L^p(M) \to \L^p(N)}
=\bnorm{\E'J' T \E J}_\cb 
\leq \norm{\E'}_\cb \norm{J'T\E}_\cb \norm{J}_\cb 
\leq \norm{J'T\E} \leq \norm{J'} \norm{T} \norm{\E} 
\leq \norm{T}.
\end{align*}

\noindent
\textit{Case 1.2: All $\lambda_k$ and $\mu_l$ belong to $\Q_{+}.$}
It is easy to prove that $T \co \L^p(M,t\tau) \to \L^p(N,t\sigma)$ is bounded if and only if $T \co \L^p(M,\tau) \to \L^p(N,\sigma)$ is bounded with equal norms a similar result for the complete boundedness. Thus, Case 1.2 follows from Case 1.1. 

\noindent
\textit{Case 1.3: $\lambda_k,\:\mu_l \in (0,\infty).$}
Suppose that $T \co \L^p(M,\tau) \to \L^p(N,\sigma)$ is completely positive. Using Case 1.2 with the map $\Id_{N}^\epsi T (\Id_{M}^\epsi)^{-1} \co \L^p(M,\tau_\epsi)\to \L^p(N,\sigma_\epsi)$, we see that $T$ is completely bounded and that
\begin{align*}
\MoveEqLeft
 \norm{T}_{\cb,\L^p(M,\tau) \to \L^p(N,\sigma)} 
 = \bnorm{(\Id_{N}^\epsi)^{-1} \Id_{N}^\epsi T (\Id_{M}^\epsi)^{-1} \Id_{M}^\epsi}_{\cb,\L^p(M,\tau) \to \L^p(N,\sigma)}\\ 
 &\leq \norm{(\Id_{N}^\epsi)^{-1}}_\cb \norm{\Id_{N}^\epsi T (\Id_{M}^\epsi)^{-1}}_\cb \norm{\Id_{M}^\epsi}_\cb 
 = \norm{(\Id_{N}^\epsi)^{-1}}_\cb \norm{\Id_{N}^\epsi T (\Id_{M}^\epsi)^{-1}} \norm{\Id_{M}^\epsi}_\cb \\
 &\leq \norm{(\Id_{N}^\epsi)^{-1}}_\cb \norm{\Id_{N}^\epsi} \norm{T} \norm{(\Id_{M}^\epsi)^{-1}} \norm{\Id_{M}^\epsi}_\cb.
\end{align*}
Going to the limit, we obtain $\norm{T}_{\cb,\L^p(M) \to \L^p(N)} \leq \norm{T}_{\L^p(M) \to \L^p(N)}$. Thus Case 1 is complete.

\noindent
\textit{Case 2: $M$ and $N$ are approximately finite-dimensional and finite.} Let $T \co \L^p(M) \to \L^p(N)$ be a completely positive map. The net $(J_\beta'\E_\beta' T J_\alpha\E_\alpha)$ converges strongly to $T$. Using Case 1 with the operator $\E_\beta' T J_\alpha \co \L^p(M_\alpha) \to \L^p(N_\beta)$ and \cite[Theorem 7.4]{Pau} we deduce that $T$ is completely bounded and that
\begin{align*}
\MoveEqLeft
\bnorm{T}_{\cb,\L^p(M) \to \L^p(N)} 
 \leq \liminf_{\alpha,\beta} \norm{J_\beta'\E_\beta' T J_\alpha \E_\alpha}_{\cb}  
 \leq \liminf_{\alpha,\beta} \norm{J_\beta'}_\cb \norm{\E_\beta' T J_\alpha}_\cb \|\E_\alpha\|_\cb \\
 &\leq \liminf_{\alpha,\beta} \norm{\E_\beta' T J_\alpha} 
\leq \liminf_{\alpha,\beta} \norm{\E_\beta'} \norm{T} \norm{J_\alpha}
\leq \norm{T}_{\L^p(M) \to \L^p(N)}.
\end{align*}
Thus, Case 2 is proved. The Case 3 is similar to the Case 2.
\end{proof}

\subsection{Modulus of regular operators vs 2x2 matrix of decomposable operators}
\label{Modulus}

For any regular operator $T \co \L^p(\Omega) \to \L^p(\Omega')$ on classical $\L^p$-spaces, it is well-known that $\bnorm{|T|}_{\L^p(\Omega) \to \L^p(\Omega')} = \norm{T}_{\reg, \L^p(\Omega) \to \L^p(\Omega')}$, see e.g. \cite[Proposition 1.3.6]{MeN}. We recall that the modulus of a regular operator $T$ between real-valued $\L^p$-spaces is given by $|T| \ov{\mathrm{def}}{=} -T \vee T$, in the sense that $|T|$ is the supremum of the set $\{-T,T\}$ in $\B(\L^p(\Omega),\L^p(\Omega'))$, see \cite[page 229]{Sch1}. For any positive $f \in \L^p(\Omega)$, we have $|T|(f) = \sup\{|T(g)| : \: |g|\leq f\}$, see \cite[Theorem 1.3.2]{MeN} and \cite[Proposition 2.2.6]{MeN} in the case of complex-valued $\L^p$-spaces.


\begin{thm}
\label{prop-reg=decomp}
Let $\Omega$ and $\Omega'$ be (localizable) measure spaces. Suppose $1 \leq p < \infty$ (see Remark \ref{Rem-w-continuous-necessary} for the case $p =\infty$). Let $T \co \L^p(\Omega) \to \L^p(\Omega')$ be a regular operator. Then the map $\Phi 
=\begin{bmatrix}
|T| & T \\ 
T^\circ & |T| 
\end{bmatrix}
\co S^p_2(\L^p(\Omega)) \to S^p_2(\L^p(\Omega'))$ is completely positive, i.e. the infimum of \eqref{Matrice-2-2-Phi} is attained with $v_1=v_2=|T|$.
\end{thm}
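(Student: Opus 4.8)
The plan is to show that $\Id_{\M_n} \ot \Phi$ is positive at every matrix level $n \geq 1$, which amounts to complete positivity of $\Phi$; here $\Id_{\M_n} \ot \Phi$ is regarded as an operator $S^p_{2n}(\L^p(\Omega)) \to S^p_{2n}(\L^p(\Omega'))$ which, in $2 \times 2$ blocks of size $n$, applies $|T| \ot \Id_{\M_n}$ to the diagonal blocks and $T \ot \Id_{\M_n}$, $T^\circ \ot \Id_{\M_n}$ to the $(1,2)$- and $(2,1)$-blocks. Throughout I use that $|T|$ is a positive operator satisfying $|T(x)| \leq |T|(|x|)$ pointwise, that it is completely positive (Proposition \ref{prop-positive-imply-cp}, being positive between commutative $\L^p$-spaces), that $T^\circ(b^*) = (T(b))^*$, and the pointwise description of the positive cone of $S^p_m(\L^p(\Omega))$ from Lemma \ref{lemma-description-con-commutative-Lp}. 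I treat $1 \leq p < \infty$, the case $p = \infty$ being similar with the analogous pointwise cone description. The argument has two steps: the scalar case $n = 1$ (i.e.\ positivity of $\Phi$ itself), then a slicing reduction of the general case to it.

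\emph{Step 1.} First I would prove $\Phi$ is positive. Given $\begin{bmatrix} a & b \\ b^* & d \end{bmatrix} \in S^p_2(\L^p(\Omega))_+$, Lemma \ref{lemma-description-con-commutative-Lp} gives $a, d \geq 0$ and $|b(\omega)|^2 \leq a(\omega) d(\omega)$ for a.e.\ $\omega$; with $f = a^{\frac12}$ and $g = d^{\frac12}$ in $\L^{2p}(\Omega)_+$ this reads $|b| \leq fg$ a.e. The crux is a pointwise Cauchy--Schwarz inequality for the positive operator $|T|$: from $2fg \leq \lambda f^2 + \lambda^{-1} g^2 = \lambda a + \lambda^{-1} d$ (a.e., for each rational $\lambda > 0$) we get $2|T|(fg) \leq \lambda |T|(a) + \lambda^{-1} |T|(d)$ in $\L^p(\Omega')$; intersecting the null sets over rational $\lambda$ and optimising in $\lambda$ yields $|T|(fg)(\omega')^2 \leq |T|(a)(\omega')\,|T|(d)(\omega')$ a.e. Since $|T(b)| \leq |T|(|b|) \leq |T|(fg)$ a.e., we obtain $|T(b)(\omega')|^2 \leq |T|(a)(\omega')\,|T|(d)(\omega')$ a.e.; together with $|T|(a), |T|(d) \geq 0$ and $T^\circ(b^*) = (T(b))^*$, Lemma \ref{lemma-description-con-commutative-Lp} shows $\Phi\left(\begin{bmatrix} a & b \\ b^* & d \end{bmatrix}\right) \geq 0$.

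\emph{Step 2.} Now fix $n \geq 1$ and $X \in S^p_{2n}(\L^p(\Omega))_+ = \L^p(\Omega, S^p_{2n})_+$, written as $\begin{bmatrix} A & B \\ B^* & D \end{bmatrix}$ in $n$-blocks. For $\xi, \eta \in \C^n$ the scalar functions $a_\xi = \langle A(\cdot)\xi,\xi\rangle$, $d_\eta = \langle D(\cdot)\eta,\eta\rangle$, $b_{\xi,\eta} = \langle B(\cdot)\eta,\xi\rangle$ lie in $\L^p(\Omega)$ and, by Lemma \ref{lemma-description-con-commutative-Lp} and positivity of $X(\omega)$ a.e., form a positive $2\times2$ element of $S^p_2(\L^p(\Omega))$. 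Step 1 then gives $|T(b_{\xi,\eta})(\omega')|^2 \leq |T|(a_\xi)(\omega')\,|T|(d_\eta)(\omega')$ a.e.; by linearity of $T$ and $|T|$ these three quantities are $\langle \hat B(\omega')\eta,\xi\rangle$, $\langle \hat A(\omega')\xi,\xi\rangle$, $\langle \hat D(\omega')\eta,\eta\rangle$ with $\hat A = (|T|\ot\Id_{\M_n})(A)$, $\hat B = (T\ot\Id_{\M_n})(B)$, $\hat D = (|T|\ot\Id_{\M_n})(D)$. Running $\xi, \eta$ over a countable dense subset of $\C^n$ and using continuity, one finds a single conull set of $\omega'$ on which $\begin{bmatrix} \hat A(\omega') & \hat B(\omega') \\ \hat B(\omega')^* & \hat D(\omega') \end{bmatrix} \geq 0$ in $\M_{2n}$ (with $\hat A(\omega'), \hat D(\omega') \geq 0$ since $A, D \geq 0$ and $|T|$ is completely positive). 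As $(T^\circ \ot \Id_{\M_n})(B^*) = \hat B^*$, this matrix is $(\Id_{\M_n}\ot\Phi)(X)(\omega')$, so $(\Id_{\M_n}\ot\Phi)(X) \geq 0$ by Lemma \ref{lemma-description-con-commutative-Lp}. Hence $\Phi$ is completely positive, and in particular the infimum \eqref{Norm-dec} for the decomposable map $T$ is attained at $v_1 = v_2 = |T|$.

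The step I expect to be most delicate is the measure theory around the ``pointwise Cauchy--Schwarz'': the inequality $2|T|(fg) \leq \lambda|T|(a) + \lambda^{-1}|T|(d)$ is only valid a.e.\ for each fixed $\lambda$, so one must first restrict to a countable set of $\lambda$'s before optimising, and similarly in Step 2 one must pass from ``for each $\xi,\eta$, a.e.\ $\omega'$'' to ``a.e.\ $\omega'$, for all $\xi,\eta$'' via a countable dense set before quoting the finite-dimensional positivity criterion; the rest is routine. I note that for selfadjoint $T$ one can bypass Step 1 altogether: then $|T| + T = 2T^{+} \geq 0$ and $|T| - T = 2T^{-} \geq 0$ are positive maps between commutative $\L^p$-spaces, hence completely positive by Proposition \ref{prop-positive-imply-cp}, so $-|T| \leq_{\cp} T \leq_{\cp} |T|$ and Lemma \ref{Lemma-passage1} applies directly; this does not cover general $T$ because then $T^\circ \neq T$, which is why the slicing argument seems needed in general.
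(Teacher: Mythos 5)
Your proof is correct, but it takes a genuinely different route from the one in the paper. The paper's argument is an approximation: it discretizes $\Omega$ and $\Omega'$ by semipartitions, compresses $\Phi$ to maps $\Phi_{\alpha,\beta} \co S^p_2(\ell^p_{n_\alpha}) \to S^p_2(\ell^p_{n_\beta})$ via the positive isometries $J_\alpha$ and the averaging operators $\mathcal{P}_\alpha$, observes that the matrix entries of $\mathcal{P}_\beta |T| J_\alpha$ dominate the moduli of those of $\mathcal{P}_\beta T J_\alpha$, exhibits the compressed block map explicitly in Choi--Kraus form $\sum_{i,j}\big(a_{ij} x a_{ij}^* + b_{ij}^{(1)} x b_{ij}^{(1)*} + b_{ij}^{(2)} x b_{ij}^{(2)*}\big)$, and then passes to the strong limit using Lemma \ref{lem-completely-positive-weak-limit}; the case $p=\infty$ is deduced from $p=1$ by duality and the identity $|T_*|^*=|T|$. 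You instead argue pointwise, combining the a.e.\ description of the cone $S^p_n(\L^p(\Omega))_+$ from Lemma \ref{lemma-description-con-commutative-Lp} with the lattice inequality $|T(b)|\leq |T|(|b|)$ and a Cauchy--Schwarz inequality for the positive operator $|T|$ obtained by optimising $2fg\leq\lambda f^2+\lambda^{-1}g^2$ over countably many $\lambda$, then reduce the $n\times n$ case to the scalar one by slicing with vectors $\xi,\eta$ and the standard $2\times 2$ positivity criterion. Both proofs ultimately rest on the same domination of $T$ by its modulus, but yours avoids the approximation and limiting machinery, treats all $1\leq p\leq\infty$ by a single argument, and correctly identifies the only delicate point, namely the management of null sets before quantifying over $\lambda$ and over a countable dense set of $(\xi,\eta)$. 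A further bonus is that your argument does not appear to use the weak* continuity of $T$ when $p=\infty$ (only the existence of $|T|$, guaranteed by Dedekind completeness of $\L^\infty(\Omega')$, and the fact that positivity in $\M_n(\L^\infty(\Omega))$ is an a.e.\ pointwise condition), which bears on the question raised in Remark \ref{Rem-w-continuous-necessary}. What the paper's route buys in exchange is an explicit Kraus-type decomposition at the finite-dimensional level, in the same approximation style reused elsewhere (e.g.\ in the proof of Theorem \ref{thm-dec=reg-hyperfinite}).
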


\begin{proof}
%
We say that a finite collection $\alpha=\{A_1,\ldots,A_{n_\alpha}\}$ of disjoint measurable subsets of $\Omega$ with finite measures is a semipartition of $\Omega$. We introduce a preorder on the set $\mathcal{A}$ of semipartitions of $\Omega$ by letting $\alpha \leq \alpha'$ if each set in $\alpha$ is a union of some sets in $\alpha'$. It is not difficult to prove that $\mathcal{A}$ is a directed set. For any $\alpha \in \mathcal{A}$, we denote by $\{A_1,\ldots,A_{n_\alpha}\}$ the elements of $\alpha$ of measure $>0$. Similarly, we introduce the set $\mathcal{B}$ of semipartitions of $\Omega'$. It is not difficult to see\footnote{\thefootnote. Since the functions $1_{A_j}$ are disjoint, for any complex numbers $a_1,\ldots,a_{n_\alpha}$, we have
\begin{align*}
\MoveEqLeft
\norm{\sum_{j=1}^{n_\alpha} \frac{a_j}{\mu(A_j)^{\frac1p}}1_{A_j}}_{\L^p(\Omega)}  
		=\Bigg(\sum_{j=1}^{n_\alpha} \norm{\frac{a_j}{\mu(A_j)^{\frac1p}}1_{A_j}}_{\L^p(\Omega)}^p\Bigg)^{\frac{1}{p}}
		=\Bigg(\sum_{j=1}^{n_\alpha}\frac{|a_j|^p}{\mu(A_j)}\norm{1_{A_j}}_{\L^p(\Omega)}^p\Bigg)^{\frac{1}{p}}\\
		&=\Bigg(\sum_{j=1}^{n_\alpha} |a_j|^p\Bigg)^{\frac{1}{p}}
		=\norm{\sum_{j=1}^{n_\alpha} a_j e_j}_{\ell^p_{n_\alpha}}.
\end{align*}} that the operator $\ell^p_{n_\alpha} \to \mathrm{span} \{1_{A_1},\ldots,1_{A_{n_\alpha}} \}$, $e_j \mapsto \frac{1}{\mu(A_j)^{\frac1p}} 1_{A_j}$ is a positive isometric isomorphism onto the subspace $\mathrm{span} \{1_{A_1},\ldots,1_{A_{n_\alpha}}\}$ of $\L^p(\Omega)$. By composition with the canonical identification of $\mathrm{span} \{1_{A_1},\ldots,1_{A_{n_\alpha}}\}$ in $\L^p(\Omega)$, we obtain a positive isometric embedding $J_\alpha \co \ell^p_{n_\alpha} \to \L^p(\Omega)$. We equally define the average operator $\mathcal{P}_\alpha \co \L^p(\Omega) \to \ell^p_{n_\alpha}$ by
$$
\mathcal{P}_\alpha(f) 
\ov{\mathrm{def}}{=} \sum_{j=1}^{n_\alpha} \bigg(\frac{1}{\mu(A_j)^{1-\frac1p}}\int_{A_j} f \d\mu \bigg) e_j,
\quad f \in \L^p(\Omega).
$$ 
We need the following folklore lemma.

\begin{lemma}
\label{Lemma-average}
Suppose $1 \leq p < \infty$. 
\begin{enumerate} 
\item For any $\alpha \in \mathcal{A}$, the map $\mathcal{P}_\alpha$ is positive and contractive.

\item For any $f \in \L^p(\Omega)$, we have $\lim_{\alpha} J_{\alpha}\mathcal{P}_{\alpha}(f)=f$.
\end{enumerate}
\end{lemma}

\begin{proof}
1. The positivity is obvious. Using Jensen's inequality, it is elementary to check the contractivity.

2. Since $\norm{J_\alpha \mathcal{P}_\alpha}_{\L^p(\Omega) \to \L^p(\Omega)}$ is uniformly bounded by 1, by \cite[III 17.4, Proposition 5]{Bou5} it suffices to show this for $f$ in the dense class of integrable simple functions constructed with subsets of measure $>0$. So let $f$ be such a function, say with respect to some semipartition $\alpha_f$. For any $\alpha \in \mathcal{A}$ which refines $\alpha_f$, it is easy to see that $J_{\alpha}\mathcal{P}_\alpha (f) = f$. Hence, for this $f$, the assertion is true. 
\end{proof}

The net\footnote{\thefootnote. The index set $A \times B$ is directed by letting $(\alpha,\beta) \leq (\alpha',\beta')$ if $\alpha \leq \alpha'$ and $\beta \leq \beta'$.} $\left(\begin{bmatrix}  
J_{\beta} \mathcal{P}_{\beta} &  J_{\beta} \mathcal{P}_{\beta} \\ 
 J_{\beta} \mathcal{P}_{\beta}  &  J_{\beta} \mathcal{P}_{\beta}
\end{bmatrix} ,\begin{bmatrix}  
|T| J_{\alpha} \mathcal{P}_{\alpha} &  T J_{\alpha}\mathcal{P}_{\alpha} \\ 
 T^\circ J_{\alpha}\mathcal{P}_{\alpha} & |T| J_{\alpha}\mathcal{P}_{\alpha}  
\end{bmatrix} \right)_{(\alpha,\beta)}$ of the product $\B(S^p_2(\L^p(\Omega'))) \times \B(S^p_2(\L^p(\Omega)),S^p_2(\L^p(\Omega')))$ is obviously convergent to $\left(\Id_{S^p_2(\L^p(\Omega'))},\begin{bmatrix} 
|T|  &  T  \\ 
 T^\circ  & |T| 
\end{bmatrix} \right)$ where each factor is equipped with the strong operator topology. Using the strong continuity of the product on bounded sets (see \cite[Proposition C.19]{EFHN}),  we infer that the net 
\[
\left(\begin{bmatrix}  
J_{\beta} \mathcal{P}_{\beta} |T| J_{\alpha} \mathcal{P}_{\alpha} &  J_{\beta} \mathcal{P}_{\beta} T J_{\alpha}\mathcal{P}_{\alpha} \\ 
 J_{\beta} \mathcal{P}_{\beta} T^\circ J_{\alpha}\mathcal{P}_{\alpha} &  J_{\beta} \mathcal{P}_{\beta} |T| J_{\alpha}\mathcal{P}_{\alpha}  
\end{bmatrix} \right)_{(\alpha,\beta)}
\]
converges strongly to the map 
$
\begin{bmatrix} 
|T| & T \\ 
T^\circ & |T| 
\end{bmatrix} 
\co S^p_2(\L^p(\Omega)) \to S^p_2(\L^p(\Omega'))$. By Lemma \ref{lem-completely-positive-weak-limit}, since we have the equality
$$
\begin{bmatrix}  
J_{\beta} \mathcal{P}_{\beta} |T| J_{\alpha} P_{\alpha} &  J_{\beta} \mathcal{P}_{\beta} T J_{\alpha}\mathcal{P}_{\alpha} \\ 
 J_{\beta} \mathcal{P}_{\beta} T^\circ J_{\alpha}P_{\alpha} &  J_{\beta} \mathcal{P}_{\beta} |T| J_{\alpha}\mathcal{P}_{\alpha}  
\end{bmatrix}
=(\Id_{S^p_2} \ot J_{\beta})
\circ 
\begin{bmatrix} \mathcal{P}_{\beta} |T| J_{\alpha} & \mathcal{P}_{\beta} T J_{\alpha} \\ 
\mathcal{P}_{\beta} T^\circ J_{\alpha} & \mathcal{P}_{\beta} |T| J_{\alpha}  
\end{bmatrix} 
\circ
(\Id_{S^p_2} \ot \mathcal{P}_{\alpha}),
$$
it suffices to show that the three linear maps $\Id_{S^p_2} \ot J_{\beta} \co S^p_2(\ell^p_{n_\beta}) \to S^p_2(\L^p(\Omega'))$, 
$\Phi_{\alpha,\beta} 
=\begin{bmatrix} 
\mathcal{P}_{\beta} |T| J_{\alpha} & \mathcal{P}_{\beta} TJ_{\alpha}  \\ 
\mathcal{P}_{\beta} T^\circ J_{\alpha} & \mathcal{P}_{\beta} |T|  J_{\alpha} 
\end{bmatrix} 
\co S^p_2(\ell^p_{n_\alpha}) \to S^p_2(\ell^p_{n_\beta})$ and $\Id_{S^p_2} \ot \mathcal{P}_{\alpha} \co S^p_2(\L^p(\Omega)) \to S^p_2(\ell^p_{n_\alpha})$ are all completely positive. By Proposition \ref{prop-positive-imply-cp}, the positive maps $J_{\beta} \co \ell^p_{n_\beta} \to \L^p(\Omega')$ and $\mathcal{P}_{\alpha} \co \L^p(\Omega) \to \ell^p_{n_\alpha}$ are completely positive. It remains to show the second assertion. For any $1 \leq j \leq n_\alpha$, we have
\begin{align*}
(\mathcal{P}_\beta T J_{\alpha})(e_j)
& =(\mathcal{P}_\beta T) \bigg(\frac{1}{\mu(A_j)^{\frac1p}} 1_{A_j}\bigg)
=\frac{1}{\mu(A_j)^{\frac1p}} \mathcal{P}_\beta\big(T(1_{A_j})\big) \\
& = \frac{1}{\mu(A_j)^{\frac1p}} \sum_{i=1}^{n_\beta} \frac{1}{\nu(B_i)^{1 - \frac1p}}\bigg(\int_{B_i} T(1_{A_j}) \d\mu' \bigg) e_i.
\end{align*}
We deduce that the matrix $[t_{\alpha,\beta,ij}]$ of the linear map $\mathcal{P}_\beta T J_{\alpha} \co \ell^p_{n_\alpha} \to \ell^p_{n_\beta}$ in the canonical basis is $\big[\frac{1}{\mu(A_j)^{\frac1p}} \frac{1}{\nu(B_i)^{1 - \frac1p}}\int_{B_i} T(1_{A_j}) \d\mu' \big]$. Moreover, we have
\begin{align*}
(\mathcal{P}_\beta T^\circ J_{\alpha})(e_j)
& =(\mathcal{P}_\beta T^\circ) \bigg(\frac{1}{\mu(A_j)^{\frac1p}} 1_{A_j}\bigg)
=\frac{1}{\mu(A_j)^{\frac1p}} \mathcal{P}_\beta\big(\ovl{T(1_{A_j})}\big) \\
& =\ovl{\frac{1}{\mu(A_j)^{\frac1p}} \sum_{i=1}^{n_\beta} \frac{1}{\nu(B_i)^{1-\frac1p}}\bigg(\int_{B_i} T(1_{A_j}) \d\mu' \bigg)} e_i.
\end{align*}
Hence the matrix of $P_{\alpha} T^\circ J_{\alpha}$ is $[\ovl{t_{\alpha,\beta,ij}}]_{ij}$. Finally, we equally have
\begin{align*}
(\mathcal{P}_\beta |T| J_{\alpha})(e_j)
& =(\mathcal{P}_\beta |T|) \bigg(\frac{1}{\mu(A_j)^{\frac1p}} 1_{A_j}\bigg)
=\frac{1}{\mu(A_j)^{\frac1p}} \mathcal{P}_\beta\big(|T|(1_{A_j})\big) \\
& =\frac{1}{\mu(A_j)^{\frac1p}} \sum_{i=1}^{n_\beta} \frac{1}{\nu(B_i)^{1-\frac1p}}\bigg(\int_{B_i} |T|(1_{A_j}) \d\mu' \bigg) e_i.
\end{align*} 
Now, we note that
$$
\int_{B_i} |T|(1_{A_j}) \d \mu' 
\geq \int_{B_i} |T(1_{A_j})|\d \mu'
\geq \left| \int_{B_i} T(1_{A_j})\d \mu'\right| 
= \mu(A_j)^{\frac1p} \nu(B_i)^{1 - \frac1p} |t_{\alpha,\beta,ij}|.
$$
Thus, the map $\mathcal{P}_\beta |T| J_{\alpha}$ is associated with some matrix $[s_{\alpha,\beta,ij}]$ with $s_{\alpha,\beta,ij} = |t_{\alpha,\beta,ij}| + r_{\alpha,\beta,ij}$ where $r_{\alpha,\beta,ij} \geq 0$ for any $i,j$. Further, let $\psi_{\alpha,\beta,ij} \in \C$ such that $t_{\alpha,\beta,ij} = |t_{\alpha,\beta,ij} |\psi_{\alpha,\beta,ij}$.

We denote by $i_{\alpha} \co \ell^p_{n_\alpha} \hookrightarrow S^p_{n_\alpha}$ the canonical diagonal embedding, $\tilde{J}_\alpha \ov{\mathrm{def}}{=} \Id_{S^p_2} \ot i_\alpha \co S^p_2(\ell^p_{n_\alpha}) \to S^p_2(S^p_{n_\alpha})$ and by $Q_{\alpha} \co S_2^p(S^p_{n_\alpha}) \to S^p_2(\ell^p_{n_\alpha})$ the canonical projection. Note that $Q_{\alpha} \tilde{J}_\alpha=\Id_{S^p_2(\ell^p_{n_\alpha})}$. Now, we show that the map $\tilde{J}_\beta \Phi_{\alpha,\beta} Q_{\alpha} \co S^p_2(S^p_{n_\alpha}) \to S^p_2(S^p_{n_\beta})$ is completely positive. If we take $a_{ij} 
= \begin{bmatrix} 
\sqrt{|t_{\alpha,\beta,ij}|} \psi_{\alpha,\beta,ij} e_{ij} & 0 \\ 
0 & \sqrt{|t_{\alpha,\beta,ij}|} e_{ij} 
\end{bmatrix}
,\, b_{ij}^{(1)} 
= \begin{bmatrix}
\sqrt{r_{\alpha,\beta,ij}} e_{ij} & 0 \\ 
0 & 0 \end{bmatrix}$ and $b_{ij}^{(2)} 
= \begin{bmatrix}
0 & 0 \\ 
0 & \sqrt{r_{\alpha,\beta,ij}} e_{ij} 
\end{bmatrix}$, 
we obtain for any $x \in S^p_2(S^p_{n_\alpha})$
\begingroup
\allowdisplaybreaks
\begin{align*}
\MoveEqLeft
 (\tilde{J}_\beta \Phi_{\alpha,\beta} Q_\alpha)(x)
=(\tilde{J}_\beta \Phi_{\alpha,\beta} Q_\alpha)
\left( \begin{bmatrix} 
x_{11} & x_{12} \\ 
x_{21} & x_{22}
\end{bmatrix}\right) \\
&=\left(\tilde{J}_\beta \begin{bmatrix} 
\mathcal{P}_{\beta} |T| J_{\alpha} & \mathcal{P}_{\beta} TJ_{\alpha}  \\ 
\mathcal{P}_{\beta} T^\circ J_{\alpha} & \mathcal{P}_{\beta} |T|  J_{\alpha} 
\end{bmatrix} \right)\left( \begin{bmatrix} 
\sum_{j=1}^{n_\alpha} x_{11jj} e_j & \sum_{j=1}^{n_\alpha} x_{12jj}e_j \\ 
\sum_{j=1}^{n_\alpha} x_{21jj}e_j & \sum_{j=1}^{n_\alpha} x_{22jj}e_j
\end{bmatrix}\right)\\
&=\tilde{J}_\beta\left( \begin{bmatrix} 
\sum_{j=1}^{n_\alpha} x_{11jj} \mathcal{P}_{\beta} |T| J_{\alpha}e_j & \sum_{j=1}^{n_\alpha} x_{12jj} \mathcal{P}_{\beta} TJ_{\alpha}e_j \\ 
\sum_{j=1}^{n_\alpha} x_{21jj} \mathcal{P}_{\beta} T^\circ J_{\alpha}e_j & \sum_{j=1}^{n_\alpha} x_{22jj} \mathcal{P}_{\beta} |T|  J_{\alpha}e_j
\end{bmatrix}\right)\\
&=\tilde{J}_\beta\left( \begin{bmatrix} 
\sum_{j=1}^{n_\alpha} x_{11jj} \sum_{i=1}^{n_\beta} s_{\alpha,\beta,ij} e_i & \sum_{j=1}^{n_\alpha} x_{12jj} \sum_{i=1}^{n_\beta} t_{\alpha,\beta,ij} e_i \\ 
\sum_{j=1}^{n_\alpha} x_{21jj}\sum_{i=1}^{n_\beta} \ovl{t_{\alpha,\beta,ij}}e_i & \sum_{j=1}^{n_\alpha} x_{22jj} \sum_{i=1}^{n_\beta} s_{\alpha,\beta,ij}e_i
\end{bmatrix}\right)\\
&=\sum_{j=1}^{n_\alpha}\sum_{i=1}^{n_\beta} \begin{bmatrix} 
x_{11jj}  s_{\alpha,\beta,ij} e_{ii} &  x_{12jj}  t_{\alpha,\beta,ij} e_{ii} \\ 
 x_{21jj} \ovl{t_{\alpha,\beta,ij}}e_{ii} & x_{22jj}  s_{\alpha,\beta,ij}e_{ii}
\end{bmatrix}\\
&=\sum_{j=1}^{n_\alpha}\sum_{i=1}^{n_\beta} \left(\begin{bmatrix} 
x_{11jj} |t_{\alpha,\beta,ij}| e_{ii} &  x_{12jj} t_{\alpha,\beta,ij} e_{ii} \\ 
x_{21jj}\ovl{t_{\alpha,\beta,ij}}e_{ii} & x_{22jj}  |t_{\alpha,\beta,ij}|e_{ii}
\end{bmatrix}
+\begin{bmatrix} 
 x_{11jj} r_{\alpha,\beta,ij}e_{ii} & 0 \\ 
0 &  x_{22jj}  r_{\alpha,\beta,ij}e_{ii}
\end{bmatrix}\right)\\
&=\sum_{j=1}^{n_\alpha}\sum_{i=1}^{n_\beta}\left( \begin{bmatrix} 
\sqrt{|t_{\alpha,\beta,ij}|} \psi_{\alpha,\beta,ij} e_{ij} & 0 \\ 
0 & \sqrt{|t_{\alpha,\beta,ij}|} e_{ij} 
\end{bmatrix} \begin{bmatrix} 
x_{11} & x_{12} \\ 
x_{21} & x_{22}
\end{bmatrix} 
 \right.\\
&\begin{bmatrix} 
\sqrt{|t_{\alpha,\beta,ij}|}\, \ovl{\psi_{\alpha,\beta,ij}} e_{ji} & 0 \\ 
0 & \sqrt{|t_{\alpha,\beta,ij}|} e_{ji} 
\end{bmatrix} + \begin{bmatrix}
\sqrt{r_{\alpha,\beta,ij}} e_{ij} & 0 \\ 
0 & 0 \end{bmatrix}
\begin{bmatrix} 
x_{11} & x_{12} \\ 
x_{21} & x_{22}
\end{bmatrix} 
\begin{bmatrix}
\sqrt{r_{\alpha,\beta,ij}} e_{ji} & 0 \\ 
0 & 0 \end{bmatrix} \\
&\left.+ \begin{bmatrix}
0 & 0 \\ 
0 & \sqrt{r_{\alpha,\beta,ij}} e_{ij} 
\end{bmatrix}
\begin{bmatrix} 
x_{11} & x_{12} \\ 
x_{21} & x_{22}
\end{bmatrix} 
\begin{bmatrix}
0 & 0 \\ 
0 & \sqrt{r_{\alpha,\beta,ij}} e_{ji} 
\end{bmatrix}\right)\\
&=\sum_{j=1}^{n_\alpha}\sum_{i=1}^{n_\beta} \big(a_{ij} x a_{ij}^* + b_{ij}^{(1)} x b_{ij}^{(1)*} + b_{ij}^{(2)} x b_{ij}^{(2)*}\big).
\end{align*}
\endgroup
We infer that $\tilde{J}_\beta \Phi_{\alpha,\beta} Q_\alpha$ is completely positive. Since $\Phi_{\alpha,\beta} = Q_\beta (\tilde{J}_\beta \Phi_{\alpha,\beta} Q_\alpha) \tilde{J}_\alpha$, we conclude that $\Phi_{\alpha,\beta}$ is completely positive.
The case $1 \leq p < \infty$ is proved.

\end{proof}

\begin{remark} \normalfont
\label{Rem-w-continuous-necessary}
Theorem \ref{prop-reg=decomp} seems to us to be true equally in the case $p=\infty$. That is, if $T \co \L^\infty(\Omega) \to \L^\infty(\Omega')$ is a (regular) operator, then the map $\Phi = \begin{bmatrix} |T|& T \\ T^\circ & |T| \end{bmatrix} \co S^\infty_2(\L^\infty(\Omega)) \to S^\infty_2(\L^\infty(\Omega'))$ is completely positive.
To prove this, replace the mapping $\mathcal{P}_\alpha \co \L^\infty(\Omega) \to \ell^\infty_n$ by $\mathcal{P}_\alpha(f) = \sum_{i=1}^n \phi_{A_i}(f|_{A_i}) e_i$, where $\phi_{A_i}$ is an arbitrary state on $\L^\infty(A_i)$ and $\Omega$ is partitioned (not semipartitioned) into $\Omega = \bigcup_{i=1}^n A_i$.
We equally take $J_\alpha \co \ell^\infty_n \to \L^\infty(\Omega),\: e_i \mapsto 1_{A_i}$.
Then Lemma \ref{Lemma-average} admits an $\L^\infty$-variant (the verification is entirely left to the reader), in particular $J_\alpha \mathcal{P}_\alpha$ converges strongly to the identity on $\L^\infty(\Omega)$ (the partitions are of course directed by refinement).
Also the proof of Theorem \ref{prop-reg=decomp} works in a similar way. If $T$ is in addition weak* continuous, we can use a duality argument\footnote{\thefootnote. Assume in addition that $T \co \L^\infty(\Omega) \to \L^\infty(\Omega')$ is weak* continuous with pre-adjoint $T_* \co \L^1(\Omega') \to \L^1(\Omega)$. Then by \eqref{Duality-reg} and by the case $p = 1$ proved previously, the map 
$\begin{bmatrix}
|T_*| & T_* \\ 
(T_*)^\circ & |T_*| 
\end{bmatrix} 
\co S^1_2(\L^1(\Omega')) \to S^1_2(\L^1(\Omega))$ is completely positive. Note that $|T_*|^*=|(T_*)^*|=|T|$ where we use \cite[Theorem 2.28 page 85]{AbA} in the first equality and it is easily checked that $((T_*)^\circ)^* = T^\circ$. So by Lemma \ref{Lemma-adjoint-cp}, its adjoint
$\begin{bmatrix} 
|T_*|^* & (T_*)^* \\ 
((T_*)^\circ)^* & |T_*|^* 
\end{bmatrix}
=\begin{bmatrix} 
|T| & T \\ 
T^\circ & |T| 
\end{bmatrix}
 \co S^\infty_2(\L^\infty(\Omega)) \to S^\infty_2(\L^\infty(\Omega'))$ is also completely positive. }.
\end{remark}

 %
%
%
 %

\subsection{Decomposable vs completely bounded}
\label{section-Decomposable-cb}

The authors of \cite{JuR} say that the following result is true without the $\QWEP$ assumption (and without proof). However, we think that $\QWEP$ is necessary\footnote{\thefootnote. Another point of view is to replace the formula of definition \eqref {Norm-dec} by $
\norm{T}_{\dec,\L^p(M) \to \L^p(N)}
=\inf\big\{\max\{\norm{v_1}_{\cb},\norm{v_2}_{\cb}\}\big\}$.} for $1<p< \infty$. 

\begin{prop}
\label{Prop-cb-leq-dec}
Let $M$ and $N$ be two $\QWEP$ von Neumann algebras which are equipped with faithful normal semifinite traces. Suppose $1 \leq p \leq \infty$. Let $T \co \L^p(M) \to \L^p(N)$ be a decomposable map. Then $T$ is completely bounded and $\norm{T}_{\cb,\L^p(M) \to \L^p(N)} \leq \norm{T}_{\dec,\L^p(M) \to \L^p(N)}$.
\end{prop}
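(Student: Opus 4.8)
The plan is to reduce the estimate to the case of a completely positive map and then to invoke the decomposable structure directly. Let $T \co \L^p(M) \to \L^p(N)$ be decomposable. By Proposition \ref{Prop-dec-inf-atteint}, the infimum in \eqref{Norm-dec} is attained, so there exist completely positive maps $v_1,v_2 \co \L^p(M) \to \L^p(N)$ with $\max\{\norm{v_1},\norm{v_2}\} = \norm{T}_{\dec}$ such that the map
$
\Phi = \begin{bmatrix} v_1 & T \\ T^\circ & v_2 \end{bmatrix} \co S^p_2(\L^p(M)) \to S^p_2(\L^p(N))
$
is completely positive. Since $M$ and $N$ are $\QWEP$, so is $\M_2(M) = \B(\ell^2_2) \ovl\ot M$ (and similarly for $N$), hence $S^p_2(\L^p(M)) = \L^p(\M_2(M))$ and $S^p_2(\L^p(N)) = \L^p(\M_2(N))$ arise from $\QWEP$ von Neumann algebras. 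Therefore Theorem \ref{quest-cp-versus-cb} applies to $\Phi$: it is completely bounded with $\norm{\Phi}_{\cb} = \norm{\Phi}_{S^p_2(\L^p(M)) \to S^p_2(\L^p(N))}$.

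The next step is to bound $\norm{\Phi}$ by $\norm{T}_{\dec}$. For $b \in S^p_2(\L^p(M))$ with $\norm{b}_{S^p_2(\L^p(M))} \leq 1$, Proposition \ref{Lemma-Matricial-inequality3} (applied with the algebra $\M_2(M)$ in place of $M$, or directly in $S^p_{2n}$) gives positive elements $a,c \in S^p_2(\L^p(M))_+$ of norm $\leq 1$ with $\begin{bmatrix} a & b \\ b^* & c \end{bmatrix} \geq 0$ in $S^p_4(\L^p(M))$. Applying the completely positive map $\Id_{S^p_2} \ot \Phi$ (which is completely positive, and in particular $2$-positive, by Lemma \ref{Lemma-cp-S1E}) to this positive $2\times 2$ block matrix over $S^p_2(\L^p(M))$ yields
$
\begin{bmatrix} \Phi(a) & \Phi(b) \\ \Phi(b^*) & \Phi(c) \end{bmatrix} \geq 0
$
in $S^p_4(\L^p(N))$; note $\Phi(b^*) = \Phi(b)^*$ since $\Phi$ is selfadjoint in the relevant sense, which follows from $T^\circ$ being placed in the off-diagonal and $v_1,v_2$ being completely positive hence selfadjoint. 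Then Lemma \ref{Lemma-Matricial-inequality} gives $\norm{\Phi(b)}_{S^p_2(\L^p(N))} \leq \frac{1}{2^{1/p}}\big(\norm{\Phi(a)}^p + \norm{\Phi(c)}^p\big)^{1/p}$, and since $\Phi(a),\Phi(c)$ are positive with norms controlled by $\norm{\Phi}$ acting on the positive elements $a,c$ of norm $\leq 1$, one obtains $\norm{\Phi} \leq C \norm{\Phi}_{\text{on positives}}$. More directly: on a positive element $x$ of norm $\leq 1$, the diagonal of $\Phi(x)$ gives $v_1(x)$ and $v_2(x)$, so $\norm{\Phi(x)} \leq \max\{\norm{v_1(x)},\norm{v_2(x)}\} + \norm{T}$-type contributions; the clean route is to observe that $\Phi \leq \begin{bmatrix} v_1 + v_2 & 0 \\ 0 & v_1 + v_2 \end{bmatrix}$ is \emph{false} in general, so instead one uses that a completely positive map $\Phi$ satisfies $\norm{\Phi}_{S^p_2(\L^p(M)) \to S^p_2(\L^p(N))} = \norm{\Phi(e)}$ for a suitable positive unit-type element when $p = \infty$, and interpolates; for general $p$, the estimate $\norm{\Phi} \leq \max\{\norm{v_1},\norm{v_2}\}$ follows because for $b$ with $\norm{b} \leq 1$ the matrix $\begin{bmatrix} a & b \\ b^* & c \end{bmatrix} \geq 0$ with $\norm{a},\norm{c}\leq 1$ forces $\begin{bmatrix} v_1(a) & T(b) \\ T(b)^* & v_2(c)\end{bmatrix}\geq 0$ in $S^p_4(\L^p(N))$ (using that the full $2\times 2$ structure of $\Phi$ is completely positive), whence by Lemma \ref{Lemma-Matricial-inequality} $\norm{T(b)} \leq \frac{1}{2^{1/p}}(\norm{v_1(a)}^p+\norm{v_2(c)}^p)^{1/p} \leq \max\{\norm{v_1},\norm{v_2}\}$; more care with $a,c$ along these lines gives $\norm{\Phi(b)}_{S^p_2} \leq \max\{\norm{v_1},\norm{v_2}\}$.

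Combining these, $\norm{T}_{\cb} = \norm{\Id_{S^p}\ot T}_{S^p(\L^p(M)) \to S^p(\L^p(N))}$; since $\Id_{S^p}\ot T$ is the off-diagonal corner of $\Id_{S^p}\ot\Phi$, Lemma \ref{lem-regulier-et-selfadjoint3} (applied with $E$ replaced by $S^p(\L^p(M))$ and $S^p(\L^p(N))$, together with the complete isometries $S^p(S^p_2(\L^p(\cdot))) = S^p_2(S^p(\L^p(\cdot)))$) gives $\norm{\Id_{S^p}\ot T} \leq \norm{\Id_{S^p}\ot\Phi} = \norm{\Phi}_{\cb} = \norm{\Phi}$. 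Together with $\norm{\Phi} \leq \max\{\norm{v_1},\norm{v_2}\} = \norm{T}_{\dec}$ from the previous paragraph, we conclude $\norm{T}_{\cb,\L^p(M)\to\L^p(N)} \leq \norm{T}_{\dec,\L^p(M)\to\L^p(N)}$, which proves the proposition; in particular $T$ is completely bounded.

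The main obstacle is the second paragraph: obtaining the bound $\norm{\Phi}_{S^p_2(\L^p(M))\to S^p_2(\L^p(N))} \leq \max\{\norm{v_1},\norm{v_2}\}$ for a completely positive $\Phi$ of the given block form. For $p = \infty$ this is standard (a completely positive map attains its norm on the unit, and $\norm{\Phi(1_2)} = \max\{\norm{v_1(1)},\norm{v_2(1)}\}$). For $1 \leq p < \infty$ one must argue via the matricial inequalities of Subsection \ref{sec-matricial-orderings}: given $b$ of norm $\leq 1$, produce $a, c$ positive of norm $\leq 1$ with the $2\times2$ block over $S^p_2(\L^p(M))$ positive (Proposition \ref{Lemma-Matricial-inequality3}), push through the $2$-positive map $\Id_{S^p_2}\ot\Phi$, and extract from the resulting positive block in $S^p_4(\L^p(N))$ the estimate on $\norm{\Phi(b)}$ via Lemma \ref{Lemma-Matricial-inequality}, keeping track that the diagonal entries $\Phi(a),\Phi(c)$ have norm $\leq \max\{\norm{v_1},\norm{v_2}\}$ because $a,c\geq 0$ and $v_1,v_2$ are the diagonal corners of $\Phi$. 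This is the only place where $\QWEP$ is genuinely used, via Theorem \ref{quest-cp-versus-cb}, to pass between the bounded and completely bounded norms of the completely positive map $\Phi$.
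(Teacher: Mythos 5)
Your overall architecture is close to the paper's: both start from Proposition \ref{Prop-dec-inf-atteint}, both run the positivity argument through Proposition \ref{Lemma-Matricial-inequality3} and Lemma \ref{Lemma-Matricial-inequality}, and both use Theorem \ref{quest-cp-versus-cb} as the place where $\QWEP$ enters. But you route everything through the intermediate claim $\norm{\Phi}_{S^p_2(\L^p(M)) \to S^p_2(\L^p(N))} \leq \max\{\norm{v_1},\norm{v_2}\}$, and this is precisely the step you do not establish. The argument you sketch for it is circular: for a positive $a \in S^p_2(\L^p(M))$ of norm at most one, the image $\Phi(a) = \begin{bmatrix} v_1(a_{11}) & T(a_{12}) \\ T^\circ(a_{21}) & v_2(a_{22}) \end{bmatrix}$ is \emph{not} controlled by $v_1,v_2$ alone --- its off-diagonal entries involve $T$ applied to the off-diagonal entries of $a$, which is exactly the quantity the proposition is trying to bound. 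Breaking the circle by using positivity of $\Phi(a)$ together with Lemma \ref{Lemma-Matricial-inequality} to dominate the off-diagonal by the diagonal, and then the triangle inequality, only yields $\norm{\Phi(a)} \leq 2\max\{\norm{v_1},\norm{v_2}\}$; and the sharper bound you would need, namely that a positive $X=\begin{bmatrix} P & Q \\ Q^* & R \end{bmatrix}$ satisfies $\norm{X} \leq \max\{\norm{P},\norm{R}\}$ at $p=\infty$, already fails for the scalar matrix $\begin{bmatrix} 1 & 1 \\ 1 & 1 \end{bmatrix}$. In addition, in your displayed ``more directly'' step you apply $\Id_{S^p_2}\ot\Phi$ to a block matrix whose entries lie in $S^p_2(\L^p(M))$ but write the image as if the entries lay in $\L^p(M)$; two different matrix levels are being conflated there.

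The fix --- and the paper's actual route --- is to never estimate $\norm{\Phi}$ at all. Fix $n$, take $b \in S^p_n(\L^p(M))$ of norm at most one, produce $a,c \in S^p_n(\L^p(M))_+$ of norm at most one with $\begin{bmatrix} a & b \\ b^* & c \end{bmatrix} \geq 0$ in $S^p_{2n}(\L^p(M))$, and apply $\Id_{S^p_n} \ot \Phi$, the amplification being on the \emph{inner} $\L^p(M)$ factor so that the $2\times 2$ block structure matches that of $\Phi$. The image $\begin{bmatrix} (\Id_{S^p_n}\ot v_1)(a) & (\Id_{S^p_n}\ot T)(b) \\ (\Id_{S^p_n}\ot T)(b)^* & (\Id_{S^p_n}\ot v_2)(c) \end{bmatrix}$ is positive and its diagonal corners involve only $v_1$ and $v_2$, with no contamination by $T$. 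Lemma \ref{Lemma-Matricial-inequality} then gives $\norm{(\Id_{S^p_n}\ot T)(b)} \leq \frac{1}{2^{1/p}}\big(\norm{v_1}_{\cb}^p + \norm{v_2}_{\cb}^p\big)^{1/p}$, and Theorem \ref{quest-cp-versus-cb} applied to the completely positive maps $v_1$ and $v_2$ (this is the only place where $\QWEP$ is used) replaces $\norm{v_i}_{\cb}$ by $\norm{v_i}$, so that $\norm{\Id_{S^p_n}\ot T} \leq \max\{\norm{v_1},\norm{v_2}\} = \norm{T}_{\dec}$ for every $n$.
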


\begin{proof}
By Proposition \ref{Prop-dec-inf-atteint}, there exist linear maps $v_1,v_2 \co \L^p(M) \to \L^p(N)$ such that the map
$
\Phi
\ov{\mathrm{def}}{=} \begin{bmatrix}
   v_1  &  T \\
   T^\circ  &  v_2  \\
\end{bmatrix} 
\co S^p_2(\L^p(M)) \to S^p_2(\L^p(N))
$
is completely positive with $\max\big\{\norm{v_1}_{},\norm{v_2}_{}\big\} =\norm{T}_{\dec}$. Let $b$ be an element of $S^p_n(\L^p(M))$ with $\norm{b}_{S^p_n(\L^p(M))} \leq 1$. By Lemma \ref{Lemma-Matricial-inequality3}, we can find $a, c \in S^p_n(\L^p(M))$ with $\norm{a}_{S^p_n(\L^p(M))} \leq 1$ and $\norm{c}_{S^p_n(\L^p(M))} \leq 1$ such that
$
\begin{bmatrix}
a   & b\\
b^* & c
\end{bmatrix}
$
is a positive element of $S^p_{2n}(\L^p(M))$. We deduce that
\begin{align*}
\MoveEqLeft
 \begin{bmatrix}
(\Id_{S^p_n} \ot v_{1})(a)   & (\Id_{S^p_n} \ot T)(b)\\
(\Id_{S^p_n} \ot T)(b)^* & (\Id_{S^p_n} \ot v_{2})(c)
\end{bmatrix}
=
\begin{bmatrix}
(\Id_{S^p_n} \ot v_{1})(a)       & (\Id_{S^p_n} \ot T)(b)\\
(\Id_{S^p_n} \ot T)^{\circ}(b^*) & (\Id_{S^p_n} \ot v_{2})(c)
\end{bmatrix}\\
&=
\begin{bmatrix}
(\Id_{S^p_n} \ot v_{1})(a)       & (\Id_{S^p_n} \ot T)(b)\\
(\Id_{S^p_n} \ot T^{\circ})(b^*) & (\Id_{S^p_n} \ot v_{2})(c)
\end{bmatrix}
=
(\Id_{S^p_n} \ot \Phi)\bigg(
\begin{bmatrix}
a   & b\\
b^* & c
\end{bmatrix}\bigg)   
\end{align*}
is a positive element of $S^p_{2n}(\L^p(N))$. By Lemma \ref{Lemma-Matricial-inequality}, using Theorem \ref{quest-cp-versus-cb}, we obtain
\begin{align*}
\MoveEqLeft
 \bnorm{(\Id_{S^p_n} \ot T)(b)}_{S^p_n(\L^p(N))}    
		\leq\frac{1}{2^{\frac{1}{p}}} \Big(\bnorm{(\Id_{S^p_n} \ot v_{1})(a)}_{S^p_{n}(\L^p(N))}^p + \bnorm{(\Id_{S^p_n} \ot v_{2})(c)}_{S^p_n(\L^p(N))}^p \Big)^{\frac1p} \\
		&\leq \frac{1}{2^{\frac{1}{p}}} \Big(\norm{v_1}_{\cb}^p \norm{a}_{S^p_n(\L^p(M))}^p + \norm{v_2}_{\cb}^p \norm{c}_{S^p_{n}(\L^p(M))}^p \Big)^{\frac1p}\\
		&\leq\max\big\{ \norm{v_1}_{},\norm{v_2}_{}\}\frac{1}{2^{\frac{1}{p}}} \Big(\norm{a}_{S^p_n(\L^p(M))}^p +  \norm{c}_{S^p_{n}(\L^p(M))}^p \Big)^{\frac1p} \\
		& \leq \max\big\{ \norm{v_1}_{},\norm{v_2}_{} \big\}
		= \norm{T}_{\dec}.
\end{align*}
We obtain $\bnorm{\Id_{S^p_n} \ot T}_{S^p_n(\L^p(M)) \to S^p_n(\L^p(N))} \leq \norm{T}_{\dec}$. We conclude that $\norm{T}_{\cb} \leq \norm{T}_{\dec}$.
\end{proof}

\begin{prop}
\label{quest-cp-versus-dec}
Let $M$ and $N$ be two $\QWEP$ von Neumann algebras equipped with faithful normal semifinite traces. Suppose $1 \leq p \leq \infty$. Let $T \co \L^p(M) \to \L^p(N)$ be a completely positive map. Then $T$ is decomposable and we have $\norm{T}_{\cb}=\norm{T}_{\dec}=\norm{T}$.
\end{prop}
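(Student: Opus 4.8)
\textbf{Proof plan for Proposition \ref{quest-cp-versus-dec}.}

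The plan is to collect together the three relevant facts that have already been established and chain the corresponding inequalities into a circle. First I would observe that $T$ is decomposable by Proposition \ref{quest-cp-versus-dec1}, which applies to an arbitrary completely positive map (no $\QWEP$ hypothesis is needed there): indeed $\begin{bmatrix} T & T \\ T & T \end{bmatrix}$ is completely positive by Lemma \ref{Lemma-cp-S1E}, so $v_1 = v_2 = T$ is admissible in \eqref{Matrice-2-2-Phi} and hence $\norm{T}_{\dec,\L^p(M) \to \L^p(N)} \leq \norm{T}_{\L^p(M) \to \L^p(N)}$. This gives one of the two inequalities relating the decomposable norm and the bare norm.

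Next I would invoke Proposition \ref{Prop-cb-leq-dec}, which is exactly where the $\QWEP$ assumption on $M$ and $N$ enters: since $T$ is decomposable and $M,N$ are $\QWEP$, it is completely bounded with $\norm{T}_{\cb,\L^p(M) \to \L^p(N)} \leq \norm{T}_{\dec,\L^p(M) \to \L^p(N)}$. Finally, the trivial inequality $\norm{T}_{\L^p(M) \to \L^p(N)} \leq \norm{T}_{\cb,\L^p(M) \to \L^p(N)}$ holds for any bounded map between operator spaces. Putting these three together,
\[
\norm{T}_{\L^p(M) \to \L^p(N)}
\leq \norm{T}_{\cb,\L^p(M) \to \L^p(N)}
\leq \norm{T}_{\dec,\L^p(M) \to \L^p(N)}
\leq \norm{T}_{\L^p(M) \to \L^p(N)},
\]
so all three quantities coincide, which is the assertion.

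There is no genuine obstacle here: the proposition is a bookkeeping consequence of Propositions \ref{quest-cp-versus-dec1} and \ref{Prop-cb-leq-dec}. The only point worth a word of care is the direction of Proposition \ref{quest-cp-versus-dec1}'s inequality — it bounds $\norm{T}_{\dec}$ from above by $\norm{T}$, which is precisely what is needed to close the loop, rather than the reverse — and the fact that the $\QWEP$ hypothesis is used solely through Proposition \ref{Prop-cb-leq-dec} (the domination of the completely bounded norm by the decomposable norm, which relies on Theorem \ref{quest-cp-versus-cb}). One should also note that complete positivity of $T$ already forces boundedness by Proposition \ref{prop-cp-imply-bounded} applied componentwise, so all the norms above are finite and the chain of inequalities is meaningful.
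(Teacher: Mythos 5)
Your argument is correct and is exactly the paper's proof: Proposition \ref{quest-cp-versus-dec1} gives $\norm{T}_{\dec} \leq \norm{T}$, Proposition \ref{Prop-cb-leq-dec} (where $\QWEP$ is used) gives $\norm{T}_{\cb} \leq \norm{T}_{\dec}$, and the trivial inequality $\norm{T} \leq \norm{T}_{\cb}$ closes the circle. Nothing to add.
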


\begin{proof}
By Proposition \ref{quest-cp-versus-dec1}, we know that $T$ is decomposable and that $\norm{T}_{\dec} \leq \norm{T}$. If $M$ and $N$ are $\QWEP$, by Proposition \ref{Prop-cb-leq-dec}, we have $\norm{T}_{\cb} \leq \norm{T}_{\dec}$.
\end{proof}

To complement the previous proposition, we observe that completely bounded operators are not decomposable in general. For that, we give a result on group von Neumann algebras of discrete groups, see Section \ref{section-twisted-von-Neumann} for background.

\begin{prop}
\label{Prop-cb-mais-pas-dec-Fourier-mult}
\begin{enumerate}
	\item Let $G$ be a non-amenable weakly amenable discrete group. Then there exists a completely bounded Fourier multiplier $M_\varphi \co \VN(G) \to \VN(G)$ which is not decomposable.
	\item Suppose $1<p< \infty$. Let $G$ be a non-amenable discrete group with $\mathrm{AP}$ and such that $\VN(G)$ has $\mathrm{QWEP}$. Then there exists a completely bounded Fourier multiplier $M_\varphi \co \L^p(\VN(G)) \to \L^p(\VN(G))$ which is not decomposable.
\end{enumerate}
\end{prop}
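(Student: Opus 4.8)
The strategy is to separate the completely bounded part from the decomposable part and to exhibit a conflict between them using the approximation properties in the hypotheses. For part (1), recall that a discrete group $G$ is weakly amenable precisely when there exists a net $(\varphi_i)$ of finitely supported (hence, in particular, $c_0$) functions on $G$ with $\varphi_i \to 1$ pointwise and $\sup_i \norm{M_{\varphi_i}}_{\cb,\VN(G) \to \VN(G)} \leq C$ for some constant $C$, the least such $C$ being the Cowling--Haagerup constant $\Lambda_{\cb}(G)$. I would first observe that if \emph{every} completely bounded Fourier multiplier on $\VN(G)$ were decomposable, then by Theorem \ref{thm-dec=reg-hyperfinite} (applied to the hyperfinite von Neumann algebra $\VN(G)$, which is hyperfinite when $G$ is amenable --- so here one must instead argue differently) one would get a contradiction. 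More carefully: the key classical fact is that a Fourier multiplier $M_\varphi \co \VN(G) \to \VN(G)$ is decomposable if and only if it is \emph{completely bounded with $\norm{M_\varphi}_{\dec} = \norm{M_\varphi}_{\cb}$ coinciding} only in the amenable case; in general decomposability of $M_\varphi$ forces $\varphi$ to be a coefficient of a unitary representation in a way that controls the $\B(\ell^2(G))$-multiplier norm, i.e. forces $\varphi$ to belong to $B(G)$-like classes, whereas weak amenability (for non-amenable $G$) produces $\varphi \in c_0(G)$ with small cb-norm that cannot lie in such a class. Concretely, I would use that a decomposable Fourier multiplier $M_\varphi$ has $\norm{M_\varphi}_{\dec,\VN(G) \to \VN(G)}$ comparable to the norm of $\varphi$ as a Herz--Schur multiplier \emph{together with} an absolute-value domination forcing a positive-definite dominating function; combined with Haagerup's theorem that positive-definite $c_0$ functions on a non-amenable group cannot approximate $1$ uniformly (this is exactly the failure of the Haagerup property to coexist with amenability, or more directly the fact that $1 \notin \ovl{B(G) \cap c_0(G)}$ uniformly), one derives that the closure of decomposable multipliers in cb-norm does not contain all cb-multipliers.

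The cleanest route, which I would actually follow, is: take a weak amenability net $(\varphi_i)$ as above with $\sup_i \norm{M_{\varphi_i}}_{\cb} \leq \Lambda_{\cb}(G) + 1$ and $\varphi_i \to 1$ pointwise, each $\varphi_i$ finitely supported. If each $M_{\varphi_i}$ were decomposable \emph{and} the net $(\norm{M_{\varphi_i}}_{\dec})$ were bounded, then Lemma \ref{lem-decomposable-weak-limit} would show that a point-weak* limit of $M_{\varphi_i}$, namely $\Id_{\VN(G)}$, is decomposable with bounded decomposable norm --- which is fine, $\Id$ is decomposable. So that alone gives no contradiction; the point must be quantitative. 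Instead I would argue that for a \emph{single} well-chosen $\varphi$ in the net, or rather for a multiplier built from the net, $\norm{M_\varphi}_{\dec}$ is forced to be large. Precisely: decomposability of $M_\varphi$ for a Fourier multiplier entails, via Proposition \ref{prop-decomposable-et-selfadjoint} and averaging over $G$ (the Haagerup-type projection onto multipliers, referenced in the introduction via Definition \ref{Defi-complementation-G}), that the dominating completely positive map $S$ can also be taken to be a Fourier multiplier $M_\psi$ with $\psi$ positive-definite and $\psi \geq |\varphi|$ pointwise in a suitable sense, and $\norm{M_\psi} = \psi(e)$. Hence $\norm{M_\varphi}_{\dec} = \inf\{\psi(e) : \psi \text{ pos.\ def.}, \ \pm\Re(\theta\varphi) \leq \psi \ \forall \theta\}$, which is bounded below by $\norm{\varphi}_{\ell^\infty} \cdot$ (something). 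Choosing $\varphi$ with $\norm{\varphi}_{\ell^\infty}$ of order $1$ but with small cb-norm --- which weak amenability for non-amenable $G$ provides (e.g. a suitable truncation of a conditionally negative length function, à la free groups: $\varphi = e^{-t\ell}$ has cb-norm $1$, but more pointedly one uses a multiplier \emph{not} in $B(G)$) --- gives a gap between $\norm{M_\varphi}_{\cb}$ and $\norm{M_\varphi}_{\dec}$, forcing $M_\varphi$ non-decomposable for appropriate normalization.

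For part (2), the argument transfers to $\L^p$. By Proposition \ref{Prop-cb-leq-dec}, since $\VN(G)$ is QWEP, any decomposable $M_\varphi \co \L^p(\VN(G)) \to \L^p(\VN(G))$ satisfies $\norm{M_\varphi}_{\cb,\L^p} \leq \norm{M_\varphi}_{\dec,\L^p}$. The property $\mathrm{AP}$ (approximation property of Haagerup--Kraus) guarantees a net $(\varphi_i)$ of finitely supported functions with $M_{\varphi_i} \to \Id$ in the stable point-weak* / appropriate topology and uniformly bounded $\L^p$-cb-norms; this is what is needed to run the analogue of Lemma \ref{lem-decomposable-weak-limit} and, crucially, to produce a cb Fourier multiplier on $\L^p(\VN(G))$ with controlled cb-norm. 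The obstruction to decomposability at level $p$ is again that decomposability forces an $\L^\infty$-type dominating completely positive multiplier (via the $\L^\infty$ level, using that a decomposable $\L^p$-multiplier on a QWEP algebra extrapolates, by interpolation against the $p=\infty$ and $p=1$ endpoints as in \eqref{Regular-as-interpolation-space}-type reasoning, to a bounded $\VN(G) \to \VN(G)$ map), and non-amenability obstructs the existence of such positive-definite dominating functions in $c_0(G)$. So one picks a symbol $\varphi$ which is an $\L^p$-cb multiplier (guaranteed by $\mathrm{AP}$ and standard Fourier multiplier theory, e.g. a Riesz-transform-type or free-Hilbert-transform-type symbol) but whose would-be decomposability would contradict non-amenability, exactly as in the $p=\infty$ case.

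\textbf{Main obstacle.} The crux --- and the step I expect to be genuinely delicate --- is establishing the implication ``$M_\varphi$ decomposable $\Rightarrow$ the dominating completely positive map may be chosen to be itself a Fourier multiplier $M_\psi$ with $\psi$ positive-definite and $\psi(e) = \norm{M_\psi}$ comparable to $\norm{M_\varphi}_{\dec}$'', because it is precisely here that one must invoke the Haagerup averaging argument / the complementation of Fourier multipliers inside the cb-operators (the $p = \infty$ case of property $(\kappa)$, which is classical for discrete $G$). Once that reduction is in hand, the contradiction with non-amenability is the standard fact that $c_0(G) \cap B(G)$ is not uniformly dense in the constants unless $G$ is amenable; and the transfer to $\L^p$ rides on Proposition \ref{Prop-cb-leq-dec} plus interpolation. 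The existence of the ``bad'' symbol $\varphi$ with small/finite cb-norm but large forced dec-norm is supplied, respectively, by weak amenability in (1) and by $\mathrm{AP}$ together with concrete multiplier constructions (free Hilbert transforms, Riesz transforms associated with proper cocycles) in (2).
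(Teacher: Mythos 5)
There is a genuine gap. The paper's proof is a soft argument that you explicitly considered and then discarded: assuming \emph{every} completely bounded Fourier multiplier on $\VN(G)$ were decomposable, the two complete norms $\norm{\cdot}_{\cb}$ and $\norm{\cdot}_{\dec}$ (comparable, since $\norm{\cdot}_{\cb}\leq \norm{\cdot}_{\dec}$) would be equivalent by the open mapping theorem; the weak amenability net of \emph{finite-rank} multipliers with $\sup_\alpha\norm{M_{\varphi_\alpha}}_{\cb}\leq C$ and $M_{\varphi_\alpha}\to\Id$ point-weak* would then have uniformly bounded decomposable norms, so $\VN(G)$ would have the bounded normal \emph{decomposable} approximation property, which by Le Merdy's theorem forces $\VN(G)$ to be injective, hence $G$ amenable --- contradiction. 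You dismissed this route on the grounds that ``$\Id$ is decomposable, so that alone gives no contradiction,'' but the contradiction never comes from the decomposability of the limit: it comes from the fact that a uniformly bounded net of finite-rank decomposable maps converging to the identity already implies injectivity. That is the missing idea. (Part (2) runs identically with the $\mathrm{AP}$ net, Junge--Ruan's $\L^p$-version of the decomposable approximation property, and Proposition \ref{prop-decomposable-Banach-space} --- this is where QWEP enters, to make $\Dec(\L^p(\VN(G)))$ complete.)

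The quantitative route you substitute does not close the argument as written. Showing that $\norm{M_\varphi}_{\dec}$ can be much larger than $\norm{M_\varphi}_{\cb}$ (which is the content of Theorem \ref{thm-comparaison-cb-dec-free-group}, and is true) only shows the two norms are inequivalent; it does not produce a single completely bounded multiplier that is \emph{not} decomposable, since each such $M_\varphi$ is still decomposable with a large but finite norm. To conclude along your lines you would either have to reinstate the uniform-boundedness/open-mapping step (returning to the paper's proof), or exhibit a concrete symbol $\varphi$ that is a completely bounded multiplier but is \emph{not} a linear combination of positive definite functions --- i.e.\ prove that decomposable multipliers have symbols in $B(G)$ (this part is indeed available via the Haagerup averaging projection, cf.\ Theorem \ref{prop-non-amenable-discrete-Fourier-multiplier-dec-infty}) \emph{and} invoke the nontrivial fact that $B(G)\subsetneq M_0A(G)$ for non-amenable discrete $G$. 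Neither step is carried out, and the appeal to ``positive-definite $c_0$ functions cannot approximate $1$ uniformly'' is not the right statement for this purpose. Note also that the finitely supported symbols furnished by weak amenability or $\mathrm{AP}$ are themselves in $B(G)$, so no member of the approximating net can serve as the desired counterexample; this is precisely why the argument must be run collectively rather than on a single element of the net.
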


\begin{proof}
1. By the proofs of \cite[Theorem 12.3.10]{BrO} and \cite[Theorem 4.4]{JR1}, there exists a net $\big(M_{\varphi_\alpha}\big)$ of finite-rank completely bounded Fourier multipliers on $\VN(G)$ with $\norm{M_{\varphi_\alpha}}_{\cb} \leq C$ such that $M_{\varphi_\alpha} \to \Id_{\VN(G)}$ in the point weak* topology. If all the Fourier multipliers were decomposable, since two comparable complete norms on a linear space are in fact equivalent, the von Neumann algebra $\VN(G)$ would have the bounded normal decomposable approximation property of \cite[Theorem 4.3 (iv)]{LM} (see also \cite[page 355]{JuR}) and $\VN(G)$ would be injective. By \cite[Theorem 3.8.2]{SS}, we conclude that $G$ is amenable. This is the desired contradiction. 

2. By \cite[Theorem 4.4]{JR1}, there exists a net of completely contractive finite-rank Fourier multipliers $M_{\varphi_\alpha} \co \L^p(\VN(G))\to \L^p(\VN(G))$ such that $M_{\varphi_\alpha} \to \Id_{\L^p(\VN(G))}$ in the point-norm topology. If all the Fourier multipliers were decomposable, again since two comparable complete norms on a linear space are in fact equivalent, the space $\L^p(\VN(G))$ would have the bounded decomposable approximation property of \cite[page 356]{JuR}. By \cite[Theorem 5.2]{JuR} the von Neumann algebra $\VN(G)$ would be injective. By \cite[Theorem 3.8.2]{SS}, we conclude that $G$ is amenable. This is a second contradiction. 
\end{proof}

\begin{remark} \normalfont
Note that we can use the free group $\F_n$ where $2\leq n \leq \infty$ ($n$ countable) with the two parts of the last result. Indeed, by \cite[Theorem 1.8]{Haa1} (see also \cite[Corollary 3.11]{DCH}), the group $\F_n$ is weakly amenable, hence has AP by \cite[page 677]{HK}. Moreover, it is well-known that $\VN(\F_n)$ has QWEP, see e.g. \cite[Theorem 9.10.4]{Pis7}.
\end{remark}

We will describe in Theorem \ref{thm-comparaison-cb-dec-free-group} an explicit result in the same vein. For that, we need intermediate results.

\begin{lemma}
\label{Lemma-Some-cp-maps}
Let $M$ be a von Neumann algebra equipped with a faithful normal semifinite trace. Suppose $1 \leq p \leq \infty$. For any integer $n \geq 2$, the maps

\begin{equation*}
\begin{array}{cccc}
  \alpha_n    \co &   \L^p(M)  &  \longrightarrow   &  S_n^p(\L^p(M))  \\
           &   x  &  \longmapsto       &  \begin{bmatrix} 
   x   &  \cdots &   x\\ 
\vdots &        & \vdots\\ 
   x   & \cdots  &   x
\end{bmatrix}  \\  \\
\end{array}
\end{equation*}
and
\begin{equation*}
\begin{array}{cccc}
   \sigma_{n}    \co &   S_{n^2}^p(\L^p(M))   &  \longrightarrow   & S_n^p(\L^p(M))   \\
           &   \begin{bmatrix}  
  \begin{bmatrix} b_{11}^{11} & \cdots & b_{11}^{1n} \\ \vdots & & \vdots \\ b_{11}^{n1} & \cdots & b_{11}^{nn} \end{bmatrix}
& \cdots & \begin{bmatrix} b_{1n}^{11} & \cdots & b_{1n}^{1n} \\ \vdots & & \vdots \\ b_{1n}^{n1} & \cdots & b_{1n}^{nn} \end{bmatrix} \\
\vdots &  & \vdots \\
\begin{bmatrix}  b_{n1}^{11} & \cdots & b_{n1}^{1n} \\ \vdots & & \vdots \\ b_{n1}^{n1} & \cdots & b_{n1}^{nn} \end{bmatrix} & \cdots & \begin{bmatrix}  b_{nn}^{11} & \cdots & b_{nn}^{1n} \\ \vdots & & \vdots \\ b_{nn}^{n1} & \cdots & b_{nn}^{nn} \end{bmatrix}
 \end{bmatrix}   &  \longmapsto       &  \begin{bmatrix}
   b_{11}^{11}  & \cdots  & b_{1n}^{1n} \\
\vdots & & \vdots \\
   b_{n1}^{n1}  & \cdots & b_{nn}^{nn} \\
 \end{bmatrix}   \\
\end{array}
\end{equation*}
are completely positive. 
\end{lemma}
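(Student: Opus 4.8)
The plan is to verify complete positivity of each of the two maps directly from the description of the matrix cones $\M_k(S^p_n(E))_+$ recalled just before Lemma \ref{Lemma-cp-S1E}, exhibiting each map as a completely positive "compression by a matrix" in the spirit of \eqref{conj-cp}. First consider $\alpha_n$. The key observation is that $\alpha_n(x) = \gamma^* \cdot (\I_n \ot x) \cdot \gamma$ is not quite the shape we want; instead, writing $e = [1,\ldots,1]^t \in \M_{n,1}$ (a column of ones), one has $\alpha_n(x) = (e \ot \I) x (e^* \ot \I)$ when $x$ is viewed in $\L^p(M) = S^p_1(\L^p(M))$. More precisely, $\alpha_n$ is the composition of the canonical completely positive embedding $\L^p(M) \hookrightarrow S^p_n(\L^p(M))$ into the $(1,1)$-corner followed by conjugation $y \mapsto u^* y u$ with $u \in \M_{n,n}$ the matrix whose first row is $[1,\ldots,1]$ and whose other rows are zero; by \eqref{conj-cp} this conjugation is completely positive, and the corner embedding is completely positive (it is conjugation by $e_{11}$-type partial isometries, again covered by \eqref{conj-cp} applied to $\alpha \in \M_{1,n}$). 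Composition of completely positive maps is completely positive, so $\alpha_n$ is completely positive.

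For $\sigma_n$, the idea is the same: $\sigma_n$ is a "diagonal-extraction" map from $S^p_{n^2}(\L^p(M)) = S^p_n(S^p_n(\L^p(M)))$ to $S^p_n(\L^p(M))$ sending the block matrix $[b_{kl}^{ij}]$ to $[b_{kl}^{kl}]$, i.e. picking the $(k,l)$-entry of the $(k,l)$-block. I would realize this as conjugation by a single rectangular matrix $v \in \M_{n^2, n}$ (with $\L^p(M)$-coefficients, i.e. $v \ot \I$): index the rows of $v$ by pairs $(i,j)$ with $1 \le i,j \le n$ and the columns by pairs $(k,l)$, and set $v_{(i,j),(k,l)} = \delta_{i,k}\delta_{j,l}$, so that $v$ is a partial isometry whose range picks out exactly the "super-diagonal" entries. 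One then checks by a direct computation — the same kind of bookkeeping as in the footnote computation for Lemma \ref{equ-1-proof-op-mappings} — that $\sigma_n(b) = v^* b v$ as elements of $S^p_n(\L^p(M)) = S^p_n(\M_n)$-type spaces, after the identification $S^p_{n^2} = S^p_n \ot S^p_n$. Once this identity is established, complete positivity of $\sigma_n$ follows immediately from \eqref{conj-cp} (conjugation by a fixed scalar matrix is completely positive on noncommutative $\L^p$-spaces).

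The main obstacle, and the only genuinely non-routine point, is getting the identification of the index sets and the conjugation formula exactly right: one must match the grouping $S^p_{n^2}(\L^p(M)) \cong S^p_n\big(S^p_n(\L^p(M))\big)$ used implicitly in the statement (outer index $(k,l)$ running over blocks, inner index $(i,j)$ running inside a block) with the tensor identification $\M_{n^2} \cong \M_n \ot \M_n$ and then write down the rectangular partial isometry $v$ whose conjugation realizes $\sigma_n$. This is purely combinatorial and involves no analysis; once the formula $\sigma_n(b) = (v^* \ot \I)\, b\, (v \ot \I)$ is checked on elementary tensors $e_{ij} \ot e_{kl} \ot y$ it extends by linearity and density, and \eqref{conj-cp} finishes the argument. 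For the case $p = \infty$ everything is identical, using weak* density in place of norm density where needed.
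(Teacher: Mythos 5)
Your approach is essentially the paper's: both proofs realize $\alpha_n$ and $\sigma_n$ as conjugations $y \mapsto \alpha^* y \alpha$ by fixed scalar matrices and then invoke \eqref{conj-cp} (the paper writes $\alpha_n(x)$ directly as the column of ones times $x$ times the row of ones, and $\sigma_n(b)=AbA^*$ with $A\in\M_{n,n^2}$ selecting the super-diagonal entries). The one concrete slip is your formula for $v$: indexing both rows and columns by pairs and setting $v_{(i,j),(k,l)}=\delta_{i,k}\delta_{j,l}$ gives the $n^2\times n^2$ identity matrix, not the desired $n^2\times n$ partial isometry; you want $v\in\M_{n^2,n}$ with $v_{(i,j),m}=\delta_{i,m}\delta_{j,m}$, for which $(v^*bv)_{mm'}=b_{mm'}^{mm'}=\sigma_n(b)_{mm'}$. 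With that correction your argument is complete and coincides with the paper's.
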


\begin{proof}
For any $x \in \L^p(M)$, we have $\alpha_{n}(x)
=\begin{bmatrix} 
   x   &  \cdots &   x\\ 
\vdots &        & \vdots\\ 
   x   & \cdots  &   x
\end{bmatrix} 
=
\begin{bmatrix}
1  \\
\vdots\\
1\\ 
\end{bmatrix} 
x
\begin{bmatrix}
1 & \cdots &1 \\ 
\end{bmatrix}$. 
Moreover, for any $b \in S_{n^2}^p(\L^p(M))$, we have $\sigma_{n}(b) =  A b A^*$ where $A \in \M_{n,n^2}$ is defined by 
{\renewcommand{\arraystretch}{1.2}
$$
A
= 
\begin{bmatrix} 
{\renewcommand{\arraystretch}{1}\begin{bmatrix} 
1 & 0 &  \cdots & 0 
\end{bmatrix}} & 
{\renewcommand{\arraystretch}{1}\begin{bmatrix} 
0 & 0 &\cdots  & 0 
\end{bmatrix}} 
&  \cdots & 
{\renewcommand{\arraystretch}{1}\begin{bmatrix} 
0 & 0 &\cdots &  0 
\end{bmatrix}} \\
{\renewcommand{\arraystretch}{1}\begin{bmatrix} 
0 & 0 & \cdots& 0 
\end{bmatrix}} & 
{\renewcommand{\arraystretch}{1}\begin{bmatrix} 
0 & 1 & \cdots & 0 
\end{bmatrix}} 
& \ldots & 
{\renewcommand{\arraystretch}{1}\begin{bmatrix} 
0 & 0 &\cdots & 0 
\end{bmatrix}} \\
\vdots & & & \vdots \\
{\renewcommand{\arraystretch}{1}\begin{bmatrix} 
0 & 0 &\cdots & 0 
\end{bmatrix}} 
&  \cdots & 
{\renewcommand{\arraystretch}{1}\begin{bmatrix} 
0 & 0 &\cdots & 0 
\end{bmatrix}} & 
{\renewcommand{\arraystretch}{1}\begin{bmatrix} 
0 & \cdots & 0 & 1 
\end{bmatrix}} 
\end{bmatrix}.
$$}
Now, we appeal to \eqref{conj-cp}.
\end{proof}

\begin{prop}
\label{prop-w-et-v}
Let $M$ and $N$ be von Neumann algebras equipped with faithful normal semifinite traces. Suppose $1 \leq p \leq \infty$. Let $n \geq 2$ be an integer and consider some bounded maps $T_{ij} \co \L^p(M) \to \L^p(N)$ where $1 \leq i,j \leq n$. If $\alpha_n$ is the completely positive map from Lemma \ref{Lemma-Some-cp-maps} then the map
\begin{equation*}
\begin{array}{cccc}
    \Phi  \co & S_n^p(\L^p(M)) &  \longrightarrow &S_n^p(\L^p(N))  \\
           &   \begin{bmatrix}  
   a_{11}  & \cdots   & a_{1n}  \\
   \vdots  &   & \vdots  \\
   a_{n1}  & \cdots   & a_{nn}  \\
 \end{bmatrix}   &  \longmapsto       &  \begin{bmatrix}  
   T_{11}(a_{11})  & \cdots   & T_{1n}(a_{1n})  \\
   \vdots  &   & \vdots  \\
   T_{n1}(a_{n1})  & \cdots   & T_{nn}(a_{nn})  \\
 \end{bmatrix}  \\
\end{array}
\end{equation*}
is completely positive if and only if the map $\Phi \circ \alpha_n$ is completely positive. 
\end{prop}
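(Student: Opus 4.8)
The plan is to show the two implications separately, with the forward direction (that $\Phi$ completely positive implies $\Phi \circ \alpha_n$ completely positive) being trivial: it follows at once from the complete positivity of $\alpha_n$ established in Lemma \ref{Lemma-Some-cp-maps}, together with the fact that a composition of completely positive maps is completely positive (apply Lemma \ref{Lemma-cp-S1E} to $\alpha_n$ to get complete positivity of $\Id_{S^p_k} \ot \alpha_n$ on all matrix levels, then compose).

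For the nontrivial direction, I would reconstruct $\Phi$ from $\Phi \circ \alpha_n$ by a ``spreading and compressing'' argument using the second map $\sigma_n$ from Lemma \ref{Lemma-Some-cp-maps}. The key observation is that $\alpha_n$ takes an $n \times n$ matrix $[a_{ij}]$ and, applied entrywise (i.e. $\Id_{S^p_n} \ot \alpha_n$ suitably interpreted, or rather a block version $\alpha_n$ applied to each entry and reorganized as an $n^2 \times n^2$ matrix), produces a matrix all of whose $n \times n$ blocks equal $[a_{ij}]$; applying $\Phi \circ \alpha_n$ to such a configuration, the $(i,j)$ block of the result in position $(i,j)$ on the ``super-diagonal'' picks out exactly $T_{ij}(a_{ij})$, and then $\sigma_n$ compresses the $n^2 \times n^2$ output back down to the $n \times n$ matrix $[T_{ij}(a_{ij})] = \Phi([a_{ij}])$. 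Concretely, I expect an identity of the form
$$
\Phi = \sigma_n \circ (\Id_{S^p_n} \ot (\Phi \circ \alpha_n)) \circ \iota_n
$$
or a minor variant, where $\iota_n \co S^p_n(\L^p(M)) \to S^p_{n^2}(\L^p(M))$ is a suitable completely positive (indeed completely positive and completely isometric) ``block-diagonal replication'' embedding that sends $[a_{ij}]$ to a matrix whose blocks are copies of $[a_{ij}]$ arranged so that $\sigma_n$ extracts the right entries. One must check that $\iota_n$ is completely positive — this should again reduce to a conjugation identity of the type $b \mapsto A b A^*$ as in \eqref{conj-cp}, or be built directly from $\alpha_n$. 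Once such a factorization of $\Phi$ through completely positive maps and $\Phi \circ \alpha_n$ is in hand, complete positivity of $\Phi$ follows by composition (using Lemma \ref{Lemma-cp-S1E} once more to lift everything to arbitrary matrix amplifications).

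The main obstacle — really the only subtlety — will be getting the bookkeeping of the three maps $\alpha_n$, $\sigma_n$ and the replication embedding to line up exactly: one needs the index $(i,j)$ entry of $[T_{ij}(a_{ij})]$ to appear, after the replication followed by $\Phi \circ \alpha_n$, in a position from which $\sigma_n$ (which selects $b_{ij}^{ij}$, i.e. the $(i,j)$ entry of the $(i,j)$ block) reads it off correctly. This is a finite, purely combinatorial matrix computation with no analytic content, so I would carry it out explicitly for the ordering conventions fixed in Lemma \ref{Lemma-Some-cp-maps} and then invoke \eqref{conj-cp} and Lemma \ref{Lemma-cp-S1E} to conclude; I do not anticipate any genuine difficulty beyond choosing consistent notation.
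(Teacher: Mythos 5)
Your proposal is correct and follows essentially the same route as the paper: the forward implication is immediate from the complete positivity of $\alpha_n$, and the reverse implication comes from the factorization $\Phi = \sigma_{n} \circ \big(\Id_{S^p_n} \ot (\Phi \circ \alpha_n)\big)$ together with Lemma \ref{Lemma-cp-S1E} and the complete positivity of $\sigma_n$. The only adjustment is that your extra replication embedding $\iota_n$ is superfluous (and as written would not type-check): the amplification $\Id_{S^p_n} \ot (\Phi \circ \alpha_n)$ is already defined on $S^p_n(\L^p(M))$, and the ``minor variant'' you anticipated is exactly the identity above, which one verifies by the finite index computation you describe.
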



\begin{proof}
One direction is obvious. For the reverse direction, we have 
\begingroup
\allowdisplaybreaks
\begin{align*}
\MoveEqLeft
  \sigma_{n} \circ \big(\Id_{S^p_n} \ot (\Phi \circ \alpha_n)\big)\left(\begin{bmatrix}  
   a_{11}  & \cdots   & a_{1n}  \\
   \vdots  &   & \vdots  \\
   a_{n1}  & \cdots   & a_{nn}  \\
 \end{bmatrix}\right)
=\sigma_{n}\left(\begin{bmatrix}  
   \Phi \circ \alpha_n(a_{11})  & \cdots   & \Phi \circ \alpha_n(a_{1n})  \\
   \vdots  &   & \vdots  \\
   \Phi \circ \alpha_n(a_{n1})  & \cdots   & \Phi \circ \alpha_n(a_{nn})  \\
 \end{bmatrix}\right)\\
&=
\sigma_{n}\left(\begin{bmatrix}
\begin{bmatrix}  
   T_{11}(a_{11})  & \cdots   & T_{1n}(a_{11})  \\
   \vdots  &   & \vdots  \\
   T_{n1}(a_{11})  & \cdots   & T_{nn}(a_{11})  \\
 \end{bmatrix}
&\cdots&\begin{bmatrix}  
   T_{11}(a_{1n})  & \cdots   & T_{1n}(a_{1n})  \\
   \vdots  &   & \vdots  \\
   T_{n1}(a_{1n})  & \cdots   & T_{nn}(a_{1n})  \\
 \end{bmatrix}\\
\vdots&&\vdots\\
\begin{bmatrix}  
   T_{11}(a_{n1})  & \cdots   & T_{1n}(a_{n1})  \\
   \vdots  &   & \vdots  \\
   T_{n1}(a_{n1})  & \cdots   & T_{nn}(a_{n1})  \\
 \end{bmatrix}
&\cdots
&\begin{bmatrix}  
   T_{11}(a_{nn})  & \cdots   & T_{1n}(a_{nn})  \\
   \vdots  &   & \vdots  \\
   T_{n1}(a_{nn})  & \cdots   & T_{nn}(a_{nn})  \\
 \end{bmatrix}
\end{bmatrix}\right) \\
&=
\begin{bmatrix}  
   T_{11}(a_{11})  & \cdots   & T_{1n}(a_{1n})  \\
   \vdots  &   & \vdots  \\
   T_{n1}(a_{n1})  & \cdots   & T_{nn}(a_{nn})  \\
 \end{bmatrix}
=
\Phi\left(\begin{bmatrix}  
   a_{11}  & \cdots   & a_{1n}  \\
   \vdots  &   & \vdots  \\
   a_{n1}  & \cdots   & a_{nn}  \\
 \end{bmatrix} \right).
\end{align*}
\endgroup
Hence $\Phi=\sigma_{n} \circ (\Id_{S^p_n} \ot (\Phi \circ \alpha_n))$. Note that if $\Phi \circ \alpha_n$ is completely positive then $\Id_{S^p_n} \ot (\Phi \circ \alpha_n)$ is also completely positive by Lemma \ref{Lemma-cp-S1E}. In this case, since $\sigma_{n}$ is completely positive we deduce that $\Phi$ is completely positive.
\end{proof}

\begin{prop}
\label{prop-decomposable-et-selfadjoint3-prime}
Let $M$ and $N$ be von Neumann algebras equipped with faithful normal semifinite traces. Suppose $1 \leq p \leq \infty$. Let $T \co \L^p(M) \to \L^p(N)$ be a linear map. Then $T$ is decomposable if and only if the map $\tilde{T} \circ \alpha_2 \co \L^p(M) \to S^p_2(\L^p(N))$ where $\tilde{T}$ is the map from Proposition \ref{prop-regulier-et-selfadjoint} is decomposable. Moreover, in this case, we have 
$$
\norm{T}_{\dec,\L^p(M) \to \L^p(N)} 
\leq \|\tilde{T}\circ \alpha_2\|_{\dec,\L^p(M) \to S^p_2(\L^p(N))} 
\leq 2^{\frac1p} \norm{T}_{\dec, \L^p(M) \to \L^p(N)}.
$$
Furthermore, $\tilde{T} \circ \alpha_2$ is adjoint preserving.
\end{prop}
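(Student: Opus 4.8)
The plan is to relate the map $\tilde T \circ \alpha_2$ to $\tilde T$ (and hence to $T$) by sandwiching it between explicit completely positive maps, so that decomposability transfers in both directions. First I would recall that by Proposition \ref{prop-decomposable-et-selfadjoint3}, $T$ is decomposable iff $\tilde T \co S^p_2(\L^p(M)) \to S^p_2(\L^p(N))$ is decomposable, with equal decomposable norms, and that $\tilde T$ is selfadjoint; thus it suffices to prove that $\tilde T$ is decomposable iff $\tilde T \circ \alpha_2$ is, together with the two-sided norm estimate with constant $2^{1/p}$ between $\|\tilde T\|_\dec$ and $\|\tilde T \circ \alpha_2\|_\dec$. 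The selfadjointness of $\tilde T \circ \alpha_2$ is immediate: $\alpha_2$ is selfadjoint (it maps $x^*$ to the matrix with all entries $x^*$, which is the adjoint of $\alpha_2(x)$ since that matrix is its own transpose up to entrywise conjugation), and $\tilde T$ is selfadjoint, so the composition is selfadjoint.

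For the easy direction ($\tilde T$ decomposable $\Rightarrow$ $\tilde T \circ \alpha_2$ decomposable): $\alpha_2 \co \L^p(M) \to S^p_2(\L^p(M))$ is completely positive by Lemma \ref{Lemma-Some-cp-maps}, hence decomposable with $\norm{\alpha_2}_\dec \leq \norm{\alpha_2} = 2^{1/p}$ (by Proposition \ref{quest-cp-versus-dec1}, using that $\norm{\alpha_2(x)}_{S^p_2(\L^p(M))} = 2^{1/p}\norm{x}$, e.g. via \eqref{Inequality-SpnE} and \eqref{Block-diagonal} applied to the factorization $\alpha_2(x) = (1,1)^t \, x \, (1,1)$ — actually more simply, $\alpha_2(x)$ has the singular values of $x$ scaled by $2^{1/p}$). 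Then by \eqref{Composition-dec}, $\tilde T \circ \alpha_2$ is decomposable with $\|\tilde T \circ \alpha_2\|_\dec \leq \|\tilde T\|_\dec \cdot 2^{1/p} = 2^{1/p}\norm{T}_\dec$, which is the upper estimate.

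For the reverse direction ($\tilde T \circ \alpha_2$ decomposable $\Rightarrow$ $\tilde T$ decomposable), the idea is to recover $\tilde T$ from $\tilde T \circ \alpha_2$ using Proposition \ref{prop-w-et-v}: $\tilde T$ is a $2\times 2$ matrix of maps $S^p_2(\L^p(M)) \to S^p_2(\L^p(N))$ (with the off-diagonal entries being $T$, $T^\circ$ and the diagonal entries $0$). One would want to apply Proposition \ref{prop-w-et-v} with $n=2$ and the underlying algebras $S^p_2(\L^p(M))$, $S^p_2(\L^p(N))$ to reduce complete positivity of a dilation $\begin{bmatrix} v_1 & \tilde T \\ \tilde T & v_2\end{bmatrix}$ to complete positivity of its precomposition with $\alpha_2$; more precisely, if $\begin{bmatrix} w_1 & \tilde T\circ\alpha_2 \\ (\tilde T \circ \alpha_2)^\circ & w_2\end{bmatrix} \co \L^p(M) \to S^p_2(S^p_2(\L^p(N)))$ is completely positive with $\max\{\norm{w_1},\norm{w_2}\} = \|\tilde T \circ \alpha_2\|_\dec$, one builds from $w_1, w_2$ maps $v_1, v_2 \co S^p_2(\L^p(M)) \to S^p_2(\L^p(N))$ by composing with $\sigma_2$ and suitable corner embeddings/projections (as in the proof of Proposition \ref{prop-decomposable-et-selfadjoint3}), so that $\begin{bmatrix} v_1 & \tilde T \\ \tilde T & v_2\end{bmatrix}$ becomes completely positive with $\max\{\norm{v_1},\norm{v_2}\} \leq \max\{\norm{w_1},\norm{w_2}\}$; this gives $\|\tilde T\|_\dec \leq \|\tilde T \circ \alpha_2\|_\dec$, i.e. $\norm{T}_\dec \leq \|\tilde T\circ\alpha_2\|_\dec$, the lower estimate. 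The main obstacle I anticipate is the bookkeeping in this reverse step: carefully identifying $S^p_2(S^p_2(\L^p(N)))$ with $S^p_4(\L^p(N))$, matching the block structure of $\tilde T$ against the output of Proposition \ref{prop-w-et-v}, and verifying that the resulting diagonal maps $v_1, v_2$ satisfy the norm bound without any loss — the scaling factors from $\alpha_2$ and $\sigma_2$ must cancel exactly, and one should double-check that the selfadjoint hypotheses needed to invoke the earlier propositions are all in place.
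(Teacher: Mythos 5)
Your reduction to $\tilde T$ via Proposition \ref{prop-decomposable-et-selfadjoint3}, your selfadjointness argument, and your plan for the direction ``$\tilde T\circ\alpha_2$ decomposable $\Rightarrow$ $T$ decomposable'' (compress the diagonal blocks of a completely positive dilation to corner maps $w_i$ and conjugate by a $2\times 4$ matrix of scalars, as in the proof of Proposition \ref{prop-decomposable-et-selfadjoint3}) are all sound in substance. But the upper estimate $\|\tilde T\circ\alpha_2\|_{\dec}\leq 2^{1/p}\norm{T}_{\dec}$ does not follow from your argument, and the reason is a concrete computational error: the singular values of $\alpha_2(x)=\begin{bmatrix} x & x\\ x & x\end{bmatrix}=\begin{bmatrix}1\\1\end{bmatrix}x\begin{bmatrix}1&1\end{bmatrix}$ are those of $x$ scaled by $2$, not by $2^{1/p}$ (test $x=1$: the matrix $\begin{bmatrix}1&1\\1&1\end{bmatrix}$ is twice a rank-one projection, so its $S^p_2$-norm is $2$ for every $p$). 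Hence $\norm{\alpha_2}=\norm{\alpha_2}_{\dec}=2$ (the decomposable and operator norms agree for completely positive maps), and the submultiplicativity \eqref{Composition-dec} only yields $\|\tilde T\circ\alpha_2\|_{\dec}\leq 2\norm{T}_{\dec}$, which is strictly weaker than the claimed bound for all $p>1$ and badly so at $p=\infty$.

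The mechanism that produces $2^{1/p}$ is different and cannot be reached by composing decomposable norms: one must exploit that the dominating map can be taken to land in the \emph{diagonal}. Concretely, take $v_1,v_2$ realizing $\norm{T}_{\dec}$ (Proposition \ref{Prop-dec-inf-atteint}); conjugating the completely positive map $\begin{bmatrix} v_1 & T\\ T^\circ & v_2\end{bmatrix}$ by $\mathrm{diag}(1,-1)$ shows that $\begin{bmatrix} v_1 & -T\\ -T^\circ & v_2\end{bmatrix}\circ\alpha_2$ is also completely positive, so the average $S=\begin{bmatrix} v_1 & 0\\ 0 & v_2\end{bmatrix}\circ\alpha_2$ is completely positive and satisfies $-S\leq_{\cp}\tilde T\circ\alpha_2\leq_{\cp} S$. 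Since $S(x)$ is a diagonal matrix, \eqref{Block-diagonal} gives
$$
\norm{S(x)}_{S^p_2(\L^p(N))}=\big(\norm{v_1(x)}^p+\norm{v_2(x)}^p\big)^{\frac1p}\leq 2^{\frac1p}\max\{\norm{v_1},\norm{v_2}\}\norm{x},
$$
and Proposition \ref{prop-decomposable-et-selfadjoint} (applicable because $\tilde T\circ\alpha_2$ is selfadjoint) yields $\|\tilde T\circ\alpha_2\|_{\dec}\leq\norm{S}\leq 2^{1/p}\norm{T}_{\dec}$. A secondary caution on your reverse direction: Proposition \ref{prop-w-et-v} applies to maps acting entrywise, which $\tilde T$ is, but the dilation $\begin{bmatrix} v_1 & \tilde T\circ\alpha_2\\ (\tilde T\circ\alpha_2)^\circ & v_2\end{bmatrix}$ is not (the $v_i$ are arbitrary), so you cannot invoke that proposition there; the direct corner-compression you mention as an alternative is the correct route and loses nothing in the norms.
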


\begin{proof}
Let $x  \in \L^p(M)$. We have 
$$
\tilde{T} \circ \alpha_2(x^*)
=\tilde{T}\left(\begin{bmatrix} 
x^* & x^* \\ 
x^* & x^* 
\end{bmatrix}\right) 
=\begin{bmatrix} 
0 & T(x^*) \\ 
T^\circ(x^*) & 0 
\end{bmatrix} 
=\begin{bmatrix} 
0 & T(x^*) \\ 
T(x)^* & 0 
\end{bmatrix}
$$

and also
$$
\left(\tilde{T}\circ \alpha_2(x)\right)^* 
=\left(\tilde{T}\left(
\begin{bmatrix} 
x & x \\ 
x & x 
\end{bmatrix}
\right)\right)^* 
=\begin{bmatrix} 
0 & T(x) \\ 
T^\circ(x) & 0 
\end{bmatrix}^* 
= \begin{bmatrix} 
0 & T^\circ(x)^* \\ 
T(x)^* & 0 
\end{bmatrix} =  
\begin{bmatrix} 
0 & T(x^*) \\ 
T(x)^* & 0 
\end{bmatrix}.
$$
We conclude that $\tilde{T} \circ \alpha_2$ is adjoint preserving, i.e. $(\tilde{T}\circ \alpha_2)^\circ = \tilde{T}\circ \alpha_2$.

Suppose that $T$ is decomposable. By Proposition \ref{Prop-dec-inf-atteint}, there exist some maps $v_1,v_2 \co \L^p(M) \to \L^p(N)$ such that $
\begin{bmatrix}
   v_1  &  T \\
   T^\circ  &  v_2  \\
\end{bmatrix}$ is completely positive with $\max\big\{\norm{v_1},\norm{v_2} \big\}
= \norm{T}_{\dec}$. Using \eqref{conj-cp}, we note that the map
$$ 
S^p_2(\L^p(M)) \to S^p_2(\L^p(M)),\: 
\begin{bmatrix}
a & b \\ c & d
\end{bmatrix}
\mapsto
\begin{bmatrix}
1 & 0 \\ 
0 & -1
\end{bmatrix}
\begin{bmatrix}
a & b \\ 
c & d
\end{bmatrix}
\begin{bmatrix}
1 & 0 \\ 
0 & -1
\end{bmatrix}
=\begin{bmatrix}
a & -b \\ 
-c & d
\end{bmatrix}
$$
is completely positive. By composition, we deduce that the map $
\begin{bmatrix}
v_1 & -T\\ 
-T^{\circ}  & v_2
\end{bmatrix}
\circ \alpha_2$ is completely positive. We define the map $S\overset{\textrm{def}}=\begin{bmatrix}
   v_1  &  0 \\
   0  &  v_2  \\
\end{bmatrix} \circ \alpha_2 \co \L^p(M) \to S^p_2(\L^p(N))$. Then In the light of the foregoing, $S$ is completely positive and is easy to check using \eqref{Block-diagonal} that $\norm{S} \leq 2^{\frac1p} \norm{T}_{\dec}$. Moreover, $-S \leq_{\cp} \tilde{T} \circ \alpha_2 \leq_{\cp} S$. By Proposition \ref{prop-decomposable-et-selfadjoint}, we conclude that $\norm{\tilde{T} \circ \alpha_2}_{\dec} \leq 2^{\frac1p} \norm{T}_{\dec}$.

Now suppose that the map $\tilde{T} \circ \alpha_2 \co \L^p(M) \to S^p_2(\L^p(N))$ is decomposable. Moreover let $v_1,v_2 \co \L^p(M) \to S^p_2(\L^p(N))$ such that the map $
\begin{bmatrix} 
v_1 & \tilde{T} \circ \alpha_2 \\ 
\tilde{T} \circ \alpha_2 & v_2 
\end{bmatrix} \co S^p_2(\L^p(M)) \to S^p_4(\L^p(N))$ is completely positive. Put $w_1 \co \L^p(M) \to \L^p(N)$, $a \mapsto (v_1(a))_{11}$ and $w_2 \co \L^p(M) \to \L^p(N)$, $a \mapsto (v_2(a))_{22}$. Then each $w_i$ is also completely positive as a composition of completely positive mappings. Then an easy computation gives
\begin{align*}
\MoveEqLeft
\begin{bmatrix} 
1 & 0 & 0 & 0 \\ 
0 & 0 & 0 & 1 
\end{bmatrix} 
\cdot 
\Bigg(
\begin{bmatrix} 
v_1 & \tilde{T} \circ \alpha_2 \\ 
\tilde{T} \circ \alpha_2 & v_2 
\end{bmatrix} 
\left( 
\begin{bmatrix} 
a & b \\ 
c & d 
\end{bmatrix} 
\right)\Bigg)\cdot \begin{bmatrix} 
1 & 0 \\ 0 & 0 \\ 
0 & 0 \\ 0 & 1 
\end{bmatrix}\\
&=\begin{bmatrix} 
1 & 0 & 0 & 0 \\ 
0 & 0 & 0 & 1 
\end{bmatrix} 
\cdot 
\begin{bmatrix} 
v_1(a) & \tilde{T}\circ \alpha_2(b) \\ 
\tilde{T} \circ \alpha_2(c) &  v_2(d)
\end{bmatrix} \cdot 
\begin{bmatrix} 
1 & 0 \\ 0 & 0 \\ 
0 & 0 \\ 0 & 1 
\end{bmatrix}
=\begin{bmatrix} 
w_1(a) & T(b) \\ 
T^\circ(c) & w_2(d) 
\end{bmatrix}. 
\end{align*}
Using \eqref{conj-cp}, we deduce by composition that the map $\begin{bmatrix} 
w_1 & T \\ 
T^\circ & w_2 
\end{bmatrix}$ is completely positive. We infer that $T$ is decomposable and that $\|T\|_{\dec} \leq \max\{\norm{w_1},\norm{w_2}\} \leq \max\{\norm{v_1},\norm{v_2}\}$ and passing to the infimum over all admissible $v_1,v_2$ shows that $\norm{T}_{\dec} \leq \| \tilde{T} \|_{\dec}$.
\end{proof}

In the following result, we generalize the results of \cite[Theorem 5.4.7]{ER} and \cite[page 204]{Haa} done for $p=\infty$.

\begin{thm}
\label{thm-computation-norm-finite-factor}
Let $M$ be a von Neumann algebra equipped with a normal finite faithful normalized trace and let $u_1,\ldots,u_n \in M$ be arbitrary unitaries. Suppose $1\leq p \leq \infty$. Consider the map $T \co \ell^p_n \to \L^p(M)$ defined by $T(e_k)=u_k$. Then $\norm{T}_{\dec,\ell^p_n \to \L^p(M)}=n^{1-\frac{1}{p}}$.  
\end{thm}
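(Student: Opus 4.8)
The plan is to establish the two inequalities $\norm{T}_{\dec} \leq n^{1-\frac1p}$ and $\norm{T}_{\dec} \geq n^{1-\frac1p}$ separately. For the upper bound, I would exhibit explicit completely positive maps $v_1, v_2 \co \ell^p_n \to \L^p(M)$ realizing the matrix $\begin{bmatrix} v_1 & T \\ T^\circ & v_2 \end{bmatrix}$ as a completely positive operator $S^p_2(\ell^p_n) \to S^p_2(\L^p(M))$ with $\max\{\norm{v_1}, \norm{v_2}\} \leq n^{1-\frac1p}$. The natural candidate is $v_1 = v_2 \co \ell^p_n \to \L^p(M)$, $e_k \mapsto 1$ (the unit of $M$), possibly up to scaling. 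Indeed, since the $u_k$ are unitaries, the block matrix $\begin{bmatrix} 1 & u_k \\ u_k^* & 1 \end{bmatrix} = \begin{bmatrix} 1 \\ u_k^* \end{bmatrix}\begin{bmatrix} 1 & u_k \end{bmatrix} \geq 0$ in $\M_2(M)$, and summing/averaging these over the diagonal structure of $\ell^p_n$ (which embeds completely positively and completely isometrically as a diagonal subalgebra) shows that $\begin{bmatrix} v_1 & T \\ T^\circ & v_1 \end{bmatrix}$ factors through these positive blocks; here one uses \eqref{conj-cp} and Lemma \ref{Lemma-Some-cp-maps} to handle the amplifications. The norm of $v_1 \co \ell^p_n \to \L^p(M)$, $e_k \mapsto 1$, is $\norm{\sum_k 1}_{\L^p(M)}$ applied to $(1,\dots,1) \in \ell^p_n$ — more precisely $\norm{v_1}_{\ell^p_n \to \L^p(M)} = \sup_{\norm{a}_{\ell^p_n}\leq 1} \norm{\sum_k a_k}_{\L^p(M)}$, and since the trace is normalized ($\norm{1}_{\L^p(M)} = 1$) this sup is attained at $a = (n^{-1/p},\dots,n^{-1/p})$ giving $n^{1-\frac1p}$ — wait, one should check: $\norm{\sum_k a_k \cdot 1}_{\L^p(M)} = |\sum_k a_k|$ and $\sup\{|\sum a_k| : \sum|a_k|^p \leq 1\} = n^{1/p'} = n^{1-1/p}$ by Hölder. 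So $\norm{v_1} = n^{1-\frac1p}$, as desired.

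For the lower bound, by Proposition \ref{Prop-cb-leq-dec} (valid since $\C$ and $M$ are $\QWEP$ — finite von Neumann algebras with separable predual, or one argues directly) we have $\norm{T}_{\dec} \geq \norm{T}_{\cb}$, but this is likely not tight enough; instead I would test the decomposability condition directly. If $\begin{bmatrix} v_1 & T \\ T^\circ & v_2 \end{bmatrix}$ is completely positive with $\max\{\norm{v_1},\norm{v_2}\} = \norm{T}_{\dec}$, apply the operator to a cleverly chosen positive element of $S^p_2(\ell^p_n)$ built from the standard basis of $\ell^p_n$, and use Lemma \ref{Lemma-Matricial-inequality} together with the fact that $T(\text{something}) = \sum_k u_k$ (up to normalization) has $\L^p$-norm exactly $n$ when evaluated on the all-ones vector — because $\sum_k u_k$ need not have small norm, but evaluating on a rank-one positive block $\begin{bmatrix} a & b \\ b^* & c\end{bmatrix}$ with $b = $ all-ones diagonal forces $\norm{T(b)}_{\L^p(M)} \leq \frac{1}{2^{1/p}}(\norm{v_1(a)}^p + \norm{v_2(c)}^p)^{1/p}$. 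Here one picks $a = c = $ all-ones as well; then $\norm{b}_{S^p_n(\ell^p_n)}$-type normalization and $T(b) = \sum_k u_k$ with $\norm{\sum_k u_k}_{\L^p(M)}$... hmm, this needs $\norm{\sum_k u_k}_{\L^p(M)}$ to be controlled, which it generally is \emph{not}. The correct approach: use that $T \co \ell^p_n \to \L^p(M)$ with $T(e_k) = u_k$ has the property that $T^* T$ or a suitable variant, combined with the known computation $\norm{T}_{\dec, \ell^p_n \to \L^p(M)}$ via duality \eqref{Duality-dec} reducing to the case $p' $ and $p = \infty$ where \cite[Theorem 5.4.7]{ER}/\cite[page 204]{Haa} give the answer for $p = \infty$, namely $\norm{T}_{\dec} = n$ there; then interpolation (using $\ell^p_n = (\ell^\infty_n, \ell^1_n)_{1/p}$ and the fact that regular/decomposable norms interpolate, cf. \eqref{Regular-as-interpolation-space} applied to the map $T$ between finite-dimensional $\L^p$-spaces, which are hyperfinite) yields $\norm{T}_{\dec, p} \leq \norm{T}_{\dec,\infty}^{1/p} \norm{T}_{\dec, 1}^{1-1/p} = n^{1/p} \cdot 1^{1-1/p}$... that gives $n^{1/p}$, not $n^{1-1/p}$, so the endpoints must be $p=1$ (where $\norm{T}_{\dec,\ell^1_n \to \L^1(M)} = 1$ since $T$ maps the $\ell^1_n$-basis to unitaries and $\ell^1_n \to \L^1(M)$, $e_k \mapsto u_k$ is contractive and its decomposable amplifications are too) giving weight $1-1/p$ to exponent... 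I would need $\norm{T}_{\dec, 1} = 1$ and $\norm{T}_{\dec,\infty} = n$, interpolating with parameter $\theta = 1/p$ attached to the $p=1$ side: $n^{(1-\theta)\cdot 1}\cdot$... Let me just say: the endpoint computations are $\norm{T}_{\dec,\infty} = n$ (from \cite{ER}, \cite{Haa}) and $\norm{T}_{\dec,1} = 1$, and bilinear interpolation of the decomposable norm (legitimate here via Theorem \ref{thm-dec=reg-hyperfinite} identifying it with the regular norm, which interpolates by \eqref{Regular-as-interpolation-space} and Calderón's method) gives $n^{1-1/p}$ as the exact value.

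The main obstacle I anticipate is the lower bound: showing $\norm{T}_{\dec} \geq n^{1-\frac1p}$ cleanly. The interpolation argument needs the correct endpoint values and needs care that interpolation gives a two-sided estimate (it gives $\leq$ on the interpolated norm from $\leq$ at endpoints, which with matching lower bound at one endpoint via a duality/reiteration argument pins down equality); alternatively a direct argument via the defining inequality and a good choice of test matrix in $S^p_{2n}(\ell^p_n)$, estimating $\norm{\sum_k u_k \otimes e_{kk}}_{S^p_n(\L^p(M))} = (\sum_k \norm{u_k}_{\L^p(M)}^p)^{1/p} = n^{1/p}$ against $\norm{v_i(\text{diag blocks})}$, should also work and might be cleaner — the point being that the diagonal amplification of $T$ has norm $n^{1/p}$ while the "all-ones" input to $v_i$ has the right scaling. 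I would carry out: (1) reduce $p = \infty$ and $p = 1$ endpoint cases via known results and a direct check; (2) upper bound for general $p$ via the explicit $v_1 = v_2 = (e_k \mapsto 1)$ construction using \eqref{conj-cp} and Lemma \ref{Lemma-Some-cp-maps}; (3) lower bound for general $p$ via interpolation (Theorem \ref{thm-dec=reg-hyperfinite} + \eqref{Regular-as-interpolation-space}) or directly via Lemma \ref{Lemma-Matricial-inequality} applied to the diagonal amplification. Step (3) is where the real work lies.
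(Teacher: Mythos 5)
Your upper bound is essentially the paper's argument: the same completely positive map $v \co e_k \mapsto 1$ with $\norm{v}=n^{1-\frac1p}$ by H\"older, the same positive $2\times 2$ (in the paper, $4\times4$) unitary blocks $\begin{bmatrix} 1 & u_k \\ u_k^* & 1\end{bmatrix}$, and automatic complete positivity from the commutative domain (Proposition \ref{prop-positive-imply-cp-if-depart-commutative}). That half is sound.

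The lower bound is where your proposal has a genuine gap, on both of the routes you sketch. Route (a), interpolation, fails twice over. First, Theorem \ref{thm-dec=reg-hyperfinite} and the interpolation identity \eqref{Regular-as-interpolation-space} require the target algebra to be approximately finite-dimensional, whereas the theorem is stated for an arbitrary finite $M$ --- and is applied in Theorem \ref{thm-comparaison-cb-dec-free-group} precisely to $\VN(\F_n)$, which is not hyperfinite; the regular norm is not even available there. Second, even when interpolation applies, it only yields $\norm{T}_{\dec,p}\leq \norm{T}_{\dec,\infty}^{1-\frac1p}\norm{T}_{\dec,1}^{\frac1p}$; convexity of $\frac1p\mapsto\log\norm{T}_{\dec,p}$ plus the endpoint values does not produce a matching \emph{lower} bound at intermediate $p$, and your ``duality/reiteration pins down equality'' is an assertion, not an argument. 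Route (b), testing on the diagonal amplification, also collapses: with $a=c=b=\sum_k e_{kk}\ot e_k$ one has $\norm{(\Id\ot T)(b)}=n^{\frac1p}$ but also $\norm{(\Id\ot v_i)(a)}\leq n^{\frac1p}\norm{v_i}$, so Lemma \ref{Lemma-Matricial-inequality} only gives $\max_i\norm{v_i}\geq 1$, nowhere near $n^{1-\frac1p}$.

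The missing idea is a trace argument. The paper reduces to the selfadjoint map $\tilde T\circ\alpha_2 \co e_k\mapsto \tilde u_k=\begin{bmatrix} 0 & u_k\\ u_k^* & 0\end{bmatrix}$ (Propositions \ref{prop-decomposable-et-selfadjoint3-prime} and \ref{prop-decomposable-et-selfadjoint}, which cost the matching factors $2^{-\frac1p}$ and $2^{1-\frac{1}{p^*}}$), so that any admissible completely positive $S$ satisfies $-S(e_k)\leq \tilde u_k\leq S(e_k)$ with $\tilde u_k$ a selfadjoint unitary. Writing $S(e_k)=\frac12\big[(S(e_k)-\tilde u_k)+(S(e_k)+\tilde u_k)\big]$ as a sum of two positives forces $\tau_1(S(e_k))\geq \norm{\tilde u_k}_{S^1_2(\L^1(M))}=2$; summing over $k$ and dualizing against $\I_2\ot 1$ gives $\norm{S(1,\ldots,1)}_{S^p_2(\L^p(M))}\geq 2^{1-\frac{1}{p^*}}n$, while $\norm{(1,\ldots,1)}_{\ell^p_n}=n^{\frac1p}$, whence $\norm{S}\geq 2^{1-\frac{1}{p^*}}n^{1-\frac1p}$. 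It is the combination of the unitarity of the $u_k$ (to force the trace lower bound) with the all-ones test vector (whose $\ell^p_n$-norm is only $n^{\frac1p}$) that produces the exponent $1-\frac1p$; neither of your two routes supplies this step.
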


\begin{proof}
As observed, we can suppose $1\leq p<\infty$. Note that the unit element $1$ of $M$ belongs to $\L^p(M)$ since $M$ is finite. The map $\varphi \co \ell^p_n \to \C$, $ \sum_{k=1}^{n} c_k e_k \mapsto \sum_{k=1}^{n} c_k$ is a positive linear functional. Since $\ell^p_n$ is a commutative $\L^p$-space, by Proposition \ref{prop-positive-imply-cp-if-depart-commutative}, we deduce that the linear map 
\begin{equation*}
\begin{array}{cccc}
    v  \co &  \ell^p_n   &  \longrightarrow   &  \L^p(M)  \\
              &  \sum_{k=1}^{n} c_k e_k   &  \longmapsto       &   \big(\sum_{k=1}^{n} c_k\big)1 \\
\end{array}
\end{equation*}
is completely positive. Moreover, using the normalization  of the trace in the third equality and H\"older's inequality in the last inequality, we have
\begin{align*}
\MoveEqLeft
 \Bgnorm{v\bigg(\sum_{k=1}^{n} c_k e_k\bigg)}_{\L^p(M)}   
		=\Bgnorm{\bigg(\sum_{k=1}^{n} c_k\bigg)1}_{\L^p(M)}
		=\left|\sum_{k=1}^{n} c_k\right|\norm{1}_{\L^p(M)}\\
		&=\left|\sum_{k=1}^{n} c_k\right|\leq \sum_{k=1}^{n} |c_k|
		\leq n^{1-\frac{1}{p}}\bigg(\sum_{k=1}^{n} |c_k|^p\bigg)^{\frac{1}{p}}.
\end{align*}
We infer that $\norm{v} \leq n^{1-\frac{1}{p}}$. 

We consider the map 
$
\tilde{T}
=
\begin{bmatrix}
     0        &  T \\
    T^\circ   &  0 \\
  \end{bmatrix} \co S^p_2(\ell^p_n) \to S^p_2(\L^p(M))
$ and the map $\alpha_4 \co \ell^p_n \to S^p_4(\ell^p_n)$ of Lemma \ref{Lemma-Some-cp-maps} with $M=\ell^\infty_n$.
Since $e^*_k=e_k$, we have
\begin{align*}
\MoveEqLeft
 \left(\begin{bmatrix} 
\begin{bmatrix} 
v & 0 \\ 
0 & v 
\end{bmatrix}
& \tilde{T}  \\ 
\tilde{T} &  
\begin{bmatrix} 
v & 0 \\ 
0 & v
\end{bmatrix} 
\end{bmatrix} 
\circ \alpha_4\right)(e_k)  
=\begin{bmatrix} 
v(e_k) & 0 & 0 & T(e_k)\\
0 & v(e_k) & T^\circ(e_k) & 0\\
0 & T(e_k) & v(e_k) & 0\\
T^\circ(e_k) & 0 & 0 & v(e_k)\\
\end{bmatrix} 
=\begin{bmatrix} 
1 & 0 & 0 & u_k\\
0 & 1 & u_k^* & 0\\
0 & u_k & 1 & 0\\
u_k^* & 0 & 0 & 1\\
\end{bmatrix}
.
\end{align*}
%
The 2x2 matrix $\tilde{u}_k
=\begin{bmatrix}
  0   & u_k  \\
  u_k^*  &  0 \\
  \end{bmatrix}$ is a (selfadjoint) unitary. Hence we have $\norm{
  \begin{bmatrix}
  0   & u_k  \\
  u_k^*  &  0 \\
  \end{bmatrix}}_{\M_2(M)} \leq 1$. By \cite[Proposition 1.3.2]{ER}, we conclude that the matrix on the right hand side of the previous equation is positive. Thus the map $\begin{bmatrix} 
\begin{bmatrix} 
v & 0 \\ 
0 & v 
\end{bmatrix}
& \tilde{T}  \\ 
\tilde{T} &  
\begin{bmatrix} 
v & 0 \\ 
0 & v
\end{bmatrix} 
\end{bmatrix} \circ \alpha_4$ is positive. Using again Proposition \ref{prop-positive-imply-cp-if-depart-commutative}, we obtain that this map is indeed completely positive. By Proposition \ref{prop-w-et-v}, we deduce that the map $\begin{bmatrix} 
\begin{bmatrix} 
v & 0 \\ 
0 & v 
\end{bmatrix}
& \tilde{T}  \\ 
\tilde{T} &  
\begin{bmatrix} 
v & 0 \\ 
0 & v
\end{bmatrix} 
\end{bmatrix}$ is completely positive. Hence $\tilde{T}$ is decomposable with $\norm{\tilde{T}}_{\dec} \leq \norm{
\begin{bmatrix} 
v & 0 \\ 
0 & v 
\end{bmatrix}}
\leq \norm{v}$ where the last inequality is easy to prove using \cite[Corollary 1.3]{Pis5}. Using Proposition \ref{prop-decomposable-et-selfadjoint3}, we conclude that $T$ is decomposable and that $\norm{T}_{\dec}=\norm{\tilde{T}}_{\dec} \leq \norm{v} \leq n^{1-\frac{1}{p}}$.

On the other hand, let $S \co \ell^p_n \to S^p_2(\L^p(M))$ be a completely positive map satisfying $-S \leq_{\cp} \tilde{T} \circ \alpha_2 \leq_{\cp} S$ where $\alpha_2 \co \ell^p_n \to S^p_2(\ell^p_n)$. If we let $x_k\overset{\textrm{def}}=S(e_k)$ and $\tilde{u}_k\overset{\textrm{def}}=\tilde{T} \circ \alpha_2(e_k)$, then $\tilde{u}_k
=\begin{bmatrix} 
0 & u_k \\ 
u_k^* & 0 
\end{bmatrix}$ is a selfadjoint unitary with $-x_k \leq \tilde{u}_k \leq x_k$. Thus we have
$$
x_k=\frac{1}{2}\big[(x_k-\tilde{u}_k)+(x_k+\tilde{u}_k)\big],
$$
with $x_k\pm \tilde{u}_k \geq 0$. 
Consider the finite trace $\tau_1 \overset{\textrm{def}}=\tr \ot\ \tau$ on $\M_2(M)$ where $\tau$ is the normalized trace on $M$. Then it follows that
\begin{align*}
\MoveEqLeft
 \tau_1(x_k)   
		=\tau_1\bigg(\frac{1}{2}\big[(x_k-\tilde{u}_k)+(x_k+\tilde{u}_k)\big]\bigg)
		=\frac{1}{2}\Big[\tau_1\big(x_k-\tilde{u}_k\big)+\tau_1\big(x_k+\tilde{u}_k\big)\Big]\\
		&=\frac{1}{2}\Big[\norm{x_k-\tilde{u}_k}_{1}+\norm{x_k+\tilde{u}_k}_{1}\Big]
		\geq \frac{1}{2}\norm{x_k-\tilde{u}_k-(x_k+\tilde{u}_k)}_{1}=\norm{\tilde{u}_k}_{S^1_2(\L^1(M))}
\end{align*}
where $\dsp
\norm{\tilde{u}_k}_{S^1_2(\L^1(M))}=\tau_1\Big(\big(\tilde{u}_k^*\tilde{u}_k\big)^{\frac{1}{2}}\Big)=\tau_1(\I_2 \ot 1)=2$. Moreover, we have $\norm{\I_2 \ot 1}_{S^{p^*}_2(\L^{p^*}(M))}=2^{\frac{1}{p^*}}$. By duality, we obtain
$$
\norm{x_1 + \cdots + x_n}_{S^p_2(\L^p(M))} 
\geq \frac{\langle x_1 + \cdots + x_n,\I_2 \ot 1 \rangle}{\norm{\I_2 \ot 1}_{S^{p^*}_2(\L^{p^*}(M))}}
=\frac{\tau_1(x_1 + \cdots + x_n)}{\norm{\I_2 \ot 1}_{S^{p^*}_2(\L^{p^*}(M))}}
=2^{1-\frac{1}{p^*}}n.
$$
We deduce that
\begin{align*}
\MoveEqLeft
    \norm{S}_{\ell^p_n \to S^p_2(\L^p(M))}
		\geq \frac{\bnorm{S(1_{})}_{S^p_2(\L^p(M))}}{\norm{1_{}}_{\ell^p_n}} 
		=n^{-\frac1p} \bnorm{S(e_1)+\cdots +S(e_n)}_{S^p_2(\L^p(M))}\\ 
		&=n^{-\frac1p}\norm{x_1+\cdots+x_n}_{S^p_2(\L^p(M))}
		\geq n^{-\frac1p}2^{1-\frac{1}{p^*}}n
		=n^{1-\frac1p}2^{1-\frac{1}{p^*}}.
\end{align*} 
Using Proposition \ref{prop-decomposable-et-selfadjoint3-prime} in the first inequality and Proposition \ref{prop-decomposable-et-selfadjoint} in the second inequality, we conclude that 
$$
\norm{T}_{\dec,\ell^p_n \to \L^p(M)}
\geq 2^{-\frac1p} \norm{\tilde{T} \circ \alpha_2}_{\dec,\ell^p_n \to S^p_2(\L^p(M))} \geq 2^{-\frac1p}n^{1-\frac1p}2^{1-\frac{1}{p^*}}
=n^{1-\frac1p}.
$$
\end{proof}

Let $n \geq 1$ be an integer and let $G=\mathbb{F}_n$ be a free group with $n$ generators denoted by $g_1,\ldots,g_n$.

\begin{thm}
\label{thm-comparaison-cb-dec-free-group}
Suppose $1 \leq p \leq \infty$. Let $n \geq 2$ be an integer. Consider the map $T_n \co \ell^p_n \to \L^p(\VN(\F_n))$ defined by $T_n(e_k)=\lambda_{g_k}$. We have $\norm{T_n}_{\cb} \leq \big( 2\sqrt{n-1}\big)^{1-\frac{1}{p}}$ 
and $\norm{T_n}_{\dec}=n^{1-\frac{1}{p}}$. In particular, if $1<p\leq \infty$ we have $\frac{\norm{T_n}_{\dec}}{\norm{T_n}_{\cb}}\xra[n \to+\infty]{} +\infty$.
\end{thm}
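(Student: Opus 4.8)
The plan is to treat the two norms separately and then combine them. For the decomposable norm the computation is immediate from Theorem~\ref{thm-computation-norm-finite-factor}: since $n \geq 2$, the free group factor $\VN(\F_n)$ is a $\mathrm{II}_1$ factor, in particular a von Neumann algebra equipped with the canonical normal finite faithful normalized trace, and $\lambda_{g_1},\dots,\lambda_{g_n}$ are unitaries of $\VN(\F_n)$, so $\norm{T_n}_{\dec,\ell^p_n \to \L^p(\VN(\F_n))} = n^{1-\frac1p}$ directly from that theorem (as usual it suffices to treat $1 \leq p < \infty$).

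For the completely bounded norm I would argue by interpolation between the endpoints $p=\infty$ and $p=1$. At $p=1$: the operator space adjoint of $T_n \co \ell^1_n \to \L^1(\VN(\F_n))$ is, up to the harmless opposite-algebra identification of Lemma~\ref{Lemma-op-mapping-1}, the map $\VN(\F_n) \to \ell^\infty_n$, $x \mapsto \big(\tau(x\lambda_{g_k})\big)_{1\le k\le n}$. Each coordinate $x \mapsto \tau(x\lambda_{g_k})$ is the composition of the complete isometry $x \mapsto x\lambda_{g_k}$ with the normalized trace (a unital completely positive functional), hence is completely contractive; since $\M_m(\ell^\infty_n)=\ell^\infty_n(\M_m)$ carries the supremum norm, this forces $\norm{T_n^*}_{\cb,\VN(\F_n)\to\ell^\infty_n}\le 1$, and therefore $\norm{T_n}_{\cb,\ell^1_n \to \L^1(\VN(\F_n))}\le 1$. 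At $p=\infty$: unwinding the definition of the cb-norm of a map out of $\ell^\infty_n$ gives
$$
\norm{T_n}_{\cb,\ell^\infty_n \to \VN(\F_n)}=\sup\Big\{\Bnorm{\textstyle\sum_{k=1}^n \lambda_{g_k}\ot b_k}_{\VN(\F_n)\otvn \B(H)} \,:\, \norm{b_k}_{\B(H)}\le 1\Big\}.
$$
To bound the right hand side, given contractions $b_1,\dots,b_n$ I would choose unitary dilations $\tilde b_k$ on a common Hilbert space $\tilde H \supseteq H$ with $b_k = P_H \tilde b_k|_H$; since $\F_n$ is free, the assignment $g_k \mapsto \tilde b_k$ extends to a unitary representation $\pi$ of $\F_n$, and $\sum_k \lambda_{g_k}\ot b_k$ is then the compression to $\ell^2(\F_n)\ot H$ of $\sum_k \lambda_{g_k}\ot \pi(g_k)$. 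By Fell's absorption principle $\lambda \ot \pi \cong \lambda \ot 1_{\tilde H}$, so $\norm{\sum_k \lambda_{g_k}\ot\pi(g_k)}=\norm{\sum_k \lambda_{g_k}}_{\VN(\F_n)}=2\sqrt{n-1}$ by the classical Akemann--Ostrand/Kesten computation in $\VN(\F_n)$; a compression does not increase the norm, whence $\norm{T_n}_{\cb,\ell^\infty_n \to \VN(\F_n)}\le 2\sqrt{n-1}$ (in fact equality, by taking $b_k=1$, though only "$\le$" is needed). Interpolating the couples $(\ell^\infty_n,\ell^1_n)$ and $(\VN(\F_n),\L^1(\VN(\F_n)))$ at $\theta=\frac1p$, exactly as in the proof of Lemma~\ref{lem-sufficient-condition-reg}, gives
$$
\norm{T_n}_{\cb,\ell^p_n \to \L^p(\VN(\F_n))}\le (2\sqrt{n-1})^{1-\frac1p}\cdot 1^{\frac1p}=(2\sqrt{n-1})^{1-\frac1p}.
$$

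Combining the two estimates yields $\dfrac{\norm{T_n}_{\dec}}{\norm{T_n}_{\cb}}\ge \Big(\dfrac{n}{2\sqrt{n-1}}\Big)^{1-\frac1p}$, and since $\dfrac{n}{2\sqrt{n-1}}\to+\infty$ while $1-\frac1p>0$ precisely when $p>1$, this ratio tends to $+\infty$ as $n\to+\infty$ for every $1<p\le\infty$. I expect the main technical point to be the $p=\infty$ estimate, i.e. proving $\norm{\sum_k \lambda_{g_k}\ot b_k}\le 2\sqrt{n-1}$ for \emph{arbitrary} operator coefficients $b_k$ of norm $\le 1$: the dilation-plus-Fell-absorption argument above is the cleanest route, but one has to be careful that the $\tilde b_k$ can be realized on one common Hilbert space and that the assignment $g_k\mapsto\tilde b_k$ genuinely extends to a representation (which is exactly where freeness of $\F_n$ is used). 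Everything else — the $p=1$ endpoint, the interpolation step, and the decomposable norm — is routine given Theorem~\ref{thm-computation-norm-finite-factor} and standard operator space duality and interpolation.
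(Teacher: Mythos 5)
Your proof is correct and follows essentially the same route as the paper: the decomposable norm comes straight from Theorem~\ref{thm-computation-norm-finite-factor}, and the completely bounded norm is obtained by interpolating the two endpoint estimates $\norm{T_n}_{\cb,\ell^1_n \to \L^1(\VN(\F_n))}\le 1$ and $\norm{T_n}_{\cb,\ell^\infty_n \to \VN(\F_n)}\le 2\sqrt{n-1}$ at $\theta=\frac1p$. The only (harmless) differences are presentational: the paper gets the $p=1$ endpoint by a direct triangle-inequality computation in $S^1_n(\L^1(\VN(\F_n)))$ where you use operator-space duality, and it simply cites \cite[Theorem 5.4.7]{ER} for the $p=\infty$ endpoint where you re-derive it via unitary dilation and Fell's absorption principle.
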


\begin{proof}
The equality is a consequence of Theorem \ref{thm-computation-norm-finite-factor}. For any $1 \leq k \leq n$, using the normalized trace $\tau_{\F_n}$, note that
$$
\bnorm{\lambda_{g_k}}_{\L^1(\VN(\F_n))} 
=\tau_{\F_n}(|\lambda_{g_k}|)
=\tau_{\F_n}\Big(\big(\lambda_{g_k}^*\lambda_{g_k}\big)^{\frac{1}{2}}\Big)
=\tau_{\F_n}(1)
=1.
$$
For any $A_1,\ldots,A_l \in S^1_n$, using the isometry $S^1_n(\ell^1_n)=\ell^1_n(S^1_n)$ in the last equality, we deduce that
\begin{align*}
\MoveEqLeft
  \Bgnorm{\big(\Id_{S^1_n} \ot T_n \big)\bigg(\sum_{k=1}^{l} A_k \ot e_k\bigg)}_{S^1_n(\L^1(\VN(\F_n)))}  
		=\Bgnorm{\sum_{k=1}^{l} A_k\ot \lambda_{g_k}}_{S^1_n(\L^1(\VN(\F_n)))}\\
		&\leq \sum_{k=1}^{l} \norm{A_k}_{S^1_n} \bnorm{\lambda_{g_k}}_{\L^1(\VN(\F_n))}
		= \sum_{k=1}^{l} \norm{A_k}_{S^1_n}
		=\Bgnorm{\sum_{k=1}^{l} A_k \ot e_k}_{S^1_n(\ell^1_n)}.
\end{align*}
We deduce that $\norm{T_n}_{\cb, \ell^1_n \to \L^1(\VN(\F_n))} \leq 1$. Note that \cite[Theorem 5.4.7]{ER} gives the estimate $\norm{T_n}_{\cb, \ell^\infty_n \to \VN(\F_n)} \leq 2\sqrt{n-1}$. Hence, by interpolation, we deduce that
\begin{align*}
\MoveEqLeft
   \norm{T_n}_{\cb,\ell^p_n \to \L^p(\VN(\F_n))} 
		\leq \big(\norm{T_n}_{\cb, \ell^1_n \to \L^1(\VN(\F_n))}\big)^{\frac{1}{p}}\big( \norm{T_n}_{\cb, \ell^\infty_n \to \VN(\F_n)}\big)^{1-\frac{1}{p}}
		\leq \big( 2\sqrt{n-1}\big)^{1-\frac{1}{p}}.
\end{align*}
\end{proof}

In Chapter \ref{sec:Existence-of-strongly}, we will continue these investigations.

\section{Decomposable Schur multipliers and Fourier multipliers on discrete groups}
\label{sec:Decomposable-Schur-multipliers-and-Fourier-multipliers}

In this section, we give a generalization of the average argument of Haagerup. This construction simultaneously gives a complementation for spaces of completely bounded Schur multipliers and completely bounded Fourier multipliers on \textit{discrete} groups, possibly deformed by a 2-cocycle and the independence of the completely bounded norm and the complete positivity with respect to the 2-cocycle. In Section \ref{subsec-description-decomposable-norm} below, we give our first results on decomposable Fourier multipliers (and Schur multipliers).

\subsection{Twisted von Neumann algebras}
\label{section-twisted-von-Neumann}
 
A basic reference on this subject is \cite{ZM}. See also \cite{BC1} and references therein. Let $G$ be a discrete group. We first recall that a 2-cocycle on $G$ with values in $\T$ is a map $\sigma \co G \times G \to \T$ such that 
\begin{equation}
\label{equation-2-cocycle}
\sigma(s,t)\sigma(st,r)
=\sigma(t,r)\sigma(s,tr)	
\end{equation}
for any $s, t, r \in G$. We will consider only normalized 2-cocycles, that is, satisfying $
\sigma(s,e)
=\sigma(e,s)
=1$ for any $s \in G$. This implies that $\sigma(s,s^{-1}) = \sigma(s^{-1},s)$ for any $s \in G$. The set $\mathrm{Z}^2(G,\T)$ of all normalized 2-cocycles becomes an abelian group under pointwise product, the inverse operation corresponding to conjugation: $\sigma ^{-1} = \ovl{\sigma},$ where $\ovl{\sigma}(s,t) = \ovl{\sigma(s,t)}$, and the identity element being the trivial cocycle on $G$ denoted by $1$.

Now, suppose that $G$ is equipped with a $\T$-valued $2$-cocycle. For any $s \in G$, we define the bounded operator $\lambda_{\sigma,s} \in \B(\ell^2_G)$ defined by 
\begin{equation}
\label{def-lambdas}
\lambda_{\sigma,s} \epsi_t 
\ov{\mathrm{def}}{=} \sigma(s,t) \epsi_{s t}	
\end{equation}
where $(\epsi_t)_{t \in G}$ is the canonical basis of $\ell^2_G$. We define the twisted group von Neumann algebra $\VN(G,\sigma)$ as the von Neumann subalgebra of $\B(\ell^2_G)$ generated by the $*$-algebra
$$
\C(G,\sigma) 
\ov{\mathrm{def}}{=} \mathrm{span}\big\{\lambda_{\sigma,s} \ : \ s \in G\big\}.
$$
For example, let $d \in \{2,3,\ldots,\infty\}$ and set $G=\Z^d$. To each $d \times d$ real skew symmetric matrix $\theta$, one may associate $\sigma_\theta \in \mathrm{Z}^2(\Z^d,\T)$ by $\sigma_\theta (m,n)=\e^{2\mathrm{i}\pi \langle m, \theta n\rangle}$ where $m,n \in \Z^d$. The resulting algebras $\T^d_{\theta}=\VN(\Z^d,\sigma_\theta)$ are the so-called $d$-dimensional noncommutative tori. See \cite{CXY} for a study of harmonic analysis on this algebra.

If $\sigma=1$, we obtain the left regular representation $\lambda \co G \to \B(\ell^2_G)$ and the group von Neumann algebra $\VN(G)$ of $G$. 

The von Neumann algebra $\VN(G,\sigma)$ is a finite algebra with trace given by $\tau_{G,\sigma}(x)=\big\langle\epsi_{e},x(\epsi_{e})\big\rangle_{\ell^2_G}$ where $x \in \VN(G,\sigma)$. In particular $\tau_{G,\sigma}(\lambda_{\sigma,s}) = \delta_{s,e}$. The generators $\lambda_{\sigma,s}$ satisfy the relations
\begin{equation}
\label{product-adjoint-twisted}
\lambda_{\sigma,s} \lambda_{\sigma,t} 
= \sigma(s,t) \lambda_{\sigma,st}, 
\qquad 
\big(\lambda_{\sigma,s}\big)^* 
= \ovl{\sigma(s,s^{-1})} \lambda_{\sigma,s^{-1}}.	
\end{equation}
Moreover, we have
$$
\tau_{G,\sigma}\big(\lambda_{\sigma,s}\lambda_{\sigma,t}\big) 
= \sigma(s,t) \delta_{s,t^{-1}},\quad s,t \in G.
$$

Given a discrete group $G$ and a $\T$-valued 2-cocycle $\sigma$, we can consider the fundamental unitary $W \co \epsi_t \ot \epsi_r  \mapsto \epsi_t \ot \epsi_{tr}$ on $\ell^2_G \ot_2 \ell^2_G$ and another unitary operator $\tilde\sigma \co \epsi_t \ot \epsi_r \to \sigma(t,r)\epsi_t \ot \epsi_r$ representing $\sigma$. We define the $\sigma$-fundamental unitary as the unitary operator
\begin{equation}
\label{eq:reg-omega-rep-unitary}
 W^{(\sigma)} 
= W \tilde\sigma \co \varepsilon_t \ot \varepsilon_r 
\mapsto \sigma(t,r) \varepsilon_t \ot \varepsilon_{t r}.
\end{equation}

\begin{lemma}
\label{Lemma-Twisted-coproduct}
Suppose that $\sigma$ and $\omega$ are $\T$-valued $2$-cocycles on a discrete group $G$. Then, for any $s \in G$ we have
$$
W^{(\omega)} \big(\lambda_{\sigma\cdot\omega,s} \ot \Id_{\ell^2_G}\big) \big(W^{(\omega)}\big)^* 
= \lambda_{\sigma,s} \ot \lambda_{\omega,s}.
$$
\end{lemma}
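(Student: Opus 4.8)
The plan is to verify the operator identity directly on the canonical orthonormal basis $(\epsi_t \ot \epsi_r)_{t,r \in G}$ of $\ell^2_G \ot_2 \ell^2_G$, since both sides are bounded operators and hence determined by their action on this basis. First I would record the action of the adjoint: since $W^{(\omega)}$ is the unitary $\epsi_t \ot \epsi_r \mapsto \omega(t,r)\,\epsi_t \ot \epsi_{tr}$ of \eqref{eq:reg-omega-rep-unitary}, reindexing via $v = tr$ yields $\big(W^{(\omega)}\big)^*(\epsi_t \ot \epsi_v) = \ovl{\omega(t,t^{-1}v)}\,\epsi_t \ot \epsi_{t^{-1}v}$.

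Then I would apply the three operators in turn to a basis vector $\epsi_t \ot \epsi_u$. Applying $\big(W^{(\omega)}\big)^*$ gives $\ovl{\omega(t,t^{-1}u)}\,\epsi_t \ot \epsi_{t^{-1}u}$; applying $\lambda_{\sigma\cdot\omega,s} \ot \Id_{\ell^2_G}$ and using \eqref{def-lambdas} together with $(\sigma\cdot\omega)(s,t)=\sigma(s,t)\omega(s,t)$ gives $\ovl{\omega(t,t^{-1}u)}\,\sigma(s,t)\omega(s,t)\,\epsi_{st} \ot \epsi_{t^{-1}u}$; finally applying $W^{(\omega)}$, and noting that $(st)(t^{-1}u)=su$, gives
\[
\ovl{\omega(t,t^{-1}u)}\,\sigma(s,t)\,\omega(s,t)\,\omega(st,t^{-1}u)\,\epsi_{st} \ot \epsi_{su}.
\]
On the other hand, by \eqref{def-lambdas} one has $(\lambda_{\sigma,s} \ot \lambda_{\omega,s})(\epsi_t \ot \epsi_u) = \sigma(s,t)\,\omega(s,u)\,\epsi_{st} \ot \epsi_{su}$. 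Comparing coefficients, the claim reduces to the scalar identity
\[
\omega(s,t)\,\omega(st,t^{-1}u) = \omega(t,t^{-1}u)\,\omega(s,u),
\]
which is exactly the 2-cocycle relation \eqref{equation-2-cocycle} for $\omega$ under the substitution $r = t^{-1}u$ (so that $tr = u$ and $(st)r = su$).

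The computation is entirely routine, consisting only of phase bookkeeping, so there is no substantive obstacle; in particular the cocycle $\sigma$ is merely carried along unchanged. Note that the same argument, specialized to $\sigma = 1$, recovers the usual covariance relation $W^{(\omega)}(\lambda_{\omega,s} \ot \Id_{\ell^2_G})\big(W^{(\omega)}\big)^* = \lambda_{1,s} \ot \lambda_{\omega,s}$ for the $\omega$-regular representation, and the general statement is its twisted analogue.
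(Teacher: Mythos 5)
Your proof is correct and follows essentially the same route as the paper's: a direct computation on the basis vectors $\epsi_t \ot \epsi_r$ that reduces the claim to the $2$-cocycle identity \eqref{equation-2-cocycle} for $\omega$. The only cosmetic difference is that you compute $\big(W^{(\omega)}\big)^*$ explicitly, whereas the paper avoids this by checking the equivalent intertwining relation $W^{(\omega)}\big(\lambda_{\sigma\cdot\omega,s}\ot\Id\big) = \big(\lambda_{\sigma,s}\ot\lambda_{\omega,s}\big)W^{(\omega)}$ on basis vectors.
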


\begin{proof}
On the one hand, for any $s,t,r \in G$, using \eqref{def-lambdas} in the second equality and \eqref{eq:reg-omega-rep-unitary} in the third equality, we have
\begin{align*}
\MoveEqLeft
  W^{(\omega)} \big(\lambda_{\sigma\cdot\omega,s} \ot \Id_{\ell^2_G}\big)(\epsi_t \ot \epsi_r)   
		= W^{(\omega)} \big(\lambda_{\sigma\cdot\omega ,s}\epsi_t \ot \epsi_r\big)\\
		&=(\sigma\cdot\omega)(s,t)W^{(\omega)} \big(\epsi_{st} \ot \epsi_r\big)
		=\sigma(s,t) \omega(s,t)\omega(st,r)\epsi_{st} \ot \epsi_{str}.
\end{align*}
On the other hand, using \eqref{eq:reg-omega-rep-unitary} in the first equality and \eqref{def-lambdas} in the third equality, we have
\begin{align*}
\MoveEqLeft
   (\lambda_{\sigma,s} \ot \lambda_{\omega,s}) W^{(\omega)}(\epsi_t \ot \epsi_r)
		=(\lambda_{\sigma,s} \ot \lambda_{\omega,s})(\omega(t,r)\epsi_t \ot \epsi_{tr})\\
		&=\omega(t,r)(\lambda_{\sigma,s}\epsi_t \ot \lambda_{\omega,s} \epsi_{t r})
		=\sigma(s,t)\omega(t,r)\omega(s,tr)\epsi_{st} \ot \epsi_{st r}.
\end{align*}
Using \eqref{equation-2-cocycle} with $\omega$ instead of $\sigma$, we conclude that these quantities are equal.
\end{proof}

Using this lemma, we obtain a well-defined kind of ``twisted coproduct'' which is a unital normal $*$-monomorphism:
\begin{equation}
\label{Twisted-coproduct}
\begin{array}{cccc}
  \Delta_{\sigma,\omega}    \co &  \VN(G,\sigma \cdot \omega)   &  \longrightarrow   &  \VN(G,\sigma) \otvn \VN(G,\omega)  \\
           &  \lambda_{\sigma \cdot \omega,s}    &  \longmapsto & \lambda_{\sigma,s} \ot \lambda_{\omega,s}   \\
\end{array}
\end{equation}
A very particular case of this construction is considered in \cite[Corollary 2.2]{CXY} for noncommutative tori with $\sigma=1$, under the notation $x \mapsto \tilde{x}$.

Suppose $1 \leq p \leq \infty$. Then a linear map $T \co \L^p(\VN(G,\sigma)) \to \L^p(\VN(G,\sigma))$ is a (completely) bounded Fourier multiplier on $\L^p(\VN(G,\sigma))$ if $T$ is (completely) bounded (and normal if $p=\infty$) and if there exists a complex function $\varphi \co G \to \C$ such that $T\big(\lambda_{\sigma,s}\big)=\varphi_s\lambda_{\sigma,s}$ for any $s \in G$. In this case, we denote $T$ by
\begin{equation*}
\begin{array}{cccc}
   M_\varphi \co   & \L^p(\VN(G,\sigma))      &  \longrightarrow   & \L^p(\VN(G,\sigma))  \\
          & \lambda_{\sigma,s}  &  \longmapsto & \varphi_s\lambda_{\sigma,s}.  \\
\end{array}
\end{equation*}
We denote by $\mathfrak{M}^{p}(G,\sigma)$ the space of bounded Fourier multipliers on $\L^p(\VN(G,\sigma))$ and by $\mathfrak{M}^{p,\cb}(G,\sigma)$ the space of completely bounded Fourier multipliers on $\L^p(\VN(G,\sigma))$.

More generally, if $I$ is a set, we denote by $\mathfrak{M}_I^{p,\cb}(G,\sigma)$ the space of (normal if $p=\infty$) completely bounded operators $\Phi \co \L^p(\B(\ell^2_I) \otvn \VN(G,\sigma)) \to \L^p(\B(\ell^2_I) \otvn \VN(G,\sigma))$ such that $\Phi=[M_{\varphi_{ij}}]_{i,j \in I}$ for some functions $\varphi_{ij} \co G \to \C$. For a (normal if $p=\infty$) bounded operator $\Phi$, this is equivalent to  the existence of a family of functions $(\varphi_{ij} \co G \to \C)_{i,j \in I}$ such that 
\begin{equation}
\label{equ-cocycle-Fourier-multiplier}
(\tr \ot \tau_{G,\sigma})\big(T(e_{ij} \ot \lambda_{\sigma,s})(e_{kl} \ot \lambda_{\sigma,t})^*\big)=\varphi_{ij}(s)\delta_{s,t} \delta_{i,k}\delta_{j,l}
\end{equation}
for any $s,t \in G$ and any $i,j,k,l \in I$.

If $\sigma$ is a $\T$-valued 2-cocycle on a discrete group $G$ and if $H$ is a subgroup of $G$, we denote by $\sigma|H \co H \times H \to \T$ the restriction of $\sigma$ to $H \times H$. It follows from \cite[Section 4.26]{ZM} 
that there is a canonical normal unital $*$-monomorphism $J$ of $\VN(H,\sigma|H)$ into $\VN(G,\sigma)$ sending $\lambda_{\sigma|H,s}$ to $\lambda_{\sigma,s}$ for each $s \in H$ which is trace preserving. Its $\L^p$-extension $J_p \co \L^p(\VN(H,\sigma|H)) \to \L^p(\VN(G,\sigma)),\: \lambda_{\sigma|H,s} \mapsto \lambda_{\sigma,s}$ is a complete contraction for $1 \leq p \leq \infty$. 

Moreover, it is easy to see for $1 \leq p \leq \infty$ that the adjoint of $J_{p^*}$ (preadjoint if $p=1$) is given by $(J_{p^*})^* \co \L^p(\VN(G,\sigma)) \to \L^p(\VN(H,\sigma|H)),\: \lambda_{\sigma,s} \mapsto \delta_{s \in H} \lambda_{\sigma|H,s}$, which is again a complete contraction. Thus, for an element 
$$
T 
=[M_{\varphi_{ij}}]_{i,j \in I} \co S^p_I(\L^p(\VN(H,\sigma|H))) \to S^p_I(\L^p(\VN(H,\sigma|H)))
$$ 
of $\mathfrak{M}_I^{p,\cb}(H,\sigma|H)$, we can consider the completely bounded map 
$$
S 
\ov{\mathrm{def}}{=} (\Id_{S^p_I} \ot J_p) T (\Id_{S^p_I} \ot (J_{p^*})^*) \co S^p_I(\L^p(\VN(G,\sigma))) \to S^p_I(\L^p(\VN(G,\sigma))).
$$ 
We clearly have $\norm{S}_{\cb} \leq \norm{T}_{\cb}$ and using $(J_{p^*})^* J_p = \Id_{\L^p(\VN(H,\sigma|H))}$, we also have $\norm{T}_{\cb} \leq \norm{S}_{\cb}$. Thus we can identify isometrically $\mathfrak{M}_I^{p,\cb}(H,\sigma|H)$ as a subspace of the Banach space $\CB(\L^p(\B(\ell^2_I) \otvn \VN(G,\sigma)))$ by identifying $[M_{\varphi_{ij}}]_{i,j \in I}$ to $[M_{\tilde{\varphi}_{ij}}]_{i,j \in I}$ where $\widetilde{\varphi} \co G \to \C$ denotes the extension of $\varphi \co H \to \C$ on $G$ which is zero off $H$.
Moreover, we have a canonical contraction $\mathfrak{M}_I^{p,\cb}(G,\sigma) \to \mathfrak{M}_I^{p,\cb}(H,\sigma|H)$, sending $[M_{\varphi_{ij}}]_{ij}$ to $[M_{\varphi_{ij}|H}]_{ij}$.
Indeed, note that $[M_{\varphi_{ij}|H}]_{ij} = \Id_{S^p_I} \ot (J_{p^*})^* \cdot [M_{\varphi_{ij}}] \cdot \Id_{S^p_I} \ot J_p$.

\subsection{Complementation for Schur multipliers and Fourier multipliers on discrete groups}
\label{Complementation-for-discrete-Schur-multipliers-and-Fourier-multipliers}

The following theorem generalizes an average trick of Haagerup \cite[proof of Lemma 2.5]{Haa3}\footnote{\thefootnote. We warn the reader that the assumption ``normal'' is lacking in \cite[Lemma 2.5]{Haa3} for maps defined on $\mathfrak{M}(\Gamma)$.}. The important point of the proof (for $1 \leq p \leq \infty$) is the fact that the map $\Delta$ below is trace preserving. 

\begin{thm}
\label{Prop-complementation-Schur-Fourier}
Let $I$ be an index set equipped with the counting measure. Let $G$ be a discrete group equipped with two normalized $\T$-valued 2-cocycles $\sigma,\omega$. Suppose $1 \leq p \leq \infty$. If $p\not=\infty$, we suppose that $\VN(G,\omega)$ has $\QWEP$. Let $T \co S^p_I(\L^p(\VN(G,\sigma))) \to S^p_I(\L^p(\VN(G,\sigma)))$ be a completely bounded operator. For any $i,j \in I$, we define the complex function $\varphi_{ij} \co G \to \C$ by
$$
\varphi_{ij}(s)
\ov{\mathrm{def}}{=} (\tr \ot \tau_{G,\sigma})\big(T(e_{ij} \ot \lambda_{\sigma,s})(e_{ij} \ot \lambda_{\sigma,s})^*\big), \qquad s \in G.
$$
Then the map
\begin{equation*}
\begin{array}{cccc}
P_{I,G}^p \co & \CB(S^p_I(\L^p(\VN(G,\sigma)))) &  \longrightarrow & \CB(S^p_I(\L^p(\VN(G,\sigma \cdot \omega)))) \\
  &   T  &  \longmapsto  &  [M_{\varphi_{ij}}]  \\
\end{array}
\end{equation*}
is a well-defined contractive map into $\mathfrak{M}_I^{p,\cb}(G,\sigma \cdot\omega)$.
There are the following additional properties of $P_{I,G}^p$.
\begin{enumerate}
\item If $\omega=1$, the map $P_{I,G}^p$ is a projection onto $\mathfrak{M}_I^{p,\cb}(G,\sigma)$. 

\item For $p=\infty$, the same assertions are true by replacing $\CB(S^p_I(\L^p(\VN(G,\sigma))))$ by the space $\CB_{\w^*}(\B(\ell^2_I) \otvn \VN(G,\sigma))$.

\item If $T$ is completely positive then the map $P_{I,G}^p(T)$ is completely positive. 

\item For any values $p,q \in [1,\infty]$ and any $T \in \CB(S^p_I(\L^p(\VN(G,\sigma)))) \cap \CB(S^q_I(\L^q(\VN(G,\sigma))))$ we have $(P_{I,G}^p(T))([x_{ij}]) = (P_{I,G}^q(T))([x_{ij}])$ for any element $[x_{ij}]$ of $S^p_I(\L^p(\VN(G,\sigma\cdot\omega))) \cap S^q_I(\L^q(\VN(G,\sigma\cdot\omega)))$. So the mappings $P_{I,G}^p$, $1 \leq p \leq \infty$, are compatible.

\item Furthermore, if $p=\infty$ and if $T$ is selfadjoint then $P_{I,G}^\infty(T)$ is selfadjoint. If $T=[T_{ij}]$ is a normal operator where $T_{ij} \co \VN(G,\sigma) \to \VN(G,\sigma)$ and if each $T_{ii}$ is unital then $P_{I,G}^\infty(T)$ is unital.

\item We have an isometry 
\[
\mathfrak{M}_I^{p,\cb}(G,\sigma) 
= \mathfrak{M}_I^{p,\cb}(G,\sigma \cdot \omega) .
\]

\end{enumerate}
\end{thm}

\begin{proof}
Using the map \eqref{Twisted-coproduct}, it is easy to see that we can define a well-defined unital normal $*$-isomorphism 
$$
\Delta \co \M_I(\VN(G,\sigma \cdot \omega)) \to \M_I(\VN(G,\sigma)) \otvn \M_I(\VN(G,\omega))
$$ 
onto the sub von Neumann algebra $\Delta\big(\M_I(\VN(G,\sigma \cdot \omega))\big)$ of $\M_I(\VN(G,\sigma)) \otvn \M_I(\VN(G,\omega))$ such that
$$
\Delta(e_{ij} \ot \lambda_{\sigma \cdot \omega,s})
=e_{ij} \ot \lambda_{\sigma,s} \ot e_{ij} \ot \lambda_{\omega,s},\qquad  s \in G.
$$ 
Using the flip $\M_I \otvn \VN(G,\sigma) \otvn \M_I \otvn \VN(G,\omega) \to \M_I \otvn \M_I \otvn \VN(G,\sigma) \otvn \VN(G,\omega), $ $x \ot y \ot z \ot t\mapsto x \ot z \ot y \ot t$, it is not difficult to check with \cite[Theorem 6.2]{Str} that the operator $\Delta$ preserves the traces. Consequently $\Delta$ is a Markov map in the sense of Section \ref{sec:Markov-maps} and admits a canonical extension $\Delta_p \co S^p_I(\L^p(\VN(G,\sigma\cdot \omega))) \to \L^p(\B(\ell^2_I) \otvn \VN(G,\sigma) \otvn \B(\ell^2_I) \otvn \VN(G,\omega))$ which is completely contractive and completely positive (and normal if $p=\infty$).

Suppose that $T \co S^p_I(\L^p(\VN(G,\sigma))) \to S^p_I(\L^p(\VN(G,\sigma)))$ is a completely bounded operator. If $\VN(G,\omega)$ is QWEP then by \eqref{multiplicativity-T} the operator 
$$
P_{I,G}^p(T)
=(\Delta^*)_{p}\big(T \ot \Id_{S^p_I(\L^p(\VN(G,\omega)))}\big)\Delta_p
$$ 
is a completely bounded map on the space $S^p_I(\L^p(\VN(G,\sigma \cdot \omega)))$. Moreover, we have
\begin{align*}
\MoveEqLeft
\bnorm{P_{I,G}^p(T)}_{\cb, S^p_I(\L^p(\VN(G,\sigma \cdot \omega)))\to S^p_I(\L^p(\VN(G,\sigma\cdot \omega)))} 
\leq \bnorm{(\Delta^*)_{p} \big(T \ot \Id_{S^p_I(\L^p(\VN(G,\omega)))}\big)\Delta_p}_{\cb}\\
& \leq \norm{T}_{\cb,S^p_I(\L^p(\VN(G,\sigma))) \to S^p_I(\L^p(\VN(G,\sigma)))}.
\end{align*}
Thus $P_{I,G}^p$ is contractive. For any $i,j,k,l \in I$ and any $s,s' \in G$, we have
\begin{align*}
\MoveEqLeft
(\tr \ot \tau_{G,\sigma \cdot \omega})\Big(\big((\Delta^*)_{p}\big(T \ot \Id_{S^p_I(\L^p(\VN(G,\omega)))} \big)\Delta_p(e_{ij}\ot \lambda_{\sigma \cdot \omega,s})\big) (e_{kl} \ot \lambda_{\sigma \cdot \omega,s'})^*\Big)  \\
		&=(\tr \ot \tau_{G,\sigma \cdot \omega})\big((\Delta^*)_{p}\big(T \ot \Id_{S^p_I(\L^p(\VN(G,\omega)))}\big)(e_{ij} \ot \lambda_{\sigma,s} \ot e_{ij} \ot \lambda_{\omega,s})(e_{kl}^* \ot \lambda_{\sigma \cdot \omega,s'}^*)\big) \\
		&=(\tr \ot \tau_{G,\sigma \cdot \omega})\Big(\big((\Delta^*)_{p}\big(T(e_{ij} \ot \lambda_{\sigma,s}) \ot e_{ij} \ot \lambda_{\omega,s} \big)\big) \big(e_{lk} \ot \ovl{(\sigma\cdot \omega)(s',s'^{-1})}\lambda_{\sigma \cdot \omega,s'^{-1}} \Big)\\
		&=\ovl{(\sigma \cdot \omega)(s',s'^{-1})}(\tr \ot \tau_{G,\sigma} \ot \tr \ot \tau_{G,\omega})\Big(\big(T(e_{ij} \ot \lambda_{\sigma,s}) \ot e_{ij} \ot \lambda_{\omega,s}\big) \Delta_{p^*}\big(e_{lk} \ot \lambda_{\sigma\cdot \omega,s'^{-1}}\big)\Big) \\
		&=\ovl{(\sigma \cdot \omega)(s',s'^{-1})}(\tr \ot \tau_{G,\sigma} \ot \tr \ot \tau_{G,\omega})\big((T(e_{ij} \ot \lambda_{\sigma,s}) \ot e_{ij} \ot \lambda_{\omega,s})\\
		&\qquad \qquad \qquad \qquad\qquad \qquad \qquad \qquad\qquad \qquad \qquad \qquad (e_{lk} \ot \lambda_{\sigma,s'^{-1}} \ot e_{lk} \ot \lambda_{\omega,s'^{-1}})\big) \\
		&=\ovl{(\sigma \cdot \omega)(s',s'^{-1})}(\tr \ot \tau_{G,\sigma})\big(T(e_{ij} \ot \lambda_{\sigma,s}) (e_{lk} \ot \lambda_{\sigma,s'^{-1}})\big)(\tr \ot \tau_{G,\omega})\big(e_{ij}e_{lk}  \ot \lambda_{\omega,s}\lambda_{\omega,s'^{-1}} \big) \\
		&=\ovl{(\sigma\cdot \omega)(s',s'^{-1})}\omega(s',s'^{-1})(\tr \ot \tau_{G,\sigma})\big(T(e_{ij} \ot \lambda_{\sigma,s}) (e_{lk} \ot \lambda_{\sigma,s'^{-1}})\big) \delta_{i,k} \delta_{j,l}\delta_{s,s'}\\
		&=(\tr \ot \tau_{G,\sigma})\big(T(e_{ij} \ot \lambda_{\sigma,s}) (e_{lk} \ot \ovl{\sigma(s',s'^{-1})}\lambda_{\sigma,s'^{-1}})\big)\delta_{i,k} \delta_{j,l} \delta_{s,s'}\\
		&=(\tr \ot \tau_{G,\sigma})\big(T(e_{ij} \ot \lambda_{\sigma,s}) (e_{kl} \ot \lambda_{\sigma,s'})^*\big) \delta_{i,k} \delta_{j,l} \delta_{s,s'}.	
\end{align*}
Hence according to \eqref{equ-cocycle-Fourier-multiplier}, $P_{I,G}^p(T)$ is the operator $[M_{\varphi_{ij}}]$. 

1. If we choose $\omega = 1$, according to the discussion at the end of Section \ref{section-twisted-von-Neumann}, $P^p_{I,G}(T)$ belongs to $\mathfrak{M}^{p,\cb}_I(G,\sigma) \subset \CB(S^p_I(\L^p(\VN(G,\sigma))))$. If $T = [M_{\psi_{ij}}]$ right from the beginning, for some symbols $\psi_{ij} \co G \to \C$, then for $s \in G$
\begin{align*}
\MoveEqLeft
\varphi_{ij}(s) = (\tr \ot \tau_{G,\sigma})\big(T(e_{ij} \ot \lambda_{\sigma,s})(e_{ij} \ot \lambda_{\sigma,s})^*\big) \\
& = \psi_{ij}(s) (\tr \ot \tau_{G,\sigma})\big((e_{ij} \ot \lambda_{\sigma,s})(e_{ij} \ot \lambda_{\sigma,s})^* \big) \\
& = \psi_{ij}(s) \tau_{G,\sigma}(\lambda_{\sigma,s} \lambda_{\sigma,s}^*) = \psi_{ij}(s) \overline{\sigma(s,s^{-1})} \tau_{G,\sigma}(\lambda_{\sigma,s} \lambda_{\sigma,s^{-1}}) \\
& = \psi_{ij}(s) \overline{\sigma(s,s^{-1})} \sigma(s,s^{-1}) \delta_{s,s} = \psi_{ij}(s).
\end{align*}
Thus, in this case $P_{I,G}^p(T) = T$, so that $P_{I,G}^p$ is indeed a projection onto $\mathfrak{M}^{p,\cb}_I(G,\sigma)$.

2. We turn to the case $p = \infty$.
Since multipliers on the level $p = \infty$ are by definition normal mappings, we need to define $P^\infty_{I,G}(T) = (\Delta^*)_\infty (P_{\w^*}(T) \ot \Id_{\B(\ell^2_I) \otvn \VN(G,\omega)}) \Delta_\infty$, where $P_{\w^*} \co \CB(M) \to \CB(M)$ with $M = \B(\ell^2_I) \otvn \VN(G,\sigma)$ is the projection onto normal maps from Proposition \ref{Prop-recover-weak-star-continuity}.
Note that $P_{\w^*}$ is contractive and preserves complete positivity according to this proposition.
Moreover, then $P_{\w^*}(T) \ot \Id_{\B(\ell^2_I) \otvn \VN(G,\omega)}$ is also normal,  and since $(\Delta^*)_\infty$ and $\Delta_\infty$ are normal, and normality is preserved under composition, we finally infer that $P^\infty_{I,G}(T)$ is normal.
Moreover, as $e_{ij} \ot \lambda_{\sigma,s} \in S^1_I(\L^1(\VN(G,\sigma)))$ for any $i,j \in I$ and $s \in G$, $P_{\w^*}(T)(e_{ij} \ot \lambda_{\sigma,s}) = T(e_{ij} \ot \lambda_{\sigma,s})$, so that $P^\infty_{I,G}(T)$ is the multiplier with symbol $\varphi_{ij}$ from the statement.

3. Note that if $T$ is completely positive then $P_{I,G}^p(T)$ is also a completely positive map by composition.

4. The statement about the compatibility of $P_{I,G}^p$ for different values of $p \in [1,\infty]$ follows directly from the defining formula of $P_{I,G}^p$ and the fact that $(\Delta^*)_p,\Delta_p$ and $\Id_{S^p_I(\L^p(\VN(G,\sigma)))}$ are all compatible for two different values of $p$.

5. Suppose $p=\infty$. If $T \co \M_I(\VN(G,\sigma)) \to \M_I(\VN(G,\sigma))$ is selfadjoint then for any $s \in G$ and any $i,j \in I$ we have
\begin{align*}
\varphi_{ij}(s)
& =(\tr \ot \tau_{G,\sigma})\big(T(e_{ij} \ot \lambda_{\sigma,s})(e_{ij} \ot \lambda_{\sigma,s})^*\big)
=(\tr \ot \tau_{G,\sigma})\big(e_{ij} \ot \lambda_{\sigma,s}(T(e_{ij} \ot \lambda_{\sigma,s}))^*\big) \\
&		=\ovl{\varphi_{ij}(s)}.
\end{align*}
It is not difficult to conclude that $P_{I,G}^\infty(T)$ is selfadjoint.

Suppose that $T=[T_{ij}]$ is a matrix of operators such that each $T_{ii}$ is unital, i.e. $T(e_{ii} \ot \lambda_{\sigma,e})=e_{ii} \ot \lambda_{\sigma,e}$. We have
\begin{align*}
\MoveEqLeft
   \varphi_{ii}(e)
=(\tr \ot \tau_{G,\sigma})\big(T(e_{ii} \ot \lambda_{\sigma,e})(e_{ii} \ot \lambda_{\sigma,e})^*\big)
		=(\tr \ot \tau_{G,\sigma})\big((e_{ii} \ot \lambda_{\sigma,e})(e_{ii} \ot \lambda_{\sigma,e})^*\big)=1.
\end{align*} 
We conclude that $P_{I,G}^\infty(T)$ is unital.

6. It suffices to use the map $P^p_{I,G}|_{\mathfrak{M}_I^{p,\cb}(G,\sigma)}$ and a symmetry argument.
\end{proof}

\begin{remark}\normalfont \label{Rem-quantum-groups}
This result admits a generalization for \textit{unimodular discrete} quantum groups. We warn the reader that the formula given in \cite[Remark 7.6]{DFSW} for unimodular locally compact quantum groups does not make sense\footnote{\thefootnote. With the notations of \cite[Remark 7.6]{DFSW}, if we identify $\L^\infty(\hat{\G})$ with $\L^\infty(\R)$, and $x$ with a function $f$, we obtain $L(f)=\int_\R  
 [\Phi(f_t)\big]_{-t} \d\mu_{\R}(t)$ where $\Phi \co \L^\infty(\R) \to \L^\infty(\R)$ and where we use translations by $t$ and $-t$. This integral is meaningless. We would like to thank Adam Skalski for his confirmation of this problem by email \textit{on his own initiative}.} already in the case of the locally compact group $\R$ of real numbers.  
\end{remark}

The case $G=\{e\}$ gives the following complementation for the space of completely bounded Schur multipliers. Compare to \cite[Proposition 2.6]{Arh2}.

\begin{cor}
\label{Prop-complementation-Schur}
Suppose that $I$ is equipped with the counting measure. Let $T \co S^p_I \to S^p_I$ be a completely bounded operator. We define the matrix $\varphi$ by
\begin{equation}
	\label{Symbole-complementation-Schur}
	\varphi_{ij}
	=\tr\big(T(e_{ij})e_{ij}^*\big), \qquad i,j \in I.
\end{equation}
Then the map $P_{I}^p \co  \CB(S^p_I) \to \CB(S^p_I)$, $T \mapsto M_\varphi$ is a well-defined contractive projection onto the subspace $\mathfrak{M}_{I}^{p,\cb}$ of completely bounded Schur multipliers. Moreover, if $T$ is completely positive then the Schur multiplier $P_{I}^p(T)$ is completely positive. 
For $p=\infty$ the same assertions are true by replacing $\CB(S^p_I)$ by the space $\CB_{\w^*}(\B(\ell^2_I))$.
\end{cor}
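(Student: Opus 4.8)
The plan is to derive Corollary~\ref{Prop-complementation-Schur} as the special case of Theorem~\ref{Prop-complementation-Schur-Fourier} in which the discrete group is trivial, that is $G = H = \{e\}$ (so that the only cocycle is $\sigma = \omega = 1$). First I would record the identifications that make the two statements match up. Since $\VN(\{e\}) = \C$ with its normalized trace, we have $\L^p(\VN(\{e\})) = \C$, and hence $S^p_I(\L^p(\VN(\{e\}))) = S^p_I$ for $1 \le p < \infty$, while for $p = \infty$ the algebra $\B(\ell^2_I) \otvn \VN(\{e\})$ is just $\B(\ell^2_I)$. Moreover, a function $\varphi \co \{e\} \to \C$ is just a scalar, and the Fourier multiplier $M_\varphi$ on $\C$ is multiplication by that scalar; consequently the matrix $[M_{\varphi_{ij}}] \co S^p_I \to S^p_I$ from Theorem~\ref{Prop-complementation-Schur-Fourier} is exactly the Schur multiplier $M_\varphi$ with symbol $\varphi = [\varphi_{ij}]$. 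Under these identifications, $\lambda_{\sigma,e}$ becomes the unit $1 \in \C$, and the defining formula
$$
\varphi_{ij} = (\tr \ot \tau_{\{e\}})\big(T(e_{ij} \ot 1)(e_{ij} \ot 1)^*\big) = \tr\big(T(e_{ij})e_{ij}^*\big)
$$
becomes precisely \eqref{Symbole-complementation-Schur}, and $\mathfrak{M}_I^{p,\cb}(\{e\},1)$ becomes the space $\mathfrak{M}_I^{p,\cb}$ of completely bounded Schur multipliers on $S^p_I$.

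Next I would simply invoke the relevant parts of Theorem~\ref{Prop-complementation-Schur-Fourier}: item (the main statement) gives that $P_I^p = P_{I,\{e\}}^p \co \CB(S^p_I) \to \CB(S^p_I)$, $T \mapsto M_\varphi$, is a well-defined contractive projection onto $\mathfrak{M}_I^{p,\cb}$; the additional property~(2) gives that $P_I^p(T)$ is completely positive whenever $T$ is; and the additional property~(1) gives that for $p = \infty$ one may replace $\CB(S^\infty_I)$ by $\CB_{\w^*}(\B(\ell^2_I))$. This already yields every assertion of the corollary. It might be worth adding a sentence pointing out that in the trivial-group case the ``twisted coproduct'' $\Delta$ of the proof of Theorem~\ref{Prop-complementation-Schur-Fourier} reduces to the usual comultiplication $x \mapsto \sum_{i,j} e_{ij} \ot x_{ij} \ot e_{ij}$ on $\M_I$ coming from $e_{ij} \mapsto e_{ij} \ot e_{ij}$, so that one recovers Haagerup's original averaging argument for Schur multipliers, as in \cite[Proposition 2.6]{Arh2}; but this is optional colour rather than part of the logical argument.

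There is essentially no hard step here: the only thing requiring care is bookkeeping — checking that the several identifications ($\L^p(\C) = \C$, $S^p_I(\C) = S^p_I$, scalar Fourier multiplier $=$ multiplication, matrix of scalar multipliers $=$ Schur multiplier, $\B(\ell^2_I) \otvn \C = \B(\ell^2_I)$) are compatible with the maps involved, in particular that they intertwine $P_{I,\{e\}}^p$ with the claimed map $T \mapsto M_\varphi$ and send the formula for $\varphi_{ij}$ to \eqref{Symbole-complementation-Schur}. So the ``obstacle,'' to the extent there is one, is purely notational rather than mathematical. Accordingly the written proof will be short: state the identifications, observe the formulas coincide, and quote Theorem~\ref{Prop-complementation-Schur-Fourier}.
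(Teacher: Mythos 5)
Your proposal is correct and is exactly the paper's route: the corollary is stated there as the special case $G=H=\{e\}$ of Theorem~\ref{Prop-complementation-Schur-Fourier}, with no further argument given. Your careful bookkeeping of the identifications ($\L^p(\VN(\{e\}))=\C$, $S^p_I(\C)=S^p_I$, matrix of scalar multipliers $=$ Schur multiplier) supplies precisely the details the paper leaves implicit.
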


The case where $I$ contains one element and a symmety argument show that the complete positivity of a multiplier is independent from the $\T$-valued $2$-cocycle $\sigma$  (this first point can be proved as the point 6 of the Theorem \ref{Prop-complementation-Schur-Fourier}).

\begin{cor}
\label{Prop-caracterisation-cp-discrete-G-sigma}
Let $G$ be a discrete group. Let $\sigma$ be a $\T$-valued $2$-cocycle on $G$. Suppose $1 \leq p \leq \infty$. If $p \not=\infty$, we suppose that $\VN(G)$ and $\VN(G,\sigma)$ have $\QWEP$. Let $\varphi \co G \to \C$ be a complex function.
\begin{enumerate}
	\item Then $\varphi$ induces a completely positive multiplier $M_{\varphi} \co \L^p(\VN(G,\sigma)) \to \L^p(\VN(G,\sigma))$ if and only if $\varphi$ induces a completely positive multiplier $M_{\varphi} \co \L^p(\VN(G)) \to \L^p(\VN(G))$.
	\item Then $\varphi$ induces a completely bounded multiplier $M_{\varphi} \co \L^p(\VN(G,\sigma)) \to \L^p(\VN(G,\sigma))$ if and only if $\varphi$ induces a completely bounded multiplier $M_{\varphi} \co \L^p(\VN(G)) \to \L^p(\VN(G))$. In this case, we have the equality
\begin{equation}
	\label{Normes-cb-cocycl-et-sans}
	\norm{M_{\varphi}}_{\cb,\L^p(\VN(G,\sigma)) \to \L^p(\VN(G,\sigma))}=\norm{M_{\varphi}}_{\cb,\L^p(\VN(G)) \to \L^p(\VN(G))}.
\end{equation}
\end{enumerate}
\end{cor}

Note that \cite[Proposition 4.3]{BC1} gives a proof of \eqref{Normes-cb-cocycl-et-sans} for $p=\infty$.

In the following result, $P_{I,G}^p$ is the map of Theorem \ref{Prop-complementation-Schur-Fourier} with $\omega=1$.

\begin{thm}
Let $I$ be an index set equipped with the counting measure. Let $G$ be a discrete group equipped with a normalized $\T$-valued 2-cocycle $\sigma$ and $H$ be a subgroup of $G$. Suppose $1 \leq p \leq \infty$. If $p \not=\infty$, we suppose that $\VN(G,\sigma)$ has $\QWEP$. Then we have a natural contraction $Q^p_H \co \mathfrak{M}_I^{p,\cb}(G,\sigma) \to \mathfrak{M}_I^{p,\cb}(H,\sigma|H)$ sending $[M_{\varphi_{ij}}] \mapsto [M_{\varphi_{ij}|H}]$ and an isometric embedding $J^p_H \co \mathfrak{M}_I^{p,\cb}(H,\sigma|H) \to \mathfrak{M}_I^{p,\cb}(G,\sigma)$ sending $[M_{\varphi_{ij}}] \mapsto [M_{\varphi_{ij} \delta_{\cdot \in H}}]$, so that $J^p_H \circ Q^p_H$ is a projection.
Then $P_{I,H}^p \ov{\mathrm{def}}{=} J^p_H \circ Q^p_H \circ P_{I,G}^p$ defines a projection $\CB(S^p_I(\L^p(\VN(G,\sigma)))) \to \CB(S^p_I(\L^p(\VN(G,\sigma))))$ having its image in $J^p_H(\mathfrak{M}_I^{p,\cb}(H,\sigma|H))$ and satisfying the same properties that the previous map $P_{I,G}^p$: it preserves complete positivity, is compatible for different values of $p$, and preserves selfadjointness and unital mappings.
\end{thm}

\begin{proof}
For the fact that $Q^p_H$ is a contraction and that $J^p_H$ is an isometric embedding, we refer to the end of Section \ref{section-twisted-von-Neumann}. It is elementary to check that $J^p_H Q^p_H$ is a projection. We have $(P^p_{I,H})^2 = J^p_H Q^p_H P^p_{I,G} J^p_H Q^p_H P^p_{I,G} = J^p_H Q^p_H J^p_H Q^p_H P^p_{I,G} = J^p_H Q^p_H P^p_{I,G} = P^p_{I,H}$, since $P^p_{I,G}$ is the identity on multipliers. Thus, $P^p_{I,H}$ is a projection. As $J^p_H Q^p_H([M_{\varphi_{ij}}]) = \Id_{S^p_I} \ot J_p (J_{p^*})^* \cdot [M_{\varphi_{ij}}] \cdot \Id_{S^p_I} \ot J_p (J_{p^*})^*$ and the mapping $J_p$ from the end of Section \ref{section-twisted-von-Neumann} is completely positive, thus by Lemma \ref{Lemma-adjoint-cp} also $(J_{p^*})^*$, we infer that $P^p_{I,H}$ preserves complete positivity. The compatibility of $P^p_{I,H}$ for different values of $p$ follows from that of $P^p_{I,G}$ and of $J^p_H$ and $Q^p_H$. If $p = \infty$ and $T$ is selfadjoint, $P^p_{I,G}(T)$ is selfadjoint, i.e. its symbol $\varphi_{ij}(s)$ takes real values for all $i,j \in I$ and $s \in G$. Then the symbol of $P^\infty_{I,H}(T)$ is $\varphi_{ij}(s) \cdot 1_{H}(s)$ which also has real values, so that $P^\infty_{I,H}$ preserves selfadjointness. In a similar way, if $T$ is normal and all $T_{ii}$ are unital, then $P^\infty_{I,G}(T)$ is unital, which amounts in $\varphi_{ii}(e) = 1$ for all $i \in I$. Since $e \in H$, we conclude that $P^\infty_{I,H}(T)$ is unital.
\end{proof}

The case where $I$ contains one element and where $\sigma=\omega=1$ gives the following.
 
\begin{cor}
\label{Prop-complementation-Fourier-subgroups}
Let $G$ be a discrete group and $H$ be a subgroup of $G$. Suppose $1 \leq p<\infty$. If $p \not=\infty$, we suppose that $\VN(G)$ has $\QWEP$. Let $T \co \L^p(\VN(G)) \to \L^p(\VN(G))$ be a completely bounded operator.  We define the complex function $\varphi \co H \to \C$ by
$$
\varphi(s)
=\tau_{G}\big(T(\lambda_{s})(\lambda_{s})^*\big), \qquad s \in H.
$$
Then the map $P_{H}^p \co \CB(\L^p(\VN(G))) \to \CB(\L^p(\VN(G)))$, $T \mapsto  M_{\varphi}$ is a well-defined contractive projection onto the subspace $\mathfrak{M}^{p,\cb}(H)$ (identified as a subspace of $\CB(\L^p(\VN(G)))$). Moreover, if $T$ is completely positive then the map $P_{H}^p(T)$ is completely positive. For $p=\infty$ the same assertions are true by replacing $\CB(\L^p(\VN(G)))$ by the space $\CB_{\w^*}(\VN(G))$.
\end{cor}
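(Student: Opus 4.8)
The plan is to obtain this corollary as the special case of Theorem~\ref{thm-complementation-subgroups-Fourier-multipliers-CB-twisted} in which the index set $I=\{*\}$ is a singleton and both $2$-cocycles are trivial, $\sigma=\omega=1$. First I would record the relevant identifications: with $\sigma=1$ one has $\VN(G,\sigma)=\VN(G)$, the twisted generators $\lambda_{\sigma,s}$ become the usual translation unitaries $\lambda_s$, and since $I$ is a singleton $S^p_I(\L^p(\VN(G)))=\L^p(\VN(G))$ and $\CB(S^p_I(\L^p(\VN(G,\sigma))))=\CB(\L^p(\VN(G)))$. Under these identifications the matrix-valued symbol $[\varphi_{ij}]$ of Theorem~\ref{thm-complementation-subgroups-Fourier-multipliers-CB-twisted} collapses to the single function $\varphi\co H\to\C$, and the defining formula $\varphi_{ij}(s)=(\tr\ot\tau_{G,\sigma})\big(T(e_{ij}\ot\lambda_{\sigma,s})(e_{ij}\ot\lambda_{\sigma,s})^*\big)$ reduces to $\varphi(s)=\tau_G\big(T(\lambda_s)\lambda_s^*\big)$ for $s\in H$ (extended by $0$ off $H$), so that $P^p_{I,H}$ becomes exactly the map $P^p_H$ of the statement.

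Next I would invoke the conclusions of Theorem~\ref{thm-complementation-subgroups-Fourier-multipliers-CB-twisted} one at a time. The theorem gives that $P^p_{I,H}$ is a well-defined contractive projection onto $\mathfrak{M}^{p,\cb}_I(H,\sigma|H)$; specializing, $P^p_H$ is a well-defined contractive projection onto $\mathfrak{M}^{p,\cb}_{\{*\}}(H,1|H)=\mathfrak{M}^{p,\cb}(H)$, the latter being viewed as a subspace of $\CB(\L^p(\VN(G)))$ via the identification $M_\varphi\leftrightarrow M_{\widetilde\varphi}$ (with $\widetilde\varphi$ the zero extension of $\varphi$ to $G$) discussed at the end of Subsection~\ref{section-twisted-von-Neumann}. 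The preservation of complete positivity is item~(2) of the theorem, and for $p=\infty$ item~(1) yields the version with $\CB_{\w^*}(\VN(G))$ in place of $\CB(\L^p(\VN(G)))$.

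Since the whole argument is a transcription of an already proved result, there is essentially no obstacle. The only points requiring a line of care are the two routine reductions just mentioned: that in the singleton, trivial-cocycle case the symbol formula degenerates to $\varphi(s)=\tau_G(T(\lambda_s)\lambda_s^*)$, and that $\mathfrak{M}^{p,\cb}_{\{*\}}(H,1|H)$ coincides with $\mathfrak{M}^{p,\cb}(H)$ as a subspace of $\CB(\L^p(\VN(G)))$; both are immediate from the definitions, so the proof is just a one-paragraph appeal to Theorem~\ref{thm-complementation-subgroups-Fourier-multipliers-CB-twisted}.
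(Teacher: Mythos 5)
Your proposal is correct and coincides with the paper's own derivation: the corollary is stated there precisely as the case of Theorem \ref{thm-complementation-subgroups-Fourier-multipliers-CB-twisted} in which $I$ is a singleton and $\sigma=\omega=1$, with exactly the two routine identifications you spell out (the collapse of the symbol formula to $\varphi(s)=\tau_G(T(\lambda_s)\lambda_s^*)$ and the identification of $\mathfrak{M}^{p,\cb}_{\{*\}}(H,1|H)$ with $\mathfrak{M}^{p,\cb}(H)$ inside $\CB(\L^p(\VN(G)))$).
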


\subsection{Description of the decomposable norm of multipliers}
\label{subsec-description-decomposable-norm}

The following theorem is our first result describing decomposable multipliers on noncommutative $\L^p$-spaces. 


\begin{thm}
\label{prop-non-amenable-discrete-Fourier-multiplier-dec-infty}
Let $G$ be a discrete group equipped with a normalized $\T$-valued 2-cocycle $\sigma$. Suppose $1 \leq p \leq \infty$. We suppose that $\VN(G)$ and $\VN(G,\sigma)$ have $\QWEP$. Then a function $\phi \co G \to \C$ induces a decomposable Fourier multiplier on $\L^p(\VN(G,\sigma))$ if and only if it induces a decomposable Fourier multiplier on $\VN(G)$. 
\end{thm}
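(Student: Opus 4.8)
The plan is to identify, for \emph{every} $p \in [1,\infty]$ and \emph{every} normalized $\T$-valued $2$-cocycle $\sigma$, the set of symbols of decomposable Fourier multipliers on $\L^p(\VN(G,\sigma))$ with one fixed subset of $\C^G$ that involves neither $p$ nor $\sigma$: namely the complex linear span of the positive definite functions on $G$. The theorem then follows at once by specializing to $(\VN(G),1)$. The starting point is Proposition \ref{prop-linearcp-imply-decomposable}: $M_\phi \co \L^p(\VN(G,\sigma)) \to \L^p(\VN(G,\sigma))$ is decomposable if and only if $M_\phi = T_1 - T_2 + \i(T_3 - T_4)$ for some completely positive maps $T_j \co \L^p(\VN(G,\sigma)) \to \L^p(\VN(G,\sigma))$, so the whole question reduces to understanding the ``diagonal symbols'' $\psi_T(s) := \tau_{G,\sigma}\big(T(\lambda_{\sigma,s})\lambda_{\sigma,s}^*\big)$ of completely positive maps.

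The key lemma is that if $T \co \L^p(\VN(G,\sigma)) \to \L^p(\VN(G,\sigma))$ is completely positive, then $\psi_T$ is a (necessarily bounded) positive definite function on $G$. To prove it, fix $s_1,\dots,s_n \in G$ and $c_1,\dots,c_n \in \C$, and consider the column vectors $\xi = (\lambda_{\sigma,s_1},\dots,\lambda_{\sigma,s_n})^{t}$ and $\eta = (c_1\lambda_{\sigma,s_1},\dots,c_n\lambda_{\sigma,s_n})^{t}$; since $\VN(G,\sigma)$ is finite, the elements $X = \xi\xi^*$ and $Z = \eta\eta^*$ are positive in $S^p_n(\L^p(\VN(G,\sigma)))$ resp. $S^{p^*}_n(\L^{p^*}(\VN(G,\sigma)))$ by \eqref{Cone-LpM} and \eqref{Def-matricial-cones-Lp}. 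Using \eqref{product-adjoint-twisted}, the $(k,l)$-entry of $X$ is a unimodular scalar times $\lambda_{\sigma,s_k s_l^{-1}}$, and the unimodular scalar occurring in the $(l,k)$-entry of $Z$ is its complex conjugate, so these scalars cancel and a direct computation gives $\big\langle (\Id_{\M_n} \ot T)(X), Z \big\rangle = \sum_{k,l=1}^{n} \ovl{c_k}\, c_l\, \psi_T(s_k s_l^{-1})$. The left-hand side is $\geq 0$ by \eqref{equa-polar-Lp} applied in $S^p_n(\L^p(\VN(G,\sigma)))$, since $(\Id_{\M_n}\ot T)(X)$ is positive ($T$ being completely positive) and $Z$ is positive; after replacing each $s_k$ by $s_k^{-1}$ this is exactly the positive definiteness of $\psi_T$. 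The same computation works at $p=\infty$, pairing $(\Id_{\M_n}\ot T)(X) \in \M_n(\VN(G,\sigma))_+$ against $Z \in S^1_n(\L^1(\VN(G,\sigma)))_+ \subset \M_n(\VN(G,\sigma))^*_+$. Applied to a decomposition $M_\phi = T_1 - T_2 + \i(T_3 - T_4)$ and observing that $\psi_{M_\phi} = \phi$ and $T \mapsto \psi_T$ is linear, this yields $\phi = \psi_{T_1} - \psi_{T_2} + \i(\psi_{T_3} - \psi_{T_4})$, i.e. $\phi$ lies in the span of the positive definite functions on $G$.

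For the reverse inclusion, suppose $\psi$ is a positive definite function on $G$. Normalizing so that $\psi(e)=1$ (the case $\psi=0$ being trivial, and a general $\psi$ being a nonnegative multiple of a normalized one), $M_\psi$ is a unital, trace preserving, normal completely positive Fourier multiplier on $\VN(G)$ by the classical correspondence between normalized positive definite functions and Markov Fourier multipliers; being a $\tau_G$-Markov map it induces completely positive contractions on every $\L^p(\VN(G))$, and by Corollary \ref{Prop-caracterisation-cp-discrete-G-sigma} the symbol $\psi$ therefore also induces a completely positive Fourier multiplier on $\L^p(\VN(G,\sigma))$. Hence any $\phi$ in the span of the positive definite functions is the symbol of a linear combination of completely positive Fourier multipliers on $\L^p(\VN(G,\sigma))$, which is decomposable by Proposition \ref{prop-linearcp-imply-decomposable}. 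Combining the two inclusions, for every $1 \leq p \leq \infty$ and every normalized $\T$-valued $2$-cocycle $\sigma$, the symbols of decomposable Fourier multipliers on $\L^p(\VN(G,\sigma))$ are precisely the elements of $\mathrm{span}_{\C}\{\text{positive definite functions on } G\}$; comparing $(\L^p(\VN(G,\sigma)))$ with $(\VN(G))$ gives the statement.

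The delicate point is the key lemma, and inside it the bookkeeping of the cocycle scalars from \eqref{product-adjoint-twisted} in the products $\lambda_{\sigma,s_k}\lambda_{\sigma,s_l}^*$: one must check that the scalars appearing in $X$ and in $Z$ are complex conjugates of each other so that they disappear from the pairing (this is where $\sigma(s,s^{-1})=\sigma(s^{-1},s)$ and $|\sigma|\equiv 1$ are used). The other input I rely on, beyond the results of the excerpt, is the classical identification of completely positive Fourier multipliers on $\VN(G)$ with positive definite functions; combined with Corollary \ref{Prop-caracterisation-cp-discrete-G-sigma} this is what transports complete positivity across all values of $p$ and all $2$-cocycles. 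It is worth noting that this route uses no complementation projection and never needs to know that a completely positive map on a noncommutative $\L^p$-space is completely bounded.
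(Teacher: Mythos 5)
Your proof is correct, and the forward implication follows a genuinely different route from the paper's. The paper first feeds the completely positive maps $T_j$ from the decomposition $M_\phi=T_1-T_2+\i(T_3-T_4)$ into the averaging projection $P_G^p$ of Theorem \ref{Prop-complementation-Schur-Fourier}, obtaining completely positive Fourier \emph{multipliers} $M_{\phi_j}$, and only then identifies the $\phi_j$ as positive definite via Corollary \ref{Prop-caracterisation-cp-discrete-G-sigma} and the classical correspondence. You instead prove directly that the diagonal symbol $\psi_T(s)=\tau_{G,\sigma}\big(T(\lambda_{\sigma,s})\lambda_{\sigma,s}^*\big)$ of an \emph{arbitrary} completely positive map is positive definite, by pairing $(\Id_{\M_n}\ot T)\big([\lambda_{\sigma,s_k}\lambda_{\sigma,s_l}^*]\big)$ against $[c_k\ovl{c_l}\,\lambda_{\sigma,s_k}\lambda_{\sigma,s_l}^*]$ and using \eqref{equa-polar-Lp}; note that your $\psi_T$ is exactly the symbol the projection of Theorem \ref{Prop-complementation-Schur-Fourier} would produce, so you are computing the same object but certifying its positive definiteness by hand. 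Your cocycle bookkeeping checks out — the cleanest way to see the cancellation $u_{kl}u_{lk}\,\sigma(s_ks_l^{-1},s_ls_k^{-1})=1$ is to apply $*$ to the identity $\lambda_{\sigma,s_k}\lambda_{\sigma,s_l}^*=u_{kl}\lambda_{\sigma,s_ks_l^{-1}}$ and compare with the $(l,k)$-entry. Your lemma is in effect the "non-multiplier" variant of Proposition \ref{Prop-caracterisation-cp} proved later in the paper, and it works at $p=\infty$ without any normality assumption on the $T_j$. What your route buys is that the forward direction needs no complementation and no knowledge that completely positive maps on $\L^p$ are completely bounded; what it does not buy is full independence from the averaging machinery, since your converse direction (which otherwise coincides with the paper's) still invokes Corollary \ref{Prop-caracterisation-cp-discrete-G-sigma}, itself a consequence of Theorem \ref{Prop-complementation-Schur-Fourier} — though one could replace that citation by a direct argument that a positive definite function induces a completely positive multiplier on $\VN(G,\sigma)$.
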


\begin{proof}
$\Rightarrow$: Let $M_\phi \co \L^p(\VN(G,\sigma)) \to \L^p(\VN(G,\sigma))$ be a decomposable Fourier multiplier. By Proposition \ref{prop-decomposable-se-decompose-en-cp}, we can write 
$
M_\phi
=T_1-T_2+\i(T_3-T_4)
$ 
where each $T_j$ is a completely positive map on $\L^p(\VN(G,\sigma))$. Using the projection $P_G^p$ of Theorem \ref{Prop-complementation-Schur-Fourier}  with $G=H$, $I=\{0\}$ and $\omega = 1$, we obtain that
$$
M_\phi
=P_G^p(M_\phi)
=P_G^p\big(T_1-T_2+\i(T_3-T_4)\big)
=P_G^p(T_1)-P_G^p(T_2)+\i\big(P_G^p(T_3)-P_G^p(T_4)\big)
$$
and that each $P_G^p(T_j)=M_{\phi_j}$ is a completely positive Fourier multiplier on $\L^p(\VN(G,\sigma))$. By Corollary \ref{Prop-caracterisation-cp-discrete-G-sigma}, each $\phi_j$ also induces a completely positive Fourier multiplier on $\L^p(\VN(G))$. By the proof of \cite[Proposition 4.2]{DCH}, we see that the (continuous) function $\phi_j$ is\footnote{\thefootnote. Here we use the inclusion $\VN(G) \subset \L^p(\VN(G))$ and the realization of $\L^p(\VN(G))$ as a subspace of measurable operators. See also Proposition \ref{Th-cp-Fourier-multipliers} which is a more general result.} positive definite. Hence it induces a completely positive Fourier multiplier on $\VN(G)$ again by \cite[Proposition 4.2]{DCH}. We conclude that $\phi$ induces a decomposable Fourier multiplier on $\VN(G)$.

$\Leftarrow$: Let $M_\phi \co \VN(G) \to \VN(G)$ be a decomposable Fourier multiplier. Similarly, with Theorem \ref{Prop-complementation-Schur-Fourier}, we can write $M_\phi=M_{\phi_1}-M_{\phi_2}+\i(M_{\phi_3}-M_{\phi_4})$ where each $M_{\phi_j} \co \VN(G) \to \VN(G)$ is completely positive. By \cite[page 216]{Har}\footnote{\thefootnote. See also Lemma \ref{lemma-inclusion-Fourier-multipliers} which is a generalization.}, each Fourier multiplier $\phi_j$ induces a completely positive\footnote{\thefootnote. We use here the fact, left to the reader, that if $T \co M \to N$ is a completely positive map which induces a bounded map $T_p \co \L^p(M) \to \L^p(N)$ then $T_p$ is also completely positive.} multiplier on $\L^p(\VN(G))$ and also on $\L^p(\VN(G,\sigma))$ by Corollary \ref{Prop-caracterisation-cp-discrete-G-sigma}. Using Proposition \ref{prop-decomposable-se-decompose-en-cp}, we conclude that $\phi$ induces a decomposable Fourier multiplier on $\L^p(\VN(G,\sigma))$.
\end{proof}

The following is essentially \cite[Section 1.17.1 Theorem 1]{Tri}, see also \cite[page 58]{Pis7}.

\begin{lemma}
\label{Lemma-interpolation}
Let $(E_0,E_1)$ be an interpolation couple and let $C$ be a complemented subspace of $E_0+E_1$. We assume that the corresponding bounded projection $P \co E_0+E_1 \to E_0+E_1$ satisfies $P(E_i) \subset E_i$ and that the restriction $P \co E_i \to E_i$ is bounded for $i=0,1$. Then $(E_0 \cap C,E_1 \cap C)$ is an interpolation couple and the canonical inclusion $J \co C \to E_0+E_1$ induces an isomorphim $\tilde{J}$ from $(E_0 \cap C,E_1 \cap C)^\theta$ onto the subspace $P((E_0,E_1)^\theta)=(E_0,E_1)^\theta \cap C$ of $(E_0,E_1)^\theta$. More precisely, if $x \in (E_0 \cap C,E_1 \cap C)^\theta$, we have
$$
\norm{\tilde{J}(x)}_{(E_0,E_1)^\theta} 
\leq \norm{x}_{(E_0 \cap C,E_1 \cap C)^\theta} 
\leq \max\big\{\norm{P}_{E_0 \to E_0},\norm{P}_{E_1 \to E_1} \big\} \norm{\tilde{J}(x)}_{(E_0,E_1)^\theta}.
$$
In particular, if $\max\{\norm{P}_{E_0 \to E_0},\norm{P}_{E_1 \to E_1}\} = 1$ then $\tilde{J}$ is an isometry.
\end{lemma}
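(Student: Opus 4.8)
\textit{Proof plan.} The plan is to deduce everything from functoriality of Calder\'on's second (upper) complex interpolation method $(\cdot,\cdot)^\theta$, using that the inclusion $J$ and the projection $P$ are inverse to one another up to the idempotent $P$. First I would observe that for $x \in E_i \cap C$ one has $\norm{x}_{E_0+E_1} \leq \norm{x}_{E_i}$, so the subspace $E_i \cap C$, endowed with its intersection norm, carries exactly the norm of $E_i$; hence both $E_0 \cap C$ and $E_1 \cap C$ embed continuously into $C \subset E_0+E_1$ and $(E_0 \cap C, E_1 \cap C)$ is an interpolation couple. Then $J$ is a morphism of couples $(E_0\cap C, E_1 \cap C) \to (E_0,E_1)$ with $\norm{J}_{E_i \cap C \to E_i} \leq 1$, and since $P$ is a projection onto $C$ we have $P(E_i) \subset E_i \cap C$ with $\norm{P}_{E_i \to E_i \cap C} \leq \norm{P}_{E_i \to E_i}$, so that $P$ is a morphism $(E_0,E_1) \to (E_0\cap C, E_1\cap C)$.

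Next I would apply the functor $F^\theta = (\cdot,\cdot)^\theta$, which is exact of exponent $\theta$ and sends a morphism to the restriction of the underlying linear map on $E_0+E_1$ to the interpolation space. This gives bounded operators $\tilde J = F^\theta(J) \co (E_0\cap C, E_1\cap C)^\theta \to (E_0,E_1)^\theta$ with $\norm{\tilde J} \leq 1$ and $\tilde P = F^\theta(P) \co (E_0,E_1)^\theta \to (E_0\cap C, E_1\cap C)^\theta$ with $\norm{\tilde P} \leq \max\{\norm{P}_{E_0 \to E_0},\norm{P}_{E_1 \to E_1}\}$. Since $P$ is a projection onto $C$, we have $PJ = \Id$ on $C$ and $JP = P$ as endomorphisms of $E_0+E_1$; by functoriality $\tilde P \tilde J = \Id$ on $(E_0\cap C, E_1\cap C)^\theta$, while $\tilde J \tilde P$ is the restriction of $P$ to $(E_0,E_1)^\theta$. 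From $\tilde P \tilde J = \Id$ it follows that $\tilde J$ is injective and $\tilde P$ surjective, so $\tilde J \tilde P$ is a bounded projection with range $\tilde J\big((E_0\cap C, E_1\cap C)^\theta\big)$; identifying this projection with the restriction of $P$ shows that $\Ran \tilde J = \{x \in (E_0,E_1)^\theta : Px = x\} = (E_0,E_1)^\theta \cap C = P\big((E_0,E_1)^\theta\big)$, and $\tilde J$ is an isomorphism onto this subspace.

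The quantitative part is then immediate: for $x \in (E_0\cap C, E_1\cap C)^\theta$ we get $\norm{\tilde J(x)}_{(E_0,E_1)^\theta} \leq \norm{\tilde J}\,\norm{x}_{(E_0\cap C, E_1\cap C)^\theta} \leq \norm{x}_{(E_0\cap C, E_1\cap C)^\theta}$, and $\norm{x}_{(E_0\cap C, E_1\cap C)^\theta} = \norm{\tilde P\tilde J(x)}_{(E_0\cap C, E_1\cap C)^\theta} \leq \norm{\tilde P}\,\norm{\tilde J(x)}_{(E_0,E_1)^\theta} \leq \max\{\norm{P}_{E_0\to E_0},\norm{P}_{E_1\to E_1}\}\,\norm{\tilde J(x)}_{(E_0,E_1)^\theta}$; when the maximum equals $1$ both estimates become equalities, so $\tilde J$ is isometric. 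The only point requiring care — and the one I would check against the cited references \cite[Section 1.17.1, Theorem 1]{Tri} and \cite[page 58]{Pis7} — is that $(\cdot,\cdot)^\theta$ is genuinely an exact interpolation functor of exponent $\theta$ whose interpolated morphisms are restrictions of the original maps on $E_0+E_1$; granting this, the argument above is pure diagram chasing and poses no real difficulty.
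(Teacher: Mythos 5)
Your proposal is correct and is essentially the paper's own argument: the paper simply cites \cite[Section 1.17.1, Theorem 1]{Tri}, whose proof is precisely the retract/coretract mechanism you spell out, namely viewing $J$ and $P$ as morphisms of couples with $PJ=\Id_C$ and applying the exactness of exponent $\theta$ of the upper complex method. The identification of the range via the projection $\tilde J\tilde P$ and the two norm estimates are exactly as in that reference, so there is nothing to add.
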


Let $(E_0,E_1)$ be an interpolation couple. If $T_0 \co E_0 \to E_0$, $T_1 \co E_1 \to E_1$ are (completely) bounded maps such that $T_0$ and $T_1$ agree on $E_0 \cap E_1$, then we say that $T_0$ and $T_1$ are compatible. In this case, it is elementary and well-known that there exists a unique (completely) bounded map $T_0+T_1 \co E_0 + E_1 \to E_0 + E_1$ 
which extends $T_0$ and $T_1$ and we have $\norm{T_0+T_1}_{E_0 + E_1 \to E_0 + E_1} \leq \max\{\norm{T_0}_{E_0 \to E_0},\norm{T_1}_{E_1 \to E_1}\}$ and similarly for the completely bounded norms. Moreover, if $T_0$ and $T_1$ are projections onto $F_0$ and $F_1$ then $T_0+T_1$ is a projection onto $F_0+F_1$. 

It allows us to deduce the following description of decomposable Fourier multipliers on amenable groups.

Let $G$ be a discrete group. Recall that the group von Neumann algebra $\VN(G)$ is approximately finite-dimensional if and only if $G$ is amenable, see \cite[Theorem 3.8.2]{SS}. Using Corollary \ref{Prop-complementation-Fourier-subgroups} with $H=G$, we obtain the following result.

\begin{thm}
\label{prop-amenable-discrete-Fourier-multiplier-dec-infty}
Let $G$ be an amenable discrete group. Suppose $1 \leq p \leq \infty$. Then a function $\phi \co G \to \C$ induces a decomposable Fourier multiplier $M_\phi \co \L^p(\VN(G)) \to \L^p(\VN(G))$ if and only if it induces a (completely) bounded Fourier multiplier $M_\phi \co \VN(G) \to \VN(G)$. In this case, we have the isometric identity
$$
\norm{M_\phi}_{\dec,\L^p(\VN(G)) \to \L^p(\VN(G))} 
= \norm{M_\phi}_{\cb, \VN(G) \to \VN(G)}
=\norm{M_\phi}_{\VN(G) \to \VN(G)}.
$$
\end{thm}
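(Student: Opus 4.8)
The statement asserts, for an amenable discrete group $G$ and $1 \leq p \leq \infty$, that a symbol $\phi \co G \to \C$ gives a decomposable Fourier multiplier on $\L^p(\VN(G))$ iff it gives a (completely) bounded Fourier multiplier on $\VN(G)$, with equal norms $\norm{M_\phi}_{\dec,\L^p} = \norm{M_\phi}_{\cb,\VN(G)} = \norm{M_\phi}_{\VN(G)}$. Since $G$ is amenable, $\VN(G)$ is approximately finite-dimensional (\cite[Theorem 3.8.2]{SS}), so Theorem \ref{thm-dec=reg-hyperfinite} applies: $\norm{M_\phi}_{\dec,\L^p(\VN(G)) \to \L^p(\VN(G))} = \norm{M_\phi}_{\reg,\L^p(\VN(G)) \to \L^p(\VN(G))}$ whenever one side is finite. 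Also, since $\VN(G)$ is amenable it is QWEP and the complete positivity result Theorem \ref{thm-norm=normcb-hyperfinite} plus Proposition \ref{quest-cp-versus-dec} show that for completely positive maps the bounded, cb and decomposable norms all coincide; and a completely bounded Fourier multiplier on $\VN(G)$ has $\norm{M_\phi}_{\cb} = \norm{M_\phi}$ by the standard averaging/transference (for amenable $G$, cb norm equals bounded norm for Fourier multipliers on $\VN(G)$). So the real content is to relate the regular (equivalently decomposable) norm at level $p$ with the cb norm at level $\infty$.

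\textbf{Key steps.} First I would invoke the interpolation identity \eqref{Regular-as-interpolation-space}: for approximately finite-dimensional $M = N = \VN(G)$ and $1 < p < \infty$,
$$
\Reg\big(\L^p(\VN(G)),\L^p(\VN(G))\big) = \big(\CB_{\w^*}(\VN(G)),\CB(\L^1(\VN(G)))\big)^{\frac1p}.
$$
Next I would apply the complementation Corollary \ref{Prop-complementation-Fourier-subgroups} with $H = G$: the projections $P_G^\infty \co \CB_{\w^*}(\VN(G)) \to \CB_{\w^*}(\VN(G))$ and $P_G^1 \co \CB(\L^1(\VN(G))) \to \CB(\L^1(\VN(G)))$ onto $\mathfrak{M}^{\infty,\cb}(G)$ and $\mathfrak{M}^{1,\cb}(G)$ respectively are contractive (norm $1$), compatible (property 3 of Theorem \ref{Prop-complementation-Schur-Fourier}), and preserve complete positivity. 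Then Lemma \ref{Lemma-interpolation}, applied with $(E_0,E_1) = (\CB_{\w^*}(\VN(G)),\CB(\L^1(\VN(G))))$ and $C$ the space of Fourier multipliers (the common range of $P_G^\infty$ and $P_G^1$), gives an \emph{isometric} identification of the interpolated subspace of Fourier multipliers at level $p$ with the subspace of $\Reg(\L^p(\VN(G)),\L^p(\VN(G)))$ consisting of Fourier multipliers. Concretely, this yields
$$
\norm{M_\phi}_{\reg,\L^p(\VN(G)) \to \L^p(\VN(G))} = \big\| M_\phi \big\|_{(\mathfrak{M}^{\infty,\cb}(G),\mathfrak{M}^{1,\cb}(G))^{\frac1p}}.
$$
Now I would identify the right-hand interpolation space: by duality $\mathfrak{M}^{1,\cb}(G) = \mathfrak{M}^{\infty,\cb}(G)$ isometrically (a completely bounded Fourier multiplier on $\L^1(\VN(G))$ is the preadjoint of one on $\VN(G)$, with the same symbol and cb-norm, since $\VN(G)$ has predual $\L^1(\VN(G))$ and preadjoints preserve cb-norms and Fourier symbols), so the interpolation couple is "constant" in the sense that both endpoints carry the same norm on the same space of symbols; hence the interpolation norm equals the common endpoint norm, i.e. $\norm{M_\phi}_{\cb,\VN(G)}$. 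Combining, $\norm{M_\phi}_{\reg,\L^p} = \norm{M_\phi}_{\cb,\VN(G)}$, and with Theorem \ref{thm-dec=reg-hyperfinite} this gives $\norm{M_\phi}_{\dec,\L^p} = \norm{M_\phi}_{\cb,\VN(G)}$. Finally the amenability of $G$ gives $\norm{M_\phi}_{\cb,\VN(G)} = \norm{M_\phi}_{\VN(G)}$ (Haagerup's averaging argument, $\kappa(G) = 1$ for discrete groups, or directly the amenable transference). The endpoint cases $p = 1$ (by Lemma \ref{lem-op-mappings}, \eqref{Duality-dec}, \eqref{Duality-reg} and the duality of cb Fourier multipliers) and $p = \infty$ (trivial, $\dec = \cb$ for Fourier multipliers on the approximately finite-dimensional $\VN(G)$ by Theorem \ref{thm-dec=reg-hyperfinite}) are handled separately and easily.

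\textbf{Main obstacle.} The delicate point is the precise bookkeeping in the interpolation step: one must check that the bounded projections $P_G^\infty, P_G^1$ really do map each endpoint into itself with norm $1$ and are compatible, so that Lemma \ref{Lemma-interpolation} applies with constant $1$ and yields an honest isometry (not merely an isomorphism) between $(\mathfrak{M}^{\infty,\cb}(G),\mathfrak{M}^{1,\cb}(G))^{\frac1p}$ and the Fourier-multiplier part of $\Reg(\L^p(\VN(G)),\L^p(\VN(G)))$ — all of this is supplied by Theorem \ref{Prop-complementation-Schur-Fourier} and Corollary \ref{Prop-complementation-Fourier-subgroups}, but the identification of $\Reg$ with the interpolation couple via \eqref{Regular-as-interpolation-space} must be meshed carefully with the fact that a Fourier multiplier regular operator at level $p$ is automatically the restriction of compatible cb maps at levels $\infty$ and $1$ which are themselves Fourier multipliers (this uses that $P_G^\infty, P_G^1$ fix Fourier multipliers). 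A secondary routine check is that $\mathfrak{M}^{1,\cb}(G) = \mathfrak{M}^{\infty,\cb}(G)$ isometrically with matching symbols, which then collapses the interpolation space to a single endpoint; this is where the argument genuinely uses that both endpoint multiplier spaces coincide (as Banach spaces of symbols with identical norm), a feature special to the amenable/discrete setting via $\kappa(G) = 1$.
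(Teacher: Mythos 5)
Your proposal is correct and follows essentially the same route as the paper's proof: the interpolation identity \eqref{Regular-as-interpolation-space} for $\Reg(\L^p(\VN(G)))$, the compatible contractive projections of Corollary \ref{Prop-complementation-Fourier-subgroups} with $H=G$ fed into Lemma \ref{Lemma-interpolation}, the collapse of the couple via the isometric coincidence of the endpoint multiplier spaces $\mathfrak{M}^{\infty}(G)=\mathfrak{M}^{1}(G)=\mathfrak{M}^{\infty,\cb}(G)$ (amenability), and finally Theorem \ref{thm-dec=reg-hyperfinite} to pass from the regular to the decomposable norm. The paper additionally routes the "if and only if" part through Theorem \ref{prop-non-amenable-discrete-Fourier-multiplier-dec-infty} together with Haagerup's $\dec=\cb$ on approximately finite-dimensional algebras, but this is only a cosmetic difference from your treatment of the endpoint cases.
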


\begin{proof}
By \cite[Corollary 1.8]{DCH}, since $G$ is amenable, we have $\mathfrak{M}^{\infty}(G)=\mathfrak{M}^{\infty,\cb}(G)$ isometrically. The first part is Theorem \ref{prop-non-amenable-discrete-Fourier-multiplier-dec-infty} using \cite[Theorem 2.1]{Haa} (which says that the decomposable norm and the completely bounded norm coincide for operators on approximately finite-dimensional von Neumann algebras). By \cite{Har}, we have $\mathfrak{M}^{\infty}(G)=\mathfrak{M}^{1}(G)$ isometrically. Now, we use Lemma \ref{Lemma-interpolation} with the interpolation couple \eqref{Regular-as-interpolation-space} and with $C=\mathfrak{M}^{\infty}(G)$ and we also use the projection Corollary \ref{Prop-complementation-Fourier-subgroups} with $H=G$. Note that we have isometrically
\begin{align*}
\MoveEqLeft
\big(\CB_{\mathrm{w}^*}(\VN(G)) \cap \mathfrak{M}^{\infty}(G),\CB(\L^1(\VN(G))) \cap \mathfrak{M}^{\infty}(G)\big)^{\frac{1}{p}}
		=\big(\mathfrak{M}^{\infty}(G),\mathfrak{M}^{\infty}(G)\big)^{\frac{1}{p}}
		=\mathfrak{M}^{\infty}(G).
\end{align*}
We infer that the space 
$
\Reg(\L^p(\VN(G))) \cap \mathfrak{M}^{\infty}(G)
=(\CB_{\mathrm{w}^*}(\VN(G)),\CB(\L^1(\VN(G))))^{\frac1p} \cap \mathfrak{M}^{\infty}(G)
$ 
equipped with the regular norm $\norm{\cdot}_{\reg,\L^p(\VN(G)) \to \L^p(\VN(G))}$ is isometric to the space $\mathfrak{M}^{\infty,\cb}(G)$.
We finally employ Theorem \ref{thm-dec=reg-hyperfinite} to pass isometrically from regular operators to decomposable operators.
\end{proof}

Similarly, we obtain the following description of decomposable Schur multipliers with the projection of Corollary \ref{Prop-complementation-Schur}.

\begin{thm}
\label{prop-continuous-Schur-multiplier-dec-infty}
Suppose $1 \leq p \leq \infty$. Then a function $\phi \co I \times I \to \C$ induces a decomposable Schur multiplier on $S^p_I$ if and only if it induces a (completely) bounded Schur multiplier on $\B(\ell^2_I)$. In this case, we have the isometric identity
$$
\norm{M_\phi}_{\dec, S^p_I \to S^p_I} 
=\norm{M_\phi}_{\reg, S^p_I \to S^p_I} 
=\norm{M_\phi}_{\cb,\B(\ell^2_I) \to \B(\ell^2_I)}
=\norm{M_\phi}_{\B(\ell^2_I) \to \B(\ell^2_I)}.
$$
\end{thm}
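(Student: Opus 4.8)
The statement for Schur multipliers should be proved in complete analogy with Theorem \ref{prop-amenable-discrete-Fourier-multiplier-dec-infty}, replacing the complementation projection of Corollary \ref{Prop-complementation-Fourier-subgroups} by the one of Corollary \ref{Prop-complementation-Schur}. First I would record the two ``endpoint'' facts that make the interpolation machinery run. On the one hand, by \cite[Theorem 2.1]{Haa} (equality of decomposable and completely bounded norms for operators on approximately finite-dimensional von Neumann algebras, here $\B(\ell^2_I)$), together with the general fact that for a Schur multiplier boundedness already implies complete boundedness on $\B(\ell^2_I)$ (a classical consequence of the description $\mathfrak{M}_I^{\infty,\cb}=\ell^\infty_I(\mathrm{R}_I)\ot_{\w^*}\ell^\infty_I(\mathrm{C}_I)$, or via the characterization that a Schur symbol is the symbol of a bounded multiplier iff it factors through a Hilbert space), one gets isometrically
$$
\mathfrak{M}_I^{\infty}(\B(\ell^2_I))=\mathfrak{M}_I^{\infty,\cb}(\B(\ell^2_I)),
\qquad
\norm{M_\phi}_{\dec,\,\B(\ell^2_I)\to\B(\ell^2_I)}=\norm{M_\phi}_{\B(\ell^2_I)\to\B(\ell^2_I)}.
$$
On the other hand, at the level $p=1$ one has $\mathfrak{M}_I^{1}=\mathfrak{M}_I^{\infty}$ isometrically (a bounded Schur multiplier on $S^1_I$ has the same norm as the corresponding multiplier on $\B(\ell^2_I)$, by duality and the transpose symmetry of Schur symbols). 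These two facts are the analogue, for $B(\ell^2_I)$, of the inputs of \cite{DCH} and \cite{Har} used in the Fourier case.

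Next I would set up the interpolation step. By \eqref{Regular-as-interpolation-space} applied to $M=N=\B(\ell^2_I)$ (which is approximately finite-dimensional), we have isometrically
$$
\Reg(S^p_I)=\big(\CB_{\w^*}(\B(\ell^2_I)),\,\CB(S^1_I)\big)^{\frac1p}
$$
using Calder\'on's upper method. Now apply Lemma \ref{Lemma-interpolation} with the interpolation couple $(E_0,E_1)=(\CB_{\w^*}(\B(\ell^2_I)),\CB(S^1_I))$ and the complemented subspace $C=\mathfrak{M}_I^{\infty}(\B(\ell^2_I))$, the corresponding projection being $P_I^\infty$ resp. $P_I^1$ from Corollary \ref{Prop-complementation-Schur}, which are contractive, compatible, and map each $E_i$ into itself. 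Since by the endpoint facts $\CB_{\w^*}(\B(\ell^2_I))\cap\mathfrak{M}_I^{\infty}=\mathfrak{M}_I^{\infty,\cb}$ and $\CB(S^1_I)\cap\mathfrak{M}_I^{\infty}=\mathfrak{M}_I^{1,\cb}=\mathfrak{M}_I^{\infty,\cb}$, the couple $(\mathfrak{M}_I^{\infty,\cb},\mathfrak{M}_I^{\infty,\cb})$ interpolates isometrically to $\mathfrak{M}_I^{\infty,\cb}$, and Lemma \ref{Lemma-interpolation} with $\max\{\norm{P_I^\infty},\norm{P_I^1}\}=1$ gives that the subspace $\Reg(S^p_I)\cap\mathfrak{M}_I^{\infty}(\B(\ell^2_I))$, with the regular norm, is isometric to $\mathfrak{M}_I^{\infty,\cb}(\B(\ell^2_I))=\mathfrak{M}_I^{\infty}(\B(\ell^2_I))$. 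In particular $\phi$ induces a regular (equivalently, by Theorem \ref{thm-dec=reg-hyperfinite}, decomposable) Schur multiplier on $S^p_I$ iff it induces a bounded Schur multiplier on $\B(\ell^2_I)$, and the norms coincide.

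Finally I would assemble the chain of equalities: Theorem \ref{thm-dec=reg-hyperfinite} gives $\norm{M_\phi}_{\dec,S^p_I\to S^p_I}=\norm{M_\phi}_{\reg,S^p_I\to S^p_I}$; the interpolation step gives $\norm{M_\phi}_{\reg,S^p_I\to S^p_I}=\norm{M_\phi}_{\cb,\B(\ell^2_I)\to\B(\ell^2_I)}$; and the endpoint fact about Schur multipliers gives $\norm{M_\phi}_{\cb,\B(\ell^2_I)\to\B(\ell^2_I)}=\norm{M_\phi}_{\B(\ell^2_I)\to\B(\ell^2_I)}$. I expect the only genuine subtlety to be the careful verification that $P_I^\infty$ and $P_I^1$ satisfy the hypotheses of Lemma \ref{Lemma-interpolation} — in particular that they restrict boundedly (indeed contractively) to each endpoint space and are compatible — but this is exactly what Corollary \ref{Prop-complementation-Schur} (together with the compatibility assertion in Theorem \ref{Prop-complementation-Schur-Fourier}) provides, so there is no real obstacle; everything else is a formal transcription of the proof of Theorem \ref{prop-amenable-discrete-Fourier-multiplier-dec-infty}.
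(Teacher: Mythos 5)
Your proposal is correct and follows essentially the same route as the paper: the two endpoint identifications $\mathfrak{M}^{\infty}_I=\mathfrak{M}^{1}_I=\mathfrak{M}^{\infty,\cb}_I$, the contractive compatible projections of Corollary \ref{Prop-complementation-Schur}, Lemma \ref{Lemma-interpolation} applied to the couple \eqref{Regular-as-interpolation-space}, and Theorem \ref{thm-dec=reg-hyperfinite} to pass from the regular to the decomposable norm. The only differences are cosmetic (you justify the endpoint facts by factorization/duality where the paper cites the literature), so there is nothing to add.
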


\section{Approximation by discrete groups}
\label{sec:approximation-discrete-groups}

The complementation Theorem \ref{Prop-complementation-Schur-Fourier} from Chapter \ref{sec:Decomposable-Schur-multipliers-and-Fourier-multipliers} is stated only for \textit{discrete} groups $G$. In order to exhibit a suitable class of admissible \textit{non-discrete} locally compact groups, approximations by discrete subgroups of $G$ become important. In this section, we introduce and study several notions of approximation which are of independent interest, but which will be important in the subsequent Chapter \ref{sec:Locally-compact-groups}.

\subsection{Preliminaries}

\paragraph{Chabauty-Fell topology.} For a topological space $Y$, let $\mathscr{F}(Y)$ denote the set of closed subsets of $Y$. For a compact subset $K$ and an open subset $U$ of $Y$, set\footnote{\thefootnote. Note that $\mathcal{O}_{K_1} \cap \cdots \cap \mathcal{O}_{K_m} = \mathcal{O}_{K_1 \cup \cdots \cup K_m}$.}
$$
\mathcal{O}_K \ov{\mathrm{def}}{=} \{F \in \mathscr{F}(Y) \ : \  F \cap K = \emptyset \} 
\quad \text{ and } \quad
\mathcal{O}_{U}' \ov{\mathrm{def}}{=} \{F \in \mathscr{F}(Y) \ : \ F \cap U \not= \emptyset\}.
$$
The finite intersections $\mathcal{O}_{K_1} \cap \cdots \cap \mathcal{O}_{K_m} \cap \mathcal{O}_{U_1}' \cap \cdots \cap \mathcal{O}_{U_n}'$ constitute a basis of a topology on $\mathscr{F}(Y)$, called the Chabauty-Fell topology, introduced in \cite[page 472]{Fell} under the name of H-topology. By \cite[Theorem 1]{Fell}, if $Y$ is locally compact then $\mathscr{F}(Y)$ is a (Hausdorff) compact space. See also \cite{BeR} and \cite{Harp} for more information.

\paragraph{Geometric convergence.} The Chabauty-Fell topology is related to the geometric convergence of Thurston. By \cite[Proposition E.1.2]{BeR}, if $Y$ is a locally compact metrizable space then a sequence $(F_n)$ of closed subsets of $Y$ converges to an element $F$ of $\mathscr{F}(Y)$ if and only if the two following conditions are satisfied:
\begin{itemize}
	\item Let $(F_{n_k})$ be a subsequence of $(F_n)$ and let $x_k \in F_{n_k}$ such that the sequence $(x_k)$ converges in $Y$ to some $x$ in $Y$. Then we have $x \in F$.
	
	\item Any point in $F$ is the limit in $Y$ of a sequence $(x_n)$ with $x_n \in F_n$ for each $n$.
\end{itemize}

\paragraph{Spaces of closed subgroups.} By \cite[IV page 474]{Fell} (see also \cite[Chapitre VIII, \S5, no. 3, Th\'eor\`eme 1]{Bou1}), if $Y=G$ is a locally compact group, the space $\mathscr{C}(G)$ of closed subgroups of $G$ equipped with the induced topology is closed in $\mathscr{F}(G)$, hence compact. Moreover, in this case, it is folklore but not entirely obvious that a basis of neighborhoods of a closed subgroup $H \in \mathscr{C}(G)$ is  given by the sets
\begin{equation}
\label{Chabauty-def}
\mathcal{N}_U^K(H) 
\ov{\mathrm{def}}{=} \big\{H' \in \mathscr{C}(G)\ : \ H' \cap K \subset H U \text{ and } H \cap K \subset H' U\big\}
\end{equation}
where $K$ runs over the compact subsets of $G$ and $U$ runs over the neighborhoods of $e_G$. In words, $H'$ is very close to $H$ if, on a large compact set $K$, the elements of $H'$ belong uniformly to a small neighborhood of $H$, and conversely. In this specific case, the convergence of a sequence was introduced by Chabauty \cite[page 147]{Cha} to generalize Mahler's well-known compactness criterion to lattices in locally compact groups. The following is folklore, see e.g. \cite[Appendix A]{BHP18}.

\begin{prop} 
\label{Chabauty-equivalence}
Let $G$ be a locally compact group. The sets $\mathcal{N}_U^K(H)$ generate the neighborhood filter of $H$ in the Chabauty-Fell topology.
\end{prop}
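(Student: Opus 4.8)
The statement to prove is Proposition \ref{Chabauty-equivalence}: the family of sets $\mathcal{N}_U^K(H)$ defined in \eqref{Chabauty-def} forms a neighborhood basis at $H$ for the Chabauty-Fell topology on $\mathscr{C}(G)$. Since both sides are (sub)bases of topologies on $\mathscr{C}(G)$, the plan is to show each $\mathcal{N}_U^K(H)$ contains a basic Chabauty-Fell neighborhood of $H$, and conversely that each basic Chabauty-Fell neighborhood $\mathcal{O}_{K_1}\cap\cdots\cap\mathcal{O}_{K_m}\cap\mathcal{O}_{U_1}'\cap\cdots\cap\mathcal{O}_{U_n}'$ of $H$ contains some $\mathcal{N}_U^K(H)$. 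Throughout I would use local compactness of $G$ to replace arbitrary compact sets by compact neighborhoods and to extract, for a compact $K$ and open neighborhood $U$ of $e_G$, a symmetric compact neighborhood $V$ of $e_G$ with $V^2\subset U$ (and $VKV\subset$ a slightly larger compact set when needed).

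\textbf{Step 1: $\mathcal{N}_U^K(H)$ is a Chabauty-Fell neighborhood of $H$.} Fix a compact $K$ and open $U\ni e_G$. First note $H\in\mathcal{N}_U^K(H)$ trivially. I would cover the compact set $H\cap K$ by finitely many translates of a small symmetric compact neighborhood $V$ of $e_G$ with $V^2 \subset U$: pick points $h_1,\dots,h_r\in H\cap K$ with $H\cap K\subset\bigcup_i h_iV$ (using compactness). Then consider the basic Chabauty-Fell open set $W=\bigcap_{i=1}^r\mathcal{O}_{h_iV}'\,\cap\,\mathcal{O}_{K\setminus HU^{\circ}}$, where $U^\circ$ denotes the interior (one must be slightly careful that $K\setminus HV^\circ$ is relatively compact — it is compact since $HV^\circ$ is open and $K$ compact). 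I claim $H\in W\subset\mathcal{N}_U^K(H)$. Membership of $H$: each $h_iV$ meets $H$ (it contains $h_i$), and $H\cap(K\setminus HU^\circ)=\emptyset$. Inclusion: if $H'\in W$, then $H'\cap(K\setminus HU^\circ)=\emptyset$ gives $H'\cap K\subset HU^\circ\subset HU$; and for the reverse containment $H\cap K\subset H'U$, take $h\in H\cap K$, so $h\in h_iV$ for some $i$, and since $H'$ meets $h_iV$, pick $h'\in H'\cap h_iV$; then $h^{-1}h'\in V^{-1}V=V^2\subset U$, hence $h\in h'U^{-1}\subset H'U$ after arranging $U$ symmetric (or simply enlarging $U$ at the outset, which costs nothing since we only need a neighborhood basis). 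This shows $\mathcal{N}_U^K(H)$ contains a Chabauty-Fell-open neighborhood of $H$.

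\textbf{Step 2: every basic Chabauty-Fell neighborhood contains some $\mathcal{N}_U^K(H)$.} Let $N=\mathcal{O}_{K_1}\cap\cdots\cap\mathcal{O}_{K_m}\cap\mathcal{O}_{U_1}'\cap\cdots\cap\mathcal{O}_{U_n}'$ with $H\in N$; set $K_0=K_1\cup\cdots\cup K_m$, a compact set disjoint from $H$. Since $H$ is closed and $K_0$ compact with $H\cap K_0=\emptyset$, there is an open neighborhood $U_0$ of $e_G$ with $HU_0\cap K_0=\emptyset$ (standard: the map $(h,k)\mapsto h^{-1}k$ and compactness, or a direct separation argument). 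For each $j=1,\dots,n$, since $H\cap U_j\neq\emptyset$ we may pick $h_j\in H\cap U_j$ and then a compact neighborhood $K_j'$ of $h_j$ and a neighborhood $U_j'$ of $e_G$ with $h_jU_j'\subset U_j$ and $U_j'$ symmetric. Put $K=K_0\cup\{h_1\}\cup\cdots\cup\{h_n\}$ (compact) and $U=U_0\cap U_1'\cap\cdots\cap U_n'$ (open neighborhood of $e_G$), shrinking $U$ further if needed so that $h_jU\subset U_j$ for all $j$. I claim $\mathcal{N}_U^K(H)\subset N$. Indeed if $H'\in\mathcal{N}_U^K(H)$: then $H'\cap K\subset HU\subset HU_0$, which is disjoint from $K_0$, so $H'\cap K_i=\emptyset$ for $i=1,\dots,m$, i.e.\ $H'\in\mathcal{O}_{K_i}$; and since $h_j\in H\cap K$, the condition $H\cap K\subset H'U$ gives $h_j\in H'U$, say $h_j=h'u$ with $h'\in H'$, $u\in U$, whence $h'=h_ju^{-1}\in h_jU^{-1}=h_jU\subset U_j$, so $H'$ meets $U_j$, i.e.\ $H'\in\mathcal{O}_{U_j}'$. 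Thus $H'\in N$.

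\textbf{Main obstacle.} The two steps are each a routine but slightly fiddly verification; the genuine care is needed in Step 1 in ensuring the sets fed to the $\mathcal{O}_{(\cdot)}$ operator are genuinely compact (here $K\setminus HU^\circ$, which requires $HU^\circ$ open — fine since $U^\circ$ is open — and $K$ compact) and, in both steps, in the bookkeeping of symmetrizing neighborhoods and absorbing ``$U$ versus $U^{-1}$, $U$ versus $U^2$'' discrepancies. All of this is harmless because we only need a neighborhood basis, so replacing $U$ by a smaller symmetric $V$ with $V^2\subset U$ at any point is legitimate; I would state this normalization once at the start and then not belabor it. No result beyond local compactness of $G$ and the definition of the Chabauty-Fell topology (plus the already-cited fact that $\mathscr{C}(G)$ is closed in $\mathscr{F}(G)$) is required.
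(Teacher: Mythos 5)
The paper offers no proof of this proposition — it is stated as folklore immediately after the definition \eqref{Chabauty-def} — so there is nothing to compare against; your argument supplies exactly the omitted verification, and it is correct. Two cosmetic points. First, the sets $\mathcal{O}'_{(\cdot)}$ are defined only for \emph{open} sets, so in Step 1 you should feed in $h_iV^{\circ}$ rather than the compact neighbourhood $h_iV$ (cover $H\cap K$ by the open translates $hV^{\circ}$ and take the finite subcover there); note that $V^{\circ}$ is still a symmetric neighbourhood of $e_G$ with $(V^{\circ})^2\subset U$, so nothing else changes. Second, the symmetrization worry in Step 1 is unnecessary: you cannot enlarge or symmetrize $U$ there, since $\mathcal{N}_U^K(H)$ must be shown to be a neighbourhood for \emph{every} $U$, but you do not need to — from $h=h_iv$, $h'=h_iw$ with $v,w\in V$ and $V=V^{-1}$ you get directly $h=h'(w^{-1}v)\in h'V^2\subset h'U\subset H'U$. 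In Step 2 you do get to choose $U$, so replacing it by $U\cap U^{-1}$ to justify $h_jU^{-1}\subset U_j$ is legitimate, as you say. With these adjustments the two inclusions are complete and the proposition follows.
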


\paragraph{Lattices and fundamental domains.} A lattice $\Gamma$ in a locally compact group $G$ is a discrete subgroup for which $G/\Gamma$ has a bounded $G$-invariant Borel measure \cite[Definition B.2.1 page 332]{BHV}. 
A locally compact $G$ that admits a lattice is necessarily unimodular \cite[Proposition B.2.2 page 332]{BHV}. 
The same reference says that if $\Gamma$ is a cocompact\footnote{\thefootnote. The word uniform is also used.} (i.e. $G/\Gamma$ is compact) discrete subgroup of a locally compact group $G$ then $\Gamma$ is a lattice of $G$. 

Let $\Gamma$ be a discrete group of a locally compact group $G$. If $A$ is a subset of $G$ and $\gamma \in \Gamma$, then the set $A\gamma$ is called an image of $A$. A fundamental domain $\X$ relative to $\Gamma$ is a Borel measurable subset of $G$ satisfying the following two properties: 
\begin{flalign}
&\label{DGamma=G} \X\Gamma=G, \\
&\label{Dgamma=Dgamma2} \X\gamma \cap \X\gamma'= \emptyset \text{ for any distinct elements }\gamma, \gamma' \text{ of } \Gamma.       
\end{flalign}   
These properties say that every element $x \in G$ is covered by one and only one image of $\X$. These conditions are equivalent to the following statement: $\X$ is a Borel measurable subset of $G$ such that the restriction of the canonical mapping $G \to G/\Gamma$ of $G$ onto left cosets, restricted to $\X$, becomes a bijection onto $G/\Gamma$. We obtain a set $\X$ with these two properties, if we select a representative $s$ from every left coset $s\Gamma$ of $\Gamma$ relative to $G$. However, in general, such a set $\X$ is not a Borel set. If $G$ is $\sigma$-compact the result \cite[Proposition B.2.4 page 333]{BHV} (see also \cite[Lemma 2]{Sie}) gives the existence of a fundamental domain for any discrete subgroup $\Gamma$ and if in addition $\Gamma$ is a lattice in $G$ then every fundamental domain for $\Gamma$ has finite Haar measure \cite[Proposition B.2.4 page 333]{BHV}. 








\subsection{Different notions of groups approximable by discrete groups}
\label{subsec-different-notions-of-groups-approximable-by-discrete-groups}

Recall that a locally compact group $G$ is approximable by a sequence $(\Gamma_j)$ of discrete subgroups \cite[Definition 1]{Kur} \cite[page 36]{Toy} if for any non-empty open set $O$ of $G$, there exists an integer $j_0$ such that for any $j \geq j_0$ we have $O\cap \Gamma_j\not=\emptyset$. We say that a locally compact group $G$ is approximable by discrete subgroups (ADS) if $G$ is approximable by some sequence $(\Gamma_j)$ of discrete subgroups. It is obvious that a second countable locally compact group $G$ is approximable by a sequence $(\Gamma_j)$ of discrete subgroups if and only if $(\Gamma_j)$ converges to $G$ for the Chabauty-Fell topology. Using the definition of the geometric convergence we obtain the following characterization.

\begin{prop}
\label{thm-approximable-discrete-subgroups}
Let $G$ be second countable locally compact group.
Let $(\Gamma_j)$ be a sequence of discrete subgroups of $G$.
The following are equivalent.
\begin{enumerate}
	\item The group $G$ is approximable by the sequence $(\Gamma_j)$.
	\item Any $s \in G$ is the limit in $G$ of a sequence $(\gamma_j)$ with $\gamma_j \in \Gamma_j$ for any integer $j$.
\end{enumerate}
\end{prop}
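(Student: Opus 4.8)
The statement to prove is Proposition \ref{thm-approximable-discrete-subgroups}, which says that for a second countable locally compact group $G$ and a sequence $(\Gamma_j)$ of discrete subgroups, approximability by $(\Gamma_j)$ in the sense of Kurosh (every non-empty open set eventually meets $\Gamma_j$) is equivalent to the pointwise approximation statement: every $s \in G$ is the limit of a sequence $(\gamma_j)$ with $\gamma_j \in \Gamma_j$. The plan is to deduce this directly from the characterization of geometric convergence recalled just before the statement, after first observing that the two conditions in the geometric convergence criterion, applied to $F_j = \Gamma_j$ and $F = G$, reduce in this specific situation. Indeed, the first bullet of the geometric convergence criterion (if $x_k \in F_{n_k}$ and $x_k \to x$, then $x \in F$) is automatically satisfied here since $F = G$ is the whole space, so it carries no content. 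Hence $(\Gamma_j)$ converges to $G$ in the Chabauty–Fell topology if and only if the second bullet holds, i.e. every point of $G$ is a limit of a sequence $(\gamma_j)$ with $\gamma_j \in \Gamma_j$; this is precisely condition (2).

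So the real work is to identify Kurosh's notion of approximability with Chabauty–Fell convergence $\Gamma_j \to G$. This is already asserted in the text immediately before the proposition (``It is obvious that a second countable locally compact group $G$ is approximable by a sequence $(\Gamma_j)$ of discrete subgroups if and only if $(\Gamma_j)$ converges to $G$ for the Chabauty–Fell topology''), so I may simply invoke it. For completeness I would spell it out: first I would show $(1) \Rightarrow$ convergence. Fix a basic neighborhood of $G$ in $\mathscr{C}(G)$, which by Proposition \ref{Chabauty-equivalence} has the form $\mathcal{N}_U^K(G) = \{H' \in \mathscr{C}(G) : H' \cap K \subset GU \text{ and } G \cap K \subset H'U\}$ with $K$ compact and $U$ a neighborhood of $e_G$; since $GU = G$, the first condition is vacuous, and the second says $K \subset \Gamma_j U$. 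Covering the compact set $K$ by finitely many left translates $x_1 U', \dots, x_n U'$ with $U'$ a symmetric neighborhood of $e_G$ with $U'U' \subset U$, and using that each non-empty open set $x_i U'$ eventually meets $\Gamma_j$ (condition (1)), I get for $j$ large enough that $K \subset \Gamma_j U$, hence $\Gamma_j \in \mathcal{N}_U^K(G)$. Conversely, for convergence $\Rightarrow (1)$, given a non-empty open $O$, pick $s \in O$ and a neighborhood $U$ of $e_G$ with $sU \subset O$; the neighborhood $\mathcal{N}_{U^{-1}}^{\{s\}}(G)$ of $G$ forces, for $j$ large, $s \in \Gamma_j U^{-1} \cap \{s\}$... more cleanly: $s \in G \cap K$ with $K = \{s\}$ and $U^{-1}$ in place of $U$ gives $s \in \Gamma_j U^{-1}$ hmm — it is cleaner to go through condition (2) directly, which I have already matched to geometric convergence: if every $s$ is a limit of $\gamma_j \in \Gamma_j$, then for any non-empty open $O$ pick $s \in O$, and eventually $\gamma_j \in O$, so $O \cap \Gamma_j \neq \emptyset$; and conversely (1) implies (2) by the translate-covering argument applied to shrinking neighborhoods of $s$ together with second countability to extract a single sequence via a diagonal argument over a countable neighborhood basis at $s$.

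The main (and only mild) obstacle is the bookkeeping in passing from ``each fixed non-empty open set eventually meets $\Gamma_j$'' to ``a single sequence $\gamma_j \in \Gamma_j$ converges to the given $s$''. This requires fixing a countable decreasing neighborhood basis $(V_m)$ at $s$ (available by second countability), choosing for each $m$ an index $j_m$ beyond which $V_m \cap \Gamma_j \neq \emptyset$, arranging $j_1 < j_2 < \cdots$, and then defining $\gamma_j$ for $j_m \leq j < j_{m+1}$ to be any element of $V_m \cap \Gamma_j$ (and $\gamma_j$ arbitrary in $\Gamma_j$ for $j < j_1$); then $\gamma_j \to s$. Everything else is a direct translation between the Kurosh definition, the geometric convergence criterion of \cite[Proposition E.1.2]{BeR}, and the triviality of the first geometric-convergence condition when the limit set is all of $G$. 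I would therefore keep the proof short, essentially: ``By \cite[Proposition E.1.2]{BeR} and the discussion preceding the statement, $(\Gamma_j)$ approximates $G$ if and only if $\Gamma_j \to G$ in $\mathscr{C}(G)$, and since the first condition in the geometric convergence criterion is automatic for $F = G$, this holds if and only if (2) holds,'' with the diagonal extraction spelled out as the one genuine point.
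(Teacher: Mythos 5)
Your proposal is correct and follows the same route the paper intends: the proposition is stated immediately after the remark that Kurosh approximability coincides with Chabauty--Fell convergence $\Gamma_j \to G$, and the paper derives it from the geometric convergence criterion of \cite[Proposition E.1.2]{BeR}, whose first condition is vacuous when the limit set is all of $G$ --- exactly your observation. Your additional diagonal extraction over a countable neighborhood basis at $s$ is the right way to make the implication $(1) \Rightarrow (2)$ fully explicit, and nothing in it is problematic.
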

Moreover, note that a connected ADS locally compact group $G$ is necessarily nilpotent (see \cite[Theorem 2.18]{HaK2}) and that a connected simply connected Lie group is ADS if and only if $G$ is nilpotent and if it admits a discrete cocompact subgroup (\cite[Theorem 1.6, 1.7 and 1.9]{HaS}. We refer to \cite{HaS}, \cite{HaK}, \cite{HaK2}, \cite{Kur}, \cite{Toy} and \cite{Wang} for more information on this notion. Now, we introduce different notions of approximation by discrete groups. These will be used in Chapter \ref{sec:Locally-compact-groups}.

\begin{defi}
\label{defi-sequentially-ALS}\label{defi-ALS}
Let $G$ be a second countable locally compact group.
\begin{enumerate}
\item The group $G$ is said to be approximable by lattice subgroups ($\ALS$) if there exists a sequence $(\Gamma_j)$ of lattices in $G$ such that $(\Gamma_j)$ converges to $G$ for the Chabauty-Fell topology.

\item The group $G$ is said to be (right) uniformly approximable by a sequence $(\Gamma_j)$ of discrete subgroups if there exists a right invariant metric $\dist$ such that for any $\epsi > 0$, there exists an integer $j_0$ such that for all $j \geq j_0$ and all $s \in G$ there exists $\gamma_j \in \Gamma_j$ such that $\dist(s,\gamma_j) < \epsi$. The group $G$ is said to be uniformly $\ADS$ if $G$ is uniformly approximable by a sequence $(\Gamma_j)$ of discrete subgroups. We also define the notion ``uniformly $\mathrm{ALS}$'' where ``discrete groups'' is replaced by ``lattice subgroups''.

\item The group $G$ is said to be approximable by shrinking by a sequence $(\Gamma_j)$ of lattice subgroups with associated fundamental domains $(\mathrm{X}_j)$ if for any neighborhood $V$ of the identity $e_G$ (equivalently, for any ball $V = B(e_G,\epsi)$ with $\epsi > 0$, associated with a right invariant metric generating the topology of $G$) there exists some integer $j_0$ such that $\X_j \subset V$ for any $j \geq j_0$. The group $G$ is said to be approximable by lattice subgroups by shrinking ($\ALSS$) if there exists a sequence $(\Gamma_j)_{j \geq 1}$ of lattice subgroups in $G$ and some associated fundamental domains $(\X_j)$ such that $G$ is approximable by shrinking by $(\Gamma_j)$ and $(\X_j)$.
\end{enumerate}
\end{defi}

\begin{remark}\normalfont
\label{rem-after-defi-ALS}
\begin{enumerate}
\item If we assume in Part 3 of Definition \ref{defi-ALS} that the subgroups $\Gamma_j$ are only discrete subgroups instead of being lattices, we obtain  the same definition. Indeed, for any sufficiently small $\epsi > 0$ and any sufficiently large $j$, we have $\X_j \subset B(e_G,\epsi)$ where $B(e_G,\epsi)$ is relatively compact according to the local compactness of $G$. Thus the closure $\ovl{\X_j}$ is compact. The canonical mapping $\pi \co G \to G/\Gamma_j$ being continuous, $\pi(\ovl{\X_j})$ is also compact. But since $\X_j$ is a fundamental domain, we have $\pi(\X_j) = G/\Gamma_j$ and a fortiori $\pi(\ovl{\X_j})= G/\Gamma_j$. Therefore, $G/\Gamma_j$ is compact, and so by \cite[Proposition B.2.2]{BHV}, the discrete subgroup $\Gamma_j$ is automatically a lattice.

\item We shall see in Part 3 of Proposition \ref{prop-approx-ALS} that a second countable locally compact group which is uniformly ADS with respect to a sequence $(\Gamma_j)$ of discrete subgroups admits fundamental domains which are almost all included in small balls. Therefore, combined with the first part of this remark, we deduce that if $G$ is uniformly ADS then $G$ is uniformly $\ALS$.

\item
Part 3 of Definition \ref{defi-ALS} is inspired by the notion $\ADS$ from \cite[page 3]{CPPR}. It is formally slightly weaker since we assume that the $\X_j$ are becoming smaller and smaller around $e_G$ instead of forming a neighborhood basis of $e_G$ as in \cite{CPPR}. Moreover, the authors of \cite{CPPR} use only lattice subgroups. However, we shall see in Part 3 of Proposition \ref{prop-approx-ALS} that our notion of $\ALSS$ is equivalent to $\ADS$ from \cite[page 3]{CPPR}.

\item It is obvious that the property uniformly ADS implies the property ADS, that uniformly ALS implies ALS and that ALS implies ADS.
\end{enumerate}
\end{remark}

Recall that any locally compact group $G$ which contains a lattice subgroup $\Gamma$ is unimodular by \cite[Proposition B.2.2]{BHV} and that the subset of unimodular closed subgroups of $G$ is closed in $\mathscr{C}(G)$ for the Chabauty topology, see \cite[Chapitre VIII, \S5, no. 3, Th\'eor\`eme 1]{Bou1}.


We start with a result giving the existence of fundamental domains satisfying some inclusion constraint. In this proposition and the subsequent lemma, we equip the group $G$ with a left invariant metric $\dist$ generating its topology and we consider the balls $B(e_G,r) \ov{\mathrm{def}}{=} \{ s \in G :\: \dist(s,e_G) < r \}$. However, note that the statement in Proposition \ref{lem-Dirichlet-cell} remains valid if one replaces the distance $\dist$ by a \textit{right} invariant one $\dist'$, generating the topology of $G$, together with balls $\tilde{B}(e_G,r) \ov{\mathrm{def}}{=} \{ s \in G :\: \dist'(s,e_G) < r \}$. Indeed, note that since both $\dist$ and $\dist'$ generate the same topology, if $D$ contains a ball $\tilde{B}(e_G,\tilde{r})$, it will contain a ball $B(e_G,r)$, so $\X$ will contain a ball $B(e_G,r')$ and thus also a ball $\tilde{B}(e_G,r'')$. 

\begin{prop}
\label{lem-Dirichlet-cell}
Let $G$ be a second countable locally compact group together with a discrete subgroup $\Gamma \subset G$. Let $D \subset G$ be a measurable subset satisfying $\bigcup_{\gamma \in \Gamma} D \gamma = G$. Then there exists a fundamental domain $\X \subset D$ associated with $\Gamma$. Moreover, if $D$ contains a ball $B(e_G,r)$ then $\X$ contains a ball $B(e_G,r')$.
\end{prop}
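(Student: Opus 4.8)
\emph{Proof plan.} The plan is to produce $\X$ by first taking an arbitrary Borel fundamental domain for $\Gamma$ in $G$ — which exists because a second countable locally compact group is $\sigma$-compact and $\Gamma$ is countable, so \cite[Proposition B.2.4 page 333]{BHV} (or \cite[Lemma 2]{Sie}) applies — and then ``pushing'' each of its points into $D$ by a suitable element of $\Gamma$. Concretely, enumerate $\Gamma = \{\gamma_n : n \geq 1\}$ with $\gamma_1 = e_G$ and let $\X_0$ be a Borel fundamental domain. The hypothesis $\bigcup_{\gamma \in \Gamma} D\gamma = G$ says that every left coset of $\Gamma$ meets $D$, so for each $d \in \X_0$ the integer $m(d) = \min\{n \geq 1 :\: d\gamma_n \in D\}$ is well defined. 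Since $x \mapsto x\gamma_n$ is a homeomorphism of $G$ and $D$ is Borel, each set $\{d \in \X_0 :\: m(d) = n\}$ is Borel, hence so is $\X = \bigcup_{n \geq 1} \{d \in \X_0 :\: m(d) = n\}\gamma_n$, and $\X \subseteq D$ by construction. That $\X$ satisfies \eqref{DGamma=G} and \eqref{Dgamma=Dgamma2} is then a routine verification using the uniqueness of the decomposition $x = d\gamma$ with $d \in \X_0$, $\gamma \in \Gamma$, together with the fact that $d \mapsto m(d)$ depends only on $d$.

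For the ``moreover'' part, the idea is to reserve a ball around $e_G$ \emph{before} thinning. By discreteness of $\Gamma$ choose $\rho > 0$ with $B(e_G,\rho) \cap \Gamma = \{e_G\}$, and fix $0 < r' < \min(r, \rho/2)$, so that $B := B(e_G,r') \subseteq D$. Using left-invariance of $\dist$ one first checks that the translates $\{B\gamma :\: \gamma \in \Gamma\}$ are pairwise disjoint: if $x\gamma = x'\gamma'$ with $x,x' \in B$, then $(x')^{-1}x = \gamma'\gamma^{-1} \in \Gamma$ and $\dist((x')^{-1}x,e_G) = \dist(x,x') < 2r' < \rho$, forcing $\gamma = \gamma'$. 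Hence $B\Gamma$ is an open $\Gamma$-invariant set each of whose orbits (left cosets) is met exactly once by $B$, and $G' := G \setminus B\Gamma$ is a closed, $\Gamma$-invariant, $\sigma$-compact set whose orbits are precisely the left cosets not meeting $B$. Since $G'$ is a union of orbits and $\X_0$ meets each orbit exactly once, $\X_0 \cap G'$ is a Borel fundamental domain for the $\Gamma$-action on $G'$; and any coset contained in $G'$ meets $D$ in a point that already lies in $D \cap G'$. So we may repeat the pushing construction on $G'$: for $d \in \X_0 \cap G'$ put $n'(d) = \min\{n :\: d\gamma_n \in D\}$ (well defined because $d\Gamma \subseteq G'$ meets $D \cap G'$) and set
\[
\X \;=\; B \;\cup\; \bigcup_{n \geq 1} \big\{d \in \X_0 \cap G' :\: n'(d) = n\big\}\gamma_n .
\]
Then $\X \subseteq D$ and $B(e_G,r') \subseteq \X$. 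One checks that $\X$ meets every orbit exactly once by splitting into orbits contained in $B\Gamma$ (met once, inside $B$, and disjoint from the $G'$-valued second part of $\X$) and orbits contained in $G'$ (disjoint from $B$, met exactly once by the second part, as in the first construction).

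I expect the main obstacle to be this ``moreover'' statement: one cannot first produce an arbitrary fundamental domain inside $D$ and hope a ball survives, so one has to pre-commit to the ball $B$, which is legitimate exactly because discreteness of $\Gamma$ together with left-invariance of the metric makes the $\Gamma$-translates of a sufficiently small ball pairwise disjoint, and then perform the thinning only on the complementary $\Gamma$-invariant set $G'$. The one slightly delicate point is the bookkeeping showing that $B$ and the thinned transversal of $G'$ glue to a genuine fundamental domain, in particular that each left coset lies entirely in $B\Gamma$ or entirely in $G'$; the Borel measurability of all the sets involved and the checking of \eqref{DGamma=G} and \eqref{Dgamma=Dgamma2} are routine. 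Finally, the version with a right-invariant metric follows from the one above because the two metrics generate the same topology, as already noted before the statement.
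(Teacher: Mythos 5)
Your proof is correct, but it takes a genuinely different route from the paper's. The paper builds the fundamental domain from scratch: it takes a measurable section $q \co G/\Gamma \to G$ of the covering $G \to G/\Gamma$ (a local continuous section extended globally by Zorn's lemma), forms the ``coordinate'' map $\gamma(s) = q(s\Gamma)^{-1}s$, proves a small lemma that $\gamma$ is constantly $e$ on a ball $B(e,\rho)$, and then runs a first-come-first-served recursion $A_1, A_2, \ldots$ over the enumeration of $\Gamma$, selecting from each coset the first translate that lands in $D$; the ball condition comes out automatically because $B(e,\min(r,\rho)) \subset A_1$. You instead quote the existence of a Borel fundamental domain $\X_0$ (which the paper itself records in the preliminaries via \cite[Proposition B.2.4]{BHV}, so there is no circularity) and push each point of $\X_0$ into $D$ by the first available element of $\Gamma$; for the ball you pre-commit to $B = B(e_G,r')$ with $2r' < \rho$ so that the translates $B\gamma$ are pairwise disjoint, and you thin only the $\Gamma$-invariant complement $G \setminus B\Gamma$. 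Your argument is shorter and avoids the covering-space/measurable-section machinery entirely, at the cost of invoking the abstract existence of $\X_0$; the paper's argument is self-contained modulo that same machinery and gets the ball with no case split. Both the gluing step (each left coset lies entirely in $B\Gamma$ or entirely in its complement) and the Borel measurability of the sets $\{d \in \X_0 : m(d) = n\} = \X_0 \cap D\gamma_n^{-1} \cap \bigcap_{k<n} (D\gamma_k^{-1})^c$ check out, so I see no gap.
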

\begin{proof}
Note first that since $G$ is second countable, $\Gamma$ endowed with the trace topology is again second countable. Since $\Gamma$ is discrete, this implies that $\Gamma$ is at most countable, and we choose one enumeration $(\gamma_j)$ of $\Gamma$. 

Consider the canonical map $p \co G \to G/\Gamma$. 
Since $G$ is second countable, there exists by \cite[Lemma 1.1]{Mac52} (see also the discussions \cite[page 11]{Bor21} and \cite[page 67]{Fur05}) a locally bounded Borel section $q \co G/\Gamma \to G$. By \cite[Corollary 4.49]{Wil07}, the map $\gamma \co G \to \Gamma$, $s \mapsto (q(s\Gamma))^{-1}s$ is a (locally bounded) Borel function. 


%

\begin{lemma}
There exists some $\rho > 0$ such that $\gamma(B(e,\rho)) \subset \{e\}$.
\end{lemma} 

\begin{proof}
Let $\dist$ be a left invariant metric on $G$ generating its topology as a locally compact group and $\dist_{G/\Gamma}$ the associated distance on $G/\Gamma$. Consider the strictly\footnote{\thefootnote. The subset $\{e\}$ is open in $\Gamma$, so $\Gamma \backslash \{e\}$ is closed.} positive number $r_0 =\dist(\Gamma \backslash \{e\},e)>0$. Since $B(e,r_0) \cap \Gamma= \{e\}$, for any $s \in G$, the condition $\dist(\gamma(s),e) < r_0$ implies that $\gamma(s) = e$. Now by definition of $\gamma$, we have $\dist(\gamma(s),e) < r_0$ if and only if $\dist(q(s\Gamma)^{-1} s,e) < r_0$ and finally if and only if $\dist(s,q(s\Gamma)) < r_0$ by left invariance. Since $q$ is continuous in a neighborhood of $e \Gamma$, there exists $r_1 >0$ such that $\dist_{G/\Gamma}(s\Gamma,e\Gamma)<r_1$ implies $\dist(q(s\Gamma),e)< \frac{r_0}{2}$. If $\dist(s,e) < \min\{r_1,\frac{r_0}{2}\}$ we have 
$$
\dist_{G/\Gamma}(s\Gamma,e\Gamma) 
\leq \dist(s,e)
< r_1,
$$ 
hence $\dist(e,q(s\Gamma)) < \frac{r_0}{2}$. Thus the triangle inequality gives
$$
\dist\big(s,q(s\Gamma)\big) 
\leq \dist(s,e)+\dist\big(e,q(s\Gamma)\big)  
< \frac{r_0}{2} + \frac{r_0}{2} 
= r_0.
$$ 
The lemma is proved.
\end{proof}

Define now $A_1 \ov{\mathrm{def}}{=} \{ s \in D :\: \gamma(s) = \gamma_1 \} = D \cap \gamma^{-1}(\{\gamma_1\})$, which is measurable as the intersection of two measurable sets. Assuming without loss of generality that $\gamma_1 = e$, we have that $B(e,r') \subset A_1$ for $r' = \min(r,\rho)$ since $B(e,r) \subset D$. Define then recursively for $k \geq 2$, the subsets
\begin{align*} 
A_k & \ov{\mathrm{def}}{=} \big\{ s \in D :\: \gamma(s) = \gamma_k ,\: \centernot{\exists}\: j \in \{ 1, \ldots, k-1\},\: \centernot{\exists} \: l \in \N :\: s \gamma_l \in A_j \big\} \\
& = D \cap \gamma^{-1}(\{\gamma_k\}) \cap \bigcap_{j = 1}^{k-1} \bigcap_{l \in \N} A_j^c \gamma_l^{-1}.
\end{align*}
It can easily be shown recursively that $A_k$ is measurable as the countable intersection of measurable sets. Define finally $\X \ov{\mathrm{def}}{=} \bigcup_{k = 1}^\infty A_k$.

We claim that $\X$ is a (measurable) fundamental domain of $\Gamma$ which is contained in $D$. First, it is measurable as a countable union of measurable sets. Since by definition, we have $A_k \subset D$ for any integer $k \geq 1$, we also have $\X \subset D$. 

\begin{lemma}
For any $\gamma \in \Gamma \backslash \{e\}$, we have $\X \gamma \cap \X = \emptyset$.
\end{lemma} 

\begin{proof}
Indeed, let $s \in \X$, so that $s \in A_{k_0}$ for some $k_0 \in \N$. This implies that $\gamma(s) = \gamma_{k_0}$. Put $t = s \gamma$. Since $\gamma \not= e$, we cannot have $\gamma(t) = \gamma(s)$, because otherwise $Y(t)=(t\Gamma,\gamma(t)) = (s \Gamma,\gamma(s))=Y(s)$, and since $Y$ is bijective, we obtain $t = s$, which is a contradiction. So $\gamma(t) = \gamma_{k_1}$ for some $k_1 \not= k_0$.

If $k_1 > k_0$, then $t$ cannot belong to $A_{k_1}$. Indeed, $t \in A_{k_1}$ implies that we cannot find $l \in \N$ such that $t \gamma_l \in A_{k_0}$ since $k_0<k_1$. This implies with $\gamma_l = \gamma^{-1}$ that $s = t \gamma^{-1} \not\in A_{k_0}$, which is a contradiction. 

If $k_1 < k_0$, then $t$ cannot belong to $A_{k_1}$ either. Indeed, since $s \in A_{k_0},$ we cannot find $l \in \N$ such that $s \gamma_l \in A_{k_1}$ since $k_1<k_0$. This implies with $\gamma_l = \gamma$ that $t = s \gamma \not\in A_{k_1}$. Thus $t \not\in \X$, so we have $\X \gamma \cap \X = \emptyset$.
\end{proof}

\begin{lemma}
We have
\begin{equation}
\label{equ-proof-lem-Dirichlet-cell}
\bigcup_{k = 1}^\infty A_k \Gamma 
= \big\{s \in D:\: \gamma(s) \in \{ \gamma_1, \gamma_2, \ldots \} \big\} \Gamma.
\end{equation}
\end{lemma}

\begin{proof}
For the inclusion $\subset$, we note that if $s \in A_k$ for some $k \in \N$, then in particular $s \in D$ and $\gamma(s) = \gamma_k$, so that $s \Gamma$ is contained in the right hand side of \eqref{equ-proof-lem-Dirichlet-cell}. For the inclusion $\supset$, if $s \in D$ and $\gamma(s) = \gamma_k$ for some $k \in \N$, then either $s \in A_k$, which implies that $s \Gamma$ is contained in the left hand side of \eqref{equ-proof-lem-Dirichlet-cell} or there exists $l \in \N$ and $j \in \{ 1, \ldots, k-1 \}$ such that $s \gamma_l \in A_j$. Then  $s \Gamma=s\gamma_l\gamma_l^{-1}\Gamma \subset A_j \Gamma$, so it is also contained in the left hand side of \eqref{equ-proof-lem-Dirichlet-cell}. Whence, \eqref{equ-proof-lem-Dirichlet-cell} is shown.
\end{proof}

The left hand side of \eqref{equ-proof-lem-Dirichlet-cell} equals clearly $\X \Gamma$, and the right hand side equals $D\Gamma$, since $\gamma(s)$ must belong to $\{\gamma_1,\gamma_2,\ldots \}$ for any $s \in D$. Since $D\Gamma=G$, we obtain $\X \Gamma = G$, so that $\X$ is a fundamental domain. Since $B(e,r') \subset A_1$, we also have $B(e,r') \subset \X$.
\end{proof}



\begin{prop}
\label{prop-approx-ALS}
Let $G$ be a second countable locally compact group.

\begin{enumerate}
\item If the group $G$ is $\ALSS$ with respect to $(\Gamma_j)$ and $(\X_j)$ then $G$ is uniformly $\ALS$ with respect to $(\Gamma_j)$.

\item Let $G$ be an $\ADS$ group with respect to a sequence $(\Gamma_j)$ of discrete subgroups. Suppose that for some $j_0 \in \N$, some compact $K \subset G$ and any $j \geq j_0$ there exists a fundamental domain $\X_j$ with respect to $\Gamma_j$ such that $\X_j \subset K$. Then the group $G$ is uniformly $\ADS$ with respect to $(\Gamma_j)$. We have a similar property for $\ALS$ and uniformly $\ALS$.

\item If the group $G$ is uniformly $\ADS$ with respect to discrete subgroups $(\Gamma_j)$ then $G$ is $\ALSS$ with respect to $(\Gamma_j)$ and some particular sequence $(\X_j)$ of fundamental domains. Moreover, the $\X_j$ can be chosen to be neighborhoods of $e_G$ if $j$ is large enough. In particular, if $G$ is uniformly $\ALS$ then $G$ is $\ALSS$.

\item The group $G$ is uniformly $\ADS$ if and only if it is uniformly $\ALS$ if and only if it is $\ALSS$.

\end{enumerate}
\end{prop}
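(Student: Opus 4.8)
The plan is to fix once and for all a right invariant metric $\dist$ generating the topology of $G$ (such a metric exists since $G$ is second countable, hence metrizable), and to use it uniformly for all three notions so that the four parts interlock. The recurring mechanism throughout will be the identity $\dist(x\gamma,\gamma)=\dist(x,e_G)$, coming from right invariance, combined with the defining relation $\X_j\Gamma_j=G$ of a fundamental domain. Part 1 is then essentially a one line computation: if $G$ is $\ALSS$ with lattices $(\Gamma_j)$ and domains $(\X_j)$, then given $\epsi>0$ the shrinking property yields $j_0$ with $\X_j\subset B(e_G,\epsi)$ for $j\geq j_0$; writing an arbitrary $s\in G$ as $s=x\gamma$ with $x\in\X_j$ and $\gamma\in\Gamma_j$, right invariance gives $\dist(s,\gamma)=\dist(x\gamma,\gamma)=\dist(x,e_G)<\epsi$. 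Since $s$ is arbitrary and the $\Gamma_j$ are lattices, this is exactly uniform $\ALS$.

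For Part 2 the aim is to upgrade pointwise density ($\ADS$) to uniform density, exploiting that the fundamental domains all sit in a fixed compact set $K$. First I would establish uniform approximation on $K$: cover $K$ by finitely many balls $B(y_1,\epsi/2),\ldots,B(y_n,\epsi/2)$; by $\ADS$ each of these open balls eventually meets $\Gamma_j$, so there is $j_1$ with $\Gamma_j\cap B(y_i,\epsi/2)\neq\emptyset$ for all $i$ whenever $j\geq j_1$, and then any $y\in K$ lies in some $B(y_i,\epsi/2)$, whence $\dist(y,\gamma)<\epsi$ for a suitable $\gamma\in\Gamma_j$ by the triangle inequality. To pass from $K$ to all of $G$, for $s\in G$ and $j\geq\max\{j_0,j_1\}$ I write $s=x\gamma_0$ with $x\in\X_j\subset K$, approximate $x$ within $\epsi$ by some $\gamma\in\Gamma_j$, and use right invariance to get $\dist(s,\gamma\gamma_0)=\dist(x,\gamma)<\epsi$ with $\gamma\gamma_0\in\Gamma_j$. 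This yields uniform $\ADS$, and the variant for $\ALS$/uniform $\ALS$ is the identical argument with the $\Gamma_j$ being lattices.

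For Part 3 I would first translate uniform $\ADS$ into a covering statement: the condition $\dist(s,\gamma)<\epsi$ is equivalent to $s\gamma^{-1}\in B(e_G,\epsi)$, so for $j\geq j_0(\epsi)$ one has $B(e_G,\epsi)\Gamma_j=G$. Then Proposition \ref{lem-Dirichlet-cell}, in its right invariant version (valid by the comment following that proposition), furnishes a fundamental domain $\X_j\subset B(e_G,\epsi)$ which moreover contains a ball around $e_G$, hence is a neighborhood of $e_G$. A diagonal choice — associating to each large $j$ the largest $k$ with $j\geq j_0(1/k)$ and taking $\X_j\subset B(e_G,1/k)$, together with any fundamental domain for the finitely many remaining indices — produces a sequence $(\X_j)$ with $\X_j$ shrinking to $e_G$, which is the defining property of $\ALSS$. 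Since these domains are small, Remark \ref{rem-after-defi-ALS} shows that the $\Gamma_j$ are automatically lattices for large $j$, so the construction genuinely witnesses $\ALSS$; the final assertion is immediate, uniform $\ALS$ being a special case of uniform $\ADS$.

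Finally, Part 4 is assembled from the previous parts into a cycle: uniform $\ALS$ $\Rightarrow$ uniform $\ADS$ is trivial (lattices are discrete), uniform $\ADS$ $\Rightarrow$ $\ALSS$ is Part 3, and $\ALSS$ $\Rightarrow$ uniform $\ALS$ is Part 1, which closes the loop and gives the three equivalences. The genuine content lies in Part 3, whose crux is the already established Dirichlet-cell construction of Proposition \ref{lem-Dirichlet-cell}; there the main technical care is verifying the covering hypothesis $B(e_G,\epsi)\Gamma_j=G$ and handling the interplay between left and right invariant balls. Parts 1 and 4 are pure bookkeeping and Part 2 is a standard finite-subcover compactness argument, so I expect the only real obstacle to be threading a single right invariant metric consistently through all of the definitions.
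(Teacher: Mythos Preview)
Your proof is correct and Parts 1, 2, and 4 follow the paper's argument essentially verbatim. The one genuine difference is in Part 3: the paper introduces the Dirichlet cell $D_{\Gamma_j}=\{s\in G:\dist(s,e)\leq\dist(s,\gamma)\text{ for all }\gamma\in\Gamma_j\}$, proves it is eventually contained in $B(e,\epsi)$, proves it covers $G$ under $\Gamma_j$-translates (via a lemma that the infimum $\inf_{\gamma'\in\Gamma_j}\dist(s,\gamma')$ is attained, which in turn needs a properness argument), and only then applies Proposition \ref{lem-Dirichlet-cell} with $D=D_{\Gamma_j}$. You instead observe directly that uniform $\ADS$ is equivalent to $B(e_G,\epsi)\Gamma_j=G$ for large $j$ and feed $D=B(e_G,\epsi)$ straight into Proposition \ref{lem-Dirichlet-cell}. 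Your route is shorter and avoids the auxiliary lemma on attainment of the infimum; the paper's route gives the marginally stronger intermediate information that the Dirichlet cell itself shrinks, but this is not used elsewhere. Both arguments rely on the remark following Proposition \ref{lem-Dirichlet-cell} to reconcile the right invariant metric with the left invariant one in that proposition, which you correctly flag.
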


\begin{proof}
1. First assume that $G$ is ALSS with respect to a sequence of lattice subgroups $(\Gamma_j)$ with associated fundamental domains $(\X_j)$. Take a right invariant metric $\dist$ on $G$ generating its topology as a locally compact group. Fix $\epsi > 0$. By the $\ALSS$ property, there exists some integer $j_0$ such that the fundamental domains $\X_j$ are contained in $B(e,\epsi)$ for any $j \geq j_0$. For any $s \in G$ and any $j$, there exists $x \in X_j$ and $\gamma \in \Gamma_j$ such that $s = x \gamma$. For any $j \geq j_0$, we conclude that
$$
\dist(s,\gamma) 
= \dist(x \gamma,\gamma) 
= \dist(x,e) < \epsi.
$$
Thus, the group $G$ is uniformly ALS.

2. Let $G$ be an ADS group with respect to a sequence $(\Gamma_j)$ of discrete subgroups in $G$. Suppose that for some $j_0 \in \N$, some compact $K \subset G$ and any $j \geq j_0,$ there exists a fundamental domain $\X_j$ with respect to $\Gamma_j$ such that $\X_j \subset K$. Fix a right invariant metric $\dist$ on $G$. The compact subset $K$ is totally bounded. Then for any $\epsi > 0$, there exist some $s_1,\ldots,s_N \in K$ such that for $j \geq j_0$,
$$
\X_{j} \subset K \subset \bigcup_{k=1}^N B\bigg(s_k,\frac{\epsi}{2}\bigg).
$$
Moreover, since $G$ is ADS, for any $1 \leq k \leq N$, there exists some $j_k \in \N$ such that for all $i \geq j_k$ there is some $\gamma_i \in \Gamma_i$ with $\dist(s_k,\gamma_i) < \frac{\epsi}{2}$. Note that this implies that if $x \in B(s_k,\frac{\epsi}{2})$ we have 
$$
\dist(x,\gamma_i) 
\leq \dist(x,s_k)+\dist(s_k,\gamma_i) 
< \frac{\epsi}{2}+\frac{\epsi}{2}
=\epsi.
$$ 
Thus, for $j_{\max} \overset{\textrm{def}}= \max\{j_0,j_1,\ldots,j_N\}$, any $j \geq j_{\max}$, any $x \in \X_{j}$ and any $i \geq j_{\max}$, there exists some $\gamma_i \in \Gamma_i$ such that $\dist(x,\gamma_i) < \epsi$. 

For an arbitrary $s \in G$ and any $j \geq j_{\max}$, we write $s = x_j \widetilde{\gamma}_j$ with $x_j \in \X_j$ and $\widetilde{\gamma}_j \in \Gamma_{j}$ and we have (setting $i =j$) $\dist(x_j,\gamma_j) < \epsi$ for some $\gamma_j \in \Gamma_j$ so also 
$$
\dist(s,\gamma_j \widetilde{\gamma}_j) 
=\dist(x\widetilde{\gamma}_j,\gamma_j \widetilde{\gamma}_j)
=\dist(x,\gamma_j) < \epsi.
$$ 
Note that $\gamma_j \widetilde{\gamma}_j$ belongs to $\Gamma_j$. Thus the group $G$ is uniformly $\ADS$. The proof of the second property is identical.

3. Now assume that $G$ is uniformly ADS with respect to a sequence $(\Gamma_j)$ of discrete subgroups. We fix a right invariant metric $\dist$ of $G$ which generates the topology of $G$ and with respect to which the uniformly ADS property holds. There exists $\delta > 0$ such that any closed ball of radius $<\delta$ is compact. 

For any $j$, we introduce the Dirichlet cell 
$$ 
D_{\Gamma_j} 
=\big\{s \in G \ : \ \dist(s,e) \leq \dist(s,\gamma) \text{ for any } \gamma \in \Gamma_j \big\}.
$$

We first show that for given $\epsi > 0,$ there exists $j_0 \in \N$ such that $D_{\Gamma_j} \subset B(e,\epsi)$ for $j \geq j_0$. Note that by the uniformly ADS property there exists a $j_0 \in \N$ such that for all $s \in G$ and any $j \geq j_0$ there exists $\gamma_j \in \Gamma_j$ such that $\dist(s,\gamma_j) \leq \frac{\epsi}{2}$. If $s \in B(e,\epsi)^c$ and if $j \geq j_0$ we obtain
$$
\dist(s,\gamma_j) 
\leq \frac{\epsi}{2} 
< \epsi 
\leq \dist(s,e).
$$
Hence $s$ does not belong to $D_{\Gamma_j}$. We deduce that $B(e,\epsi)^c \subset D_{\Gamma_j}^c$ if $j \geq j_0$. The claim is proved.

Now we prove that the Dirichlet cell $D_{\Gamma_j}$ satisfies $\bigcup_{\gamma \in \Gamma_j} D_{\Gamma_j} \gamma = G$ if $j$ is large enough. Let $s \in G$. For any $j$, consider the positive real number 
$$
r_j
\ov{\mathrm{def}}{=} \inf_{\gamma' \in \Gamma_j} \dist(s,\gamma').
$$  
There exists $j_1$ such that for any $j \geq j_1$ and any $s \in G$ there exists $\gamma_j \in \Gamma_j$ such that $\dist(s,\gamma_j)< \frac{\delta}{3}$, hence $r_j <\frac{\delta}{3}$. 

\begin{lemma}
For any $j \geq j_1$, there exists $\gamma \in \Gamma_j$ such that $\dist(s,\gamma) \leq \dist(s,\gamma')$ for any $\gamma' \in \Gamma_j$.
\end{lemma}

\begin{proof}
If $s \in \Gamma_j$, it is obvious that the infimum is a minimum. Suppose $s \notin \Gamma_j$. We have $r_j>0$. We let $K=B'(x,2r_j) \cap \Gamma_j$. This subset is nonempty and compact. If $\gamma' \in \Gamma_j \backslash K$ we have $\dist(x,\gamma') > 2r_j$. We deduce that
$$
r_j
=\inf_{\gamma' \in \Gamma_j} \dist(s,\gamma')
=\inf_{\gamma' \in K} \dist(s,\gamma').
$$
Finally, the map $\gamma' \mapsto \dist(s,\gamma')$ is continuous on the compact $K$, hence attains its infimum on $K$. 
\end{proof}
%

In particular, for any $\gamma'' \in \Gamma_j$, using the right-invariance of the distance, we obtain
$$
\dist(s\gamma^{-1},e)
=\dist(s,\gamma) \leq \dist(s,\gamma''\gamma)
=\dist(s\gamma^{-1},\gamma'').
$$  
Therefore, $s \gamma^{-1} \in D_{\Gamma_j}$, that is $s \in D_{\Gamma_j} \gamma$.

Moreover, $D_{\Gamma_j} = \bigcap_{\gamma \in \Gamma_j} \{s \in G :\: \dist(s,e) \leq \dist(s,\gamma) \}$ is an intersection of closed sets, and hence itself closed, hence measurable. 

Note that $\Gamma_j \backslash \{e\}$ is closed. Hence we have $r_j'=\dist(e,\Gamma_j \backslash \{e\})>0$. Thus the ball $B(e,\frac{r_j'}{2})$ is contained in $D_{\Gamma_j}$. According to Proposition \ref{lem-Dirichlet-cell}, there exists some fundamental domain $\X_j \subset D_{\Gamma_j}$ associated with $\Gamma_j$, which is a neighborhood of $e \in G$. Furthermore, if $j \geq j_0$ we have $\X_j \subset D_{\Gamma_j} \subset B(e,\epsi)$. Hence we conclude that the group $G$ is $\ALSS$ with respect to $(\Gamma_j)$ and $(\X_j)$. The proof of the second property is identical.

4. This statement is now obvious.
\end{proof}

%

\subsection{The case of second countable compactly generated locally compact groups}

The following uses a trick of the proof of \cite[Lemma 5.7]{Wang}. For the sake of completeness, we give all the details. Recall that a topological group is compactly generated if it has a compact generating set \cite[Definition 5.12]{HR}. For example, a connected locally compact group is compactly generated \cite[Proposition 2.C.3 (2)]{dCHa}.

\begin{lemma}
\label{Lemma-uniforme-large-i}
Let $G$ be a compactly generated locally compact group and $(\Gamma_i)$ a sequence of subgroups of $G$ which converges to $G$ for the Chabauty-Fell topology. Then there exists a compact subset $K$ of $G$ and $i_0$ such that $G =K \Gamma_i$ for any $i \geq i_0$. 
\end{lemma}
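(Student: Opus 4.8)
The plan is to reduce everything to one absorption inequality for a single well-chosen compact set. I would first produce a compact symmetric neighborhood $K$ of $e_G$ that generates $G$: since $G$ is compactly generated pick a compact generating set $C$, and since $G$ is locally compact pick a compact symmetric neighborhood $W$ of $e_G$; then $K := C \cup C^{-1} \cup W$ is compact, symmetric, a neighborhood of $e_G$, and satisfies $\langle K \rangle = G$. Because $e_G \in K = K^{-1}$, the powers are increasing, $K \subseteq K^2 \subseteq K^3 \subseteq \cdots$, and $G = \bigcup_{n \geq 1} K^n$.

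Next I would use the Chabauty--Fell convergence $\Gamma_i \to G$, applied to the compact set $K^2$ and the open neighborhood $\mathrm{int}(K)$ of $e_G$. By the description of the basic neighborhoods $\mathcal{N}_U^L(G)$ in \eqref{Chabauty-def} and Proposition~\ref{Chabauty-equivalence}, there is an index $i_0$ with $K^2 = G \cap K^2 \subseteq \Gamma_i\,\mathrm{int}(K) \subseteq \Gamma_i K$ for every $i \geq i_0$. (Should one prefer not to assume the $\Gamma_i$ closed, one replaces them by $\overline{\Gamma_i}$ and uses $\overline{\Gamma_i} \subseteq \Gamma_i V$ for every neighborhood $V$ of $e_G$, recovering the same inclusion after shrinking $\mathrm{int}(K)$.)

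Then, for a fixed $i \geq i_0$, the inclusion $K^2 \subseteq \Gamma_i K$ gives $\Gamma_i K^2 \subseteq \Gamma_i \Gamma_i K = \Gamma_i K$, hence $\Gamma_i K^2 = \Gamma_i K$; multiplying on the right by $K$ and inducting yields $\Gamma_i K^{n+1} = (\Gamma_i K^n) K = (\Gamma_i K) K = \Gamma_i K$, so $\Gamma_i K^n = \Gamma_i K$ for all $n \geq 1$. Therefore $G = \Gamma_i G = \bigcup_{n \geq 1} \Gamma_i K^n = \Gamma_i K$, and taking inverses (using that $K$ and $\Gamma_i$ are symmetric) gives $K \Gamma_i = (\Gamma_i K)^{-1} = G^{-1} = G$. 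This is the claimed conclusion, with the compact set $K$ and the index $i_0$ independent of $i$.

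I do not expect a serious obstacle here: the one real point is that the bootstrap in the last step needs precisely a \emph{one-sided} absorption $K^2 \subseteq \Gamma_i K$ — a two-sided version is not available since $\Gamma_i$ need not normalize $K$ — and this one-sided statement is exactly what Chabauty--Fell convergence delivers once $K$ is chosen to be a neighborhood of $e_G$. The only minor technicality, the possible non-closedness of the $\Gamma_i$, is handled by passing to closures as indicated above.
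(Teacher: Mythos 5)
Your proof is correct and follows essentially the same route as the paper's: both fix a compact symmetric generating neighborhood $K$ with $G=\bigcup_n K^n$, use the convergence $\Gamma_i\to G$ to absorb one extra power of $K$ into $\Gamma_i K$ (the paper proves $K^3\subset K^2\Gamma_i$ via a finite cover of $K^3$ by translates $Us_j$, you read $K^2\subset\Gamma_i\,\mathrm{int}(K)$ directly off the Chabauty--Fell neighborhood basis \eqref{Chabauty-def}), and then bootstrap by induction. The only cosmetic differences are that your absorption is on the left, so you invert at the end using the symmetry of $K$, and you obtain the slightly cleaner compact set $K$ where the paper settles for $K\cup K^2$.
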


\begin{proof}
By the proof of \cite[Theorem 5.13]{HR}, there exists an open subset $V$ of $G$ containing $e$ with $G=\bigcup_{n \geq 1} (V \cup V^{-1})^n$ such that $\ovl{V}$ is compact. We let $U=V \cup V^{-1}$. The subset $U$ is open and contains $e$. Moreover, the set $K \ov{\mathrm{def}}{=} \ovl{U}=\ovl{V \cup V^{-1}}=\ovl{V} \cup \ovl{V}^{-1}$ is compact and we have $G=\bigcup_{n \geq 1} K^n$. Since $e$ belongs to $U$, we have $UG=G$. Moreover, by \cite[Theorem 4.4]{HR}, the subset $K^3$ is compact and included in $UG$. Using \cite[Theorem 4.4]{HR} again, we deduce that $(U s)_{s \in G}$ is an open covering of $K^3$. By compactness there exist some elements $s_1,\ldots,s_m \in G$ such that 
$$
K^3 \subset \bigcup_{j=1}^{m} Us_j.
$$ 
Since $(\Gamma_i)$ approximates the group $G$, there exists some $i_0$ such that for any $i \geq i_0$ we have $\{s_1,\ldots,s_m\} \subset U\Gamma_i$. For $i \geq i_0$, we deduce that $
K^3 \subset U^2 \Gamma_i \subset K^2 \Gamma_i$. By induction\footnote{\thefootnote. If $K^n \subset K^2\Gamma_i$ for some $n \geq 3$ then we have $K^{n+1}=KK^n \subset KK^2\Gamma_i=K^3\Gamma_i \subset K^2\Gamma_i \Gamma_i=K^2\Gamma_i$.}, we obtain $K^n \subset K^2\Gamma_i$ for any $n \geq 3$. Moreover, we have $K^2 \subset K^2\Gamma_i$. For any $i \geq i_0$, we deduce that
\begin{align*}
\MoveEqLeft
  G \backslash K \subset \bigcup_{n \geq 2} K^n \subset K^2 \Gamma_i.
\end{align*}
Note that $K \subset K\Gamma_i$. Thus the compact $K \cup K^2$ has the desired property.
\end{proof}

\begin{cor}
\label{Cor-lattice-large-i}
Let $G$ be a compactly generated locally compact group and $(\Gamma_i)$ a sequence of discrete subgroups which converges to $G$ for the Chabauty-Fell topology. For any large enough $i$, the subgroup $\Gamma_i$ is a cocompact lattice. 
\end{cor}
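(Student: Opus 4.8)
The plan is to deduce Corollary \ref{Cor-lattice-large-i} directly from Lemma \ref{Lemma-uniforme-large-i} together with the standard fact (recalled in the excerpt, \cite[Proposition B.2.2]{BHV}) that a cocompact discrete subgroup of a locally compact group is automatically a lattice. So the only real content to prove is that $\Gamma_i$ is cocompact for $i$ large enough, i.e. that $G/\Gamma_i$ is compact.

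First I would apply Lemma \ref{Lemma-uniforme-large-i}: since $G$ is compactly generated and $(\Gamma_i)$ converges to $G$ in the Chabauty-Fell topology, there exist a compact subset $K \subset G$ and an index $i_0$ such that $G = K\Gamma_i$ for every $i \geq i_0$. Now fix such an $i \geq i_0$ and let $\pi \co G \to G/\Gamma_i$ be the canonical (continuous, open, surjective) quotient map. From $G = K\Gamma_i$ we get $\pi(K) = \pi(K\Gamma_i) = \pi(G) = G/\Gamma_i$. Since $K$ is compact and $\pi$ is continuous, $\pi(K)$ is compact, hence $G/\Gamma_i = \pi(K)$ is compact. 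Thus $\Gamma_i$ is a cocompact closed subgroup of $G$ for all $i \geq i_0$; being discrete by hypothesis, it is a cocompact discrete subgroup. Finally, by \cite[Proposition B.2.2 page 332]{BHV} (invoked in the excerpt), every cocompact discrete subgroup of a locally compact group is a lattice, so $\Gamma_i$ is a cocompact lattice for every $i \geq i_0$, which is the assertion.

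I do not expect any serious obstacle here: the statement is essentially a repackaging of Lemma \ref{Lemma-uniforme-large-i}. The one point that deserves a word of care is the implicit use of unimodularity / the lattice criterion — one should make sure the group $G$ is not assumed a priori to be unimodular, but this is fine because \cite[Proposition B.2.2]{BHV} yields unimodularity of $G$ as a consequence of admitting a lattice, and the cocompactness-implies-lattice direction does not need it as an input. If one wanted to avoid citing the cocompact-implies-lattice fact one could instead directly note that a fundamental domain can be taken inside the compact set $K$ (using Proposition \ref{lem-Dirichlet-cell} applied to $D = K$, which satisfies $\bigcup_{\gamma \in \Gamma_i} D\gamma = G$), whence $\X_i \subset K$ has finite Haar measure and $\Gamma_i$ is a lattice; but the short argument via $\pi(K)$ compact is cleaner.

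\begin{proof}
By Lemma \ref{Lemma-uniforme-large-i}, there exist a compact subset $K$ of $G$ and an integer $i_0$ such that $G = K\Gamma_i$ for any $i \geq i_0$. Fix $i \geq i_0$ and let $\pi \co G \to G/\Gamma_i$ be the canonical map, which is continuous and surjective. From $G = K\Gamma_i$ we obtain $\pi(K) = \pi(G) = G/\Gamma_i$. Since $K$ is compact and $\pi$ is continuous, the space $G/\Gamma_i = \pi(K)$ is compact. Hence $\Gamma_i$ is a cocompact discrete subgroup of $G$, and by \cite[Proposition B.2.2 page 332]{BHV} it is a cocompact lattice.
\end{proof}
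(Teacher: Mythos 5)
Your proof is correct and is essentially identical to the paper's: the authors also invoke Lemma \ref{Lemma-uniforme-large-i} to get $G=K\Gamma_i$, note in a footnote that $q(K)=G/\Gamma_i$ is compact for the canonical continuous quotient map $q$, and then recall that a cocompact discrete subgroup is a lattice. No discrepancies to report.
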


\begin{proof}
Use the previous Lemma \ref{Lemma-uniforme-large-i} and recall that a discrete subgroup $\Gamma$ which is cocompact\footnote{\thefootnote. \label{footnote-35}If $G = K \Gamma_i$ for a compact $K$, then for the canonical and continuous $q \co G \to G/\Gamma_i$, we have $q(K) = G/\Gamma_i$, so that $G/\Gamma_i$ is compact.} is a lattice.
\end{proof}

\begin{thm}
\label{Th-compactly-tout-equivalent}
Let $G$ be a second countable compactly generated locally compact group. The following are equivalent.
\begin{enumerate}
	\item $G$ is $\ADS$.
	
	\item $G$ is $\ALS$.
	
	\item $G$ is uniformly $\ALS$.
	
	\item $G$ is $\ALSS$.
\end{enumerate}
\end{thm}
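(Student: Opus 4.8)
The plan is to establish the cycle $(1) \Rightarrow (3) \Rightarrow (2) \Rightarrow (1)$ together with the equivalence $(3) \Leftrightarrow (4)$, most of which reduces to results already available in this section.

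First, the easy implications. The implication $(3) \Rightarrow (2)$ is immediate from Remark \ref{rem-after-defi-ALS}, since ``uniformly $\ALS$'' trivially implies ``$\ALS$''. Likewise $(2) \Rightarrow (1)$ holds because $\ALS$ implies $\ADS$, again by Remark \ref{rem-after-defi-ALS}. The equivalence $(3) \Leftrightarrow (4)$ is contained in Part 4 of Proposition \ref{prop-approx-ALS} (uniformly $\ADS$ $\Leftrightarrow$ uniformly $\ALS$ $\Leftrightarrow$ $\ALSS$), which in particular gives that ``uniformly $\ALS$'' and ``$\ALSS$'' are equivalent.

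It remains to prove $(1) \Rightarrow (3)$, which is where compact generation is genuinely used. Assume $G$ is $\ADS$. Since $G$ is second countable, there is a sequence $(\Gamma_i)$ of discrete subgroups of $G$ converging to $G$ for the Chabauty-Fell topology. By Lemma \ref{Lemma-uniforme-large-i}, there exist a compact subset $K \subset G$ and an index $i_0$ with $G = K \Gamma_i$ for every $i \geq i_0$; equivalently $\bigcup_{\gamma \in \Gamma_i} K\gamma = G$. The set $K$, being compact in a Hausdorff space, is closed, hence Borel measurable, so Proposition \ref{lem-Dirichlet-cell} applies with $D = K$: for each $i \geq i_0$ there is a fundamental domain $\X_i \subset K$ associated with $\Gamma_i$. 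Thus the hypotheses of Part 2 of Proposition \ref{prop-approx-ALS} are met (all the $\X_i$ lie inside the fixed compact $K$ for $i \geq i_0$), and we conclude that $G$ is uniformly $\ADS$ with respect to $(\Gamma_i)$. Finally, Part 4 of Proposition \ref{prop-approx-ALS} upgrades this to ``uniformly $\ALS$'', which is statement $(3)$. This closes the cycle.

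The only nontrivial step is the passage $(1) \Rightarrow (3)$; the main obstacle there is producing fundamental domains all lying in a single compact set, which is precisely what one gets by combining the compact generation of $G$ (through Lemma \ref{Lemma-uniforme-large-i}, which yields a uniform covering $G = K\Gamma_i$) with the fundamental domain construction of Proposition \ref{lem-Dirichlet-cell}. Everything else in the argument is either a direct citation of Proposition \ref{prop-approx-ALS} or a trivial weakening recorded in Remark \ref{rem-after-defi-ALS}.
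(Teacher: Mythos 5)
Your proof is correct and follows essentially the same route as the paper: the nontrivial step rests on exactly the same combination of Lemma \ref{Lemma-uniforme-large-i} (giving $G = K\Gamma_i$ from compact generation), Proposition \ref{lem-Dirichlet-cell} (producing fundamental domains inside $K$), and Part 2 of Proposition \ref{prop-approx-ALS}. The only cosmetic difference is that the paper first upgrades $\ADS$ to $\ALS$ via Corollary \ref{Cor-lattice-large-i} and then proves $(2)\Rightarrow(3)$, whereas you work with discrete subgroups throughout and pass from uniformly $\ADS$ to uniformly $\ALS$ at the end via Part 4 of Proposition \ref{prop-approx-ALS}; both orderings are valid.
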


\begin{proof}
The implications 2. $\Rightarrow$ 1. and 3. $\Rightarrow$ 2. are obvious. By Corollary \ref{Cor-lattice-large-i}, we have the implication 1. $\Rightarrow$ 2. By the part 3 of Proposition \ref{prop-approx-ALS}, the properties 3. and 4. are equivalent. 

Suppose that $G$ is ALS with respect to a sequence $(\Gamma_j)$ of lattice subgroups in $G$. Then by Lemma \ref{Lemma-uniforme-large-i}, there exists a compact subset $K$ of $G$ and $i_0$ such that $G =K \Gamma_i$ for any $i \geq i_0$. By Proposition \ref{lem-Dirichlet-cell}, there exists\footnote{\thefootnote. If $G$ is a second countable locally compact group and if $\Gamma$ is a cocompact lattice in $G$ then there exists a relatively compact fundamental domain $\X$ for $\Gamma$ in $G$. This result \cite[8]{Sie} of Siegel does not suffice here.} a fundamental domain $\X_i$ for $\Gamma_i$ in $G$ such that $\X_i \subset K$ for any $i \geq i_0$. From part 2 of Proposition \ref{prop-approx-ALS}, we conclude that $G$ is ALSS and thus 2. implies 3. 
\end{proof}



\section{Decomposable Fourier multipliers on non-discrete locally compact groups}
\label{sec:Locally-compact-groups}

In this section, we start by giving general results on Fourier multipliers on noncommutative $\L^p$-spaces. After this, we construct our projections by approximation. Then we study (classes of) examples, including direct and semi-direct products of groups, the semi-discrete Heisenberg group, groups acting on trees and pro-discrete groups.
We conclude by drawing the relevant consequences for decomposable multipliers.

\subsection{Generalities on Fourier multipliers on unimodular groups}
\label{sec:Generalities-Fourier-multipliers}



\paragraph{Group von Neumann algebras of locally compact groups.} 
Let $G$ be a locally compact group equipped with a left invariant Haar measure $\mu_G$. For a complex function $g \co G \to \C$, we write $\lambda(g)$ for the left convolution operator (in general unbounded) by $g$ on $\L^2(G)$. This means that the domain of $\lambda(g)$ consists of all $f$ of $\L^2(G)$ for which the integral $(g*f)(t) \ov{\mathrm{def}}{=} \int_G g(s)f(s^{-1}t) \d\mu_G(s)$ exists for almost all $t \in G$ and for which the resulting function $g*f$ belongs to $\L^2(G)$, and for such $f$, we let $\lambda(g)f \ov{\mathrm{def}}{=} g*f$. Finally, by \cite[Corollary 20.14]{HR}, each $g \in \L^1(G)$ induces a bounded operator $\lambda(g) \co \L^2(G) \to \L^2(G)$. 

Let $\VN(G)$ be the von Neumann algebra generated by the set $\big\{\lambda(g) : g \in \L^1(G)\big\}$. It is called the group von Neumann algebra of $G$ and is equal to the von Neumann algebra generated by the set $\{\lambda_s : s \in G\}$ where 
\begin{equation}
\label{Left-translation}
   \lambda_s  \co  \begin{cases}
  \L^2(G)   &  \longrightarrow    \L^2(G)  \\
            f   &  \longmapsto        (t \mapsto f(s^{-1}t))
\end{cases}
\end{equation}
is the left translation by $s$. 
Recall that for any $g \in \L^1(G)$ we have $\lambda(g)=\int_{G} g(s)\lambda_s \d\mu_G(s)$ where the latter integral is understood in the weak operator sense\footnote{\thefootnote. That means (see e.g. \cite[Theorem 5 page 289]{Gaa}) that $\lambda(g) \co \L^2(G) \to \L^2(G)$ is the unique bounded operator such that
$$
\langle \lambda(g)f, h\rangle_{\L^2(G)}
=\int_{G} g(s)\langle \lambda_s f,h \rangle_{\L^2(G)} \d\mu_G(s),\quad f,h \in \L^2(G).
$$}. 

Let $H$ be a closed subgroup of $G$ equipped with a left Haar measure. The prescription $\lambda_{H,s} \mapsto \lambda_{G,s}$, $s \in H$ (where $\lambda_{H,s}$ denotes the left translation by $h$ on $\L^2(H)$ and $\lambda_{G,s}$ the corresponding left translation by $h$ on $\L^2(G)$) extends to a normal injective $*$-homomorphism from $\VN(H)$ to $\VN(G)$, see e.g. \cite[Proposition 2.6.6]{KaL1}, \cite[Theorem 2 page 113]{Der4} and \cite{DKSS} for generalizations to quantum groups. 

We also use the notation $\lambda(\mu) \co \L^2(G) \to \L^2(G)$ for the convolution operator by the measure $\mu$.

\paragraph{Plancherel weights.} Let $G$ be a locally compact group. A function $g \in \L^2(G)$ is called left bounded \cite[Definition 2.1]{Haa2} if the convolution operator $\lambda(g)$ induces a bounded operator on $\L^2(G)$. The Plancherel weight $\tau_G \co \VN(G)^+\to [0,\infty]$ is\footnote{\thefootnote. This is the natural weight associated with the left Hilbert algebra $\mathrm{C}_c(G)$.} defined by the formula
$$
\tau_G(x)
\ov{\mathrm{def}}{=} \begin{cases}
\norm{g}^2_{\L^2(G)} & \text{if }x^{\frac{1}{2}}=\lambda(g) \text{ for some left bounded function } g \in \L^2(G)\\
+\infty & \text{otherwise}
\end{cases}.
$$

By \cite[Proposition 2.9]{Haa2} (see also \cite[Theorem 7.2.7]{Ped}), the canonical left ideal $\mathfrak{n}_{\tau_G}=\big\{x \in \VN(G)\ : \  \tau_G(x^*x) <\infty\big\}$ is given by
$$
\mathfrak{n}_{\tau_G}
=\big\{\lambda(g)\ :\ g \in \L^2(G)\text{ is left bounded}\big\}.
$$
Recall that $\mathfrak{m}_{\tau_G}^+$ denotes the set $\big\{x \in \VN(G)^+ : \tau_G(x)<\infty\big\}$ and that $\mathfrak{m}_{\tau_G}$ is the complex linear span of $\mathfrak{m}_{\tau_G}^+$ which is a $*$-subalgebra of $\VN(G)$. By \cite[Proposition 2.9]{Haa2} and \cite[Proposition page 280]{Str}, we have 
$$
\mathfrak{m}_{\tau_G}^+
=\big\{\lambda(g) : g \in \L^2(G) \text{ continuous and left bounded}, \ \lambda(g)\geq 0\big\}.
$$

By \cite[page 125]{Haa2} or \cite[Proposition 7.2.8]{Ped}, the Plancherel weight $\tau_G$ on $\VN(G)$ is tracial if and only if G is unimodular, which means that the left Haar measure of $G$ and the right Haar measure of $G$ coincide. Now, in the sequel, we suppose that the locally compact group $G$ is unimodular. 


We will use the involution $f^*(t) \ov{\mathrm{def}}{=} \ovl{f(t^{-1})}$. By \cite[Theorem 4]{Kun}, if $f,g \in \L^2(G)$ are left bounded then $f*g$ and $f^*$ are left bounded and we have 
\begin{equation}
\label{composition-et-lambda}
\lambda(g)\lambda(f)
=\lambda(g*f) 
\quad \text{and} \quad 
\lambda(f)^*=\lambda(f^*).
\end{equation}

If $f,g \in \L^2(G)$ it is well-known \cite[Corollaire page 168 and (17) page 166]{Bou1} that the function $f*g$ is continuous and that we have $(f*g)(e_G)=(g*f)(e_G)=\int_G \check{g} f \d\mu_G$ where $e_G$ denotes the identity element of $G$ and where $\check{g}(s) \overset{\textrm{def}}= g(s^{-1})$. By \cite[(4) page 282]{StZ}, if $f,g \in \L^2(G)$ are left bounded, the operator $\lambda(g)^*\lambda(f)$ belongs to $\mathfrak{m}_{\tau_G}$ and we have the fundamental ``noncommutative Plancherel formula''
\begin{equation}
\label{Formule-Plancherel}
\tau_G\big(\lambda(g)^*\lambda(f)\big)
=\langle g,f\rangle_{\L^2(G)}
\quad \text{which gives} \quad 
\tau_G\big(\lambda(g)\lambda(f)\big)
=\int_G \check{g} f \d\mu_G
=(g*f)(e_G).
\end{equation}
In particular, this formula can be used with any functions $f,g$ of $\L^1(G) \cap \L^2(G)$. By \eqref{Formule-mtau}, if we consider the subset $\mathrm{C}_e(G) \ov{\mathrm{def}}{=} \Span\big\{g^**f : g,f \in \L^2(G)\text{ left bounded}\big\}$ of $\mathrm{C}(G)$, we have
\begin{equation}
	\label{Def-mtauG}
	\mathfrak{m}_{\tau_G}
=\lambda\big(\mathrm{C}_e(G)\big)
\end{equation}
and we can see $\tau_G$ as the functional that evaluates functions of $\mathrm{C}_e(G)$ at $e_G \in G$. Although the formula $\tau_G\big(\lambda(h)\big)=h(e)$ seems to make sense for every function $h$ in $\mathrm{C}_c(G)$, we warn the reader that it is not true\footnote{\thefootnote. \label{footnote-39}In fact, suppose that $G$ is compact. Since $\L^2(G) \subset \L^1(G)$, any function of $\L^2(G)$ is left bounded. Moreover, the group $G$ is unimodular so the map $f \mapsto f^*$ is an anti-unitary operator on $\L^2(G)$. We infer that $\L^2(G)^* = \L^2(G)$ and consequently that
$$
\mathrm{C}_e(G) 
= \mathrm{span} \, \L^2(G)*\L^2(G).
$$ 
As already noted, we always have $\mathrm{C}_e(G) \subset \mathrm{C}(G)$. If in addition $\lambda(\mathrm{C}(G)) \subset \lambda(\mathrm{C}_e(G))$, we have $\mathrm{C}(G) = \mathrm{C}_c(G) \subset \mathrm{C}_e(G)$ (if $f,g \in \L^1(G)$ and $\lambda(f)=\lambda(g)$, we have $f=g$ almost everywhere since the regular representation $\lambda \co \L^1(G) \to \B(\L^2(G))$, $f \mapsto \lambda(f)$ is injective by \cite[page 285]{Dix}), then we obtain $\mathrm{span}\, \L^2(G)*\L^2(G) = \mathrm{C}(G)$. But this is true only if $G$ is finite (see \cite[34.16, 34.40 (ii) and 37.4]{HeR2}).} in general that $\lambda\bigl(\mathrm{C}_c(G)\bigr) \subset \mathfrak{m}_{\tau_G}$ contrary to what is unfortunately too often written in the literature.

\paragraph{Averaging projections.} If $K$ is a compact subgroup of a locally compact group $G$ equipped with its \textit{normalized} Haar measure $\mu_K$, we can consider the element $p_K \ov{\mathrm{def}}{=} \lambda_K(\mu_K)$ of $\VN(K)$. It is easy to see that it identifies to the element $\lambda_G(\mu_K^0)$ of $\VN(G)$ where $\mu_K^0$ is the canonical extension of the measure $\mu_K$ on the locally compact space $G$. We say that it is the averaging projection associated with $K$. The following lemma is folklore. For the sake of completeness, we give a short proof.

\begin{lemma}
\label{Lemma-iso-GK-VN}
If $K$ is a normal compact subgroup then the averaging projection $p_K$ associated with $K$ is a central projection in $\VN(G)$ and finally the map
\begin{equation}
\label{Iso-averaging-projection}
\begin{array}{cccc}
    \pi  \co &   \VN(G/K)  &  \longrightarrow   &  \VN(G)p_K  \\
           &  \lambda_{sK}   &  \longmapsto       &  \lambda_s p_K  \\
\end{array}
\end{equation}
is a well-defined $*$-isomorphism.
\end{lemma}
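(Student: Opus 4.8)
The statement has three pieces: $p_K$ is central in $\VN(G)$; the map $\pi$ in \eqref{Iso-averaging-projection} is well defined; and $\pi$ is a $*$-isomorphism onto $\VN(G)p_K$. I would organize the argument around the standard fact that $p_K=\lambda_G(\mu_K^0)$ is a projection: indeed $\mu_K^0 * \mu_K^0 = \mu_K^0$ since $\mu_K$ is the normalized Haar measure of the compact group $K$, and $(\mu_K^0)^* = \mu_K^0$ because $K$ is unimodular and $\mu_K$ is inversion-invariant; hence by \eqref{composition-et-lambda} (extended to convolution by measures) $p_K^2 = p_K = p_K^*$.

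For centrality, the key computation is $\lambda_s p_K \lambda_s^{-1} = \lambda_G(\delta_s * \mu_K^0 * \delta_{s^{-1}})$, and $\delta_s * \mu_K^0 * \delta_{s^{-1}}$ is the pushforward of $\mu_K^0$ under conjugation by $s$; since $K$ is normal this conjugation maps $K$ onto $K$ and, $K$ being compact with $G$ unimodular, preserves the normalized Haar measure of $K$, so $\delta_s * \mu_K^0 * \delta_{s^{-1}} = \mu_K^0$ and thus $\lambda_s p_K = p_K \lambda_s$ for all $s \in G$. Since the $\lambda_s$ generate $\VN(G)$ as a von Neumann algebra and multiplication is separately weak* continuous, $p_K$ commutes with all of $\VN(G)$, i.e. $p_K$ is a central projection, and consequently $\VN(G)p_K$ is a von Neumann algebra (with unit $p_K$).

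Next I would establish that $\pi$ is well defined and multiplicative. The natural way is to exhibit $\pi$ as the $\L^\infty$-level map coming from a $*$-homomorphism between group algebras: the quotient map $q \co G \to G/K$ induces, on the level of compactly supported (or $\L^1$) functions, a norm-decreasing $*$-homomorphism via integration along $K$-cosets, and one checks that $\lambda_{G/K}(h) \mapsto \lambda_G(h \circ q \cdot \text{(suitable factor)}) p_K$ is consistent, with $\lambda_{G/K,sK}$ corresponding to $\lambda_{G,s} p_K$; concretely one verifies directly on generators that the assignment $\lambda_{sK} \mapsto \lambda_s p_K$ respects the relations, namely that $\lambda_s p_K$ depends only on the coset $sK$ (this is exactly because $\lambda_t p_K = p_K = \lambda_t p_K$ — more precisely $\lambda_t p_K = p_K$ for $t \in K$, since $p_K = \lambda_G(\mu_K^0)$ and $\mu_K$ is $K$-invariant), that $(\lambda_s p_K)(\lambda_t p_K) = \lambda_{st} p_K$ using centrality and $p_K^2 = p_K$, and that $(\lambda_s p_K)^* = \lambda_{s^{-1}} p_K = \lambda_{(sK)^{-1}} p_K$. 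This shows $\pi$ is a well-defined unital $*$-homomorphism on the $*$-algebra generated by the $\lambda_{sK}$; one then checks it is weak* continuous (e.g. by realizing it via a normal conditional-expectation-type construction, or by noting it is implemented by an isometry $\L^2(G/K) \to \L^2(G)$ onto the range of $p_K$ acting on $\L^2(G)$) and extends to a normal $*$-homomorphism on all of $\VN(G/K)$ with range the weak* closure of $\{\lambda_s p_K : s \in G\}$, which is $\VN(G)p_K$.

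Finally, injectivity of $\pi$: since $\pi$ is a normal $*$-homomorphism between von Neumann algebras, its kernel is of the form $z\VN(G/K)$ for a central projection $z$; to rule out a nontrivial kernel one uses that $\pi$ intertwines the Plancherel traces appropriately. Concretely, I expect the cleanest route is to note that $p_K$ acting on $\L^2(G)$ is the orthogonal projection onto the subspace of functions constant on left $K$-cosets, which is naturally unitarily equivalent (via the isometry sending $\L^2(G/K)$ into this subspace using the normalized Haar measure of $K$) to $\L^2(G/K)$, and that under this identification the representation $s \mapsto \lambda_{G,s}|_{p_K \L^2(G)}$ is conjugate to the left regular representation of $G/K$; hence $\pi$ is actually a spatial isomorphism and in particular injective. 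The main obstacle is this last spatial identification — getting the measure-theoretic bookkeeping right so that $p_K\L^2(G) \cong \L^2(G/K)$ as $G$-representations, using Weil's integration formula for the quotient $G/K$ (available since $K$ is compact hence the modular functions match) — and, relatedly, being careful that the trace-preservation statement (which is implicit in calling this an isomorphism of the relevant von Neumann algebras as trace-equipped objects, if needed downstream) follows from $\tau_G(p_K \,\cdot\,) $ restricting to the Plancherel trace of $G/K$ under $\pi$. Everything else is routine manipulation with convolution of measures and the relations \eqref{composition-et-lambda}.
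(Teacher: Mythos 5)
Your proof is correct and follows the same route as the paper: centrality and well-definedness of $\pi$ are obtained exactly as in the paper's proof, via the identities $\delta_s * \mu_K^0 = \mu_K^0 * \delta_s$ (from normality of $K$) and $\delta_{sk} * \mu_K^0 = \delta_s * \mu_K^0$ for $k \in K$ (from left-invariance of $\mu_K$). The remaining points (multiplicativity, normality, injectivity via the spatial identification $p_K\L^2(G) \cong \L^2(G/K)$) are dismissed as obvious in the paper, and your standard arguments for them are sound.
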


\begin{proof}
For any $s \in G$, we have $sK=Ks$ and consequently
$
   \lambda_s\lambda(\mu_K^0) 
		=\lambda(\delta_s*\mu_K^0)
		=\lambda(\mu_K^0)\lambda_{s}
$. Hence $p_K$ is central. For any $s \in G$, if $sK=s'K$, we have 
$
\lambda_s p_K
=\lambda_s \lambda(\mu_K^0)
=\lambda(\delta_s *\mu_K^0)
=\lambda(\delta_{s'}*\mu_K^0)
=\lambda_{s'} \lambda(\mu_K^0)
=\lambda_{s'} p_K
$. Hence $\pi$ is well-defined. Other statements are obvious.
\end{proof}


If $K$ is in addition an \textit{open} subgroup, the following allows us to consider maps on the associated noncommutative $\L^p$-spaces.

\begin{lemma}
\label{Lemma-trace-preserving-averaging}
Let $K$ be a compact open normal subgroup of a unimodular locally compact group $G$. We suppose that $G$ is equipped with a Haar measure $\mu_G$ and that $K$ is equipped with its normalized Haar measure $\mu_K$. We have $p_K=\frac{1}{\mu_G(K)}\lambda(1_{K})$ and the map $\mu_G(K)\pi \co \VN(G/K) \to \VN(G)p_K$ is trace preserving. Finally if $1 \leq p \leq \infty$, the $*$-isomorphism $\pi$ induces a complete isometry $\mu_G(K)^{\frac1p}\pi_p$ from $\L^p(\VN(G/K))$ into $\L^p(\VN(G)p_K)$. In particular $\pi_p$ is of completely bounded norm less than $\frac{1}{\mu_G(K)^{\frac1p}}$.
\end{lemma}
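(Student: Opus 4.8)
The plan is to verify the three assertions in sequence, each reducing to a short computation or an application of a fact already available in the paper.

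First I would prove the formula $p_K = \frac{1}{\mu_G(K)}\lambda(1_K)$. By definition $p_K = \lambda_G(\mu_K^0)$, where $\mu_K^0$ is the extension to $G$ of the normalized Haar measure $\mu_K$ of $K$. Since $K$ is open, $\mu_G|_K$ is a (nonzero, finite) Haar measure on $K$, hence equals $\mu_G(K)\,\mu_K$ by uniqueness of Haar measure. Translating this into densities with respect to $\mu_G$: the measure $\mu_K^0$ has density $\frac{1}{\mu_G(K)} 1_K$ with respect to $\mu_G$. Therefore for $f \in \L^2(G)$, $(\mu_K^0 * f)(t) = \frac{1}{\mu_G(K)}\int_G 1_K(s) f(s^{-1}t)\,\d\mu_G(s) = \frac{1}{\mu_G(K)}(1_K * f)(t)$, which is exactly $\frac{1}{\mu_G(K)}\lambda(1_K)f$. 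This gives the claimed identity.

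Next I would establish that $\mu_G(K)\pi \co \VN(G/K) \to \VN(G)p_K$ is trace preserving, where $\pi$ is the $*$-isomorphism from Lemma \ref{Lemma-iso-GK-VN}. It suffices to check this on a weak* dense $*$-subalgebra, and since both $\tau_{G/K}$ and $\tau_G$ are the Plancherel weights (traces, as both groups are unimodular --- $G/K$ is unimodular because it is the quotient of a unimodular group by a compact normal subgroup), I would compute on elements of the form $\lambda_{G/K}(g)^*\lambda_{G/K}(f)$ with $f,g$ nice (say continuous left bounded, or compactly supported) functions on $G/K$, using the noncommutative Plancherel formula \eqref{Formule-Plancherel}. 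On the one side, $\tau_{G/K}(\lambda_{G/K}(g)^*\lambda_{G/K}(f)) = \langle g,f\rangle_{\L^2(G/K)}$. On the other side, $\pi$ sends $\lambda_{G/K}(f)$ to $\lambda_G(f\circ q) p_K$ (where $q\co G \to G/K$ is the quotient map), and one computes $\tau_G\big((\lambda_G(g\circ q)p_K)^*(\lambda_G(f\circ q)p_K)\big)$ using $p_K^*p_K = p_K = \frac{1}{\mu_G(K)}\lambda(1_K)$ together with the relation between $\L^2(G)$ and $\L^2(G/K)$ coming from the Weil integration formula $\int_G h\,\d\mu_G = \int_{G/K}\big(\int_K h(xk)\,\d\mu_K(k)\big)\d\mu_{G/K}(\dot x)$. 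The factor $\mu_G(K)$ appears precisely from disintegrating against the normalized $\mu_K$ versus the non-normalized $\mu_G|_K$; tracking it carefully yields $\tau_G(\pi(\cdot)) = \frac{1}{\mu_G(K)}\tau_{G/K}(\cdot)$ on this dense subalgebra, hence $\mu_G(K)\pi$ is trace preserving.

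Finally, for the $\L^p$-statement: $\pi$ is a normal $*$-isomorphism onto the von Neumann subalgebra $\VN(G)p_K$ (corner), so it is completely positive and a complete isometry at the level $p=\infty$. By the previous paragraph, $\mu_G(K)\pi$ is a trace-preserving normal $*$-homomorphism, hence by the theory of Section \ref{sec:Markov-maps} it induces a completely contractive (and completely positive) map $(\mu_G(K)\pi)_p \co \L^p(\VN(G/K)) \to \L^p(\VN(G)p_K)$ for every $1 \le p \le \infty$; indeed a trace-preserving $*$-isomorphism is a $(\tau_{G/K}, \mu_G(K)^{-1}\tau_G|_{\VN(G)p_K})$-Markov map whose $\L^p$-extension is completely contractive. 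Since the $\L^p$-norm scales as $\|x\|_{\L^p(\VN(G)p_K, c\,\tau)} = c^{1/p}\|x\|_{\L^p(\VN(G)p_K,\tau)}$, undoing the rescaling by $\mu_G(K)$ multiplies the norm of $\pi_p$ by $\mu_G(K)^{-1/p}$, which gives $\norm{\pi}_{\cb, \L^p(\VN(G/K)) \to \L^p(\VN(G)p_K)} \le \mu_G(K)^{-1/p}$, as claimed.

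The main obstacle is the bookkeeping of normalization constants in the second paragraph: one has to be careful about which Haar measures ($\mu_G$, $\mu_G|_K$, normalized $\mu_K$, $\mu_{G/K}$) are used in each Plancherel formula and in the Weil disintegration, since the Plancherel trace $\tau_G$ depends on the choice of $\mu_G$ while $\tau_{G/K}$ depends on $\mu_{G/K}$, and these are linked only once $\mu_K$ is fixed to be normalized. Getting the single factor $\mu_G(K)$ to come out correctly --- rather than $\mu_G(K)^2$ or $\mu_G(K)^{-1}$ --- is the only genuinely delicate point; everything else is a routine consequence of material already developed in the paper.
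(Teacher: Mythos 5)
Your proposal is correct, and the first and third steps coincide with the paper's argument (the identity $\mu_K=\frac{1}{\mu_G(K)}\mu_G|_K$ for the formula for $p_K$, and the standard rescaling/interpolation argument for the $\L^p$ bound, which the paper only sketches as ``standard''). Where you diverge is the trace-preservation step. The paper's key observation is that $K$ open forces $G/K$ to be \emph{discrete}, so $\tau_{G/K}$ is just the canonical trace with $\tau_{G/K}(\lambda_{sK})=1_K(s)$, and it suffices to compute $\tau_G(\pi(\lambda_{sK}))=\tau_G(p_K^*\lambda_s p_K)=\frac{1}{\mu_G(K)^2}\tau_G\bigl(\lambda(1_K)^*\lambda(1_{sK})\bigr)=\frac{1}{\mu_G(K)^2}\langle 1_K,1_{sK}\rangle_{\L^2(G)}=\frac{1}{\mu_G(K)}1_K(s)$ --- a one-line application of the Plancherel formula \eqref{Formule-Plancherel} on $G$ alone, using only $p_K=p_Kp_K^*$. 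You instead treat $G/K$ as a general quotient and propose to match the two Plancherel weights via Weil's disintegration formula; this works, but it is precisely what creates the normalization bookkeeping you flag as the delicate point (with $\mu_K$ normalized, Weil's formula forces $\mu_{G/K}$ to be $\mu_G(K)$ times counting measure, and you must then decide which normalization $\tau_{G/K}$ refers to). Noting discreteness up front both fixes that normalization canonically (counting measure) and eliminates the Weil formula from the computation entirely, so I would recommend folding that observation into your second step; with it, your argument collapses to the paper's.
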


\begin{proof}
The subgroup $K$ is open, so $\mu_G|_{K}$ is a Haar measure on $K$ and $\mu_K = \frac{1}{\mu_G(K)} \mu_G|_{K}$. So
$$
p_K
=\lambda\big(\mu_K^0\big)
=\lambda\bigg(\bigg(\frac{1}{\mu_G(K)} \mu_G|_{K}\bigg)^0\bigg)
=\frac{1}{\mu_G(K)}\lambda\big((\mu_G|_{K})^0\big)
=\frac{1}{\mu_G(K)}\lambda(1_K\mu_G)
=\frac{1}{\mu_G(K)}\lambda(1_{K}).
$$
Note that the group $G/K$ is discrete by \cite[Theorem 5.21]{HR} since $K$ is open and that $p_K=p_Kp_K^*$. For any $s \in G$, using Plancherel formula \eqref{Formule-Plancherel} in the second equality, we obtain
\begin{align*}
\label{Calcul-trace-preserving-averaging}
\MoveEqLeft
\tau_G(\pi(\lambda_{sK})) 
=\tau_G(\lambda_sp_K) 
=\tau_G(p_K^* \lambda_s p_K) 
=\frac{1}{\mu_G(K)^2} \tau_G(\lambda(1_{K})^* \lambda_s \lambda(1_{K})) \\
&=\frac{1}{\mu_G(K)^2} \langle 1_{K}, 1_{sK} \rangle_{\L^2(G)} 
=\frac{1}{\mu_G(K)} 1_{K}(s) 
=\frac{1}{\mu_G(K)} \tau_{G/K}(\lambda_{sK}). 
\end{align*}
The statements on induced maps by $\pi$ between $\L^p$-spaces are now standard using interpolation. Indeed, if $x \in \L^p(\VN(G/K))$ we have $\big(\tau_G(|\pi(x)|^p)\big)^{\frac{1}{p}}=\big(\frac{1}{\mu_G(K)} \tau_{G/K}(|x|^p)\big)^{\frac{1}{p}}$.
\end{proof}

\paragraph{Noncommutative $\L^p$-spaces of group von Neumann algebras.}

By \eqref{Formule-Plancherel}, the linear map $\L^1(G) \cap \L^2(G) \to \L^2(\VN(G))$, $g \mapsto  \lambda(g)$ is an isometric map which can be extended to an isometry between $\L^2(G)$ and $\L^2(\VN(G))$ using \cite[Corollary 9.3]{Sti}.

We need a convenient dense subspace of $\L^p(\VN(G))$. If $p=\infty$, \cite[Corollary 7 page 51]{Der4} says\footnote{\thefootnote.  Note that $\mathrm{PM}_2(G)=\VN(G)$.} that $\lambda(\mathrm{C}_c(G))$ is weak* dense in $\VN(G)$, so by Kaplansky's density theorem, the closed unit ball of $\lambda(\mathrm{C}_c(G))$ is weak* dense in the closed unit ball of $\VN(G)$. Moreover, it is proved in \cite[Proposition 4.7]{Daws4} (see \cite[Proposition 3.4]{Eym} for the case $p=1$) that $\lambda(\Span \mathrm{C}_c(G) \ast \mathrm{C}_c(G))$ is dense in $\L^p(\VN(G))$ in the case $1 \leq p < \infty$.



\paragraph{Fourier multipliers on noncommutative $\L^p$-spaces.}

Note that if $\phi \in \L^1_{\loc}(G)$ is a locally integrable function and if $f \in \mathrm{C}_c(G)$ then the product $\phi f$ belongs to $\L^1(G)$ and consequently induces a bounded operator $\lambda(\phi f) \co \L^2(G) \to \L^2(G)$. Recall that this operator is equal to the weak integral $\int_{G} \phi(s) f(s) \lambda_s \d \mu_G(s)$. Finally, recall that $\L^2_{\loc}(G) \subset \L^1_{\loc}(G)$.

\begin{defi}
\label{defi-Lp-multiplier}
Let $G$ be a unimodular locally compact group. Suppose $1 \leq p \leq \infty$. Then  we say that a (weak* continuous if $p=\infty$) bounded operator $T \co \L^p(\VN(G)) \to \L^p(\VN(G))$ is a ($\L^p$) Fourier multiplier if there exists a locally 2-integrable function $\phi \in \L^2_{\loc}(G)$ such that for any $f \in \mathrm{C}_c(G) * \mathrm{C}_c(G)$ ($f \in \mathrm{C}_c(G)$ if $p=\infty$) the element $\int_{G} \phi(s) f(s) \lambda_s \d \mu_G(s)$ belongs to $\L^p(\VN(G))$ and 
\begin{equation}
\label{equ-def-Fourier-mult}
T\bigg(\int_{G} f(s) \lambda_s \d \mu_G(s)\bigg) 
= \int_{G} \phi(s) f(s) \lambda_s \d \mu_G(s), \quad \text{i.e.} \quad T(\lambda(f)) =\lambda(\phi f).
\end{equation}
In this case, we let $T=M_\phi$.
\end{defi}
Then $\mathfrak{M}^p(G)$ is defined to be the space of all bounded $\L^p$ Fourier multipliers and $\mathfrak{M}^{p,\cb}(G)$ to be the subspace consisting of completely bounded $\L^p$ Fourier multipliers.

Note that we take symbols in $\L^2_{\loc}(G)$ to use Plancherel formula \eqref{Formule-Plancherel} in the sequel of this section. We will see in Proposition \ref{prop-M2-Fourier-multipliers} combined with Lemma \ref{lemma-inclusion-Fourier-multipliers} that the symbol $\phi$ of a bounded Fourier multiplier necessarily belongs to the smaller space $\L^\infty(G)$. So, we could replace $\L^2_{\loc}(G)$ by $\L^\infty(G)$ in the definition. It is not clear if we can replace $\L^2_{\loc}(G)$ by $\L^1_{\loc}(G)$ for an arbitrary group $G$. Recall that the space $\L^1(\VN(G))$ canonically identifies to the Fourier algebra $\mathrm{A}(G)$. Using the regularity of the Fourier algebra \cite[Th 2.3.8]{KaL1}, it is not difficult in the case $p=1$ to see that a Fourier multiplier $\phi$ is equal almost everywhere to a continuous complex function defined on $G$. Moreover, there exists\footnote{\thefootnote. This is clear since the regular representation $\lambda \co \L^1(G) \to \B(\L^2(G))$, $f \mapsto \lambda(f)$ is injective by \cite[page 285]{Dix}.} at most one function $\phi$ (up to identity almost everywhere) such that $T = M_\phi$ and we say that $\phi$ induces the bounded Fourier multiplier $M_\phi$. Finally, it is obvious that the linear map $\mathrm{M}\mathrm{A}(G) \to \mathfrak{M}^1(G)$, $\varphi \mapsto M_{\varphi}$ is an isometry, where the space $\mathrm{M}\mathrm{A}(G)$ of multipliers of the Fourier algebra $\mathrm{A}(G)$ is defined in \cite[pages 153-154]{KaL1}.*

Finally, note that we can see $\mathfrak{M}^\infty(G)$ as a subset of the space $\B(\mathrm{C}_\lambda^*(G),\VN(G))$ where $\mathrm{C}_\lambda^*(G)$ is the reduced $\mathrm{C}^*$-algebra of $G$. See \cite[Remark 1.3]{KaL1}.



 %

The following results generalize the alluded observations of \cite{Har} done for discrete groups.

\begin{lemma}
\label{lemma-duality-Fourier-multipliers}
Let $G$ be a unimodular locally compact group. Suppose $1 \leq p \leq \infty$. We have the isometries $\mathfrak{M}^p(G) \to \mathfrak{M}^{p^*}(G)$, $M_\phi \mapsto M_\phi$ and $\mathfrak{M}^{p,\cb}(G) \to \mathfrak{M}^{p^*,\cb}(G)$, $M_\phi \mapsto M_\phi$. Moreover, the Banach adjoint $(M_\phi)^* \co \L^{p^*}(\VN(G))\to \L^{p^*}(\VN(G))$ (preadjoint if $p=\infty$) of $M_\phi \co \L^p(\VN(G))\to \L^p(\VN(G))$ identifies to the Fourier multiplier whose symbol is $\check{\phi}$. Moreover, the maps $\mathfrak{M}^p(G) \to \mathfrak{M}^{p}(G)$, $M_\phi \mapsto M_{\ovl{\phi}}$ and $\mathfrak{M}^{p,\cb}(G) \to \mathfrak{M}^{p^*,\cb}(G)$, $M_\phi \mapsto M_{\ovl{\phi}}$ are isometries. Finally, we can replace $M_{\ovl{\phi}}$ by $M_{\check{\phi}}$ in the last sentence.
\end{lemma}

\begin{proof}
Let $M_\phi \co \L^p(\VN(G)) \to \L^p(\VN(G))$ be an element of $\mathfrak{M}^p(G)$. For any $f, g \in \mathrm{C}_c(G) \ast \mathrm{C}_c(G)$ ($f \in \mathrm{C}_c(G)$ and $g \in \mathrm{C}_c(G)*\mathrm{C}_c(G)$ if $p=\infty$ and  $f \in \mathrm{C}_c(G)*\mathrm{C}_c(G)$ and $g \in \mathrm{C}_c(G)$ if $p=1$), we have $g,\phi f\in \L^1(G) \cap \L^2(G)$ since $\phi \in \L^2_{\loc}(G)$. Using Plancherel formula \eqref{Formule-Plancherel} in the second and third equalities,  we deduce that
\begin{align*}
\MoveEqLeft
\label{Ultim-eq2}
\tau\big(M_\phi(\lambda(f))\lambda(g)\big)  
=\tau\big(\lambda(\phi f)\lambda(g)\big) 
=\int_G \check{\phi} \check{f} g\d\mu_G 
=\tau\big(\lambda(f) \lambda\big(\check{\phi} g\big)\big) 
=\tau\big(\lambda(f)M_{\check{\phi}}\big(\lambda(g)\big)\big). 
\end{align*}
We conclude that the adjoint $(M_\phi)^* \co \L^{p^*}(\VN(G)) \to \L^{p^*}(\VN(G))$ (preadjoint if $p=\infty$) identifies to the multiplier $M_{\check{\phi}}$. Thus the map $M_{\phi} \mapsto M_{\check{\phi}}$ provides an isometry $\mathfrak{M}^p(G) \to \mathfrak{M}^{p^*}(G)$. 

On the other hand, note that the map $\kappa \co \VN(G) \to \VN(G)$, $\lambda_s \mapsto \lambda_{s^{-1}}$ is a $*$-anti-automorphism of the algebra $\VN(G)$, hence weak* continuous. For any $g \in \mathrm{C}_c(G)$, using \cite[VI.3 Proposition 1]{Bou2} in the second equality, we see that
\begin{align*}
\MoveEqLeft
\kappa(\lambda(g))
=\kappa\bigg(\int_{G} g(s)\lambda_s \d\mu_G(s)\bigg)
=\int_{G} g(s)\kappa(\lambda_s) \d\mu_G(s) \\
&=\int_{G} g(s)\lambda_{s^{-1}} \d\mu_G(s)
=\int_{G} g(s^{-1})\lambda_{s} \d\mu_G(s)
=\lambda(\check{g})           
\end{align*}  
where we use that $\int_G f(s)\lambda_s \d \mu_G(s)$ is a well-defined weak* integral (by \cite[Lemma 2.2]{Arh9} and \cite[Corollary 2, III.38]{Bou2}). For any $f,g \in \mathrm{C}_c(G)$, we deduce that
\begin{align*}
\MoveEqLeft
\tau(\kappa(\lambda(g)\lambda(f)))
=\tau(\kappa(\lambda(g*f)))
=\tau(\lambda(\overbrace{g*f}^{\check{}}))
=\tau\big(\lambda(\check{f}*\check{g})\big) \\
&=\int_G f\check{g} \d\mu_G
=\int_G \check{f} g \d\mu_G
=\tau(\lambda(g*f))
=\tau(\lambda(g)\lambda(f))
\end{align*} 
We conclude with \cite[Theorem 6.2]{Str} that $\kappa$ preserves the trace. Hence, it induces an isometric map $\kappa_{p^*} \co \L^{p^*}(\VN(G))\to \L^{p^*}(\VN(G))$. Now, if $M_{\phi}$ belongs to $\mathfrak{M}^{p^*}(G)$ note that the map $\kappa_{p^*}^\op \circ M_{\varphi} \circ \kappa_{p^*} \co \L^{p^*}(\VN(G))\to \L^{p^*}(\VN(G))$ identifies to the multiplier $M_{\check{\phi}}$. We conclude that the map $\mathfrak{M}^{p^*}(G) \to \mathfrak{M}^{p^*}(G)$, $M_{\varphi} \mapsto M_{\check{\varphi}}$ is an isometry. We conclude by composition that the map $\mathfrak{M}^p(G) \to \mathfrak{M}^{p^*}(G)$, $M_\phi \mapsto M_\phi$ is an isometry. To show the isometry $\mathfrak{M}^{p,\cb}(G) = \mathfrak{M}^{p^*,\cb}(G)$, we proceed in the same way using Lemma \ref{Lemma-op-mapping-1} observing that $\kappa_{p^*} \co \L^{p^*}(\VN(G))^\op \to \L^{p^*}(\VN(G))$ is completely isometric. Finally, with the isometric map $\Theta \co \L^{p}(\VN(G)) \to \L^{p}(\VN(G))$, $x \mapsto x^*$, it is easy to check that the map $\Theta M_{\phi}\Theta \co \L^{p}(\VN(G)) \to \L^{p}(\VN(G))$ identifies to the multiplier $M_{\check{\ovl{\phi}}}$. Moreover, recall that $\Theta \co \ovl{\L^{p}(\VN(G))^\op} \to \L^{p}(\VN(G))$ is a complete isometry. Then it is not difficult to obtain the final assertions.
\end{proof}

\begin{lemma}
\label{prop-M2-Fourier-multipliers}
Let $G$ be a unimodular locally compact group. We have the following isometries 
$$
\mathfrak{M}^2(G)
=\mathfrak{M}^{2,\cb}(G)
=\L^\infty(G).
$$
\end{lemma}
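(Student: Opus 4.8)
The claim is a Plancherel-type identification: $\mathfrak{M}^2(G) = \mathfrak{M}^{2,\cb}(G) = \L^\infty(G)$ isometrically. The strategy is to exploit the isometry $\L^2(G) \to \L^2(\VN(G))$, $g \mapsto \lambda(g)$, which conjugates the $\L^2$ Fourier multiplier $M_\phi$ into the pointwise multiplication operator $g \mapsto \phi g$ on $\L^2(G)$. Concretely, I would first recall (as established in the excerpt) that $\L^1(G) \cap \L^2(G) \to \L^2(\VN(G))$, $g \mapsto \lambda(g)$ extends to a surjective isometry $U \co \L^2(G) \to \L^2(\VN(G))$. If $M_\phi \in \mathfrak{M}^2(G)$, then by \eqref{equ-def-Fourier-mult} we have $M_\phi(\lambda(f)) = \lambda(\phi f)$ for $f \in C_c(G)*C_c(G)$, i.e. $U^{-1} M_\phi U$ agrees with pointwise multiplication by $\phi$ on the dense subspace $C_c(G)*C_c(G)$ of $\L^2(G)$. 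Hence $M_\phi$ is bounded on $\L^2(\VN(G))$ if and only if pointwise multiplication by $\phi$ is bounded on $\L^2(G)$, which happens if and only if $\phi \in \L^\infty(G)$, with $\norm{M_\phi}_{\L^2(\VN(G)) \to \L^2(\VN(G))} = \norm{M_\phi}_{\B(\L^2(G))} = \norm{\phi}_{\L^\infty(G)}$. This gives $\mathfrak{M}^2(G) = \L^\infty(G)$ isometrically.

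For the middle equality $\mathfrak{M}^2(G) = \mathfrak{M}^{2,\cb}(G)$, the key point is that $\L^2(\VN(G))$ is a Hilbert space (more precisely an operator Hilbert space, or one can argue directly). I would argue as follows: for any $\phi \in \L^\infty(G)$, the multiplier $M_\phi$ on $\L^2(G)$ has an obvious completely bounded extension because pointwise multiplication operators on $\L^2(\Omega)$ satisfy $\norm{M_\phi \ot \Id_{S^2}}_{\L^2(\Omega,S^2) \to \L^2(\Omega,S^2)} = \norm{\phi}_{\L^\infty}$ (tensoring with a Hilbert space does not change the norm of a pointwise multiplication operator, since $\L^2(\Omega, S^2) = \L^2(\Omega) \ot_2 S^2$ and $M_\phi \ot \Id = $ multiplication by the $\B(S^2)$-valued function $\phi(\cdot) \Id_{S^2}$). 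Transporting this through the isometry $U$ and using the characterization \eqref{defnormecb} of complete boundedness via $S^p$ with $p = 2$ (Proposition \ref{lemma-charact-Lp-cb}), one gets $\norm{M_\phi}_{\cb, \L^2(\VN(G)) \to \L^2(\VN(G))} \leq \norm{\phi}_{\L^\infty(G)}$. The reverse inequality $\norm{M_\phi}_{\cb} \geq \norm{M_\phi} = \norm{\phi}_{\L^\infty(G)}$ is automatic. Hence $\mathfrak{M}^{2,\cb}(G) = \L^\infty(G)$ with equal norms as well, and the chain of isometries follows.

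The main obstacle, and the step needing the most care, is making rigorous the claim that a bounded (or completely bounded) operator $T$ on $\L^2(\VN(G))$ lying in $\mathfrak{M}^2(G)$ is genuinely unitarily equivalent to multiplication by its symbol $\phi$. The subtlety is that the symbol is only assumed to be in $\L^2_{\loc}(G)$ and the defining identity \eqref{equ-def-Fourier-mult} only holds on $\lambda(C_c(G)*C_c(G))$; one must check this subspace is dense in $\L^2(\VN(G))$ (which the excerpt records, citing \cite[Proposition 4.7]{Daws4}), that $U$ really does carry it onto $C_c(G)*C_c(G) \subset \L^2(G)$, and then that boundedness of $\phi \cdot$ on this dense subspace of $\L^2(G)$ forces $\phi \in \L^\infty(G)$ — this last point is a standard but non-trivial measure-theoretic argument (if $\phi \notin \L^\infty$, the sets $\{|\phi| > n\}$ have positive measure and, by local finiteness of Haar measure and inner regularity, contain positive-measure subsets supporting suitable $C_c * C_c$ test functions that violate the bound). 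For the $\cb$ part, the delicate point is identifying $S^2(\L^2(\VN(G)))$ with $\L^2$ of an amplified von Neumann algebra and checking the amplified multiplier still acts as pointwise multiplication; this is where one uses that $\L^2$ makes everything Hilbertian so that the operator space structure is the one for which all these identifications are isometric. Everything else is bookkeeping with Plancherel's formula \eqref{Formule-Plancherel}.
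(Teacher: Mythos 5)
Your proof is correct and follows essentially the same route as the paper: the Plancherel isometry reduces $M_\phi$ to pointwise multiplication by $\phi$ on $\L^2(G)$, giving $\norm{M_\phi}=\norm{\phi}_{\L^\infty(G)}$, and the complete boundedness comes for free from the Hilbertian nature of the $p=2$ level. The only cosmetic difference is that the paper disposes of the $\cb$ equality by citing that $\L^2(\VN(G))$ is an operator Hilbert space (on which completely bounded maps coincide isometrically with bounded ones), whereas you verify the same fact by hand via $S^2(\L^2(\VN(G)))\cong S^2\ot_2\L^2(G)$ and the norm-preservation of $\Id\ot T$ on Hilbertian tensor products.
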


\begin{proof}
Suppose that $\phi \in \L^2_{\loc}(G)$ induces a bounded Fourier multiplier. Using the Plancherel isometry $\L^2(\VN(G)) \cong \L^2(G)$, for any function $f \in \mathrm{C}_c(G)*\mathrm{C}_c(G)$, we obtain (since $\phi f \in \L^1(G) \cap \L^2(G)$) that $\bnorm{M_\phi(\lambda(f))}_{\L^2(\VN(G))} 
=\bnorm{\lambda(\phi f)}_{\L^2(\VN(G))}
=\norm{\phi f}_{\L^2(G)}$. We deduce that 
$$
\norm{M_\phi}_{\L^2(\VN(G)) \to \L^2(\VN(G))} 
=\sup_{f \in \mathrm{C}_c(G)*\mathrm{C}_c(G),\norm{f}_{\L^2(G)} \leq 1} \norm{\phi f}_{\L^2(G)} 
=\norm{\phi}_{\L^\infty(G)}.
$$ 
Conversely, if $\phi \in \L^\infty(G)$ then for any $f \in \mathrm{C}_c(G)*\mathrm{C}_c(G)$ we have $\phi f \in \L^1(G) \cap \L^2(G)$ and consequently $\lambda(\phi f) \in \L^2(\VN(G))$. Moreover, we have $\bnorm{\lambda(\phi f)}_{\L^2(\VN(G))}=\norm{\phi f}_{\L^2(G)} \leq \norm{\phi}_{\L^\infty(G)}\norm{f}_{\L^2(G)}$. So $\phi$ induces a bounded Fourier multiplier on $\L^2(\VN(G))$. This shows that $\mathfrak{M}^2(G) = \L^\infty(G)$. 

Moreover, the operator space structure of $\L^2(\VN(G))$ turns it into an operator Hilbert space \cite[page~139]{Pis7}, so that the completely bounded mappings on $\L^2(\VN(G))$ coincide with the bounded ones by \cite[page~127]{Pis7}. We conclude that $\mathfrak{M}^{2,\cb}(G) = \mathfrak{M}^2(G) = \L^\infty(G)$.
\end{proof}

\begin{lemma}
\label{lemma-inclusion-Fourier-multipliers}
Let $G$ be a unimodular locally compact group. Suppose $1 \leq p \leq q \leq 2$. We have the contractive inclusions $\mathfrak{M}^1(G) \subset \mathfrak{M}^p(G) \subset \mathfrak{M}^q(G) \subset \mathfrak{M}^2(G)$ and $\mathfrak{M}^{1,\cb}(G) \subset \mathfrak{M}^{p,\cb}(G) \subset \mathfrak{M}^{q,\cb}(G) \subset \mathfrak{M}^{2,\cb}(G)$.
\end{lemma}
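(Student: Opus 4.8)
The statement asserts contractive inclusions $\mathfrak{M}^1(G) \subset \mathfrak{M}^p(G) \subset \mathfrak{M}^q(G) \subset \mathfrak{M}^2(G)$ for $1 \leq p \leq q \leq 2$, and the analogous chain for the completely bounded versions. The natural engine here is complex interpolation of noncommutative $\L^p$-spaces: for a unimodular $G$, the spaces $\L^r(\VN(G))$ form an interpolation scale, and a Fourier multiplier with symbol $\phi$ acts compatibly on all of them (it is defined by the same formula $\lambda(f) \mapsto \lambda(\phi f)$ on the common dense subspace $\lambda(\Span C_c(G) * C_c(G))$, which sits densely in every $\L^r(\VN(G))$ for $1 \leq r < \infty$ by \cite[Proposition 4.7]{Daws4}). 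So the first thing I would do is record this compatibility: if $\phi$ induces a bounded multiplier $M_\phi$ on $\L^1(\VN(G))$ and on $\L^2(\VN(G))$ (the latter being automatic once $\phi \in \L^\infty(G)$, cf. Lemma \ref{prop-M2-Fourier-multipliers}), then the two operators agree on $\lambda(\Span C_c(G)*C_c(G))$ and hence, by the Riesz--Thorin-type interpolation theorem for noncommutative $\L^p$-spaces, $M_\phi$ extends to a bounded operator on $\L^r(\VN(G))$ for every $r$ between $1$ and $2$, with $\norm{M_\phi}_{\L^r \to \L^r} \leq \norm{M_\phi}_{\L^1 \to \L^1}^{\theta} \norm{M_\phi}_{\L^2 \to \L^2}^{1-\theta}$ where $\frac1r = \theta + \frac{1-\theta}{2}$. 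In particular this gives $\mathfrak{M}^1(G) \subset \mathfrak{M}^r(G)$ contractively once one also checks that the interpolated operator is genuinely the Fourier multiplier with symbol $\phi$ — but that is immediate since it agrees with $\lambda(\phi f)$ on a dense set and $\phi \in \L^2_{\loc}(G)$ guarantees $\phi f \in \L^1 \cap \L^2$, so $\lambda(\phi f) \in \L^r(\VN(G))$ makes sense.

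For the intermediate inclusions $\mathfrak{M}^p(G) \subset \mathfrak{M}^q(G)$ with $1 \leq p \leq q \leq 2$, I would not interpolate directly between $p$ and $q$; instead observe that $q$ lies between $p$ and $q^* \geq 2 \geq q$, or more cleanly: first use Lemma \ref{lemma-duality-Fourier-multipliers} to get $\mathfrak{M}^p(G) = \mathfrak{M}^{p^*}(G)$ isometrically via $M_\phi \mapsto M_\phi$ (same symbol), so a bounded multiplier on $\L^p(\VN(G))$ is automatically bounded on $\L^{p^*}(\VN(G))$ with the same norm. Now $q$ satisfies $p \leq q \leq 2 \leq p^*$, so $q$ is an interpolation index between $p$ and $p^*$; writing $\frac1q = \frac{1-\theta}{p} + \frac{\theta}{p^*}$ for a suitable $\theta \in [0,1]$ and applying the interpolation theorem to the compatible pair $(M_\phi$ on $\L^p$, $M_\phi$ on $\L^{p^*})$ yields boundedness on $\L^q(\VN(G))$ with $\norm{M_\phi}_{\L^q \to \L^q} \leq \norm{M_\phi}_{\L^p \to \L^p}^{1-\theta} \norm{M_\phi}_{\L^{p^*} \to \L^{p^*}}^{\theta} = \norm{M_\phi}_{\L^p \to \L^p}$, giving the contractive inclusion. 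The chain $\mathfrak{M}^1 \subset \mathfrak{M}^p \subset \mathfrak{M}^q \subset \mathfrak{M}^2$ then follows by taking $p = 1$ in one step, general $p \leq q$ in the middle, and $q \leq 2$ with Lemma \ref{prop-M2-Fourier-multipliers} at the end (or just $q^* \geq 2$, $\mathfrak{M}^q = \mathfrak{M}^{q^*} \subset \mathfrak{M}^2$ is a degenerate case).

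For the completely bounded versions, the argument is identical but carried out at the level of $\Id_{S^r} \ot M_\phi$: by Proposition \ref{lemma-charact-Lp-cb}, complete boundedness of $M_\phi$ on $\L^r(\VN(G))$ is equivalent to boundedness of $\Id_{S^r} \ot M_\phi$ on $S^r(\L^r(\VN(G)))$, and the vector-valued noncommutative $\L^p$-spaces $S^r(\L^r(\VN(G)))$ again interpolate (by \eqref{Def-vector-valued-Lp-non-com} and Pisier's theory, since $\VN(G)$ — or rather $\B(\ell^2) \otvn \VN(G)$ — is semifinite; strictly one interpolates $S^r(E)$ for a fixed operator space $E$, which is exactly what the complex interpolation definition of $S^r(E)$ provides). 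The duality step uses Lemma \ref{Lemma-op-mapping-1} to handle the opposite algebra appearing in $\L^{r^*}(\VN(G))^* $-identifications, exactly as in the proof of Lemma \ref{lemma-duality-Fourier-multipliers}. The main obstacle — really a bookkeeping point rather than a deep one — is making sure the interpolated operator is identified as the Fourier multiplier with the \emph{same} symbol $\phi$ (so that the inclusion map is literally $M_\phi \mapsto M_\phi$ and not merely an abstract embedding of Banach spaces), and that the hypothesis $\phi \in \L^2_{\loc}(G)$ is enough for all the objects $\lambda(\phi f)$ to lie in the relevant $\L^r$-spaces simultaneously; both are settled by restricting attention to the dense test functions $f \in \Span C_c(G) * C_c(G)$, on which everything is an honest $\L^1 \cap \L^2$ computation.
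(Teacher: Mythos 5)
Your proposal is correct and follows essentially the same route as the paper: self-duality of the multiplier spaces (Lemma \ref{lemma-duality-Fourier-multipliers}) combined with complex interpolation, the only cosmetic difference being that you interpolate between $p$ and $p^*$ in one step to reach $q$, whereas the paper first passes to $\mathfrak{M}^2(G)$ and then interpolates between $2$ and $p$. Your extra care in identifying the interpolated operator as the multiplier with the \emph{same} symbol on the dense test class $\lambda(\Span C_c(G)*C_c(G))$ is a welcome precision the paper leaves implicit.
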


\begin{proof}
Note that the first inclusion is a particular case of the second inclusion. If $M_\phi$ belongs to $\mathfrak{M}^p(G)$ then by Lemma \ref{lemma-duality-Fourier-multipliers}, it also belongs to $\mathfrak{M}^{p^*}(G)$, consequently, by complex interpolation, $M_\phi$ belongs to $\mathfrak{M}^2(G)$. Using again interpolation between 2 and $p$, we deduce that $M_\phi$ belongs to $\mathfrak{M}^q(G)$. The second chain is proved in the same manner.
\end{proof}

The first part of the following result generalizes \cite[Lemma 5.1.4]{KaL1}.

\begin{lemma}
\label{Lemma-Symbol-weakstar-convergence-weakLp-vrai}
Let $G$ be a unimodular locally compact group. Suppose $1 \leq p \leq \infty$. Let $(M_{\phi_j})$ be a bounded net of bounded Fourier multipliers on $\L^p(\VN(G))$ and suppose that $\phi$ is an element of $\L^\infty(G)$ such that $(\phi_j)$ converges to $\phi$ for the weak* topology of $\L^\infty(G)$. Then $\phi$ induces a bounded Fourier multiplier on $\L^p(\VN(G))$. In addition if $1<p<\infty$, the net $(M_{\phi_j})$ converges to $M_{\phi}$ for the weak operator topology of $\B(\L^p(\VN(G)))$ and
$$
\norm{M_\phi}_{\L^p(\VN(G)) \to \L^p(\VN(G))} 
\leq \liminf_{j \to \infty} \norm{M_{\phi_j}}_{\L^p(\VN(G)) \to \L^p(\VN(G))}.
$$
If $p=\infty$, for any functions $f \in \mathrm{C}_c(G)$ and $g \in \mathrm{C}_c(G) \ast \mathrm{C}_c(G)$, we have 
$$\big\langle M_{\phi_j}(\lambda(f)), \lambda(g) \big\rangle_{\VN(G),\L^1(\VN(G))}\xra[\ j \ ]{} \big\langle M_{\phi}(\lambda(f)), \lambda(g) \big\rangle_{\VN(G),\L^1(\VN(G))}.$$ 
A similar statement is true by replacing ``bounded'' by ``completely bounded'' and the norms by $\norm{\cdot}_{\cb,\L^p(\VN(G)) \to \L^p(\VN(G))}$.
\end{lemma}

\begin{proof}
For any functions $f, g \in \mathrm{C}_c(G) \ast \mathrm{C}_c(G)$ (to adapt if if $p=1$), we have $f\check{g} \in \L^1(G)$. For any $j$, we have
\begin{align*}
\MoveEqLeft
\left|\int_G \phi_j f \check{g}\d \mu_G\right| 
=\left|\big\langle \lambda(\phi_j f), \lambda(g) \big\rangle_{\L^p(\VN(G)),\L^{p^*}(\VN(G))}\right|\\
&=\left|\big\langle M_{\phi_j}(\lambda(f)), \lambda(g) \big\rangle_{\L^p(\VN(G)),\L^{p^*}(\VN(G))}\right|\\
&\leq \norm{M_{\phi_j}}_{\L^p(\VN(G)) \to \L^p(\VN(G))} 
\norm{\lambda(f)}_{\L^p(\VN(G))} \norm{\lambda(g)}_{\L^{p^*}(\VN(G))}.
\end{align*}
Passing to the limit, we obtain
$$
\left|\int_G \phi f \check{g}\d \mu_G\right| 
\leq \liminf_{j \to \infty} \norm{M_{\phi_j}}_{\L^p(\VN(G)) \to \L^p(\VN(G))}
\norm{\lambda(f)}_{\L^p(\VN(G))} \norm{\lambda(g)}_{\L^{p^*}(\VN(G))}.
$$
By density if $p<\infty$, we conclude that $\phi$ induces a bounded Fourier multiplier on $\L^p(\VN(G))$ with the estimate on the norm (use duality if $p=\infty$).

Using again Plancherel formula \eqref{Formule-Plancherel} and the weak* convergence of the net $(\phi_j)$, we deduce that for any functions $f, g \in \mathrm{C}_c(G) \ast \mathrm{C}_c(G)$
\begin{align*}
\MoveEqLeft
\big\langle(M_{\phi}-M_{\phi_j})(\lambda(f)), \lambda(g) \big\rangle_{\L^p(\VN(G)),\L^{p^*}(\VN(G))}
=\tau\big((M_{\phi}-M_{\phi_j})(\lambda(f))\lambda(g)\big) \\
&=\tau\big(\lambda((\phi-\phi_j) f)\lambda(g)\big) 
=\int_G (\phi-\phi_j) f\check{g} \d\mu_G
=\big\langle \phi-\phi_j,f\check{g} \big\rangle_{\L^\infty(G),\L^1(G)} 
\xra[\ j \ ]{} 0.            
\end{align*} 
By density, using a $\frac{\epsi}{4}$-argument and the boundedness of the net, we conclude\footnote{\thefootnote. More precisely, if $X$ is a Banach space, if $E_1$ is dense subset of $X$, if $E_2$ is a dense subset of $X^*$ and if $(T_j)$ is a bounded net of $\B(X)$ with an element $T$ of $\B(X)$ such that $\langle T_j(x),x^*\rangle \xra[\ j\ ]{}  \langle T(x), x^*\rangle$ for any $x \in E_1$ and any $x^* \in E_2$, then the net $(T_j)$ converges to $T$ for the weak operator topology of $\B(X)$.} the proof. The case $p=\infty$ is similar.

Now, we prove the last sentence, it suffices to show that $\phi$ induces a completely bounded Fourier multiplier. For any $f_{kl},g_{kl} \in \mathrm{C}_c(G)*\mathrm{C}_c(G)$ ($f_{kl} \in \mathrm{C}_c(G)$ if $p=\infty$) where $1 \leq k,l \leq N$, we have $f_{kl} \check{g}_{kl}\in \L^1(G)$ and for any $j$
\begin{align*}
\MoveEqLeft
\left| \big\langle \big[ M_{\phi_{j}}(\lambda(f_{kl}))\big] , \big[\lambda(g_{kl})\big] \big\rangle_{\M_N(\L^p(\VN(G))),S^1_N(\L^{p^*}(\VN(G)))} \right|  \\
& \leq \norm{M_{\phi_j}}_{\cb,\L^p(\VN(G)) \to \L^p(\VN(G))} \bnorm{\big[\lambda(f_{kl})\big]}_{\M_N(\L^p(\VN(G)))} \bnorm{\big[\lambda(g_{kl})\big]}_{S^1_N(\L^{p^*}(\VN(G)))},
\end{align*}
that is, using Plancherel formula \eqref{Formule-Plancherel},
\begin{align*}
\MoveEqLeft
\left|\sum_{k,l=1}^N \int_G \phi_j(s) f_{kl}(s) \check{g}_{kl}(s) \d\mu_G(s) \right| \\
& \leq \norm{M_{\phi_j}}_{\cb,\L^p(\VN(G)) \to \L^p(\VN(G))} \bnorm{\big[\lambda(f_{kl})\big]}_{\M_N(\L^p(\VN(G)))} \bnorm{\big[\lambda(g_{kl})\big]}_{S^1_N(\L^{p^*}(\VN(G)))}.
\end{align*}
Passing to the limit, we obtain
\begin{align*}
\MoveEqLeft
\left|\sum_{k,l=1}^N \int_G \phi(s) f_{kl}(s) \check{g}_{kl}(s) \d\mu_G(s) \right| \\
&\leq \liminf_{j \to \infty} \norm{M_{\phi_j}}_{\cb,\L^p(\VN(G)) \to \L^p(\VN(G))} \bnorm{\big[\lambda(f_{kl})\big]}_{\M_N(\L^p(\VN(G)))} \bnorm{\big[\lambda(g_{kl})\big]}_{S^1_N(\L^{p^*}(\VN(G)))}.            
\end{align*}  
We deduce that $\phi$ induces a completely bounded Fourier multiplier on $\L^p(\VN(G))$ with the suitable estimate on the completely bounded norm.
\end{proof}

\begin{lemma}
\label{lem-Fourier-multiplier-weak-star-closed}
Let $G$ be a unimodular locally compact group and $1 < p < \infty$. Then the space $\mathfrak{M}^{p,\cb}(G)$ is weak* closed in $\CB(\L^p(\VN(G)))$. Similarly, the space $\mathfrak{M}^p(G)$ is weak* closed in the space $\B(\L^p(\VN(G)))$. Finally, the spaces $\mathfrak{M}^{\infty}(G)$ and $\mathfrak{M}^{\infty,\cb}(G)$ are weak* closed in the spaces $\B(\mathrm{C}_\lambda^*(G),\VN(G))$ and $\CB(\mathrm{C}_\lambda^*(G),\VN(G))$.
\end{lemma}

\begin{proof}
By the Banach-Dieudonn\'e theorem \cite[page 154]{Hol1}, it suffices to show that the closed unit ball of $\mathfrak{M}^{p,\cb}(G)$ is weak* closed in $\CB(\L^p(\VN(G)))$. Let $(M_{\phi_j})$ be a net in that unit ball converging for the weak* topology to some completely bounded map $T \co \L^p(\VN(G)) \to \L^p(\VN(G))$. By Lemma \ref{prop-M2-Fourier-multipliers} and Lemma \ref{lemma-inclusion-Fourier-multipliers}, for any $j$, we have 
$$
\norm{\phi_j}_{\L^\infty(G)} 
\leq \norm{M_{\phi_j}}_{\cb,\L^p(\VN(G))\to \L^p(\VN(G))} 
\leq 1.
$$ 
Hence by Banach-Alaoglu's theorem there exists a subnet of $(\phi_j)$ converging for the weak* topology to some $\phi \in \L^\infty(G)$. It remains to show that $T = M_\phi$. Recall that the predual of the space $\CB(\L^p(\VN(G)))$ is given by $\L^p(\VN(G)) \widehat{\ot} \L^{p^*}(\VN(G))^\op$, where $\widehat{\ot}$ denotes the operator space projective tensor product and the duality bracket is given by
$$ 
\langle T, x \ot y \rangle_{\CB(\L^p(\VN(G))),\L^p(\VN(G)) \widehat{\ot} \L^{p^*}(\VN(G))} 
=\big\langle T(x), y \big\rangle_{\L^p(\VN(G)), \L^{p^*}(\VN(G))}.
$$
This implies that $\big\langle M_{\phi_j}(x), y\big\rangle \xra[j]{} \langle T(x),y \rangle$ for any $x \in \L^p(\VN(G))$ and any $y \in \L^{p^*}(\VN(G))$. By Lemma \ref{Lemma-Symbol-weakstar-convergence-weakLp-vrai}, the net $(M_{\phi_j})$ converges in addition to $M_{\phi}$ for the weak operator topology. So by uniqueness of the limit, we obtain that $T = M_\phi$.

For the last sentence, we use a similar proof where here $T \co \mathrm{C}_\lambda^*(G) \to \VN(G)$. On the one hand, we have $\big\langle M_{\phi_j}(x), y\big\rangle \xra[j]{} \langle T(x),y \rangle$ for any $x \in \mathrm{C}_\lambda^*(G)$ and any $y \in \L^1(\VN(G))$. On the other hand by Lemma \ref{Lemma-Symbol-weakstar-convergence-weakLp-vrai}, for any $f \in \mathrm{C}_c(G)$ and any $g \in \mathrm{C}_c(G) \ast \mathrm{C}_c(G)$, we have $\big\langle M_{\phi_j}(\lambda(f)), \lambda(g) \big\rangle \to \big\langle M_{\phi}(\lambda(f)), \lambda(g) \big\rangle$. By uniqueness of the limit, we deduce that $\big\langle T(\lambda(f)),\lambda(g) \big\rangle_{\VN(G),\L^1(\VN(G))}=\big\langle M_{\phi}(\lambda(f)), \lambda(g) \big\rangle_{\VN(G),\L^1(\VN(G))}$. Consequently, we obtain $M_{\phi}(\lambda(f))= T(\lambda(f))$ for any function $f \in \mathrm{C}_c(G)$. Finally $T=M_{\phi}$. 

The statement on the space $\mathfrak{M}^p(G)$ can be proved in a similar manner, using the predual $\L^p(\VN(G)) \hat{\ot} \L^{p^*}(\VN(G))$ of $\B(\L^p(\VN(G)))$ where $\hat{\ot}$ denotes the Banach space projective tensor product. The last sentence is similar.
\end{proof}

\begin{remark}\normalfont
\label{Max-com-subset}
We do not know if $\mathfrak{M}^{p,\cb}(G)$ and $\mathfrak{M}^{p}(G)$ are maximal commutative subsets of $\CB(\L^p(\VN(G)))$ and $\B(\L^p(\VN(G)))$ which is a stronger assertion. 
\end{remark}

If $G$ is an \textit{abelian} locally compact group and if $M_\varphi \co \L^p(G) \to \L^p(G)$ is a positive multiplier in $\mathfrak{M}^p(\hat{G})$, note that $\varphi$ is equal almost everywhere to a function of the Fourier-Stieltjes algebra $\B(\hat{G})$, thus to a continuous function. The next lemma extends this result to the noncommutative context. 

\begin{lemma}
\label{Lemma-positive-Fourier-multipliers}
Let $G$ be a unimodular locally compact group. Suppose $1 \leq p \leq \infty$. Let $\varphi \co G \to \C$ be a complex function which induces a positive Fourier multiplier $M_\varphi \co \L^p(\VN(G)) \to \L^p(\VN(G))$. Then $\varphi$ is equal almost everywhere to a continuous function.
\end{lemma}

\begin{proof}
We can suppose $1 <p <\infty$. Let $g \in \mathrm{C}_c(G)$. Then the operator $\lambda(g^* \ast g)=\lambda(g)^*\lambda(g) \co \L^2(G) \to \L^2(G)$ is positive. Moreover, by \eqref{Def-mtauG}, it belongs to $\mathfrak{m}_{\tau_G} \subset \L^p(\VN(G))$. We conclude that $\lambda(g^**g)$ belongs to $\L^p(\VN(G))_+$. We deduce that $M_\varphi\big(\lambda(g^* *g)\big)$ is a positive element of $\L^p(\VN(G))$. Since $\varphi(g^**g)$ belongs to $\L^1(G) \cap \L^2(G)$, the operator $M_\varphi\big(\lambda(g^**g)\big)=\lambda(\varphi(g^**g))$ is bounded on $\L^2(G)$. 
Now, for any $\xi \in \L^2(G)$, by positivity,
\begin{align*}
\MoveEqLeft
0  \leq \big\langle M_\varphi \big(\lambda(g^* \ast g)\big) \xi, \xi \big\rangle_{\L^2(G)}
=\bigg\langle\bigg(\int_G  \varphi(s)(g^**g)(s)\lambda_s \d\mu_G(s)\bigg) \xi, \xi\bigg\rangle_{\L^2(G)}\\
&=\int_G  \Big\langle\big(\varphi(s)(g^**g)(s)\lambda_s\big)\xi , \xi\Big\rangle_{L^2(G)}\d\mu_G(s)\\
&=\int_G  \bigg(\int_G \ovl{g(t^{-1})} g(t^{-1}s) \d\mu_G(t)\bigg) \varphi(s)\big\langle \lambda_s \xi, \xi \big\rangle_{\L^2(G)} \d\mu_G(s) \\
&=\int_G \bigg(\int_G  \ovl{g(t)} g(ts) \d\mu_G(t)\bigg) \varphi(s)\big\langle \lambda_s \xi, \xi \big\rangle_{\L^2(G)} \d\mu_G(s)\\  
&=\int_G \int_G  \ovl{g(t)} g(s) \varphi(t^{-1}s)\big\langle \lambda_{t^{-1}s} \xi, \xi \big\rangle_{\L^2(G)} \d\mu_G(s) \d\mu_G(t).
\end{align*}
Hence the function $s \mapsto \varphi(s) \big\langle \lambda_s \xi, \xi \big\rangle_{\L^2(G)}$ of $\L^\infty(G)$ is positive definite \cite[VII.3, Definition 3.20]{Tak2}, \cite[page 296]{Dix}. By \cite[VII.3, Corollary 3.22]{Tak2}, we deduce that it coincides almost everywhere with a continuous function on $G$. To conclude the lemma, it suffices now to show that there exists a neighborhood $K_1$ of $e \in G$ such that for any $s_0 \in G$, there exists $\xi \in \L^2(G)$ such that $\big\langle \lambda_s \xi, \xi \big\rangle_{\L^2(G)}$ does not vanish for $s \in K_1 s_0$. To this end, let $K_0$ be a compact neighborhood of $e$ and set $K = K_1^{-1} \cdot K_0,$ which is also compact. Let $\xi_0 \in \L^2(G)$ such that $\xi_0 \geq 0$ almost everywhere and $\xi_0 > 0$ on $K$. Put $\xi = \xi_0 + \lambda_{s_0^{-1}} \xi_0$. Then 
\begin{align*}
\big\langle \lambda_s \xi, \xi \big\rangle_{\L^2(G)} 
& = \Big\langle \lambda_s \big(\xi_0 + \lambda_{s_0^{-1}} \xi_0\big), \xi_0 + \lambda_{s_0^{-1}} \xi_0 \Big\rangle_{L^2(G)} 
\geq \Big\langle \lambda_{ss_0^{-1}} \xi_0 , \xi_0 \Big\rangle_{\L^2(G)} \\ 
&= \int_G \xi_0\big(s_0 s^{-1} t\big) \xi_0(t) \d\mu_G(t) 
 \geq \int_{K_0} \xi_0\big(s_0 s^{-1} t\big) \xi_0(t) \d\mu_G(t).
\end{align*}
For $t \in K_0$ and $s \in K_1s_0$, we have $s_0 s^{-1} t \in K_1^{-1} K_0=K$, so that $\xi_0(s_0 s^{-1} t) > 0$. Also, $\xi_0(t) > 0$ for such $t$. Thus, the last integral is strictly positive for $s \in K_1 s_0$, and the lemma is shown.
\end{proof}

\begin{prop}
\label{Th-cp-Fourier-multipliers}
Let $G$ be a unimodular locally compact group. Suppose $1 \leq p \leq \infty$. The following are equivalent for a complex measurable function $\varphi \co G \to \C$\footnote{\thefootnote. This proposition admits a generalization for $n$-positive maps.}.
\begin{enumerate}
	\item $\varphi$ induces a completely positive Fourier multiplier $M_\varphi \co \L^p(\VN(G)) \to \L^p(\VN(G))$. 
	\item $\varphi$ induces a completely positive Fourier multiplier $M_\varphi \co \VN(G) \to \VN(G)$.
	\item $\varphi$ is equal almost everywhere to a continuous positive definite function.
\end{enumerate} 
\end{prop}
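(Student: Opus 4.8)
The scheme is a cyclic chain $3. \Rightarrow 2. \Rightarrow 1. \Rightarrow 3.$, where the passages between $\L^p$-levels are handled by the extension/restriction arguments for Fourier multipliers already available, and the bridge with positive definiteness at level $p=\infty$ is classical. First I would prove $3. \Rightarrow 2.$: if $\varphi$ is (almost everywhere equal to) a continuous positive definite function, then by the classical Gelfand--Naimark--Segal construction $\varphi$ is a coefficient of a unitary representation, say $\varphi(s) = \langle \pi(s)\eta,\eta\rangle$, and the map $\lambda_s \mapsto \varphi(s)\lambda_s$ extends to a normal unital completely positive (in fact Markov) map on $\VN(G)$; this is standard and is exactly the content of \cite[Proposition 4.2]{DCH}. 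The implication $2. \Rightarrow 1.$ I would obtain by the observation recorded in a footnote of the proof of Theorem \ref{prop-non-amenable-discrete-Fourier-multiplier-dec-infty}: a completely positive map $T \co M \to N$ that induces a bounded map $T_p \co \L^p(M) \to \L^p(N)$ automatically induces a completely positive map at level $p$. Since a continuous positive definite $\varphi$ also defines a completely positive Markov map on $\L^1(\VN(G))$ (use Lemma \ref{lemma-duality-Fourier-multipliers} together with the level $p=\infty$ case), interpolation between $1$ and $\infty$ gives a bounded, hence completely positive, multiplier on $\L^p(\VN(G))$ for every $1 \le p \le \infty$.

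The heart of the matter is $1. \Rightarrow 3.$, and this is where I would spend the effort. Assume $M_\varphi \co \L^p(\VN(G)) \to \L^p(\VN(G))$ is completely positive. First, being positive, $M_\varphi$ is bounded by Proposition \ref{prop-cp-imply-bounded}; by Lemma \ref{Lemma-positive-Fourier-multipliers}, $\varphi$ agrees almost everywhere with a continuous function, so from now on $\varphi$ may be assumed continuous. It remains to show $\varphi$ is positive definite, i.e. that for all $n$, all $s_1,\dots,s_n \in G$ and all scalars $c_1,\dots,c_n$ one has $\sum_{i,j} \ovl{c_i} c_j \varphi(s_i^{-1}s_j) \ge 0$. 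The strategy is to test complete positivity of $M_\varphi$ against carefully chosen positive elements of $\M_n(\L^p(\VN(G)))$ built from approximate units. Concretely, for a compact neighbourhood basis $(u_k)$ of $e_G$, let $g_k \in C_c(G)$ be the usual normalized positive-type bumps supported near $e_G$, so that $\lambda(g_k) \to 1$ suitably and $\lambda(g_k^* * g_k) \ge 0$ lies in $\mathfrak{m}_{\tau_G} \subset \L^p(\VN(G))$. Consider the matrix $x^{(k)} = \big[\lambda_{s_i} \lambda(g_k^* * g_k) \lambda_{s_j}^*\big]_{i,j=1}^n = \big[\lambda(\delta_{s_i} * (g_k^**g_k) * \delta_{s_j^{-1}})\big]$; this is of the form $\alpha^* y \alpha$ with $\alpha$ the row built from $\lambda_{s_i}^*$ and $y$ the diagonal matrix with entries $\lambda(g_k^**g_k)$, hence positive in $\M_n(\L^p(\VN(G)))$ by \eqref{conj-cp}. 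Applying $\Id_{\M_n} \ot M_\varphi$ and then pairing with the positive functional coming from a suitable vector in $\ell^2_n \ot \L^2(G)$ (using the Plancherel embedding $\L^2(\VN(G)) \cong \L^2(G)$ and the duality $\langle \cdot, \cdot\rangle_{\L^p,\L^{p^*}}$), one extracts in the limit $k \to \infty$ the scalar $\sum_{i,j} \ovl{c_i} c_j \varphi(s_i^{-1}s_j)$, which must therefore be nonnegative.

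The main obstacle I anticipate is making this limiting argument rigorous \emph{uniformly in $p$}, since $\mathfrak{m}_{\tau_G}$ is not all of $\lambda(C_c(G))$ in the non-discrete, non-compact case (the footnote in the excerpt warns precisely about this), so one must stay within $C_e(G) = \Span\{g^**f\}$ where Plancherel's formula \eqref{Formule-Plancherel} and the identification of $\tau_G$ with evaluation at $e_G$ are legitimate. I would circumvent this by computing everything at the Hilbert space level: evaluate $\big\langle (\Id_{\M_n}\ot M_\varphi)(x^{(k)}) \xi, \xi\big\rangle$ for $\xi = \sum_i \epsi_i \ot \zeta$ with $\zeta \in C_c(G)$, use that $M_\varphi(\lambda(h)) = \lambda(\varphi h)$ as bounded operators on $\L^2(G)$ for $h = \delta_{s_i}*(g_k^**g_k)*\delta_{s_j^{-1}}$ (legitimate since $\varphi$ is continuous and $h \in \L^1\cap\L^2$), and then let $\zeta$ run through an approximate identity and $k\to\infty$ to recover $\sum_{i,j}\ovl{c_i}c_j\varphi(s_i^{-1}s_j)$ as a limit of nonnegative quantities — exactly mirroring the computation in the proof of Lemma \ref{Lemma-positive-Fourier-multipliers}. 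An alternative, cleaner route, which I would mention, is to reduce directly to the case $p=\infty$: a completely positive $\L^p$-multiplier is in particular a positive $\L^p$-multiplier with continuous symbol, and one can then invoke the projection-free transference already used in the proof of Theorem \ref{prop-non-amenable-discrete-Fourier-multiplier-dec-infty} (via \cite[Proposition 4.2]{DCH} and the density of $\lambda(\Span C_c(G)*C_c(G))$ in $\L^p(\VN(G))$) to transport complete positivity back to $\VN(G)$, closing the cycle with the already-established $2.\Leftrightarrow 3.$.
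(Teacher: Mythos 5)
Your proposal is correct and follows the same overall scheme as the paper: the cycle $3.\Rightarrow 2.\Rightarrow 1.\Rightarrow 3.$, with $3.\Rightarrow 2.$ quoted from \cite{DCH}, $2.\Rightarrow 1.$ obtained from Lemma \ref{lemma-inclusion-Fourier-multipliers} plus the footnoted fact that a completely positive map on $M$ which is bounded on $\L^p(M)$ is completely positive there, and $1.\Rightarrow 3.$ started by invoking Lemma \ref{Lemma-positive-Fourier-multipliers} to reduce to a continuous symbol and then testing complete positivity against positive matrices of the form $\bigl[\lambda(f_i)^*\lambda(f_j)\bigr]$, which lie in $\M_n(\L^p(\VN(G)))_+$ because their entries belong to $\mathfrak{m}_{\tau_G}$.

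The one substantive divergence is the finishing move in $1.\Rightarrow 3.$ The paper keeps the test functions $f_i \in C_c(G)$ completely general, pairs $(\Id_{\M_n}\ot M_\varphi)\bigl[\lambda(f_i^**f_j)\bigr]$ against vectors $\ovl{g_i}$ to obtain the integrated inequality $\sum_{i,j}\int_G \varphi\,(f_i^**f_j)(g_i*\tilde g_j)\,\d\mu_G \geq 0$, and then hands this directly to \cite[Propositions 4.2 and 4.3]{DCH}, which characterize continuous positive definite functions by exactly such inequalities; no limiting argument is needed. You instead specialize the $f_i$ to translates of an approximate identity and recover the pointwise inequality $\sum_{i,j}\ovl{c_i}c_j\varphi(s_i s_j^{-1})\geq 0$ by a double limit. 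Your route is workable (continuity of $\varphi$, already secured, makes the concentration argument legitimate) but re-proves by hand the passage from the integrated to the pointwise form of positive-definiteness that \cite{DCH} supplies for free; the paper's choice buys a shorter and cleaner proof. One caveat on your closing aside: the ``alternative, cleaner route'' of transporting complete positivity back to $\VN(G)$ as in the proof of Theorem \ref{prop-non-amenable-discrete-Fourier-multiplier-dec-infty} is not available here, since that argument rests on the projection $P_G^p$ of Theorem \ref{Prop-complementation-Schur-Fourier}, which exists only for discrete groups; for a general unimodular locally compact group the direct argument you develop first is the one that actually works.
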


\begin{proof}
3. $\Rightarrow$ 2.: This is \cite[Proposition 4.3]{DCH}.

2. $\Rightarrow$ 1.: Suppose first that $M_\varphi \co \VN(G) \to \VN(G)$ is completely positive. Since $M_\varphi$ is bounded on $\VN(G)$, by Lemma \ref{lemma-inclusion-Fourier-multipliers}, $\varphi$ induces a Fourier multiplier on $\L^p(\VN(G))$ which is\footnote{\thefootnote. We use here the fact, left to the reader, that if $T \co M \to N$ is a completely positive map which induces a bounded map $T_p \co \L^p(M) \to \L^p(N)$ then $T_p$ is also completely positive.} completely positive.

1. $\Rightarrow$ 3.: According to Lemma \ref{Lemma-positive-Fourier-multipliers}, the function $\varphi$ is continuous almost everywhere, so we can assume that $\varphi$ is continuous without changing the operator $M_\varphi$.
For $i = 1 ,\ldots, n$ let $f_i \in \mathrm{C}_c(G)$.
Note that by \cite[Proposition 2.1]{Lan} the matrix $[\lambda(f_i^**f_j)]=[\lambda(f_i)^*\lambda(f_j)]$ is a positive element of $\M_n(\VN(G))$ and an element of $\M_n(\L^p(\VN(G)))$ by \eqref{Def-mtauG}, hence a positive element of $\M_n(\L^p(\VN(G)))$. Consequently, $(\Id_{\M_n} \ot M_\varphi)[\lambda(f_i^**f_j)]=[\lambda(\varphi(f_i^**f_j))]$ is an element of $\M_n(\L^p(\VN(G))_+ \cap \M_n(\VN(G))$. In particular, for any $g_1,\ldots,g_n \in \mathrm{C}_c(G)$ we have
$$
\sum_{i,j=1}^{n} \big\langle\lambda\big(\varphi(f_i^**f_j)\big)\ovl{g_j}, \ovl{g_i}\big\rangle_{\L^2(G)} \geq 0
$$
that is
$$
\sum_{i,j=1}^{n} \int_{G} \varphi(s)(f_i^**f_j)(s)(g_i* \tilde{g}_j)(s) \d \mu_G(s)
\geq 0.
$$
By \cite[Proposition 4.3 and Proposition 4.2]{DCH}, we conclude that the function $\varphi$ is continuous and positive definite.
\end{proof}

\begin{prop}
\label{Prop-Convergence-of-multipliers}
Let $G$ be a unimodular locally compact group. Suppose $1 \leq p < \infty$. Let $(M_{\phi_n})$ be a bounded sequence of bounded Fourier multipliers on $\L^p(\VN(G))$ such that $(\phi_n)$ converges almost everywhere to some function $\phi \in \L^\infty(G)$. Then $\phi$ induces a bounded Fourier multiplier $M_\phi$ on $\L^p(\VN(G))$ and 
$$
\norm{M_\phi}_{\L^p(\VN(G)) \to \L^p(\VN(G))} 
\leq \liminf_{n \to +\infty} \norm{M_{\phi_n}}_{\L^p(\VN(G)) \to \L^p(\VN(G))}.
$$
A similar result is true for completely bounded multipliers.
\end{prop}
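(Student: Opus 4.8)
The plan is to pass from the symbols $\phi_n$ back to $\phi$ on a convenient dense subspace of $\L^p(\VN(G))$ and then use the uniform bound together with a lower semicontinuity argument. First I would fix $f \in C_c(G) \ast C_c(G)$ and recall from the discussion after Lemma \ref{prop-M2-Fourier-multipliers} (citing \cite{Daws4}) that $\lambda(\Span C_c(G) \ast C_c(G))$ is dense in $\L^p(\VN(G))$. Since $\phi_n$ converges almost everywhere to $\phi$ and $\sup_n \norm{\phi_n}_{\L^\infty(G)} < \infty$ (the latter because by Lemma \ref{prop-M2-Fourier-multipliers} and Lemma \ref{lemma-inclusion-Fourier-multipliers} we have $\norm{\phi_n}_{\L^\infty(G)} \leq \norm{M_{\phi_n}}_{\L^p(\VN(G)) \to \L^p(\VN(G))}$ for $1 \leq p \leq 2$; for $p > 2$ one uses Lemma \ref{lemma-duality-Fourier-multipliers} to reduce to $p^* < 2$), the functions $\phi_n f$ converge to $\phi f$ in $\L^1(G) \cap \L^2(G)$ by dominated convergence, dominated by $(\sup_n \norm{\phi_n}_\infty) |f| \in \L^1(G) \cap \L^2(G)$. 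Hence $\lambda(\phi_n f) \to \lambda(\phi f)$ in $\L^2(\VN(G))$ by the Plancherel isometry \eqref{Formule-Plancherel}, so in particular $M_{\phi_n}(\lambda(f)) = \lambda(\phi_n f)$ converges to $\lambda(\phi f)$ in $\L^2(\VN(G))$, thus also in the measure topology.

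Next I would extract a subnet realizing $\liminf_n \norm{M_{\phi_n}}_{\L^p \to \L^p} =: C$, and along this subnet observe that the bounded sequence $(M_{\phi_n}(\lambda(f)))_n$ in $\L^p(\VN(G))$, being bounded in norm $\leq C \norm{\lambda(f)}_{\L^p(\VN(G))}$ and convergent in the measure topology to $\lambda(\phi f)$, forces $\lambda(\phi f) \in \L^p(\VN(G))$ with $\norm{\lambda(\phi f)}_{\L^p(\VN(G))} \leq C \norm{\lambda(f)}_{\L^p(\VN(G))}$: this is the standard Fatou-type lower semicontinuity of the $\L^p$-norm under convergence in measure (via \cite[Theorem 3.5]{PiX} or a Banach-space weak-limit argument, since for $1 < p < \infty$ a bounded sequence has a weakly convergent subsequence whose weak limit must coincide with the measure-topology limit $\lambda(\phi f)$). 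In particular the element $\int_G \phi(s) f(s) \lambda_s \d\mu_G(s) = \lambda(\phi f)$ lies in $\L^p(\VN(G))$ for every $f \in C_c(G) \ast C_c(G)$, which is exactly the condition in Definition \ref{defi-Lp-multiplier}.

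Then it remains to see that the densely defined map $\lambda(f) \mapsto \lambda(\phi f)$ extends to a bounded operator on $\L^p(\VN(G))$ of norm $\leq C$; this is immediate from the uniform estimate just obtained, together with the density of $\lambda(\Span C_c(G) \ast C_c(G))$ and linearity, so we set $M_\phi$ to be this extension. By construction $M_\phi$ is a bounded $\L^p$ Fourier multiplier with symbol $\phi$ and $\norm{M_\phi}_{\L^p(\VN(G)) \to \L^p(\VN(G))} \leq C = \liminf_{n \to +\infty} \norm{M_{\phi_n}}_{\L^p(\VN(G)) \to \L^p(\VN(G))}$. For $1 < p < \infty$ one might alternatively phrase the whole argument more softly: the closed ball of radius $C$ in $\mathfrak{M}^p(G)$ is weak* closed in $\B(\L^p(\VN(G)))$ by (the proof of) Lemma \ref{lem-Fourier-multiplier-weak-star-closed}, and $M_{\phi_n} \to M_\phi$ weakly by Lemma \ref{Lemma-Symbol-weakstar-convergence-weakLp-vrai} once one knows $\phi$ induces a bounded multiplier — but establishing that $\phi$ induces a bounded multiplier is precisely the substantive point, so the direct density argument above is cleaner.

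\textbf{Main obstacle.} The one genuinely delicate step is the lower semicontinuity: deducing $\lambda(\phi f) \in \L^p(\VN(G))$ with the norm bound from $\L^p$-boundedness of the approximants plus convergence in the measure topology. One must be careful that $a.e.$ convergence of the symbols $\phi_n$ only gives convergence of $\lambda(\phi_n f)$ in a weak sense (here, in $\L^2(\VN(G))$, hence in measure), not in $\L^p$; the passage to an $\L^p$-bound then relies on a Fatou property for noncommutative $\L^p$-spaces, and for $p=1$ one cannot use reflexivity and must invoke the measure-topology Fatou lemma directly.
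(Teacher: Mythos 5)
Your proof is correct, and its overall skeleton (work on the dense subspace $\lambda(\Span C_c(G)\ast C_c(G))$, extract the uniform bound $\sup_n\norm{\phi_n}_{\L^\infty(G)}<\infty$ from Lemmas \ref{prop-M2-Fourier-multipliers}, \ref{lemma-inclusion-Fourier-multipliers} and \ref{lemma-duality-Fourier-multipliers}, apply dominated convergence, then extend by density) matches the paper's; the genuine divergence is in the one delicate step you yourself flag. The paper obtains the lower semicontinuity by duality: it tests $M_{\phi_n}(\lambda(f))$ against $\lambda(g)$ for $g$ in the dense subspace of $\L^{p^*}(\VN(G))$, passes to the limit in the scalar integrals $\int_G \phi_n f g\,\d\mu_G$ by dominated convergence, and reads off $\bnorm{\lambda(\phi f)}_{\L^p(\VN(G))}\leq C\,\norm{\lambda(f)}_{\L^p(\VN(G))}$ from the $\L^p$--$\L^{p^*}$ duality (with a small adaptation of the test class at $p=1$). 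You instead upgrade the almost everywhere convergence of the symbols to convergence of $\lambda(\phi_n f)$ to $\lambda(\phi f)$ in $\L^2(\VN(G))$, hence in the measure topology, and then invoke the Fatou property of the generalized singular numbers to conclude $\lambda(\phi f)\in\L^p(\VN(G))$ with the norm bound. Both mechanisms are sound: the paper's route is more self-contained, using only facts already established in the text (Plancherel and the density of $\lambda(\Span C_c(G)\ast C_c(G))$ in $\L^{p^*}(\VN(G))$), whereas yours imports the noncommutative Fatou lemma of Fack--Kosaki as an external ingredient (the precise reference inside \cite{PiX} should be checked) but in exchange treats all $1\leq p<\infty$ uniformly, with no reflexivity caveat and no separate adaptation at $p=1$.
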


\begin{proof}
By Lemma \ref{prop-M2-Fourier-multipliers}, the sequence $(\phi_n)$ of functions is  uniformly bounded in the norm $\norm{\cdot}_{\L^\infty(G)}$. Note that if $f \in \L^1(G)$, the sequence $(\int_G \phi_n f \d \mu_G)$ converges to $\int_G \phi f \d \mu_G$ by the dominated convergence theorem. Hence $(\phi_n)$ converges to $\phi$ for the weak* topology of $\L^\infty(G)$. The conclusion is a consequence of Lemma \ref{Lemma-Symbol-weakstar-convergence-weakLp-vrai}. 
\end{proof}
\subsection{The completely bounded homomorphism theorem for Fourier multipliers}
\label{sec-The completely bounded homomorphism theorem}

Suppose $1 \leq p <\infty$. Let us remind the definition of a Schur multiplier on $S^p_\Omega=\L^p(\B(\L^2(\Omega)))$ where $(\Omega,\mu)$ is a ($\sigma$-finite) measure space \cite[Section 1.2]{LaS}. If $f \in \L^2(\Omega \times \Omega)$, we denote by $K_f \co \L^2(\Omega) \to \L^2(\Omega)$, $u \mapsto \int_\Omega f(z,\cdot) u(z) \d z$ the integral operator with kernel $f$. We say that a measurable function $\phi \co \Omega \times \Omega \to \C$ induces a bounded Schur multiplier on $S^p_\Omega$ if for any $f \in \L^2(\Omega \times \Omega)$ satisfying $K_f \in S^p_\Omega$ we have $K_{\phi f} \in S^p_\Omega$ and if the map $S^2_\Omega \cap S^p_\Omega \to S^p_\Omega$, $K_f \mapsto K_{\phi f}$ extends to a bounded map $M_\phi$ from $S^p_\Omega$ into $S^p_\Omega$ called the Schur multiplier associated with $\phi$. We denote by $\mathfrak{M}_{\Omega}^{p,\cb}$ the space of completely bounded Schur multipliers on $S^p_\Omega$. We refer to the surveys \cite{ToT1} and \cite{Tod1} for the case $p=\infty$. 

Let $G$ be a unimodular locally compact group.  The right regular representation $\rho \co G \to \B(\L^2(G))$ is given by $(\rho_t\xi)(s) = \xi(st)$. Recall that $\rho$ is a strongly continuous unitary representation. We will use the notation $\mathrm{Ad}_{\rho_s}^p \co S^p_G \to S^p_G$, $x \mapsto \rho_s x \rho_{s^{-1}}$. A bounded Schur multiplier $M_\phi \co S^p_G \to S^p_G$ is a Herz-Schur multiplier if $M_\phi\mathrm{Ad}_{\rho_s}^p =\mathrm{Ad}_{\rho_s}^p M_\phi$ for any $s \in G$. In this case, there exists a measurable function $\varphi \co G \to \C$ such that $\phi(r,s)=\varphi(rs^{-1})$ for almost every $r,s \in G$ and we let $M_{\varphi}^{\HS}=M_{\phi}$. We denote by $\frak{M}^{p,\cb,\HS}_G$ the subspace of $\frak{M}^{p,\cb}_G$ of completely bounded Herz-Schur multipliers.

In the sequel $G_{\disc}$ stands for the group $G$ equipped with the discrete topology.

\begin{prop}
\label{prop-homomorphism-Schur-multipliers-cb}
Let $G$ and $H$ be second countable locally compact groups and $\sigma \co G \to H$ be a continuous homomorphism. Suppose $1 \leq p \leq \infty$. If $\varphi \co H \to \C$ is a  continuous function which induces a completely bounded Herz-Schur multiplier $M_\varphi^{\mathrm{HS}} \co S^p_H \to S^p_H$, then the continuous function $\varphi \circ \sigma \co G \to \C$ induces a completely bounded Herz-Schur multiplier $M_{\varphi \circ \sigma}^{\mathrm{HS}} \co S^p_G \to S^p_G$ and
$$
\bnorm{M_{\varphi \circ \sigma}^{\mathrm{HS}}}_{\cb,S^p_G \to S^p_G} 
\leq \bnorm{M_\varphi^{\mathrm{HS}}}_{\cb,S^p_H \to S^p_H}.
$$
Moreover, if $\sigma(G)$ is dense in $H$, we have an isometry\footnote{\thefootnote. The proof shows that if $M_{\varphi \circ \sigma}$ is completely bounded then $M_{\varphi}$ is completely bounded.} $M_\varphi^{\mathrm{HS}} \mapsto M_{\varphi \circ \sigma}^{\mathrm{HS}}$.
\end{prop}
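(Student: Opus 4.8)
The plan is to reduce the Herz-Schur multiplier statement on the continuous groups to a statement about ordinary Schur multipliers and then transfer via the homomorphism $\sigma$. First I would recall the standard dictionary (see \cite{BF1,CDS,NR,Bozejko-Fendler,CaSp}, or more precisely the transference between Herz-Schur and completely bounded multipliers): a continuous function $\varphi \co H \to \C$ induces a completely bounded Herz-Schur multiplier $M_\varphi^{\HS} \co S^p_H \to S^p_H$ precisely when the kernel $(r,s) \mapsto \varphi(rs^{-1})$ induces a completely bounded Schur multiplier on $S^p_H$, with equal completely bounded norm, and moreover for $p = \infty$ this is equivalent to $\varphi$ being a completely bounded (Herz-Schur) multiplier of $\mathrm{C}^*_\lambda(H)$, with a Gilbert-type factorization $\varphi(rs^{-1}) = \langle P(r), Q(s) \rangle_{\mathcal{H}}$ for bounded continuous $P, Q \co H \to \mathcal{H}$. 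Since the question is for general $1 \leq p \leq \infty$, the cleanest route is: establish the claim for $p = \infty$ using the Gilbert factorization, and then interpolate. Indeed, for the $p=\infty$ case, if $\varphi(rs^{-1}) = \langle P(r), Q(s)\rangle_\mathcal{H}$ with $\norm{P}_\infty \norm{Q}_\infty \leq \norm{M_\varphi^{\HS}}_{\cb}$, then setting $\tilde P = P \circ \sigma$ and $\tilde Q = Q \circ \sigma$, which are bounded and continuous $G \to \mathcal{H}$ because $\sigma$ is continuous, we get $(\varphi \circ \sigma)(rs^{-1}) = \varphi(\sigma(r)\sigma(s)^{-1}) = \langle \tilde P(r), \tilde Q(s) \rangle_\mathcal{H}$, which exhibits $\varphi \circ \sigma$ as a completely bounded Herz-Schur multiplier of $S^\infty_G$ with $\bnorm{M_{\varphi\circ\sigma}^{\HS}}_{\cb} \leq \norm{\tilde P}_\infty \norm{\tilde Q}_\infty \leq \norm{M_\varphi^{\HS}}_{\cb}$.

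Next, for the $\L^p$ case I would use Lemma \ref{lem-sufficient-condition-reg} style reasoning or, more directly, the known fact that a completely bounded Herz-Schur multiplier at the level $p=\infty$ (equivalently on $\B(\L^2)$) is automatically completely bounded on all $S^p_G$ with the $\cb$-norm non-increasing in the sense dictated by interpolation between $p=1$ and $p=\infty$; this is essentially the regularity of Schur multipliers, and one gets $\bnorm{M_{\varphi\circ\sigma}^{\HS}}_{\cb, S^p_G \to S^p_G} \leq \bnorm{M_{\varphi\circ\sigma}^{\HS}}_{\cb, S^\infty_G \to S^\infty_G}$. Composing with the $p=\infty$ estimate just obtained, and using that $\bnorm{M_\varphi^{\HS}}_{\cb, S^\infty_H \to S^\infty_H} \leq \bnorm{M_\varphi^{\HS}}_{\cb, S^p_H \to S^p_H}$ (again by interpolation/duality, since the $\cb$-norm of a Schur multiplier on $S^p$ dominates the one on $S^\infty = \B(\L^2)$ when the multiplier is already bounded on $\B(\L^2)$ — more carefully, the $\cb$-norms on $S^p$ and $S^\infty$ for a Schur multiplier all coincide by the classical result, so in fact one has equality throughout), yields the desired inequality $\bnorm{M_{\varphi\circ\sigma}^{\HS}}_{\cb, S^p_G \to S^p_G} \leq \bnorm{M_\varphi^{\HS}}_{\cb, S^p_H \to S^p_H}$.

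For the last assertion, suppose $\sigma(G)$ is dense in $H$. One direction is the inequality just proved. For the reverse, I would start from the factorization $(\varphi\circ\sigma)(rs^{-1}) = \langle \tilde P(r), \tilde Q(s)\rangle_\mathcal{H}$ of a completely bounded Herz-Schur multiplier $M_{\varphi\circ\sigma}^{\HS}$ of $S^\infty_G$, with $\norm{\tilde P}_\infty \norm{\tilde Q}_\infty$ as close as desired to $\bnorm{M_{\varphi\circ\sigma}^{\HS}}_{\cb}$. The point is then to push $\tilde P$ and $\tilde Q$ forward along $\sigma$: because $\sigma(G)$ is dense and $\varphi$ is continuous on $H$, the function $\varphi$ is determined on all of $H$ by its restriction to $\sigma(G)$, and one can use a weak-$*$ (or ultraweak) limit / a continuity argument to produce bounded continuous maps $P, Q \co H \to \mathcal{H}'$ (into a possibly larger Hilbert space, obtained as a subspace of an ultrapower or via a pointwise-weak limit argument over a net in $G$ mapping densely into $H$) such that $\varphi(rs^{-1}) = \langle P(r), Q(s)\rangle_{\mathcal{H}'}$ and $\norm{P}_\infty\norm{Q}_\infty \leq \norm{\tilde P}_\infty\norm{\tilde Q}_\infty$. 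This gives $\bnorm{M_\varphi^{\HS}}_{\cb, S^\infty_H \to S^\infty_H} \leq \bnorm{M_{\varphi\circ\sigma}^{\HS}}_{\cb, S^\infty_G \to S^\infty_G}$, hence equality at $p=\infty$, and then the $\L^p$ equality follows by the interpolation facts above. The main obstacle I expect is precisely this last step: making the "push-forward of the Gilbert factorization along a dense-image homomorphism" rigorous, i.e. extracting genuine bounded continuous functions $P,Q$ on $H$ from approximations defined only on the dense subset $\sigma(G)$ while controlling norms and continuity; the natural tool is an ultrafilter limit of the $\tilde P(g_i), \tilde Q(g_i)$ along nets $g_i \in G$ with $\sigma(g_i) \to h$, combined with the continuity of $\varphi$ to guarantee the resulting inner products reproduce $\varphi(rs^{-1})$ independently of the chosen net.
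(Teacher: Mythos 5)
There is a genuine gap, and it sits at the heart of your reduction. You claim that ``the $\cb$-norms on $S^p$ and $S^\infty$ for a Schur multiplier all coincide by the classical result'', and in particular that $\bnorm{M_\varphi^{\mathrm{HS}}}_{\cb,S^\infty_H \to S^\infty_H} \leq \bnorm{M_\varphi^{\mathrm{HS}}}_{\cb,S^p_H \to S^p_H}$. This is false. The classical results only give the reverse: a Schur multiplier that is completely bounded on $\B(\L^2)$ is automatically completely bounded on every $S^p$ with $\cb$-norm at most the $S^\infty$ one (by interpolation between $p=\infty$ and $p=1$), and what coincides with the $\B(\L^2)$-norm is the \emph{decomposable} (equivalently regular) norm on $S^p$, not the $\cb$-norm --- see Theorem \ref{prop-continuous-Schur-multiplier-dec-infty}. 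For $1<p<\infty$, $p\neq 2$, there are Schur multipliers completely bounded on $S^p$ that are unbounded on $\B(\ell^2)$: the triangular truncation is exactly such an example (Corollary \ref{Cor-strongly-non-reg-truncation}). Consequently your hypothesis --- that $M_\varphi^{\mathrm{HS}}$ is completely bounded on $S^p_H$ --- does \emph{not} give you a Gilbert factorization $\varphi(rs^{-1})=\langle P(r),Q(s)\rangle$, so the entire strategy of proving the statement at $p=\infty$ and transporting it to $S^p$ collapses for the intermediate values of $p$. (Your $p=\infty$ argument itself is fine, and the push-forward of the factorization along $\sigma$ is essentially the standard proof of the homomorphism theorem at that level; but the proposition is claimed for all $1\leq p\leq\infty$ with the $S^p$-$\cb$-norm on both sides.)

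The paper's proof avoids $p=\infty$ entirely and works at the fixed level $p$. It decomposes $\sigma$ canonically as $G \to G/\ker\sigma \to \Ran\sigma \hookrightarrow H$, handles the quotient step with the discretization lemma \cite[Lemma 9.2]{CPPR} (for a continuous symbol the Herz-Schur $\cb$-norm on $S^p_G$ equals the one on $S^p_{G_{\disc}}$), handles the isomorphism step by the obvious conjugation by a permutation of matrix units, and handles the subgroup inclusion and the density statement with \cite[Theorem 1.19]{LaS}, which expresses $\bnorm{M_\varphi^{\mathrm{HS}}}_{\cb,S^p_H\to S^p_H}$ as a supremum of finite-matrix Schur multiplier norms over finite subsets $F\subset H$; density of $\Ran\sigma$ and continuity of $\varphi$ then let one perturb each finite $F$ into a finite subset of $\Ran\sigma$ with an $N^2\epsi$ control on the resulting Schur multiplier norms. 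If you want to salvage your approach you would need a substitute for the Gilbert factorization that is available under the mere $S^p$-$\cb$ hypothesis; the finite-subset supremum formula of \cite{LaS} is precisely that substitute.
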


\begin{proof}
Let $G \xra{\pi} G/\ker(\sigma) \xra{\tilde \sigma} \Ran \sigma \xra{i} H$ be the canonical decomposition of the homomorphism $\sigma$. By \cite[Lemma 9.2]{CPPR} (see also the arxiv version, Lemma 8.2), we have
$$
\norm{M_{\varphi \circ i \circ \tilde \sigma \circ \pi}^{\mathrm{HS}}}_{\cb,S^p_G \to S^p_G}
= \norm {M_{\varphi \circ i \circ \tilde \sigma}^{\mathrm{HS}}}_{\cb,S^p_{G/\ker \sigma} \to S^p_{G/\ker \sigma}}.
$$
We have a natural isomorphism $J_{\tilde \sigma} \co S^p_{(G/\ker\sigma)_\disc} \to S^p_{(\Ran \sigma)_\disc}$, $e_{s_1,s_2} \mapsto e_{\tilde{\sigma}(s_1),\tilde{\sigma}(s_2)}$ where the $e_{s_1,s_2}$'s are the matrix units. Therefore, the group isomorphism $\tilde\sigma \co G/\ker\sigma \to \Ran \sigma$ yields an isometric isomorphism from the space of completely bounded Herz-Schur multipliers over $S^p_{(\Ran \sigma)_\disc}$ to the space of completely bounded Herz-Schur multipliers over $S^p_{(G/\ker\sigma)_\disc}$ by sending each $M_{\psi}^{\mathrm{HS}}$ to $M_{\psi \circ \tilde{\sigma}}^{\mathrm{HS}} = J_{\tilde{\sigma}^{-1}} M_{\psi}^{\mathrm{HS}} J_{\tilde\sigma}$. Thus, we obtain using \cite[Lemma 9.2]{CPPR} three times
\begin{align*}
\MoveEqLeft
 \norm{M_{\varphi \circ i \circ \tilde \sigma}^{\mathrm{HS}}}_{\cb,S^p_{G/\ker \sigma} \to S^p_{G/\ker \sigma}}
= \norm{M_{\varphi \circ i \circ \tilde \sigma}^{\mathrm{HS}}}_{\cb,S^p_{(G/\ker \sigma)_{\disc}} \to S^p_{(G/\ker \sigma)_{\disc}}} \\
 &= \norm{M_{\varphi \circ i}^{\mathrm{HS}}}_{\cb,S^p_{(\Ran \sigma)_\disc} \to S^p_{(\Ran \sigma)_\disc}} 
\leq \norm{M_{\varphi}^{\mathrm{HS}}}_{\cb,S^p_{H_\disc} \to S^p_{H_\disc}}
 = \norm{M_\varphi^{\mathrm{HS}}}_{\cb,S^p_H \to S^p_H}.
\end{align*}
This shows the first part of the proposition.

It remains to show the isometric statement in the case where $\Ran \sigma$ is dense in $H$. In the light of the foregoing, we only need to show that
\begin{equation}
	\label{Eq-Conclusion-Schur}
	\norm{M_\varphi^{\mathrm{HS}}}_{\cb,S^p_H \to S^p_H} 
\leq \norm{M_{\varphi \circ i}^{\mathrm{HS}}}_{\cb,S^p_{(\Ran \sigma)_\disc} \to S^p_{(\Ran \sigma)_\disc}}.
\end{equation}
According to \cite[Theorem 1.19]{LaS}, we have
$$
\bnorm{M_\varphi^{\mathrm{HS}}}_{\cb,S^p_H \to S^p_H} 
= \sup_{F \subset H \text{ finite}} \bnorm{M_{\varphi}^{\mathrm{HS}}|_F}_{\cb,S^p_F \to S^p_F}.
$$
Here, the restriction to $F$ means that one considers the mapping $\sum_{s_1,s_2 \in F} a_{s_1,s_2} e_{s_1,s_2} \mapsto \sum_{s_1,s_2 \in F} \varphi(s_1^{-1} s_2) a_{s_1,s_2} e_{s_1,s_2}$. We fix some finite subset $F = \{s_1,\ldots,s_N \} \subset H$ and some $\epsi > 0$. Then for any $1 \leq k,l \leq N$, by continuity of $\varphi$, there exist a neighborhood $V_{k,l}$ of $s_k^{-1} s_l$ such that $|\varphi(t) - \varphi(s_k^{-1}s_l)| < \epsi$ if $t \in V_{k,l}$. Since the mapping $G \times G \to G$, $(s,t) \mapsto s^{-1}t$ is continuous, there exist neighborhoods $W_{k,l}$ of $s_k$ and $W_{k,l}'$ of $s_l$ such that $(W_{k,l})^{-1} W_{k,l}' \subset V_{k,l}$. For any $1 \leq k \leq N$, let now $U_k = \bigcap_{l=1}^N W_{k,l} \cap \bigcap_{l = 1}^N W_{l,k}'$ which is a neighborhood $U_k$ of $s_k$. Since $\Ran \sigma$ is dense in $H$, there exists $t_k \in \Ran \sigma \cap U_k$ and we obtain a subset $\tilde{F}_\epsi = \{t_1,\ldots,t_N\}$ of $\Ran \sigma$ with the same cardinality as $F$. Moreover, $t_k^{-1} t_l$ belongs to $U_k^{-1} U_l \subset V_{k,l}$ and consequently, $|\varphi(t_k^{-1} t_l) - \varphi(s_k^{-1} s_l)| < \epsi$ for any $k,l \in \{1,\ldots,N\}$.

Denote $M_A,M_B \co S^p_N \to S^p_N$ the Schur multipliers with symbols $A=[\varphi(t_k^{-1}t_l)]$ and $B=[\varphi(s_k^{-1}s_l)]$. Then, we obtain using the identifications $S^p_{\tilde{F}_\epsi}=S^p_N$ and $S^p_F=S^p_N$ in the first equality
\begin{align*}
\MoveEqLeft
\left| \norm{M_{\varphi \circ i}^{\mathrm{HS}}|_{\tilde{F}_\epsi}}_{\cb,S^p_{\tilde{F}_\epsi} \to S^p_{\tilde{F}_\epsi}} - \norm{M_{\varphi}^{\mathrm{HS}}|_F }_{\cb,S^p_F \to S^p_F} \right| 
=\left| \norm{M_A}_{\cb,S^p_N \to S^p_N} - \norm{M_B}_{\cb,S^p_N \to S^p_N} \right| \\
&\leq \norm{M_A-M_B}_{\cb,S^p_N \to S^p_N}
= \norm{\sum_{k,l=1}^{N} \big(\varphi(t_k^{-1} t_l) - \varphi(s_k^{-1} s_l)\big)M_{e_{kl}}}_{\cb,S^p_N \to S^p_N}\\
&=\sum_{k,l = 1}^N \left|\varphi(t_k^{-1} t_l) - \varphi(s_k^{-1} s_l)\right| \norm{M_{e_{kl}}}_{\cb,S^p_N \to S^p_N} < N^2\epsi.
\end{align*}
We have shown
$$
\bnorm{M_{\varphi \circ i}^{\mathrm{HS}}|_{\tilde{F}_\epsi}}_{\cb,S^p_{\tilde{F}_\epsi} \to S^p_{\tilde{F}_\epsi}} 
\xra[\epsi \to 0]{}  
\bnorm{ M_{\varphi}^{\mathrm{HS}} |_F}_{\cb,S^p_F \to S^p_F}.
$$
But again according to \cite[Theorem 1.19]{LaS}, the left hand side is dominated by 
$$
\norm{M_{\varphi \circ i}^{\mathrm{HS}}}_{\cb,S^p_{(\Ran \sigma)_\disc} \to S^p_{(\Ran \sigma)_\disc}}.
$$
Hence we obtain \eqref{Eq-Conclusion-Schur}.
\end{proof}

Now, we state a completely bounded version of the classical homomorphism theorem \cite[page 184]{EdG}. 

\begin{thm}
\label{prop-homomorphism-Fourier-multipliers-cb}
Let $G$ and $H$ be locally compact groups and $\sigma \co G \to H$ be a continuous homomorphism. Suppose $1 \leq p \leq \infty$. We suppose that $G$ and $H$ are second countable and amenable if $1 < p <\infty$. If $\varphi \co H \to \C$ is a continuous function which induces a completely bounded Fourier multiplier $M_\varphi \co \L^p(\VN(H)) \to \L^p(\VN(H))$, then the continuous function $\varphi \circ \sigma \co G \to \C$ induces a completely bounded Fourier multiplier $M_\varphi \co \L^p(\VN(G)) \to \L^p(\VN(G))$ and
$$
\norm{M_{\varphi \circ \sigma}}_{\cb,\L^p(\VN(G)) \to \L^p(\VN(G))} 
\leq \norm{M_\varphi}_{\cb,\L^p(\VN(H)) \to \L^p(\VN(H))}.
$$
Moreover, if $\sigma(G)$ is dense in $H$, we have an isometry\footnote{\thefootnote. The proof shows that if $M_{\varphi \circ \sigma}$ is completely bounded then $M_{\varphi}$ is completely bounded.} $M_{\varphi} \mapsto M_{\varphi \circ \sigma}$. Finally, if $M_\varphi$ is completely positive then $M_{\varphi \circ \sigma}$ is also completely positive.
\end{thm}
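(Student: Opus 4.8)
The strategy is to reduce the Fourier multiplier statement to the Schur multiplier statement of Proposition \ref{prop-homomorphism-Schur-multipliers-cb} via transference, thereby exploiting the homomorphism already established there. First I would recall the transference identification between completely bounded Fourier multipliers and completely bounded Herz--Schur multipliers: for a unimodular (amenable if $1<p<\infty$) second countable locally compact group $K$ and a continuous function $\psi \co K \to \C$, one has $\psi \in \mathfrak{M}^{p,\cb}(K)$ if and only if $\psi$ induces a completely bounded Herz--Schur multiplier $M_\psi^{\mathrm{HS}} \co S^p_K \to S^p_K$, with equality of completely bounded norms (this is the transference theorem of \cite{BoF,CDS,NR}; for $p=\infty$ one uses \cite{BoF} directly, for $1<p<\infty$ one needs amenability, for $p=1$ one dualizes via Lemma \ref{lemma-duality-Fourier-multipliers}). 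This equivalence is what forces the amenability and second countability hypotheses in the $1<p<\infty$ regime, and the hypotheses of Theorem \ref{prop-homomorphism-Fourier-multipliers-cb} are chosen precisely so that it applies to both $G$ and $H$.

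Given this, the argument runs as follows. Starting from $\varphi \co H \to \C$ continuous with $M_\varphi \co \L^p(\VN(H)) \to \L^p(\VN(H))$ completely bounded, transference on $H$ yields a completely bounded Herz--Schur multiplier $M_\varphi^{\mathrm{HS}} \co S^p_H \to S^p_H$ with $\norm{M_\varphi^{\mathrm{HS}}}_{\cb,S^p_H \to S^p_H} = \norm{M_\varphi}_{\cb,\L^p(\VN(H)) \to \L^p(\VN(H))}$. Then Proposition \ref{prop-homomorphism-Schur-multipliers-cb} applied to the continuous homomorphism $\sigma \co G \to H$ gives that $\varphi \circ \sigma$ induces a completely bounded Herz--Schur multiplier $M_{\varphi \circ \sigma}^{\mathrm{HS}} \co S^p_G \to S^p_G$ with $\norm{M_{\varphi \circ \sigma}^{\mathrm{HS}}}_{\cb,S^p_G \to S^p_G} \leq \norm{M_\varphi^{\mathrm{HS}}}_{\cb,S^p_H \to S^p_H}$. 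Finally, transference on $G$ (valid since $G$ is unimodular and amenable, second countable in the relevant range) gives back that $\varphi \circ \sigma \in \mathfrak{M}^{p,\cb}(G)$ with $\norm{M_{\varphi \circ \sigma}}_{\cb,\L^p(\VN(G)) \to \L^p(\VN(G))} = \norm{M_{\varphi \circ \sigma}^{\mathrm{HS}}}_{\cb,S^p_G \to S^p_G}$. Chaining the three (in)equalities yields the desired norm bound. When $\sigma(G)$ is dense in $H$, Proposition \ref{prop-homomorphism-Schur-multipliers-cb} provides an isometry at the Herz--Schur level, and since the two transference steps are isometric, one obtains the isometry $M_\varphi \mapsto M_{\varphi \circ \sigma}$ at the Fourier multiplier level; moreover, this direction shows that if $M_{\varphi \circ \sigma}$ is completely bounded, so is $M_\varphi$ (run the argument in reverse through the isometry).

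For the complete positivity statement, I would argue via Proposition \ref{Th-cp-Fourier-multipliers}: if $M_\varphi \co \L^p(\VN(H)) \to \L^p(\VN(H))$ is completely positive, then $\varphi$ is (a.e. equal to) a continuous positive definite function on $H$. It is elementary that the pullback $\varphi \circ \sigma$ of a continuous positive definite function along the continuous homomorphism $\sigma$ is again a continuous positive definite function on $G$ (positive definiteness of the matrix $[\varphi(\sigma(s_i)^{-1}\sigma(s_j))]=[\varphi(\sigma(s_i^{-1}s_j))]$ is immediate from positive definiteness of $\varphi$ evaluated at the points $\sigma(s_i)$). By the already-established boundedness of $M_{\varphi \circ \sigma}$ together with Proposition \ref{Th-cp-Fourier-multipliers} in the reverse direction, $M_{\varphi \circ \sigma}$ is completely positive on $\L^p(\VN(G))$.

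\textbf{Main obstacle.} The delicate point is the precise form of the transference theorem in the noncommutative $\L^p$-setting for $1<p<\infty$, which genuinely requires amenability and a careful treatment of the identification $S^p_K(\L^p(\VN(K)))$-valued objects; one must cite the transference results of \cite{CDS,NR} in the exact generality needed (amenable, second countable, unimodular locally compact groups) and verify that the completely bounded norms are preserved, not merely the bounded ones. The Schur-multiplier homomorphism theorem is already granted by Proposition \ref{prop-homomorphism-Schur-multipliers-cb}, so the real work is entirely in correctly invoking transference on both ends and in checking that the isometric statement survives the composition — but both are bookkeeping once the transference machinery is in place.
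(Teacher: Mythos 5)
Your proposal for the core case $1<p<\infty$ is exactly the paper's argument: transfer $M_\varphi$ to the Herz--Schur multiplier $M_\varphi^{\mathrm{HS}}$ via the amenable transference of \cite{CDS} (with \cite[Remark 9.3]{CPPR}), apply Proposition \ref{prop-homomorphism-Schur-multipliers-cb}, and transfer back; the complete positivity argument via Proposition \ref{Th-cp-Fourier-multipliers} and the elementary pullback of positive definite functions is also identical to the paper's.

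The one point where your route diverges, and where it is slightly weaker than the statement, is at the endpoints. For $p=\infty$ (and hence $p=1$ by duality) the theorem assumes neither amenability nor second countability, but your plan still passes through Proposition \ref{prop-homomorphism-Schur-multipliers-cb}, which is stated only for \emph{second countable} locally compact groups; so as written your argument proves the $p\in\{1,\infty\}$ cases only under an extra second countability hypothesis. The paper avoids this by handling $p=\infty$ directly with the completely bounded homomorphism theorem of Spronk \cite[Theorem 6.2]{Spr1}, which is valid for arbitrary locally compact groups, and then obtaining $p=1$ by preduality. If you want the full generality of the statement you should either quote Spronk's theorem at $p=\infty$ as the paper does, or upgrade the Schur-multiplier homomorphism theorem beyond the second countable setting.
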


\begin{proof}
The case $p=\infty$ is \cite[Theorem 6.2]{Spr1}. By duality, we obtain the case $p=1$. Now, we suppose that $1<p<\infty$. Note that by Lemma \ref{prop-M2-Fourier-multipliers} and Lemma \ref{lemma-inclusion-Fourier-multipliers}, the function $\varphi$ is bounded. Then by amenability of $G$ and $H$, using \cite[Theorem 4.2 and Corollary 5.3]{CDS}\footnote{\thefootnote. We warn the reader that the proof of \cite[Theorem 5.2]{CDS} is only valid for second countable groups. The proof uses Lebesgue's dominated convergence theorem in the last line of page 7007 and this result does not admit a generalization for nets. See \cite{KaL} for more information.} with \cite[Remark 9.3]{CPPR} and Proposition \ref{prop-homomorphism-Schur-multipliers-cb}, we obtain
\begin{align*}
\MoveEqLeft
\norm{M_{\varphi \circ \sigma}}_{\cb,\L^p(\VN(G)) \to \L^p(\VN(G))} 
=\norm{M_{\varphi \circ \sigma}^{\mathrm{HS}}}_{\cb,S^p_G \to S^p_G} 
 \leq \norm{M_{\varphi}^{\mathrm{HS}}}_{\cb,S^p_H \to S^p_H}\\ 
&= \norm{M_{\varphi}}_{\cb,\L^p(\VN(H)) \to \L^p(\VN(H))}.
\end{align*}
The isometric statement is proved in the same way. 

Finally, suppose that $M_\varphi$ is completely positive. By Proposition \ref{Th-cp-Fourier-multipliers}, we deduce that its symbol $\varphi$ is a continuous positive definite function. Since $\sigma$ is continuous, the function $\varphi \circ \sigma$ is also continuous. Moreover, if $\alpha_1,\ldots,\alpha_n \in \C$ and $s_1,\ldots,s_n \in G$, we infer that
\begin{align*}
\sum_{k,l=1}^n \alpha_k \ovl{\alpha_l} \varphi \circ \sigma (s_k s_l^{-1}) 
=\sum_{k,l=1}^n \alpha_k \ovl{\alpha_l} \varphi \big(\sigma(s_k) \sigma(s_l)^{-1}\big) 
 \geq 0.
\end{align*}
We conclude that $\varphi \circ \sigma$ is positive definite. We conclude by using again Proposition \ref{Th-cp-Fourier-multipliers}.
\end{proof}

\subsection{Extension of Fourier multipliers}
\label{sec-Extension-of-multipliers}

The following is an extension of \cite[Lemma 2.1 (2)]{Haa3} and a variant of \cite[Theorem B.1]{CPPR}. In \cite[Theorem B.1]{CPPR}, we warn the reader that a factor $\mu_G(\X)^{-1}$ is missing. Contrary to what is said, the alluded method does not give constant 1.

\begin{thm}
\label{thm-Jodeit-version-cb-Theorem-B.1}
Let $\Gamma$ be a lattice of a second countable unimodular locally compact group $G$ and $\X$ be a fundamental domain associated with $\Gamma$. We denote by $\gamma \co G \to \Gamma$ and $x \co G \to \X$ the measurable mappings uniquely determined by the decomposition $s = \omega(s) \gamma(s)$ for any $s \in G$. Suppose $1 \leq p \leq \infty$. We assume that $G$ is amenable if $1<p<\infty$. Let $\phi \co \Gamma \to \C$ be a complex function which induces a completely bounded Fourier multiplier $M_\phi \co \L^p(\VN(\Gamma)) \to \L^p(\VN(\Gamma))$. Then the complex function $\widetilde{\phi} \ov{\mathrm{def}}{=} \frac{1}{\mu_G(\X)} 1_{\X} \ast (\phi \mu_\Gamma) \ast 1_{\X^{-1}} \co G \to \C$ where $\mu_\Gamma$ is the counting measure on $\Gamma$ defined by
\begin{equation}
\label{symbole-tilde-varphi}
\widetilde{\phi}(s) 
\ov{\mathrm{def}}{=} \frac{1}{\mu_G(\X)} \int_\X \phi(\gamma(s\omega)) \d\mu_G(\omega),\quad s \in G
\end{equation}
is continuous and induces a completely bounded Fourier multiplier $M_{\widetilde{\phi}} \co \L^p(\VN(G)) \to \L^p(\VN(G))$ and we have
\begin{equation}
\label{Estimation-Lp-Haagerup}
\bnorm{M_{\widetilde{\phi}}}_{\cb,\L^p(\VN(G)) \to \L^p(\VN(G))} 
\leq \bnorm{M_\phi}_{\cb,\L^p(\VN(\Gamma)) \to \L^p(\VN(\Gamma))}.	
\end{equation}
Finally, if $M_\phi$ is completely positive then $M_{\widetilde{\phi}}$ is also completely positive.
\end{thm}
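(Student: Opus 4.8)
The plan is to use a transference argument that relates Fourier multipliers on $\VN(G)$ to Fourier multipliers on $\VN(\Gamma)$ via the Herz-Schur multiplier picture, and then invoke the extension theorem for Schur multipliers (a variant of \cite[Theorem B.1]{CPPR}). First I would observe that since $G$ is amenable if $1<p<\infty$ (and for $p=1,\infty$ without amenability assumptions), by \cite[Theorem 4.2 and Corollary 5.3]{CDS} together with \cite[Remark 9.3]{CPPR}, the completely bounded norm of $M_\phi$ on $\L^p(\VN(\Gamma))$ equals the completely bounded norm of the associated Herz-Schur multiplier $M_\phi^{\HS}$ on $S^p_\Gamma$; the same statement holds for $M_{\widetilde\phi}$ on $\L^p(\VN(G))$ and $S^p_G$. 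So the problem reduces to comparing Herz-Schur multiplier norms, namely proving $\bnorm{M_{\widetilde\phi}^{\HS}}_{\cb,S^p_G\to S^p_G}\leq \bnorm{M_\phi^{\HS}}_{\cb,S^p_\Gamma\to S^p_\Gamma}$ and that complete positivity is preserved.

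The key step is then to realize $\widetilde\phi$ as an average of translates of (the trivial extension of) $\phi$ coming from the fundamental domain $\X$. Writing $s=\omega(s)\gamma(s)$ for the decomposition relative to $\Gamma$ and $\X$, formula \eqref{symbole-tilde-varphi} says $\widetilde\phi(s) = \frac{1}{\mu_G(\X)}\int_\X \phi(\gamma(s\omega))\d\mu_G(\omega)$. The strategy is to fix $\omega \in \X$, consider the function $s \mapsto \phi(\gamma(s\omega))$, and show that the corresponding kernel $(r,s) \mapsto \phi(\gamma(r^{-1}s)\,\text{shifted by }\omega)$ — more precisely the kernel built from the pushforward along $s\mapsto s\omega$ — defines a completely bounded Schur multiplier on $S^p_G$ with the same completely bounded norm as $M_\phi^{\HS}$ on $S^p_\Gamma$. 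Here I would use that the map $G \to \X \times \Gamma$, $s \mapsto (\omega(s),\gamma(s))$ is a measurable isomorphism carrying the Haar measure to the product of $\mu_G|_\X$ and the counting measure $\mu_\Gamma$, so that $S^p_G \cong S^p_{\X \times \Gamma} \cong S^p_\X \otimes^{\text{appropriate}} S^p_\Gamma$; the Herz-Schur multiplier on $\Gamma$ lifts to $S^p_\X(\L^p(\B(\ell^2_\Gamma)))$ by tensoring with the identity on the $\X$-component, which does not change the completely bounded norm, and the identity component absorbs the "shift by $\omega$" since that is just a partial isometry (in fact a translation rearranging $\X$). Averaging over $\omega \in \X$ with the normalizing factor $\frac{1}{\mu_G(\X)}$ then yields, by convexity of the completely bounded norm, the estimate \eqref{Estimation-Lp-Haagerup}. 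Continuity of $\widetilde\phi$ follows from its expression as $\frac{1}{\mu_G(\X)} 1_\X * (\phi\mu_\Gamma) * 1_{\X^{-1}}$, since convolution of an $\L^1$ function with an $\L^\infty$ function is continuous (and $\phi$ is bounded by Lemma \ref{prop-M2-Fourier-multipliers} and Lemma \ref{lemma-inclusion-Fourier-multipliers} applied over $\Gamma$, so $\phi\mu_\Gamma$ convolved appropriately lands in the right class on the compact-support side).

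For the complete positivity statement, if $M_\phi$ is completely positive then by Proposition \ref{Th-cp-Fourier-multipliers} its symbol $\phi$ is (equal a.e. to) a positive definite function on $\Gamma$. I would then check directly from \eqref{symbole-tilde-varphi}, or rather from the identity $\widetilde\phi = \frac{1}{\mu_G(\X)} 1_\X * (\phi\mu_\Gamma) * 1_{\X^{-1}}$, that $\widetilde\phi$ is positive definite on $G$: this is the standard fact that $f * \psi * f^*$ is positive definite whenever $\psi$ is, applied with $f = \frac{1}{\sqrt{\mu_G(\X)}}1_\X$ (up to care with the placement of $1_{\X^{-1}}$ versus $(1_\X)^*$, which agree here since we may normalize $\X$ suitably, or absorb the discrepancy into the standard computation $\sum_{k,l}\alpha_k\overline{\alpha_l}\widetilde\phi(s_ks_l^{-1})\geq 0$). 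Combined with continuity of $\widetilde\phi$, Proposition \ref{Th-cp-Fourier-multipliers} then gives that $M_{\widetilde\phi}$ is completely positive.

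\textbf{Main obstacle.} The hard part will be setting up the transference identification cleanly: making rigorous the measurable isomorphism $G \cong \X\times\Gamma$ at the level of Schatten-valued $\L^p$-spaces and verifying that the lifted operator on $S^p_G$ is \emph{exactly} the Herz-Schur multiplier $M_{\widetilde\phi}^{\HS}$ after averaging, with no loss in the completely bounded norm. In particular one must be careful that the "shift by $\omega$" operators are genuine (partial) isometries whose conjugation preserves complete boundedness and positivity, and that the averaging integral over $\X$ converges in the appropriate (weak or strong) operator topology so that the norm estimate and positivity pass to the limit — here Lemma \ref{lem-completely-positive-weak-limit} and the reflexivity of $S^p_G$ for $1<p<\infty$ (with a separate weak* argument for $p=\infty$ and duality for $p=1$) will be the relevant tools.
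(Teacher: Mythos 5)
Your proposal follows essentially the same route as the paper: under the amenability hypothesis it transfers the problem to Herz--Schur multipliers via \cite[Theorem 4.2, Corollary 5.3]{CDS} and \cite[Remark 9.3]{CPPR}, passes through the Schur-multiplier extension from $S^p_\Gamma$ to $S^p_G$, transfers back, and handles complete positivity through positive definiteness of the symbol via Proposition \ref{Th-cp-Fourier-multipliers} (with $p=\infty$ from \cite[Lemma 2.1 (2)]{Haa3} and $p=1$ by duality). The only differences are cosmetic: where you propose to prove the Schur-multiplier step by hand via the measurable isomorphism $G \cong \X \times \Gamma$ and averaging over $\omega \in \X$, the paper simply cites \cite[Lemma 2.6]{LaS}, and it checks positive definiteness directly from the averaged identity $\widetilde{\phi}(st^{-1}) = \frac{1}{\mu_G(\X)}\int_\X \phi\big(\gamma(s\omega)\gamma(t\omega)^{-1}\big)\d\mu_G(\omega)$ rather than through the $f \ast \psi \ast f^*$ device.
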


\begin{proof}
The case $p=\infty$ is \cite[Lemma 2.1 (2)]{Haa3} and the case $p=1$ follows by duality. The continuity of $\widetilde{\phi}$ is alluded\footnote{\thefootnote. We have
$$
\widetilde{\phi}(s) 
= \frac{1}{\mu_G(\X)} \int_{\X} \phi(\gamma(s\omega)) \d\mu_G(\omega) 
= \frac{1}{\mu_G(\X)}\int_G \phi(\gamma(t)) 1_{\X}(s^{-1}t) \d\mu_G(t).
$$
Then for any $s_1,s_2 \in G$, we have
\begin{align*}
\MoveEqLeft
 \left|\widetilde{\phi}(s_1) - \widetilde{\phi}(s_2)\right| 
 \leq \frac{1}{\mu_G(\X)} \int_G |\phi(\gamma(t))| \, |1_{\X}(s_1^{-1}t) - 1_{\X}(s_2^{-1}t)| \d\mu_G(t) 
\leq \frac{\norm{\phi}_{\L^\infty(G)}}{\mu_G(\X)} \int_G |1_{s_1\X}(t) - 1_{s_2\X}(t) | \d\mu_G(t) \\
& = \norm{\phi}_{\L^\infty(G)}\, \frac{\mu_G\big(s_1 \X \Delta s_2 \X\big)}{\mu_G(\X)} 
=  \norm{\phi}_{\L^\infty(G)}\,\frac{\mu_G\big((s_2^{-1} s_1 \X) \Delta \X\big)}{\mu_G(\X)} \xra[s_2 \to s_1]{} 0
\end{align*}
where the last line follows from \cite[Theorem A page 266]{Hal}.
} in \cite{Haa3} and in the proof of \cite[Lemma 2.1]{Haa3}, the formula \eqref{symbole-tilde-varphi} is shown.

Now, we consider the remaining case $1<p<\infty$. Since $G$ and $\Gamma$ are both amenable, we obtain using \cite[Theorem 4.2, Corollary 5.3]{CDS}\footnote{\thefootnote. We warn the reader that the proof of \cite[Theorem 5.2]{CDS} is only valid for second countable groups. The proof uses Lebesgue's dominated convergence theorem in the last line of page 7007 and this result does not admit a generalization for nets. See \cite{KaL} for more information.} in the first and in the last equality together with \cite[Remark 9.3]{CPPR}, and \cite[Lemma 2.6]{LaS} in the inequality
\begin{align*}
\MoveEqLeft
\bnorm{M_{\widetilde{\phi}}}_{\cb,\L^p(\VN(G)) \to \L^p(\VN(G))} 
= \bnorm{M_{\widetilde{\phi}}^{\HS}}_{\cb,S^p_G \to S^p_G} 
 \leq \bnorm{M_{\phi}^{\HS}}_{\cb, S^p_\Gamma \to S^p_\Gamma} \\ 
&= \bnorm{M_{\phi}}_{\cb, \L^p(\VN(\Gamma)) \to \L^p(\VN(\Gamma))}.
\end{align*}
Suppose that $M_\phi$ is completely positive. According to the proof of \cite[Lemma 2.1]{Haa3}, for any $s,t \in G$, we have
\begin{align}
\label{Formule-Haagerup}
\MoveEqLeft
\widetilde{\phi}(s t^{-1}) 
 = \frac{1}{\mu_G(\X)} \int_\X \phi\big(\gamma(s \omega') \gamma(t \omega')^{-1}\big) \d\mu_G(\omega').
\end{align}
We will show that $\widetilde{\phi}$ is positive definite. Let $\alpha_1,\ldots,\alpha_n \in \C$ and $s_1,\ldots,s_n \in G$. Since $\phi$ is positive definite by Proposition \ref{Th-cp-Fourier-multipliers}, we obtain
\begin{align*}
\sum_{k,l=1}^n \alpha_k \ovl{\alpha_l} \widetilde{\phi}(s_k s_l^{-1}) 
&=\frac{1}{\mu_G(\X)} \sum_{k,l=1}^{n} \alpha_k \ovl{\alpha_l} \int_{\X} \phi\big(\gamma(s_k \omega') \gamma(s_l \omega')^{-1}\big) \d\mu_G(\omega') \\
&=\frac{1}{\mu_G(\X)} \int_{\X} \sum_{k,l=1}^{n} \alpha_k \ovl{\alpha_l} \phi\big(\gamma(s_k \omega') \gamma(s_l \omega')^{-1}\big) \d\mu_G(\omega') 
\geq 0.
\end{align*}
Since the function $\widetilde{\phi}$ is continuous, we conclude that $M_{\widetilde{\phi}}$ is completely positive by using again Proposition \ref{Th-cp-Fourier-multipliers}.
\end{proof}




\subsection{Groups approximable by lattice subgroups}
\label{sec-Groups-approximable-by-lattice-subgroups}

If $(Y,\dist_Y)$ and $(Z,\dist_Z)$ are metric spaces and if $f \co Y \to Z$ is uniformly continuous, we denote by $\omega(f,\cdot) \co [0,+\infty[ \to [0,+\infty[$ a modulus of continuity of $f$. We have $\lim_{\delta \to 0} \omega(f,\delta)=0$ and $\omega(f,0)=0$. The function $\omega(f,\cdot)$ is increasing and for any $s,t \in Y$ we have
\begin{equation}
\label{inequality-modulus}
\dist_Z\big(f(s),f(t)\big) 
\leq \omega\big(f,\dist_Y(s,t)\big).	
\end{equation}
Let $G$ be a topological group. We denote by $\nu \co G \to G$, $s \mapsto s^{-1}$ the inversion map.

The following theorem gives a variant of Theorem \ref{Prop-complementation-Schur-Fourier} for a particular class of unimodular groups. 

\begin{thm}
\label{thm-complementation-Fourier-ADS-amenable} 
Let $G$ be a second countable unimodular locally compact group which satisfies $\mathrm{ALSS}$ with respect to a sequence of lattices $(\Gamma_j)_{j \geq 1}$ and associated fundamental domains $(\X_j)_{j \geq 1}$. Suppose $1 \leq p \leq \infty$. We assume that $G$ is amenable if $1<p<\infty$. Suppose that for some constant $c > 0$ and any compact subset $K$ of $G$ we have
\begin{equation}
\label{equ-Haar-measure-convergence}
\lim_{j \to \infty} \sup_{\gamma \in \Gamma_j \cap K}\left|\frac{1}{\mu(\X_j)} \int_G \frac{\mu^2(\X_j \cap \gamma \X_j s)}{\mu^2(\X_j)} \d\mu(s) - c\right|
=0
\end{equation}
where $\mu=\mu_G$ is a Haar measure of $G$. Then for $1 \leq p \leq \infty$, there exists a linear mapping
$$
P_G^p \co \CB(\L^p(\VN(G))) \to \mathfrak{M}^{p,\cb}(G)
$$
of norm at most $\frac{1}{c}$ with the properties:
\begin{enumerate}
\item If $T \co \L^p(\VN(G)) \to \L^p(\VN(G))$ is completely positive, then $P_G^p(T)$ is completely positive.
\item If $T = M_\psi$ is a Fourier multiplier on $\L^p(\VN(G))$ with bounded continuous symbol $\psi \co G \to \C$, then $P_G^p(M_\psi) = M_\psi$.
Moreover, if we have $\gamma \X_j = \X_j \gamma$ for any $j \in \N$ and any $\gamma \in \Gamma_j$, or alternatively, if $\X_j$ is symmetric in the sense that $\mu(\X_j \Delta \X_j^{-1}) = 0$ for any $j \in \N$, then $P_G^p(M_\psi) = M_\psi$ for any bounded measurable symbol such that $M_\psi \in \mathfrak{M}^{p,\cb}(G)$.
\end{enumerate}
For an element $T$ belonging to $\CB(\L^p(\VN(G)))$ and to $\CB(\L^q(\VN(G)))$ for two values $p,q \in [1,\infty]$, we have $P_G^p(T)x = P_G^q(T)x$ for $x \in \L^p(\VN(G)) \cap \L^q(\VN(G))$. 

In the preceding lines, if $p = \infty$, we can take $\CB_{\w^*}(\VN(G))$ as the domain space of $P_G^\infty$.
\end{thm}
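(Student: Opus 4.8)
The strategy is the same ``averaging over approximating discrete subgroups'' idea that underlies Theorem \ref{thm-complementation-subgroups-Fourier-multipliers-CB-twisted}, but now the discrete group $\Gamma_j$ only \emph{approximates} $G$ rather than sitting inside it with a trace that extends. First I would fix $j$ and consider the complete isometry $J_j \co \L^p(\VN(\Gamma_j)) \to \L^p(\VN(G))$ coming from the inclusion $\Gamma_j \hookrightarrow G$ (which is trace preserving since $\Gamma_j$ is discrete and $G$ unimodular), together with its completely contractive adjoint $J_j^* \co \L^p(\VN(G)) \to \L^p(\VN(\Gamma_j))$, $\lambda_s \mapsto 1_{\Gamma_j}(s) \lambda_s$. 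Composing, one obtains for $T \in \CB(\L^p(\VN(G)))$ a completely bounded operator on $\L^p(\VN(\Gamma_j))$; applying to it the discrete complementation $P^p_{\Gamma_j}$ of Corollary \ref{Prop-complementation-Fourier-subgroups} (case $H=G=\Gamma_j$) produces a completely bounded Fourier multiplier $M_{\phi_j}$ on $\L^p(\VN(\Gamma_j))$ with symbol $\phi_j \co \Gamma_j \to \C$, $\phi_j(\gamma) = \tau_{\Gamma_j}(J_j^* T J_j(\lambda_\gamma) \lambda_\gamma^*)$, whose completely bounded norm is $\le \norm{T}_{\cb}$ and which is completely positive if $T$ is. Then I would use the extension procedure of Theorem \ref{thm-Jodeit-version-cb-Theorem-B.1}: the function $\widetilde{\phi_j} = \frac{1}{\mu(\X_j)} 1_{\X_j} * (\phi_j \mu_{\Gamma_j}) * 1_{\X_j^{-1}}$ on $G$ is continuous, induces a completely bounded (and completely positive when $T$ is positive) Fourier multiplier $M_{\widetilde{\phi_j}}$ on $\L^p(\VN(G))$ with $\norm{M_{\widetilde{\phi_j}}}_{\cb} \le \norm{M_{\phi_j}}_{\cb} \le \norm{T}_{\cb}$. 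Thus we obtain a bounded net (indexed by $j$) in the closed ball of radius $\norm{T}_{\cb}$ of $\mathfrak{M}^{p,\cb}(G)$.

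The second step is to extract a limit. Since $\mathfrak{M}^{p,\cb}(G)$ is weak* closed in $\CB(\L^p(\VN(G)))$ for $1 < p \le \infty$ by Lemma \ref{lem-Fourier-multiplier-weak-star-closed} (and for $p=1$ one works by duality through Lemma \ref{lemma-duality-Fourier-multipliers}, passing to $p=\infty$), the bounded sequence $(M_{\widetilde{\phi_j}})$ admits a weak* cluster point $S \in \mathfrak{M}^{p,\cb}(G)$ with $\norm{S}_{\cb} \le \norm{T}_{\cb}$; passing to a subnet I would take this as a genuine limit. However $S$ will in general not have norm $\le \frac{1}{c}\norm{T}_{\cb}$ unless we renormalize: the point of hypothesis \eqref{equ-Haar-measure-convergence} is precisely that when one computes the symbol of $M_{\widetilde{\phi_j}}$ in the \emph{identity multiplier case} $T = M_\psi$, one does not recover $\psi$ but rather $\psi$ multiplied by the quantity $\frac{1}{\mu(\X_j)}\int_G \frac{\mu^2(\X_j \cap \gamma\X_j s)}{\mu^2(\X_j)}\d\mu(s)$ evaluated ``at the relevant $\gamma$'', which tends to the constant $c$ uniformly on compacta. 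So I would define $P^p_G(T)$ to be $\frac1c$ times the chosen weak* limit; then $\norm{P^p_G}\le \frac1c$ follows, and property (1) (preservation of complete positivity) passes to the weak* limit by Lemma \ref{lem-completely-positive-weak-limit} since $\frac1c > 0$. Compatibility of $P^p_G$ for different values of $p$ follows from the compatibility of $J_j$, $J_j^*$, $P^p_{\Gamma_j}$ and the extension map at each finite stage, together with the fact that weak* limits respect the common dense subspace $\lambda(\Span C_c(G) * C_c(G))$ on which all the operators agree before taking limits.

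The remaining and most delicate point is property (2): that $P^p_G$ really is a projection onto $\mathfrak{M}^{p,\cb}(G)$, i.e. $P^p_G(M_\psi) = M_\psi$. The plan is to compute $\widetilde{\phi_j}(s)$ explicitly when $T = M_\psi$. Using \eqref{symbole-tilde-varphi} and the fact that $\phi_j(\gamma) = \tau_{\Gamma_j}(J_j^* M_\psi J_j(\lambda_\gamma)\lambda_\gamma^*)$, which for a Fourier multiplier $M_\psi$ reduces — after the inclusion/restriction maps, using Plancherel — to an integral of $\psi$ against the kernel built from $1_{\X_j}$ and its convolutions, one sees that $\widetilde{\phi_j}$ is obtained from $\psi$ by convolving against a probability-type kernel whose ``mass'' is exactly the expression appearing in \eqref{equ-Haar-measure-convergence}; by the $\ALSS$ property ($\X_j$ shrinks to $\{e_G\}$) plus the continuity of $\psi$ and the dominated convergence theorem, one gets $\widetilde{\phi_j}(s) \to c\,\psi(s)$ pointwise, uniformly on compacta, and by Lemma \ref{Lemma-Symbol-weakstar-convergence-weakLp-vrai} this yields $M_{\widetilde{\phi_j}} \to c\,M_\psi$ in the relevant topology, whence $P^p_G(M_\psi) = M_\psi$. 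The main obstacle is carrying out this kernel computation carefully — keeping track of where the factors $\frac{1}{\mu(\X_j)}$ sit, verifying that the ``mass'' of the kernel is genuinely the left-hand side of \eqref{equ-Haar-measure-convergence}, and handling the case of a merely \emph{measurable} symbol $\psi$ (rather than continuous), which requires the extra hypothesis $\gamma\X_j = \X_j\gamma$ or the symmetry $\mu(\X_j \Delta \X_j^{-1}) = 0$ so that $1_{\X_j} * \cdot * 1_{\X_j^{-1}}$ simplifies enough to recover $\psi$ without a continuity/approximate-identity argument. For that last case I would combine the symmetry with the fact that, by Lemma \ref{lem-Fourier-multiplier-weak-star-closed} again, $M_\psi$ is a weak* limit of multipliers with continuous symbols (e.g. regularized by convolution), reducing to the continuous case already treated. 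Finally, for $p=1$ the whole statement is deduced from the $p=\infty$ case via the duality isometry $\mathfrak{M}^{1,\cb}(G) = \mathfrak{M}^{\infty,\cb}(G)$ of Lemma \ref{lemma-duality-Fourier-multipliers} together with the footnote identifying weak-operator and point-weak* topologies on bounded sets of preadjoints.
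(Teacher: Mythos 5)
Your first step contains a fatal gap: the map $J_j \co \L^p(\VN(\Gamma_j)) \to \L^p(\VN(G))$ that you base everything on does not exist for $1 \le p < \infty$ when $G$ is non-discrete. While $\Gamma_j \hookrightarrow G$ does induce a normal $*$-monomorphism $\VN(\Gamma_j) \to \VN(G)$, this embedding is \emph{not} trace preserving: $\tau_{\Gamma_j}$ is a normalized finite trace with $\tau_{\Gamma_j}(1)=1$, whereas the Plancherel trace of $\VN(G)$ satisfies $\tau_G(1)=\infty$ for any non-discrete $G$ (the identity is $\lambda(\delta_{e})$ and $\delta_e \notin \L^2(G)$). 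Consequently there is no induced isometry, nor even a bounded map, between the $\L^p$-spaces, and the ``adjoint'' $\lambda_s \mapsto 1_{\Gamma_j}(s)\lambda_s$ is a restriction to a null set with no meaning on $\L^p(\VN(G))$. This is precisely the obstruction flagged in the paper just before Theorem \ref{thm-complementation-subgroups-Fourier-multipliers-CB-twisted} (``the trace is not preserved for any non-discrete locally compact group''), and overcoming it is the actual content of the theorem. The paper's proof replaces your $J_j$ by the compressions $\Phi_j^p \co \lambda_\gamma \mapsto \mu(\X_j)^{-2+\frac1p}\, h_j^* \lambda_\gamma h_j$ with $h_j=\lambda(1_{\X_j})$ (borrowed from \cite{CPPR}), which are completely contractive and completely positive on every $\L^p$ level; the density condition \eqref{equ-Haar-measure-convergence} then arises from the explicit computation of $\tau_G\big(T(h_j^*\lambda_\gamma h_j)h_j^*\lambda_{\gamma^{-1}}h_j\big)$ in terms of the overlaps $\mu(\X_j \cap \gamma\X_j s^{-1})^2$, not from a restriction-of-symbol formula. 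Your intuition that \eqref{equ-Haar-measure-convergence} measures the ``mass'' by which the construction distorts a multiplier, and that one must divide by $c$, is correct, but without a valid substitute for $J_j$ the argument does not start.

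Two secondary points. First, your treatment of merely measurable symbols (``$M_\psi$ is a weak* limit of multipliers with continuous symbols, reduce to the continuous case'') requires $P_G^p$ to be weak* continuous, which is not automatic for an operator obtained as a weak* cluster point of the $P_j^p$ in $\B(\CB(\L^p(\VN(G))))$; the paper instead proves, under the symmetry or commutativity hypothesis on $\X_j$, a uniform $\L^1(G)\to\L^1(G)$ bound for the symbol-averaging operator $\Xi_j\co\psi\mapsto\widetilde{\phi_j}$, from which weak* convergence $\widetilde{\phi_j}\to\psi$ in $\L^\infty(G)$ follows directly. Second, placing the factor $\frac1c$ only at the limit rather than at each finite stage is harmless, but note that the uniform bound $\norm{M_{\widetilde{\phi_j}}}_{\cb}\le\frac1c\norm{T}_{\cb}$ at each $j$ is what the paper actually uses to run the compactness argument simultaneously for all $p$ via Tychonoff, which is how the cross-$p$ compatibility is obtained.
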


\begin{proof}
If $G$ is amenable, note that each $\Gamma_j$ is amenable by \cite[Proposition G.2.2]{BHV}. So each $\VN(\Gamma_j)$ is hyperfinite, hence $\QWEP$. 

For any $j$, we consider the element $h_j \ov{\mathrm{def}}{=} \lambda(1_{\X_j}) = \int_{\X_j} \lambda_s \d\mu(s)$ of the group von Neumann algebra $\VN(G)$ and define for $1 \leq p \leq \infty$ the (normal\footnote{\thefootnote. Recall that the product of a von Neumann algebra is separately weak* continuous, e.g. see \cite[Proposition 2.7.4 (1)]{BLM}.}if $p=\infty$) completely positive map
$$ 
\Phi_j^p \co \L^p(\VN(\Gamma_j)) \to \L^p(\VN(G)),\: \lambda_\gamma \mapsto \mu(\X_j)^{-2 + \frac1p} h_j^* \lambda_\gamma h_j.
$$
It is noted and shown in \cite[page~19]{CPPR} that each $\Phi_j^p$ is completely contractive. For any $1 \leq p \leq \infty$, we also consider the adjoint (preadjoint if $p=1$) $\Psi_j^p=\big(\Phi_j^{p^*}\big)^* \co \L^p(\VN(G)) \to \L^p(\VN(\Gamma_j))$ of $\Phi_j^{p^*}$ which is also completely contractive and completely positive by Lemma \ref{Lemma-adjoint-cp}. 
Now, use Theorem \ref{Prop-complementation-Schur-Fourier} for the discrete group $\Gamma_j$ and define for some completely bounded map $T \co \L^p(\VN(G)) \to \L^p(\VN(G))$, the Fourier multiplier $M_{\phi_j} \co \L^p(\VN(\Gamma_j))\to \L^p(\VN(\Gamma_j))$ defined by
$$ 
M_{\phi_j} 
\ov{\mathrm{def}}{=} \frac1c P_{\Gamma_j}^p\big(\Psi_j^p T \Phi_j^p\big) 
\quad \text{if $1 \leq p<\infty$ and} \quad
M_{\phi_j} 
\ov{\mathrm{def}}{=} \frac1c P_{\Gamma_j}^\infty\big(\Psi_j^\infty P_{\w^*}(T) \Phi_j^\infty\big) \quad \text{if $p = \infty$}, 
$$
where the contractive map $P_{\w^*} \co \CB(\VN(G)) \to \CB(\VN(G))$ is described in Proposition \ref{Prop-recover-weak-star-continuity}, whose symbol is (if $T$ is normal in the case $p=\infty)$ 
\begin{align}
\label{Symbol-phi_j-first}
\phi_j(\gamma) 
& =\frac1c \tau_{\Gamma_j}\big(\Psi_j^p T \Phi_j^p (\lambda_\gamma) \lambda_{\gamma^{-1}}\big)
=\frac1c \tau_{G}\Big(T\Phi_j^p (\lambda_\gamma) \Phi_j^{p^*}(\lambda_{\gamma^{-1}})\Big) \\
& =\frac{1}{c\, \mu(\X_j)^{3}} \tau_G\big(T(h_j^* \lambda_\gamma h_j) h_j^* \lambda_{\gamma^{-1}} h_j\big). \nonumber
\end{align}
Then we have for $1 \leq p<\infty$
\begin{align*}
\MoveEqLeft
\norm{M_{\phi_j}}_{\cb,\L^p(\VN(\Gamma_j)) \to \L^p(\VN(\Gamma_j))} 
=\norm{\frac1c P_{\Gamma_j}^p(\Psi_j^p T \Phi_j^p)}_{\cb,\L^p(\VN(\Gamma_j)) \to \L^p(\VN(\Gamma_j))} \\
&\leq \frac{1}{c} \norm{\Psi_j^p T \Phi_j^p}_{\cb, \L^p(\VN(\Gamma_j)) \to \L^p(\VN(\Gamma_j))} 
\leq \frac{1}{c} \norm{T}_{\cb,\L^p(\VN(G)) \to \L^p(\VN(G))}
\end{align*}
and similarly for $p=\infty$. Let further
\begin{equation}
\label{equ1-Fourier-complementation}
\widetilde{\phi_j} 
\ov{\mathrm{def}}{=} \frac{1}{\mu(\X_j)} 1_{\X_j} \ast (\phi_j \mu_{\Gamma_j}) \ast 1_{\X_j^{-1}} \co G \to \C
\end{equation}
where $\mu_{\Gamma_j}$ is the counting measure on the discrete subset $\Gamma_j$ of $G$. According to Theorem \ref{thm-Jodeit-version-cb-Theorem-B.1}, $M_{\widetilde{\phi_j}} \co \L^p(\VN(G)) \to \L^p(\VN(G))$ is a completely bounded Fourier multiplier with
\begin{equation}
	\label{inegalite-1-sur-c}
\bnorm{M_{\widetilde{\phi_j}}}_{\cb,\L^p(\VN(G)) \to \L^p(\VN(G))} 
\leq \norm{M_{\phi_j}}_{\cb,\L^p(\VN(\Gamma_j)) \to \L^p(\VN(\Gamma_j))}
\leq \frac1c \norm{T}_{\cb,\L^p(\VN(G)) \to \L^p(\VN(G))}.	
\end{equation} 
If $1 < p \leq \infty$, note that $\B(\CB(\L^p(\VN(G))))$ is a dual Banach space and admits the predual
\begin{equation}
\label{equ-predual-bracket}
\CB(\L^p(\VN(G))) \hat \ot \big(\L^p(\VN(G)) \widehat{\ot} \L^{p^*}(\VN(G))^\op \big),
\end{equation}
where $\hat{\ot}$ denotes the Banach space projective tensor product and where $\widehat{\ot}$ denotes the operator space projective tensor product. The duality bracket is given by
\begin{equation}
\label{Derniere-eq}
\big\langle P , T \ot (x \ot y) \big\rangle
=\big\langle P(T) x, y \big\rangle_{\L^p(\VN(G)),\L^{p^*}(\VN(G))}.
\end{equation}

The mappings $P_j^p \co T \mapsto M_{\widetilde{\phi_j}}$ are linear and uniformly bounded in $\B(\CB(\L^p(\VN(G))))$ (we use $\B(\CB(\VN(G)), \CB(\mathrm{C}^*_\lambda(G),\VN(G)))$ if $p=\infty$). 
From now on, we restrict to the case $1 < p \leq \infty$ and we will return to the case $p = 1$ only at the end of the proof. The elements $P_j^p$ belong to the space $Y_p \ov{\mathrm{def}}{=} \frac1c \text{Ball}(\B(\CB(\L^p(\VN(G)))))$ for $p \in (1,\infty]$. By Banach-Alaoglu's theorem, note that each $Y_p$ is compact with respect to the weak* topology of the underlying Banach space. Then by Tychonoff's theorem, $\prod_{p \in (1,\infty]} Y_p$ is also compact. Thus, the net $\big((P_j^p)_{p \in (1,\infty]}\big)$ admits a convergent subnet $\big((P_{j(k)}^p)_{p \in (1,\infty]}\big)$, which converges to some element $((P_G^p)_{p \in (1,\infty)},P_G^\infty)$ of $\prod_{p \in (1,\infty]} Y_p$, i.e. for any $p$ the net $(P_{j(k)}^p)$ converges to $P_G^p$ for the weak* topology. With \eqref{Derniere-eq}, we see that this implies that $(P_{j(k)}^{p}(T))$ converges for the weak operator topology (in the point weak* topology if $p=\infty$) to $P_G^p(T)$. Observe that the weak* topology on $\CB(\L^p(\VN(G)))$ coincides on bounded subsets with the weak operator topology (the point weak* topology if $p=\infty$) essentially by the same argument as the one of the proof of \cite[Lemma 7.2]{Pau} (which uses \cite[Proposition 1.21]{Dou98}). We conclude by Lemma \ref{lem-Fourier-multiplier-weak-star-closed} that $P_G^p(T)$ is itself a Fourier multiplier. 


Note that we clearly have 
$$
\bnorm{P_G^p}_{\CB(\L^p(\VN(G))) \to \CB(\L^p(\VN(G)))} 
\leq \liminf_{k \to +\infty} \bnorm{P_{j(k)}^p}_{\CB(\L^p(\VN(G))) \to \CB(\L^p(\VN(G)))} 
\leq \frac1c.
$$ 

We next show that $P_G^p$ preserves the complete positivity. Suppose that $T$ is (normal if $p=\infty)$ completely positive. Since $\Phi_j^p$ and $\Psi_j^p$ are completely positive, $\Psi_j^p T \Phi_j^p$ is also completely positive and thus, by Theorem \ref{Prop-complementation-Schur-Fourier}, $M_{\phi_j} = \frac1c P_{\Gamma_j}^p(\Psi_j^p T \Phi_j^p)$ is completely positive. Using Theorem \ref{thm-Jodeit-version-cb-Theorem-B.1}, we conclude that $M_{\widetilde{\phi_j}}$ is completely positive. Since $P_G^p(T)$ is the weak operator topology limit of $M_{\widetilde{\phi_j}}$ (point weak* topology limit if $p=\infty$), the complete positivity of $M_{\widetilde{\phi_j}}$ carries over to that of $P_G^p(T)$ by Lemma \ref{lem-completely-positive-weak-limit}.

We claim that $P_G^p$ has the compatibility property stated in the theorem. Note that the symbol $\widetilde{\phi_j}$ of $P_j^p(T)$ does not depend on $p$ if $T$ belongs to two different spaces $\CB(\L^p(\VN(G)))$ and $\CB(\L^q(\VN(G)))$. If in addition $x$ belongs to both $\L^p(\VN(G))$ and $\L^q(\VN(G))$ and if $y$ belongs to both $\L^{p^*}(\VN(G))$ and $\L^{q^*}(\VN(G))$, then we have
\begin{align*}
\MoveEqLeft
\big\langle P_G^p(T)x, y \big\rangle_{} 
= \lim_k \big\langle P_{j(k)}^p(T)x, y \big\rangle_{} 
= \lim_k \big\langle P_{j(k)}^q(T)x, y \big\rangle_{} 
= \big\langle P_G^q(T)x, y \big\rangle_{}.  
\end{align*}
Then it is immediate that the $P_G^p$'s are compatible as stated in the theorem.


We finally will show now that $P_G^p(M_{\psi}) = M_{\psi}$ for any bounded continuous symbol $\psi \co G \to \C$ (or $\psi$ bounded measurable under the additional symmetry/commutativity assumption on $\X_j$) giving rise to a completely bounded $\L^p$-multiplier. We start by computing the symbol $\phi_j$. For any $\gamma \in \Gamma_j$, note that $\lambda_\gamma h_j=\lambda_\gamma \lambda(1_{X_j})=\lambda(1_{\gamma \X_j})$ and similarly $\lambda_{\gamma^{-1}} h_j=\lambda(1_{\gamma^{-1} \X_j})$. Consequently, we have
\begin{align*}
\MoveEqLeft
\phi_j(\gamma) 
=  \frac{1}{c\, \mu(\X_j)^{3}} \tau_G\big( M_\psi (h_j^* \lambda_\gamma h_j) h_j^* \lambda_{\gamma^{-1}} h_j\big) \\
& =  \frac{1}{c\, \mu(\X_j)^{3}} \tau_G \left( M_\psi \lambda\big(1_{\X_j^{-1}} \ast 1_{\gamma \X_j}\big) \lambda\big(1_{\X_j^{-1}} \ast 1_{\gamma^{-1} X_j}\big) \right) \\
& =  \frac{1}{c\, \mu(\X_j)^{3}} \tau_G\left( \lambda\big(\psi(1_{\X_j^{-1}} \ast 1_{\gamma \X_j})\big) \lambda\big( 1_{\X_j^{-1}} \ast 1_{\gamma^{-1} X_j}\big) \right) \\
&=\frac{1}{c\, \mu(\X_j)^{3}} \int_{G} \psi(s) \big(1_{\X_j^{-1}} \ast 1_{\gamma \X_j}\big)(s) \big(1_{\X_j^{-1}} \ast 1_{\gamma^{-1} \X_j}\big)(s^{-1})\d \mu(s)
\end{align*}
where the last equality follows from the Plancherel formula \eqref{Formule-Plancherel} and from the fact that the functions $\psi (1_{\X_j^{-1}} \ast 1_{\gamma \X_j})$ and $1_{\X_j^{-1}} \ast 1_{\gamma^{-1} \X_j}$ belong to the space $\L^1(G) \cap \L^2(G)$, and thus are left bounded. Now, using \cite[Theorem 20.10 (iv)]{HR}, note that for any $s \in G$
$$
\big(1_{\X_j^{-1}} \ast 1_{\gamma \X_j}\big)(s) 
=\int_{G} 1_{\X_j^{-1}}(t^{-1}) 1_{\gamma \X_j}(ts) \d\mu(t) 
=\int_{X_j}  1_{\gamma \X_j}(ts) \d\mu(t) 
=\mu(\X_j \cap \gamma \X_j s^{-1})	
$$
and 
$$
\big(1_{\X_j^{-1}} \ast 1_{\gamma^{-1} \X_j}\big)(s^{-1}) 
= \mu(\X_j \cap \gamma^{-1} \X_j s) 
= \mu(\gamma \X_j s^{-1} \cap \X_j).
$$
Thus, for any $\gamma \in \Gamma_j$, we conclude that
\begin{equation}
\label{Symbol-Phij}
\phi_j(\gamma) 
=\frac{1}{c\, \mu(\X_j)^{3}} \int_G \psi(s) \mu\big(\X_j \cap \gamma \X_j s^{-1}\big)^2 \d\mu(s).	
\end{equation}
Now, we examine the asymptotic behaviour of the sequence of symbols $\phi_j$. Since $G$ is second countable, it admits a right-invariant metric $\dist(\cdot,\cdot)$, i.e. $\dist(s,t) = \dist(sr,tr)$ for $r,s,t \in G$, such that the closed balls are compact \cite{HaP}. We denote by $B(x,r)$ the open ball centered on $x$ with radius $r$ and $B'(x,r)$ the closed ball. We need the following lemmas. 



\begin{lemma}
\label{Lemma-topo}
For any neighborhood $V$ of the identity $e$ in $G$, any compact subset $K$ of $G$, any $j$ sufficiently large and any $\gamma \in K$, we have 
\begin{equation}
\label{equ3-Fourier-complementation}
\X_j \cap \gamma \X_j s^{-1} 
= \emptyset, \quad s \in G \backslash \gamma V.
\end{equation}
\end{lemma}

\begin{proof}
Since $K$ is compact, we have $K \subset B(e,R_K)$ for some $R_K > 0$. Let $j$ be so large that $\X_j \subset B(e,\frac13)$. If $s \in G \backslash B(e,R_K + 1)$, then we have for $\omega \in \X_j$ and $\gamma \in K$ 
\begin{align*}
\MoveEqLeft
  \dist(e,\gamma \omega s^{-1}) 
	\geq \dist(e,s^{-1}) - \dist(s^{-1},\gamma \omega s^{-1}) 
	= \dist(s,e) - \dist(e,\gamma \omega) \\
	& \geq \dist(s,e) - \dist(e,\omega) - \dist(\omega,\gamma \omega) \geq \dist(s,e) - \dist(e,\omega) - \dist(e,\gamma) \\
	& \geq R_K + 1 - \frac13 - R_K \geq  \frac{2}{3}.  
\end{align*}
Thus, for such an $s$, we have $\X_j \cap \gamma \X_j s^{-1} = \emptyset$, since $\X_j \subset B(e,\frac13)$. So from now on, we can assume $s \in B(e,R_K+1)$, in other words, varying in a compact set. 

Let $\epsi>0$ such that $B(e,\epsi) \subset V$. By \cite[Theorem 4.9]{HR}, there exists $\epsi' > 0$ such that $\gamma B(e,\epsi) \gamma^{-1}$ contains the ball $B(e,\epsi')$ for any $\gamma \in K$. Let $\gamma \in K$ and $s \in B(e,R_K+1) \backslash \gamma V$. Since $s \not\in \gamma B(e,\epsi)$, we have $\gamma s^{-1} \not\in \gamma B(e,\epsi)^{-1} \gamma^{-1}$ and finally $\dist(\gamma,s)=\dist(e, \gamma s^{-1}) \geq \epsi'$. Consider the compact $K'=B'(e,1) \cdot B'(e,R_K+1)^{-1} $ and some $0<\epsi'' \leq \min\{\frac{1}{2}\epsi',1\}$ such that $\omega\big(\nu|K',\epsi''\big) \leq \frac{1}{2}\epsi'$. Consider $j$ so large that $\X_j \subset B(e,\epsi'')$. Let $\omega \in \X_j$. Then
$$
\dist(e,\gamma \omega s^{-1}) = \dist(e,s \omega^{-1} \gamma^{-1}) = \dist(\gamma,s \omega^{-1}) \geq \dist(\gamma,s) - \dist(s,s\omega^{-1}).
$$
Note that $s^{-1}$ and $\omega s^{-1}$ vary in the compact $K'$ for $\omega$ varying in $X_j$. Now, using \eqref {inequality-modulus}, we have
\begin{align*}
\MoveEqLeft
\dist(s,s \omega^{-1}) 
\leq \omega\big(\nu|K',\dist(s^{-1},\omega s^{-1})\big) 
=\omega\big(\nu|K',\dist(e,\omega) \big)
\leq \omega\big(\nu|K',\epsi''\big)
\leq \frac{1}{2}\epsi'.
\end{align*}
We deduce that
  $\dist(e,\gamma\omega s^{-1})  
		\geq \epsi' -\frac{1}{2}\epsi' 
		=\frac{1}{2}\epsi' 
		\geq \epsi''
$, so that $\gamma \omega s^{-1} \not\in B(e,\epsi'')$ and thus $\X_j \cap \gamma \X_j s^{-1} = \emptyset$ since $\X_j \subset B(e,\epsi'')$. We have shown \eqref{equ3-Fourier-complementation}.
\end{proof}

\begin{lemma}
Assume in addition that $\psi$ is a continuous symbol. Then for any compact subset $K$ of $G$, we have
\begin{equation}
\label{equ2-Fourier-complementation}
\sup_{\gamma \in \Gamma_j \cap K} |\phi_j(\gamma) - \psi(\gamma)| 
\xra[j \to +\infty]{} 0.
\end{equation}
\end{lemma}

\begin{proof}
We fix a compact subset $K$ of $G$ and a compact neighborhood $V$ of $e$. Then, for any $j$ sufficiently large and any $\gamma \in K$, Lemma \ref{Lemma-topo} implies the existence of the integral $\int_G \psi(\gamma)\mu\big(\X_j \cap \gamma \X_j s^{-1}\big)^2 \d\mu(s)$. By definition of $c$, for any $\gamma \in \Gamma_j \cap K$, using \eqref{Symbol-Phij} in the first equality, we have
\begingroup
\allowdisplaybreaks
\begin{align*}
\MoveEqLeft
\left|\phi_j(\gamma) -\psi(\gamma) \right| 
=\left|\frac{1}{c\, \mu(\X_j)^{3}} \int_G \psi(s) \mu\big(\X_j \cap \gamma \X_j s^{-1}\big)^2 \d\mu(s) -\psi(\gamma) \right|\\
&=\frac{1}{c\, \mu(\X_j)^{3}} \left| \int_G \psi(s) \mu\big(\X_j \cap \gamma \X_j s^{-1}\big)^2 \d\mu(s) -c \mu(\X_j)^{3} \psi(\gamma) \right|\\
&=\frac{1}{c\, \mu(\X_j)^{3}} \left| \int_G \psi(s) \mu\big(\X_j \cap \gamma \X_j s^{-1}\big)^2 \d\mu(s) -\int_G \psi(\gamma)\mu\big(\X_j \cap \gamma \X_j s^{-1}\big)^2\d\mu(s)\right.\\
&\quad \left.+\int_G \psi(\gamma)\mu\big(\X_j \cap \gamma \X_j s^{-1}\big)^2\d\mu(s) - c \mu(\X_j)^{3} \psi(\gamma) \right|\\
&\leq \frac{1}{c\, \mu(\X_j)^{3}} \int_G \left|\psi(s)-\psi(\gamma) \right|\mu\big(\X_j \cap \gamma \X_j s^{-1}\big)^2 \d\mu(s)\\
&\quad +\frac{1}{c\, \mu(\X_j)^{3}}|\psi(\gamma)|\left|\int_G \mu\big(\X_j \cap \gamma \X_j s^{-1}\big)^2\d\mu(s)-c \mu(\X_j)^{3} \right|\\
&\leq \frac{1}{c\, \mu(\X_j)^{3}} \int_G |\psi(s) - \psi(\gamma)|\, \mu\big(\X_j \cap \gamma \X_j s^{-1}\big)^2 \d\mu(s) \\  
&\quad + \frac1c |\psi(\gamma)| \left|\frac{1}{\mu(\X_j)^{}} \int_G \frac{\mu\big(\X_j \cap \gamma \X_j s^{-1}\big)^2}{\mu(\X_j)^{2}} \d\mu(s)-c \right|.
\end{align*}
\endgroup
The last summand converges to $0$ as $j \to \infty$ uniformly in $\gamma \in \Gamma_j \cap K$ according to the assumption \eqref{equ-Haar-measure-convergence} and the boundedness of $\psi$. It remains to treat the first summand. Then, for  and $j$ sufficiently large and $\gamma \in \Gamma_j \cap K$, using Lemma \ref{Lemma-topo} in the first equality, we obtain 
\begin{align*}
\MoveEqLeft
\sup_{\gamma \in \Gamma_j \cap K} \frac{1}{c\, \mu(\X_j)^{3}} \int_G |\psi(s) - \psi(\gamma)|\, \mu\big(\X_j \cap \gamma \X_j s^{-1}\big)^2 \d\mu(s) \\
& = \frac{1}{c\, \mu(\X_j)^{3}} \sup_{\gamma \in \Gamma_j \cap K} \int_{\gamma V} |\psi(s) - \psi(\gamma)|\, \mu\big(\X_j \cap \gamma \X_j s^{-1}\big)^2 \d\mu(s) \\
&\leq \frac{1}{c\, \mu(\X_j)^{3}}\bigg(\sup_{\gamma \in \Gamma_j \cap K}\int_{\gamma V} \mu\big(\X_j \cap \gamma \X_j s^{-1}\big)^2 \d\mu(s) \bigg)
\bigg(\sup_{s \in \gamma V,\:\gamma \in \Gamma_j \cap K} |\psi(s) - \psi(\gamma)| \bigg)\\
&=\bigg(\sup_{\gamma \in \Gamma_j \cap K}\frac{1}{c\, \mu(\X_j)}\int_{G} \frac{\mu\big(\X_j \cap \gamma \X_j s^{-1}\big)^2}{\mu(\X_j)^2} \d\mu(s) \bigg)
\bigg(\sup_{s \in \gamma V,\:\gamma \in \Gamma_j \cap K} |\psi(s) - \psi(\gamma)| \bigg).
\end{align*}
We will show that for $V=B'(e,\epsi')$ the last supremum converges to $0$ as $\epsi' \to 0$ uniformly in $j$. Since it is not difficult to see that the first factor is uniformly bounded for $j \geq 1$ and $\gamma \in \Gamma_j \cap K$ by the assumption \eqref{equ-Haar-measure-convergence} of the theorem, \eqref{equ2-Fourier-complementation} follows. Consider some $0<\epsi \leq 1$. Define the compact $K'=K\cdot B'(e,1)$. Let $0 < \epsi' \leq 1$ such that $\omega(\nu|K'^{-1},\epsi') \leq \epsi$. If $s,t \in K'$ and $\dist(s^{-1},t^{-1}) \leq \epsi'$, we have by \eqref{inequality-modulus}
$$
\dist(s,t) 
\leq \omega\big(\nu|K'^{-1},\dist(s^{-1},t^{-1})\big)
\leq \omega(\nu|K'^{-1},\epsi')
\leq \epsi.
$$
Note that the restriction $\psi|K'$ of the continuous function $\psi$ on $K'$ is uniformly continuous. 
For any $j$, using \eqref{inequality-modulus} in the first inequality, we deduce that
\begin{align*}
\MoveEqLeft
  \sup_{s \in \gamma B'(e,\epsi'),\:\gamma \in \Gamma_j \cap K} |\psi(s) - \psi(\gamma)|  
		\leq \sup_{s^{-1} \in B'(\gamma^{-1},\epsi'),\:\gamma \in \Gamma_j \cap K} \omega\big(\psi|K',\dist(s,\gamma)\big)\\
		&\leq \sup_{s \in \gamma V,\:\gamma \in \Gamma_j \cap K} \omega\big(\psi|K',\epsi\big)= \omega\big(\psi|K',\epsi\big)\xra[\epsi \to 0]{} 0.
\end{align*}
\end{proof}

We continue with the asymptotic behaviour of the symbols $\widetilde{\phi_j}$. 

\begin{lemma}
Assume in addition that $\psi$ is a continuous symbol.
Then for any $s \in G$, we have
\begin{equation}
\label{equ4-Fourier-complementation}
\widetilde{\phi_j}(s) \xra[j \to +\infty]{} \psi(s).
\end{equation}
\end{lemma}

\begin{proof}
Let $s \in G$. Recall that we have a unique decomposition $s = \omega_j(s) \gamma_j(s)$ with $\omega_j(s) \in \X_j$ and $\gamma_j(s) \in \Gamma_j$. Then, by \eqref{symbole-tilde-varphi}, we have
\begin{align*}
\left| \widetilde{\phi_j}(s) - \psi(s)\right| 
&=\left|\frac{1}{\mu(\X_j)} \int_{\X_j} \phi_j(\gamma_j(st))  \d\mu(t)- \psi(s)\right| = \frac{1}{\mu(\X_j)} \left| \int_{\X_j} \big(\phi_j(\gamma_j(st)) - \psi(s)\big) \d\mu(t) \right| \\
& \leq  \frac{1}{\mu(\X_j)}\int_{\X_j} \big(\left| \phi_j(\gamma_j(st)) - \psi(\gamma_j(st)) \right| + \left| \psi(\gamma_j(st)) - \psi(s) \right|\big) \d\mu(t)\\
&\leq \frac{1}{\mu(\X_j)} \int_{\X_j} \left| \phi_j(\gamma_j(st)) - \psi(\gamma_j(st)) \right|\d\mu(t) + \frac{1}{\mu(\X_j)}\int_{\X_j} \left| \psi(\gamma_j(st)) - \psi(s) \right| \d\mu(t).
\end{align*}
We start to prove that the first summand converges to $0$ as $j \to \infty$. Indeed, according to \eqref{equ2-Fourier-complementation}, it suffices to show that $\gamma_j(st)$ remains in a fixed compact set independent of $j$, for $t$ varying in $\X_j$. We will even show that $\dist(\gamma_j(st),s) \to 0$ as $j \to \infty$ uniformly in $t \in \X_j$. 

Let $\epsi >0$. Consider the compact $K_s=(s\cdot B'(e,1))^{-1}$. There exists $0< \epsi' \leq \min\{1,\epsi\}$ such that $\omega\big(\nu|K_s,\epsi'\big) \leq \epsi$. Then for some $j_0 \in \N$, we have $\X_{j} \subset B(e,\epsi')$ for all $j \geq j_0$. Note that $s^{-1}$ and $(st)^{-1}$ and vary in the compact set $K_s$ for $j\geq j_0$ and $t$ varying in $\X_j$. For these $j$ and any $t \in \X_j$, using \eqref{inequality-modulus}, we see that
\begin{align*}
\MoveEqLeft
\dist(\gamma_j(st),s)  
\leq \dist(\gamma_j(st),st) + \dist(st,s) 
= \dist(\omega_j(st)^{-1} st, st) + \dist(st,s) \\
&=\dist(\omega_j(st)^{-1},e)+\dist(st,s)
\leq \dist(e,\omega_j(st)) + \omega\big(\nu|K_s,\dist\big((st)^{-1}, s^{-1}\big)\big) \\
&\leq \epsi' + \omega\big(\nu|K_s,\dist\big(t^{-1},e\big)\big)
\leq \epsi + \omega\big(\nu|K_s,\epsi'\big)
\leq \epsi + \epsi.
\end{align*}
We conclude that $\sup_{t \in \X_j}\dist(\gamma_j(st),s) \to 0$ as $j \to \infty$. 

For the second summand, consider $\epsi>0$. Note that the restriction $\psi|B'(s,1)$ is uniformly continuous. There exists $0 < \epsi'\leq 1$ such that $\omega\big(\psi|B'(s,1),\epsi'\big) \leq \epsi$ and there exists $j_0$ such that $\sup_{t \in \X_j} \dist(\gamma_j(st),s) \leq \epsi'$ for any $j \geq j_0$. For these $j$, using \eqref{inequality-modulus}, we deduce that 
\begin{align*}
\MoveEqLeft
   \sup_{t \in \X_j} |\psi(\gamma_j(st))-\psi(s)| 
		\leq \sup_{t \in \X_j} \omega\big(\psi|B'(s,1),\dist(\gamma_j(st),s)\big) \\
		&\leq \sup_{t \in \X_j} \omega\big(\psi|B'(s,1),\epsi'\big)= \omega\big(\psi|B'(s,1),\epsi'\big)
		\leq \epsi.
\end{align*}
That means that $\sup_{t \in \X_j} |\psi(\gamma_j(st)) - \psi(s)| \to 0$ as $j \to \infty$. Thus \eqref{equ4-Fourier-complementation} follows.
\end{proof}

If $f \in \L^\infty(G)$, the particular case $p=2$ of \eqref{inegalite-1-sur-c}  applied to $M_\psi$ instead of $T$ together with Lemma \ref{prop-M2-Fourier-multipliers} allows us to define a well-defined operator $\Xi_j \co \L^\infty(G) \to \L^\infty(G)$, $\psi \mapsto \widetilde{\phi_j}$ for any $j$ with
\begin{equation}
\label{equ-2-proof-lem-ALSS-weak-star}
\bnorm{\Xi_j(\psi)}_{\L^\infty(G)} 
\leq \frac{1}{c} \norm{\psi}_{\L^\infty(G)}.
\end{equation}

\begin{lemma}
\label{lem-ALSS-estimation-L1}
Assume that $\gamma \X_j = \X_j \gamma$ for any $j \in \N$ and any $\gamma \in \Gamma_j$ or that $\mu(\X_j \Delta \X_j^{-1}) = 0$ for any $j \in \N$. 
\begin{enumerate}
	\item If $\psi \in \L^1(G)$ then the formula \eqref{Symbol-Phij} gives a well-defined function $\phi_j \co \Gamma_j \to \C$ for any $j$.
	\item For any $j$, we have a well-defined bounded operator $\Xi_j \co \L^1(G) \to \L^1(G)$, $\psi \mapsto \widetilde{\phi_j}$ where $\widetilde{\phi_j}$ is defined by the formula 
\begin{equation}
\label{eq-def-phij}
\widetilde{\phi_j} 
=\frac{1}{\mu(\X_j)} 1_{\X_j} \ast (\phi_j \mu_{\Gamma_j}) \ast 1_{\X_j^{-1}}.	
\end{equation}
Moreover, for any $\psi \in \L^1(G)$ and any $j$, we have
\begin{equation}
\label{equ-1-proof-lem-ALSS-weak-star}
\bnorm{\Xi_j(\psi)}_{\L^1(G)} 
\leq \frac{1}{c} \norm{\psi}_{\L^1(G)}.
\end{equation}
\end{enumerate}
\end{lemma}

\begin{proof}
1. If $\gamma \X_j = \X_j \gamma$ for any $\gamma \in \Gamma_j$, then using \eqref{DGamma=G} in the second equality
\begin{align*}
\MoveEqLeft
\mu(\X_j \cap \gamma \X_j s^{-1})
=\mu(\X_j \cap \X_j \gamma s^{-1})
\leq\mu(\X_j \cap \X_j \Gamma_j s^{-1})
=\mu(\X_j \cap G s^{-1})
=\mu(\X_j).
\end{align*}
If $\mu(\X_j \Delta \X_j^{-1}) = 0$, then using unimodularity in the last equality, we see that
\begin{align}
\MoveEqLeft
\mu(\X_j \cap \gamma \X_j s^{-1})
\leq \mu(\X_j \cap \X_j^{-1} \cap \gamma \X_j s^{-1})+\mu\big((\X_j -\X_j^{-1}) \cap \gamma \X_j s^{-1}\big) \nonumber\\
&\leq \mu(\X_j \cap \X_j^{-1} \cap \gamma \X_j s^{-1})+\mu\big((\X_j \Delta \X_j^{-1}) \cap \gamma \X_j s^{-1}\big)\nonumber\\
&\leq \mu\big(\X_j \cap \X_j^{-1} \cap \gamma (\X_j \cap \X_j^{-1}) s^{-1}\big)
+\mu\big(\X_j \cap \X_j^{-1} \cap \gamma (\X_j-\X_j^{-1}) s^{-1}\big)+\mu\big(\X_j \Delta X_j^{-1})\nonumber\\
&\leq  \mu\big(\X_j \cap \X_j^{-1} \cap \gamma (\X_j \cap \X_j^{-1}) s^{-1}\big) + \overbrace{\mu\big(\gamma (\X_j \Delta \X_j^{-1}) s^{-1}\big)}^{=0}+ \overbrace{\mu(\X_j \Delta \X_j^{-1})}^{=0} \nonumber\\
&\leq  \mu(\X_j^{-1} \cap \gamma \X_j^{-1} s^{-1})
= \mu(\X_j \cap s \X_j \gamma^{-1}) \label{estimation-1}.
\end{align}
Using \eqref{DGamma=G}, we obtain
$$
\mu(\X_j \cap \gamma \X_j s^{-1}) 
\leq \mu(\X_j \cap s \X_j \Gamma_j)
=\mu(\X_j).
$$
So the integrand of \eqref{Symbol-Phij} is integrable in both cases since $\psi \in \L^1(G)$. We deduce that the function $\phi_j$ is well-defined. 

2. For any $j$, using \eqref{Symbol-Phij} in the first equality,  we have
\begin{align}
\sum_{\gamma \in \Gamma_j} |\phi_j(\gamma)| 
&=\sum_{\gamma \in \Gamma_j} \left|\frac{1}{c\, \mu(\X_j)^{3}} \int_G \psi(s) \mu\big(\X_j \cap \gamma \X_j s^{-1}\big)^2 \d\mu(s)\right|\nonumber \\
&\leq \frac{1}{c\, \mu(\X_j)^{3}}\int_G \sum_{\gamma \in \Gamma_j} |\psi(s)| \mu(\X_j \cap \gamma \X_j s^{-1})^2 \d \mu(s) \nonumber \\
&\leq \frac{1}{c \mu(\X_j)^3} \norm{\psi}_{\L^1(G)}  \sup_{s \in G} \sum_{\gamma \in \Gamma_j} \mu(\X_j \cap \gamma \X_j s^{-1})^2 \nonumber\\
&=\frac{1}{c\mu(\X_j)} \norm{\psi}_{\L^1(G)}  \sup_{s \in G} \sum_{\gamma \in \Gamma_j} \frac{\mu(\X_j \cap \gamma \X_j s^{-1})^2}{\mu(\X_j)^2} \nonumber \\
&\leq \frac{1}{c\mu(\X_j)} \norm{\psi}_{\L^1(G)} \sup_{s \in G} \Bigg( \sum_{\gamma \in \Gamma_j} \frac{\mu(\X_j \cap \gamma \X_j s^{-1})}{\mu(\X_j)} \Bigg)^2. \label{equ-3-proof-lem-ALSS-weak-star}
\end{align}
If $\gamma \X_j = \X_j \gamma$ for any $\gamma \in \Gamma_j$, then we estimate \eqref{equ-3-proof-lem-ALSS-weak-star} further with the pairwise disjointness \eqref{Dgamma=Dgamma2} of the sets $\X_j \gamma s^{-1}$ for different values of $\gamma \in \Gamma_j$ in the second equality and \eqref{DGamma=G} in the third equality
\begin{align*}
\MoveEqLeft
\sum_{\gamma \in \Gamma_j} \frac{\mu(\X_j \cap \gamma \X_j s^{-1})}{\mu(\X_j)} 
=\sum_{\gamma \in \Gamma_j} \frac{\mu(\X_j \cap \X_j \gamma s^{-1})}{\mu(\X_j)} 
=\frac{\mu(\X_j \cap \X_j \Gamma_j s^{-1})}{\mu(\X_j)}\\
&=\frac{\mu(\X_j \cap G s^{-1})}{\mu(\X_j)} 
=\frac{\mu(\X_j)}{\mu(\X_j)} 
=1.
\end{align*}
If $\mu(\X_j \Delta \X_j^{-1}) = 0$, then we estimate \eqref{equ-3-proof-lem-ALSS-weak-star} using \eqref{estimation-1} in the first inequality and \eqref{Dgamma=Dgamma2} in the first equality and \eqref{DGamma=G} in the last equality, giving
\begin{align*}
\MoveEqLeft
\sum_{\gamma \in \Gamma_j} \frac{\mu(\X_j \cap \gamma \X_j s^{-1})}{\mu(\X_j)}
\leq \sum_{\gamma \in \Gamma_j} \frac{\mu(\X_j \cap s \X_j \gamma^{-1})}{\mu(\X_j)}
=\frac{\mu(\X_j \cap s \X_j \Gamma_j)}{\mu(\X_j)} 
=1.
\end{align*}
By \cite[Theorem 19.15]{HR}, we conclude that the measure $\phi_j \mu_{\Gamma_j}$ is bounded with $\norm{\phi_j \mu_{\Gamma_j}}_{\M(G)} \leq \frac{1}{c\mu(\X_j)} \norm{\psi}_{\L^1(G)}$. Therefore, using \eqref{Symbol-Phij} and \cite[Theorem 20.12]{HR} in the first inequality and the unimodularity of $G$ to write $\mu(\X_j^{-1}) = \mu(\X_j)$ in the third inequality, we obtain
\begin{align*}
\bnorm{\widetilde{\phi_j}}_{\L^1(G)}
&\leq \frac{1}{\mu(\X_j)} \norm{1_{\X_j}}_{\L^1(G)} \norm{\phi_j \mu_{\Gamma_j}}_{\M(G)} \bnorm{1_{\X_j^{-1}}}_{\L^1(G)}\\
&\leq \frac{1}{c\,\mu(\X_j)} \norm{\psi}_{\L^1(G)} \bnorm{1_{\X_j^{-1}}}_{\L^1(G)}
\leq \frac{1}{c} \norm{\psi}_{\L^1(G)}.
\end{align*}
Thus, \eqref{equ-1-proof-lem-ALSS-weak-star} is shown. 
\end{proof}

Next, observe that if $\psi$ has a support away from the origin $e \in G$ then $\widetilde{\phi_j}(r) = 0$ for $r$ close to $e$. More precisely, we have the following observation. This lemma is not useful if $G$ is compact.

\begin{lemma} 
\label{Null-phij}
Suppose that $\psi(s)=0$ a.e. if $\dist(s,e) < R$ for some $R>4$. Then we have $(\Xi_j\psi)(r) = 0$ for any $r \in B'(e,R-4)$ and any $j$ large enough. 
\end{lemma}

\begin{proof}
We pick $j_0 \in \N$ and take $j \geq j_0$ such that $\X_j \subset B'(e,1)$ for these $j$. By \eqref{symbole-tilde-varphi} (the computation of \cite[Lemma 2.1 (2)]{Haa3} is valid) and \eqref{Symbol-Phij}, we have
$$
\widetilde{\phi_j}(r) 
=\frac{1}{\mu(\X_j)} \int_{\X_j} \phi_j(\gamma_j(rt)) \d\mu(t)
=\frac{1}{c\mu(\X_j)^4} \int_{\X_j} \int_{G} \psi(s) \mu\big(\X_j \cap \gamma_j(rt) \X_j s^{-1}\big)^2 \d\mu(s) \d\mu(t).
$$
Let $r \in B'(e,R-4)$. If $\dist(s,e) <R$ the integrand is zero. On the other hand, if $\dist(s,e) \geq R$, writing $rt = \omega_j(rt) \gamma_j(rt)$ where $\omega_j(rt) \in \X_j$, we have for any $\omega_j' \in \X_j$
\begin{align*}
\dist(\gamma_j(rt)\omega_j's^{-1},e) 
&=\dist(\omega_j(rt)^{-1} rt \omega_j' s^{-1},e) 
=\dist(\omega_j(rt)^{-1} rt \omega_j',s) \\
&\geq \dist(s,e) - \dist(\omega_j(rt)^{-1} rt \omega_j',e) \\
&\geq \dist(s,e) - \dist(\omega_j(rt)^{-1} rt \omega_j',\omega_j') - \dist(\omega_j',e) \\
&\geq \dist(s,e) - \dist(\omega_j(rt)^{-1}rt,e) - 1 \\
&\geq \dist(s,e) - \dist(\omega_j(rt)^{-1}rt,t) - \dist(t,e) - 1\\
&\geq \dist(s,e) - \dist(\omega_j(rt)^{-1}r,e) - 2 \\
&\geq \dist(s,e) - \dist(\omega_j(rt)^{-1}r,r) - \dist(r,e) - 2\\
&=\dist(s,e) -\dist(\omega_j(rt)^{-1},e)- \dist(r,e)- 2 \\
&=\dist(s,e) -\dist(e,\omega_j(rt))- \dist(r,e)- 2 \\
&\geq \dist(s,e) - \dist(r,e) - 3
\geq R-R+4-3
=1.
\end{align*}
So the integrand is also zero. We infer that we have $\widetilde{\phi_j}(r) = 0$.
\end{proof}

We turn to the weak* convergence\footnote{\thefootnote. Note that if $G$ is compact, the proof is more simple. No need to use $\chi$.} of the symbol $\widetilde{\phi_j}$. 

\begin{lemma}
\label{lem-ALSS-weak-star}
Let $\psi \in \L^\infty(G)$. Assume in addition that $\gamma \X_j = \X_j \gamma$ for any $j \in \N$ and any $\gamma \in \Gamma_j$ or that $\mu(\X_j \Delta \X_j^{-1}) = 0$ for any $j \in \N$. Then $\Xi_j(\psi) \xra[j]{} \psi$ for the weak* topology of $\L^\infty(G)$.
\end{lemma}

\begin{proof}
Let $g \in \L^1(G)$ be a testing element of weak* convergence. By density of $\mathrm{C}_c(G)$ in $\L^1(G)$ and the uniform estimate \eqref{equ-2-proof-lem-ALSS-weak-star}, we can assume in fact that $g \in \mathrm{C}_c(G)$. 

Then if $\chi \in \mathrm{C}_c(G)$ is a cut-off function with $\chi(s) = 1$ for all $s$ with\footnote{\thefootnote. Recall that $\exc(A,B)=\sup\{ \dist(a,B) :a \in A\}$.} $\dist(s,e) < R\overset{\mathrm{def}}=4 + \exc(\supp g,\{e\})$, (recall that the metric $\dist$ used previously is proper) we have $\psi\chi=1$ on $\supp(g)$. So $\langle \psi, g \rangle_{\L^\infty(G),\L^1(G)}=\langle \psi \chi, g \rangle_{\L^\infty(G),\L^1(G)}$. Moreover,  we have 
$$
\Xi_j(\psi) 
= \Xi_j(\psi \chi) + \Xi_j(\psi(1 - \chi)).
$$ 
Recall that $\psi(1-\chi)$ is zero if $\dist(s,e) <R$. Hence by applying Lemma \ref{Null-phij} with $\psi(1-\chi)$ instead of $\psi$, we deduce that the function $\Xi_j(\psi(1-\chi))$ is zero if $r \in B'(e,\exc(\supp g,\{e\}))$, in particular on $\supp g$. We conclude that $\langle \widetilde{\phi_j}, g \rangle = \langle \Xi_j(\psi \chi), g \rangle$. 

Now let $\psi_\epsi \in \mathrm{C}_c(G)$ be an $\epsi$-approximation in $\L^1(G)$ norm of $\psi \chi \in \L^1(G) \cap \L^\infty(G)$. Using \eqref{equ-1-proof-lem-ALSS-weak-star}, in the second equality, we obtain
\begin{align*}
\MoveEqLeft
\left| \big\langle \Xi_j(\psi) ,g \big\rangle_{\L^\infty(G),\L^1(G)} - \langle \psi, g \rangle_{\L^\infty(G),\L^1(G)} \right| 
=\left| \big\langle \Xi_j(\psi\chi), g \big\rangle - \langle \psi\chi, g \rangle \right|\\
&\leq \left| \big\langle (\Xi_j - \Id_{\L^1(G)})(\psi \chi - \psi_\epsi), g \big\rangle \right| + \left| \big\langle \Xi_j(\psi_\epsi)-\psi_\epsi, g \big\rangle \right| \\
&\leq \left( \frac1c + 1 \right) \norm{\psi \chi - \psi_\epsi}_{\L^1(G)} \norm{g}_{\L^\infty(G)} + \left| \big\langle \Xi_j(\psi_\epsi) -\psi_\epsi, g \big\rangle \right|\\ 
&\leq \left( \frac1c + 1 \right) \epsi \norm{g}_{\L^\infty(G)} + \left| \big\langle \Xi_j(\psi_\epsi) -\psi_\epsi, g \big\rangle \right|.
\end{align*}
Thus the first term becomes small uniformly in $j \geq j_0$. For the second term, we use the pointwise convergence $\Xi_j \psi_\epsi(s) \to \psi_\epsi(s)$ from \eqref{equ4-Fourier-complementation} together with the domination $|\Xi_j \psi_\epsi(s) g(s)| \leq \frac1c \norm{\psi_\epsi}_{\L^\infty(G)} |g(s)|$.
\end{proof}

If the assumptions of Lemma \ref{lem-ALSS-weak-star} are satisfied, we deduce by Lemma \ref{Lemma-Symbol-weakstar-convergence-weakLp-vrai} that $M_{\widetilde{\phi_j}} \to M_\psi$ in the weak operator topology of $\B(\L^p(\VN(G)))$ (point weak* topology if $p = \infty$). Moreover, this convergence also holds if $\psi$ is a continuous and bounded symbol. Indeed, according to \eqref{equ4-Fourier-complementation}, we have a pointwise convergence $\widetilde{\phi_j}(s) \to \psi(s)$, which together with the uniform bound $\|\widetilde{\phi_j}\|_{\L^\infty(G)} \leq \frac{1}{c} \|M_\psi\|_{\cb,\L^p(\VN(G)) \to \L^p(\VN(G))}$ of \eqref{inegalite-1-sur-c} also implies weak* convergence $\widetilde{\phi_j} \to \psi$, so that we can again appeal to Lemma \ref{Lemma-Symbol-weakstar-convergence-weakLp-vrai}. According to the description of the predual space \eqref{equ-predual-bracket}, we have for the convergent subnet $M_{\widetilde{\phi_{j(k)}}}$ of $M_{\widetilde{\phi_j}}$ that
$$
\Big\langle M_{\widetilde{\phi_{j(k)}}} f, g \Big\rangle_{\L^p(\VN(G),\L^{p^*}(\VN(G))}
\xra[k]{} \big\langle P_G^p(M_\psi) f, g \big\rangle_{\L^p(\VN(G),\L^{p^*}(\VN(G))}.
$$
for $f \in \L^p(\VN(G))$ and $g \in \L^{p^*}(\VN(G))$. Since a subnet of a convergent net converges to the same limit, we deduce $P_G^p(M_\psi) = M_\psi$.

Now, we turn to the case $p = 1$. We simply put
$$
P_G^1 \co \CB(\L^1(\VN(G))) \to \CB(\L^1(\VN(G))), \: T \mapsto P^\infty_G(T^*)_*.
$$
Note that $P^\infty_G(T^*)$ belongs to $\mathfrak{M}^{\infty,\cb}(G)$, so that it admits indeed a preadjoint $P^\infty_G(T^*)_*$ belonging to $\mathfrak{M}^{1,\cb}(G)$ by Lemma \ref{lemma-duality-Fourier-multipliers}. We check now the claimed properties of $P_G^1$. Linearity and boundedness are clear. If $T \co \L^1(\VN(G)) \to \L^1(\VN(G))$ is completely positive, then by Lemma \ref{Lemma-adjoint-cp}, $T^*$ is also completely positive and hence also $P_G^\infty(T^*)$. We conclude that $P_G^1(T) = P_G^\infty(T^*)_*$ is completely positive. If $M_\psi \in \mathfrak{M}^{1,\cb}(G)$, then we have $P_G^1(M_{\psi}) = P_G^\infty((M_{\psi})^*)_*=(P_G^\infty(M_{\check{\psi}}))_* =(M_{\check{\psi}})_*=M_{\psi}$.

It remains to check the claimed compatibility property. We need the following lemma.

\begin{lemma}
\label{lem-compatibilite-p=1}
For $j \in \N$ and any completely bounded map $T \co \L^1(\VN(G)) \to \L^1(\VN(G))$, we have $P_j^1(T)^* = P_j^\infty(T^*)$.
\end{lemma}

\begin{proof}
In this proof we denote by $\phi_j^T$ the symbol of $\frac1c P_{\Gamma_j}^p(\Psi_j^p T \Phi_j^p)$. Let $S \co \L^1(\VN(\Gamma_j)) \to \L^1(\VN(\Gamma_j))$ be a completely bounded map. We denote by $\psi_j^S$ the symbol of the Fourier multiplier $P_{\Gamma_j}^1(S)$ given by Corollary \ref{Prop-complementation-Fourier-subgroups} with $G=H=\Gamma_j$. The symbol $\psi_j^{(S^*)}$ of the Fourier multiplier $P_{\Gamma_j}^\infty(S^*)$ is given by (where $\gamma \in \Gamma_j$)
$$
\psi_j^{(S^*)}(\gamma) 
=\tau_{\Gamma_j} \big(S^*(\lambda_\gamma) \lambda_\gamma^{-1}\big) 
=\tau_{\Gamma_j} \big(\lambda_\gamma S(\lambda_\gamma^{-1})\big)
= \tau_{\Gamma_j} \big(S(\lambda_\gamma^{-1})\lambda_\gamma \big)
=\psi_j^S(\gamma^{-1})
=\check{\psi}_j^S(\gamma).
$$ 
Using Lemma \ref{lemma-duality-Fourier-multipliers} in the second equality, we obtain 
\begin{equation}
	\label{Equality-pratique}
	 P_{\Gamma_j}^\infty(S^*)
	=M_{\check{\psi}_j^S}
	=(M_{\psi_j^S})^*
	=\big(P_{\Gamma_j}^1(S)\big)^*.
\end{equation}
Note that $
\Psi^\infty_j T^* \Phi^\infty_j 
= (\Psi^1_j T \Phi^1_j)^*$. 
This implies 
$$
M_{\phi_j^{(T^*)}} 
= \frac{1}{c} P_{\Gamma_j}^\infty\big(\Psi^\infty_j T^* \Phi^\infty_j\big) 
= \frac{1}{c} P_{\Gamma_j}^\infty\big((\Psi^1_j T \Phi^1_j)^*\big) 
= \frac{1}{c} P_{\Gamma_j}^1\big(\Psi^1_j T \Phi^1_j\big)^*
=\big(M_{\phi_j^{T}}\big)^*
=M_{\check{\phi}_j^T}
$$ 
where we use \eqref{Equality-pratique} in the central equality. 
Now, using \eqref{equ1-Fourier-complementation}, $(1_{\X_j})\check{\phantom{i}} = 1_{\X_j^{-1}}$ and $\check{\mu}_{\Gamma_j} = \mu_{\Gamma_j}$, we deduce
\begin{align*}
\MoveEqLeft
\widetilde{\phi_j^{(T^*)}}
=\frac{1}{\mu(\X_j)} 1_{\X_j} \ast \big(\phi_j^{(T^*)}\mu_{\Gamma_j} \big) \ast 1_{\X_j^{-1}}
=\frac{1}{\mu(\X_j)} 1_{\X_j} \ast \big(\check{\phi}_j^T\mu_{\Gamma_j} \big) \ast 1_{\X_j^{-1}}\\
&= \frac{1}{\mu(\X_j)} \overbrace{1_{\X_j} \ast \big(\phi_j^T\mu_{\Gamma_j} \big) \ast 1_{\X_j^{-1}}}^{\check{}} 
=\check{\widetilde{\phi_j^T}},
\end{align*}
thus finishing the proof of the lemma since 
$
P_j^1(T)^*
=\big(M_{\widetilde{\phi_j^T}}\big)^*
=M_{\check{\widetilde{\phi_j^T}}}
=M_{\widetilde{\phi_j^{(T^*)}}}
=P_j^\infty(T^*)$. 
\end{proof}

Now suppose that $T$ belongs to both $\CB(\L^1(\VN(G)))$ and $\CB(\L^p(\VN(G)))$. Recall that the symbol $\widetilde{\phi_j^T}$ of $P_j^p(T)$ does not depend on $p$ if $T$ belongs to two different spaces $\CB(\L^p(\VN(G)))$ and $\CB(\L^q(\VN(G)))$. Consequently the symbols of $P_j^{p}(T)^*$ and $P_j^1(T)^*$ are identical and the symbols of $P_j^\infty(T^*)$ and $P_j^{p^*}(T^*)$ are also identical. Using the previous lemma, we conclude that
$$
P_j^{p}(T)^* = P_j^{p^*}(T^*).
$$ 
Passing to the limit when $j \to \infty$, we infer that $P_G^p(T)^* = P_G^{p^*}(T^*)$.  Therefore, for any $x \in \L^1(\VN(G)) \cap \L^p(\VN(G))$ and any $y \in \VN(G) \cap \L^{p^*}(\VN(G))$, using the compatibility of the $P_G^q$ already proven, we have
\begin{align*}
\big\langle P_G^1(T) x , y \big\rangle 
& = \big\langle P_G^\infty(T^*)_* x, y \big\rangle 
= \big\langle x, P_G^\infty(T^*)y \big\rangle 
= \big\langle x, P_G^{p^*}(T^*) y \big\rangle 
= \big\langle x, P_G^p(T)^* y \big\rangle \\
& = \big\langle P_G^p(T) x, y \big\rangle.
\end{align*}
This shows the compatibility on the $\L^1$ level.

For the last sentence, use Proposition \ref{Prop-recover-weak-star-continuity}.
\end{proof}


\begin{remark}\normalfont
\label{Rem-continuous-symbol}
We ignore if the condition \eqref{equ-Haar-measure-convergence} can be removed.
\end{remark}

Since the symbol of a completely bounded Fourier multiplier $\M_\phi \co \VN(G) \to \VN(G)$ is equal almost everywhere to a continuous function, see e.g. \cite[Corollary 3.3]{Haa3}, the previous theorem gives projections at the level $p=\infty$ and $p=1$. 

\begin{cor}
\label{cor-ADS-complementation-without-amenability}
Let $G$ be a second countable unimodular locally compact group satisfying $\mathrm{ALSS}$ such that \eqref{equ-Haar-measure-convergence} holds. Then there exist projections $P_G^\infty \co \CB_{\mathrm{w}^*}(\VN(G)) \to \CB_{\mathrm{w}^*}(\VN(G))$ and $P_G^1 \co \CB(\L^1(\VN(G))) \to \CB(\L^1(\VN(G)))$ which are compatible, onto $\mathfrak{M}^{\infty,\cb}(G)$ and $\mathfrak{M}^{1,\cb}(G)$ of norm at most $\frac1c$ preserving complete positivity.
\end{cor}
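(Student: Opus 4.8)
The corollary is essentially a restatement of Theorem~\ref{thm-complementation-Fourier-ADS-amenable} in the two extreme cases $p=\infty$ and $p=1$, together with the observation that the completely bounded Fourier multipliers on $\VN(G)$ have (almost everywhere) continuous symbols, so the caveat about continuous symbols in part~2 of that theorem is automatically satisfied. The plan is therefore to invoke Theorem~\ref{thm-complementation-Fourier-ADS-amenable} directly: by hypothesis $G$ is a second countable unimodular locally compact group satisfying $\ALSS$ with respect to some sequence of lattices $(\Gamma_j)$ and associated fundamental domains $(\X_j)$, and \eqref{equ-Haar-measure-convergence} holds with some constant $c>0$. Since we only use the cases $p\in\{1,\infty\}$, the amenability hypothesis of Theorem~\ref{thm-complementation-Fourier-ADS-amenable} (which was only needed for $1<p<\infty$) is irrelevant here.

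First I would apply Theorem~\ref{thm-complementation-Fourier-ADS-amenable} with $p=\infty$ to obtain a linear map $P_G^\infty\co\CB_{\w^*}(\VN(G))\to\mathfrak{M}^{\infty,\cb}(G)$ of norm at most $\frac1c$ which preserves complete positivity (property~1 of the theorem) and satisfies $P_G^\infty(M_\psi)=M_\psi$ for every bounded continuous symbol $\psi$ (property~2). To upgrade this to a genuine projection onto $\mathfrak{M}^{\infty,\cb}(G)$, I note that any completely bounded Fourier multiplier $M_\psi\co\VN(G)\to\VN(G)$ has a symbol $\psi$ which is equal almost everywhere to a continuous function --- this is, e.g., \cite[Corollary 3.3]{Haa3}, and it also follows from Proposition~\ref{Th-cp-Fourier-multipliers} in the completely positive case. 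Replacing $\psi$ by its continuous representative does not change $M_\psi$, so property~2 gives $P_G^\infty(M_\psi)=M_\psi$; hence $P_G^\infty$ is idempotent and its range is exactly $\mathfrak{M}^{\infty,\cb}(G)$. The same argument with $p=1$ produces $P_G^1\co\CB(\L^1(\VN(G)))\to\mathfrak{M}^{1,\cb}(G)$, a projection of norm at most $\frac1c$ preserving complete positivity; here the continuity of symbols of elements of $\mathfrak{M}^{1,\cb}(G)$ is seen either directly or via the isometry $\mathfrak{M}^{1,\cb}(G)=\mathfrak{M}^{\infty,\cb}(G)$ of Lemma~\ref{lemma-duality-Fourier-multipliers}. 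Finally, the compatibility of $P_G^\infty$ and $P_G^1$ is exactly the last assertion of Theorem~\ref{thm-complementation-Fourier-ADS-amenable}: for $T\in\CB_{\w^*}(\VN(G))\cap\CB(\L^1(\VN(G)))$ (identified as maps compatible in the interpolation sense) and $x\in\VN(G)\cap\L^1(\VN(G))$ we have $P_G^\infty(T)x=P_G^1(T)x$.

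There is essentially no obstacle here beyond bookkeeping: the real content has already been established in Theorem~\ref{thm-complementation-Fourier-ADS-amenable}. The only point requiring a small verification is that $\mathfrak{M}^{\infty,\cb}(G)$ (and $\mathfrak{M}^{1,\cb}(G)$) genuinely consist of multipliers with continuous symbols, so that the ``continuous symbol'' version of property~2 suffices to conclude idempotency --- and this is standard. Thus the proof is a one-line deduction: \textit{Apply Theorem~\ref{thm-complementation-Fourier-ADS-amenable} with $p=\infty$ and $p=1$, using \cite[Corollary 3.3]{Haa3} to see that every completely bounded Fourier multiplier on $\VN(G)$, resp. on $\L^1(\VN(G))$, has an almost everywhere continuous symbol, which makes $P_G^\infty$ and $P_G^1$ projections onto $\mathfrak{M}^{\infty,\cb}(G)$ and $\mathfrak{M}^{1,\cb}(G)$; complete positivity is preserved and the two projections are compatible by the same theorem.} \qed
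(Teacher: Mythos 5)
Your proposal is correct and follows exactly the route the paper takes: the corollary is deduced from Theorem~\ref{thm-complementation-Fourier-ADS-amenable} at $p=\infty$ and $p=1$ (where amenability is not required), using \cite[Corollary 3.3]{Haa3} to guarantee that completely bounded Fourier multipliers have almost everywhere continuous symbols, so that property~2 of the theorem yields genuine projections onto $\mathfrak{M}^{\infty,\cb}(G)$ and $\mathfrak{M}^{1,\cb}(G)$, with compatibility and preservation of complete positivity coming from the same theorem. No gaps.
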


\subsection{Examples of computations of the density}
\label{subsec-computations-density}

In this section, we will describe some concrete non-abelian groups in which Theorem \ref{thm-complementation-Fourier-ADS-amenable} applies. Before that, we start by recalling some information on semidirect products.

\paragraph{Semidirect products.} Let $G_1$ and $G_2$ be topological groups and consider some group homomorphism $\eta \co G_2 \to \Aut(G_1)$ such that the map\footnote{\thefootnote. If $\Aut(G_1)$ is equipped with the well-known Braconnier topology, the continuity of the map $(s,t) \mapsto \eta_t(s)$ from $G_1 \times G_2$ onto $G_1$ is equivalent to the continuity of the homomorphism $\eta \co G_2 \to \Aut(G_1)$.} 
\begin{equation}
\label{Continuity-semidirect}
G_1 \times G_2 \to G_1, \ (s,t) \mapsto \eta_t(s)\text{ is continuous}.
\end{equation}
The semidirect product $G_1 \rtimes_\eta G_2$ \cite[page 183]{FeD1} is the topological group with the underlying set $G_1 \times G_2$ equipped with the product topology and with the group operations given by
\begin{equation}
\label{Operations-semidirect}
(s,t) \rtimes_\eta (s',t')
=\big(s\eta_t(s'),tt'\big)
\quad \text{and} \quad
(s,t)^{-1}
=\big(\eta_{t^{-1}}(s^{-1}), t^{-1}\big).
\end{equation}
The group $G_1$ identifies to a closed normal subgroup of $G_1 \rtimes_\eta G_2$ and $G_2$ as a closed subgroup \cite[page 183]{FeD1} and we have $(G_1 \rtimes_\eta G_2)/G_1=G_2$.

If $G_1$ and $G_2$ are locally compact groups then $G_1 \rtimes_\eta G_2$ is a locally compact group. If $G_1$ and $G_2$ are in addition equipped with some left Haar measures $\mu_{G_1}$ and $\mu_{G_2}$, by \cite[Proposition 9.5 Chapter III]{FeD1} (see also \cite[15.29]{HR}) a left Haar measure of $G$ is given by $\mu_{G}=\mu_{G_1} \ot (\delta \mu_{G_2})$ where $\delta \co G_2 \to (0,\infty)$ is defined by $\delta(t)=\textrm{mod}\, \eta_{t}$ where $t \in G_2$. By \cite[Chapter III, (9.6)]{FeD1}, a right Haar measure is given by $\Delta_{G_1}\mu_{G_1} \ot \Delta_{G_2} \mu_{G_2}$. It is folklore and easy to deduce from \cite[pages 119-120]{Loo1} that if $G_1$ and $G_2$ are unimodular and if each automorphism $\eta_t$ of $G_1$ is measure-preserving, i.e. if
\begin{equation*}
\label{Preserves-Haar-measure}
\int_{G_1} f(\eta_t(s)) \d\mu_{G_1}(s)
=\int_{G_1} f(s) \d\mu_{G_1}(s), \quad t \in G_2, f \in \mathrm{C}_c(G_1).
\end{equation*}
then the group $G_1 \rtimes_\eta G_2$ is unimodular. In this case, $\mu_G= \mu_{G_1} \ot \mu_{G_2}$ gives a Haar measure on $G$. 

We will use the following lemma.

\begin{lemma}
\label{Lemma-Fundamental-semidirect}
Let $G_1$ and $G_2$ be locally compact groups. Let $\Gamma_1$ and $\Gamma_2$ be lattices in $G_1$ and $G_2$. Suppose that $\eta \co G_2 \to \Aut(G_1)$ is a homomorphism satisfying \eqref{Continuity-semidirect}. If $\eta_t(\Gamma_1) \subset \Gamma_1$ for any $t \in G_2$ then $\Gamma=\Gamma_1 \rtimes_{\eta|\Gamma_2} \Gamma_2$ is a lattice of $G_1 \rtimes_\eta G_2$. If in addition $\X_1$ and $\X_2$ are associated fundamental domains, then $\X=\X_1 \times \X_2$ is a fundamental domain associated with $\Gamma$.
\end{lemma}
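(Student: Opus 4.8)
The plan is to verify directly the two defining properties \eqref{DGamma=G} and \eqref{Dgamma=Dgamma2} of a fundamental domain for the set $\X = \X_1 \times \X_2$ with respect to the subgroup $\Gamma = \Gamma_1 \rtimes_{\eta|\Gamma_2} \Gamma_2$ inside $G = G_1 \rtimes_\eta G_2$, after first checking that $\Gamma$ is indeed a discrete subgroup and then a lattice. First I would observe that the hypothesis $\eta_t(\Gamma_1) \subset \Gamma_1$ for $t \in \Gamma_2$ is exactly what is needed for the set $\Gamma_1 \times \Gamma_2$ to be closed under the multiplication and inversion formulas \eqref{Operations-semidirect}, so that $\Gamma$ is an (abstract) subgroup; discreteness is immediate since $\Gamma_1 \times \Gamma_2$ is discrete in the product topology of $G_1 \times G_2$, which is the topology of $G$. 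For the lattice property, I would use that $G/\Gamma = (G_1 \rtimes_\eta G_2)/(\Gamma_1 \rtimes \Gamma_2)$ fibers over $G_2/\Gamma_2$ with fibers modelled on $G_1/\Gamma_1$; concretely, $\X_1 \times \X_2$ will turn out to be a finite-measure fundamental domain (using $\mu_G = \mu_{G_1} \otimes \mu_{G_2}$ in the unimodular normalization, or the appropriate modular correction in general), and once we exhibit a Borel fundamental domain of finite Haar measure the lattice property follows from the definition recalled before \eqref{DGamma=G}, together with \cite[Proposition B.2.2 and B.2.4 page 332-333]{BHV}.

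Next I would carry out the two set-theoretic identities. For \eqref{DGamma=G}: given $(s,t) \in G$, write $t = \omega_2 \gamma_2$ with $\omega_2 \in \X_2$, $\gamma_2 \in \Gamma_2$ (possible by \eqref{DGamma=G} for $\X_2$), and then, using that $G_1$ is covered by $\X_1 \Gamma_1$, write $s\,[\text{correction term}]$ in the form $\omega_1 \gamma_1$. The point where the hypothesis $\eta_{\omega_2}(\gamma_1) = \gamma_1$ for $\omega_2 \in \X_2$, $\gamma_1 \in \Gamma_1$ enters is precisely here: when one forms the product $(\omega_1,\omega_2) \rtimes_\eta (\gamma_1,\gamma_2) = (\omega_1 \eta_{\omega_2}(\gamma_1), \omega_2\gamma_2)$, the twisting $\eta_{\omega_2}$ acting on $\gamma_1$ must be trivial so that the first coordinate is genuinely $\omega_1\gamma_1 \in \X_1\Gamma_1 = G_1$ and one can solve for $\omega_1$ given $s$. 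So one chooses $\gamma_1 \in \Gamma_1$ and $\omega_1 \in \X_1$ with $\omega_1\gamma_1 = s$ (hence $\omega_1$ is determined once $\gamma_1$ is), and $(\omega_1,\omega_2)\rtimes_\eta(\gamma_1,\gamma_2) = (s,t)$, giving $\X\Gamma = G$. For \eqref{Dgamma=Dgamma2}: suppose $(\omega_1,\omega_2)\rtimes_\eta(\gamma_1,\gamma_2)$ and $(\omega_1',\omega_2')\rtimes_\eta(\gamma_1',\gamma_2')$ coincide with $(\omega_i,\omega_i') \in \X_i$ and $(\gamma_i,\gamma_i') \in \Gamma_i$; comparing second coordinates gives $\omega_2\gamma_2 = \omega_2'\gamma_2'$, whence $\omega_2 = \omega_2'$ and $\gamma_2 = \gamma_2'$ by \eqref{Dgamma=Dgamma2} for $\X_2$; then comparing first coordinates and again invoking $\eta_{\omega_2}(\gamma_1) = \gamma_1$, $\eta_{\omega_2'}(\gamma_1') = \gamma_1'$ reduces the equality to $\omega_1\gamma_1 = \omega_1'\gamma_1'$, so $\omega_1 = \omega_1'$ and $\gamma_1 = \gamma_1'$ by \eqref{Dgamma=Dgamma2} for $\X_1$. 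Measurability of $\X_1 \times \X_2$ is clear since $\X_1,\X_2$ are Borel.

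The main obstacle I anticipate is not in the algebra — which is a bookkeeping exercise once the compatibility hypothesis $\eta_{\omega_2}|_{\Gamma_1} = \mathrm{id}$ is in place — but in pinning down the measure-theoretic bookkeeping needed to conclude that $\Gamma$ is a \emph{lattice} (as opposed to merely a discrete subgroup with a Borel fundamental domain): one must check that the Haar measure of $G_1 \rtimes_\eta G_2$ restricted to $\X_1 \times \X_2$ is finite and $G$-invariant in the quotient sense, which requires being careful about the modular function $\delta(t) = \mathrm{mod}\,\eta_t$ appearing in the formula $\mu_G = \mu_{G_1} \otimes (\delta\mu_{G_2})$ from \cite[Proposition 9.5 Chapter III]{FeD1} and about the normalizations of the Haar measures on $\Gamma_i \backslash$-quotients. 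In the intended application (unimodular $G$, with each $\eta_t$ measure-preserving, so $\delta \equiv 1$ and $\mu_G = \mu_{G_1}\otimes\mu_{G_2}$) this collapses to $\mu_G(\X) = \mu_{G_1}(\X_1)\mu_{G_2}(\X_2) < \infty$, and the $G$-invariance of the quotient measure is standard; so I would state the lemma in that clean setting (or simply invoke that a discrete subgroup admitting a finite-measure Borel fundamental domain is a lattice, via \cite{BHV}) and relegate the general modular computation to a remark.
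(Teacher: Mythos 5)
Your verification that $\X=\X_1\times\X_2$ satisfies \eqref{DGamma=G} and \eqref{Dgamma=Dgamma2} is essentially the paper's own argument: decompose $s=\omega_1\gamma_1$, $t=\omega_2\gamma_2$, use $\eta_{\omega_2}(\gamma_1)=\gamma_1$ to untwist the first coordinate of $(\omega_1,\omega_2)\rtimes_\eta(\gamma_1,\gamma_2)$, and run the same computation backwards for disjointness. That part is correct and matches the paper line for line.

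Where you diverge is the first assertion, that $\Gamma=\Gamma_1\rtimes_{\eta|\Gamma_2}\Gamma_2$ is a lattice. The paper disposes of this by citing \cite[Exercise B.3.5]{BHV} and proves nothing; you propose instead to deduce it from the existence of the finite-measure Borel fundamental domain $\X_1\times\X_2$. This inverts the logical structure of the lemma and does not quite cover it: the lattice claim is stated under the hypothesis $\eta_t(\Gamma_1)\subset\Gamma_1$ for $t\in\Gamma_2$ \emph{only}, whereas the existence of fundamental domains $\X_1,\X_2$ with $\eta_{\omega_2}|_{\Gamma_1}=\mathrm{id}$ is an \emph{additional} hypothesis introduced for the second assertion. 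Without it, $\X_1\times\X_2$ need not be a fundamental domain (your own computation shows the first coordinate of $(\omega_1,\omega_2)\rtimes_\eta(\gamma_1,\gamma_2)$ is $\omega_1\eta_{\omega_2}(\gamma_1)$, and $\eta_{\omega_2}$ for $\omega_2\in\X_2$ need not preserve $\Gamma_1$), so your route proves the lattice property only in the restricted setting of the second assertion. Moreover, even there, "discrete subgroup with a finite-measure Borel fundamental domain $\Rightarrow$ lattice" requires the induced measure on $G/\Gamma$ to be $G$-invariant, which you flag but defer; this is automatic when $G$ is unimodular but is part of what needs checking in general. To prove the first assertion in the stated generality you should either give a direct argument (e.g.\ construct a finite $G$-invariant measure on $G/\Gamma$ from those on $G_1/\Gamma_1$ and $G_2/\Gamma_2$ via the fibration you mention, without routing through a product fundamental domain) or, as the paper does, simply invoke \cite[Exercise B.3.5]{BHV}.
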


\begin{proof}
The first part is \cite[Exercise B.3.5]{BHV}. It remains to show that $\X$ is a fundamental domain of $\Gamma$. Indeed, this subset is clearly Borel measurable. Consider some arbitrary element $(s_1,s_2)$ of $G$. Since $\X_1$ is a fundamental domain of $\Gamma_1$, we can write $s_1= \omega_1 \gamma_1$ with $\omega_1 \in \X_1$ and $\gamma_1 \in \Gamma_1$ and similarly $s_2=\omega_2 \gamma_2$ with $\omega_2 \in \X_2$ and $\gamma_2 \in \Gamma_2$. Consequently, using \eqref{Operations-semidirect}, we have
$$
(s_1,s_2)
=(\omega_1\gamma_1,\omega_2 \gamma_2)
=\big(\omega_1\eta_{\omega_2}(\eta_{\omega_2^{-1}}(\gamma_1)),\omega_2 \gamma_2\big)
=(\omega_1,\omega_2) \rtimes_\eta \big(\eta_{\omega_2^{-1}}(\gamma_1),\gamma_2\big)
$$ 
where $(\omega_1,\omega_2) \in \X$ and $\big(\eta_{\omega_2^{-1}}(\gamma_1),\gamma_2\big) \in \Gamma$. So we obtain \eqref{DGamma=G}. 

Consider some $(\omega_1,\omega_2), (\omega_1',\omega_2') \in \X$ where $\omega_1,\omega_1' \in \X_1$ and $\omega_2,\omega_2' \in \X_2$ and some elements $(\gamma_1,\gamma_2)$ and $(\gamma_1',\gamma_2')$ of $\Gamma$. If $(\omega_1,\omega_2) \rtimes_\eta (\gamma_1,\gamma_2)=(\omega_1',\omega_2') \rtimes_\eta (\gamma_1',\gamma_2')$ then $
\big(\omega_1\eta_{\omega_2}(\gamma_1),\omega_2\gamma_2\big)
=\big(\omega_1'\eta_{\omega_2'}(\gamma_1'),\omega_2'\gamma_2'\big)$.
Therefore $\omega_2\gamma_2 = \omega_2'\gamma_2'$. Since $\X_2$ is a fundamental domain we deduce by \eqref{Dgamma=Dgamma2} that $\omega_2 = \omega_2'$ and $\gamma_2 = \gamma_2'$.
Inserting into the previous first variable, we get $\omega_1\eta_{\omega_2}(\gamma_1) = \omega_1'\eta_{\omega_2}(\gamma_1')$. Since $\X_1$ is a fundamental domain we have by \eqref{Dgamma=Dgamma2}, $\omega_1= \omega_1'$ and $\eta_{\omega_2}(\gamma_1) = \eta_{\omega_2}(\gamma_1')$. So $\gamma_1=\gamma_1'$. We conclude that $\X$ satisfies \eqref{Dgamma=Dgamma2}. 
\end{proof}

\paragraph{Groups acting on locally finite trees.} We give now some examples of compact non-discrete $\ALSS$ groups acting on locally finite trees for which Theorem \ref{thm-complementation-Fourier-ADS-amenable} yields a bounded map $P_G^p \co \CB(\L^p(\VN(G))) \to \mathfrak{M}^{p,\cb}(G)$ with sharp norm, i.e. with a norm equal to one. 

Let $(m_j)_{j \geq 1}$ be a sequence of integers with $m_j \geq 2$. Let $\ovl{Y}=(Y_j)_{j \geq 1}$ be a sequence of alphabets with $|Y_j|=m_j$ and $Y_j=\{y_{j,1},\ldots,y_{j,m_j}\}$. If $n \geq 0$, a word of length $n$ over $\ovl{Y}$ is a sequence of letters of the form $w=w_1w_2\dots w_n$ with $w_j \in Y_j$ for all $j$. The unique word of length 0, the empty word, is denoted by $\emptyset$. The set of words of length $n$ is called the $n$th level.

Now we introduce the prefix relation $\leq$ on the set of all words over $\ovl{Y}$. Namely, we let $w \leq z$ if $w$ is an initial segment of the sequence $z$, i.e. if $w=w_1\dots w_n$, $z=z_1\dots z_k$ with $n \leq k$ and $w_j=z_j$ for all $j \in \{1,\dots,n\}$. This relation is a partial order and the partially ordered set $\tree$ of words over $\ovl{Y}$ is called the spherically homogeneous tree over $\ovl{Y}$. We refer to \cite{BGS1} and \cite{Gri1} for more information.

Let us give now the graph-theoretical interpretation of $\tree$. Every word over $\ovl{Y}$ represents a vertex in a rooted tree. Namely, the empty word $\emptyset$ represents the root, the $m_1$ one-letter words $y_{1,1},\ldots,y_{1,m_1}$ represent the $m_1$ children of the root, the $m_2$ two-letter words $y_{1,1}y_{2,1},\ldots,y_{1,1}y_{2,m_2}$ represent the $m_2$ children of the vertex $y_{1,1}$, etc. 

\begin{center}
\includegraphics[scale=0.55]{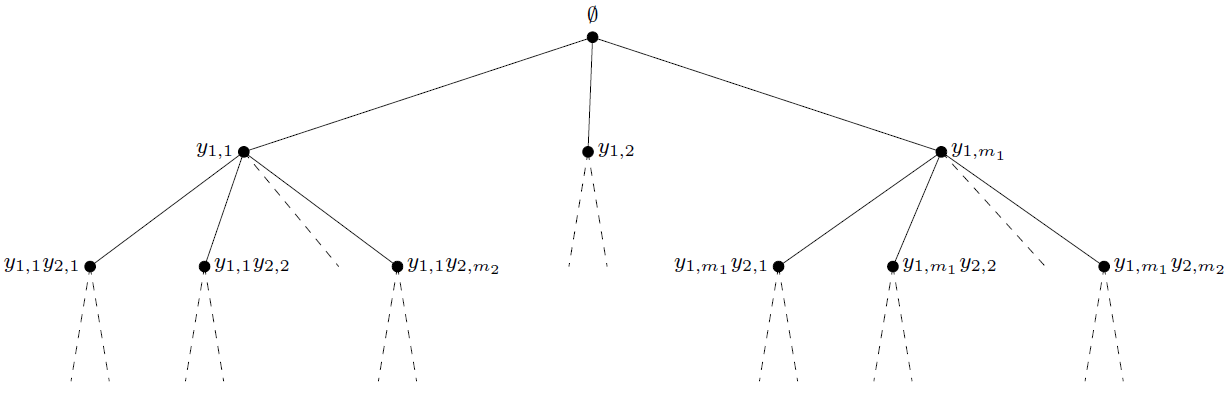}
\end{center}

 
An automorphism of $\tree$ is a bijection of $\tree$ which preserves the prefix relation. From the graph-theoretical point of view, an automorphism is a bijection which preserves edge incidence and the distinguished root vertex $\emptyset$. We denote by $\Aut(\tree)$ the group of automorphisms of $\tree$ and if $j \geq 0$ by $\Aut_{[j]}(\tree)$ the subgroup of automorphisms whose vertex permutations at level $j$ and below\footnote{\thefootnote. The action is trivial on the levels $j,j+1,j+2,\ldots$.} are trivial.

We equip $\tree$ with the discrete topology and $\Aut(\tree)$ with the topology of pointwise convergence. By \cite[page 133]{Gri1}, the sequence $(\Aut_{[j]}(\tree))_{j \geq 0}$ of finite groups and the canonical inclusions $\psi_{ij} \co \Aut_{[j]}(\tree) \to \Aut_{[i]}(\tree)$ where $j \geq i \geq 0$ define an inverse system and we have an isomorphism
\begin{equation}
\label{Aut-tree}
\Aut(\tree)
=\varprojlim \Aut_{[j]}(\tree).	
\end{equation} 
In particular, $\Aut(\tree)$ is a profinite group, hence compact and totally disconnected by \cite[Corollary 1.2.4]{Wil}.

If $j \geq 0$, we denote by $\St(j)$ the $j$th level stabilizer consisting of automorphisms of $\tree$ which fix all the vertices on the level $j$ (and of course on the levels $0,1,\ldots,j-1$). Then $\St(j)$ is a normal subgroup of $\Aut(\tree)$ which is open if $j \geq 1$. By \cite[page 20]{BGS1}, for any $j \geq 0$, we have an isomorphism
\begin{equation}
\label{Aut-tree-semidirect}
\Aut(\tree)
=\St(j) \rtimes \Aut_{[j]}(\tree).	
\end{equation}

\begin{prop}
\label{prop-tree-group-constant-c}
The compact group $\Aut(\tree)$ is second countable and $\ALSS$ with respect to the sequence $(\Aut_{[j]}(\tree))_{j \geq 1}$ of finite lattice subgroups and to the sequence $(\St(j))_{j \geq 1}$ of symmetric fundamental domains. Moreover, \eqref{equ-Haar-measure-convergence} holds with $c=1$. More precisely, for any integer $j \in \N$ and any $\gamma \in \Aut_{[j]}(\tree)$, we have 
\begin{equation}
\label{equ-tree}
\frac{1}{\mu(\St(j))} \int_{\Aut(\tree)} \frac{\mu(\St(j) \cap \gamma \St(j) s)^2}{\mu(\St(j))^2} \d\mu(s) 
= 1.
\end{equation}
Consequently, Theorem \ref{thm-complementation-Fourier-ADS-amenable} applies.
\end{prop}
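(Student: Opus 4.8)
The plan is to verify the four claimed properties of the sequence $\big(\Aut_{[j]}(\tree)\big)_{j \geq 1}$ with fundamental domains $\big(\St(j)\big)_{j \geq 1}$, since once these are established, Theorem \ref{thm-complementation-Fourier-ADS-amenable} applies verbatim. First, second countability: by \eqref{Aut-tree} the group $\Aut(\tree)$ is an inverse limit of a sequence of finite groups with surjective connecting maps, hence metrizable (it embeds in the countable product $\prod_j \Aut_{[j]}(\tree)$ of finite discrete spaces), and metrizable compact groups are second countable. Next, each $\Aut_{[j]}(\tree)$ is a \emph{finite} subgroup of $\Aut(\tree)$, hence discrete, and since $\Aut(\tree)$ is compact, the quotient $\Aut(\tree)/\Aut_{[j]}(\tree)$ is compact, so by \cite[Proposition B.2.2]{BHV} each $\Aut_{[j]}(\tree)$ is a (cocompact) lattice. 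That $\St(j)$ is a fundamental domain for $\Aut_{[j]}(\tree)$ follows directly from the semidirect product decomposition \eqref{Aut-tree-semidirect}: $\Aut(\tree) = \St(j) \rtimes \Aut_{[j]}(\tree)$ means precisely that every element of $\Aut(\tree)$ factors uniquely as a product of an element of $\St(j)$ and an element of $\Aut_{[j]}(\tree)$, which is the conjunction of \eqref{DGamma=G} and \eqref{Dgamma=Dgamma2}; moreover $\St(j)$, being an open \emph{subgroup}, is symmetric (it contains the identity and is closed under inversion), so $\St(j)^{-1} = \St(j)$ and in particular $\mu(\St(j) \Delta \St(j)^{-1}) = 0$.

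For the ALSS property I would argue that the sequence of open normal subgroups $(\St(j))_{j \geq 1}$ shrinks to $\{e\}$: since $\Aut(\tree)$ carries the topology of pointwise convergence and $\St(j)$ is exactly the set of automorphisms fixing all vertices up to level $j$, the family $\{\St(j)\}_{j \geq 1}$ is a neighbourhood basis of the identity. Hence for any neighbourhood $V$ of $e_G$ there is $j_0$ with $\St(j) \subset V$ for all $j \geq j_0$, which is the definition of ALSS with respect to $(\Aut_{[j]}(\tree))$ and $(\St(j))$. I should also record that the $\St(j)$ are normal, so in particular $\gamma \St(j) = \St(j)\gamma$ for every $\gamma \in \Aut_{[j]}(\tree)$, which is the alternative hypothesis in Part 2 of Theorem \ref{thm-complementation-Fourier-ADS-amenable}; this lets the conclusion $P_G^p(M_\psi) = M_\psi$ hold for all bounded measurable symbols, not just continuous ones.

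The only computation is \eqref{equ-tree}. Fix $j$ and $\gamma \in \Aut_{[j]}(\tree)$. Normalize the Haar measure $\mu$ on the compact group $\Aut(\tree)$ to be a probability measure; then $\mu(\St(j)) = 1/|\Aut_{[j]}(\tree)| =: 1/N_j$ by \eqref{Aut-tree-semidirect}. For the inner integral, note that $\St(j)$ is an open subgroup, so the map $s \mapsto \mu(\St(j) \cap \gamma \St(j) s)$ is supported on a union of finitely many cosets; more precisely, using normality of $\St(j)$ one has $\gamma \St(j) s = \St(j) \gamma s$, and $\St(j) \cap \St(j)\gamma s$ is either $\St(j)$ (when $\gamma s \in \St(j)$) or $\emptyset$ (otherwise). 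Hence $\mu(\St(j) \cap \gamma \St(j) s) = \mu(\St(j)) \cdot 1_{\gamma^{-1}\St(j)}(s)$, and therefore
\[
\frac{1}{\mu(\St(j))} \int_{\Aut(\tree)} \frac{\mu(\St(j) \cap \gamma \St(j)s)^2}{\mu(\St(j))^2}\, \d\mu(s) = \frac{1}{\mu(\St(j))} \int_{\Aut(\tree)} 1_{\gamma^{-1}\St(j)}(s)\, \d\mu(s) = \frac{\mu(\gamma^{-1}\St(j))}{\mu(\St(j))} = 1,
\]
using left-invariance of $\mu$ in the last step. This gives \eqref{equ-tree} with $c = 1$, and since the left-hand side equals $1$ identically in $j$ and $\gamma$, the supremum-and-limit condition \eqref{equ-Haar-measure-convergence} holds trivially. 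I do not anticipate a genuine obstacle here; the only point requiring a little care is the bookkeeping identity $\mu(\St(j)\cap \gamma\St(j)s) = \mu(\St(j))\,1_{\gamma^{-1}\St(j)}(s)$, which rests on $\St(j)$ being a normal subgroup so that its cosets partition $\Aut(\tree)$ and two of them are either equal or disjoint. With the four hypotheses of Theorem \ref{thm-complementation-Fourier-ADS-amenable} verified, the final sentence ``Theorem \ref{thm-complementation-Fourier-ADS-amenable} applies'' follows, yielding a completely positive-preserving bounded projection $P_G^p \co \CB(\L^p(\VN(G))) \to \mathfrak{M}^{p,\cb}(G)$ of norm at most $1$ (hence exactly $1$, since it is a nonzero projection).
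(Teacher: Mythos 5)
Your proposal is correct and follows essentially the same route as the paper: the semidirect product decomposition \eqref{Aut-tree-semidirect} for the fundamental domain, openness/normality of $\St(j)$ and the neighbourhood-basis property for ALSS, and the dichotomy that two right cosets of $\St(j)$ are either equal or disjoint to evaluate the integral. The only cosmetic difference is that the paper first uses left invariance to reduce to $\gamma=e$ before invoking the coset dichotomy, whereas you keep $\gamma$ and identify the integrand directly as $1_{\gamma^{-1}\St(j)}$; both give \eqref{equ-tree} identically.
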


\begin{proof}
Since the inverse system is indexed by $\N$, by \cite[Proposition 4.1.3]{Wil}, the group $\Aut(\tree)$ is second countable. By \eqref{Aut-tree-semidirect}, we have $\Aut(\tree)=\St(j)\Aut_{[j]}(\tree)$. Suppose that $\gamma_1, \gamma_2$ belong to $\Aut_{[j]}(\tree)$ and that $\omega_1, \omega_2 \in \St(j)$ satisfy $\omega_1\gamma_1=\omega_2\gamma_2$. Then $\omega_2^{-1}\omega_1=\gamma_2\gamma_1^{-1}$. Using again \eqref{Aut-tree-semidirect}, we infer that $\gamma_1=\gamma_2$. Moreover, $\St(j)$ is open hence Borel measurable, and a subgroup hence symmetric. We conclude that $\St(j)$ is a symmetric fundamental domain for $\Aut_{[j]}(\tree)$. 

Now, we have a homeomorphism $\Aut(\tree)/\Aut_{[j]}(\tree)=(\St(j) \rtimes \Aut_{[j]}(\tree))/\Aut_{[j]}(\tree)=\St(j)$. Note that the subgroup $\St(j)$ is open, hence closed in the compact group $\Aut(\tree)$ by \cite[Theorem 5.5]{HR} and finally compact. We conclude that $\Aut_{[j]}(\tree)$ is a cocompact lattice. Moreover, by \cite[page 133]{Gri1}, the sequence $(\St(j)_{j \geq 1}$ is an open neighborhood basis of $\Id_{\tree}$ in $\Aut(\tree)$. 


It remains to compute \eqref{equ-tree}. By normality of $\St(j)$, for any $\gamma \in \Aut_{[j]}(\tree)$, we have $\gamma \St(j) = \St(j) \gamma$. Using that $\mu$ is a left Haar measure of $\Aut(\tree)$ in the last equality, for any $\gamma \in \Aut_{[j]}(\tree)$, we deduce that
\begin{align*}
\MoveEqLeft
  \frac{1}{\mu(\St(j))} \int_{\Aut(\tree)} \frac{\mu(\St(j) \cap \gamma \St(j) s)^2}{\mu(\St(j))^2} \d\mu(s) 
=\frac{1}{\mu(\St(j))} \int_{\Aut(\tree)} \frac{\mu(\St(j) \cap  \St(j) \gamma s)^2}{\mu(\St(j))^2} \d\mu(s)\\
&=\frac{1}{\mu(\St(j))} \int_{\Aut(\tree)} \frac{\mu(\St(j) \cap  \St(j) s)^2}{\mu(\St(j))^2} \d\mu(s).          
\end{align*}     
For any $s \in \Aut(\tree)$, the sets $\St(j)$ and $\St(j) s$ are right cosets of the subgroup $\St(j)$ in $\Aut(\tree)$. Since two right cosets are either identical or disjoint, we deduce that
$$
\St(j) \cap \St(j) s 
= 
\begin{cases} 
\St(j) & \text{ if } s \in \St(j) \\ 
\emptyset &  \text{ if } s \not\in \St(j) 
\end{cases}.
$$
Now, we can conclude since 
$$
\frac{1}{\mu(\St(j))} \int_{\Aut(\tree)} \frac{\mu(\St(j) \cap \St(j) s)^2}{\mu(\St(j))^2} \d\mu(s) 
=\frac{1}{\mu(\St(j))} \int_{\St(j)} \frac{\mu(\St(j))^2}{\mu(\St(j))^2} \d\mu(s) 
=\frac{\mu(\St(j))}{\mu(\St(j))} 
=1.
$$
\end{proof}

\begin{remark} \normalfont

\label{Remark-iterated-wreath}
By \cite[page 134]{Gri1}, note that we have an isomorphism $\Aut(\tree)=\varprojlim (\mathrm{Sym}(Y_j) \wr \cdots \wr \mathrm{Sym}(Y_2) \wr \mathrm{Sym}(Y_1))$. If $(G_j,Y_j)_{j \geq 1}$ denotes a sequence of finite permutation groups (such that the actions are faithful), the same method gives a generalization for the inverse limit $G=\varprojlim (G_j \wr \cdots \wr G_2 \wr G_1)$ of iterated permutational wreath products. The verification is left to the reader.
\end{remark}

\paragraph{Stability under products.} 
The (good) behaviour of \eqref{equ-Haar-measure-convergence} under direct products is described in the following result.

\begin{prop}
\label{prop-product-complementation-ADS}
Let $G_1$ and $G_2$ be two second countable (unimodular) locally compact groups satisfying $\ALSS$ with respect to the sequences $(\Gamma_{1,j}), \: (\Gamma_{2,j})$ of lattices and to the sequences $(\X_{1,j})$, $(\X_{2,j})$ of associated fundamental domains. Suppose that \eqref{equ-Haar-measure-convergence} holds for both groups $G_1$ and $G_2$ with constants $c_1$ and $c_2$. Then $G = G_1 \times G_2$ is $\ALSS$ with respect to the lattices $(\Gamma_j) = (\Gamma_{1,j} \times \Gamma_{2,j})$ and associated fundamental domains $(\X_j)= (\X_{1,j} \times \X_{2,j})$ and it satisfies \eqref{equ-Haar-measure-convergence} with constant $c = c_1 \cdot c_2$.
Moreover, if $\X_{1,j}$ and $\X_{2,j}$ are symmetric (resp. $\gamma_k \X_{k,j} = \X_{k,j} \gamma_k$ for $k = 1,2$ and $\gamma_k \in \Gamma_{k,j}$) then $\X_j$ is symmetric (resp. $\gamma \X_j = \X_j \gamma$ for $\gamma \in \Gamma_j$).
Let $1 \leq p \leq \infty$ and suppose that $G_1$ and $G_2$ are amenable if $1 < p < \infty$. Then Theorem \ref{thm-complementation-Fourier-ADS-amenable} applies to $G=G_1 \times G_2$.
\end{prop}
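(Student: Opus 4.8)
The plan is to reduce everything to the one-factor hypotheses by exploiting the product structure of the Haar measure. First I would record the basic structural facts: since $G_1$ and $G_2$ are second countable and unimodular, so is $G = G_1 \times G_2$, with $\mu_G = \mu_{G_1} \otimes \mu_{G_2}$ a (bi-invariant) Haar measure; moreover $G$ is amenable whenever $G_1$ and $G_2$ are. That $\Gamma_j = \Gamma_{1,j} \times \Gamma_{2,j}$ is a lattice of $G$ with associated fundamental domain $\X_j = \X_{1,j} \times \X_{2,j}$ follows from Lemma \ref{Lemma-Fundamental-semidirect} applied to the trivial action $\eta$ (for which $G_1 \rtimes_\eta G_2 = G_1 \times G_2$, and the two hypotheses $\eta_t(\Gamma_{1,j}) \subset \Gamma_{1,j}$ and $\eta_{\omega_2}(\gamma_1) = \gamma_1$ are vacuously satisfied). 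For the $\ALSS$ property, given a neighbourhood $V$ of $e_G$ I would choose neighbourhoods $V_1$ of $e_{G_1}$ and $V_2$ of $e_{G_2}$ with $V_1 \times V_2 \subset V$; by the $\ALSS$ hypothesis on each factor there is $j_0$ with $\X_{1,j} \subset V_1$ and $\X_{2,j} \subset V_2$ for $j \geq j_0$, whence $\X_j \subset V$ for $j \geq j_0$, which by the equivalence recorded in Definition \ref{defi-ALS} suffices.

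Next I would verify the density condition \eqref{equ-Haar-measure-convergence}. Fix a compact $K \subset G$; replacing $K$ by the compact set $\pi_1(K) \times \pi_2(K) \supset K$ we may assume $K = K_1 \times K_2$. For $\gamma = (\gamma_1,\gamma_2) \in \Gamma_j \cap K$ and $s = (s_1,s_2) \in G$ one has the set identity $\X_j \cap \gamma \X_j s = (\X_{1,j} \cap \gamma_1 \X_{1,j} s_1) \times (\X_{2,j} \cap \gamma_2 \X_{2,j} s_2)$, so the product structure of $\mu_G$ together with Fubini's theorem (the integrand being nonnegative) gives
\[
\frac{1}{\mu_G(\X_j)} \int_G \frac{\mu_G(\X_j \cap \gamma \X_j s)^2}{\mu_G(\X_j)^2}\,\d\mu_G(s) = A_{1,j}(\gamma_1)\, A_{2,j}(\gamma_2),
\]
where $A_{k,j}(\gamma_k) = \mu_{G_k}(\X_{k,j})^{-1}\int_{G_k} \mu_{G_k}(\X_{k,j} \cap \gamma_k \X_{k,j} t)^2 \mu_{G_k}(\X_{k,j})^{-2}\,\d\mu_{G_k}(t)$. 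By hypothesis $\sup_{\gamma_k \in \Gamma_{k,j} \cap K_k}|A_{k,j}(\gamma_k) - c_k| \to 0$, so in particular for $j$ large the quantities $A_{k,j}(\gamma_k)$ are bounded by $c_k + 1$ uniformly on $\Gamma_{k,j} \cap K_k$; then the estimate $|A_{1,j}(\gamma_1)A_{2,j}(\gamma_2) - c_1 c_2| \leq (c_1+1)|A_{2,j}(\gamma_2) - c_2| + c_2|A_{1,j}(\gamma_1) - c_1|$, together with the fact that $\gamma_k$ ranges over a subset of $\Gamma_{k,j} \cap K_k$ as $\gamma$ ranges over $\Gamma_j \cap K$, yields after taking the supremum the convergence to $c_1 c_2$.

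For the "moreover" part, if $\gamma_k \X_{k,j} = \X_{k,j}\gamma_k$ for $\gamma_k \in \Gamma_{k,j}$ then for $\gamma = (\gamma_1,\gamma_2) \in \Gamma_j$ we get $\gamma\X_j = (\gamma_1\X_{1,j}) \times (\gamma_2\X_{2,j}) = (\X_{1,j}\gamma_1) \times (\X_{2,j}\gamma_2) = \X_j\gamma$. If instead $\mu_{G_k}(\X_{k,j}\Delta\X_{k,j}^{-1}) = 0$, I would use $\X_j^{-1} = \X_{1,j}^{-1} \times \X_{2,j}^{-1}$ together with the inclusion $(A_1 \times A_2)\Delta(B_1 \times B_2) \subset \big((A_1\Delta B_1) \times (A_2 \cup B_2)\big) \cup \big((A_1 \cup B_1) \times (A_2 \Delta B_2)\big)$ and the finiteness of $\mu_{G_k}(\X_{k,j})$ (fundamental domains of lattices have finite measure, and $\mu_{G_k}$ is unimodular so $\mu_{G_k}(\X_{k,j}^{-1}) = \mu_{G_k}(\X_{k,j})$) to conclude $\mu_G(\X_j\Delta\X_j^{-1}) = 0$. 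Finally, since $G = G_1 \times G_2$ is second countable, unimodular, $\ALSS$, satisfies \eqref{equ-Haar-measure-convergence} with constant $c_1 c_2$, and is amenable when $1 < p < \infty$, Theorem \ref{thm-complementation-Fourier-ADS-amenable} applies directly.

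I expect the only mildly delicate point to be the uniformity in $\gamma$ when passing from the convergence of each factor $A_{k,j}$ to the convergence of the product $A_{1,j}(\gamma_1)A_{2,j}(\gamma_2)$; this is resolved by the uniform boundedness of the factors for large $j$. The remaining steps are routine bookkeeping with product measures.
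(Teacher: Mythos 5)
Your proof is correct and follows essentially the same route as the paper's: Lemma \ref{Lemma-Fundamental-semidirect} for the lattice and fundamental domain, the product decomposition $\X_j \cap \gamma \X_j s = (\X_{1,j} \cap \gamma_1 \X_{1,j} s_1) \times (\X_{2,j} \cap \gamma_2 \X_{2,j} s_2)$ with Fubini to factor the density integral as $I_1(\gamma_1) I_2(\gamma_2)$, and the same add-and-subtract estimate with uniform boundedness of one factor to pass the supremum over $\Gamma_j \cap K$ to the limit $c_1 c_2$. The only difference is that you spell out the symmetry of $\X_j$ via the symmetric-difference inclusion, which the paper dismisses as "easy to check"; your argument there is fine.
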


\begin{proof}
If $G_1$ and $G_2$ are second countable then $G_1 \times G_2$ is also second countable. By Lemma \ref{Lemma-Fundamental-semidirect}, $\Gamma_j = \Gamma_{1,j} \times \Gamma_{2,j}$ is a lattice subgroup of $G_1 \times G_2$ and $\X_j = \X_{1,j} \times \X_{2,j}$ is an associated fundamental domain. If $\mu_1$ and $\mu_2$ are Haar measures on $G_1$ and $G_2$ then $\mu=\mu_1 \ot \mu_2$ is a Haar measure on $G$. We check that $G_1 \times G_2$ is $\ALSS$ with respect to $(\Gamma_j)$ and $(\X_j)$. Let $V$ be a neighborhood of $e \in G_1 \times G_2$. Then there exist neighborhoods $U_1$ of $e_1 \in G_1$ and $U_2$ of $e_2 \in G_2$ such that $U_1 \times U_2 \subset V$. Since $G_1$ and $G_2$ are $\ALSS$, there exists $j_0 \in \N$ such that $\X_{1,j} \subset U_1$ and $\X_{2,j} \subset U_2$ for any $j \geq j_0$. Consequently, $\X_j = \X_{1,j} \times \X_{2,j} \subset U_1 \times U_2 \subset V$. Consequently $G_1 \times G_2$ is $\ALSS$. Now for $\gamma_1 \in \Gamma_{1,j}$, we put
$$
I_1(\gamma_1) 
\ov{\mathrm{def}}{=} \frac{1}{\mu_1\big(\X_{1,j}\big)} \int_{G_1} \frac{\mu_1^2\big(\X_{1,j} \cap \gamma_1 \X_{1,j} s_1\big)}{\mu_1^2\big(\X_{1,j}\big)} \d\mu_1(s_1)
$$
and similarly, for given $\gamma_2 \in \Gamma_{2,j}$ resp. $\gamma \in \Gamma_j$, we define $I_2(\gamma_2)$ resp. $I(\gamma)$. We claim that $I((\gamma_1,\gamma_2)) = I_1(\gamma_1) I_2(\gamma_2)$. Indeed, using the elementary fact $(A \times B) \cap (C \times D) = (A \cap C) \times (B \cap D)$, we have
\begingroup
\allowdisplaybreaks
\begin{align*}
\MoveEqLeft
I((\gamma_1,\gamma_2)) 
=\frac{1}{\mu\big(\X_{1,j} \times \X_{2,j}\big)} \int_{G_1 \times G_2} \frac{\mu^2\big((\X_{1,j} \times \X_{2,j}) \cap (\gamma_1, \gamma_2) (\X_{1,j} \times \X_{2,j}) (s_1,s_2)\big)}{\mu^2\big(\X_{1,j} \times \X_{2,j}\big)} \d\mu(s_1,s_2) \\
&=\frac{1}{\mu_1\big(\X_{1,j}\big) \mu_2\big(\X_{2,j}\big)} \int_{G_1 \times G_2} \frac{\mu^2\big((\X_{1,j} \times \X_{2,j}) \cap (\gamma_1 \X_{1,j} s_1) \times (\gamma_2 \X_{2,j} s_2)\big)}{\mu_1^2\big(\X_{1,j}\big) \mu_2^2\big(\X_{2,j}\big)} \d\mu(s_1,s_2) \\
&=\frac{1}{\mu_1\big(\X_{1,j}\big) \mu_2\big(\X_{2,j}\big)} \int_{G_1 \times G_2} \frac{\mu^2\big((\X_{1,j} \cap \gamma_1 \X_{1,j} s_1) \times (\X_{2,j} \cap \gamma_2 \X_{2,j} s_2)\big)}{\mu_1^2\big(\X_{1,j}\big) \mu_2^2\big(\X_{2,j}\big)} \d\mu(s_1,s_2) \\
&=\frac{1}{\mu_1\big(\X_{1,j}\big) \mu_2\big(\X_{2,j}\big)} \int_{G_1} \frac{\mu_1^2\big(\X_{1,j} \cap \gamma_1 \X_{1,j} s_1\big)}{\mu_1^2\big(\X_{1,j}\big)} \d\mu_1(s_1) \int_{G_2} \frac{\mu_2^2\big(\X_{2,j} \cap \gamma_2 \X_{2,j} s_2\big)}{\mu_2^2\big(\X_{2,j}\big)} \d\mu_2(s_2) \\
&=I_1(\gamma_1) I_2(\gamma_2).
\end{align*}
\endgroup
Now let $K$ be a compact subset of $G_1 \times G_2$. We check \eqref{equ-Haar-measure-convergence}, that is
$$
\lim_{j \to \infty} \sup_{(\gamma_1,\gamma_2) \in \Gamma_j \cap K} \left| I((\gamma_1,\gamma_2)) - c_1 c_2 \right| 
= 0.
$$
Denoting $\pi_k \co G_1 \times G_2 \to G_k$ the canonical continuous projection, we have that $\pi_k(K) \subset G_k$ is compact $(k = 1,2)$. Then 
\begingroup
\allowdisplaybreaks
\begin{align*}
&\sup_{(\gamma_1,\gamma_2) \in \Gamma_j \cap K} \left| I((\gamma_1,\gamma_2)) - c_1 c_2 \right| 
\leq \sup_{(\gamma_1,\gamma_2) \in \Gamma_j \cap \pi_1(K) \times \pi_2(K)} \left| I((\gamma_1,\gamma_2)) - c_1 c_2 \right| \\
& = \sup_{\gamma_1 \in \Gamma_{1,j} \cap \pi_1(K)} \sup_{\gamma_2 \in \Gamma_{2,j} \cap \pi_2(K)} \left| I_1(\gamma_1) I_2(\gamma_2) - c_1 c_2 \right| \\
& \leq \sup_{\gamma_1 \in \Gamma_{1,j} \cap \pi_1(K)} \sup_{\gamma_2 \in \Gamma_{2,j} \cap \pi_2(K)} \left| I_1(\gamma_1) I_2(\gamma_2) - c_1 I_2(\gamma_2) \right| + \left| c_1 I_2(\gamma_2) - c_1 c_2 \right| \\
& \leq \sup_{\gamma_1 \in \Gamma_{1,j} \cap \pi_1(K)} |I_1(\gamma_1) - c_1| \sup_{\gamma_2 \in \Gamma_{2,j} \cap \pi_2(K)} |I_2(\gamma_2)| + c_1 \sup_{\gamma_2 \in \Gamma_{2,j} \cap \pi_2(K)} \left| I_2(\gamma_2) - c_2 \right| \\
& \xra[j \to +\infty]{} 0 \cdot c_2 + c_1 \cdot 0 
= 0.
\end{align*}
\endgroup
Thus, \eqref{equ-Haar-measure-convergence} follows for $G_1 \times G_2$ and constant $c_1 c_2$.
The statement about preservation of symmetric fundamental domains (resp. commutation $\gamma \X_j = \X_j \gamma$) is easy to check.
For the application of Theorem \ref{thm-complementation-Fourier-ADS-amenable}, we only note that $G_1 \times G_2$ is amenable once $G_1$ and $G_2$ are amenable.
\end{proof}

\begin{remark}\normalfont
\label{remark-discrete-group-c}
Let $G$ be a countable discrete group. The group $G$ is $\ALSS$ with respect to the constant sequences $(\Gamma_j)$ and $(\X_j)$ defined by $\Gamma_j=G$ and by $\X_j=\{e\}$ for any $j$. Moreover, for any $\gamma \in G$ and any $j$, it is obvious that
$$
\frac{1}{\mu_G(\X_j)} \int_G \frac{\mu_G^2(\X_j \cap \gamma \X_j s)}{\mu_G^2(\X_j)} \d\mu_G(s)
=1.
$$
\end{remark}

\paragraph{Semidirect products of abelian groups by discrete groups.} 

For semidirect products, the situation is not as good as direct products.

\begin{prop}
\label{Prop-complementation-Fourier-ADS-amenable}
Let $G_1$ be a second countable abelian locally compact group which is $\ALSS$ with respect to a sequence $(\Gamma_{1,j})$ of lattice subgroups associated to a sequence $(\X_{1,j})$ of fundamental domains such that \eqref{equ-Haar-measure-convergence} is satisfied. Let $G_2$ be a countable discrete group. Suppose that $\eta \co G_2 \to \Aut(G_1)$ is a homomorphism satisfying $\eta_t(\Gamma_{1,j}) \subset \Gamma_{1,j}$ for any $t \in G_2$ and any $j$. Then the semidirect product $G = G_1 \rtimes_\eta G_2$ is second countable and $\ALSS$ with respect to the sequences $(\Gamma_j)$ and $(\X_j)$ defined by $\Gamma_j = \Gamma_{1,j} \times G_2$ and $\X_j = \X_{1,j} \times \{e_{G_2}\}$. If in addition $\eta_{t}(\X_{1,j}) \subset \X_{1,j}$ for any $t \in G_2$ and any $j$ then \eqref{equ-Haar-measure-convergence} holds with some $c \in (0,1]$. If the $\X_{1,j}$ are symmetric (resp. $\gamma_1 \X_{1,j} = \X_{1,j} \gamma_1$ for any $\gamma_1 \in \Gamma_{1,j}$) then the $\X_j$ are symmetric (resp. $\gamma \X_j = \X_j \gamma$ for any $\gamma \in \Gamma_j$).
Consequently, Theorem \ref{thm-complementation-Fourier-ADS-amenable} applies in the case $p=1$ and $p=\infty$. If $G_2$ is in addition amenable, the result applies in the case $1<p<\infty$.
\end{prop}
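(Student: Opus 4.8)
The plan is to check, in order, that $G=G_1\rtimes_\eta G_2$ satisfies all the hypotheses of Theorem \ref{thm-complementation-Fourier-ADS-amenable} and then invoke it. Second countability of $G$ is immediate: $G_1$ is second countable and $G_2$, being countable and discrete, is second countable, so the product topology on $G$ is. For the $\ALSS$ property, I would apply Lemma \ref{Lemma-Fundamental-semidirect} with $\Gamma_{1,j}\subset G_1$, the lattice $\Gamma_2=G_2$ in $G_2$, and the trivial fundamental domain $\X_2=\{e_{G_2}\}$ associated with it. The hypotheses of that lemma hold here: continuity condition \eqref{Continuity-semidirect} is automatic since $G_2$ is discrete, $\eta_t(\Gamma_{1,j})\subset\Gamma_{1,j}$ for $t\in G_2$ is assumed, and $\eta_{\omega_2}(\gamma_1)=\gamma_1$ for $\omega_2\in\X_2$ holds because $\eta_{e_{G_2}}=\mathrm{id}_{G_1}$. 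Hence $\Gamma_j=\Gamma_{1,j}\rtimes_{\eta|G_2}G_2=\Gamma_{1,j}\times G_2$ is a lattice in $G$ and $\X_j=\X_{1,j}\times\{e_{G_2}\}$ is an associated fundamental domain. The shrinking condition follows easily: given a neighbourhood $V$ of $e_G$, discreteness of $G_2$ gives a neighbourhood $U_1$ of $e_{G_1}$ with $U_1\times\{e_{G_2}\}\subset V$, and the $\ALSS$ property of $G_1$ provides $j_0$ with $\X_{1,j}\subset U_1$, hence $\X_j\subset V$, for $j\ge j_0$.

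Now assume in addition $\eta_t(\X_{1,j})\subset\X_{1,j}$ for every $t\in G_2$ and every $j$. Applying this with $t^{-1}$ in place of $t$ and then the bijection $\eta_t$, and using $\eta_t\circ\eta_{t^{-1}}=\eta_e=\mathrm{id}_{G_1}$, one gets $\X_{1,j}\subset\eta_t(\X_{1,j})$, hence $\eta_t(\X_{1,j})=\X_{1,j}$; similarly $\eta_t(\Gamma_{1,j})=\Gamma_{1,j}$. Since $\X_{1,j}$ is a fundamental domain of the lattice $\Gamma_{1,j}$, its Haar measure $\mu_{G_1}(\X_{1,j})$ lies in $(0,\infty)$, so the equality $\mu_{G_1}(\eta_t(\X_{1,j}))=\mu_{G_1}(\X_{1,j})$ forces $\mathrm{mod}\,\eta_t=1$ for all $t\in G_2$. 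Thus each $\eta_t$ is measure-preserving, $G$ is unimodular, $\mu=\mu_{G_1}\otimes\mu_{G_2}$ (with $\mu_{G_2}$ the counting measure on $G_2$) is a Haar measure of $G$, and $\mu(\X_j)=\mu_{G_1}(\X_{1,j})$.

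Next I would verify \eqref{equ-Haar-measure-convergence}. For $\gamma=(\gamma_1,t)\in\Gamma_j$ and $s=(s_1,u)\in G$, the group law \eqref{Operations-semidirect} together with $\eta_t(\X_{1,j})=\X_{1,j}$ gives $\gamma\X_j s=(\gamma_1\X_{1,j}\eta_t(s_1))\times\{tu\}$, so that $\X_j\cap\gamma\X_j s=(\X_{1,j}\cap\gamma_1\X_{1,j}\eta_t(s_1))\times(\{e_{G_2}\}\cap\{tu\})$, which is nonempty only when $u=t^{-1}$. Integrating over $G$, only the summand $u=t^{-1}$ survives and the change of variables $r_1=\eta_t(s_1)$ (legitimate since $\eta_t$ is measure-preserving) yields
\[
\int_G \frac{\mu^2(\X_j \cap \gamma \X_j s)}{\mu^2(\X_j)} \d\mu(s)
= \int_{G_1} \frac{\mu_{G_1}^2(\X_{1,j} \cap \gamma_1 \X_{1,j} \eta_t(s_1))}{\mu_{G_1}^2(\X_{1,j})} \d\mu_{G_1}(s_1)
= \int_{G_1} \frac{\mu_{G_1}^2(\X_{1,j} \cap \gamma_1 \X_{1,j} r_1)}{\mu_{G_1}^2(\X_{1,j})} \d\mu_{G_1}(r_1).
\]
Hence the quantity inside the absolute value in \eqref{equ-Haar-measure-convergence} at $\gamma\in\Gamma_j$ equals the corresponding one for $G_1$ at $\gamma_1=\pi_1(\gamma)$, where $\pi_1\co G\to G_1$ is the canonical projection. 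As this depends only on $\gamma_1$, and $\pi_1(\Gamma_j\cap K)\subset\Gamma_{1,j}\cap\pi_1(K)$ with $\pi_1(K)$ compact, the density condition \eqref{equ-Haar-measure-convergence} for $G_1$ with its constant $c_1$ implies \eqref{equ-Haar-measure-convergence} for $G$ with $c=c_1$; moreover $c_1\in(0,1]$, since $c_1>0$ by hypothesis while each integral above is bounded by $\mu_{G_1}^{-2}(\X_{1,j})\int_{G_1}\mu_{G_1}(\X_{1,j}\cap\gamma_1\X_{1,j}r_1)\,\d\mu_{G_1}(r_1)=1$ by Fubini and translation invariance.

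For the last assertions, a direct computation from \eqref{Operations-semidirect} gives $\X_j^{-1}=\X_{1,j}^{-1}\times\{e_{G_2}\}$, whence $\mu(\X_j\Delta\X_j^{-1})=\mu_{G_1}(\X_{1,j}\Delta\X_{1,j}^{-1})$; and using $\eta_t(\X_{1,j})=\X_{1,j}$ with commutativity of $G_1$ one gets $\gamma\X_j=(\gamma_1\X_{1,j})\times\{t\}=\X_j\gamma$ for $\gamma=(\gamma_1,t)\in\Gamma_j$. Therefore $G$ is a second countable unimodular locally compact group which is $\ALSS$ with respect to $(\Gamma_j),(\X_j)$ and satisfies \eqref{equ-Haar-measure-convergence} with constant $c_1>0$, so Theorem \ref{thm-complementation-Fourier-ADS-amenable} applies for $p=1$ and $p=\infty$ with no amenability requirement; when $G_2$ is in addition amenable, $G$ is an extension of the abelian (hence amenable) group $G_1$ by the amenable group $G_2$, hence amenable, and Theorem \ref{thm-complementation-Fourier-ADS-amenable} applies for $1<p<\infty$ as well. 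The main obstacle is the bookkeeping in the semidirect product — computing $\gamma\X_j s$ and seeing that the $G_2$-component of $\X_j\cap\gamma\X_j s$ is nonempty only on the ``diagonal'' $u=t^{-1}$ — together with the preliminary step of upgrading the inclusion $\eta_t(\X_{1,j})\subset\X_{1,j}$ to an equality and deducing that each $\eta_t$ is measure-preserving, which is exactly what makes the change of variables $r_1=\eta_t(s_1)$ valid and reduces the whole verification to the density condition already assumed for $G_1$.
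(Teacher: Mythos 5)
Your proof is correct and follows essentially the same route as the paper's: Lemma \ref{Lemma-Fundamental-semidirect} for the lattice/fundamental-domain structure, the same neighbourhood argument for the shrinking property, and the same reduction of the density integral over $G$ to the one over $G_1$ by observing that only the slice $u=t^{-1}$ of the $G_2$-variable contributes. The only (harmless) presentational difference is that you make the upgrade $\eta_t(\X_{1,j})=\X_{1,j}$ and $\mathrm{mod}\,\eta_t=1$ explicit, deducing unimodularity and justifying the substitution $r_1=\eta_t(s_1)$ from it, whereas the paper obtains unimodularity from the existence of a lattice and uses an additive translation as its change of variables.
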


\begin{proof}
It is obvious that $G$ is second countable. By Lemma \ref{Lemma-Fundamental-semidirect}, each $\Gamma_j$ is a lattice of $G$ and each $\X_j$ is an associated fundamental domain. We check that $G_1 \times G_2$ is $\ALSS$ with respect to $(\Gamma_j)$ and $(\X_j)$. Let $V$ be a neighborhood of the neutral element $e$ of $G_1 \times G_2$. Then there exist neighborhood $U_1$ of $e_1 \in G_1$ such that $U_1 \times \{e_2\} \subset V$. Since $G_1$ is $\ALSS$, there exists $j_0 \in \N$ such that $\X_{1,j} \subset U_1$ for any $j \geq j_0$. Consequently, $\X_j = \X_{1,j} \times \{e_2\} \subset U_1 \times \{e_2\} \subset V$. Thus $G_1 \times G_2$ is $\ALSS$. 

Using \cite[Proposition B.2.2 page 332]{BHV}, the existence of a lattice implies that $G$ is unimodular and $\mu_G= \mu_{G_1} \ot \mu_{G_2}$ gives a Haar measure on $G$. It remains to check \eqref{equ-Haar-measure-convergence}. To this end, consider $\gamma = (\gamma_1,\gamma_2) \in \Gamma_j$, $\omega = (\omega_1,e_{G_2}) \in \X_j$ and $s = (s_1,s_2) \in G$. Then using \eqref{Operations-semidirect} 
$$
\gamma \omega s 
=(\gamma_1,\gamma_2)\rtimes_\eta(\omega_1,e_{G_2})\rtimes_\eta(s_1,s_2)
=(\gamma_1,\gamma_2)\rtimes_\eta(\omega_1 + s_1,s_2) 
=\big(\gamma_1+\eta_{\gamma_2}(\omega_1 + s_1),\gamma_2 s_2\big).
$$ 
This element belongs to $\X_j = \X_{1,j} \times \{e_{G_2}\}$ if and only if $s_2 = \gamma_2^{-1}$ and $\gamma_1 + \eta_{\gamma_2}(\omega_1 + s_1) \in \X_{1,j}$. By the assumption $\eta_{\gamma_2}(\X_{1,j}) \subset \X_{1,j}$, the latter condition is equivalent with 
$$
\eta_{\gamma_2}^{-1}\big(\gamma_1 + \eta_{\gamma_2}(\omega_1 + s_1)\big) \in \X_{1,j},
$$
that is $\eta_{\gamma_2}^{-1}(\gamma_1) + \omega_1 + s_1 \in \X_{1,j}$. For any $\gamma = (\gamma_1,\gamma_2) \in \Gamma_j$ and $s = (s_1,s_2) \in G$, we infer that 
\begin{align*}
\MoveEqLeft
\mu_G(\X_j \cap \gamma \X_j s)
=(\mu_{G_1} \ot \mu_{G_2})\big((\X_{1,j} \times \{e_{G_2}\}) \cap \gamma \X_j s\big)\\
&=\mu_{G_1}\big(\{\omega_1 \in \X_{1,j}  : \eta_{\gamma_2}^{-1}(\gamma_1) + \omega_1 + s_1 \in \X_{1,j}\}\big).
\end{align*}
Moreover, we have $\mu_G(\X_j)=\mu_{G_1 \ot G_2}( \X_{1,j} \times \{e_{G_2}\})=\mu_{G_1}(\X_{1,j})\mu_{G_2}(\{e_{G_2}\})=\mu_{G_1}(\X_{1,j})$. Therefore, with a change of variable in the second equality and using the fact that $G_1$ satisfies \eqref{equ-Haar-measure-convergence} in the passage to the limit, we finally obtain
\begin{align*}
\MoveEqLeft
\int_G \frac{\mu_G(\X_j \cap \gamma \X_j s)^2}{\mu_G(\X_j)^3} \d\mu_G(s) 
= \int_{G_1} \frac{\mu_{G_1}(\{\omega_1 \in \X_{1,j} :\: \omega_1 + s_1 + \eta_{\gamma_2}^{-1}(\gamma_1) \in \X_{1,j} \})^2}{\mu_{G_1}(\X_{1,j})^3} \d\mu_{G_1}(s_1) \\
&=\int_{G_1} \frac{\mu_{G_1}(\{ \omega_1 \in \X_{1,j} :\: \omega_1 + s_1 \in \X_{1,j} \})^2}{\mu_{G_1}(\X_{1,j})^3} \d\mu_{G_1}(s_1) \\
& = \int_{G_1} \frac{\mu_{G_1}(\X_{1,j} \cap (\X_{1,j} + s_1))^2}{\mu_{G_1}(\X_{1,j})^3} \d\mu_{G_1}(s_1)\\
& \xra[j \to +\infty]{} c \in (0,1].
\end{align*}
The statement about the symmetry (resp. commutativity with elements of $\Gamma_j$) of the fundamental domain is easy to check with \eqref{Operations-semidirect}. If $G_2$ is amenable then $G$ is an amenable group by \cite[Proposition G.2.2 (ii)]{BHV}, being a group extension of an amenable group by an abelian (hence also amenable) group.
\end{proof}

For applying the previous result, we compute the density \eqref{equ-Haar-measure-convergence} for some abelian groups. By \cite[Corollary 4.2.6]{DeE}, the groups described in the following proposition are the compactly generated locally compact abelian groups of Lie type.

\begin{prop}
\label{Prop-constantes-Lie-type}
Suppose that $G=\Z^l \times \R^n \times \T^m \times F$ where $l,n,m \in \N$ and where $F$ is a finite abelian group. For any integer $j$, consider the lattice subgroup
\begin{equation*}
\label{Equa-a-corriger}
\Gamma_{j} 
\ov{\mathrm{def}}{=} \Z^l \times (2^{-j} \Z)^n \times \left\{ 2^{-j} r:\: r \in \{0,\ldots,2^j-1\} \right\}^m  \times F
\end{equation*}
and the associated symmetric fundamental domain
\begin{equation*}
\label{Fundamental-domains}
\X_{j} 
\ov{\mathrm{def}}{=} \{0\}^l \times [-2^{-j-1},2^{-j-1})^n \times [-2^{-j-1},2^{-j-1})^m  \times \{e_F\}.
\end{equation*}
Then the group $G$ is $\ALSS$ with respect to the sequences $(\Gamma_j)$ and $(\X_j)$. Moreover, for any $j$ and any $\gamma \in \Gamma_{j}$, we have
$$
\frac{1}{\mu_{G}(\X_{j})}\int_{G} \frac{\mu_{G}^2\left(\X_{j} \cap (\gamma + \X_{j} + s)\right)}{\mu_{G}^2(\X_{j})} \d\mu_{G}(s)
= \left(\frac23\right)^{n+m}.
$$
\end{prop}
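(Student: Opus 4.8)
The plan is to handle the two claims separately, using throughout the product structures of $G$, of its Haar measure $\mu_G = \mu_{\Z^l} \ot \mu_{\R^n} \ot \mu_{\T^m} \ot \mu_F$, of $\Gamma_j$ and of $\X_j$.

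For the $\mathrm{ALSS}$ assertion, I would first note that $G$ is abelian (hence unimodular) and second countable as a finite product of second countable groups. Each of the four factors of $\Gamma_j$ is a cocompact discrete subgroup of the corresponding factor of $G$ --- namely $\Z^l \subset \Z^l$, $(2^{-j}\Z)^n \subset \R^n$, $(2^{-j}\Z/\Z)^m \subset \T^m$, and $F \subset F$ --- so $\Gamma_j$ is a cocompact discrete subgroup of $G$, thus a lattice by \cite[Proposition B.2.2]{BHV}. Likewise $\X_j$ is a product of fundamental domains for these factor lattices: $\{0\}$ for $\Z^l$, the half-open cube of side $2^{-j}$ for $(2^{-j}\Z)^n$, the same cube viewed in $\T^m$ for $(2^{-j}\Z/\Z)^m$ (legitimate since the side $2^{-j}$ is $\leq 1$), and $\{e_F\}$ for $F$; hence $\X_j$ is a fundamental domain for $\Gamma_j$, and it is symmetric since $\X_j$ and $-\X_j$ differ by a $\mu_G$-null set. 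Finally, since $\Z^l$ and $F$ are discrete, any neighbourhood $V$ of $e_G$ contains a set of the form $\{0\}^l \times W \times \{e_F\}$ with $W$ a neighbourhood of the origin in $\R^n \times \T^m$; for $j$ large enough the set $[-2^{-j-1},2^{-j-1})^n \times [-2^{-j-1},2^{-j-1})^m$ is contained in $W$, so $\X_j \subset V$ eventually, which is the $\mathrm{ALSS}$ condition.

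For the density formula, I would first use the translation invariance of the Haar measure $\mu_G$ to perform the substitution $s \mapsto s - \gamma$, which eliminates $\gamma$ and shows that the left-hand side equals
$$
\frac{1}{\mu_G(\X_j)^3} \int_G \mu_G\big(\X_j \cap (\X_j + s)\big)^2 \d\mu_G(s),
$$
independently of $\gamma$. Writing $s = (s_1,s_2,s_3,s_4)$ along the four factors and setting $a = 2^{-j-1}$, one has $\X_j \cap (\X_j + s) = \{0\}^l \times \big([-a,a)^n \cap ([-a,a)^n + s_2)\big) \times \big([-a,a)^m \cap ([-a,a)^m + s_3)\big) \times \{e_F\}$, so $\mu_G(\X_j \cap (\X_j + s))$ is the product of the four factor measures, and by Tonelli the integral above factorizes as a product over the four factors, $\mu_G(\X_j)^3$ factorizing accordingly. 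The $\Z^l$ and $F$ factors each contribute $1$, since the relevant set is a singleton of measure $1$ whose intersection with a translate is nonempty only for the trivial translation. For the $\R^n$ factor, Tonelli reduces it to the $n$-th power of $\frac{1}{(2a)^3}\int_\R\big((2a-|t|)_+\big)^2\d t$, and the elementary identity $\int_\R (2a-|t|)_+^2 \d t = \tfrac23 (2a)^3$ gives the contribution $(2/3)^n$. For the $\T^m$ factor, the point is that for $j \geq 1$ one has $2a = 2^{-j} \leq \tfrac12$, so on $\T = \R/\Z$ the interval $[-a,a)$ and any of its translates meet in a single arc without wraparound; consequently $\mu_\T\big([-a,a) \cap ([-a,a)+t)\big) = (2a - d_\T(0,t))_+$ where $d_\T$ is the circle distance, and integrating its square over $\T$ again yields $\tfrac23(2a)^3$, so this factor contributes $(2/3)^m$. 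Multiplying the four contributions gives $(2/3)^{n+m}$.

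The only step that is not entirely routine, and the one I would be most careful about, is the torus factor: one must verify that the side length $2^{-j-1}$ is small enough ($\leq \tfrac14$, so $2a \leq \tfrac12$) that the circular ``convolution'' $t \mapsto \mu_\T([-a,a) \cap ([-a,a)+t))$ coincides with the triangular profile $(2a - d_\T(0,t))_+$ computed on the line. If the interval were too large, the overlap contribution wrapping around $\T$ would alter the value of the integral and the formula would break down --- indeed it fails if one allows $j = 0$ with $m \geq 1$. Everything else, including the verifications that $\Gamma_j$ is a lattice and $\X_j$ a fundamental domain, and the one-variable integral $\int_0^{2a}(2a-t)^2\d t = \tfrac{(2a)^3}{3}$, is standard.
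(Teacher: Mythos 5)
Your proof is correct and follows essentially the same route as the paper's: translation invariance to remove $\gamma$, factorization of the integral over the four factors, and the elementary computation $\int_{-2a}^{2a}(2a-|t|)^2\d t=\tfrac23(2a)^3$ for each $\R$ and $\T$ coordinate. Your explicit check that $2^{-j}\leq\tfrac12$ is needed to avoid wraparound on $\T$ (so the identity as stated really requires $j\geq 1$ when $m\geq 1$) is a genuine point that the paper's proof passes over silently.
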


\begin{proof}
Using Lemma \ref{Lemma-Fundamental-semidirect}, it is clear that the $\Gamma_{j}$'s are lattice subgroups and that the $\X_{j}$'s are associated fundamental domains. It is obvious that $G$ is $\ALSS$ with respect to these sequences. For any $j$, a simple computation gives 
\begin{align*}
\MoveEqLeft
\mu_{G}(\X_{j}) 
=(\mu_{\R^n} \ot \mu_{\T^m})\big(\big[-2^{-j-1},2^{-j-1}\big)^n \times \big[- 2^{-j-1}, 2^{-j-1}\big)^m\big)\\
&=\Big(\mu_{\R}\big(\big[-2^{-j-1},2^{-j-1}\big)\Big)^n \Big(\mu_{\T}\big(\big[-2^{-j-1},2^{-j-1}\big)\Big)^m
=2^{-j(n+m)}.
\end{align*}
Now, note that if $-2a \leq x \leq 2a$ then we have $\mu_\R([-a,a] \cap [-a+x,a+x])=2a-|x|$. Further, for any $j$ and any $\gamma \in \Gamma_{j}$, we have, writing $s = (x_1,\ldots, x_n,y_1,\ldots,y_m,z_1,\ldots,z_l,f)$
\begingroup
\allowdisplaybreaks
\begin{align*}
&\int_{G} \mu\big(\X_{j} \cap (\gamma + \X_{j} + s)\big)^2 \d\mu_{G}(s) 
=\int_{G} \mu_{G}\big(\X_{j} \cap (\X_{j} + s)\big)^2 \d\mu_{G}(s) \\
&=\int_{\R^n} \prod_{k=1}^n \mu_{\R} \left([-2^{-j-1},2^{-j-1}) \cap [-2^{-j-1} + x_k,2^{-j-1} + x_k)\right)^2 \d(x_1,\ldots,x_n) \times \\
& \times \int_{\T^m} \prod_{l=1}^m \mu_{\T}\left([-2^{-j-1},2^{-j-1}) \cap [-2^{-j-1} + y_l,2^{-j-1} + y_l)\right)^2 \d(y_1,\ldots,y_m) \\
& = \left(\int_{-2^{-j}}^{2^{-j}} (2^{-j} - |x|)^2 \d x \right)^n \left( \int_{-2^{-j}}^{2^{-j}} (2^{-j} - |y|)^2 \d y \right)^m 
= \left(2\int_{0}^{2^{-j}} (2^{-j}-x)^2 \d x \right)^{n+m}\\
&=\left(2\int_{0}^{2^{-j}} u^2 \d u \right)^{n+m}
=\left(\frac23\right)^{n+m} 2^{-3j(n+m)}.
\end{align*}
\endgroup
Thus
$$
\int_{G} \frac{\mu_{G}\left(\X_{j} \cap (\gamma + \X_{j} + s)\right)^2}{\mu_{G}(\X_{j})^3} \d\mu_{G}(s) 
=2^{3j(n+m)} \cdot \left(\frac23\right)^{n+m} 2^{-3j(n+m)} 
=\left(\frac23\right)^{n+m} \in (0,1].
$$
\end{proof}

\begin{remark}\normalfont
\label{remark-cor-complementation-Fourier-ADS-amenable}
The assumptions of Proposition \ref{Prop-complementation-Fourier-ADS-amenable} are satisfied in the following situation. Assume that $G_1= \Z^l \times \R^n \times \T^m  \times F$ where $l,n,m \in \N$ and where $F$ is a finite abelian group. Let $G_2$ be a subgroup of $\mathrm{Sym}(n) \times \mathrm{Sym}(m)$ where $\mathrm{Sym}(n)$ and $\mathrm{Sym}(m)$ are the permutation groups of $n$ and $m$ elements. For $(\sigma_1,\sigma_2) \in G_2$, let further 
\begin{align}
\MoveEqLeft
 \label{equ-remark-cor-complementation-Fourier-ADS-amenable}
\eta_{(\sigma_1,\sigma_2)}(z_1,\ldots,z_l,x_1,\ldots,x_n,y_1,\ldots,y_m,f) \\
&= \big(z_1,\ldots,z_l,x_{\sigma_1(1)},\ldots,x_{\sigma_1(n)},y_{\sigma_2(1)},\ldots,y_{\sigma_2(m)},f\big). \nonumber
\end{align}
For any integer $j$, consider the lattice $\Gamma_{1,j}=\Z^l \times  (2^{-j} \Z)^n \times \left\{ 2^{-j} r:\: r \in \{0,\ldots,2^j-1\} \right\}^m  \times F$ of $G_1$ and the symmetric fundamental domain $\X_{1,j}=\{0\}^l \times  [-2^{-j-1},2^{-j-1})^n \times [-2^{-j-1},2^{-j-1})^m  \times \{e_F\}$. It is easy to check that the transformation \eqref{equ-remark-cor-complementation-Fourier-ADS-amenable} preserves both $\Gamma_{1,j}$ and $\X_{1,j}$. Then $G_1$, $G_2$, $(\Gamma_{1,j})$, $(\X_{1,j})$ and $\eta$ satisfy all the assumptions of Proposition \ref{Prop-complementation-Fourier-ADS-amenable} and consequently Theorem \ref{thm-complementation-Fourier-ADS-amenable} applies to the group $G = G_1 \rtimes_\eta G_2$.

More generally, $G_2$ can be any countable discrete (amenable) group such that $\eta_t$ is given by a coordinate permutation as in \eqref{equ-remark-cor-complementation-Fourier-ADS-amenable} for any $s \in G_2$.
\end{remark}

Now, we give a natural semidirect product for which we can apply Proposition \ref{Prop-complementation-Fourier-ADS-amenable} and \ref{Prop-constantes-Lie-type}. Let $\mathbb{H}_n=\R^{2n+1}$ be the (continuous) Heisenberg group with group operations
\begin{equation}
\label{Operations-Heisenberg}
(a,b,t)\cdot(a',b',t') 
=(a+a',b+b',t+t'+a\cdot b')
\quad \text{and} \quad
(a,b,t)^{-1}=(-a,-b,-t+a\cdot b)	
\end{equation}
where $a,b,a',b' \in \R^n$ and $t,t' \in \R$ and where $\cdot$ denotes the canonical scalar product on $\R^{n}$. Recall that $\mathbb{H}_n$ is unimodular and the Haar measure on $\mathbb{H}_n$ is just usual Lebesgue measure on $\R^n$.
We can use our results with the semi-discrete Heisenberg group described in the following result, see \cite[page 1459]{Pal2} for more information on this group.

\begin{prop}
\label{prop-Heisenberg-semidiscrete}
Let $\mathrm{H}_n= \{(x,y,t) \in \mathbb{H}_n :\: x,y \in \Z^n, t \in \R\}$ be the (amenable) closed subgroup of the Heisenberg group $\mathbb{H}_n$. For any integer $j$, we consider the lattice subgroup $\Gamma_j = \Z^n \times \Z^n \times 2^{-j} \Z$ of $\mathrm{H}_n$ and the associated symmetric fundamental domain $\X_j = \{0\} \times \{0\} \times [-2^{-j-1},2^{-j-1})$. Then $\mathrm{H}_n$ is $\ALSS$ with respect to the increasing sequence $(\Gamma_j)$ and to the sequence $(\X_j)$. Moreover, for any $j$ and any $\gamma \in \Gamma_{j}$, we have
\begin{equation}
\label{equ-1-prop-Heisenberg-semidiscrete}
\frac{1}{\mu(\X_j)} \int_{\mathrm{H}_n} \frac{\mu^2(\X_j \cap \gamma \X_j s)}{\mu^2(\X_j)} \d\mu(s) 
= \frac23.
\end{equation}
In particular, Theorem \ref{thm-complementation-Fourier-ADS-amenable} applies.
\end{prop}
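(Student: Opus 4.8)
The plan is to verify the three claimed assertions about the semi-discrete Heisenberg group $\mathrm{H}_n$ by checking each hypothesis of Theorem \ref{thm-complementation-Fourier-ADS-amenable} in turn: that $\mathrm{H}_n$ is $\ALSS$ with respect to $(\Gamma_j)$ and $(\X_j)$, that the density condition \eqref{equ-Haar-measure-convergence} holds with $c = \frac23$, and that $\mathrm{H}_n$ is amenable (needed in the range $1 < p < \infty$).

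First I would establish the structural facts. Using the group operations \eqref{Operations-Heisenberg}, one checks directly that $\Gamma_j = \Z^n \times \Z^n \times 2^{-j}\Z$ is closed under the product and inverse (the cross-term $a \cdot b'$ is an integer when $a,b' \in \Z^n$, hence lies in $2^{-j}\Z$), so $\Gamma_j$ is a discrete subgroup; it is increasing in $j$ since $2^{-j}\Z \subset 2^{-(j+1)}\Z$. The set $\X_j = \{0\} \times \{0\} \times [-2^{-j-1}, 2^{-j-1})$ is a Borel set, and one verifies \eqref{DGamma=G} and \eqref{Dgamma=Dgamma2}: given $(x,y,t) \in \mathrm{H}_n$, write $t = u + 2^{-j}k$ with $u \in [-2^{-j-1},2^{-j-1})$ and $k \in \Z$, then $(x,y,t) = (0,0,u)\cdot(x,y,2^{-j}k)$ using that $(0,0,u)$ is central-ish enough for the product to work out (here the first factor has $a = b = 0$, so the cross-term vanishes). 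Disjointness of translates follows from uniqueness of this decomposition. Since $\X_j \subset \{0\}\times\{0\}\times[-2^{-j-1},2^{-j-1})$ shrinks to the identity as $j \to \infty$, the group is $\ALSS$; and since $\X_j = -\X_j$ up to a null set (the half-open interval is symmetric modulo an endpoint), the $\X_j$ are symmetric in the sense of Theorem \ref{thm-complementation-Fourier-ADS-amenable}. Unimodularity of $\mathrm{H}_n$ follows from \cite[Proposition B.2.2]{BHV} since it admits a lattice, and the Haar measure is Lebesgue measure on the $\R$-coordinate (restricted to the discrete $\Z^n \times \Z^n$ part).

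The computational heart is \eqref{equ-1-prop-Heisenberg-semidiscrete}. Here I would follow the model of Proposition \ref{Prop-constantes-Lie-type}. Write $\gamma = (p,q,2^{-j}k) \in \Gamma_j$ and $s = (x,y,r) \in \mathrm{H}_n$. Using \eqref{Operations-Heisenberg}, compute $\gamma \cdot \X_j \cdot s$: an element $(0,0,\tau)$ with $\tau \in [-2^{-j-1},2^{-j-1})$ gives $\gamma\cdot(0,0,\tau)\cdot s = (p + x,\ q + y,\ 2^{-j}k + \tau + r + (\text{integer cross-terms involving } p, q, y))$. Intersecting with $\X_j = \{0\}\times\{0\}\times[-2^{-j-1},2^{-j-1})$ forces $x = -p$, $y = -q$ (discrete constraints), and then the third coordinate ranges over $[-2^{-j-1},2^{-j-1}) \cap ([-2^{-j-1},2^{-j-1}) + c)$ for a constant $c = c(\gamma, r)$ depending linearly on $r$ modulo $2^{-j}\Z$. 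Thus $\mu(\X_j \cap \gamma\X_j s)$ equals the one-dimensional measure $\mu_\R([-2^{-j-1},2^{-j-1}) \cap (\cdot + c))= \max(0, 2^{-j} - |c|)$, the same elementary interval-overlap function appearing in Proposition \ref{Prop-constantes-Lie-type}. Then $\int_{\mathrm{H}_n} \mu^2(\X_j \cap \gamma\X_j s)\,\d\mu(s)$ reduces — after summing out the trivial (singleton) contributions from $x = -p$, $y = -q$ — to $\int_{-2^{-j}}^{2^{-j}} (2^{-j} - |c|)^2\,\d c = 2\int_0^{2^{-j}} u^2\,\d u = \frac23 2^{-3j}$, while $\mu(\X_j) = 2^{-j}$, giving the ratio $\frac{1}{2^{-j}}\cdot\frac{\frac23 2^{-3j}}{2^{-2j}} = \frac23$. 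This is exactly a constant (independent of $j$ and $\gamma$), so \eqref{equ-Haar-measure-convergence} holds trivially with $c = \frac23 \in (0,1]$.

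The main obstacle — though a mild one — is the bookkeeping in the cross-term of \eqref{Operations-Heisenberg}: one must confirm that the $a \cdot b'$ correction only shifts the third coordinate by an element of $2^{-j}\Z$ (when the relevant vectors are integer-valued), so that it disappears upon reducing modulo the lattice and does not distort the interval-overlap computation. Once that is checked, amenability of $\mathrm{H}_n$ follows from the reference \cite[page 1459]{Pal2} cited in the statement, or alternatively because $\mathrm{H}_n$ is a closed subgroup of the (nilpotent, hence amenable) Heisenberg group $\mathbb{H}_n$, and closed subgroups of amenable locally compact groups are amenable. With all hypotheses verified, Theorem \ref{thm-complementation-Fourier-ADS-amenable} applies and yields the bounded projection $P_G^p$ onto $\mathfrak{M}^{p,\cb}(\mathrm{H}_n)$ of norm at most $\frac{1}{c} = \frac32$, preserving complete positivity, for all $1 \leq p \leq \infty$ (with $\CB_{\w^*}$ in place of $\CB$ when $p = \infty$).
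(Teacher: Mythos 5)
Your proof is correct, and the computation of the density constant checks out: with $\gamma=(p,q,2^{-j}k)$ and $s=(x,y,r)$, the intersection $\X_j\cap\gamma\X_j s$ is empty unless $x=-p$, $y=-q$, in which case its measure is $\max(0,2^{-j}-|2^{-j}k+r-p\cdot q|)$, and integrating the square over $r\in\R$ (where the constant shift is absorbed by translation invariance) gives $\tfrac23 2^{-3j}$, hence the ratio $\tfrac23$. However, your route differs from the paper's. The paper does not verify the lattice, fundamental domain, ALSS and density properties by hand; instead it exhibits the splitting $\mathrm{H}_n=G_1\rtimes_\eta G_2$ with $G_1=\{(0,b,t):b\in\Z^n,\,t\in\R\}\cong\Z^n\times\R$, $G_2=\{(a,0,0):a\in\Z^n\}\cong\Z^n$ and $\eta_{(a,0,0)}(0,b,t)=(0,b,t+b\cdot a)$, checks that $\eta$ preserves $\Gamma_{1,j}=\Z^n\times 2^{-j}\Z$ and $\X_{1,j}=\{0\}^n\times[-2^{-j-1},2^{-j-1})$, and then quotes Proposition \ref{Prop-complementation-Fourier-ADS-amenable} (semidirect products of abelian groups by discrete groups) together with Proposition \ref{Prop-constantes-Lie-type} (which gives $(\tfrac23)^{n+m}$ with one real factor, i.e.\ $\tfrac23$). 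The paper's approach buys reusability of the general machinery and avoids redoing the cross-term bookkeeping; yours is more self-contained and makes explicit exactly where the Heisenberg cross-term $a\cdot b'$ enters (only as an element of $2^{-j}\Z$, hence harmless), which is the one point the general propositions hide. Both are complete proofs of the statement.
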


\begin{proof}
Using \eqref{Operations-Heisenberg}, it is easy to see that $\mathrm{H}_n$ is a closed subgroup of $\mathbb{H}_n$, so it is locally compact. If $G_1=\{(0,b,t): b \in \Z^n, t \in \R\}$ and $G_2=\{ (a,0,0): a \in \Z^n\}$, it is not difficult to check by using again \eqref{Operations-Heisenberg} that $G_1$ and $G_2$ are closed subgroups of $\mathrm{H}_n$, $\mathrm{H}_n=G_1G_2$, $G_1 \cap G_2=\{(0,0,0)\}$ and that $G_1$ is normal in $\mathrm{H}_n$. By \cite[Proposition page 184]{FeD1}, we deduce an isomorphism $\mathrm{H}_n=G_1 \rtimes_\eta G_2$ of topological groups where 
\begin{equation}
\label{action-eta}
\eta_{(a,0,0)}(0,b,t)
=(0,b,t+b \cdot a), \quad a,b \in \Z^n, t \in \R.	
\end{equation}
Note that $G_1$ is isomorphic to $\Z^n \times \R$ and that $G_2$ is isomorphic to $\Z^n$. For any $j$, we consider $\Gamma_{1,j}=\Z^n \times 2^{-j} \Z$ and $\X_{j,1}=\{0\}^n \times [-2^{-j-1},2^{-j-1})$. For any $(a,0,0) \in G_2$ and any integer $j$, using \eqref{action-eta}, we see that $\eta_{(a,0,0)}(\Gamma_{1,j}) \subset \Gamma_{1,j}$ and $\eta_{(a,0,0)}(\X_{1,j}) \subset \X_{1,j}$. By Proposition \ref{Prop-complementation-Fourier-ADS-amenable}, we deduce that $\Gamma_j$ is a lattice subgroup of $\mathrm{H}_n$, that $\X_j$ is an associated fundamental domain and that the group $\mathrm{H}_n$ is $\ALSS$ with respect to the sequences $(\Gamma_j)$ and $(\X_j)$. Finally the equality \eqref{equ-1-prop-Heisenberg-semidiscrete} is a consequence of Proposition \ref{Prop-constantes-Lie-type} and Proposition \ref{Prop-complementation-Fourier-ADS-amenable}.
\end{proof}

We finish by bringing to light a bad behaviour of \eqref{equ-Haar-measure-convergence} with respect to the Heisenberg group $\mathbb{H}_3$.

\begin{prop}
\label{prop-Heisenberg-constant-c}
For any integer $j$, we consider the lattice subgroup $\Gamma_j = 2^{-j} \Z \times 2^{-j} \Z \times 2^{-2j} \Z$ of the Heisenberg group $\mathbb{H}_3$ and the associated fundamental domain $\X_j=[-2^{-j-1},2^{-j-1}) \times [-2^{-j-1},2^{-j-1}) \times [-2^{-2j-1},2^{-2j-1})$. Then the Heisenberg group $\mathbb{H}_3$ is $\ALSS$ with respect to the increasing sequence $(\Gamma_j)$ and to the sequence $(\X_j)$. Moreover, for every fixed $\gamma = (\gamma_1,\gamma_2,\gamma_3) \in \Gamma_{j_0}$ for some $j_0 \in \N$ with $(\gamma_1,\gamma_2) \not= (0,0)$ and $\gamma_1 \cdot \gamma_2 = 0$ we have
\begin{equation}
\label{equ-1-prop-Heisenberg}
\lim_{j \to +\infty} \frac{1}{\mu(\X_j)} \int_{\mathbb{H}_3} \frac{\mu^2(\X_j \cap \gamma \X_j s)}{\mu^2(\X_j)} \d\mu(s) 
=0. 
\end{equation}
In particular, for this choice of group, and sequences of lattices and fundamental domains, Theorem \ref{thm-complementation-Fourier-ADS-amenable} is not applicable.
\end{prop}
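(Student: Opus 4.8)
The plan is to prove the three separate assertions: first that $\mathbb{H}_3$ is $\ALSS$ with respect to the stated sequences $(\Gamma_j)$ and $(\X_j)$; second the limit formula \eqref{equ-1-prop-Heisenberg}; and third, to observe that these together obstruct \eqref{equ-Haar-measure-convergence} and hence Theorem \ref{thm-complementation-Fourier-ADS-amenable}.

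For the first assertion, I would check that each $\Gamma_j = 2^{-j}\Z \times 2^{-j}\Z \times 2^{-2j}\Z$ is a subgroup of $\mathbb{H}_3$ using the group law \eqref{Operations-Heisenberg}: the point is that if $(a,b,t),(a',b',t') \in \Gamma_j$ then $a\cdot b' \in 2^{-2j}\Z$, so the product stays in $\Gamma_j$; similarly for inverses since $a \cdot b \in 2^{-2j}\Z$. The sequence is increasing because $2^{-j}\Z \subset 2^{-(j+1)}\Z$ and $2^{-2j}\Z \subset 2^{-2(j+1)}\Z$. That $\Gamma_j$ is discrete and cocompact (hence a lattice, by \cite[Proposition B.2.2]{BHV}) follows because $\X_j$ is a relatively compact fundamental domain; to verify that $\X_j$ is a fundamental domain I would use Lemma \ref{Lemma-Fundamental-semidirect} with the semidirect decomposition $\mathbb{H}_3 = G_1 \rtimes_\eta G_2$ where $G_1 = \{(0,b,t)\}$, $G_2 = \{(a,0,0)\}$ and $\eta_{(a,0,0)}(0,b,t) = (0,b,t+ba)$, exactly as in the proof of Proposition \ref{prop-Heisenberg-semidiscrete}, taking $\Gamma_{1,j} = 2^{-j}\Z \times 2^{-2j}\Z$ and $\X_{1,j} = [-2^{-j-1},2^{-j-1}) \times [-2^{-2j-1},2^{-2j-1})$, and checking $\eta_{(a,0,0)}(\Gamma_{1,j}) \subset \Gamma_{1,j}$ and $\eta_{(a,0,0)}(\X_{1,j}) \subset \X_{1,j}$ for $a \in 2^{-j}\Z$ (here the $2^{-2j}$ in the $t$-coordinate is exactly what is needed since $b\cdot a \in 2^{-2j}\Z$). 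Then $\X_j \subset V$ eventually for any neighborhood $V$ of $e$, giving $\ALSS$.

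For the second assertion, fix $\gamma = (\gamma_1,\gamma_2,\gamma_3) \in \Gamma_{j_0}$ with $(\gamma_1,\gamma_2) \neq (0,0)$ and $\gamma_1\gamma_2 = 0$; by symmetry say $\gamma_2 = 0$, $\gamma_1 \neq 0$ (the case $\gamma_1 = 0$, $\gamma_2 \neq 0$ being analogous). I would compute $\mu(\X_j \cap \gamma\X_j s)$ by parametrizing $s = (x,y,t)$ and using \eqref{Operations-Heisenberg} to write $\gamma \cdot (a',b',t') \cdot s = (\gamma_1 + a' + x, b' + y, \ldots)$. The measure $\mu(\X_j \cap \gamma\X_j s)$ is the Lebesgue measure of the set of $(a',b',t') \in \X_j$ whose image lies in $\X_j$; the first two coordinates contribute factors like $\mu_\R([-2^{-j-1},2^{-j-1}) \cap [-2^{-j-1} - \gamma_1 - x, 2^{-j-1} - \gamma_1 - x))$, and the third coordinate $[-2^{-2j-1},2^{-2j-1})$ intersected with a translate by something involving $t$ and the cross term $a' \cdot b'$ (or $\gamma_1 \cdot b'$, etc.). The key observation is that $\gamma_1$ is a fixed nonzero dyadic rational, so for $j$ large, $\gamma_1$ is of order $1$ while the $X_j$-box in the first coordinate has width $2^{-j} \to 0$; thus the shear introduced by $\gamma_1$ in the $t$-coordinate (the term $\gamma_1 b'$ ranges over an interval of length $|\gamma_1| \cdot 2^{-j}$ which eventually exceeds $2^{-2j}$) destroys the overlap. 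I would carry out the resulting elementary integral $\int_{\mathbb{H}_3} \mu(\X_j \cap \gamma\X_j s)^2 \d\mu(s)$ explicitly — it factors into a product of one-dimensional integrals with an extra constraint coupling the $b'$-variable and the $t$-variable through $\gamma_1$ — and show, after normalizing by $\mu(\X_j)^3 = 2^{-4j} \cdot 2^{-4j} \cdot \ldots$ wait, $\mu(\X_j) = 2^{-j}\cdot 2^{-j}\cdot 2^{-2j} = 2^{-4j}$, so $\mu(\X_j)^3 = 2^{-12j}$, that the normalized quantity behaves like $2^{-j}$ up to a constant (the effective volume of overlap shrinks by an extra factor compared to the abelian case), hence tends to $0$.

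The main obstacle will be the bookkeeping in the second step: correctly tracking how the non-abelian cross term $a \cdot b'$ in \eqref{Operations-Heisenberg} interacts with the three rescaled coordinates, and showing that the coupling makes the overlap integral decay rather than stabilize. Concretely, one must verify that when $\gamma_2 = 0$, the intersection condition forces $b' \in [-2^{-j-1},2^{-j-1}) \cap [-2^{-j-1}-y, 2^{-j-1}-y)$ \emph{and simultaneously} $\gamma_1 b' + (\text{stuff in } t) \in$ an interval of length $2^{-2j}$; since $\gamma_1$ is a fixed constant, the admissible range of $b'$ for the $t$-condition has length $\asymp 2^{-2j}/|\gamma_1| = 2^{-2j}/|\gamma_1|$, which is much smaller than $2^{-j}$, producing the extra decay. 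Once this is set up, the integration over $s = (x,y,t)$ and the final limit are routine. Finally, for the third assertion, \eqref{equ-1-prop-Heisenberg} shows that the supremum over $\gamma \in \Gamma_j \cap K$ (for a compact $K$ containing such a $\gamma$) of the quantity in \eqref{equ-Haar-measure-convergence} cannot converge to a single positive constant $c$: indeed it contains values tending to $0$ (from the shear directions) while also — by a computation analogous to Proposition \ref{Prop-constantes-Lie-type} in the central direction $\gamma_1 = \gamma_2 = 0$ — containing values bounded away from $0$; hence \eqref{equ-Haar-measure-convergence} fails for this choice of data, so Theorem \ref{thm-complementation-Fourier-ADS-amenable} does not apply.
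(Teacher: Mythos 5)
Your proposal is correct and follows essentially the same route as the paper: the ALSS part is handled by the same semidirect-product/fundamental-domain considerations, and the limit \eqref{equ-1-prop-Heisenberg} is obtained from the same mechanism, namely that the fixed nonzero $\gamma_1$ (resp. $\gamma_2$) shears a variable of scale $2^{-j}$ into the $t$-window of scale $2^{-2j}$, forcing an extra factor of roughly $2^{-j}$ in the normalized overlap integral. The only difference is organizational — you compute $\mu(\X_j \cap \gamma\X_j s)$ directly and then square, whereas the paper expands the square as a triple integral over $\X_j^3$ of the indicator of $\gamma^{-1}rt^{-1}\gamma u$ before extracting the same constraint — and this does not change the substance of the argument.
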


\begin{proof}
Note that it is obvious that $\mathbb{H}_3$ is $\ALSS$ with respect to the sequences $(\Gamma_j)$ and $(\X_j)$. First observe that for any $s \in \mathbb{H}_3$ and any integer $j$ we have
$$
\mu(\X_j \cap  \gamma\X_j s)
=\int_{\mathbb{H}_3} 1_{\X_j \cap  \gamma\X_j s}(t) \d \mu(t)
=\int_{\mathbb{H}_3} 1_{\X_j}(t) 1_{\gamma\X_j s}(t) \d \mu(t)
=\int_{\mathbb{H}_3} 1_{\X_j}(t) 1_{\X_j}(\gamma^{-1} t s^{-1}) \d \mu(t).
$$
For any $s \in \mathbb{H}_3$, any $j$ and any $\gamma \in \Gamma_j$, we have using the invariance of the Haar measure in the third equality (to use $u=\gamma^{-1} ts^{-1}$)
\begin{align}
\MoveEqLeft
\label{equ2-Heisenberg} 
\frac{1}{\mu(\X_j)} \int_{\mathbb{H}_3} \frac{\mu(\X_j \cap \gamma \X_j s)}{\mu^2(\X_j)} \d\mu(s)
=\frac{1}{\mu(\X_j)^3} \int_{\mathbb{H}_3} \mu(\X_j \cap \gamma \X_j s)\mu(\X_j \cap \gamma \X_j s) \d\mu(s)\\
&=\frac{1}{\mu(\X_j)^3} \int_{\mathbb{H}_3} \int_{\mathbb{H}_3} \int_{\mathbb{H}_3} 1_{\X_j}(t)1_{\X_j}(r) 1_{\X_j}(\gamma^{-1} ts^{-1})1_{\X_j}(\gamma^{-1} rs^{-1})  \d\mu(r) \d\mu(t)\d\mu(s) \nonumber\\
&=\frac{1}{\mu(\X_j)^3} \int_{\mathbb{H}_3} \int_{\mathbb{H}_3} \int_{\mathbb{H}_3} 1_{\X_j}(t)1_{\X_j}(r) 1_{\X_j}(u)1_{\X_j}(\gamma^{-1} rt^{-1}\gamma u)  \d\mu(r) \d\mu(t)\d\mu(u)\nonumber\\
&=\frac{1}{\mu(\X_j)^3} \int_{\X_j} \int_{\X_j} \int_{\X_j} 1_{\X_j}(\gamma^{-1} rt^{-1}\gamma u) \d\mu(r)\d\mu(u)\d\mu(t)\nonumber \\
&=\frac{1}{\mu(\X_j)^{3}} \int_{\R^3} \int_{\R^3} \int_{\R^3} 1_{|r_1|,|u_1|,|t_1|, |r_2|,|u_2|,|t_2| 
\leq 2^{-j-1}} 1_{|r_3|,|u_3|,|t_3| 
\leq 2^{-2j-1}} 1_{\X_j}(\gamma^{-1} rt^{-1}\gamma u) \\
&\qquad \qquad \qquad \qquad\qquad \qquad \qquad \qquad\qquad \qquad \qquad \qquad\qquad \qquad \qquad \qquad\d r \d u \d t. \nonumber
\end{align} 
If $\gamma=(\gamma_1,\gamma_2,\gamma_3) \in \Gamma_j$ and if $r,u,t \in \X_j$, by \eqref{Operations-Heisenberg}, a tedious yet elementary calculation yields
\begin{equation}
\label{equ-3-prop-Heisenberg}
\gamma^{-1} rt^{-1}\gamma u
=(u_1 + r_1 - t_1,u_2+r_2-t_2,u_3 + r_3 - t_3 -\gamma_1 r_2 + t_1t_2-t_1\gamma_2-t_1u_2 +r_1 \gamma_2 +r_1 u_2 -r_1t_2 +\gamma_1t_2 ). 
\end{equation}
We estimate from above. The last indicator function in the previous triple integral can be majorized by $1_{|(\gamma^{-1} rt^{-1}\gamma u)_3| \leq 2^{-2j-1}}$. If $|(\gamma^{-1} rt^{-1}\gamma u)_3| \leq 2^{-2j-1}$ and $r,u,t \in \X_j$, then by triangle inequality and \eqref{equ-3-prop-Heisenberg}, we have
\begin{align*}
\MoveEqLeft
 |-\gamma_1r_2 - t_1 \gamma_2 + r_1 \gamma_2 +\gamma_1 t_2| 
    \leq \left|(\gamma^{-1} rt^{-1}\gamma u)_3\right| +|u_3 + r_3 - t_3 + t_1t_2-t_1u_2 +r_1 u_2 -r_1t_2|\\
		&\leq 2^{-2j-1}\bigg(1+1+1+1+2+2+2+2\bigg)
		=6 \cdot 2^{-2j}.
\end{align*}  
Using the equality $\frac{1}{\mu(\X_j)^{3}}=2^{12j}$, this says that \eqref{equ2-Heisenberg} is less than  
\begin{align*}
\MoveEqLeft
2^{12j} \int_{\R^3} \int_{\R^3} \int_{\R^3} 1_{|r_1|,|u_1|,|t_1|,|r_2|,|u_2|,|t_2| 
\leq 2^{-j-1}} 1_{|r_3|,|u_3|,|t_3| 
\leq 2^{-2j-1}} 
\\
&\qquad \qquad \qquad \qquad\qquad \qquad \qquad \qquad 1_{|-\gamma_1r_2 - t_1 \gamma_2 + r_1 \gamma_2 +\gamma_1 t_2| \leq 6 \cdot 2^{-2j}} 
\d r \d u \d t.
\end{align*}
We cheaply integrate over $u_1,u_2,r_3,u_3$ and $t_3$ and obtain
$$
= 2^{12 j} 2^{-8j} \int_{\R^4} 1_{|r_1|,|t_1|,|r_2|,|t_2| 
\leq 2^{-j-1} } 1_{|-\gamma_1r_2 - t_1 \gamma_2 + r_1 \gamma_2 +\gamma_1 t_2| 
\leq 6 \cdot 2^{-2j} } \d r_1 \d r_2 \d t_1 \d t_2.
$$
Now suppose first that $\gamma_2 = 0$ and $\gamma_1 \neq 0$. Then the last indicator function  can be simplified and we can cheaply integrate over $r_1$ and $t_1$ to estimate further
\begin{align*}
\MoveEqLeft
\leq 2^{4j} 2^{-2j} \int_{\R^2} 1_{|r_2|,|t_2| \leq 2^{-j-1} } 1_{|-r_2 + t_2| 
\leq \frac{1}{|\gamma_1|} 6 \cdot 2^{-2j}} \d r_2 \d t_2 \\
&=2^{2j} \int_{-2^{-j-1}}^{2^{-j-1}} \int_{-2^{-j-1}}^{2^{-j-1}}  1_{|t_2- r_2| \leq \frac{1}{|\gamma_1|}6 \cdot 2^{-2j}} \d r_2 \d t_2 
=\frac{1}{4}\int_{-1}^1 \int_{-1}^{1} 1_{|t_2' - r_2'| \leq \frac{1}{|\gamma_1|}12\cdot  2^{-j}} \d r_2' \d t_2'.
\end{align*}
where we have performed the change of variables $r_2' = 2^{j+1} r_2$, $t_2' = 2^{j+1} t_2$. Now the last double integral is easily seen to converge to $0$ as $j \to \infty$. The case $\gamma_1 = 0$ and $\gamma_2 \neq 0$ can be treated in the same way by symmetry.
\end{proof}

\subsection{Pro-discrete groups}
\label{subsec-pro-discrete-groups}

An inverse system of topological groups indexed by a directed set $I$ consists of a family $(G_j)_{j \in I}$ of topological groups and a family $(\psi_{ij} \co G_j \to G_i)_{i,j \in I, j \geq i}$ of continuous homomorphisms such that $\psi_{ii}=\Id_{G_i}$ and $\psi_{ij} \psi_{jk}=\psi_{ik}$ whenever $k \geq j \geq i$ \cite[Definition 1.1.1]{Wil}. An inverse system is called a surjective inverse system if each map $\psi_{ij}$ is surjective. Now let $(G_j, \psi_{ij})$ be an inverse system of topological groups and let $G$ be a topological group. We shall call a family of continuous homomorphisms $\psi_j \co G \to G_j$ compatible with the inverse system if $\psi_{ij}\psi_j=\psi_i$ whenever $j \geq i$. An inverse limit of an inverse system ($G_j$, $\psi_{ij}$) of topological groups is a topological group $G$ together with a compatible family $\psi_j \co G \to G_j$ of continuous homomorphisms with the following universal property: whenever $\psi_j' \co G' \to G_j$ is a compatible family of continuous homomorphisms from a topological group $G'$, there exists a unique continuous homomorphism $\varphi \co G' \to G$ such that $\psi_j\varphi=\psi_j'$ for each $j$. Each inverse system admits an inverse limit, given by the following construction \cite[Proposition 1.1.4]{Wil}:
\begin{equation}
\label{Def-profinite}
\varprojlim G_j
=\bigg\{s \in \prod_{j \in I} G_j: p_i(s)=\psi_{ij}(p_j(s)) \text{ for all } i \leq j\bigg\}
\end{equation}
with the subspace topology from the product topology and with projection maps $\psi_j$ given by the restrictions to $\varprojlim G_j$ of the projection maps $p_i \co \prod_{j \in I} G_j \to G_i$ from the product. 

We say that a topological group $G$ is pro-discrete if it is isomorphic to the inverse limit of an inverse system of discrete groups. We have the following characterization for locally compact groups which is a variation of \cite[Lemma 1.3]{Sau}. For the sake of completeness, we give a complete proof.

\begin{prop}
\label{prop-pro-discrete}
A locally compact group $G$ is pro-discrete if and only if it admits a basis $(\X_j)$ of neighborhoods of the identity $e_G$ consisting of open compact normal subgroups. In this case, we have $G=\varprojlim G_j$ where the inverse system is given by the groups $G_j=G/\X_j$ and by the homomorphisms $\psi_{ij} \co G_j \to G_i$, $s\X_j \mapsto s\X_i$ for $j \geq i$ and where the preorder is the opposite of inclusion\footnote{\thefootnote. We let $j \geq i$ if and only if $\X_j \subset \X_i$.} of the $\X_j$'s. Moreover, if $G$ is first countable then there exists a countable basis of open compact normal subgroups.
Finally, a pro-discrete locally compact group $G$ is always totally disconnected.
\end{prop}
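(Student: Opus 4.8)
The statement has four parts, which I would treat in this order: (i) a pro-discrete locally compact group has a basis of open compact normal subgroups at $e_G$; (ii) conversely, such a basis yields a presentation as $\varprojlim G/\X_j$; (iii) first countability gives a countable such basis; (iv) a pro-discrete locally compact group is totally disconnected.

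For (ii), the ``easy'' direction, I would start from a basis $(\X_j)$ of open compact normal subgroups at $e_G$, preorder the index set by reverse inclusion, and set $G_j = G/\X_j$. Each $G_j$ is discrete since $\X_j$ is open, and the maps $\psi_{ij}\colon s\X_j \mapsto s\X_i$ (for $\X_j \subset \X_i$) are well-defined continuous homomorphisms forming an inverse system. The canonical map $G \to \varprojlim G_j$, $s \mapsto (s\X_j)_j$, is a continuous homomorphism; it is injective because $\bigcap_j \X_j = \{e_G\}$ (the $\X_j$ form a neighborhood basis and $G$ is Hausdorff), and its image is dense. To get that it is a topological isomorphism onto $\varprojlim G_j$, I would use that $\X_j$ is compact: the image of the compact open subgroup $\X_j$ maps onto the kernel of the projection $\varprojlim G_k \to G_j$, and a standard compactness argument (or invoking that a continuous bijective homomorphism from a locally compact $\sigma$-compact-on-$\X_j$-pieces group is open on those pieces) shows the map is open. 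I expect this to be mostly routine bookkeeping.

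For (i), the harder direction, I would take $G = \varprojlim G_j$ with the $G_j$ discrete and the presentation \eqref{Def-profinite}, and exhibit the required basis. The natural candidates are the subgroups $N_j = \ker \psi_j = G \cap \prod_{k} U_k$ where $U_k$ is chosen appropriately — more precisely, $N_j$ is the preimage in $G$ of $\{e_{G_j}\}$ under $\psi_j$, which is open since $G_j$ is discrete and $\psi_j$ continuous, and normal since it is a kernel. The subtle point, and what I expect to be the main obstacle, is \emph{compactness} of these subgroups: an arbitrary inverse limit of discrete groups need not be locally compact, so I must genuinely use the hypothesis that $G$ is locally compact. The argument I would run: pick a compact neighborhood $K$ of $e_G$; since the $N_j$ (over all $j$ and all finite intersections) form a neighborhood basis at $e_G$, there is some $j_0$ with $N_{j_0} \subset K$, hence $N_{j_0}$ is a closed subset of the compact $K$, thus compact; and then for $j \geq j_0$ the $N_j \subset N_{j_0}$ are also compact open normal, and they still form a neighborhood basis. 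One must check carefully that finite intersections of the basic kernels are again of the form $N_j$ for some $j$ in the directed index set (using that $I$ is directed and $\psi_j$ is compatible), so the basis can indeed be taken to consist of subgroups $N_j$ rather than finite intersections.

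For (iii), if $G$ is first countable, take a countable neighborhood basis $(V_n)$ at $e_G$; for each $n$ choose an open compact normal subgroup $\X_{j(n)} \subset V_n$ from the basis produced in (i); the countable family $(\X_{j(n)})$ is then itself a neighborhood basis at $e_G$, and this is exactly what is needed (one may further intersect finitely many to make it decreasing, using normality and compactness to stay inside the class). For (iv), total disconnectedness is immediate once (i) is in hand: the connected component $C$ of $e_G$ is contained in every open subgroup (an open subgroup is also closed, so $C$ lies in it as $C$ is connected and contains $e_G$), hence $C \subset \bigcap_j \X_j = \{e_G\}$; since $G$ is a topological group all connected components are translates of $C$, so $G$ is totally disconnected. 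I would also remark that this last part also follows from \cite[Corollary 1.2.4]{Wil} once the profinite-like structure is exhibited, but the direct argument via open subgroups is cleaner here.
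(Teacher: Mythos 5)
Your proposal is correct and follows essentially the same route as the paper's proof: the kernels $\ker\psi_j$ as the candidate basis (open and normal as kernels, forming a neighborhood basis via the product topology and directedness, compact for large $j$ by local compactness), the inverse-limit presentation via the map $s\mapsto(s\X_j)_j$ with injectivity from the Hausdorff property and surjectivity from a finite-intersection/compactness argument on the cosets $s_j\X_j$, and total disconnectedness from the fact that open subgroups are closed. The only difference is cosmetic: you are slightly more explicit than the paper about why the bijection is open (via the compact open subgroup $\X_j$ mapping onto the open kernel of a projection), which is a welcome extra precision but not a different argument.
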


\begin{proof}
Suppose that $G$ admits a family $(\X_j)$ of open compact normal subgroups forming a neighborhood basis of $e_G$. For any $j \in I$, we set $G_j \overset{\textrm{def}}= G/\X_j$, which is discrete by \cite[Theorem 5.21]{HR} since $\X_j$ is open. We use the preorder defined in the statement of the result. For $j \geq i$, i.e. $\X_j \subset \X_i$, we also consider the well-defined homomorphism $\psi_{ij} \co G_j \to G_i$, $s\X_j \mapsto s\X_i$. It is plain to check\footnote{\thefootnote. If $k \geq j \geq i$ we have $\psi_{ij} \psi_{jk}(s\X_k)=\psi_{ij}(s\X_j)=s\X_i=\psi_{ik}(s\X_k)$.} that the $(\psi_{ij})_{j \geq i}$ is an inverse system. We consider the construction \eqref{Def-profinite} of the inverse limit $\varprojlim G_j$. Note that the family of continuous homomorphisms $\psi_j' \co G \to G/\X_j$, $s \mapsto s \X_j$ is compatible\footnote{\thefootnote. If $j \geq i$ we have $\psi_{ij}\psi_j'(s)=\psi_{ij}(s\X_j)=s\X_i=\psi_i'(s)$.}. According to the universal property, there exists a continuous homomorphism $\varphi \co G \to \varprojlim G_j$ satisfying the compatibility $\psi_j' = \psi_j \varphi$. For any $s \in G$, this means that
$$
s \X_j 
=\psi_j'(s) 
=\psi_j (\varphi(s)) 
=p_j(\varphi(s)),
$$ 
so that $\varphi(s)$ is equal to the element $(s \X_j)_{j \in  I}$ of the product $\prod_{j \in I} G/\X_j$.

It remains to check that $\varphi$ is bijective. For the injectivity, suppose that $\varphi(s) = e$, so $s\X_j = \X_j$ for all $j$. Thus, $s \in \X_j$ for all $j$. Since $G$ is Hausdorff and since the $\X_j$'s form a basis of neighborhoods, we obtain $s = e_G$. For the surjectivity, let $t=(s_j \X_j)_{j \in I}$ be an element of $\varprojlim G_j$. Let $F$ be a finite subset of $I$. Consider some $i \in I$ such that $i \geq j$ for any $j \in F$. For $j \in F$, we have
$$
s_j \X_j 
=p_j(t)
=\psi_{ji}(p_i(t))
=\psi_{ji}(s_i \X_i)
=s_i \X_j
$$
so $s_i \in s_j \X_j$. Hence $s_i$ belongs to $\cap_{j \in F} s_j \X_j$. We infer that the collection of the compact subsets $s_j \X_j$ has the finite intersection property. We conclude that there exists $s \in \cap_{j \in I} s_j \X_j$. Consequently $\varphi(s)= (s \X_j)_{j \in I}=(s_j \X_j)_{j \in I}=t$. We conclude that $G \cong \varprojlim G_j$.

Assume now that $G$ is an inverse limit $\varprojlim G_j$ of discrete groups $G_j$. Again, we use the description \eqref{Def-profinite}. Since each $\psi_j$ is continuous, each kernel $\ker \psi_j = \psi_j^{-1}(\{e_j\})$ is the preimage of an open set, hence open in $G$. We also know that $\ker \psi_j$ is normal and closed as a kernel of a continuous homomorphism. It only remains to check that the $\ker \psi_j$'s form a neighborhood basis of the identity $e_G$. Indeed since $\ker \psi_j$ will fall within any given compact neighborhood of $e_G$ for big enough $j$, $\ker \psi_j$ will also be compact for such $j$.

Let $U$ be any neighborhood of $e_G$ in $G$. Then by trace topology, there exists a neighborhood $\tilde{U}$ of $e_G$ in $\prod_{j \in I} G_j$ with $U = \tilde{U} \cap G$. By the definition of the product topology, there exists some finite subset $F$ of $I$ such that the subset $\tilde{V} = \prod_{j \in I} A_j$ of $\prod_{j \in I} G_j$ satisfies $\tilde{V} \subset \tilde{U}$ with $A_j=\{e_j\}$ if $j \in F$ and $A_j=G_j$ if $j \not\in F$. Since $I$ is directed, we can choose $i \in I$ such that $i \geq j$ for any $j \in F$. Then for any $s \in \ker \psi_i$ and any $j \in F$, we have 
$$
p_j(s)
= \psi_{ji}(p_i(s))
=\psi_{ji}(\psi_i(s))
= \psi_{ji}(e_i) 
=e_j.
$$ 
Hence $\ker \psi_j \subset \tilde{V}$. Consequently, we have $\ker \psi_j \subset \tilde{V} \cap G \subset U$. We have shown that the $\ker \psi_j$'s form a neighborhood basis of the identity. 

If $G$ is first countable, there exists a countable neighborhood basis of $e_G$, so we can also extract a sequence of the $\ker \psi_j$ forming a neighborhood basis of $e_G$.

We turn to the last claim. Recall that the intersection of all open subgroups of a locally compact group is the connected component of the identity $e_G$ by \cite[Theorem 7.8]{HR}. Since $G$ is Hausdorff, the intersection of closed neighborhoods of $e_G$ is $\{e_G\}$. Since an open subgroup is always closed \cite[Theorem 5.5]{HR}, we infer that the component of the identity is equal to $\{e_G\}$. By \cite[Theorem 7.3]{HR}, we conclude that $G$ is totally disconnected.
\end{proof}

In particular, by \cite[Proposition 3 page 20]{Bou1}, a pro-discrete locally compact group $G$ is unimodular. 

\begin{remark} \normalfont
\label{Rem-total-disc}
Note that a locally compact group $G$ is totally disconnected if and only if  the compact open subgroups form a basis of neighborhoods of the identity $e_G$. The end of the proof of Proposition \ref{prop-pro-discrete} proves the more general implication $\Leftarrow$. The converse is \cite[Theorem 7.7]{HR}.
\end{remark}

There is the following variant of Theorem \ref{thm-complementation-Fourier-ADS-amenable}.

\begin{thm}
\label{thm-complementation-quotient}
Let $G=\varprojlim G_j$ be a second countable pro-discrete locally compact group with respect to an inverse system indexed by $\N$. Suppose $1 \leq p \leq \infty$. Assume that $G$ is amenable if $1 <p< \infty$. Then there exists a contractive map
$$
P_G^p \co \CB(\L^p(\VN(G))) \to \mathfrak{M}^{p,\cb}(G)
$$
with the properties:
\begin{enumerate}
\item If $T$ is completely positive, then $P_G^p(T)$ is also completely positive.
\item If $T = M_\psi$ is a Fourier multiplier on $\L^p(\VN(G))$ with bounded measurable symbol $\psi \co G \to \C$ then $P_G^p(M_\psi) = M_\psi$.
\end{enumerate}
Moreover, $P_G^p$ has the following compatibility: if $T \in \CB(\L^p(\VN(G))) \cap \CB(\L^q(\VN(G)))$ for some $1 \leq p,q \leq \infty$, then $P_G^p(T)$ being twice defined as an element of $\mathfrak{M}^{p,\cb}(G)$ and $\mathfrak{M}^{q,\cb}(G)$ coincides on $\L^p(\VN(G)) \cap \L^q(\VN(G))$. Note that in the case $p = \infty$, we can take $\CB_{\w^*}(\VN(G))$ as the domain space of $P_G^\infty$.
\end{thm}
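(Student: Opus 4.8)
<br>

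The plan is to mimic the proof of Theorem \ref{thm-complementation-Fourier-ADS-amenable}, but with the sequence of lattices and fundamental domains replaced by the sequence of open compact normal subgroups furnished by Proposition \ref{prop-pro-discrete}. More precisely, by Proposition \ref{prop-pro-discrete}, since $G$ is second countable pro-discrete, there exists a decreasing sequence $(K_j)_{j \geq 1}$ of open compact normal subgroups of $G$ forming a neighborhood basis of $e_G$, with $G = \varprojlim G/K_j$. Each quotient $G/K_j$ is a discrete group. First I would set $\X_j = K_j$; since $K_j$ is itself a subgroup, $\X_j$ is automatically a symmetric fundamental domain for the lattice $\Gamma_j = K_j$... wait — here the situation is in fact simpler and dual to the ALSS case: instead of a lattice $\Gamma_j$ inside $G$ with a shrinking fundamental domain, we have a shrinking normal subgroup $K_j$ with discrete quotient $G/K_j$. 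So the role of $\VN(\Gamma_j)$ in Theorem \ref{thm-complementation-Fourier-ADS-amenable} is now played by $\VN(G/K_j)$, and the averaging projection machinery of Lemma \ref{Lemma-iso-GK-VN}, Lemma \ref{Lemma-trace-preserving-averaging} replaces the maps $\Phi_j^p$, $\Psi_j^p$ built from $h_j = \lambda(1_{\X_j})$.

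The key steps, in order, are as follows. (1) Using the averaging projection $p_{K_j}$ associated with $K_j$ and the $*$-isomorphism $\pi_j \co \VN(G/K_j) \to \VN(G)p_{K_j}$ of Lemma \ref{Lemma-iso-GK-VN}, together with the trace-scaling computation of Lemma \ref{Lemma-trace-preserving-averaging}, build for $1 \leq p \leq \infty$ completely positive, completely contractive maps $\Phi_j^p \co \L^p(\VN(G/K_j)) \to \L^p(\VN(G))$ (essentially $x \mapsto \mu_G(K_j)^{1/p} \pi_j(x)$, suitably normalized so as to be a complete contraction) and their adjoints $\Psi_j^p = (\Phi_j^{p^*})^* \co \L^p(\VN(G)) \to \L^p(\VN(G/K_j))$, completely positive and completely contractive by Lemma \ref{Lemma-adjoint-cp}. (2) For a completely bounded $T$ on $\L^p(\VN(G))$, apply the discrete complementation Corollary \ref{Prop-complementation-Fourier-subgroups} (with $H = G/K_j$) to $\Psi_j^p T \Phi_j^p$ to obtain a completely bounded Fourier multiplier $M_{\phi_j}$ on $\L^p(\VN(G/K_j))$ with $\norm{M_{\phi_j}}_{\cb} \leq \norm{T}_{\cb}$, preserving complete positivity, with $p=\infty$ handled via $P_{\w^*}$ from Proposition \ref{Prop-recover-weak-star-continuity}. (3) Pull the multiplier $M_{\phi_j}$ on the quotient back up to a Fourier multiplier on $\L^p(\VN(G))$: since $K_j$ is a compact open normal subgroup, the inflation $\widetilde{\phi_j} = \phi_j \circ q_j$ (where $q_j \co G \to G/K_j$ is the quotient map) induces a completely bounded Fourier multiplier $M_{\widetilde{\phi_j}}$ on $\L^p(\VN(G))$ with $\norm{M_{\widetilde{\phi_j}}}_{\cb} \leq \norm{M_{\phi_j}}_{\cb}$; this is the pro-discrete analogue of Theorem \ref{thm-Jodeit-version-cb-Theorem-B.1}, but here it is genuinely easier and requires no amenability, because the extension map along the surjection $q_j \co G \to G/K_j$ is covered directly by the completely bounded homomorphism Theorem \ref{prop-homomorphism-Fourier-multipliers-cb} in the cases $p \in \{1,\infty\}$ and, for $1<p<\infty$ under the standing amenability hypothesis, again by Theorem \ref{prop-homomorphism-Fourier-multipliers-cb} applied to $q_j$. (4) Take a weak* cluster point of the maps $P_j^p \co T \mapsto M_{\widetilde{\phi_j}}$ in $\mathrm{Ball}(\B(\CB(\L^p(\VN(G)))))$ simultaneously for all $p$ (Banach-Alaoglu and Tychonoff, exactly as in Theorem \ref{thm-complementation-Fourier-ADS-amenable}), obtaining $P_G^p$; use Lemma \ref{lem-Fourier-multiplier-weak-star-closed} to see $P_G^p(T)$ is a Fourier multiplier, Lemma \ref{lem-completely-positive-weak-limit} for preservation of complete positivity, and the $p=1$ case is again defined by $P_G^1(T) = P_G^\infty(T^*)_*$ via Lemma \ref{lemma-duality-Fourier-multipliers}.

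The one genuinely new verification — and the step I expect to be the main obstacle — is the projection property $P_G^p(M_\psi) = M_\psi$ for every bounded measurable symbol $\psi$, not merely continuous ones. In Theorem \ref{thm-complementation-Fourier-ADS-amenable} this required the technical conditions \eqref{equ-Haar-measure-convergence} and either symmetry or $\Gamma_j$-commutativity of the fundamental domains, and a long asymptotic analysis of the symbols $\phi_j$, $\widetilde{\phi_j}$. Here the point to exploit is that $K_j$ is a genuine subgroup, so $p_{K_j}$ is a central projection and the averaging is a true conditional-expectation-type operation: one should compute directly that, for $\psi$ bounded measurable, $\widetilde{\phi_j}(s) = \frac{1}{\mu_G(K_j)} \int_{K_j} \psi(sk)\, \d\mu_G(k)$, i.e. $\widetilde{\phi_j}$ is the average of $\psi$ over the coset $sK_j$. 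Since the $K_j$ shrink to $e_G$ and form a neighborhood basis, $\widetilde{\phi_j} \to \psi$ in the weak* topology of $\L^\infty(G)$ for every $\psi \in \L^\infty(G)$ (a martingale-type / Lebesgue-differentiation argument along the decreasing sequence of subgroups, using density of $C_c(G)$ together with the uniform bound $\norm{\widetilde{\phi_j}}_{\L^\infty(G)} \leq \norm{\psi}_{\L^\infty(G)}$), and then Lemma \ref{Lemma-Symbol-weakstar-convergence-weakLp-vrai} gives $M_{\widetilde{\phi_j}} \to M_\psi$ in the relevant topology, whence $P_G^p(M_\psi) = M_\psi$ after passing to the cluster point. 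The compatibility statement for different $p$ follows verbatim as in Theorem \ref{thm-complementation-Fourier-ADS-amenable}, since the symbol $\widetilde{\phi_j}$ manifestly does not depend on $p$ and $\Phi_j^p, \Psi_j^p$ are compatible; and the norm bound is $\leq 1$ here (rather than $\frac1c$) precisely because the averaging operation over a subgroup is an honest contraction, which is why no density hypothesis is needed.
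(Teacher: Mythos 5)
Your proposal is correct and follows essentially the same route as the paper's proof: the same maps $\Phi_j^p,\Psi_j^p$ built from the averaging projections $p_{K_j}$ of Lemmas \ref{Lemma-iso-GK-VN} and \ref{Lemma-trace-preserving-averaging}, the discrete complementation applied to $\Psi_j^p T\Phi_j^p$, inflation along the quotient map via Theorem \ref{prop-homomorphism-Fourier-multipliers-cb} (which, as you note, is the sole place amenability enters for $1<p<\infty$; cf.\ Remark \ref{rem-non-amenability-pro-discrete}), identification of the symbol as the conditional expectation $\widetilde{\phi_j}=\E_j^\infty(\psi)$ and its weak* convergence to $\psi$ by a martingale argument, and a weak* cluster point to conclude. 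The paper carries these out in exactly this order (Lemmas \ref{Lemma-equ-symbol-profinite} and \ref{lem-prodiscrete-weak-star}), so no further comparison is needed.
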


\begin{proof}
Let $G=\varprojlim G_j$ be a second countable pro-discrete locally compact group. By Proposition \ref{prop-pro-discrete}, $G$ admits a (countable) basis $(\X_j)$ of neighborhoods of the identity $e_G$ consisting of open compact normal subgroups. By \eqref{Iso-averaging-projection}, we have an isomorphism from the group von Neumann algebra $\VN(G/\X_j)$ onto $p_{\X_j}\VN(G)$. Using Lemma \ref{Lemma-trace-preserving-averaging}, we obtain a completely positive and completely contractive map $\L^p(\VN(G/\X_j)) \to \L^p(p_{\X_j}\VN(G))= p_{\X_j}\L^p(\VN(G))$. By composing this map with the identification $\L^p(p_{\X_j}\VN(G)) \subset \L^p(\VN(G))$, we obtain a (normal if $p=\infty$) completely positive and completely contractive map
$$
\Phi_{j}^{p} \co \L^p(\VN(G/\X_j)) \to \L^p(\VN(G)), \lambda_{G/\X_j,s \X_j} \mapsto \mu_G(\X_j)^{\frac1p} p_{\X_j}\lambda_{G,s}.
$$ 
Furthermore, we consider the adjoint (preadjoint if $p=1)$ $\Psi_{j}^{p}= \big(\Phi_{j}^{p^*}\big)^* \co \L^p(\VN(G)) \to \L^p(\VN(G/\X_j))$ of $\Phi_{j}^{p^*}$ which is also (normal if $p=\infty$) completely contractive and completely positive by Lemma \ref{Lemma-adjoint-cp} for any $1 \leq p \leq \infty$.

Let $T \co \L^p(\VN(G)) \to \L^p(\VN(G))$ be some completely bounded map. Now, using Theorem \ref{Prop-complementation-Schur-Fourier} for the discrete group $G/\X_j$ (since $\X_j$ is open; note that if $p \not= \infty$, $G/\X_j$ is amenable by \cite[Proposition G.2.2]{BHV}), we define the completely bounded Fourier multiplier
$$
M_{\varphi_j}
=P_{G/\X_j}^p\big(\Psi_{j}^{p} T \Phi_{j}^{p}\big) 
\co \L^p(\VN(G/\X_j)) \to \L^p(\VN(G/\X_j))
$$
if $1 \leq p<\infty$ and $M_{\varphi_j}
=P_{G/\X_j}^\infty\big(\Psi_{j}^{\infty} P_{\w^*}(T) \Phi_{j}^{\infty}\big) 
\co \VN(G/\X_j) \to \VN(G/\X_j)$ if $p = \infty$, where the contractive map $P_{\w^*} \co \CB(\VN(G)) \to \CB(\VN(G))$ is described in Proposition \ref{Prop-recover-weak-star-continuity}. Note that $\varphi_j \co G/\X_j \to \C$ is defined by $\varphi_j(s/\X_j)=\tau_{G/\X_j}\big(\Psi_{j}^{p} T \Phi_{j}^{p} (\lambda_{s\X_j}) \lambda_{s^{-1}\X_j}\big)$ (if $T$ is normal in the case $p=\infty$). Then 
\begin{align*}
\MoveEqLeft
 \bnorm{M_{\varphi_j}}_{\cb,\L^p(\VN(G/\X_j)) \to \L^p(\VN(G/\X_j))}
=\bnorm{P_{G/\X_j}^p\big(\Psi_{j}^{p} T \Phi_{j}^{p}\big)}_{\cb,\L^p(\VN(G/\X_j)) \to \L^p(\VN(G/\X_j))}\\
&\leq \bnorm{\Psi_{j}^{p} T \Phi_{j}^{p}}_{\cb,\L^p(\VN(G/\X_j)) \to \L^p(\VN(G/\X_j))}
\leq \norm{T}_{\cb,\L^p(\VN(G)) \to \L^p(\VN(G))},
\end{align*}
in the case $1\leq p<\infty$ and similarly in the case $p=\infty$. Note that each function $\varphi_j$ is continuous since $G/\X_j$ is discrete. Now, we define the continuous complex function $\widetilde{\varphi}_j=\varphi_j \circ \pi_j \co G \to \C$ where $\pi_j \co G \to G/\X_j$ is the canonical surjective map. Since the homomorphism $\pi_j$ is continuous, according to Proposition \ref{prop-homomorphism-Fourier-multipliers-cb}, the symbol $\widetilde{\varphi}_j$ induces a completely bounded Fourier multiplier on $\L^p(\VN(G))$ and we have the estimate
\begin{align*}
\bnorm{M_{\widetilde{\varphi}_j}}_{\cb,\L^p(\VN(G)) \to \L^p(\VN(G))} 
&= \bnorm{M_{\varphi_j}}_{\cb,\L^p(\VN(G/\X_j)) \to \L^p(\VN(G/\X_j))}\\
&\leq \norm{T}_{\cb,\L^p(\VN(G)) \to \L^p(\VN(G))}. \nonumber  
\end{align*}

Now, we suppose that $T=M_{\psi}$ for a (bounded) measurable symbol $\psi \co G \to \C$ giving rise to a completely bounded $\L^p$ Fourier multiplier. We start by giving a description of the symbol $\widetilde{\varphi}_j$ as an average of $\psi$.

\begin{lemma}
\label{Lemma-equ-symbol-profinite}
For any $s \in G$, we have
\begin{equation}
\label{average-prodiscret}
\widetilde{\varphi}_j(s) 
=\int_{\X_j} \psi(st) \d\mu_{\X_j}(t).	
\end{equation}
\end{lemma}

\begin{proof}
The subgroup $\X_j$ is open, so $\mu_G|_{\X_j}$ is a left Haar measure on $\X_j$ and $\mu_{\X_j} = c_j \mu_G|_{\X_j}$ where $c_j=\frac{1}{\mu_G(\X_j)}$. Moreover, for any $s \in G$, the indicator function $1_{s\X_j}$ belongs to $\mathrm{C}_c(G)$ since $s\X_j$ is an open and compact subset of $G$. For any $s,t \in G$, note that 
\begin{align*}
\MoveEqLeft
(1_{s\X_j}*\check{1}_{\X_j})(t)
=\int_G 1_{s\X_j}(r)\check{1}_{X_j}(r^{-1}t) \d \mu_G(r)
=\int_{sX_j} 1_{X_j}(t^{-1}r) \d \mu_G(r)\\
&=\mu_G(s\X_j \cap t\X_j)
=\mu_G(sX_j)1_{s\X_j}(t).
\end{align*} 
We conclude that $1_{s\X_j} \in \mathrm{C}_c(G)*\mathrm{C}_c(G)$. Then, for any $s \in G$, using the definition of a Fourier multiplier and Lemma \ref{Lemma-trace-preserving-averaging}, we see that 
$$
M_\psi\big(\lambda_{G,s} p_{\X_j}\big) 
=M_\psi\big(\lambda_{G,s} c_j\lambda_G(1_{X_j})\big)
=c_j M_\psi\big(\lambda_G(1_{s\X_j}) \big)\\
=c_j \lambda_G(\psi 1_{s\X_j})
$$
and similarly
$$
\lambda_{G,s^{-1}} p_{\X_j}
=c_j\lambda_{G,s^{-1}}\lambda_G(1_{\X_j})
=c_j\lambda(1_{s^{-1}\X_j}).  
$$
For any $s \in G$, using the Plancherel Formula \eqref{Formule-Plancherel}, we obtain
\begin{align*}
\MoveEqLeft
\widetilde{\varphi}_j(s) 
=\varphi_j \circ \pi_j(s) 
=\tau_{G/\X_j}\left(\Psi_{j}^{p} M_\psi \Phi_{j}^{p}\big(\lambda_{G/\X_j,s \X_j}\big) \lambda_{G/\X_j,s^{-1} \X_j} \right) \\
&=\tau_{G} \left(M_\psi \Phi_{j}^{p}\big(\lambda_{G/\X_j,s \X_j}\big)\Phi_{j}^{p^*}(\lambda_{G/\X_j,s^{-1} \X_j}) \right) \\
&=\mu_G(\X_j)^{\frac1p} \mu_G(\X_j)^{1-\frac1p} \tau_{G}\left(M_\psi\big(\lambda_{G,s} p_{\X_j}\big) \lambda_{G,s^{-1}} p_{\X_j}\right) 
=c_j\tau_{G}\left(\lambda_{G}(\psi 1_{s\X_j}) \lambda_{G}\big(1_{s^{-1}\X_j}\big)\right).
\end{align*}
Now, using the normality of the subgroup $\X_j$, we see that
\begin{align*}
\MoveEqLeft
\widetilde{\varphi}_j(s) 
		=c_j\int_G \psi(r) 1_{s\X_j}(r)1_{s^{-1}\X_j}(r^{-1}) \d\mu_G(r)
		=c_j\int_{s\X_j} \psi(r) 1_{\X_j s}(r) \d\mu_G(r)\\
		&=c_j\int_{s\X_j} \psi(r) \d\mu_G(r)
		=c_j\int_{\X_j} \psi(st) \d\mu_{G}(t)
		=\int_{\X_j} \psi(st) \d\mu_{\X_j}(t).
\end{align*}
\end{proof} 

Let $\E_j^\infty \co \L^\infty(G) \to \L^\infty(G)$ be the normal conditional expectation associated with the $\sigma$-algebra generated by the left cosets of $\X_j$ in $G$ considered in \cite[page 182-183]{JeR} (see also \cite[page 69]{IoT1}) and $\E_j^1 \co \L^1(G) \to \L^1(G)$ the contractive associated map.  The previous lemma says that for any integer $j$ we have $\widetilde{\varphi}_j=\E_j^\infty(\psi)$. Now, we prove the following convergence result.

\begin{lemma}
\label{lem-prodiscrete-weak-star}
Let $G$ be a pro-discrete locally compact group and let $(\X_j)$ be a decreasing basis of neighborhoods of the identity $e_G$ consisting of open compact normal subgroups. Let $\E_j^\infty \co \L^\infty(G) \to \L^\infty(G)$ be the normal conditional expectation associated with the $\sigma$-algebra generated by the left cosets of $\X_j$ in $G$. For any $\psi \in \L^\infty(G)$, the net $(\E_j^\infty(\psi))$ converges to $\psi$ for the weak* topology of $\L^\infty(G)$.
\end{lemma}

\begin{proof}
By \cite[Proposition 2.6.32]{HvNVW}, for any $f \in \L^\infty(G)$ and any $g \in \L^1(G)$, we have $\big\langle \E_j^\infty(f), g \big\rangle_{\L^\infty(G),\L^1(G)}=\big\langle f, \E_j^1(g) \big\rangle_{\L^\infty(G),\L^1(G)}$. 
Consequently, the map $\E_j^\infty \co \L^\infty(G) \to \L^\infty(G)$  admits as preadjoint the contractive map $\E_j^1 \co \L^1(G) \to \L^1(G)$. So it suffices to show that the net $(\E_j^1)$ converges to the identity for the weak operator topology. Actually, we will show that the convergence is true\footnote{\thefootnote. This fact is proved in the second countable case in \cite[Theorem 3.3]{JeR} and seesms alluded without proof in the general case in \cite[page 184]{JeR} (see also \cite[page 71]{IoT1} for a proof). Here, we give an alternative argument. Finally, Bourbaki transformed this into an exercise \cite[Exercise 10 page 89]{Bou3}, as usual without giving any reference.} for the norm topology of $\L^1(G)$. Since the net $(\E_j^1)$ is uniformly bounded, by \cite[Proposition 5, Chapt. III, 17.4]{Bou5}, it suffices to show that $\E_j^1(g)$ converges to $g$ in $\L^1(G)$ for any $g$ belonging to some total subset of $\L^1(G)$. By \cite[Lemma 2 a), VII.15]{Bou3}, the subset of positive functions with compact support constant on the left cosets of some $\X_j$ is total. So let $g$ be such a function. If $i \geq j$, i.e. if $\X_i \subset \X_j$, each left coset of $\X_i$ in $G$ is a subset of a left coset of $\X_j$ in $G$. Then for almost all $s \in G$ we have
$$
\big(\E_i^1(g)\big)(s)
=\int_{\X_i} g(st) \d\mu_{\X_i}(t)
=\int_{\X_i} g(s) \d\mu_{\X_i}(t)
=g(s)
$$ 
So $\E_i^1(g)=g$. Hence, for this $g$, the assertion is true. The proof is complete.
\end{proof}

Using Lemma \ref{lem-prodiscrete-weak-star} together with Lemma \ref{Lemma-Symbol-weakstar-convergence-weakLp-vrai}, we deduce that the sequence $(M_{\widetilde{\varphi_j}})$ converges to $M_\psi$ in the weak operator topology of $\B(\L^p(\VN(G)))$ (in the point weak* topology if $p = \infty$).
Then we proceed as in the proof of Theorem \ref{thm-complementation-Fourier-ADS-amenable} to construct the contractive linear maps $P_G^p \co \CB(\L^p(\VN(G))) \to \mathfrak{M}^{p,\cb}(G)$ and to show that $P_G^p(M_\psi) = M_\psi$ whenever $M_\psi \in \mathfrak{M}^{p,\cb}(G)$.

Finally, we show that the map $P_G^p$ preserves the complete positivity. Suppose that $T$ is (normal if $p=\infty)$ completely positive. The operator $\Psi_{j}^{p} T \Phi_{j}^{p}$ is completely positive. Hence the multiplier $M_{\varphi_j}=P_{G/\X_j}^p\big(\Psi_{j}^{p} T \Phi_{j}^{p}\big)$ is also completely positive. By Theorem \ref{prop-homomorphism-Fourier-multipliers-cb}, we infer that $M_{\widetilde{\varphi_j}}=M_{\varphi_j \circ \pi_j}$ is completely positive. Using Lemma \ref{lem-completely-positive-weak-limit}, it is easy to deduce that $P_G^p(T)$ is completely positive.
\end{proof}


\begin{remark}\normalfont
According to \cite[Theorem 12.3.26]{Pal2}, a second countable nilpotent\footnote{\thefootnote. Recall that nilpotent implies unimodular by \cite{MS1}.} compactly generated totally disconnected locally compact group admits a sequence $(\X_j)$ satisfying the assumptions of the theorem. Moreover, any second countable compactly generated uniscalar\footnote{\thefootnote. Note that uniscalar implies unimodular, see \cite[Theorem 12.3.26]{Pal2}.} $p$-adic Lie group admits such a sequence $(\X_j)$ by \cite[Theorem 5.2]{GW}. Moreover, 
$p$-adic can be replaced by pro-$p$-adic \cite[Proposition 7.4]{GW}. Finally, there exists an example of a compactly generated totally disconnected uniscalar locally compact group which does not have an open compact normal subgroup, see \cite{BhM} and \cite{KeWi}.
\end{remark}

\begin{remark} \normalfont
\label{rem-arbre}
Note that the result applies to the profinite groups acting on locally finite trees described in Section \ref{subsec-computations-density}. 
\end{remark}



\subsection{Amenable groups and convolutors}
\label{sec:complementation-convolutor}

In this section, we observe that we can obtain compatible projections on spaces of Fourier multipliers associated to abelian locally compact groups and more generally on spaces of convolutors associated to amenable locally compact groups.

\paragraph{Convolution operators.}  Let $G$ be a locally compact group and $1 \leq  p \leq \infty$. Here we use the left translation $\lambda_s \co \L^p(G) \to \L^p(G)$ with a similar definition to the one of \eqref{Left-translation}.
A bounded linear operator $T \co \L^p(G) \to \L^p(G)$ (supposed to be weak* continuous in the case $p = \infty$\footnote{\thefootnote. If $G$ is not compact, note that there exist bounded operators $T \co \L^\infty(G) \to \L^\infty(G)$ which commute with left translations and which are not weak* continuous. We refer to \cite{Lau1} for more information.}) is said to be a $p$-convolution operator of $G$ \cite[page 8]{Der4} if for every $s \in  G$ we have $\lambda_sT=T\lambda_s$. The set of all convolution operators (or convolutors) of $G$ is denoted $\CV_p(G)$. If $G$ is abelian then $\CV_p(G)=\mathfrak{M}^{p}(\hat{G})$ isometrically, see \cite[Chapter 1]{Der4}.

If $X$ is a Banach space, the subset $\CV_p(G,X)$ of $\B(\L^p(G,X))$ is defined as the space of convolution operators $T$ such that $T \ot \Id_X$ extends to a bounded operator on $\L^p(G,X)$. The space $\CV_{p,\cb}(G)$ of completely bounded convolutors on $\L^p(G)$ coincides with $\CV_p(G,S^p)$.

\vspace{0.2cm}

Proposition \ref{prop-ma-complementation-Fourier-multipliers-abelian} is slight generalization of a particular case of the result \cite[Corollaire page 79]{Der3} (rediscovered in part in \cite[Theorem 1.1]{ArV}). We will thank Antoine Derighetti to communicate this reference. 

\begin{prop}
\label{prop-ma-complementation-Fourier-multipliers-abelian}
Let $G$ be an amenable locally compact group. Suppose $1 \leq p \leq \infty$. Then there exists a contractive projection $P_{G}^p \co \B(\L^p(G)) \to \B(\L^p(G))$ (in the  case $p = \infty$, we have $P_{G}^{\infty} \co \B_{\w^*}(\L^\infty(G)) \to \B_{\w^*}(\L^\infty(G))$) onto $\CV_{p}(G)$ such that if $T \co \L^p(G) \to \L^p(G)$ is positive\footnote{\thefootnote. Recall that the notions of ``positivity'' and ``complete positivity'' are identical on commutative $\L^p$-spaces by Proposition \ref{prop-positive-imply-cp-if-depart-commutative} and a completely positive map is completely bounded by Theorem \ref{thm-norm=normcb-hyperfinite}.} then $P_{G}^p(T)$ is positive. Moreover, all these mappings are compatible with each other. Moreover, if $1<p<\infty$, the restriction of $P_{G}^p$ to $\CB(\L^p(G))$ induces a well-defined contractive projection $P_{G}^{p,\cb} \co \CB(\L^p(G)) \to \CB(\L^p(G))$ onto $\CV_{p,\cb}(G)$.
\end{prop}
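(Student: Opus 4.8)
The plan is to realise $P_G^p$ as an averaging operator over the left translation action, which is available because $G$ is amenable. Fix once and for all a left-invariant mean $m$ on the space $C_b(G)$ of bounded continuous functions on $G$. Recall that for $1\leq p<\infty$ the left regular representation $s\mapsto \lambda_s$ is a strongly continuous representation of $G$ by positive isometric isomorphisms of $\L^p(G)$, while for $p=\infty$ each $\lambda_s$ is a weak$^*$-continuous positive isometric isomorphism of $\L^\infty(G)$. For $1\leq p<\infty$ and $T\in\B(\L^p(G))$, given $f\in\L^p(G)$ and $g\in\L^{p^*}(G)$, the scalar function $s\mapsto \langle \lambda_s T\lambda_{s^{-1}}f,g\rangle$ is bounded by $\norm{T}\norm{f}_{\L^p(G)}\norm{g}_{\L^{p^*}(G)}$ and is continuous: the norm-continuity of $s\mapsto \lambda_s T\lambda_{s^{-1}}f$ follows from the two-term estimate $\norm{\lambda_s T\lambda_{s^{-1}}f-\lambda_{s_0}T\lambda_{s_0^{-1}}f}_{\L^p(G)}\leq \norm{T}\,\norm{\lambda_{s^{-1}}f-\lambda_{s_0^{-1}}f}_{\L^p(G)}+\norm{(\lambda_s-\lambda_{s_0})(T\lambda_{s_0^{-1}}f)}_{\L^p(G)}$, both terms tending to $0$ by strong continuity of translations. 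Hence this function lies in $C_b(G)$, and $(f,g)\mapsto m\big(s\mapsto \langle \lambda_s T\lambda_{s^{-1}}f,g\rangle\big)$ is a bounded sesquilinear form of norm at most $\norm{T}$, which defines $P_G^p(T)\in\B(\L^p(G))$ with $\norm{P_G^p(T)}\leq\norm{T}$; morally $P_G^p(T)=\int_G \lambda_s T\lambda_{s^{-1}}\d m(s)$.

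Next I would check the claimed properties. Left invariance of $m$ gives $\lambda_t P_G^p(T)=P_G^p(T)\lambda_t$ for all $t\in G$, so $P_G^p(T)\in\CV_p(G)$; and if $T\in\CV_p(G)$ then $\lambda_s T\lambda_{s^{-1}}=T$ for every $s$, whence $P_G^p(T)=T$. Thus $P_G^p$ is a contractive projection of $\B(\L^p(G))$ onto $\CV_p(G)$. If $T$ is positive, each $\lambda_s T\lambda_{s^{-1}}$ is positive (conjugation by positive isometric isomorphisms of a commutative $\L^p$-space), so for $f\in\L^p(G)_+$ and $g\in\L^{p^*}(G)_+$ the function being averaged is nonnegative; positivity of $m$ then yields $\langle P_G^p(T)f,g\rangle\geq 0$, and by \eqref{equa-polar-Lp} together with Proposition \ref{prop-positive-imply-cp-if-depart-commutative} the map $P_G^p(T)$ is positive, in fact completely positive. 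For $p=\infty$, where strong continuity of translations fails, I would instead pass to the preadjoint: a weak$^*$-continuous $T\in\B_{\w^*}(\L^\infty(G))$ has a preadjoint $T_*\in\B(\L^1(G))$, and I set $P_G^\infty(T)\overset{\mathrm{def}}=\big(P_G^1(T_*)\big)^*$. This operator is automatically weak$^*$-continuous, it commutes with every $\lambda_t$ because $P_G^1(T_*)$ commutes with the preadjoints of the $\lambda_t$, it restricts to the identity on weak$^*$-continuous convolutors, and it preserves (complete) positivity by Lemma \ref{Lemma-adjoint-cp}; thus it is a contractive projection onto $\CV_\infty(G)$ in the weak$^*$-continuous category.

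The compatibility is built in: using the same mean $m$ for all $p$, if $T$ is bounded on both $\L^p(G)$ and $\L^q(G)$, then for $x\in\L^p(G)\cap\L^q(G)$ and $y\in\L^{p^*}(G)\cap\L^{q^*}(G)$ both $\langle P_G^p(T)x,y\rangle$ and $\langle P_G^q(T)x,y\rangle$ equal $m\big(s\mapsto\langle \lambda_s T\lambda_{s^{-1}}x,y\rangle\big)$, so $P_G^p(T)x=P_G^q(T)x$; the $p=\infty$ case is matched to the finite ones through the preadjoint relation exactly as in the compatibility argument of Theorem \ref{thm-complementation-Fourier-ADS-amenable}. Finally, for the completely bounded statement with $1<p<\infty$, I would run the same averaging construction on $\L^p(G,S^p)=S^p(\L^p(G))$ applied to $\Id_{S^p}\ot T$, using the positive isometric isomorphisms $\lambda_s\ot\Id_{S^p}$: the averaged operator has norm at most $\norm{T}_{\cb,\L^p(G)\to\L^p(G)}$ by \eqref{normecbcommutatif}, and testing against elementary tensors in a dense subspace one verifies that it equals $\Id_{S^p}\ot P_G^p(T)$. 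Hence $\norm{P_G^p(T)}_{\cb}\leq\norm{T}_{\cb}$, so $P_G^p$ restricts to a contractive map $\CB(\L^p(G))\to\CB(\L^p(G))$ with range in $\CV_{p,\cb}(G)=\CV_p(G,S^p)$, and it is the identity there since $P_G^p$ already is on $\CV_p(G)$.

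The main obstacle I anticipate is not in the formal projection/positivity arguments, which are routine once the averaging is set up, but rather (i) the precise regularity needed to feed the vector-valued map $s\mapsto \lambda_s T\lambda_{s^{-1}}f$ into the mean (continuity for $p<\infty$, handled above, and the genuine failure of strong continuity at $p=\infty$, forcing the detour through the preadjoint), and (ii) verifying that $P_G^\infty(T)$ really is a weak$^*$-continuous convolutor that restricts to the identity on $\CV_\infty(G)$, together with checking that the $\cb$-averaging on $\L^p(G,S^p)$ coincides with $\Id_{S^p}\ot P_G^p(T)$, which requires the density reduction to elementary tensors.
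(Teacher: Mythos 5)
Your proof is correct and follows essentially the same route as the paper: the paper outsources the cases $1\le p<\infty$ to Derighetti and Arendt--Voigt, whose construction is exactly your invariant-mean average of $\lambda_s T\lambda_{s^{-1}}$, handles $p=\infty$ by the identical preadjoint formula $P_G^\infty(T)=(P_G^1(T_*))^*$, and proves the completely bounded statement by the same averaging of $s\mapsto\langle T(\lambda_s f),\lambda_s g\rangle$ against a (right-)invariant mean, which coincides with your form up to the substitution $s\mapsto s^{-1}$. The only difference is that you reconstruct the cited averaging argument explicitly rather than quoting it.
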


\begin{proof}
The case $1<p<\infty$ is \cite[Theorem 5]{Der3} and \cite[Theorem 1.1]{ArV}. The case $p=1$ of \cite[Theorem 1.1]{ArV}) gives a projection $P_{G}^{1} \co \B(\L^1(G)) \to \B(\L^1(G))$. Now for a weak* continuous operator $T \co \L^\infty(G) \to \L^\infty(G)$, we let $P_G^{\infty}(T)=P_G^{1}(T_*)^*$. We obtain the desired projection. The verifications are left to the reader. 

Suppose $1<p <\infty$. Let $T \co \L^p(G) \to \L^p(G)$ be a completely bounded operator. For any $f \in \L^p(G)$ and any $g \in \L^{p^*}(G)$, we consider the complex function $h_{T,f,g} \co G \to \C$, $s \mapsto \big\langle T(\lambda_{s}(f)),\lambda_{s}(g) \big\rangle_{\L^p(G),\L^{p^*}(G)}$ defined on $G$. The function $h_{T,f,g}$ is\footnote{\thefootnote. For any $s \in G$, we have
\begin{align*}
\MoveEqLeft
  \left|\big\langle T(\lambda_{s}(f)),\lambda_{s}(g) \big\rangle_{\L^p(G),\L^{p^*}(G)} \right| 
\leq \norm{T}_{\L^p(G) \to \L^p(G)} \norm{\lambda_{s}(f)}_{\L^p(G)} \norm{\lambda_{s}(g)}_{\L^{p^*}(G)}\\
&=\norm{T}_{\L^p(G) \to \L^p(G)} \norm{f}_{\L^p(G)} \norm{g}_{\L^{p^*}(G)}.
\end{align*}} bounded. 
By \cite[Theorem 20.4]{HR}, the maps $G \to \L^{p}(G)$, $s \mapsto T(\lambda_{s}(f))$ and $G \to \L^{p^*}(G)$, $s \mapsto \lambda_{s}(g)$ are continuous. Using the continuity of the duality bracket $\langle \cdot, \cdot\rangle_{L^{p}(G),\L^{p^*}(G)}$ \cite[Corollary 6.40]{AlB} on bounded subsets, we deduce that the map $h_{T,f,g}$ is continuous, hence measurable.

Since $G$ is amenable, by \cite[Proposition 4.23]{Pie}, there exists a right invariant mean\footnote{\thefootnote. That is a unital positive bounded linear form $\mathfrak{M} \co \L^\infty(G) \to \C$ such that $\mathfrak{M}(f_t)=\mathfrak{M}(f)$ for any $t \in G$ where $f_t(s)=f(st)$.} $\mathfrak{M} \co \L^\infty(G) \to \C$. Since $\L^\infty(G)$ is a unital commutative C*-algebra, the map $\mathfrak{M}$ is completely contractive by \cite[Lemma 5.1.1]{ER}. The map $\mathfrak{B} \co \L^p(G) \times \L^{p^*}(G) \to  \C$, $(f,g) \mapsto \mathfrak{M}\big(h_{T,f,g}\big)$ is clearly bilinear. Moreover, for any integer $n$, any $[f_{ij}] \in \M_{n}(\L^p(G))$ and any $[g_{kl}] \in \M_{n}(\L^{p^*}(G))$, we have
\begin{align*}
\MoveEqLeft
    \bnorm{\big[\mathfrak{B}(f_{ij},g_{k,l})\big]}_{\M_{n^2}}
		=\bnorm{\big[\mathfrak{M}(h_{T,f_{ij},g_{kl}})\big]}_{\M_{n^2}}
		 \leq \bnorm{\big[h_{T,f_{ij},g_{kl}}\big]}_{\M_{n^2}(\L^\infty(G))}\\
		&=\Bnorm{\Big[s \mapsto \big\langle T(\lambda_{s}(f_{ij})),\lambda_{s}(g_{kl})\big\rangle_{\L^p(G),\L^{p^*}(G)}\Big]}_{\M_{n^2}(\L^\infty(G))}\\
		&=\Bnorm{s \mapsto \Big[\big\langle T(\lambda_{s}(f_{ij})),\lambda_{s}(g_{kl}) \big\rangle_{\L^p(G),\L^{p^*}(G)}\Big]}_{\L^\infty(G,\M_{n^2})}\\
		&=\sup_{s \in G} \norm{\Big[\big\langle T(\lambda_{s}(f_{ij})),\lambda_{s}(g_{kl}) \big\rangle_{\L^p(G),\L^{p^*}(G)}\Big]}_{\M_{n^2}}.
\end{align*}
Now, using \cite[(3.2.3)]{ER} in the first inequality and the fact left to the reader (to use \cite[Proposition 2.1]{Pis4}) that each $\lambda_s \co \L^p(G) \to \L^p(G)$ is completely isometric in the last equality, we obtain for any $s \in G$
\begin{align*}
\MoveEqLeft
\Bnorm{\Big[\big\langle T(\lambda_{s}(f_{ij})),\lambda_{s}(g_{kl}) \big\rangle_{\L^p(G),\L^{p^*}(G)}\Big]}_{\M_{n^2}}  
		=\Bnorm{\big\langle \big\langle \big[T(\lambda_{s}(f_{ij}))\big],\big[\lambda_{s}(g_{kl})\big] \big\rangle \big\rangle}_{\M_{n^2}} \\
		&\leq \Bnorm{\big[T(\lambda_{s}(f_{ij}))\big]}_{\M_{n}(\L^p(G))} \bnorm{[\lambda_{s}(g_{kl})]}_{\M_{n}(\L^{p^*}(G))}\\
		&\leq \norm{T}_{\cb,\L^p(G) \to \L^p(G)} \bnorm{[\lambda_{s}(f_{ij})]}_{\M_{n}(\L^p(G))} \bnorm{[\lambda_{s}(g_{kl})]}_{\M_{n}(\L^{p^*}(G))}\\
		&= \norm{T}_{\cb,\L^p(G) \to \L^p(G)} \bnorm{[f_{ij}]}_{\M_{n}(\L^p(G))} \bnorm{[g_{kl}]}_{\M_{m}(\L^{p^*}(G))}.
\end{align*}
Taking the supremum, we infer that
$$
\bnorm{[\mathfrak{B}(f_{ij},g_{k,l})]}_{\M_{n^2}} 
\leq \norm{T}_{\cb,\L^p(G) \to \L^p(G)} \bnorm{[f_{ij}]}_{\M_{n}(\L^p(G))} \bnorm{[g_{kl}]}_{\M_{n}(\L^{p^*}(G))}.
$$
We conclude that $\mathfrak{B}$ is completely bounded in the sense of \cite[page 126]{ER} with $\norm{\mathfrak{B}}_{\cb} \leq \norm{T}_{\cb,\L^p(G) \to \L^p(G)}$. Hence, by \cite[Proposition 7.1.2]{ER} there exists a unique completely bounded operator $P_{G}^{p,\cb}(T) \co \L^p(G) \to \L^p(G)$ such that 
$$
\mathfrak{B}(f,g)
=\big\langle P_{G}^{p,\cb}(T)(f),g \big\rangle_{\L^p(G),\L^{p^*}(G)},\qquad f \in \L^p(G), g \in \L^{p^*}(G).
$$
Moreover, we have $\bnorm{P_{G}^{p,\cb}(T)}_{\cb,\L^p(G) \to \L^p(G)} =\norm{\mathfrak{B}}_{\cb} \leq \norm{T}_{\cb,\L^p(G) \to \L^p(G)}$. This operator coincides with the operator $P_{G}^p(T)$ provided by a slightly simplified\footnote{\thefootnote. We can replace the space of right uniformly continuous functions by $\L^\infty(G)$. Moreover, note that translations of \cite[Theorem 1.1]{ArV} differ from our notation.} proof of \cite[Theorem 1.1]{ArV}. The compatibility is left to the reader.
\end{proof}

\begin{remark} \normalfont
\label{Remark-convolutors-amenability}
Consider a locally compact group $G$. It would be interesting to know if the amenability of $G$ is characterized by the property of Proposition \ref{prop-ma-complementation-Fourier-multipliers-abelian}.
\end{remark}

\subsection{Description of the decomposable norm of multipliers}
\label{subsec-description-decomposable-norm-Fourier}


The following is a variant of Theorem \ref{prop-amenable-discrete-Fourier-multiplier-dec-infty}.

\begin{thm}
\label{prop-continuous-Fourier-multiplier-dec-infty}
Let $G$ be an amenable second countable unimodular locally compact group which is $\ALSS$ satisfying the assumption \eqref{equ-Haar-measure-convergence}. Suppose $1 \leq p \leq \infty$. Then a measurable function $\phi \co G \to \C$ induces a decomposable Fourier multiplier on $\L^p(\VN(G))$ if and only if it induces a (completely) bounded Fourier multiplier on $\VN(G)$. In this case, we have
\begin{equation}
\label{inequality-with-c}
c\norm{M_\phi}_{\VN(G) \to \VN(G)} 
\leq \norm{M_\phi}_{\dec,\L^p(\VN(G)) \to \L^p(\VN(G))} 
\leq \norm{M_\phi}_{\VN(G) \to \VN(G)}.	
\end{equation}
\end{thm}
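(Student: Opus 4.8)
The plan is to reduce the statement about decomposable multipliers on $\L^p(\VN(G))$ to a statement about completely bounded multipliers, using the complementation machinery from Theorem \ref{thm-complementation-Fourier-ADS-amenable} (available because $G$ is $\ALSS$, amenable, and satisfies \eqref{equ-Haar-measure-convergence}) together with the identification of decomposable and regular operators on approximately finite-dimensional von Neumann algebras (Theorem \ref{thm-dec=reg-hyperfinite}, applicable because $\VN(G)$ is hyperfinite by amenability of $G$, see \cite[Theorem 3.8.2]{SS}). The key observation is that amenability of $G$ gives $\mathfrak{M}^{\infty}(G) = \mathfrak{M}^{\infty,\cb}(G)$ isometrically by \cite[Corollary 1.8]{DCH}, and by \cite{Har} (in the locally compact form recorded in Lemma \ref{lemma-duality-Fourier-multipliers} and Lemma \ref{lemma-inclusion-Fourier-multipliers}) one has $\mathfrak{M}^{\infty}(G) = \mathfrak{M}^1(G)$ isometrically. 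So $\mathfrak{M}^{\infty,\cb}(G)$ embeds compatibly as a subspace of both $\CB_{\w^*}(\VN(G))$ and $\CB(\L^1(\VN(G)))$.

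First I would establish the easy inclusion and inequality: if $\phi$ induces a completely bounded Fourier multiplier $M_\phi \co \VN(G) \to \VN(G)$, then by the averaging/homomorphism results in Subsection \ref{sec-Extension-of-multipliers} (or directly by interpolation, since $M_\phi$ is also completely bounded on $\L^1(\VN(G))$ via duality, Lemma \ref{lemma-duality-Fourier-multipliers}) it induces a completely bounded, hence regular, hence (by Theorem \ref{thm-dec=reg-hyperfinite}) decomposable multiplier on $\L^p(\VN(G))$, with $\norm{M_\phi}_{\dec,\L^p(\VN(G))\to\L^p(\VN(G))} = \norm{M_\phi}_{\reg} \leq \norm{M_\phi}_{\cb,\VN(G)\to\VN(G)} = \norm{M_\phi}_{\VN(G)\to\VN(G)}$, using that the $\L^p$-regular norm interpolates between the $\CB_{\w^*}$-norm at $\infty$ and the $\CB$-norm at $1$ via \eqref{Regular-as-interpolation-space}, all of which equal $\norm{M_\phi}_{\VN(G)\to\VN(G)}$ here by amenability. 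This gives the right-hand inequality of \eqref{inequality-with-c}.

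For the converse direction and the left-hand inequality, suppose $M_\phi \co \L^p(\VN(G)) \to \L^p(\VN(G))$ is decomposable. By Proposition \ref{prop-decomposable-se-decompose-en-cp} we can write $M_\phi = T_1 - T_2 + \i(T_3 - T_4)$ with each $T_k$ completely positive on $\L^p(\VN(G))$. Apply the projection $P_G^p \co \CB(\L^p(\VN(G))) \to \mathfrak{M}^{p,\cb}(G)$ of Theorem \ref{thm-complementation-Fourier-ADS-amenable}; since $P_G^p(M_\phi) = M_\phi$ (as $M_\phi$ is already a Fourier multiplier with bounded measurable symbol), one gets $M_\phi = P_G^p(T_1) - P_G^p(T_2) + \i(P_G^p(T_3) - P_G^p(T_4))$ where each $P_G^p(T_k)$ is a completely positive Fourier multiplier on $\L^p(\VN(G))$. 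By Proposition \ref{Th-cp-Fourier-multipliers}, the symbol of each $P_G^p(T_k)$ is (a.e. equal to) a continuous positive definite function, hence induces a completely positive Fourier multiplier on $\VN(G)$; consequently $\phi$ induces a decomposable, in particular bounded, completely bounded Fourier multiplier on $\VN(G)$. For the quantitative bound, one uses the decomposable norm version: choose by Proposition \ref{Prop-dec-inf-atteint} maps $v_1, v_2$ realizing $\norm{M_\phi}_{\dec}$, apply $\Id_{S^p_2} \ot P_G^p$ (which is completely positive and contractive, tensoring up by Lemma \ref{Lemma-cp-S1E}) to the completely positive matrix $\begin{bmatrix} v_1 & M_\phi \\ M_\phi^\circ & v_2 \end{bmatrix}$ and note $P_G^p(M_\phi^\circ) = P_G^p(M_\phi)^\circ = M_\phi^\circ$; this produces a completely positive matrix $\begin{bmatrix} P_G^p(v_1) & M_\phi \\ M_\phi^\circ & P_G^p(v_2) \end{bmatrix}$ witnessing that $M_\phi$ is decomposable on $\L^p(\VN(G))$ with decomposable norm $\leq \frac1c \norm{M_\phi}_{\dec}$, and whose entries are completely bounded Fourier multipliers. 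Then using the interpolation identification $\Reg(\L^p(\VN(G))) \cap \mathfrak{M}^{\infty}(G) = \mathfrak{M}^{\infty,\cb}(G)$ (Lemma \ref{Lemma-interpolation} applied to the couple \eqref{Regular-as-interpolation-space} with complemented subspace $\mathfrak{M}^{\infty}(G)$, where the projection has norm one), combined with Theorem \ref{thm-dec=reg-hyperfinite} and the fact that $\mathfrak{M}^{\infty}(G) = \mathfrak{M}^1(G)$, gives $c\,\norm{M_\phi}_{\VN(G)\to\VN(G)} \leq \norm{M_\phi}_{\dec,\L^p(\VN(G))\to\L^p(\VN(G))}$.

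The main obstacle, and the point that needs the most care, is tracking the constant $c$: the projection $P_G^p$ only has norm $\frac1c$ (not $1$), so one cannot expect an isometric identity as in the discrete/amenable case of Theorem \ref{prop-amenable-discrete-Fourier-multiplier-dec-infty}, and one must carefully verify that applying $P_G^p$ (with its $\frac1c$ norm) to a decomposition of $M_\phi$ into completely positive pieces yields exactly the factor $c$ in the left inequality of \eqref{inequality-with-c} — in particular one should argue directly with the decomposable norm and completely positive dominants rather than going through the $[d]$-norm, and one must check that the symbols produced are continuous positive definite so that Proposition \ref{Th-cp-Fourier-multipliers} applies to pass from the $\L^p$ level to the $\L^\infty$ level. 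A secondary technical point is ensuring that all the identifications ($\mathfrak{M}^\infty(G) = \mathfrak{M}^1(G) = \mathfrak{M}^{\infty,\cb}(G)$, the interpolation identity, the $\dec = \reg$ equality) are genuinely isometric and compatible so that no spurious constants creep in on the upper bound side.
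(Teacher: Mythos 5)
Your overall architecture (complementation from Theorem \ref{thm-complementation-Fourier-ADS-amenable}, decomposition into completely positive pieces, Proposition \ref{Th-cp-Fourier-multipliers} to pass between the $\L^p$ and $\L^\infty$ levels, and the interpolation identity \eqref{Regular-as-interpolation-space} with Theorem \ref{thm-dec=reg-hyperfinite} for the constants) is the same as the paper's. But there is a genuine gap in the forward direction: you assert that $P_G^p(M_\phi)=M_\phi$ ``as $M_\phi$ is already a Fourier multiplier with bounded measurable symbol.'' Theorem \ref{thm-complementation-Fourier-ADS-amenable} does \emph{not} give this. Its fixed-point property $P_G^p(M_\psi)=M_\psi$ is only guaranteed for bounded \emph{continuous} symbols; for merely bounded measurable symbols it requires the extra hypothesis that the fundamental domains satisfy $\gamma\X_j=\X_j\gamma$ or $\mu(\X_j\Delta\X_j^{-1})=0$, which is not assumed in the statement you are proving. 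You cannot invoke the a.e.-continuity of the symbol of a decomposable $\L^p$-multiplier either, since that continuity (Remark \ref{Rem-continuity-dec-multipliers-Lp}) is a \emph{consequence} of the theorem, obtained only after one knows $M_\phi$ is completely bounded on $\VN(G)$; using it here would be circular. So as written, your argument only proves the forward implication for continuous symbols.

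The paper closes exactly this gap with an approximation step that your proposal is missing: using Leptin's theorem (amenability) to get a contractive approximate unit $(\psi_i)$ of $\mathrm{A}(G)$ with compact supports and an approximate unit $(\chi_j)$ of $\L^1(G)$, one forms $\phi_{i,j}=\chi_j\ast(\psi_i\phi)$, which lies in $\L^2(G)\ast\L^2(G)=\mathrm{A}(G)$ and is therefore continuous. One then shows $\norm{M_{\phi_{i,j}}}_{\reg,\L^p\to\L^p}\le\norm{M_\phi}_{\reg,\L^p\to\L^p}$ (a translation-invariance computation with the Plancherel formula), applies the continuous-symbol case to get a uniform bound on $\norm{M_{\phi_{i,j}}}_{\VN(G)\to\VN(G)}$, and passes to the limit via the weak* convergence $\phi_{i,j}\to\phi$ in $\L^\infty(G)$. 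You should add this step (or an equivalent regularization) to make the forward direction work for general measurable symbols; the rest of your argument, including the backward direction and the tracking of the constant $c$ through Lemma \ref{Lemma-interpolation}, is consistent with the paper's proof.
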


\begin{proof}
$\Rightarrow$: We start with the case of a decomposable Fourier multiplier $M_\phi \co \L^p(\VN(G)) \to \L^p(\VN(G))$ with a \textit{continuous} symbol. By Proposition \ref{prop-decomposable-se-decompose-en-cp}, we can write 
$
M_\phi
=T_1-T_2+\mathrm{i}(T_3-T_4)
$ 
where each $T_j$ is a completely positive map on $\L^p(\VN(G))$. Using the map $P_G^p$ of Theorem \ref{thm-complementation-Fourier-ADS-amenable} (since $G$ is amenable) and the continuity of $\phi$, we obtain that
$$
M_\phi
=P_G^p(M_\phi)
=P_G^p\big(T_1-T_2+\i(T_3-T_4)\big)
=P_G^p(T_1)-P_G^p(T_2)+\i\big(P_G^p(T_3)-P_G^p(T_4)\big)
$$
where each $P_G^p(T_j)$ is a completely positive Fourier multiplier on $\L^p(\VN(G))$. Hence, by Proposition \ref{Th-cp-Fourier-multipliers}, it induces a completely positive Fourier multiplier on $\VN(G)$. We conclude that $\phi$ induces a decomposable Fourier multiplier on $\VN(G)$. If $\phi$ is only bounded and measurable, but the approximating fundamental domains $\X_j$ are symmetric (resp. $\gamma \X_j = \X_j \gamma$ for $\gamma \in \Gamma_j$), then according to Theorem \ref{thm-complementation-Fourier-ADS-amenable}, we can argue the same way.


Without the assumption of continuity (resp. symmetry or commutativity of the fundamental domains), we adapt the method of approximation of \cite[Remark 9.3]{CPPR} by completely bounded multipliers on $\VN(G)$. Let $M_\phi \co \L^p(\VN(G)) \to \L^p(\VN(G))$ be a decomposable Fourier multiplier. Since $G$ is amenable, by Leptin Theorem \cite[Theorem 10.4]{Pie}, there exists a contractive approximative unit $(\psi_i)$ of the Fourier algebra $\mathrm{A}(G)$ such that each $\psi_i$ has compact support. In addition, consider a contractive approximate unit $(\chi_j)$ of $\L^1(G)$ such that each $\chi_j$ is a function belonging to $\mathrm{C}_c(G)$ with $\norm{\chi_j}_{\L^1(G)}=1$ and $\chi_j \geq 0$ satisfying the properties of \cite[(14.11.1)]{Dieu2} (see \cite[Example 14.11.2]{Dieu2} for the existence).
For any $i,j$, we let $\phi_{i,j}=\chi_j \ast (\psi_i\phi)$. 

We claim that for any $i,j$, we have
\begin{equation}
\label{equ-1-prop-continuous-Fourier-multiplier-dec-infty}
\norm{M_{\phi_{i,j}}}_{\reg, \L^p(\VN(G)) \to \L^p(\VN(G))} 
\leq \norm{M_{\phi}}_{\reg, \L^p(\VN(G)) \to \L^p(\VN(G))}. 
\end{equation}
Indeed, since $G$ is amenable, the von Neumann algebra $\VN(G)$ is approximately finite-dimen\-sional by \cite[Corollary 6.9 (a)]{Con2}. Using Theorem \ref{thm-dec=reg-hyperfinite}, \cite[Definition 2.1]{Pis4}, the duality \cite[Theorem 4.7]{Pis5} and Plancherel formula \eqref{Formule-Plancherel}, we need to show that for any $N \in \N$, and any $f_{kl},g_{kl} \in \mathrm{C}_c(G)*\mathrm{C}_c(G)$ 
where $1 \leq k,l \leq N$ we have
\begin{align*}
& \left| \big\langle \big[ M_{\phi_{i,j}}(\lambda(f_{kl}))\big], \big[\lambda(g_{kl})\big] \big\rangle_{\L^p(\VN(G),\M_N),\L^{p^*}(\VN(G),S^1_N)} \right| 
= \left| \sum_{k,l=1}^N \int_G \phi_{i,j}(s) f_{kl}(s) \check{g}_{kl}(s) \d\mu_G(s) \right| \\
& \leq \norm{M_{\phi}}_{\reg,\L^p(\VN(G)) \to \L^p(\VN(G))} \bnorm{[\lambda(f_{kl})]}_{\L^p(\VN(G),\M_N)} \bnorm{[\lambda(g_{kl})]}_{\L^{p^*}(\VN(G),S^1_N)}.
\end{align*}
Note that
\begin{align*}
\MoveEqLeft
\left|\sum_{k,l=1}^N \int_G \psi_i(t)\phi(t)f_{kl}(t) \check{g}_{kl}(t) \d\mu_G(t) \right|
=\left| \sum_{k,l=1}^N \big\langle M_{\psi_i \phi}(\lambda(f_{kl})) , \lambda(g_{kl}) \big\rangle \right| \\
& \leq \norm{M_{\psi_i}}_{\reg,\L^p(\VN(G)) \to \L^p(\VN(G))} \norm{M_\phi}_{\reg,\L^p \to \L^p}\bnorm{[\lambda(f_{kl})]}_{}\bnorm{[\lambda(g_{kl})]}_{}.
\end{align*}
By the second and the last part of the proof, we have $\norm{M_{\psi_i}}_{\reg,\L^p \to \L^p} \leq \norm{M_{\psi_i}}_{\VN(G) \to \VN(G)} \leq \|\psi_i\|_{A(G)} \leq 1$. Using the fact that $\norm{[\lambda_{s^{-1}} \delta_{kl}]}_{\M_N(\VN(G))} = 1$, it is not difficult to prove that the regular norm is translation invariant, so that
$$
\left|\sum_{k,l=1}^N \int_G \psi_i(s^{-1}t) \phi(s^{-1}t) f_{kl}(t) \check{g}_{kl}(t)\d\mu_G(t) \right|
\leq \norm{M_\phi}_{\reg,\L^p \to \L^p}\bnorm{[\lambda(f_{kl})]}_{}\bnorm{[\lambda(g_{kl})]}_{}.
$$
Consequently, since $\norm{\chi_j}_{\L^1(G)} \leq 1$
\begin{align*}
\MoveEqLeft
 \left|\int_G \chi_j(s)\sum_{k,l=1}^N\bigg(\int_G \psi_i(s^{-1}t)\phi(s^{-1}t) f_{kl}(t) \check{g}_{kl}(t)\d\mu_G(t)\bigg)\d\mu_G(s)\right|\\
& \leq \int_G |\chi_j(s)| \left|\sum_{k,l}\int_G \psi_i(s^{-1}t)\phi(s^{-1}t) f_{kl}(t) \check{g}_{kl}(t)\d\mu_G(t)\right| \d\mu_G(s)        \\
		&\leq \norm{M_\phi}_{\reg,\L^p \to \L^p}\bnorm{[\lambda(f_{kl})]}_{}\bnorm{[\lambda(g_{kl})]}_{}.
\end{align*}
But by Fubini Theorem, we have
\begin{align*}
& \left|\int_G \chi_j(s)\sum_{k,l}\bigg(\int_G \psi_i(s^{-1}t)\phi(s^{-1}t) f_{kl}(t) \check{g}_{kl}(t)\d\mu_G(t)\bigg)\d\mu_G(s)\right| \\
& =\left|\sum_{k,l=1}^N \int_G \bigg(\int_G \chi_j(s)\psi_i(s^{-1}t)\phi(s^{-1}t)\d\mu_G(s)\bigg) f_{kl}(t) \check{g}_{kl}(t)\d\mu_G(t)\right|.
\end{align*}
We deduce that
$$
\left|\sum_{k,l=1}^N \int_G (\chi_j \ast (\psi_i \phi))(t) f_{kl}(t) \check{g}_{kl}(t)\d\mu_G(t)\right|
\leq \norm{M_\phi}_{\reg,\L^p \to \L^p}\bnorm{[\lambda(f_{kl})]}_{}\bnorm{[\lambda(g_{kl})]}_{},
$$
and finally, \eqref{equ-1-prop-continuous-Fourier-multiplier-dec-infty} follows.

Recall that $\psi_i \in \mathrm{C}_c(G)$ and $\phi \in \L^\infty(G)$, so $\psi_i \phi \in \L^\infty(G)$ with compact support, so $\psi_i \phi \in \L^2(G)$. Moreover, each function $\chi_j$ belongs to $\L^2(G)$. We conclude that $\phi_{i,j}=\chi_j \ast (\psi_i\phi)$ belongs to $\L^2(G) \ast \L^2(G)$, which equals $\mathrm{A}(G)$ \cite[Th\'eor\`eme page 218]{Eym}, so it is a continuous symbol. Then the first part of the proof and the last part show that each function $\phi_{i,j}$ induces a (completely) bounded multiplier on $\VN(G)$ with a uniform completely bounded norm. Thus, there exists a constant $C < \infty$ such that for any $i,j$, we have for $f,g \in \mathrm{C}_c(G)*\mathrm{C}_c(G)$ (to adapt if $p = \infty$ or $p = 1$)
$$
\left| \int_G \phi_{i,j}(t) f(t) \check{g}(t) \d\mu_G(t) \right| 
\leq C \norm{\lambda(f)}_{\VN(G)} \norm{\lambda(g)}_{\L^1(\VN(G))}.
$$
If $\phi_{i,j}$ converges to $\phi$ in the weak* topology of $\L^\infty(G)$, then this will yield
$$
\left| \int_G \phi(t) f(t) \check{g}(t) \d\mu_G(t) \right| 
\leq C \norm{\lambda(f)}_{\VN(G)} \norm{\lambda(g)}_{\L^1(\VN(G))}
$$
and consequently, that $\norm{M_{\phi}}_{\VN(G) \to \VN(G)} \leq C$. We show the claimed weak* convergence. For a given $h \in \L^1(G)$, we write $\langle \phi_{i,j} , h \rangle_{\L^\infty(G),\L^1(G)} 
=\langle \chi_j \ast (\psi_i \phi) - \psi_i \phi , h \rangle + \langle \psi_i \phi - \phi, h \rangle$. For the second summand, note that $\|\psi_i\|_\infty \leq \norm{\psi_i}_{\mathrm{A}(G)} \leq 1$, so that $\psi_i \phi - \phi$ is uniformly bounded in $\L^\infty(G)$. Moreover, $\psi_i(s) \to 1$ for any $s \in G$, since it is an approximate unit. By dominated convergence, we deduce $\langle \psi_i \phi - \phi, h \rangle \to 0$ as $i \to \infty$. Now for a fixed large $i$, we have that $\langle \chi_j \ast (\psi_i \phi) - \psi_i \phi, h \rangle \to 0$ according to \cite[(14.11.1)]{Dieu2}.

$\Leftarrow$: Let $M_\phi \co \VN(G) \to \VN(G)$ be a decomposable Fourier multiplier. Similarly, with Corollary \ref{cor-ADS-complementation-without-amenability}, we can write $M_\phi=M_{\phi_1}-M_{\phi_2}+\mathrm{i}(M_{\phi_3}-M_{\phi_4})$ where each $M_{\phi_j} \co \VN(G) \to \VN(G)$ is completely positive. By Proposition \ref{Th-cp-Fourier-multipliers}, each Fourier multiplier $\phi_j$ induces a completely positive multiplier on $\L^p(\VN(G))$. Using Proposition \ref{prop-decomposable-se-decompose-en-cp}, we conclude that $\phi$ induces a decomposable Fourier multiplier on $\L^p(\VN(G))$.

The proof of last part is similar to the proof to the one of Theorem \ref{prop-amenable-discrete-Fourier-multiplier-dec-infty} together with Theorem \ref{thm-dec=reg-hyperfinite} when one remembers that the von Neumann algebra $\VN(G)$ is approximately finite-dimensional.
\end{proof}

\begin{remark}\normalfont
\label{amenability-replaced-by-hyperfinite}
If we replace the amenability assumption by supposing that $\VN(G)$ is approximately finite-dimensional then the end of the proof shows that for any function $\varphi$ inducing a completely bounded Fourier multiplier on $\VN(G)$ we have the inequalities \eqref{inequality-with-c}.
\end{remark}

Similarly, we obtain the following result:

\begin{thm}
\label{prop-continuous-Fourier-multiplier-dec-infty2}
Let $G$ be a second countable amenable pro-discrete locally compact group. 
Suppose $1 \leq p \leq \infty$. Then a function $\phi \co G \to \C$ induces a decomposable Fourier multiplier $M_\phi \co \L^p(\VN(G)) \to \L^p(\VN(G))$ if and only if it induces a (completely) bounded Fourier multiplier on $M_\phi \co \VN(G) \to \VN(G)$. In this case, we have
$$
\norm{M_\phi}_{\dec,\L^p(\VN(G)) \to \L^p(\VN(G))} 
=\norm{M_\phi}_{\cb, \VN(G) \to \VN(G)}
=\norm{M_\phi}_{\VN(G) \to \VN(G)}.
$$
\end{thm}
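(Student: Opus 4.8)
The plan is to combine the complementation result for pro-discrete groups (Theorem \ref{thm-complementation-quotient}) with the interpolation machinery of Lemma \ref{Lemma-interpolation}, exactly as was done in the proof of Theorem \ref{prop-amenable-discrete-Fourier-multiplier-dec-infty} for amenable discrete groups. The underlying skeleton is: identify decomposable multipliers on $\L^p(\VN(G))$ with regular operators (Theorem \ref{thm-dec=reg-hyperfinite}, valid because amenability of $G$ forces $\VN(G)$ to be approximately finite-dimensional by \cite[Corollary 6.9]{Con2}), realize the regular operators as a complex interpolation space via \eqref{Regular-as-interpolation-space}, and then cut down by the projection onto Fourier multipliers to pass from $\CB_{\w^*}(\VN(G))$ and $\CB(\L^1(\VN(G)))$ to $\mathfrak{M}^{\infty,\cb}(G)$ and $\mathfrak{M}^{1,\cb}(G)$.

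First I would establish the equivalence ``decomposable on $\L^p$ $\iff$ (completely) bounded on $\VN(G)$'' following the two-direction scheme in the proof of Theorem \ref{prop-continuous-Fourier-multiplier-dec-infty}. For the forward direction, given a decomposable $M_\phi \co \L^p(\VN(G)) \to \L^p(\VN(G))$, write $M_\phi = T_1 - T_2 + \mathrm{i}(T_3 - T_4)$ with $T_j$ completely positive (Proposition \ref{prop-decomposable-se-decompose-en-cp}), apply the projection $P_G^p$ of Theorem \ref{thm-complementation-quotient} which preserves complete positivity and fixes Fourier multipliers (here crucially with a bounded \emph{measurable} symbol, so no continuity issue arises, unlike in Theorem \ref{prop-continuous-Fourier-multiplier-dec-infty}), to get $M_\phi = \sum \pm P_G^p(T_j)$ with each $P_G^p(T_j)$ a completely positive Fourier multiplier on $\L^p(\VN(G))$; then Proposition \ref{Th-cp-Fourier-multipliers} upgrades each to a completely positive multiplier on $\VN(G)$, so $\phi$ induces a decomposable multiplier on $\VN(G)$. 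The converse uses Corollary \ref{cor-ADS-complementation-without-amenability} — wait, more precisely the $p=\infty$, $p=1$ projections from Theorem \ref{thm-complementation-quotient} — to decompose $M_\phi \co \VN(G) \to \VN(G)$ into completely positive multipliers, which by Proposition \ref{Th-cp-Fourier-multipliers} and Lemma \ref{lemma-inclusion-Fourier-multipliers} induce completely positive multipliers on $\L^p(\VN(G))$, whence $M_\phi$ is decomposable on $\L^p(\VN(G))$ by Proposition \ref{prop-decomposable-se-decompose-en-cp}.

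For the isometric identity, I would invoke Lemma \ref{Lemma-interpolation} with the interpolation couple $E_0 = \CB_{\w^*}(\VN(G))$, $E_1 = \CB(\L^1(\VN(G)))$ (whose $\frac1p$-interpolation space is $\Reg(\L^p(\VN(G)))$ by \eqref{Regular-as-interpolation-space}) and with $C = \mathfrak{M}^p(G)$, using as bounded projections the compatible pair $(P_G^\infty, P_G^1)$ from Theorem \ref{thm-complementation-quotient}; since each has norm $1$, Lemma \ref{Lemma-interpolation} gives an isometry. One checks $E_0 \cap C = \mathfrak{M}^{\infty,\cb}(G)$ and $E_1 \cap C = \mathfrak{M}^{1,\cb}(G)$ via Lemmas \ref{lemma-duality-Fourier-multipliers} and \ref{lemma-inclusion-Fourier-multipliers}. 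Then $(\mathfrak{M}^{\infty,\cb}(G),\mathfrak{M}^{1,\cb}(G))^{\frac1p} = (\mathfrak{M}^{\infty,\cb}(G),\mathfrak{M}^{\infty,\cb}(G))^{\frac1p} = \mathfrak{M}^{\infty,\cb}(G)$, where the first equality uses the isometry $\mathfrak{M}^{1,\cb}(G) = \mathfrak{M}^{\infty,\cb}(G)$ of Lemma \ref{lemma-duality-Fourier-multipliers}. This identifies $\Reg(\L^p(\VN(G))) \cap \mathfrak{M}^p(G)$ with $\mathfrak{M}^{\infty,\cb}(G)$ isometrically; finally Theorem \ref{thm-dec=reg-hyperfinite} converts regular into decomposable isometrically, and $\norm{M_\phi}_{\cb,\VN(G)\to\VN(G)} = \norm{M_\phi}_{\VN(G)\to\VN(G)}$ by amenability (\cite[Corollary 1.8]{DCH} on $G/\X_j$ passed to the limit, or directly the approximate finite-dimensionality of $\VN(G)$ combined with Theorem \ref{thm-norm=normcb-hyperfinite}). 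I expect the main obstacle to be bookkeeping the compatibility of all the embeddings — making sure $\CB_{\w^*}(\VN(G))$, $\CB(\L^1(\VN(G)))$, and $\mathfrak{M}^p(G)$ all embed compatibly into a common space of operators on $\VN(G) \cap \L^1(\VN(G))$ (using density of $\lambda(\Span C_c(G)*C_c(G))$ and weak* density of $\lambda(C_c(G))$) so that Lemma \ref{Lemma-interpolation} genuinely applies — rather than any deep new difficulty; this is mostly a matter of transcribing the argument already used for Theorem \ref{prop-continuous-Fourier-multiplier-dec-infty}, noting that here we even avoid the continuity-of-symbol subtleties because the pro-discrete projection fixes arbitrary bounded measurable symbols.
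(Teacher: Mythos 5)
Your proposal is correct and follows essentially the same route the paper intends: the paper introduces this theorem with "Similarly, we obtain the following result," meaning one transcribes the proof of Theorem \ref{prop-continuous-Fourier-multiplier-dec-infty} with the pro-discrete complementation of Theorem \ref{thm-complementation-quotient} (with constant $1$) in place of the ALSS projection, exactly as you do. Your observation that the pro-discrete projection fixes arbitrary bounded measurable symbols, so the approximate-unit regularization step of the ALSS case can be skipped, is also the reason the statement here is an isometry rather than an equivalence up to the constant $c$.
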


\begin{remark}\normalfont
\label{Rem-amenability-approximation}\label{Rem-continuity-dec-multipliers-Lp}
In both situations, a function $\phi \co G \to \C$ which induces a decomposable Fourier multiplier $M_\phi \co \L^p(\VN(G)) \to \L^p(\VN(G))$ is equal to a continuous function almost everywhere, see e.g. \cite[Corollary 3.3]{Haa3}.
\end{remark}

The following observation was communicated\footnote{\thefootnote. In \cite{ArK1}, we will give another argument.}  to us by Sven Raum whom we thank for this. It shows that in the pro-discrete case, a similar remark to Remark \ref{amenability-replaced-by-hyperfinite} is useless.

\begin{prop}
\label{Prop-equivalence-}
A second countable pro-discrete locally compact group $G$ is amenable if and only if its von Neumann algebra $\VN(G)$ is approximately finite-dimensional.
\end{prop}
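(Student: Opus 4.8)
Proof proposal for Proposition \ref{Prop-equivalence-}.

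The plan is to prove the two implications separately, using on one side classical structure theory and on the other side a transfer through the approximating discrete quotients. First I would treat the easy direction: if $G$ is amenable then $\VN(G)$ is approximately finite-dimensional. This is exactly \cite[Corollary 6.9 (a)]{Con2} (equivalently, amenability of $G$ gives injectivity of $\VN(G)$, which for a von Neumann algebra with separable predual is the same as being approximately finite-dimensional by Connes' theorem; the second countability of $G$ ensures $\L^2(G)$ is separable, hence $\VN(G)$ acts on a separable Hilbert space and has separable predual). So this half is immediate and requires no new argument.

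For the converse, suppose $\VN(G)$ is approximately finite-dimensional; I want to deduce that $G$ is amenable. Since $G$ is a second countable pro-discrete locally compact group, by Proposition \ref{prop-pro-discrete} it admits a countable decreasing basis $(\X_j)$ of neighborhoods of $e_G$ consisting of open compact normal subgroups, and $G = \varprojlim G/\X_j$. The key point is that for each $j$, the quotient $G_j := G/\X_j$ is a \emph{discrete} group and a continuous quotient of $G$, so $\VN(G_j)$ embeds (via the averaging projection construction) as a corner $p_{\X_j}\VN(G)$ of $\VN(G)$; more precisely, by Lemma \ref{Lemma-iso-GK-VN} and Lemma \ref{Lemma-trace-preserving-averaging}, $\VN(G_j)$ is $*$-isomorphic to the von Neumann subalgebra $p_{\X_j}\VN(G)$ cut by a central projection. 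A corner $p M p$ of an approximately finite-dimensional von Neumann algebra $M$ by a (central) projection is again approximately finite-dimensional (injectivity passes to corners, or one can invoke the permanence properties of hyperfiniteness). Hence each $\VN(G/\X_j)$ is approximately finite-dimensional, and since $G/\X_j$ is discrete, \cite[Theorem 3.8.2]{SS} gives that $G/\X_j$ is amenable for every $j$.

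It then remains to pass from amenability of all the discrete quotients $G/\X_j$ to amenability of $G$ itself. Here I would use that $\X_j$ is a \emph{compact} (hence amenable) normal subgroup with amenable quotient $G/\X_j$, so $G$ is an extension of an amenable group by an amenable group and is therefore amenable by \cite[Proposition G.2.2]{BHV} — indeed it suffices to use a single index $j$. Alternatively, and perhaps cleaner conceptually, one observes that $G = \varprojlim G/\X_j$ is an inverse limit of amenable groups along surjective maps with compact kernels; amenability is preserved under such inverse limits (again via \cite[Proposition G.2.2]{BHV} applied to one kernel). Either way the conclusion follows. The main obstacle, and the only place that needs a little care, is the permanence statement that a central-projection corner of an approximately finite-dimensional von Neumann algebra with separable predual is again approximately finite-dimensional; this is standard (it is transparent from the injectivity characterization, since a conditional expectation onto $p M p$ is built from the one onto $M$), but it is the step I would want to state explicitly with a precise reference rather than leave implicit.
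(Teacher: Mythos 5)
Your proof is correct and follows essentially the same route as the paper: reduce to a single open compact normal subgroup $K$, identify $\VN(G/K)$ with the corner $\VN(G)p_K$ via the central averaging projection, deduce amenability of the discrete quotient $G/K$ from \cite[Theorem 3.8.2]{SS}, and conclude by the extension permanence \cite[Proposition G.2.2]{BHV}; the converse is \cite[Corollary 6.9 (a)]{Con2} in both. The paper likewise handles the corner step by appealing to the equivalence of ``injective'' and ``approximately finite-dimensional'', exactly as you suggest.
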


\begin{proof}
Consider a pro-discrete locally compact group $G$ such that $\VN(G)$ is approximately finite-dimensional. By Proposition \ref{prop-pro-discrete}, there exists an open compact normal subgroup $K$ of $G$. Using the central projection $p_K$ of Lemma \ref{Lemma-iso-GK-VN}, we have a $*$-isomorphism $\pi \co \VN(G/K) \to \VN(G)p_K$, $\lambda_{sK} \mapsto \lambda_s p_K$. It is well-known\footnote{\thefootnote. This observation relies on the equivalence between ``injective'' and ``approximately finite-dimensional''.} that this implies that $\VN(G)p_K$ is approximately finite-dimensional and thus that $\VN(G/K)$ is approximately finite-dimensional. Furthermore, since $K$ is open, the group $G/K$ is discrete by \cite[Theorem 5.26]{HR}. By \cite[Theorem 3.8.2]{SS}, we infer that $G/K$ amenable. Since $K$ is amenable, by \cite[Proposition G.2.2]{BHV}, we conclude that the group $G$ is amenable.

The converse is \cite[Corollary 6.9 (a)]{Con2}.
\end{proof}

Similarly, 
we obtain a proof of the next result. The first part is\footnote{\thefootnote. We warn the reader that the proof \cite[Proposition 3.3]{Are} is really problematic. The proof of the \textit{fundamental} point (the surjectivity of the map $\tau_p$) is lacking.} essentially stated in \cite[Proposition 3.3]{Are}.

\begin{thm}
\label{thm-convolutor-description-dec-infty}
Let $G$ be an amenable locally compact group. Suppose $1 < p < \infty$. Then a convolutor $T \co \L^p(G) \to \L^p(G)$ of $\CV_p(G)$ is regular if and only if it induces a bounded convolutor $T \co \L^\infty(G) \to \L^\infty(G)$. In this case, we have
$$
\norm{T}_{\reg,\L^p(G) \to \L^p(G)} 
=\norm{T}_{\L^\infty(G) \to \L^\infty(G)}
(=\norm{T}_{\cb,\L^\infty(G) \to \L^\infty(G)}).
$$
\end{thm}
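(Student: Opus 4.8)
This is the amenable/convolutor counterpart of Theorems \ref{prop-amenable-discrete-Fourier-multiplier-dec-infty} and \ref{prop-continuous-Fourier-multiplier-dec-infty}, and the plan is to run the same interpolation-plus-complementation argument with the group von Neumann algebra replaced by the commutative von Neumann algebra $\L^\infty(G)$. The ingredients are: $\L^\infty(G)$ is a commutative von Neumann algebra, hence approximately finite-dimensional, so the isometric identity \eqref{Regular-as-interpolation-space} applies with $M=N=\L^\infty(G)$ and $1<p<\infty$, giving $\Reg(\L^p(G)) = \big(\CB_{\w^*}(\L^\infty(G)),\CB(\L^1(G))\big)^{\frac1p}$ isometrically; every bounded map from an operator space into a commutative C$^*$-algebra is automatically completely bounded with the same norm (it is controlled by its scalar evaluations, and functionals satisfy $\norm{\cdot}_{\cb}=\norm{\cdot}$), so $\CB_{\w^*}(\L^\infty(G))=\B_{\w^*}(\L^\infty(G))$, $\CB(\L^1(G))=\B(\L^1(G))$, and also $\CV_\infty(G)=\CV_{\infty,\cb}(G)$, $\CV_1(G)=\CV_{1,\cb}(G)$, all isometrically; and Proposition \ref{prop-ma-complementation-Fourier-multipliers-abelian} provides compatible contractive projections $P_G^\infty\co\B_{\w^*}(\L^\infty(G))\to\B_{\w^*}(\L^\infty(G))$ and $P_G^1\co\B(\L^1(G))\to\B(\L^1(G))$ onto $\CV_\infty(G)$ and $\CV_1(G)$, together with a compatible $P_G^p\co\B(\L^p(G))\to\B(\L^p(G))$ onto $\CV_p(G)$.

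First I would realize $\CV_\infty(G)$ and $\CV_1(G)$ as one and the same object sitting diagonally inside the interpolation couple $(E_0,E_1):=\big(\CB_{\w^*}(\L^\infty(G)),\CB(\L^1(G))\big)$. By the classical Wendel-type description of convolution operators (see \cite[Chapter 1]{Der4}), a weak$^*$ continuous convolutor on $\L^\infty(G)$ and a convolutor on $\L^1(G)$ are each convolution by a uniquely determined bounded measure, and convolution by a fixed measure acts boundedly and compatibly on the whole scale $\L^q(G)$, $1\le q\le\infty$, with equal operator norms at $q=1$ and $q=\infty$. After embedding $E_0$, $E_1$, $\Reg(\L^p(G))$ and the relevant convolutor spaces continuously into a common space, e.g. $\B\big(\L^1(G)\cap\L^\infty(G),\L^1(G)+\L^\infty(G)\big)$ (using density of $\L^1(G)\cap\L^\infty(G)$ in each $\L^q(G)$ and weak$^*$-density in $\L^\infty(G)$, plus weak$^*$ continuity on the $\L^\infty$ side), one obtains a closed subspace $C:=\CV_\infty(G)$ of $E_0+E_1$ with $E_0\cap C=C=E_1\cap C$, on which the $E_0$- and the $E_1$-norms coincide (both equal the measure-algebra norm of the associated measure) and equal $\norm{\cdot}_{\L^\infty(G)\to\L^\infty(G)}=\norm{\cdot}_{\cb,\L^\infty(G)\to\L^\infty(G)}$.

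Then I would apply Lemma \ref{Lemma-interpolation} to $(E_0,E_1)$, the subspace $C$, and the projection $P:=P_G^\infty+P_G^1$ on $E_0+E_1$ amalgamated from the compatible contractive projections above (which are genuinely completely bounded, since $\B=\CB$ on $\L^\infty(G)$ and $\L^1(G)$, and satisfy $P(E_i)\subset E_i$ with contractive restriction). Since $\max\{\norm{P_G^\infty}_{E_0\to E_0},\norm{P_G^1}_{E_1\to E_1}\}=1$ and $(E_0\cap C,E_1\cap C)^{\frac1p}=(C,C)^{\frac1p}=C$ isometrically, the lemma gives that $P\big((E_0,E_1)^{\frac1p}\big)=(E_0,E_1)^{\frac1p}\cap C=\Reg(\L^p(G))\cap C$, with the interpolation (= regular) norm, is isometric to $C=\CV_\infty(G)$. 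Finally I would identify $\Reg(\L^p(G))\cap C$ with $\Reg(\L^p(G))\cap\CV_p(G)$: the restriction of $P_G^p$ to $\Reg(\L^p(G))=(E_0,E_1)^{\frac1p}$ agrees, by compatibility of the family of projections, with the restriction of $P$, hence maps $\Reg(\L^p(G))$ into $\Reg(\L^p(G))\cap C$; so if $T\in\CV_p(G)$ is regular then $T=P_G^p(T)\in\Reg(\L^p(G))\cap C$, while conversely any element of $\Reg(\L^p(G))\cap C$ commutes with all translations on the dense subspace $\L^1(G)\cap\L^\infty(G)$, hence on $\L^p(G)$, so belongs to $\CV_p(G)$. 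Combining: $T\in\CV_p(G)$ is regular if and only if it induces a (weak$^*$ continuous) bounded convolutor on $\L^\infty(G)$, and then $\norm{T}_{\reg,\L^p(G)\to\L^p(G)}=\norm{T}_{\L^\infty(G)\to\L^\infty(G)}=\norm{T}_{\cb,\L^\infty(G)\to\L^\infty(G)}$, the last equality again by complete boundedness of maps into commutative C$^*$-algebras.

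The main obstacle is the compatibility bookkeeping, exactly as in Theorem \ref{prop-continuous-Fourier-multiplier-dec-infty}: one must check carefully that a weak$^*$ continuous convolutor on $\L^\infty(G)$ really does descend to a \emph{compatible} convolutor on $\L^1(G)$ (so that $C$ genuinely sits diagonally in the couple and the two endpoint norms on $C$ coincide) — this is where the measure-algebra description and the fact that a fixed convolution acts consistently across the whole $\L^q$-scale are essential — and that the projections of Proposition \ref{prop-ma-complementation-Fourier-multipliers-abelian} fit the precise hypotheses of Lemma \ref{Lemma-interpolation} once everything has been placed into a common ambient space. When $G$ is abelian this result is already contained in Theorem \ref{prop-continuous-Fourier-multiplier-dec-infty} through the identification $\CV_p(G)=\mathfrak{M}^p(\hat G)$; the genuinely new content is the non-abelian amenable case, handled via the convolutor complementation.
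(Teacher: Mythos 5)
Your proposal is correct and follows essentially the same route the paper intends: the paper's proof of this theorem is literally "similarly" to Theorems \ref{prop-amenable-discrete-Fourier-multiplier-dec-infty} and \ref{prop-continuous-Fourier-multiplier-dec-infty}, i.e.\ the interpolation identity \eqref{Regular-as-interpolation-space} for the commutative (hence approximately finite-dimensional) algebra $\L^\infty(G)$, Lemma \ref{Lemma-interpolation} applied with the compatible contractive projections of Proposition \ref{prop-ma-complementation-Fourier-multipliers-abelian}, and the Wendel-type identification of the endpoint convolutor spaces with the measure algebra. Your attention to the compatibility bookkeeping and to the automatic equality of bounded and completely bounded norms on the commutative endpoints matches what the paper's sketch relies on.
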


This result applies to decomposable Fourier multipliers $M_\phi \co \L^p(\VN(G)) \to \L^p(\VN(G))$ on an abelian locally compact group $G$.

\begin{remark} \normalfont
\label{Remark-convolutors-amenability}
Consider a locally compact group $G$. It would be interesting to know if the amenability of $G$ is characterized by the property of Theorem \ref{thm-convolutor-description-dec-infty}.
\end{remark}

\section{Strongly and CB-strongly non decomposable operators}
\label{sec:Existence-of-strongly}

In this section, we construct completely bounded operators $T \co \L^p(M) \to \L^p(M)$ which cannot be approximated by decomposable operators. We particularly investigate different types of multipliers. We also give explicit examples of such operators on the noncommutative $\L^p$-spaces associated to the free groups (see Theorem \ref{Prop-concrete-free-groups} and Theorem \ref{prop-free-Hilbert-transform}).

\subsection{Definitions}
\label{subsec:Existence-of-strongly-Definitions}

The following definition is an extension of the one of \cite[Remark, page 163]{ArV} on classical $\L^p$-spaces to noncommutative $\L^p$-spaces since the regular norm and the decomposable norm are identical by Theorem \ref{thm-dec=reg-hyperfinite}.

\begin{defi}
\label{defi-strongly-non-decomposable}
We say that an operator $T \co \L^p(M) \to \L^p(M)$ is strongly non decomposable if $T$ does not belong to the closure $\ovl{\Dec(\L^p(M))}$ of the space $\Dec(\L^p(M))$ with respect to the operator norm $\norm{\cdot}_{\L^p(M) \to \L^p(M) }$.
\end{defi}

It means that $T$ cannot be approximated by decomposable operators. We also introduce the following variation of this definition.

\begin{defi}
\label{defi-CB-strongly-non-decomposable}
We say that a completely bounded operator $T \co \L^p(M) \to \L^p(M)$ is $\CB$-strongly non decomposable if $T$ does not belong to the closure $\ovl{\Dec(\L^p(M))}^{\CB}$ of the space $\Dec(\L^p(M))$ with respect to the completely bounded norm $\norm{\cdot}_{\cb,\L^p(M) \to \L^p(M)}$.
\end{defi}

If $M$ is approximately finite-dimensional, we also use the words \textit{strongly non regular} and $\CB$-\textit{strongly non regular}. 

\begin{remark}\label{rem-defi-CB-strongly-non-decomposable} \normalfont
These two notions are related. Indeed, let $T \co \L^p(M) \to \L^p(M)$ be a completely bounded operator in $\ovl{\Dec(\L^p(M))}^{\CB}$. There exists a sequence $(T_n)$ of decomposable operators acting on $\L^p(M)$ such that $\norm{T-T_n}_{\cb, \L^p(M) \to \L^p(M)}$ tends to zero when $n$ approaches $+\infty$. Hence, we have
$$
\norm{T-T_n}_{\L^p(M) \to \L^p(M)} 
\leq \norm{T-T_n}_{\cb, \L^p(M) \to \L^p(M)} \xra[n \to+\infty]{} 0.
$$
Hence $T$ belongs to the closure $\ovl{\Dec(\L^p(M))}$. We deduce that if $T$ is completely bounded and strongly non decomposable then $T$ is $\CB$-strongly non decomposable.  
\end{remark}

\subsection{Strongly non regular completely bounded Fourier multipliers on abelian groups}
\label{sec:Existence-of-strongly-non-regular-Fourier-multipliers-abelian}

Arendt and Voigt proved that the Hilbert transforms on the groups $\R$, $\Z$ and $\T$ are strongly non regular \cite[Example 3.3, 3.4, 3.9]{ArV}. In the case of an arbitrary abelian locally compact group $G$, a notion of Hilbert transform is not available in general. Nevertheless, we prove in this section that there exists a strongly non regular completely bounded Fourier multiplier acting on $\L^p(G)$.

\paragraph{Complements on convolution operators.} If $\mu\in \M(G)$ is a bounded Borel measure on $G$, then $\rho^p_G(\mu)$ denotes the element of $\CV_p(G)$, defined by $\rho^p_G(\mu)(f) =f*\Delta_G^{\frac{1}{p^*}}\check{\mu}$ for any continuous function $f \co G \to \C$ with compact support, \cite[page 8]{Der4}. Moreover, if $\mu \in \M(G)$ and if $H$ is a closed subgroup of $G$ note that 
\begin{equation}
	\label{Deri-Res-measures}
	1_H\mu=i(\mathrm{Res}_H\mu)
\end{equation}
where $i(\nu)$ denotes the image of the measure $\nu$ under the inclusion map $i$ of $H$ in $G$.

If $X$ is a Banach space, the subset $\CV_p(G,X)$ of $\B(\L^p(G,X))$ is defined as the space of convolution operators $T$ such that $T \ot \Id_X$ extends to a bounded operator on $\L^p(G,X)$. 

\paragraph{Positive convolution operators.} The following is \cite[Theorem 9.6]{Pie} (see also \cite[page 8]{Der4}, and \cite[page 280-281]{Are} for a good explanation). Let $G$ be an \textit{amenable} locally compact group and suppose $1 < p < \infty$. Let $T \co \L^p(G) \to \L^p(G)$ be a positive convolution operator. Then there exists a positive bounded measure $\mu \in \mathrm{M}(G)$ on $G$ such that $T(f)=f*\Delta_G^{\frac{1}{p^*}}\check{\mu}$ for any continuous function $f \co G \to \C$ with compact support\footnote{\thefootnote. If $s \in G$ we have by \cite[page 7]{Der4}
$
\big(f*\Delta_G^{\frac{1}{p^*}}\check{\mu}\big)(s)
=\int_G f(st) \Delta_G(t)^{\frac{1}{p}}\d \mu(t)$. }. Moreover, we have $\norm{T}_{\L^p(G) \to \L^p(G)}=\norm{\mu}$.

\paragraph{Canonical isometry from $\CV_p(H,X)$ into $\CV_p(G,X)$.} Let $G$ be a locally compact group, $H$ a closed subgroup of $G$, $X$ a Banach space and $1 < p < \infty$. There exists a canonical linear isometry 
\begin{equation}
	\label{Deri-injection}
i \co \CV_p(H,X) \to \CV_p(G,X).
\end{equation}
It is a vectorial extension of \cite[Theorem 2 page 113]{Der4}, (see also \cite[Theorem 2.6]{Arh5}) which can be proven with a similar proof. Note that the remark \cite[Remark page 106]{Der4} gives for any $\mu \in \M(H)$ the equality
\begin{equation}
\label{mesures-et-i}
	i\big(\rho^p_H(\mu)\big)=\rho^p_G(i(\mu)).
\end{equation}
where $i(\mu)$ denotes the image of the measure $\mu$ under the inclusion map $i$ of $H$ in $G$. Suppose in addition that $G$ is abelian. Using the isomorphism $\widehat{G}/ H^{\perp}=\widehat{H}$ given by $\ovl{\chi}\mapsto \chi|H$ we can reformulate \cite[Theorem 1 page 123]{Der4} under the equality
$$
i(M_\varphi)=M_{\varphi \circ \pi}
$$ 
where $\pi \co \hat{G} \to \hat{G}/H^\perp$ is the canonical map.

\paragraph{Isometry from $\CV_p(G/H)$ into $\CV_p(G)$.} Let $G$ be an amenable locally compact group and $H$ be a normal closed subgroup of $G$ such that $G/H$ is compact. By \cite[page 4 and 11]{Der5}, there exist an isometry $\Omega \co \CV_p(G/H) \to \CV_p(G)$ and a contraction $R \co \CV_p(G) \to \CV_p(G/H)$ satisfying $R\Omega=\Id_{\CV_p(G/H)}$ such that for any $\mu \in \M(G)$ 
$$
R(\rho_{G}^p(\mu))=\rho_{G/H}^p(T_H\mu)
$$
where the measure $T_H(\mu)$ is defined by (see \cite[8.2.12 page 233]{Rei})
$$
\int_{G/H} g \d \big(T_H(\mu)\big)
=\int_{G} g \circ \pi_H \d \mu_G,
$$ 
for all continuous functions $g \co G/H \to \C$ with compact support.

Let $G$ be a locally compact abelian group and $H$ be a compact subgroup of $G$. We denote by $\pi \co G \to G/H$ the canonical map. The mapping $\chi\mapsto \chi\circ \pi$ is an isomorphism of $\widehat{G/H}$ onto $H^\perp$. If $\varphi \co H^{\perp} \to \C$ is a complex function, we denote by $\widetilde{\varphi}\co \widehat{G} \to \C$ the extension of $\varphi$ on $\widehat{G}$ which is zero off $H^{\perp}$. Let $X$ be a Banach space. By \cite[Proposition 2.8]{Arh5}, the linear map
\begin{equation} 
\label{prop-extension-0-Fourier-multipliers-abelien}
    \CV_p(G/H,X) \to  \CV_p(G,X), \quad  M_{\varphi} \to M_{\tilde{\varphi}}
\end{equation} 
is an isometry.

\paragraph{Projection from $\B(\L^p(G))$ onto $\CV_p(G)$.} Let $G$ be an \textit{amenable} group and suppose $1 \leq p < \infty$. The result \cite[Theorem 1.1]{ArV} says that there exists a positive contractive projection 
\begin{equation}
	\label{Projection-Arendt-Voigt}
	P_G \co \B(\L^p(G)) \to \B(\L^p(G)).
\end{equation}
onto $\CV_p(G)$. 

\paragraph{Projection from $\CV_p(G)$ onto $\CV_p(H)$.} Let $G$ be a locally compact group and $H$ be an amenable closed subgroup. Suppose $1<p<\infty$. By \cite[Theorems 12 and 15]{Der2}, there exists a projection $\mathcal{P} \co \CV_p(G) \to \CV_p(G)$ onto $\{S \in \CV_p(G) \ : \ \supp S \subset H\}$ such that if $Q_H= i^{-1} \circ \mathcal{P} \co \CV_p(G) \to \CV_p(H)$ we have the following properties:
\begin{enumerate}
	\item $\mathcal{P}(\rho^p_G(\mu)) = \rho^p_G(1_H \mu)$ for every bounded measure $\mu \in \mathrm{M}(G)$,
	\item $\norm{Q_H(T)}_{\L^p(H) \to \L^p(H)} \leq \norm{T}_{\L^p(G) \to \L^p(G)}$,
	\item $Q_H(i(S)) = S$ for $S \in \CV_p(H)$.
\end{enumerate}

\paragraph{Restriction of multipliers.} Let $G$ be a locally compact \textit{abelian} group. Let $H$ be a closed subgroup of the dual group $\widehat{G}$. Suppose $1 \leq p \leq \infty$. Let $\varphi \co \widehat{G} \to \C$ be a continuous complex function which induces a bounded Fourier  multiplier (i.e. a convolutor) $M_\varphi \co \L^p(G) \to \L^p(G)$. Then, by \cite[Corollary 4.6]{Sae} (see also \cite[abstract and page 6]{Cow}), the restriction $\varphi_{\mid_H} \co H \to \C$ induces a bounded Fourier multiplier $M_{\varphi_{\mid_H}} \co \L^p(\hat{H}) \to \L^p(\hat{H})$ and we have 
\begin{equation}
\label{thm-Restriction-multipliers-abelian groups}
\bnorm{M_{\varphi_{\mid_H}}}_{\L^p(\hat{H}) \to \L^p(\hat{H})} \leq \bnorm{M_\varphi}_{\L^p(G) \to \L^p(G)}.
\end{equation}

\vspace{0.2cm}

We start with a useful observation.

\begin{lemma}
\label{Lemma-Der-est-pos}
Let $G$ be a unimodular amenable locally compact group and $H$ be a closed subgroup of $G$. Suppose $1<p <\infty$. The map $Q_H \co \CV_p(G) \to \CV_p(H)$ is positive.
\end{lemma}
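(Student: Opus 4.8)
The statement to prove is that for $G$ a unimodular amenable locally compact group, $H$ a closed subgroup, and $1<p<\infty$, the map $Q_H \co \CV_p(G) \to \CV_p(H)$ is positive. Recall that $Q_H = i^{-1} \circ \mathcal{P}$, where $\mathcal{P} \co \CV_p(G) \to \CV_p(G)$ is the projection onto convolutors supported in $H$, and $i \co \CV_p(H) \to \CV_p(G)$ is the canonical isometric embedding \eqref{Deri-injection}.

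\textbf{Strategy.} The plan is to reduce positivity of $Q_H$ to the known description of positive convolution operators on amenable groups as convolution by positive bounded measures, and then to track this positive measure through the maps $\mathcal{P}$ and $i^{-1}$ using the explicit formulas available for their action on $\lambda^p_G(\mu)$. First I would take $T \in \CV_p(G)$ with $T \geq 0$. Since $G$ is amenable and $1<p<\infty$, by \cite[Theorem 9.6]{Pie} (as recalled in the paragraph ``Positive convolution operators'' of the excerpt) there exists a positive bounded measure $\mu \in \M(G)$ with $T = \lambda^p_G(\mu)$ and $\norm{T}_{\L^p(G) \to \L^p(G)} = \norm{\mu}$. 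Then by the property (1) of $\mathcal{P}$ listed in the paragraph ``Projection from $\CV_p(G)$ onto $\CV_p(H)$'', we have $\mathcal{P}(T) = \mathcal{P}(\lambda^p_G(\mu)) = \lambda^p_G(1_H\mu)$. Now $1_H\mu$ is again a positive bounded measure, and using \eqref{Deri-Res-measures} we have $1_H\mu = i(\mathrm{Res}_H\mu)$, where $\mathrm{Res}_H\mu$ is a positive bounded measure on $H$. Combining with \eqref{mesures-et-i}, $\mathcal{P}(T) = \lambda^p_G(i(\mathrm{Res}_H\mu)) = i(\lambda^p_H(\mathrm{Res}_H\mu))$. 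Applying $i^{-1}$ gives $Q_H(T) = \lambda^p_H(\mathrm{Res}_H\mu)$.

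\textbf{Concluding the positivity.} It remains to observe that $\lambda^p_H(\nu)$ is a positive operator on $\L^p(H)$ whenever $\nu$ is a positive bounded measure on $H$. This is elementary: for a nonnegative continuous function $f$ with compact support on the unimodular group $H$, the formula $\lambda^p_H(\nu)(f) = f * \check\nu$ (using $\Delta_H \equiv 1$) is a convolution of two nonnegative objects, hence nonnegative, and one passes to general $f \in \L^p(H)_+$ by density and continuity of $\lambda^p_H(\nu)$, together with the fact that $\L^p(H)_+$ is closed. Thus $Q_H(T) = \lambda^p_H(\mathrm{Res}_H\mu) \geq 0$, which proves that $Q_H$ is positive.

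\textbf{Main obstacle.} The conceptual content is already packaged in the cited results, so the only genuine point requiring care is checking that the explicit identities \eqref{Deri-Res-measures}, \eqref{mesures-et-i}, and property (1) of $\mathcal{P}$ all refer to compatible normalizations and that $i^{-1}$ is well-defined on the range $\mathcal{P}(\CV_p(G)) = \{S \in \CV_p(G) : \supp S \subset H\}$, which is precisely the range of the isometry $i$. I expect the delicate bookkeeping to be the interplay between $\mathrm{Res}_H\mu$ (restriction of a measure to the subgroup) and $1_H\mu$ (multiplication by the indicator), but this is exactly the content of \eqref{Deri-Res-measures} and so amounts to a citation rather than a new argument. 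A minor additional check is that $\mathrm{Res}_H\mu$ is indeed a bounded measure on $H$ — this follows since $\norm{\mathrm{Res}_H\mu} = \norm{1_H\mu} \leq \norm{\mu} < \infty$.
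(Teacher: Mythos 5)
Your proposal is correct and follows essentially the same route as the paper's proof: represent the positive convolutor as $\lambda^p_G(\mu)$ for a positive bounded measure $\mu$ (using the amenability of $G$), then track $\mu$ through $\mathcal{P}$ and $i^{-1}$ via the identities $\mathcal{P}(\lambda^p_G(\mu))=\lambda^p_G(1_H\mu)$, \eqref{Deri-Res-measures} and \eqref{mesures-et-i} to conclude $Q_H(T)=\lambda^p_H(\mathrm{Res}_H\mu)$, which is positive since $\mathrm{Res}_H\mu$ is a positive measure. Your extra remarks on the $\check{\nu}$ normalization and the elementary positivity of convolution by a positive measure fill in details the paper leaves implicit, but the argument is the same.
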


\begin{proof}
Let $T \co \L^p(G) \to \L^p(G)$ be a positive convolution operator. There exists a positive measure $\nu \in \M(G)$ such that $T=\rho^p_G(\check{\nu})$. We consider $\mu=\check{\nu}$. We have $T=\rho^p_G(\mu)$. Using \eqref{mesures-et-i} and \eqref{Deri-Res-measures}, we see that
\begin{align*}
\MoveEqLeft
  \mathcal{P}\big(\rho_G^p(\mu)\big)  
		=\rho_G^p(1_H\mu) 
		=\rho_G^p\big(i(\mathrm{Res}_H\mu)\big) 
		=i\big(\rho^p_H(\mathrm{Res}_H\mu)\big).
\end{align*}
Using the definition $Q_H=i^{-1} \circ \mathcal{P}$ of $Q_H$, we obtain finally
$$
Q_{H}(T)
=Q_{H}\big(\rho_G^p(\mu)\big)
=i^{-1}\big(\mathcal{P}(\rho_G^p(\mu))\big)
=\rho_H^p(\mathrm{Res}_H\mu).
$$
Since $\mathrm{Res}_H\mu$ is a positive measure, we deduce that $Q_{H}(T)$ is a positive operator.
\end{proof}

Similarly, we can prove the two following results.

\begin{lemma}
\label{Lemma-R-est-pos}
Let $G$ be a unimodular amenable locally compact group and $H$ be a normal closed subgroup of $G$ such that $G/H$ is compact. Suppose $1<p <\infty$. The map $R \co \CV_p(G) \to \CV_p(G/H)$ is positive.
\end{lemma}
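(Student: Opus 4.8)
The plan is to mimic almost verbatim the proof of Lemma \ref{Lemma-Der-est-pos}, replacing the subgroup machinery by the quotient machinery recalled in the paragraph preceding the statement. First I would take a positive convolution operator $T \co \L^p(G) \to \L^p(G)$ belonging to $\CV_p(G)$. Since $G$ is amenable and $1<p<\infty$, the description of positive convolution operators (\cite[Theorem 9.6]{Pie}) provides a positive bounded measure $\nu \in \M(G)$ with $T(f) = f * \nu$ for every continuous $f \co G \to \C$ with compact support; because $G$ is unimodular, $\Delta_G \equiv 1$, so this says exactly $T = \lambda_G^p(\check{\nu})$, and setting $\mu = \check{\nu}$ (the reflection of a positive measure is positive) we obtain $T = \lambda_G^p(\mu)$ with $\mu \geq 0$.

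Then I would apply the intertwining formula recalled just before the lemma, namely $R(\lambda_G^p(\mu)) = \lambda_{G/H}^p(T_H\mu)$, which gives $R(T) = \lambda_{G/H}^p(T_H\mu)$. It remains to check two things. First, $T_H\mu$ is a positive measure: the defining relation $\int_{G/H} g \, \d(T_H\mu) = \int_{G} (g \circ \pi_H) \, \d\mu$ for $g \in \mathcal{K}(G/H)$ shows that $\int_{G/H} g \, \d(T_H\mu) \geq 0$ whenever $g \geq 0$, since then $g \circ \pi_H \geq 0$ and $\mu \geq 0$; hence $T_H\mu \geq 0$. Second, $\lambda_{G/H}^p$ maps positive measures to positive operators: the group $G/H$ is compact, hence unimodular, so $\lambda_{G/H}^p(\rho)$ acts on continuous compactly supported functions by $f \mapsto f * \check{\rho}$, which sends nonnegative functions to nonnegative functions when $\rho \geq 0$, and this positivity passes to $\L^p(G/H)$ by density. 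Combining the two points, $R(T) = \lambda_{G/H}^p(T_H\mu)$ is a positive operator.

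I do not expect any genuine obstacle: the argument is a direct transcription of the proof of Lemma \ref{Lemma-Der-est-pos}, and every ingredient (the existence of $R$ with the stated intertwining property from \cite{Der5}, the positivity of $T_H$ on positive measures, and the unimodularity of the compact quotient $G/H$) is either recalled right before the statement or is standard. The only point worth spelling out explicitly is that $\lambda_{G/H}^p$ preserves positivity, which follows from unimodularity of $G/H$ together with the elementary fact that convolution against a positive measure is positivity-preserving on $\L^p$.
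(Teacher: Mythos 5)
Your proof is correct and is exactly the argument the paper intends: the paper only says that Lemma \ref{Lemma-R-est-pos} is proved "similarly" to Lemma \ref{Lemma-Der-est-pos}, and your transcription (write $T=\lambda_G^p(\mu)$ with $\mu\geq 0$ via \cite[Theorem 9.6]{Pie}, apply $R(\lambda_G^p(\mu))=\lambda_{G/H}^p(T_H\mu)$, and note that $T_H$ preserves positivity of measures) is precisely that adaptation.
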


\begin{lemma}
\label{Lemma-i-est-pos}
Let $G$ be a unimodular amenable locally compact group and $H$ be a closed subgroup of $G$. Suppose $1<p <\infty$. The map $i \co \CV_p(H) \to \CV_p(G)$ is positive.
\end{lemma}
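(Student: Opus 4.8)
The plan is to mimic the proof of Lemma \ref{Lemma-Der-est-pos}, using the compatibility \eqref{mesures-et-i} between the inclusion map $i$ and convolution operators defined by measures, together with the description of positive convolution operators on $\L^p$-spaces of amenable groups recalled in the paragraph on positive convolution operators (\cite[Theorem 9.6]{Pie}).

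First I would record the elementary observation that for any locally compact group $K$, $1 < p < \infty$, and any positive bounded measure $\rho \in \M(K)$, the operator $\lambda^p_K(\rho) \co \L^p(K) \to \L^p(K)$ is positive: indeed $\lambda^p_K(\rho)(f) = f * \Delta_K^{\frac{1}{p^*}} \check{\rho}$ and, since $\Delta_K^{\frac{1}{p^*}}$ is a strictly positive continuous function and $\check{\rho}$ is a positive measure, $f * (\Delta_K^{\frac{1}{p^*}} \check{\rho}) \geq 0$ whenever $f \in C_c(K)$ is nonnegative; one concludes by density of $C_c(K)_+$ in $\L^p(K)_+$. Next, since $H$ is a closed subgroup of the amenable group $G$, it is amenable, so for a positive $T \in \CV_p(H)$ there is a positive bounded measure $\mu \in \M(H)$ with $T(f) = f * \mu$ for all $f \in C_c(H)$. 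Taking for $\nu$ the image under inversion of the positive measure $\Delta_H^{-\frac{1}{p^*}}\mu$, one checks that $\nu$ is positive and that $\Delta_H^{\frac{1}{p^*}}\check{\nu} = \mu$, so that $\lambda^p_H(\nu)$ and $T$ agree on the dense subspace $C_c(H)$; hence $T = \lambda^p_H(\nu)$.

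It then remains to apply \eqref{mesures-et-i}, which gives $i(T) = i(\lambda^p_H(\nu)) = \lambda^p_G(i(\nu))$, where $i(\nu) \in \M(G)$ is the image of the positive measure $\nu$ under the inclusion $H \hookrightarrow G$, hence itself positive. By the elementary observation of the previous paragraph (applied with $K = G$), $\lambda^p_G(i(\nu))$ is a positive operator, so $i(T) \geq 0$, proving that $i \co \CV_p(H) \to \CV_p(G)$ is positive.

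I do not expect any genuine obstacle here; the only point requiring a little care is the bookkeeping with the modular function $\Delta_H$ when passing from the measure $\mu$ representing $T$ to a positive measure $\nu$ with $T = \lambda^p_H(\nu)$ (since $H$ need not be unimodular), and the remark that $\lambda^p_G$ of a positive measure is a positive operator when $G$ is unimodular — exactly as in the proof of Lemma \ref{Lemma-Der-est-pos}.
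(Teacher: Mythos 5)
Your proof is correct and follows essentially the same route the paper intends (the paper only says the lemma is proved "similarly" to Lemma \ref{Lemma-Der-est-pos}): represent the positive convolutor on the amenable closed subgroup $H$ as $\lambda^p_H$ of a positive measure via \cite[Theorem 9.6]{Pie}, transport it with \eqref{mesures-et-i}, and observe that convolution by a positive measure (times the strictly positive density $\Delta^{1/p^*}$) preserves the positive cone. Your extra bookkeeping with $\Delta_H$ is a welcome precision, since $H$ need not inherit unimodularity from $G$.
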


Now, we state our first transference result.

\begin{prop}
\label{prop-transfer-subgroups} 
Let $G$ be a unimodular amenable locally compact group and $H$ be a closed subgroup of $G$. Then a convolution operator $T \co \L^p(H) \to \L^p(H)$ is a strongly non regular Fourier multiplier if and only if the convolutor $i(T) \co \L^p(G) \to \L^p(G)$ is strongly non regular. 
\end{prop}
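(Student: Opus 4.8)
The plan is to exploit the two contractive maps relating convolutors on $H$ and on $G$ already recorded above, namely the isometric inclusion $i \co \CV_p(H) \to \CV_p(G)$ of \eqref{Deri-injection} and the norm-decreasing projection $Q_H \co \CV_p(G) \to \CV_p(H)$ with $Q_H \circ i = \Id_{\CV_p(H)}$, together with the fact (Lemma \ref{Lemma-Der-est-pos} and Lemma \ref{Lemma-i-est-pos}) that both $i$ and $Q_H$ are positive, hence map decomposable operators to decomposable operators. Since $\CV_p(H) = \mathfrak{M}^p(\hat{H})$ and $\CV_p(G) = \mathfrak{M}^p(\hat{G})$ isometrically when the groups are abelian, and more generally since both $i(T)$ and $T$ are convolutors, the statement reduces to showing that $i$ maps $\ovl{\Dec(\L^p(H))} \cap \CV_p(H)$ exactly onto $\ovl{\Dec(\L^p(G))} \cap \big(i(\CV_p(H))\big)$ in the appropriate sense.

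The key steps, in order, would be the following. First I would show the easy implication: if $T \co \L^p(H) \to \L^p(H)$ is a strong limit in operator norm of decomposable operators $T_n$ on $\L^p(H)$, then, because $i$ is a contraction and by Proposition \ref{prop-positive-imply-cp-if-depart-commutative} together with Lemma \ref{Lemma-i-est-pos} maps decomposable operators to decomposable operators (a decomposable operator is a linear combination of completely positive ones by Proposition \ref{prop-decomposable-se-decompose-en-cp}, and $i$ preserves complete positivity, being a positive map between spaces one of which is a commutative $\L^p$ after the abelian reduction — here one must be slightly careful and instead argue directly that $i \circ \Phi$ is completely positive for the $2\times 2$ matrix amplification using that $i$ applied entrywise is again of the form $i$ on matrix-valued convolutors), the operators $i(T_n)$ are decomposable on $\L^p(G)$ with $\norm{i(T) - i(T_n)}_{\L^p(G) \to \L^p(G)} = \norm{T - T_n}_{\L^p(H) \to \L^p(H)} \to 0$. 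Hence $i(T)$ is not strongly non regular once $T$ is not. Contrapositively, if $i(T)$ is strongly non regular then so is $T$. Second, for the converse, suppose $i(T) = \lim_n S_n$ in operator norm with each $S_n \co \L^p(G) \to \L^p(G)$ decomposable. Apply $Q_H$: since $Q_H$ is a contraction, $\norm{T - Q_H(S_n)}_{\L^p(H) \to \L^p(H)} = \norm{Q_H(i(T) - S_n)}_{\L^p(H) \to \L^p(H)} \leq \norm{i(T) - S_n}_{\L^p(G) \to \L^p(G)} \to 0$, using $Q_H \circ i = \Id$. It remains to check that each $Q_H(S_n)$ is decomposable on $\L^p(H)$; this follows because $Q_H = i^{-1} \circ \mathcal{P}$ where $\mathcal{P}$ is a positive projection on $\CV_p(G)$ and $i^{-1}$ on its range, and a positive map (more precisely its matrix amplification, which one checks is again of the same multiplier-restriction type) sends decomposable operators to decomposable operators; alternatively, decompose $S_n$ as a combination of four completely positive convolutors via Proposition \ref{prop-decomposable-se-decompose-en-cp} and push each through $Q_H$, invoking that $Q_H$ preserves positivity by Lemma \ref{Lemma-Der-est-pos} and that on commutative $\L^p$-spaces positivity equals complete positivity by Proposition \ref{prop-positive-imply-cp-if-depart-commutative}. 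Hence $T$ is not strongly non regular, completing the equivalence.

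The main obstacle I anticipate is the bookkeeping around complete positivity versus positivity for the $2\times2$ block maps: the definition of decomposability involves complete positivity of $\Phi = \begin{bmatrix} v_1 & T \\ T^\circ & v_2 \end{bmatrix}$ on $S^p_2(\L^p(\cdot))$, so to push a decomposition through $i$ or $Q_H$ one needs that the entrywise-applied $i$ (respectively $Q_H$) on $S^p_2$-valued convolutors is again completely positive, not merely positive. For $G$ abelian this is automatic since the target is a commutative $\L^p$-space and Proposition \ref{prop-positive-imply-cp-if-depart-commutative} (or \ref{prop-positive-imply-cp}) upgrades positivity to complete positivity; for general unimodular $G$ one should instead observe that $\Id_{S^p_2} \ot i$ is itself the canonical isometry $\CV_p(H, S^p_2) \to \CV_p(G, S^p_2)$ associated with $H \leq G$ and the Banach space $S^p_2$ (using \eqref{Deri-injection} with $X = S^p_2$), so it carries the completely positive block map $\Phi$ for $T$ to the completely positive block map for $i(T)$, and symmetrically for $Q_H$ using the compatible family of such maps over all amplifications. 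Once this functoriality of $i$ and $Q_H$ with respect to $S^p_2$-amplification (indeed $S^p_n$ for all $n$) is in place, the rest is the short two-sided norm estimate above. A secondary, purely notational point is to phrase ``strongly non regular Fourier multiplier'' so that it is clear the approximating decomposable operators may be taken to be convolutors themselves — this is harmless because $P_G$ from \eqref{Projection-Arendt-Voigt} is a positive contraction onto $\CV_p(G)$, so one can always project the $S_n$ back into $\CV_p(G)$ without increasing the error or destroying decomposability.
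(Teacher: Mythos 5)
Your argument is correct and is essentially the proof in the paper: decompose the approximating decomposable operators into combinations of positive maps plus a small remainder, use the Arendt--Voigt projection $P_G$ (resp.\ $P_H$) to land in $\CV_p(G)$ (resp.\ $\CV_p(H)$), and transfer via the positivity-preserving contractions $i$ and $Q_H$ with $Q_H \circ i = \Id$. Your worry about complete positivity of the $2\times 2$ block amplifications is dissolved exactly as you suspect, by Proposition \ref{prop-positive-imply-cp-if-depart-commutative}: since $\L^p(G)$ and $\L^p(H)$ are commutative, positive maps are automatically completely positive, so the paper simply works with positive operators throughout and never needs the vector-valued functoriality of $i$ and $Q_H$.
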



\begin{proof}
Note that $H$ is also amenable since it is a subgroup of the amenable group $G$.

$\Leftarrow$: Suppose that $T$ belongs to $\ovl{\Reg(\L^p(H))}^{\B(\L^p(H))}$. Let $\epsi > 0$. Then there exist some positive operators $R_1,R_2,R_3,R_4 \co \L^p(H) \to \L^p(H)$ and a bounded map $R \co \L^p(H) \to \L^p(H)$ of norm less than $\epsi$ such that $T=R_1-R_2+\mathrm{i}(R_3-R_4)+R$. Since $H$ is amenable, we can use the map \eqref{Projection-Arendt-Voigt} and suppose that $R_1,R_2,R_3,R_4$ and $R$ are convolution operators. Using the isometry $i \co \CV_p(H) \to \CV_p(G) $ we obtain
$$
i(T)
=i(R_1)-i(R_2)+\i(i(R_3)-i(R_4))+i(R).
$$
Using Lemma \ref{Lemma-i-est-pos}, we see that the operators $i(R_j)$ are positive. Moreover, note that we have $\norm{i(R)}_{\L^p(G) \to \L^p(G)}=\norm{R}_{\L^p(H) \to \L^p(H)} \leq \epsi$. It follows that the convolution operator $i(T)$ is $\epsi$-close to $\Reg(\L^p(G))$ in the Banach space $\B(\L^p(G))$. So letting $\epsi \to 0$ yields that $i(T) \in \overline{\Reg(\L^p(G))}^{\B(\L^p(G))}$. This is the desired contradiction.

$\Rightarrow$: Suppose that $i(T)$ belongs to $\overline{\Reg(\L^p(G))}^{\B(\L^p(G))}$. Let $\epsi > 0$. Then there exist some positive maps $R_1,R_2,R_3,R_4 \co \L^p(G) \to \L^p(G)$ and a bounded map $R \co \L^p(G) \to \L^p(G)$ of norm less than $\epsi$ such that $i(T)=R_1-R_2+\i(R_3-R_4)+R$. Since $G$ is amenable, using the map \eqref{Projection-Arendt-Voigt}, we can suppose that $R_1,R_2,R_3,R_4$ and $R$ are convolution operators.


Since $H$ is amenable, we can use the contraction $Q_{H} \co \CV_p(G) \to \CV_p(H)$. We obtain
\begin{align*}
\MoveEqLeft
 T=Q_{H}\big(i(T)\big)
    =Q_{H}\big(R_1-R_2+\i(R_3-R_4)+R\big)\\ 
		&=Q_{H}(R_1)-Q_{H}(R_2)+\i \big(Q_H(R_3)-Q_{H}(R_4)\big)+Q_{H}(R).
\end{align*}
Moreover, by the contractivity of $Q_H$, the convolution operator $Q_{H}(R) \co \L^p(H) \to \L^p(H)$ is bounded of norm less than $\epsi$. Furthermore, by Lemma \ref{Lemma-Der-est-pos}, each convolution operator $Q_{H}(R_k) \co \L^p(H) \to \L^p(H)$ is a positive operator. It follows that $T$ is $\epsi$-close to $\Reg(\L^p(H))$ in the Banach space $\B(\L^p(H))$. So letting $\epsi \to 0$ yields that $T \in \ovl{\Reg(\L^p(H))}^{\B(\L^p(H))}$. This is the desired contradiction.
\end{proof}

\begin{prop}
\label{prop-transfer-quotient} 
Let $G$ be a unimodular amenable locally compact group and $H$ be a normal closed subgroup of $G$ such that $G/H$ is compact. If the convolution operator $T \co \L^p(G/H) \to \L^p(G/H)$ is strongly non regular then the convolution operator $\Omega(T) \co \L^p(G) \to \L^p(G)$ is strongly non regular.
\end{prop}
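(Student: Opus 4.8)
The statement to be proved is Proposition~\ref{prop-transfer-quotient}: if $T \co \L^p(G/H) \to \L^p(G/H)$ is strongly non regular then $\Omega(T) \co \L^p(G) \to \L^p(G)$ is strongly non regular, where $G$ is a unimodular amenable locally compact group, $H$ a normal closed subgroup with $G/H$ compact, $1<p<\infty$, and $\Omega \co \CV_p(G/H) \to \CV_p(G)$ is the canonical isometry recalled before the statement, with its left inverse $R \co \CV_p(G) \to \CV_p(G/H)$ satisfying $R\Omega = \Id_{\CV_p(G/H)}$. The argument runs by contraposition and follows the same scheme as the proof of Proposition~\ref{prop-transfer-subgroups}, but using $R$ and $\Omega$ instead of $Q_H$ and $i$.

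First I would assume that $\Omega(T)$ belongs to the closure $\ovl{\Reg(\L^p(G))}^{\B(\L^p(G))}$ and derive that $T \in \ovl{\Reg(\L^p(G/H))}^{\B(\L^p(G/H))}$, contradicting the hypothesis. So fix $\epsi > 0$; by Definition~\ref{defi-strongly-non-decomposable} and Proposition~\ref{prop-linearcp-imply-decomposable} (together with the identification of regular and decomposable maps from Theorem~\ref{thm-dec=reg-hyperfinite}, valid since amenability of $G$ makes $\VN(G)$ — and here $\L^\infty(G)$, which is approximately finite-dimensional — hyperfinite; but in the commutative case one can also argue directly) there exist positive bounded maps $R_1,R_2,R_3,R_4 \co \L^p(G) \to \L^p(G)$ and a bounded map $R_0 \co \L^p(G) \to \L^p(G)$ with $\norm{R_0}_{\L^p(G) \to \L^p(G)} \leq \epsi$ such that $\Omega(T) = R_1 - R_2 + \i(R_3 - R_4) + R_0$. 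Since $G$ is amenable I would apply the contractive positive projection $P_G \co \B(\L^p(G)) \to \B(\L^p(G))$ onto $\CV_p(G)$ of \eqref{Projection-Arendt-Voigt}; because $\Omega(T) \in \CV_p(G)$ we have $P_G(\Omega(T)) = \Omega(T)$, and replacing each $R_k$ by $P_G(R_k)$ we may assume all of $R_1,R_2,R_3,R_4,R_0$ are convolution operators, with the $R_k$ still positive (as $P_G$ is positive) and $\norm{P_G(R_0)} \leq \epsi$.

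Next I would apply the contraction $R \co \CV_p(G) \to \CV_p(G/H)$ recalled before the statement. Since $R\Omega = \Id$, we get
$$
T = R(\Omega(T)) = R(R_1) - R(R_2) + \i\big(R(R_3) - R(R_4)\big) + R(R_0).
$$
Here $\norm{R(R_0)}_{\L^p(G/H) \to \L^p(G/H)} \leq \norm{R_0}_{\L^p(G) \to \L^p(G)} \leq \epsi$ by contractivity of $R$, and each $R(R_k)$ is a positive convolution operator by Lemma~\ref{Lemma-R-est-pos}. Therefore $T$ lies within distance $\epsi$ of $\Reg(\L^p(G/H))$ (using again Proposition~\ref{prop-linearcp-imply-decomposable} to see that a span of completely positive, i.e. positive by Proposition~\ref{prop-positive-imply-cp-if-depart-commutative}, convolution operators is regular). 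Letting $\epsi \to 0$ gives $T \in \ovl{\Reg(\L^p(G/H))}^{\B(\L^p(G/H))}$, contradicting that $T$ is strongly non regular.

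The only genuinely delicate point — and where I expect to spend the most care — is the positivity of the map $R$ on convolution operators, i.e. Lemma~\ref{Lemma-R-est-pos}: one must check, using the formula $R(\lambda_G^p(\mu)) = \lambda_{G/H}^p(T_H \mu)$ and the fact that $T_H$ sends positive measures on $G$ to positive measures on $G/H$, that $R$ carries positive convolutors to positive convolutors; this mirrors the proof of Lemma~\ref{Lemma-Der-est-pos} and should go through by the structure theorem for positive convolution operators on amenable groups (which expresses a positive $T \in \CV_p(G)$ as $\lambda_G^p(\check\nu)$ for a positive measure $\nu$). Everything else is a routine transplant of the proof of Proposition~\ref{prop-transfer-subgroups}; note that, unlike that proposition, only one implication is claimed here, precisely because $R$ is merely a left inverse of $\Omega$ and there is no candidate for a projection $\CV_p(G/H) \to \CV_p(G)$ going the other way that would let us transfer strong non-regularity downward.
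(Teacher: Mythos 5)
Your argument is correct and coincides with the paper's own proof: assume $\Omega(T)$ is approximable by regular operators, decompose it into positive parts plus an $\epsi$-small remainder, use the Arendt–Voigt projection \eqref{Projection-Arendt-Voigt} to make all terms convolutors, then push down via the contraction $R$ using $R\Omega=\Id$ and Lemma~\ref{Lemma-R-est-pos} to contradict the strong non-regularity of $T$. No substantive difference from the paper's treatment.
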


\begin{proof}
Suppose that $\Omega(T)$ belongs to $\overline{\Reg(\L^p(G))}^{\B(\L^p(G))}$. Let $\epsi > 0$. Then there exist some positive maps $S_1,S_2,S_3,S_4 \co \L^p(G) \to \L^p(G)$ and a bounded map $S \co \L^p(G) \to \L^p(G)$ of norm less than $\epsi$ such that $\Omega(T)=S_1-S_2+\i(S_3-S_4)+S$. Since $G$ is amenable, using the map \eqref{Projection-Arendt-Voigt}, we can suppose that $S_1,S_2,S_3,S_4$ and $S$ are convolution operators. Using the contraction $R \co \mathrm{CV}_p(G) \to \mathrm{CV}_p(G/H)$, we obtain
$$
T
=R(\Omega(T))
=R(S_1)-R(S_2)+\i(R(S_3)-R(S_4))+R(S).
$$
Moreover, by the contractivity of $R$, the convolution operator $R(S) \co \L^p(G/H) \to \L^p(G/H)$ is bounded of norm less than $\epsi$. By Lemma \ref{Lemma-R-est-pos}, each convolution operator $R(S_k) \co \L^p(G/H) \to \L^p(G/H)$ is positive. It follows that $T$ is $\epsi$-close to $\Reg(\L^p(G/H))$ in the Banach space $\B(\L^p(G/H))$. So letting $\epsi \to 0$ yields that $T \in \ovl{\Reg(\L^p(G/H))}^{\B(\L^p(G/H))}$. This is the desired contradiction. 
\end{proof}

\begin{prop}
\label{prop-transfert-extension-0-abelian}
Let $G$ be a compact abelian group and let $H$ be a closed subgroup of $G$. If $\varphi \co H^{\perp} \to \C$ is a complex function, we denote by $\widetilde{\varphi} \co \widehat{G} \to \C$ the extension of $\varphi$ on $\widehat{G}$ which is zero off $H^{\perp}$. If the function $\varphi$ induces a strongly non regular Fourier multiplier $M_{\varphi} \co \L^p(G/H) \to \L^p(G/H)$ then the function $\widetilde{\varphi}$ induces a strongly non regular Fourier multiplier $M_{\widetilde{\varphi}} \co \L^p(G) \to \L^p(G)$.
\end{prop}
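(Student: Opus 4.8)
The plan is to deduce this from Proposition~\ref{prop-transfer-quotient}, once one identifies the extension-by-zero operation on symbols with the isometric embedding $\Omega \co \CV_p(G/H) \to \CV_p(G)$ appearing there. First I would check that the standing hypotheses of Proposition~\ref{prop-transfer-quotient} hold. Since $G$ is a compact abelian group it is unimodular and amenable; the subgroup $H$ is closed, hence normal (as $G$ is abelian) and compact (as a closed subset of a compact space), so the quotient $G/H$ is again a compact group. Thus the pair $(G,H)$ satisfies "$G$ unimodular amenable, $H$ normal closed, $G/H$ compact", so the isometry $\Omega \co \CV_p(G/H) \to \CV_p(G)$ and the contractive retraction $R \co \CV_p(G) \to \CV_p(G/H)$ with $R\Omega = \Id_{\CV_p(G/H)}$ and $R(\lambda_G^p(\mu)) = \lambda_{G/H}^p(T_H\mu)$ are available. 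Here $M_\varphi$, being a Fourier multiplier on $\L^p(G/H)$, is exactly a convolution operator of $G/H$, and the assertion to prove is precisely "$M_\varphi$ strongly non regular $\Rightarrow \Omega(M_\varphi)$ strongly non regular", provided we know $\Omega(M_\varphi) = M_{\widetilde\varphi}$.

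The remaining point, which is the only real content, is the identification $\Omega(M_\varphi) = M_{\widetilde\varphi}$; equivalently, that $R(M_{\widetilde\varphi}) = M_\varphi$. I would argue this at the level of symbols. Under Pontryagin duality, $\CV_p(G/H)$ is the space of $\L^p$-Fourier multipliers with symbol on $\widehat{G/H}$, which is canonically the closed subgroup $H^\perp$ of $\widehat G$, while $\CV_p(G)$ consists of $\L^p$-Fourier multipliers with symbol on $\widehat G$. If $M_{\widetilde\varphi} = \lambda_G^p(\mu)$ with $\mu$ the (pseudo)measure whose Fourier transform is $\widetilde\varphi$, then $R(M_{\widetilde\varphi}) = \lambda_{G/H}^p(T_H\mu)$, and for $\chi \in \widehat{G/H} = H^\perp$ one has $\widehat{T_H\mu}(\chi) = \widehat{\mu}(\chi) = \widetilde\varphi(\chi) = \varphi(\chi)$, so $R(M_{\widetilde\varphi}) = M_\varphi$; by density this extends from bounded measures to all multipliers. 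Since $\Omega$ is a section of $R$ taking values in the multipliers supported on $H^\perp$, and the extension-by-zero map $M_\varphi \mapsto M_{\widetilde\varphi}$ of \eqref{prop-extension-0-Fourier-multipliers-abelien} (applicable here because $H$ is a compact subgroup of $G$) is another such section, the two coincide, i.e. $\Omega(M_\varphi) = M_{\widetilde\varphi}$.

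With this in hand, Proposition~\ref{prop-transfer-quotient} applied with $T = M_\varphi$ yields immediately that $M_{\widetilde\varphi} = \Omega(M_\varphi) \co \L^p(G) \to \L^p(G)$ is strongly non regular. As an alternative route that sidesteps the identification of the two embeddings, one can imitate directly the proof of Proposition~\ref{prop-transfer-quotient}: assuming $M_{\widetilde\varphi} \in \ovl{\Reg(\L^p(G))}^{\B(\L^p(G))}$, fix $\epsi > 0$ and write $M_{\widetilde\varphi} = S_1 - S_2 + \i(S_3 - S_4) + S$ with $S_1,\dots,S_4$ positive and $\norm{S}_{\L^p(G) \to \L^p(G)} \leq \epsi$ (using that on a commutative $\L^p$-space positivity and complete positivity coincide by Proposition~\ref{prop-positive-imply-cp-if-depart-commutative}, and that regular operators are spanned by positive ones via Theorem~\ref{thm-dec=reg-hyperfinite} and Proposition~\ref{prop-linearcp-imply-decomposable}); push everything into $\CV_p(G)$ using the positive contractive projection \eqref{Projection-Arendt-Voigt}; then apply the positive contraction $R \co \CV_p(G) \to \CV_p(G/H)$ from Lemma~\ref{Lemma-R-est-pos}, using $R(M_{\widetilde\varphi}) = M_\varphi$, to see that $M_\varphi$ is $\epsi$-close to $\Reg(\L^p(G/H))$; letting $\epsi \to 0$ contradicts the strong non regularity of $M_\varphi$.

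The main obstacle is the symbol-level identification $\Omega(M_\varphi) = M_{\widetilde\varphi}$ (equivalently $R(M_{\widetilde\varphi}) = M_\varphi$): it amounts to reconciling the measure-theoretic description of the Derighetti maps $\Omega$ and $R$ with the Fourier-multiplier description of the extension-by-zero isometry, which is routine but requires some care with the several duality identifications $\widehat{G/H} \cong H^\perp$ and $\widehat H \cong \widehat G / H^\perp$ at play, and with the passage from bounded measures to general (pseudo)measures defining the multipliers.
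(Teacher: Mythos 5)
Your strategy is genuinely different from the paper's. The paper never invokes the Derighetti maps $\Omega$ and $R$ in this proof: after writing $M_{\widetilde{\varphi}}=R_1-R_2+\i(R_3-R_4)+R$ and projecting onto multipliers with the Arendt--Voigt projection \eqref{Projection-Arendt-Voigt}, it works entirely at the level of symbols on the \emph{discrete} group $\widehat{G}$. The positive pieces have positive-definite symbols (Lemma \ref{Lemma-positive-Fourier-multipliers} and Proposition \ref{Th-cp-Fourier-multipliers}), which restrict to positive-definite functions on the subgroup $H^\perp=\widehat{G/H}$ and hence give positive multipliers on $\L^p(G/H)$; the small remainder is handled by the restriction theorem \eqref{thm-Restriction-multipliers-abelian groups}; and the conclusion follows because $\widetilde{\varphi}|_{H^\perp}=\varphi$. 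This route costs nothing beyond restricting functions on a discrete group, whereas your route buys a statement that would work verbatim from Proposition \ref{prop-transfer-quotient} --- \emph{if} the identification of the two embeddings were in place.

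That identification is exactly where your proposal has a gap. The argument that ``$\Omega$ and extension-by-zero are both sections of $R$ taking values in the multipliers supported on $H^\perp$, hence coincide'' is not valid as stated: the paper only characterizes $\Omega$ by $R\Omega=\Id_{\CV_p(G/H)}$ and isometry, so nothing tells you $\Omega(M_\varphi)$ has symbol vanishing off $H^\perp$, and two sections of a non-injective map need not agree without a common subspace on which $R$ is injective. Your fallback identity $R(M_{\widetilde{\varphi}})=M_\varphi$, which is all the alternative route needs, rests on the formula $R(\lambda_G^p(\mu))=\lambda_{G/H}^p(T_H\mu)$; but this is only stated for bounded measures $\mu\in\M(G)$, while a general multiplier is a pseudomeasure for which $T_H$ is not even defined here. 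The ``by density'' extension you invoke is precisely the missing step: one would need the measure convolutors to be dense in $\CV_p(G)$ for a topology in which both $R$ and symbol restriction to $H^\perp$ are continuous, and none of that is established in the paper. So the skeleton of your alternative route (decompose, project with \eqref{Projection-Arendt-Voigt}, apply the positive contraction $R$ of Lemma \ref{Lemma-R-est-pos}) is sound and mirrors the proof of Proposition \ref{prop-transfer-quotient}, but the proof is incomplete until $R(M_{\widetilde{\varphi}})=M_\varphi$ is actually proved --- and the paper's symbol-level argument is designed precisely to avoid having to prove it.
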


\begin{proof}
Suppose that $M_{\tilde{\varphi}}$ belongs to $\ovl{\Reg(\L^p(G))}^{\B(\L^p(G))}$. Let $\epsi > 0$. Then there exist some positive maps $R_1,R_2,R_3,R_4 \co \L^p(G) \to \L^p(G)$ and a bounded map $R \co \L^p(G) \to \L^p(G)$ of norm less than $\epsi$ such that $M_{\widetilde{\varphi}}=R_1-R_2+\i(R_3-R_4)+R$.

Since $G$ is amenable, the linear map \eqref{Projection-Arendt-Voigt} yields the existence of some complex functions $\phi_1,\phi_2,\phi_3,\phi_4$ and $\psi$ on $\widehat{G}$ such that $M_{\widetilde{\varphi}}= M_{\phi_1}-M_{\phi_2}+\i (M_{\phi_3}-M_{\phi_4})+M_\psi$ such that the Fourier multipliers $M_{\phi_k}$ are positive on $\L^p(G)$ and $M_\psi$ is again of norm less than $\epsi$. 

By
Proposition \ref{Th-cp-Fourier-multipliers}, each (continuous\footnote{\thefootnote. Note that the group $\hat{G}$ is discrete.}) function $\phi_k$ induces a positive linear operator $M_{\phi_k} \co \L^\infty(G) \to \L^\infty(G)$ and $\phi_k$ is positive definite. We infer that the restriction $\phi_k|H^\perp \co G \to \C$ is (continuous and) positive definite, and thus by \cite[Proposition 4.2]{DCH}, induces a positive operator $M_{\phi_k|H^\perp} \co \L^\infty(G/H) \to \L^\infty(G/H)$. Then by Proposition \ref{Th-cp-Fourier-multipliers}, it follows that the Fourier multiplier $M_{\phi_k|H^\perp} \co \L^p(G/H) \to \L^p(G/H)$ is positive.

Note that the group $H^\perp=\widehat{G/H}$ is discrete. By \eqref{thm-Restriction-multipliers-abelian groups}, since the function $\psi$ is continuous, the Fourier multiplier $M_{\psi|H^\perp} \co \L^p(G/H) \to \L^p(G/H)$ is bounded of norm less than $\epsi$. Since
$$
M_{\varphi} 
=M_{\varphi_1|H^\perp}-M_{\varphi_2|H^\perp}+\i \big(M_{\varphi|H^\perp}-M_{\varphi_4|H^\perp}\big)+M_{\psi|H^\perp}
$$
it follows that $M_{\varphi}$ is $\epsi$-close to $\Reg(\L^p(G/H))$ in the Banach space $\B(\L^p(G/H))$, so that letting $\epsi \to 0$ yields that $M_{\varphi} \in \overline{\Reg(\L^p(G/H))}^{\B(\L^p(G/H))}$. This is the desired contradiction.
\end{proof}

Let $(\epsi_k)_{k \geq 0}$ be a sequence of independent Rademacher variables on some probability space $\Omega_0$. Let $X$ be a Banach space and let $1<p<\infty$. We let $\Rad_p(X)\subset \L^p(\Omega_0,X)$ be the closure of 
$\mathrm{span}\bigl\{\epsi_k \ot x\ |\ k \geq 0,\ x\in X\bigr\}$ in the Bochner space $\L^p(\Omega_0,X)$. Thus, for any finite family $(x_k)_{0 \leq k \leq n}$ of elements of $X$, we have
\begin{equation*}
\Bgnorm{\sum_{k=0}^{n} \epsi_k \ot x_k}_{\Rad_p(X)} 
=
\Bigg(\int_{\Omega_0} \bgnorm{\sum_{k=0}^{n} \epsi_k(\omega) x_k}_{X}^{p} \d\omega \Bigg)^{\frac{1}{p}}.
\end{equation*}
We simply write $\Rad(X)=\Rad_2(X)$. By Kahane's inequalities (see e.g. \cite[Theorem 11.1]{DJT}), the Banach spaces $\Rad(X)$ and $\Rad_p(X)$ are canonically isomorphic. We will use the following result which is a variant of \cite[Theorem 4.1.9]{EdG}.

\begin{prop}
\label{prop-vector-valued-LP}
Let $X$ be a $\mathrm{UMD}$ Banach space. Suppose $1 < p < \infty$.
\begin{enumerate}
\item Let $G$ be a countably infinite discrete abelian group. Assume that there exists a sequence $(H_n)_{n \geq 0}$ of subgroups of the (compact) dual group $\hat{G}$ such that
\begin{itemize}
\item[(a)] each $H_n$ is open,
\item[(b)] $H_{n+1} \subsetneqq H_n$,
\item[(c)] $\bigcap_{n \geq 0} H_n = \{0\}$ and $H_0 = \hat{G}$.
\end{itemize}
For any integer $n \geq 0$, consider the subset $\Delta_n = H_n \backslash H_{n+1}$ of $\hat{G}$. Then for any $f \in \L^p(G,X)$, the series $\sum_{n=0}^\infty \epsi_n \ot (M_{1_{\Delta_n}} \ot \Id_X)(f)$ converges in $\Rad(\L^p(G,X))$ and we have the norm equivalence
\begin{equation}
	\label{Eq-LP1}
\norm{f}_{\L^p(G,X)} 
\approx \Bgnorm{\sum_{n=0}^\infty \epsi_n \ot (M_{1_{\Delta_n}} \ot \Id_X)(f)}_{\Rad(\L^p(G,X))}.
\end{equation}

\item Let $G$ be a compact abelian group. Assume that there exists a sequence $(Y_n)_{n \geq 0}$ of subgroups of the (discrete) dual group $\hat{G}$ such that
\begin{itemize}
\item[(a)] each $Y_n$ is finite
\item[(b)] $Y_n \subsetneqq Y_{n+1}$,
\item[(c)] $Y_0 = \{0\}$ and $\bigcup_{n \geq 0} Y_n = \hat{G}$.
\end{itemize}
Let $\Delta_0=Y_0$ and $\Delta_n = Y_{n} \backslash Y_{n-1}$ for $n \geq 1$. Then for any $f \in \L^p(G,X)$, the series $\sum_{n=0}^\infty \epsi_n \ot (M_{1_{\Delta_n}} \ot \Id_X)(f)$ converges in $\Rad(\L^p(G,X))$ and we have the norm equivalence
\begin{equation}
\label{Eq-LP2}
\norm{f}_{\L^p(G,X)} 
\approx \Bgnorm{\sum_{n=0}^\infty \epsi_n \ot (M_{1_{\Delta_n}} \ot \Id_X)(f)}_{\Rad(\L^p(G,X))}.
\end{equation}
\end{enumerate}
\end{prop}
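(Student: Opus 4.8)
The plan is to upgrade the scalar Littlewood--Paley decomposition of \cite[Chapter 4]{EdG} to the vector-valued, $\mathrm{UMD}$-coefficient setting by identifying the partial sums $S_N \overset{\mathrm{def}}= \sum_{n=0}^{N} M_{1_{\Delta_n}} \ot \Id_X$ with conditional expectations and then invoking the unconditionality of (reverse) martingale difference sequences in $\mathrm{UMD}$ spaces, see \cite{HvNVW}. The key preliminary observation is the following: if $H$ is an \emph{open} subgroup of $\widehat{G}$, then its annihilator $H^\perp \subset G$ is a subgroup with $\widehat{G}/H$ (hence $H^\perp = \widehat{\widehat{G}/H}$) finite in the discrete case, and $G/H^\perp$ finite in the compact case; the inverse Fourier transform of $1_{H}$ is a positive multiple of $1_{H^\perp}$; consequently $M_{1_{H}}\co \L^p(G) \to \L^p(G)$ is the averaging operator over $H^\perp$, i.e. the conditional expectation $\E(\,\cdot\,|\mathcal{F}_{H}) $ onto the $\sigma$-algebra $\mathcal{F}_{H}$ generated by the cosets of $H^\perp$. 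In particular $M_{1_H}$ is contractive on $\L^p(G)$ (Jensen), so $M_{1_H} \ot \Id_X$ is bounded on $\L^p(G,X)$, and $M_{1_{\widehat{G}}} = \Id$ (in Part 1 one uses here that $\{0\}$ is a $\mu_{\widehat{G}}$-null set in the compact infinite group $\widehat{G}$, so the term $n=0$ really sees $\widehat{G}\setminus H_1$ and not merely $\widehat{G}\setminus(H_1\cup\{0\})$).

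Next one sets up the relevant martingale. In Part 2, since $Y_n \uparrow \widehat{G}$ we get $Y_n^\perp \downarrow \{0\}$, so the $\mathcal{F}_{Y_n}$ form an \emph{increasing} filtration whose union generates the Borel $\sigma$-algebra of $G$; writing $f_n \overset{\mathrm{def}}= (M_{1_{Y_n}}\ot\Id_X)(f) = \E(f|\mathcal{F}_{Y_n})\ot\Id_X$ we obtain a martingale with $f_n \to f$ in $\L^p(G,X)$ (forward martingale convergence on the finite measure space $G$) and difference sequence $df_n = (M_{1_{\Delta_n}}\ot\Id_X)(f)$ for $n \geq 1$, together with $df_0 = (M_{1_{\Delta_0}}\ot\Id_X)(f) = (M_{1_{Y_0}}\ot\Id_X)(f)$. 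In Part 1, since $H_n \downarrow \{0\}$ with $H_0 = \widehat{G}$, we get $H_n^\perp \uparrow G$, so the $\mathcal{F}_{H_n}$ form a \emph{decreasing} filtration whose tail $\bigcap_n \mathcal{F}_{H_n}$ is trivial (because $\bigcup_n H_n^\perp = G$); thus $g_n \overset{\mathrm{def}}= (M_{1_{H_n}}\ot\Id_X)(f) = \E(f|\mathcal{F}_{H_n})\ot\Id_X$ is a reverse martingale with $g_0 = f$ and $g_n \to 0$, and $g_n - g_{n+1} = (M_{1_{\Delta_n}}\ot\Id_X)(f)$, so that $f = \sum_{n\geq 0}(M_{1_{\Delta_n}}\ot\Id_X)(f)$. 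In both cases, for each fixed $N$ one passes, if necessary, to the reversed finite sequence $h_k = g_{N-k}$ (Part 1) to obtain a genuine finite forward martingale $(h_k)_{k=0}^N$ whose increments are, up to relabelling of the indices, exactly the $(M_{1_{\Delta_n}}\ot\Id_X)(f)$ for $n \leq N$, with $h_N - h_0 = f - g_N$ in Part 1 and $f_N - f_0 = f - f_0$ in Part 2.

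Finally, since $X$ is $\mathrm{UMD}$, the martingale difference sequence $(dh_k)$ is unconditional in $\L^p(\,\cdot\,,X)$ with a constant $\beta_{p,X}$ independent of $N$; averaging over signs, using the exchangeability of the Rademacher variables and Kahane's inequality (to pass from $\Rad_p$ to $\Rad$), one gets for $N_1 \leq N_2$
\begin{equation*}
\Bgnorm{\sum_{n=N_1}^{N_2} \epsi_n \ot (M_{1_{\Delta_n}} \ot \Id_X)(f)}_{\Rad(\L^p(G,X))}
\approx \Bgnorm{\sum_{n=N_1}^{N_2} (M_{1_{\Delta_n}} \ot \Id_X)(f)}_{\L^p(G,X)}
= \norm{f_{N_2} - f_{N_1-1}}_{\L^p(G,X)}
\end{equation*}
(with the evident reverse-martingale analogue in Part 1), which tends to $0$ as $N_1,N_2 \to \infty$ by the martingale convergence invoked above; hence the series $\sum_n \epsi_n \ot (M_{1_{\Delta_n}}\ot\Id_X)(f)$ converges in $\Rad(\L^p(G,X))$. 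Taking $N_1$ at the first index and letting $N_2 \to \infty$ yields $\norm{f}_{\L^p(G,X)} \approx \bnorm{\sum_{n\geq 0}\epsi_n \ot (M_{1_{\Delta_n}}\ot\Id_X)(f)}_{\Rad(\L^p(G,X))}$, which is \eqref{Eq-LP1}, respectively \eqref{Eq-LP2}. I expect the only genuinely delicate points to be the identification of the Fourier multipliers $M_{1_H}$ (for $H = H_{n}$ or $H = Y_n$) with honest conditional expectations — so that the partial sums $S_N$ really form a martingale — and, in Part 1, the bookkeeping for the reverse martingale over the $\sigma$-finite but infinite measure space $(G,\mu_G)$, for which one needs the triviality of the tail $\sigma$-algebra $\bigcap_n \mathcal{F}_{H_n}$ deduced from $\bigcup_n H_n^\perp = G$.
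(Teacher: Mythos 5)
Your proposal is correct and follows essentially the same route as the paper: both identify $M_{1_{H_n}}$ (resp.\ $M_{1_{Y_n}}$) with the conditional expectation onto the $\sigma$-algebra generated by the cosets of the annihilator via the Poisson formula, realize the $(M_{1_{\Delta_n}}\ot\Id_X)(f)$ as (reverse, in Part 1) martingale differences, and conclude by the $\mathrm{UMD}$ unconditionality of martingale difference sequences together with martingale convergence (the paper handles the vanishing of the backward limit in Part 1 by noting that $\mu_G$ is purely infinite on the trivial tail $\sigma$-algebra, and gets convergence of the Rademacher series from the Hoffmann--Jorgensen--Kwapien theorem rather than your Cauchy-block argument, but these are cosmetic variants).
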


\begin{proof}
1. Let $\mathcal{F} = \mathcal{P}(G)$ denote the full $\sigma$-algebra of subsets of $G$. For $n \geq 0$, consider the annihilator $G_n \overset{\mathrm{def}}= H_n^\perp$ in $G$. Since each $H_n$ is open and compact, each $G_n$ is compact and open by \cite[Remark 4.2.22]{Rei}, hence finite ($G$ is discrete). 

For any negative integer $k \leq 0$ consider the $\sigma$-algebra $\mathcal{F}_k$ generated by the cosets of $G_{-k}$ in $G$. Since $G$ is countably infinite, there are only countably many cosets of $G_{-k}$ in $G$. So by \cite[Exercice 4 (a) page 227]{AbA} the elements of $\Fc_{k}$ are the sets which are a union of cosets of $G_{-k}$ in $G$. Since $H_{-k+1} \subset H_{-k}$ for all $k \leq 0$, by \cite[Proposition 4.2.24]{Rei}, we have $G_{-k} \subset G_{-k+1}$. Then it is not difficult to see that $\mathcal{F}_{k-1} \subset \mathcal{F}_{k}$ if $k \leq 0$. We conclude that $(\mathcal{F}_{k})_{k \leq 0}$ is a filtration in $G$. It is elementary to check
\footnote{\thefootnote. \label{footnote65} Let $s \in G$ and let $I_n\overset{\mathrm{def}}=s(H_n)$ be the subgroup of $\T$ where we identify $s$ with $\eta(s)$ where $\eta \co G \to \hat{\hat{G}}$ is the canonical map. Since $H_n$ is compact, $I_n$ is a closed subgroup of $\T$. Any decreasing sequence of closed subgroups of $\T$ stabilizes (each closed subgroup is finite or equal to $\T$). So there exists $N \geq 0$ such that $I_n$ is the same for all $n \geq N$. Let $I$ be this common value. We have $I \subset I_n=s(H_n)$ for any $n \geq 0$. 

If $I=\{1\}$, then $s$ annihilates $H_n$ for $n \geq N$. Hence $s \in G_n$ for $n \geq N$.

Suppose that $I$ is not trivial. Let $i \in I \setminus \{1\}$ and let $C_n\overset{\mathrm{def}}=s^{-1}(\{i\}) \cap H_n$. Then the sets $C_n$ are nonempty for any $n \geq 0$ and form a decreasing sequence of compact subsets of $\hat{G}$. The intersection $C\overset{\mathrm{def}}=\bigcap_{n \geq 0} C_n$ is thus nonempty. But $C \subset \bigcap_{n \geq 0} H_n=\{0\}$, so this means $0 \in C$. Hence $0 \in s^{-1}(i)$. This is a contradiction, since $i \neq 1$ and $s(0)=1$.} that $\cup_{n \geq 0} G_n=G$.

Moreover, since $G$ is countable, the counting measure $\mu_G$ is $\sigma$-finite. 
Since the $G_{-k}$ are finite, so the restriction of $\mu_G$ to each $\Fc_k$ is also $\sigma$-finite. So, by \cite[Corollary 2.6.30]{HvNVW}, the conditional expectation $\E(\cdot|\Fc_k)$ with respect to $\Fc_k$ is well-defined and it is explicitly described in \cite[page 183]{JeR} (see also \cite[page 69]{IoT1}), since $G_{-k}$ is compact, by 
$$
\E(f|\Fc_k)
= T_{G_{-k}}(f) \circ \pi_k \quad \text{(almost everywhere)}.
$$
where $\pi_{k} \co G \to G/G_{-k}$ is the canonical map and where $T_{G_{-k}}$ is essentially defined in \cite[page 100]{Rei}. For any integer $k \leq 0$, since $H_{-k}$ is open, the Poisson formula \cite[5.5.4]{Rei} says that
$$
\big(T_{G_{-k}}(f) \circ \pi_k\big)(s)
=\int_{H_{-k}} \chi(s)\hat{f}(\chi) \d\mu_{H_{-k}}(\chi)
=\int_{\hat{G}} \chi(s)1_{H_{-k}}(\chi)\hat{f}(\chi) \d\mu_{\hat{G}}(\chi).
$$ 
We conclude that the conditional expectation $\E(\cdot|\Fc_k) \co \L^p(G) \to \L^p(G)$ is\footnote{\thefootnote. We can alternatively compute the conditional expectation with \cite[Exercice 4 (c) page 227]{AbA} instead of the Poisson formula.} a Fourier multiplier whose symbol is the indicator function $1_{H_{-k}}$. Hence for any $n \geq 0$
$$ 
M_{1_{\Delta_n}} 
=M_{1_{H_n \backslash H_{n+1}}}
=M_{1_{H_n}}-M_{1_{H_{n+1}}}
=\E(\cdot|\Fc_{-n}) - \E(\cdot|\Fc_{-n-1})
$$
as bounded operators on $\L^p(G)$. Note that the right hand side is regular on $\L^p(G)$.  Consequently, their tensor products with the identity $\Id_X$ also coincide. 



For any $f \in \L^p(G,X)$ and any integer $k \leq 0$, we let $f_{k} \overset{\mathrm{def}}= \big(\E(\cdot|\Fc_{k}) \ot \Id_X\big)(f)$. By \cite[Proposition 2.6.3 and Example 3.1.2]{HvNVW}, we obtain a martingale $(f_k)_{k \leq 0}$ with respect to the filtration $(\Fc_{k})_{k \leq 0}$. Note that since $G_0 = H_0^\perp=\hat{G}^\perp=\{0\}$ we have $\Fc_0 = \Fc$ and thus $f_0=\big(\E(\cdot|\Fc_{0}) \ot \Id_X\big)(f) = f$. Consequently, for any integer $N \geq 1$, we have $
\sum_{k=-N+1}^{0} d f_k
=\sum_{k=-N+1}^{0} (f_k-f_{k-1})
=f_0-f_{-N}
=f-f_{-N}$ and $d f_k=f_k-f_{k-1}=\big(\E(\cdot|\Fc_{k}) \ot \Id_X\big)(f)-\big(\E(\cdot|\Fc_{k-1}) \ot \Id_X\big)(f)$. By \cite[Proposition 4.2.3]{HvNVW} with the change of index $n=-k$, we infer that
$$ 
\norm{f-f_{-N}}_{\L^p(G,X)} 
\cong \Bgnorm{\sum_{n=0}^{N-1} \epsi_n \ot (M_{1_{\Delta_n}} \ot \Id_X)(f)}_{\Rad(\L^p(G,X))}.
$$


It is straightforward to check
\footnote{\thefootnote. \label{Dernierfootnote}
Let $A \in \bigcap_{k \leq 0} \Fc_k$. Suppose that $A \neq \emptyset$. Now, we construct a sequence $(s_k)$ of elements of $G$ by induction. There exists some $s_0 \in G$ such that $\{s_0\}=s_0 G_0 \subset A$. Suppose that $s_{-k} \in G$ for some $k \leq 0$ satisfy $s_{-k} G_{-k} \subset A$. Since we can write $A = \bigcup_{s \in I_{-k+1}} s G_{-k+1}$ for some index set $I_{-k+1}$ and since $G_{-k}$ is a subgroup of $G_{-k+1}$, we can choose $s_{-k+1} \in G$ such that $s_{-k} G_{-k} \subset s_{-k+1} G_{-k+1} \subset A$. Moreover, we have $s_{-k}G_{-k} = s_{-k-1}G_{-k}$. Indeed, since $s_{-k-1} G_{-k-1} \subset s_{-k} G_{-k}$, we have $s_{-k-1} \in s_{-k} G_{-k}$. Hence there exists $r_{-k} \in G_{-k}$ such that $s_{-k-1} = s_{-k} r_{-k}$. We deduce that $s_{-k} = s_{-k-1} r_{-k}^{-1}$ and consequently $s_{-k} G_{-k} = s_{-k-1} r_{-k}^{-1} G_{-k} = s_{-k-1} G_{-k}$. Finally, we obtain 
$$
s_0 \bigcup_{k \leq 0} G_{-k}
=\bigcup_{k \leq 0} s_0 G_{-k} 
\subset \bigcup_{k \leq 0} s_{-k} G_{-k}
\subset A.
$$ 
On the other hand, we have already observed that the first set equals $G$.
} 
that $\bigcap_{k \leq 0} \Fc_{k}=\{\emptyset,G\}$. We conclude that the restriction of the measure $\mu_G$ to $\bigcap_{k \leq 0} \Fc_{k}$ is purely infinite in the sense of \cite[Definition 1.2.27 (c)]{HvNVW} on the $\sigma$-algebra $\Fc_{-\infty} \overset{\textrm{def}}= \bigcap_{k \leq 0} \Fc_k$. According to \cite[Theorem 3.3.5 (3)]{HvNVW}, $f_{-N}$ converges to zero in $\L^p(G,X)$ when $N$ goes to $\infty$. Since $X$ is UMD, $X$ does not contain the Banach space $c_0$. Using Hoffmann-Jorgensen-Kwapien Theorem \cite{HoJ}, \cite{Kwa1}, it is not difficult to conclude that the series $\sum_{n=0}^\infty \epsi_n \ot (M_{1_{\Delta_n}} \ot \Id_X)(f)$ converges in $\Rad(\L^p(G,X))$ and to obtain the claimed norm equivalence of Littlewood-Paley type.

2. Let $\mathcal{F}$ denote the Borel $\sigma$-algebra generated by the open subsets of $G$. For $n \geq 0$, consider the annihilator $G_n \overset{\mathrm{def}}= Y_n^\perp$ in $G$ and the $\sigma$-algebra $\mathcal{F}_n$ generated by the cosets of $G_n$ in $G$. Since each $Y_n$ is open and compact, each $G_n$ is compact and open by \cite[Remark 4.2.22]{Rei}. Since $Y_{n} \subset Y_{n+1}$ for all $n \geq 0$, we have $G_{n+1} \subset G_{n}$ and finally $\mathcal{F}_{n} \subset \mathcal{F}_{n+1}$. We conclude that $(\mathcal{F}_{n})_{n \geq 0}$ is a filtration in $G$. Since $G$ is compact, the Haar measure $\mu_G$ is finite, so trivially $\sigma$-finite on each $\Fc_n$. So, by \cite[Corollary 2.6.30]{HvNVW}, the conditional expectation $\E(\cdot|\Fc_n)$ with respect to $\Fc_n$ is well-defined and it is explicitly described in \cite[page 69]{IoT1} (since $G_n$ is compact) by 
$$
\E(f|\Fc_n)
= T_{G_{n}}(f) \circ \pi_n \quad \text{(almost everywhere)}.
$$
where $\pi_{n} \co G \to G/G_{n}$ is the canonical map and where $T_{G_{n}}$ is essentially defined in \cite[page 100]{Rei}. For any integer $n \geq 0$, since $Y_{n}$ is open, the Poisson formula \cite[(5.5.4)]{Rei} says that
$$
\big(T_{G_{n}}(f) \circ \pi_n\big)(s)
=\int_{Y_{n}} \chi(s)\hat{f}(\chi) \d\mu_{Y_{n}}(\chi)
=\int_{\hat{G}} \chi(s)1_{Y_{n}}(\chi)\hat{f}(\chi) \d\mu_{\hat{G}}(\chi).
$$ 
We conclude that the conditional expectation $\E(\cdot|\Fc_n) \co \L^p(G) \to \L^p(G)$ is a Fourier multiplier whose symbol is the indicator function $1_{Y_{n}}$. Hence for any $n \geq 1$
$$ 
M_{1_{\Delta_n}} 
=M_{1_{Y_{n} \backslash Y_{n-1}}}
=M_{1_{Y_n}}-M_{1_{Y_{n-1}}}
=\E(\cdot|\Fc_{n}) - \E(\cdot|\Fc_{n-1})
$$
as bounded operators on $\L^p(G)$. Note that the right hand side is regular on $\L^p(G)$.  Consequently, their tensor products with the identity $\Id_X$ also coincide. Similarly, we have $M_{1_{\Delta_0}} \ot \Id_X=M_{1_{Y_{0}}} \ot \Id_X=\E(\cdot|\Fc_{0}) \ot \Id_X$.

For any $f \in \L^p(G,X)$ and any integer $n \geq 0$, we let $f_{n} \overset{\mathrm{def}}= \big(\E(\cdot|\Fc_{n}) \ot \Id_X\big)(f)$. By \cite[Proposition 2.6.3 and Example 3.1.2]{HvNVW}, we obtain a martingale $(f_n)_{n \geq 0}$ with respect to the filtration $(\Fc_{n})_{n \geq 0}$. For any integer $N \geq 1$, we have $
\sum_{n=1}^{N} d f_n
=\sum_{n=1}^{N} (f_n-f_{n-1})
=f_N-f_{0}$ and $d f_n=f_n-f_{n-1}=\big(\E(\cdot|\Fc_{n}) \ot \Id_X\big)(f)-\big(\E(\cdot|\Fc_{n-1}) \ot \Id_X\big)(f)$ if $n \geq 1$ and $df_0=f_0=\big(\E(\cdot|\Fc_{0}) \ot \Id_X\big)(f)$. 

Note that $\bigcap_{n \geq 0} G_n = \{0\}$. Indeed, if $t \in G_n$, then for any $\chi \in Y_n=G_n^\perp$ we have $\chi(t)=1$. So if $t \in \bigcap_{n \geq 0} G_n$, then $\chi(t) = 1$ for all $\xi \in \bigcup_{n \geq 0} Y_n = \hat{G}$. Thus $t=0$ and the claim is proved. Then it is not difficult to check\footnote{\thefootnote. If $U$ is an open subset of $G$ containing $0$, consider the decreasing sequence of compact subsets $(G-U) \cap G_n$ and conclude that $G_n \subset U$ if $n$ is large enough.} that $(G_n)_{n \geq 0}$ is a neighborhood system at $0$. Now by \cite[(4.21)]{HR} (see also \cite[Example page 223]{Bou4}), the family of subsets of the form $sG_n$ where $n \geq 0$ and where $s$ runs through $G$ is an open basis for $G$. So the limit $\sigma$-algebra $\Fc_\infty = \sigma\left(\bigcup_{n \geq 0} \Fc_n \right)$ equals $\Fc$. 

According to \cite[Theorem 3.3.2 (2)]{HvNVW}, $f_{N}$ converges to $\big(\E(\cdot|\Fc_{\infty}) \ot \Id_X\big)(f)=f$ in $\L^p(G,X)$ when $N$ goes to $\infty$. Similarly to the case 1, we obtain the convergence of the series $\sum_{n=0}^\infty \epsi_n \ot (M_{1_{\Delta_n}} \ot \Id_X)(f)$ and the equivalence
$$ 
\norm{f-f_0}_{\L^p(G,X)} 
\cong \Bgnorm{ \sum_{n = 1}^\infty \epsi_n \ot ( M_{1_{\Delta_n}} \ot \Id_X)(f)}_{\Rad(\L^p(G,X))}.
$$
One easily incorporates $\norm{f_0}_{\L^p(G,X)} = \norm{(M_{1_{\Delta_0}} \ot \Id_X)(f)}_{\L^p(G,X)}$ on both sides with \cite[page 5]{HvNVW2} to deduce the claimed Littlewood-Paley norm equivalence.
\end{proof}

Note that in the case $X=\C$, using the Maurey-Khintchine inequalities \cite[16.11]{DJT} the equivalences \eqref{Eq-LP1} and \eqref{Eq-LP2} become
\begin{equation}
\label{equ-Littlewood-Paley-equivalence}
\norm{f}_{\L^p(G)} 
\approx \left\| \left( \sum_{n=0}^\infty |M_{1_{\Delta_n}} f|^2 \right)^{\frac12} \right\|_{\L^p(G)}.
\end{equation}

We need the following characterization \cite{Ram} of the closure $\ovl{\B(\hat{G})}$ of the Fourier-Stieltjes algebra $\B(\hat{G})\overset{\mathrm{def}}= \{ \hat{\mu} :\: \mu\in \M(G)\}$ of the dual of a locally compact abelian group $G$ in the space $\mathrm{C}_b(\hat{G})$ of bounded continuous complex-valued functions on $\hat{G}$ equipped with the norm $\norm{\cdot}_{\infty}$. If $f \co \hat{G} \to \C$ is a bounded continuous function then $f$ belongs to $\ovl{\B(\hat{G})}$ if and only if for any sequence $(\mu_n)$ of bounded Borel measures on $\hat{G}$ the conditions $\sup_{n \geq 1} \norm{\mu_n}<\infty$ and $\widehat{\mu_n}(x) \xra[n \to +\infty]{} 0$ for all $x \in G$ imply that $\int_{\hat{G}} f \d \mu_n \xra[n \to +\infty]{} 0$.


\begin{prop}
\label{prop-existence-strongly-non-regular-compact-group}
Let $G$ be an infinite compact abelian group. Suppose $1 < p < \infty$. Then there exists a strongly non regular Fourier completely bounded Fourier multiplier on $\L^p(G)$.
\end{prop}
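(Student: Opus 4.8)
The plan is to combine Pontryagin duality with the structure theory of infinite discrete abelian groups, the extension-by-zero transference of Subsection~\ref{sec:Existence-of-strongly-non-regular-Fourier-multipliers-abelian}, and the vector-valued Littlewood--Paley decomposition of Proposition~\ref{prop-vector-valued-LP}. Since $G$ is abelian, $\L^\infty(G)$ is hyperfinite, so regular and decomposable operators on $\L^p(G)$ coincide by Theorem~\ref{thm-dec=reg-hyperfinite} and the statement is unambiguous. Put $\Lambda=\hat{G}$, an infinite discrete abelian group, and distinguish two cases. If $\Lambda$ contains an element of infinite order, set $\Gamma_1=\Z\leq\Lambda$. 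Otherwise $\Lambda$ is a torsion group, and a routine induction (adjoin at each step an element outside the current subgroup, using that finitely generated torsion abelian groups are finite) produces a strictly increasing chain $\{0\}=Y_0\subsetneq Y_1\subsetneq\cdots$ of finite subgroups of $\Lambda$ whose union $\Gamma_1=\bigcup_n Y_n$ is an infinite subgroup; note $[Y_n:Y_{n-1}]\geq2$, so $|Y_n|\geq2^n$. In both cases let $H=\Gamma_1^\perp\leq G$; a subgroup of the discrete group $\hat{G}$ is closed, hence $H^\perp=\Gamma_1$, so $G/H$ is a compact abelian group with $\widehat{G/H}=\Gamma_1$. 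By Proposition~\ref{prop-transfert-extension-0-abelian} it suffices to produce a strongly non regular Fourier multiplier $M_\phi$ on $\L^p(G/H)=\L^p(\widehat{\Gamma_1})$, and its extension by zero to $\hat{G}$ stays completely bounded thanks to the isometry \eqref{prop-extension-0-Fourier-multipliers-abelien} applied with $X=S^p$; so it remains to treat two base cases.

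When $\Gamma_1=\Z$, i.e.\ $\widehat{\Gamma_1}=\T$, I would take $M_\varphi$ with $\varphi=\sign\co\Z\to\C$, the classical Hilbert transform on $\L^p(\T)$: it is completely bounded (being bounded on $\L^p(\T,X)$ for every UMD space $X$, in particular $X=S^p$, whence \eqref{normecbcommutatif} applies) and strongly non regular by \cite[Examples~3.3, 3.4 and~3.9]{ArV}.

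In the remaining base case, write $\Delta_0=Y_0$ and $\Delta_n=Y_n\setminus Y_{n-1}$ for $n\geq1$, so that $(\Delta_n)$ partitions $\Gamma_1=\widehat{\widehat{\Gamma_1}}$; for a sign sequence $\theta=(\theta_n)\in\{-1,1\}^{\N}$ set $\phi_\theta=\sum_{n}\theta_n 1_{\Delta_n}\co\Gamma_1\to\{-1,1\}$. Complete boundedness of $M_{\phi_\theta}$ follows from Proposition~\ref{prop-vector-valued-LP}(2) applied to the compact group $\widehat{\Gamma_1}$ and the chain $(Y_n)$, with $X=S^p$ (which is UMD for $1<p<\infty$): since $\phi_\theta 1_{\Delta_n}=\theta_n 1_{\Delta_n}$, the operator $M_{\phi_\theta}\ot\Id_{S^p}$ only multiplies the $n$-th Littlewood--Paley block by $\theta_n$, an isometry of $\Rad(\L^p(\widehat{\Gamma_1},S^p))$ because $(\theta_n\epsi_n)$ has the same distribution as $(\epsi_n)$; together with the norm equivalence \eqref{Eq-LP2} and the identity \eqref{normecbcommutatif} this bounds $\norm{M_{\phi_\theta}}_{\cb,\L^p(\widehat{\Gamma_1})\to\L^p(\widehat{\Gamma_1})}$ by a constant times the UMD constant of $S^p$, uniformly in $\theta$.

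For the strong non regularity I would argue by contradiction with a recursively chosen $\theta$. Suppose $M_{\phi_\theta}$ lies in the operator-norm closure of $\Reg(\L^p(\widehat{\Gamma_1}))$. Applying the positive contractive projection \eqref{Projection-Arendt-Voigt} onto convolutors (available since $\widehat{\Gamma_1}$ is amenable; Proposition~\ref{prop-ma-complementation-Fourier-multipliers-abelian}), which preserves regularity because a regular operator is a linear combination of positive ones (Proposition~\ref{prop-linearcp-imply-decomposable}), one reduces to approximating $M_{\phi_\theta}$ in operator norm by regular convolutors; by Theorem~\ref{thm-convolutor-description-dec-infty} these are exactly the convolutions by bounded measures, i.e.\ the $M_\psi$ with $\psi\in\B(\Gamma_1)$. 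Since $\norm{M_{\phi_\theta}-M_\psi}_{\L^p\to\L^p}\geq\norm{M_{\phi_\theta}-M_\psi}_{\L^2\to\L^2}=\norm{\phi_\theta-\psi}_{\infty}$ by Lemma~\ref{lemma-inclusion-Fourier-multipliers} and Lemma~\ref{prop-M2-Fourier-multipliers}, this gives $\phi_\theta\in\ovl{\B(\Gamma_1)}$. Now apply the Ramsay criterion recalled above to the test measures $\mu_m=\sigma_m-\sigma_{m-1}$, where $\sigma_m$ is the normalised counting measure on $Y_m$: then $\norm{\mu_m}\leq2$ and $\widehat{\mu_m}=1_{Y_m^\perp}-1_{Y_{m-1}^\perp}\to0$ pointwise on $\widehat{\Gamma_1}$, because $\bigcap_m Y_m^\perp=(\bigcup_m Y_m)^\perp=\{0\}$. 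Writing $a_m=\int_{\Gamma_1}\phi_\theta\,\d\sigma_m=|Y_m|^{-1}\sum_{k=0}^{m}\theta_k|\Delta_k|$ and $r_m=|Y_{m-1}|/|Y_m|\leq\tfrac12$, one checks the recursion $a_m-a_{m-1}=(1-r_m)(\theta_m-a_{m-1})$ with $|a_{m-1}|\leq1$; choosing $\theta_m=-\sign(a_{m-1})$ (and $\theta_m=1$ if $a_{m-1}=0$) forces $|a_m-a_{m-1}|\geq\tfrac12$ for every $m\geq1$, so $\int_{\Gamma_1}\phi_\theta\,\d\mu_m=a_m-a_{m-1}$ does not tend to $0$, contradicting the Ramsay criterion. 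Hence $\phi_\theta\notin\ovl{\B(\Gamma_1)}$, $M_{\phi_\theta}$ is strongly non regular, and the proof is complete. The main obstacle is precisely this last step: the hypothesis only yields closeness in the $\L^p$-operator norm, which must be upgraded --- through the projection onto convolutors and the bounded-measure description --- to $\L^\infty$-closeness of symbols before the Ramsay criterion can bite, and the recursive choice of signs against the running averages $a_m$ is what makes the contradiction go through; the complete boundedness, by contrast, is essentially a direct application of Proposition~\ref{prop-vector-valued-LP}.
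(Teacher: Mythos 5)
Your proposal is correct and follows essentially the same route as the paper: reduce via Pontryagin duality and the extension-by-zero transference to either the Hilbert transform on $\T$ or a Littlewood--Paley-type symbol along a chain of finite subgroups of a countable torsion dual group, then rule out approximation by regular operators through the Ramirez criterion for the closure of the Fourier--Stieltjes algebra, using the same test measures $\sigma_{m}-\sigma_{m-1}$. The only differences are local: you choose the block signs adaptively against the running averages $a_m$ (which neatly avoids the paper's need to pass to a subsequence of the $Y_n$ so that $|Y_{2n}|$ grows fast enough), and you re-derive the reduction to $\phi_\theta\notin\ovl{\B(\Gamma_1)}$ via the convolutor projection and Theorem \ref{thm-convolutor-description-dec-infty} where the paper simply invokes \cite[Theorem 3.1]{ArV}.
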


\begin{proof}
Since $G$ is compact, its dual $\hat{G}$ is discrete. Suppose first that $\hat{G}$ contains an element of infinite order, thus a (necessarily closed) subgroup isomorphic with $\Z$. Consider the closed subgroup $H=\Z^\perp$ of $G$. Then we have an isomorphism $\widehat{G/H}=H^\perp=\Z$. Hence $G/H$ is isomorphic to $\T$. According to \cite[Example 3.9]{ArV}, the Hilbert transform on $\T$ defines a strongly non regular Fourier multiplier on $\L^p(G/H)$. 

Since $S^p$ is $\mathrm{UMD}$, the Hilbert transform induces a bounded operator on $\L^p(\R,S^p)$ by \cite[Theorem 5.1.1]{HvNVW}. According to the estimate \cite[Proposition 5.2.5]{HvNVW}, the Hilbert transform on $\T$ induces a bounded operator $\L^p(G/H,S^p) \to \L^p(G/H,S^p)$. Then by the canonical isometry $\L^p(G/H,S^p) = S^p(\L^p(G/H))$ of \cite[(3.6)]{Pis5} and Proposition \ref{lemma-charact-Lp-cb}, we deduce that the Hilbert transform is completely bounded on $\L^p(G/H)$. Thus, by Proposition \ref{prop-transfert-extension-0-abelian} and using the isometry \eqref{prop-extension-0-Fourier-multipliers-abelien}, we deduce that there exists a strongly non regular Fourier multiplier on $\L^p(G)$.

Now suppose that no element in $\hat{G}$ has infinite order, i.e. $\hat{G}$ is an infinite abelian torsion group. Then it contains a countably infinite abelian torsion group (consider some countably infinite collection of elements in $\hat{G}$ and take the subgroup spanned by this collection, which is again countably infinite). Arguing as before with Proposition \ref{prop-transfert-extension-0-abelian} and the isometry \eqref{prop-extension-0-Fourier-multipliers-abelien}, it suffices to find a strongly non regular Fourier multiplier on a group having as dual this countable group, so we assume now that $\hat{G}$ is a countably infinite abelian torsion discrete group.

It is (really) elementary to see there exists a sequence $(Y_n)_{n \geq 0}$ of subgroups of $\hat{G}$ with the properties:
\begin{enumerate}
\item each $Y_n$ is finite,
\item $Y_n \subsetneqq Y_{n+1}$,
\item $Y_0=\{0\}$ and $\bigcup_{n=0}^\infty Y_n = \hat{G}$.
\end{enumerate}

Consider now $\Delta_0 \ov{\mathrm{def}}{=} Y_0, \: \Delta_n \ov{\mathrm{def}}{=} Y_{n} \backslash Y_{n-1}$ for $n \geq 1$. According to Proposition \ref{prop-vector-valued-LP}, the Littlewood-Paley equivalence \eqref{equ-Littlewood-Paley-equivalence} holds. This in turn is equivalent \cite[1.2.5 pages~8 and 14]{EdG} to the property that any $\psi \in \L^\infty(\hat{G})$ which is constant on any $\Delta_n$, $n=0,1,2,\ldots$ and vanishes on all but finitely many $\Delta_n$ induces a bounded Fourier multiplier $M_\psi$ on $\L^p(G)$ with $\norm{M_\psi}_{\L^p(G) \to \L^p(G)} \leq C_p \norm{\psi}_{\L^\infty(\hat{G})}$. For any integer $n$, consider the function $\phi_n \ov{\mathrm{def}}{=} \sum_{k=0}^n 1_{\Delta_{2k+1}}$ defined on $\hat{G}$. Since $\norm{\phi_n}_{\L^\infty(\hat{G})} \leq 1$, we have $\norm{M_{\phi_n}}_{\L^p(G) \to \L^p(G)} \leq C_p$. Consider the function $\phi \ov{\mathrm{def}}{=} \sum_{n=0}^{\infty} (1_{Y_{2n+1}} - 1_{Y_{2n}})$ of $\L^\infty(\hat{G})$. Since $\phi_n(x) \to \phi(x)$ as $n \to \infty$ for any $x \in \hat{G}$, we conclude using Proposition \ref{Prop-Convergence-of-multipliers} that the Fourier multiplier $M_\phi$ is bounded on $\L^p(G)$, $1 < p  < \infty$. 

Now, we prove that $M_\phi$ is strongly non regular. According to \cite[Theorem 3.1]{ArV}, it suffices to show that $\phi$ does not belong to the closure of the Fourier-Stieltjes algebra $\B(\hat{G})$ in $\L^\infty(\hat{G})$-norm. For this in turn, it suffices to find a sequence of measures $\mu_n$ on $\hat{G}$ with the properties
\begin{enumerate}
\item $\norm{\mu_n}_{\M(\hat{G})} \leq 2$,
\item $\widehat{\mu_n}(s) \xra[n \to +\infty]{} 0$ for any $s \in G$,
\item $\dsp \int_{\hat{G}} \phi \d\mu_n \centernot{\xra[n \to +\infty]{}} 0$.
\end{enumerate}
We choose the sequence $(\mu_n)$ defined by 
$$
\mu_n
\overset{\mathrm{def}}=\frac{1}{|Y_{n+1}|} \sum_{x \in Y_{n+1}} \delta_{x}-\frac{1}{|Y_n|} \sum_{x \in Y_n} \delta_{x}.
$$
Then property 1 is clearly satisfied, since the Haar measure on $\hat{G}$ is the counting measure.

For property 2, we have $\widehat{\mu_n} = 1_{G_{n+1}}-1_{G_n}$, where $G_n$ is the annihilator of $Y_n$ in $G$, i.e. $G_n = Y_n^\bot = \{ s \in G: \: \xi(s) = 1 \text{ for all } \xi \in Y_n \}$. 
\begin{itemize}
	\item If $s = 0$ then $\widehat{\mu_n}(s) = 1_{G_{n+1}}(0) - 1_{G_n}(0) = 0$ for all $n \in \N$. 

\item Consider now the case $s \in G \backslash \{0\}$. Recall that we have seen in the proof of Proposition \ref{prop-vector-valued-LP} that $\bigcap_{n \geq 0} G_n = \{0\}$. Hence, by the fact that the $Y_n$ increase and thus the $G_n$ decrease, there exists an index $n_0$ such that $s \not\in G_n$ for any $n \geq n_0$. Therefore, $\widehat{\mu_n}(s) = 0$ for any $n \geq n_0$.
\end{itemize}
It remains to show property 3. We have
\begin{align*}
\MoveEqLeft
 \int_{\hat{G}} \phi \d\mu_{2n} 
=   \frac{1}{|Y_{2n+1}|} \sum_{x \in Y_{2n+1}} \phi(x)-\frac{1}{|Y_{2n}|} \sum_{x \in Y_{2n}} \phi(x)\\
		&=\frac{1}{|Y_{2n+1}|} \sum_{x \in Y_{2n+1}} \bigg(\sum_{k=0}^\infty (1_{Y_{2k+1}} - 1_{Y_{2k}})\bigg)(x)-\frac{1}{|Y_{2n}|} \sum_{x \in Y_{2n}} \bigg(\sum_{k=0}^\infty (1_{Y_{2k+1}} - 1_{Y_{2k}})\bigg)(x)\\
		&=\frac{1}{|Y_{2n+1}|} \sum_{x \in Y_{2n+1}} \bigg(\sum_{k=0}^\infty 1_{Y_{2k+1}}(x) - 1_{Y_{2k}}(x)\bigg)-\frac{1}{|Y_{2n}|} \sum_{x \in Y_{2n}} \bigg(\sum_{k=0}^\infty 1_{Y_{2k+1}}(x) - 1_{Y_{2k}}(x)\bigg)\\
		&=\frac{1}{|Y_{2n+1}|} \sum_{x \in Y_{2n+1}} \sum_{k=0}^{n} \big(1_{Y_{2k+1}}(x) - 1_{Y_{2k}}(x)\big)-\frac{1}{|Y_{2n}|} \sum_{x \in Y_{2n}} \sum_{k=0}^{n-1} \big(1_{Y_{2k+1}}(x) - 1_{Y_{2k}}(x)\big)\\
	 &=\frac{1}{|Y_{2n+1}|} \bigg(\sum_{k=0}^{n} \sum_{x \in Y_{2n+1}} 1_{Y_{2k+1}}(x) - \sum_{x \in Y_{2n+1}}1_{Y_{2k}}(x)\bigg)\\
	&-\frac{1}{|Y_{2n}|}  \bigg(\sum_{k=0}^{n-1} \sum_{x \in Y_{2n}}1_{Y_{2k+1}}(x) - \sum_{x \in Y_{2n}}1_{Y_{2k}}(x)\bigg)\\
&=\frac{1}{|Y_{2n+1}|} \sum_{k=0}^n \big( |Y_{2k+1}| - |Y_{2k}| \big) - \frac{1}{|Y_{2n}|} \sum_{k=0}^{n-1} \big( |Y_{2k+1}| - |Y_{2k}| \big) \\
& = \left( \frac{1}{|Y_{2n+1}|} - \frac{1}{|Y_{2n}|} \right) \sum_{k=0}^{n-1} \big( |Y_{2k+1}| - |Y_{2k}| \big) + \frac{1}{|Y_{2n+1}|} \big( |Y_{2n+1}| - |Y_{2n}| \big) \\
& = 1 - \frac{|Y_{2n}|}{|Y_{2n+1}|} + \left( \frac{1}{|Y_{2n+1}|} - \frac{1}{|Y_{2n}|} \right) \sum_{k=0}^{n-1} \big( |Y_{2k+1}| - |Y_{2k}| \big) \\
& \geq 1 - \frac{|Y_{2n}|}{|Y_{2n+1}|} - \frac{1}{|Y_{2n}|} \sum_{k=0}^{n-1} \big( |Y_{2k+1}| - |Y_{2k}| \big).
\end{align*}
The second term in the last line is smaller than $1/2$ in modulus by the fact that the $Y_n$ increase strictly and the fact that the order of a subgroup divides the order of the whole group. For the third term, we note that by skipping several indices $n$ we can assume recursively that $|Y_{2n}|$ is so large that $\frac{1}{|Y_{2n}|} \sum_{k=0}^{n-1} \left( |Y_{2k+1}| - |Y_{2k}| \right) < \frac14$. Thus the whole expression in the last line is bigger than $1 - \frac12 - \frac14 = \frac14$ and hence does not converge to $0$.

According to Proposition \ref{lemma-charact-Lp-cb}, it suffices now to show that $M_\phi \otimes \Id_{S^p}$ extends to a bounded operator on the Bochner space $\L^p(G,S^p)$. Using both inequalities of Proposition \ref{prop-vector-valued-LP}, the fact that $S^p$ has UMD and Kahane's contraction principle \cite[Proposition 2.5]{KW} for the scalars $\delta_{n \text{ even}}$, we get
\begingroup
\allowdisplaybreaks
\begin{align*}
\MoveEqLeft
\bnorm{(M_\phi \ot \Id_{S^p}) f}_{\L^p(G,S^p)} 
 \lesssim \E \Bgnorm{ \sum_{n = 0}^\infty \epsi_n \ot (M_{1_{\Delta_n}} \ot \Id_{S^p}) (M_\phi \ot \Id_{S^p}) (f)}_{\L^p(G,S^p)} \\
 &= \E \Bgnorm{ \sum_{n = 0}^\infty \epsi_n \ot \delta_{n \text{ even}} (M_{1_{\Delta_n}} \ot \Id_{S^p})(f)}_{\L^p(G,S^p)}\\ 
 &\leq \E \Bgnorm{ \sum_{n = 0}^\infty \epsi_n \ot (M_{1_{\Delta_n}} \ot \Id_{S^p})(f)}_{\L^p(G,S^p)} 
 \lesssim \norm{f}_{\L^p(G,S^p)}.
\end{align*}
\endgroup
The proof is complete.
\end{proof}

Recall that a topological space $X$ is 0-dimensional if $X$ is a non-empty $\mathrm{T}_1$-space and if the family of all sets that are both open and closed is a basis for the topology \cite[page 11]{HR} \cite[page 360]{Eng}. By \cite[Theorem 6.2.1]{Eng}, every 0-dimensional space is totally disconnected, i.e. $X$ does not contain any connected subsets of cardinality larger than one.

\begin{prop}
\label{prop-existence-strongly-non-regular-discrete-group}
Let $G$ be an infinite discrete abelian group. Suppose $1<p<\infty$. Then there exists a strongly non regular completely bounded Fourier multiplier on $\L^p(G)$.
\end{prop}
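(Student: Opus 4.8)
Since $G$ is discrete, $\L^p(G)=\ell^p(G)$ and Fourier multipliers on $\L^p(G)$ are exactly the convolutors, so $\mathfrak{M}^p(G)=\CV_p(G)=\mathfrak{M}^p(\hat G)$ and symbols live on the compact dual $\hat G$. The plan is to reduce to two model cases by transference. Every infinite abelian group $G$ either has an element of infinite order, hence a closed (discrete) subgroup $H\cong\Z$, or is a torsion group, in which case the subgroup $H$ generated by a countably infinite subset of $G$ is a countably infinite torsion subgroup. Since $G$ is amenable and unimodular and $H$ is a closed subgroup, Proposition~\ref{prop-transfer-subgroups} says that an operator $T\in\CV_p(H)$ is strongly non regular if and only if $i(T)\in\CV_p(G)$ is, and the vector-valued isometry \eqref{Deri-injection} with $X=S^p$, together with \eqref{normecbcommutatif}, shows that $i$ also preserves the completely bounded norm. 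So it suffices to build a strongly non regular completely bounded Fourier multiplier on $\L^p(H)$ in each case. When $H\cong\Z$ I would take the discrete Hilbert transform $\mathcal{H}$ on $\ell^p(\Z)$: it is strongly non regular by \cite[Example~3.4]{ArV}, and it is completely bounded because $S^p$ is a UMD space, so that $\mathcal{H}\ot\Id_{S^p}$ is bounded on $\ell^p(\Z;S^p)$ (vector-valued Marcinkiewicz multiplier theorem, or periodisation from the Hilbert transform on $\L^p(\R;S^p)$), whence the claim by \eqref{normecbcommutatif}.

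It remains to treat the case where $H$ is countably infinite and torsion. Then $\hat H$ is compact, infinite, second countable, and totally disconnected (because $H$ is torsion), so by \cite[Theorem~7.7]{HR} (see Remark~\ref{Rem-total-disc}) it has a neighbourhood basis of $0$ made of open --- hence closed and compact --- subgroups; being second countable and infinite, it admits a strictly decreasing sequence $(H_n)_{n\ge0}$ of open compact subgroups with $H_0=\hat H$ and $\bigcap_{n\ge0}H_n=\{0\}$. This is precisely the hypothesis of Proposition~\ref{prop-vector-valued-LP}(1), so with $\Delta_n=H_n\setminus H_{n+1}$ we get the Littlewood--Paley equivalence \eqref{equ-Littlewood-Paley-equivalence} on $\L^p(H)=\ell^p(H)$; exactly as in the proof of Proposition~\ref{prop-existence-strongly-non-regular-compact-group} this entails that every $\psi\in\L^\infty(\hat H)$ which is constant on each $\Delta_n$ and vanishes off finitely many of them induces a bounded multiplier on $\ell^p(H)$ with norm at most $C_p\norm{\psi}_\infty$. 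Put $\phi_n=\sum_{k=0}^n 1_{\Delta_{2k+1}}$ and $\phi=\sum_{k=0}^\infty 1_{\Delta_{2k+1}}=\sum_{n=0}^\infty\bigl(1_{H_{2n+1}}-1_{H_{2n+2}}\bigr)\in\L^\infty(\hat H)$, so that $\norm{M_{\phi_n}}\le C_p$ and $\phi_n\to\phi$ pointwise on $\hat H$. Since $\VN(\hat H)\cong\L^\infty(\widehat{\hat H})=\ell^\infty(H)$, we identify $\L^p(\VN(\hat H))$ with $\ell^p(H)$, and Proposition~\ref{Prop-Convergence-of-multipliers} applied to the compact (hence unimodular) group $\hat H$ yields a bounded Fourier multiplier $M_\phi$ on $\ell^p(H)$.

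I would then check that $M_\phi$ is strongly non regular and completely bounded. For the first point, note that $U=\bigcup_k\Delta_{2k+1}$ and $V=\bigcup_k\Delta_{2k}$ are disjoint open sets with conull union in $\hat H$, that $\phi=1$ on $U$ and $\phi=0$ on $V$, and that $\phi(0)=0$; picking $\chi_j\in\Delta_{2j+1}$ (resp. $\chi_j\in\Delta_{2j}$) we have $\chi_j\in H_{2j}$, hence $\chi_j\to 0$ since $(H_n)$ is a neighbourhood basis of $0$, so $0$ lies in the closure of both $U$ and $V$. Consequently no continuous function can agree with $\phi$ almost everywhere: such a $g$ would equal $1$ at every point of $U$ (because $\{g\ne1\}\cap U$ is open and null, hence empty) and $0$ at every point of $V$, so $g(0)$ would be both $1$ and $0$. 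On the other hand the $\L^\infty(\hat H)$-norm closure $\ovl{\B(\hat H)}$ of the Fourier--Stieltjes algebra consists only of classes of continuous functions, since the Haar measure of the compact group $\hat H$ has full support (a sequence of continuous functions Cauchy for the essential supremum norm is uniformly Cauchy). Therefore $\phi\notin\ovl{\B(\hat H)}$, and by \cite[Theorem~3.1]{ArV} the multiplier $M_\phi$ is not in the operator-norm closure of the regular operators on $\ell^p(H)$, i.e. it is strongly non regular. For complete boundedness, I would apply Proposition~\ref{prop-vector-valued-LP}(1) with the UMD space $X=S^p$: since $(M_{1_{\Delta_n}}\ot\Id_{S^p})(M_\phi\ot\Id_{S^p})=\delta_{n\,\mathrm{odd}}\,(M_{1_{\Delta_n}}\ot\Id_{S^p})$, Kahane's contraction principle gives $\norm{(M_\phi\ot\Id_{S^p})f}_{\ell^p(H,S^p)}\lesssim\norm{f}_{\ell^p(H,S^p)}$ exactly as at the end of the proof of Proposition~\ref{prop-existence-strongly-non-regular-compact-group}, and $M_\phi$ is completely bounded by \eqref{normecbcommutatif}.

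The main difficulty is keeping the dualised picture straight: one has to work on the \emph{compact} group $\hat H$ with its open compact subgroups $H_n$ (playing the role of the finite subgroups of the discrete dual in Proposition~\ref{prop-existence-strongly-non-regular-compact-group}), use the identification $\ell^p(H)=\L^p(\VN(\hat H))$ in order to invoke Propositions~\ref{prop-vector-valued-LP} and \ref{Prop-Convergence-of-multipliers}, and verify that the symbol $\phi$ genuinely fails to be almost everywhere continuous (it oscillates between $0$ and $1$ on every neighbourhood of $0$). Once this dictionary is set up, the Littlewood--Paley estimates and the transference results already available do the rest; in fact, for discrete $G$ the ``strongly non regular'' part becomes easier than in the compact case, since a bounded symbol that is not a.e. continuous automatically lies outside $\ovl{\B(\hat H)}$.
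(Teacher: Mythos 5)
Your proposal is correct and follows essentially the same route as the paper: the same case split (element of infinite order versus torsion), the same transference via Proposition~\ref{prop-transfer-subgroups} and \eqref{Deri-injection}, the Hilbert transform on $\Z$ in the first case, and in the torsion case the same Littlewood--Paley decomposition along a decreasing chain of open compact subgroups of the totally disconnected compact dual, with the same symbol $\phi$ and the same Kahane-contraction argument for complete boundedness. The only (harmless) deviations are cosmetic: you justify the existence of the chain $(H_n)$ from second countability and Remark~\ref{Rem-total-disc} rather than citing \cite{EdG}, and your proof that $\phi$ is not a.e.\ continuous uses ``open and null implies empty'' instead of picking points in the positive-measure sets $\Delta_n$, which amounts to the same thing.
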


\begin{proof}
Suppose first that $G$ contains an element of infinite order, so a (closed) subgroup $H$ isomorphic with $\Z$. Then by \cite[Example 3.4]{ArV}, the Hilbert transform induces a strongly non regular Fourier multiplier on $\L^p(H)$. Since $S^p$ is UMD and according to \cite[Theorem 2.8]{BGM}, the Hilbert transform is bounded on $\L^p(H,S^p)$ so completely bounded on $\L^p(H)$ by Proposition \ref{lemma-charact-Lp-cb}. Now, using Proposition \ref{prop-transfer-subgroups} and the isometry \eqref{Deri-injection}, the composed Fourier multiplier $M_{\phi \circ \pi}$ on $\L^p(G)$, where $\pi \co \hat{G} \to \hat{G} / H^\bot$ is the canonical map, is a strongly non regular completely bounded Fourier multiplier.

Now suppose that every element of $G$ is of finite order, so $G$ is a torsion group.
We can assume that $G$ is countably infinite. Indeed, otherwise choose a countably infinite number of elements in $G,$ and let $H$ be the subgroup of $G$ generated by these elements. Then $H$ is again countably infinite. If there is a strongly non regular completely bounded Fourier multiplier on $\L^p(H)$ then Proposition \ref{prop-transfer-subgroups} and the isometry \eqref{Deri-injection} yield a strongly non regular Fourier multiplier on $\L^p(G)$.

Note that since $G$ is countably infinite, by \cite[Theorem 24.15]{HR}, its dual $\hat{G}$ is metrizable. The fact that $G$ is torsion implies by \cite[Theorem 24.21]{HR} that $\hat{G}$ is $0$-dimensional. This in turn implies that $\hat{G}$ is totally disconnected.

So $\hat{G}$ is an infinite compact abelian metrizable totally disconnected group. By the second part of \cite[Remark page~68]{EdG}, there exists a sequence $(H_n)_{n \geq 0}$ of closed subgroups of $\hat{G}$ such that
\begin{enumerate}
\item each $H_n$ is open,
\item $H_{n+1} \subsetneqq H_n$,
\item $\bigcap_{n=0}^\infty H_n = \{0\}$, $H_0 = \hat{G}$.
\end{enumerate}
Then the sets $\Delta_n = H_n \backslash H_{n+1}$ enjoy the Littlewood-Paley equivalence \eqref{equ-Littlewood-Paley-equivalence} according to Proposition \ref{prop-vector-valued-LP}. With $\phi = \sum_{n=1}^\infty (1_{H_{2n-1}} - 1_{H_{2n}})$, as in the proof of Proposition \ref{prop-existence-strongly-non-regular-compact-group}, we see that $M_\phi$ is a bounded Fourier multiplier on $\L^p(G)$, $1 < p  < \infty$.

It remains to show that $M_\phi$ is strongly non regular. Invoking \cite[Theorem 3.1 and Remark 3.2]{ArV}, it suffices to show that $\phi$ is not equal almost everywhere to a continuous function.

So assume that $\psi \co \hat{G} \to \C$ is a continuous function with $\psi = \phi$ almost everywhere. We will show a contradiction, which will end the proof. Since the $H_n$ are closed and open by the point 1, $H_{n-1} \backslash H_{n}$ is open. As it is also non-empty by the point 2, it must be of positive Haar measure. Therefore, there exists $x_n \in H_{n-1} \backslash H_n$ with 
$$
\psi(x_n) 
=\phi(x_n) 
= 
\begin{cases} 
0 & n \text{ even} \\ 
1 & n \text{ odd}
\end{cases}.
$$
Consider now the sequence $y_n = x_{2n-1}$. By compactness, there exists a subsequence of $y_n$ which converges against some $\xi \in \hat{G}$. Since $y_n$ belongs to $H_{2n-1},$ by the point 2, $y_m$ belongs to $H_{2n-1}$ for all $m \geq n$. As $H_{2n-1}$ is closed, $\xi$ belongs to $H_{2n-1}$, so to $\bigcap_{n=1}^\infty H_{2n-1} = \bigcap_{n=1}^\infty H_n = \{0\}$. Therefore, a subsequence of $y_n$ converges to $0$.

In the same manner, one shows that a subsequence of $x_{2n}$ converges to $0$. However, $\psi$ applied to these two subsequences is constant to 1 and to 0 respectively, so does not converge. Hence $\psi$ cannot be continuous.

Now use Proposition \ref{prop-vector-valued-LP} in a similar fashion to the compact case to deduce that $M_\phi$ is completely bounded on $\L^p(G)$. The proof is complete.
\end{proof}

Recall the following structure theorem for locally compact abelian groups, see e.g. \cite[Theorem 24.30]{HR} and \cite[Theorem 4.2.31]{Rei}.

\begin{thm}
\label{Th structure} Any locally compact abelian group is isomorphic to a product $\R^n\times G_0$ where $n\geq 0$ is an integer and $G_0$ is a locally compact abelian group containing a compact subgroup $K$ such that $G_0/K$ is discrete.
\end{thm}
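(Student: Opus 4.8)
The plan is to reduce Theorem~\ref{Th structure} to two classical facts: the structure of \emph{compactly generated} locally compact abelian groups, and the fact that a vector subgroup splits off as a direct topological factor; the rest is elementary glue.

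First I would observe that every locally compact abelian group $G$ admits an open, compactly generated subgroup $H$: pick a compact symmetric neighbourhood $U$ of $0$ and set $H = \bigcup_{k \geq 1} (U + \cdots + U)$ (the $k$-fold sum). Then $H$ contains a neighbourhood of each of its points, hence is open (and so closed), and it is generated by the compact set $U$. Next I would invoke the principal structure theorem for compactly generated locally compact abelian groups (\cite[Theorem~9.8]{HR}): such an $H$ is topologically isomorphic to $\R^a \times \Z^b \times K$ for some integers $a,b \geq 0$ and some compact abelian group $K$. Under this isomorphism, the factor $V = \R^a \times \{0\} \times \{0\}$ is a closed subgroup of $H$ topologically isomorphic to $\R^a$; since $H$ is open in $G$, $V$ is also closed in $G$.

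Then I would apply the splitting lemma for vector subgroups (\cite[\S 9 and \S 24]{HR}, essentially due to Pontryagin): a closed subgroup of a locally compact abelian group that is topologically isomorphic to some $\R^a$ is a direct topological summand. Applied to $V \subseteq G$ this gives a topological isomorphism $G \cong \R^a \times G_0$ with $G_0 = G/V$, and we set $n = a$. It remains to produce the required compact subgroup of $G_0$. The image $\overline{H}$ of $H$ under the open quotient map $G \to G_0$ is open in $G_0$, and $\overline{H} \cong H/V \cong \Z^b \times K$. Inside $\Z^b \times K$ the subgroup $\{0\} \times K$ is open, since $\{0\}$ is open in the discrete group $\Z^b$, and it is compact; transporting it back yields a subgroup $K_0 \subseteq \overline{H} \subseteq G_0$ that is both compact and open. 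An open subgroup always has discrete quotient, so $G_0/K_0$ is discrete, which is exactly the asserted property.

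The main obstacle is that the genuine content lies in the two cited black boxes: the Lie-type classification $H \cong \R^a \times \Z^b \times K$ of a compactly generated locally compact abelian group, and the splitting of a vector subgroup. The former is proved by finding a compact subgroup $C$ with $H/C$ a (connected-by-discrete) Lie group and then running the elementary structure theory of such Lie groups; the latter is usually obtained via Pontryagin duality, or by directly constructing a continuous section of $G \to G/V$. Reproving either in full would be a substantial detour unrelated to the rest of the paper, so they are simply quoted from \cite{HR} (and \cite{Rei}); only the existence of the compactly generated open subgroup and the passage to the quotient need to be spelled out.
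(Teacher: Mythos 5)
Your outline is correct, but there is nothing in the paper to measure it against: Theorem \ref{Th structure} is quoted there as a classical fact, with the proof delegated entirely to \cite[Theorem 24.30]{HR} and \cite[Theorem 4.2.31]{Rei}. What you write is the standard textbook derivation of exactly that theorem: take an open compactly generated subgroup $H$, decompose $H \cong \R^a \times \Z^b \times K$, split off the closed vector subgroup $V \cong \R^a$ (vector subgroups of LCA groups are topological direct summands), and observe that the image of $H$ in $G_0 = G/V$ is an open subgroup topologically isomorphic to $\Z^b \times K$, inside which $K$ sits as a compact \emph{open} subgroup, so that $G_0/K$ is discrete. The elementary glue is all in order — in particular you correctly note that $\pi|_H$ is open so $\pi(H) \cong H/V$ topologically, and that ``open in an open subgroup'' gives openness in $G_0$. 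As you acknowledge, the genuine mathematical content lives in the two quoted black boxes (the compactly generated structure theorem and the splitting of vector subgroups), which are precisely the substance of the references the paper itself invokes; so your proposal is a faithful reconstruction rather than an alternative route, and it neither adds to nor conflicts with the paper's (non-)treatment.
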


With the help of the previous theorem, we can now prove the following.

\begin{thm}
\label{thm-existence-strongly-non-decomposable-abelian-groups}
\label{thm-existence-CB-strongly-non-decomposable-abelian-groups}
Let $G$ be an infinite locally compact abelian group. Suppose $1 < p < \infty$. Then there exists a strongly non regular Fourier multiplier on $\L^p(G)$ which is completely bounded and $\CB$-strongly non decomposable.
\end{thm}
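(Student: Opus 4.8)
The plan is to reduce, via the structure theorem for locally compact abelian groups (Theorem \ref{Th structure}), to the two situations already treated in this subsection --- infinite compact abelian groups (Proposition \ref{prop-existence-strongly-non-regular-compact-group}) and infinite discrete abelian groups (Proposition \ref{prop-existence-strongly-non-regular-discrete-group}) --- and then to push the example up to $G$ along a suitable closed subgroup by means of the transference Proposition \ref{prop-transfer-subgroups}. Write $G \cong \R^n \times G_0$ where $n \geq 0$ and $G_0$ contains a compact subgroup $K$ with $G_0/K$ discrete. Since $G$ is infinite, exactly one of the following holds: (i) $n \geq 1$; (ii) $n = 0$ and $K$ is infinite; (iii) $n = 0$ and $K$ is finite. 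I first observe that in case (iii) the group $G$ is itself infinite and discrete: the subgroup $K$ is the preimage of the open point $\{eK\}$ of the discrete quotient $G_0/K$ under the open continuous map $G_0 \to G_0/K$, hence a finite open subgroup of $G_0$; as $G_0$ is Hausdorff, $K \backslash \{e\}$ is closed, so $\{e\} = K \cap (K \backslash \{e\})^c$ is open, $G_0 = G$ is discrete, and it is infinite by hypothesis. Thus the three cases are exhaustive.

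Next I choose in each case a closed subgroup $H$ of $G$ carrying a ready-made example. In case (i), the subgroup $\{(k,0,\dots,0) : k \in \Z\}$ of $G \cong \R^n \times G_0$ is closed and topologically isomorphic to the infinite discrete group $\Z$, and I take $H := \Z$. In case (ii), I take $H := K$, an infinite compact abelian group which is a closed subgroup of $G$. In case (iii), I take $H := G$. Then Proposition \ref{prop-existence-strongly-non-regular-discrete-group} (in cases (i) and (iii)) or Proposition \ref{prop-existence-strongly-non-regular-compact-group} (in case (ii)) provides a strongly non regular completely bounded Fourier multiplier $T$ on $\L^p(H)$. (In case (i) one may alternatively take $T$ to be the Hilbert transform on $\L^p(\R)$, which is strongly non regular by \cite[Example 3.3]{ArV} and, $S^p$ being $\mathrm{UMD}$, completely bounded by Proposition \ref{lemma-charact-Lp-cb}.)

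It remains to transfer $T$ from $H$ to $G$; nothing is needed in case (iii) since $H = G$, so assume we are in case (i) or (ii). Being abelian, $G$ is unimodular and amenable, so Proposition \ref{prop-transfer-subgroups} applies and the image $i(T) \co \L^p(G) \to \L^p(G)$ of the convolution operator $T$ under the canonical embedding $i \co \CV_p(H) \to \CV_p(G)$ is again strongly non regular. Complete boundedness is preserved too: the vector-valued version \eqref{Deri-injection} of this embedding restricts to an isometry $\CV_p(H,S^p) \to \CV_p(G,S^p)$, and since $\CV_{p,\cb}(G) = \CV_p(G,S^p)$, the complete boundedness of $T$ passes to $i(T)$. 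In all three cases we have therefore produced a completely bounded, strongly non regular --- equivalently, since $\L^p(G)$ is a commutative hence approximately finite-dimensional von Neumann algebra, strongly non decomposable --- Fourier multiplier on $\L^p(G)$. By Remark \ref{rem-defi-CB-strongly-non-decomposable}, a completely bounded operator that is strongly non decomposable is automatically $\CB$-strongly non decomposable, which concludes the proof.

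\textbf{Main difficulty.} No genuinely new analysis is required: all the substance is contained in Propositions \ref{prop-existence-strongly-non-regular-compact-group} and \ref{prop-existence-strongly-non-regular-discrete-group} (the Littlewood--Paley constructions on $\mathrm{UMD}$-valued $\L^p$-spaces, producing lacunary symbols lying outside the closure of the Fourier--Stieltjes algebra) and in the transference machinery of \cite{Der2} and \cite{Der4} recalled above. The only points needing care are making the case distinction coming from Theorem \ref{Th structure} genuinely exhaustive --- in particular the topological remark that a finite compact subgroup with discrete quotient forces the whole group to be discrete --- and verifying that complete boundedness survives the transference, which amounts to invoking the operator-space (i.e. $S^p$-valued) form \eqref{Deri-injection} of the embedding $\CV_p(H) \hookrightarrow \CV_p(G)$ rather than its scalar form.
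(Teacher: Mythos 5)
Your proposal is correct and follows essentially the same route as the paper: the structure theorem reduces to the three cases $n\geq 1$, $K$ infinite compact, and $G$ discrete, the ready-made examples come from Propositions \ref{prop-existence-strongly-non-regular-compact-group} and \ref{prop-existence-strongly-non-regular-discrete-group} (or the Hilbert transform), and the example is pushed up to $G$ via the embedding \eqref{Deri-injection} together with Proposition \ref{prop-transfer-subgroups}. The only cosmetic differences are that the paper uses the copy of $\R$ (rather than $\Z$) in case (i) and states the discreteness of $G$ in case (iii) without your more detailed topological justification.
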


\begin{proof}
We use the previous structure Theorem \ref{Th structure} to decompose $G$ and we distinguish three cases. 

If $n \geq 1$ then $G$ has a closed subgroup $H$ isomorphic to $\R$ and we consider the Hilbert transform on $\L^p(H)$ which is strongly non regular by \cite[Example 3.3]{ArV}. Since the Schatten class $S^p$ has UMD, the Hilbert transform is bounded on $\L^p(H,S^p)$ and hence completely bounded on $\L^p(H)$ according to Proposition \ref{lemma-charact-Lp-cb}. Now appeal to the isometry \eqref{Deri-injection} and Proposition \ref{prop-transfer-subgroups} to extend the Hilbert transform to a strongly non regular and completely bounded Fourier multiplier on $\L^p(G)$.

If $n = 0$ then $G = G_0$. Suppose first that the compact subgroup $K$ is infinite.
Using Proposition \ref{prop-existence-strongly-non-regular-compact-group}, there exists a completely bounded Fourier multiplier which is strongly non regular. Again, using the isometry \eqref{Deri-injection} and Proposition \ref{prop-transfer-subgroups}, we obtain a strongly non regular and completely bounded Fourier multiplier on $\L^p(G)$. 

If $n = 0$ and if the compact subgroup $K$ is finite, then it is itself discrete (since it is Hausdorff) and thus $G = G_0$ is discrete and infinite. Now, use Proposition \ref{prop-existence-strongly-non-regular-discrete-group} to find a strongly non regular completely bounded Fourier multiplier on $\L^p(G)$.

The last assertion is a consequence of Section \ref{subsec:Existence-of-strongly-Definitions}.
\end{proof}

\subsection{Strongly non regular completely bounded convolutors on non-abelian groups}
\label{sec:Existence-of-strongly-non-regular-convolutors}

\begin{thm}
\label{Th-strongly-non-regular-convolutor--un-amenable-groups}
Let $G$ be a unimodular amenable locally compact group which contains an infinite abelian subgroup. Suppose $1<p<\infty$. There exists a strongly non regular completely bounded convolution operator $T \co \L^p(G) \to \L^p(G)$.
\end{thm}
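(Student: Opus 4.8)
The strategy is a transference argument combining the strongly non regular completely bounded Fourier multiplier from Theorem \ref{thm-existence-strongly-non-decomposable-abelian-groups} with the two transference results Proposition \ref{prop-transfer-subgroups} and (if needed) Proposition \ref{prop-transfer-quotient}, exactly in the spirit of the proof of Theorem \ref{thm-existence-strongly-non-decomposable-abelian-groups} itself. First I would let $H$ be an infinite abelian subgroup of $G$; since $G$ is unimodular and amenable, so is $H$, and by taking the closure $\ovl{H}$ (which is again abelian, amenable, unimodular and still infinite) we may assume that $H$ is a \emph{closed} subgroup of $G$. The point is that $H$ is itself an infinite locally compact abelian group, so by Theorem \ref{thm-existence-CB-strongly-non-decomposable-abelian-groups} there is a strongly non regular completely bounded convolutor (equivalently, Fourier multiplier $M_\phi \co \L^p(H) \to \L^p(H)$ on $\L^p(\hat{H})=\CV_p(\hat{H})$, depending on the abelian/dual convention) $T_0 \in \CV_p(H)$ which is moreover completely bounded, hence lies in $\CV_{p,\cb}(H) = \CV_p(H,S^p)$.

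Next I would push $T_0$ up to $G$ via the canonical isometry $i \co \CV_p(H) \to \CV_p(G)$ from \eqref{Deri-injection}, obtaining $T = i(T_0) \in \CV_p(G)$. By Proposition \ref{prop-transfer-subgroups}, since $G$ is unimodular and amenable and $H$ is a closed subgroup, $T_0$ is strongly non regular if and only if $i(T_0)$ is strongly non regular; so $T$ is strongly non regular. For the complete boundedness of $T$, I would use the vector-valued version of the isometry \eqref{Deri-injection}, namely the isometric embedding $\CV_p(H,X) \to \CV_p(G,X)$ valid for any Banach space $X$ (this is the vectorial extension alluded to right after \eqref{Deri-injection}, with the same proof), applied with $X = S^p$: since $T_0 \in \CV_p(H,S^p)$, its image $i(T_0) \ot \Id_{S^p}$ extends boundedly to $\L^p(G,S^p)$, so $T \in \CV_p(G,S^p) = \CV_{p,\cb}(G)$, i.e. $T$ is completely bounded on $\L^p(G)$. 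Thus $T$ is a strongly non regular completely bounded convolution operator on $\L^p(G)$, which is the assertion.

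The main obstacle — really the only non-routine point — is the reduction to a \emph{closed} infinite abelian subgroup. If the given infinite abelian subgroup $H$ is not closed, passing to $\ovl{H}$ is fine provided $\ovl{H}$ is still infinite, which it is (it contains the infinite set $H$); $\ovl{H}$ is abelian as the closure of an abelian subgroup in a topological group, it is unimodular as a closed subgroup of a unimodular group, and it is amenable as a closed subgroup of an amenable group. One subtlety: the transference machinery of Section \ref{sec:Locally-compact-groups} and the results \eqref{Deri-injection}, Proposition \ref{prop-transfer-subgroups} are stated for closed subgroups, so this reduction is exactly what makes those results applicable. A secondary, purely bookkeeping point is to make sure the ``abelian'' version of Theorem \ref{thm-existence-CB-strongly-non-decomposable-abelian-groups} is applied on the correct side (the theorem produces a strongly non regular completely bounded Fourier multiplier on $\L^p(\hat{H})$, equivalently a convolutor on the abelian group which here plays the role of $H$); once $T_0$ is produced as an element of $\CV_p(H)$ rather than of a multiplier algebra, the rest is a direct invocation of the already-established isometries and transference. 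I do not expect any further difficulty, since everything else is a verbatim application of results proved earlier in the paper.
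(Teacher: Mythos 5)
Your proposal is correct and follows essentially the same route as the paper: pass to the closure $\ovl{H}$, which is a closed infinite abelian subgroup, invoke Theorem \ref{thm-existence-CB-strongly-non-decomposable-abelian-groups} to get a strongly non regular completely bounded Fourier multiplier on $\L^p(\ovl{H})$, and transfer it to $G$ via Proposition \ref{prop-transfer-subgroups} using amenability and unimodularity. Your additional remarks on preserving complete boundedness through the vector-valued isometry \eqref{Deri-injection} with $X=S^p$ make explicit a point the paper leaves implicit, but the argument is the same.
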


\begin{proof}
Suppose that $G$ contains an infinite abelian group $H$. Note that the closure $\ovl{H}$ of $H$ is a closed abelian infinite subgroup of $G$. By Theorem \ref{thm-existence-CB-strongly-non-decomposable-abelian-groups}, there exists a strongly non regular completely bounded Fourier multiplier on $\L^p(\ovl{H})$. Since $G$ is amenable and unimodular, we conclude by using Proposition \ref{prop-transfer-subgroups}.
\end{proof}

\begin{cor}
\label{Cor-strongly-non-regular-convolutor-compact-groups}
Let $G$ be an infinite compact group. Suppose $1<p<\infty$. There exists a strongly non regular completely bounded convolution operator $T \co \L^p(G) \to \L^p(G)$.
\end{cor}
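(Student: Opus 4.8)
\textbf{Proof plan for Corollary \ref{Cor-strongly-non-regular-convolutor-compact-groups}.} The plan is to deduce this corollary from Theorem \ref{Th-strongly-non-regular-convolutor--un-amenable-groups} by checking that an infinite compact group satisfies the three hypotheses appearing there: that $G$ is unimodular, that $G$ is amenable, and that $G$ contains an infinite abelian subgroup. Unimodularity of a compact group is classical (the modular function is a continuous homomorphism into $(0,\infty)$ with compact image, hence trivial). Amenability of a compact group is also standard, the normalized Haar measure providing an invariant mean. So the only point requiring an argument is the existence of an infinite abelian subgroup inside an arbitrary infinite compact group.

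For that last point, I would argue as follows. If $G$ is not totally disconnected, then its connected component $G_0$ of the identity is a nontrivial compact connected group; by the structure theory of compact connected groups, $G_0$ contains a nontrivial torus, hence a closed subgroup isomorphic to $\T$, which is infinite and abelian. If $G$ is totally disconnected (profinite), then one can produce an infinite abelian subgroup as follows: pick any nontrivial element $g_1 \in G$, consider the closed abelian subgroup $\ovl{\langle g_1 \rangle}$ it generates; if this is infinite we are done, otherwise it is a nontrivial finite (hence open-complement) situation and we may pass to an open subgroup of smaller ``index'' and iterate, extracting a strictly increasing sequence of finite subgroups whose union generates an infinite abelian subgroup — more cleanly, since an infinite profinite group has infinitely many open normal subgroups, one can build an infinite abelian subgroup by a direct Zorn-type or diagonal argument, or simply invoke the known fact that every infinite profinite group contains an infinite abelian (even procyclic or elementary abelian) closed subgroup. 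In all cases the closure of the abelian subgroup obtained is a closed infinite abelian subgroup of $G$.

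Once the three hypotheses are verified, Theorem \ref{Th-strongly-non-regular-convolutor--un-amenable-groups} applies verbatim and yields, for $1 < p < \infty$, a strongly non regular completely bounded convolution operator $T \co \L^p(G) \to \L^p(G)$, which is exactly the assertion of the corollary.

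\textbf{Main obstacle.} The only non-formal ingredient is the dichotomy argument ensuring that every infinite compact group contains an infinite closed abelian subgroup; for the connected case this rests on the structure theorem for compact connected groups (presence of tori), and for the profinite case on an elementary but slightly delicate extraction argument for infinite abelian subgroups of infinite profinite groups. Everything else (unimodularity, amenability, and the reduction itself via Proposition \ref{prop-transfer-subgroups} hidden inside Theorem \ref{Th-strongly-non-regular-convolutor--un-amenable-groups}) is immediate.
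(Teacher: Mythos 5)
Your overall reduction is exactly the one the paper uses: verify that an infinite compact group is unimodular, amenable, and contains an infinite abelian subgroup, then apply Theorem \ref{Th-strongly-non-regular-convolutor--un-amenable-groups}. The first two points are indeed immediate, and your treatment of the non-totally-disconnected case (a nontrivial compact connected group contains a nontrivial, hence infinite, compact connected abelian subgroup) is acceptable.

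However, there is a genuine gap in the totally disconnected case. The existence of an infinite abelian subgroup in an arbitrary infinite profinite group is \emph{not} an ``elementary but slightly delicate extraction argument''; it is a deep theorem of Zel'manov \cite[Theorem 2]{Zel}, tied to the restricted Burnside problem, and it is precisely what the paper cites at this point (with no case distinction: Zel'manov's result covers all infinite compact groups at once). Your sketched iteration fails for a concrete reason: if $G$ is a periodic (torsion) profinite group, then the closure of every cyclic subgroup is finite, and a finite abelian subgroup need not be contained in any strictly larger abelian subgroup, so there is no way to ``extract a strictly increasing sequence of finite abelian subgroups'' by a Zorn or diagonal argument. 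The hard case is exactly the periodic one, where Zel'manov first proves local finiteness and one then invokes the Hall--Kulatilaka theorem that infinite locally finite groups contain infinite abelian subgroups. So the statement you propose to ``simply invoke'' is the entire content of the difficulty; as written, your proof of the profinite case is not a proof. Replacing that paragraph by a citation of \cite[Theorem 2]{Zel} (applied to all of $G$, making the connected/disconnected dichotomy unnecessary) repairs the argument and recovers the paper's proof.
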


\begin{proof}
Note that $G$ is amenable \cite[Proposition 12.1]{Pie} and unimodular \cite[VII.12]{Bou3}. By \cite[Theorem 2]{Zel}, the infinite compact group $G$ contains an infinite abelian subgroup. Hence, we can use Theorem \ref{Th-strongly-non-regular-convolutor--un-amenable-groups}.
\end{proof}

A group $G$ is locally finite if each finitely generated subgroup is finite, see \cite[page 422]{Rob}. A locally compact group $G$ is called topologically locally finite if
the closure of every finitely generated subgroup of $G$ is compact \cite[Section 2]{Cap1}.

\begin{cor}
\label{Th-strongly-non-regular-convolutor-locally-finite-groups}
Let $G$ be an infinite unimodular locally finite locally compact group. Suppose $1<p<\infty$. There exists a strongly non regular completely bounded convolution operator $T \co \L^p(G) \to \L^p(G)$. 
\end{cor}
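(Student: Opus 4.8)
The plan is to deduce the statement directly from Theorem~\ref{Th-strongly-non-regular-convolutor--un-amenable-groups}. Since $G$ is assumed unimodular, it suffices to check the two remaining hypotheses of that theorem, namely that $G$ is amenable and that $G$ contains an infinite abelian subgroup (and here the subgroup need not be closed, since the proof of Theorem~\ref{Th-strongly-non-regular-convolutor--un-amenable-groups} passes to its closure on its own).

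First I would argue that $G$ is amenable. If $G$ is discrete this is the fact recalled just before the statement: a locally finite group is the directed union of its finite subgroups, hence amenable. In the general locally compact case one reduces to this: $G$ is totally disconnected, because its identity component is a connected locally compact group all of whose Lie quotients are connected locally finite Lie groups, hence trivial (a nontrivial connected Lie group contains a copy of $\R$ or $\T$, neither of which is locally finite as an abstract group), so the identity component is trivial by the Gleason--Yamabe theorem. By van~Dantzig's theorem $G$ thus has a compact, hence amenable, open subgroup, and $G$ is the directed union of its open compactly generated subgroups; since amenability of a locally compact group is preserved under directed unions of open amenable subgroups (a left invariant mean on $\L^\infty$ of such a subgroup pulls back along the $\L^\infty$-restriction map, which is equivariant because the subgroup is open, and one extracts a weak$^*$ cluster point), it remains only to know that every open compactly generated locally finite subgroup of $G$ is amenable --- equivalently, one may simply invoke that a locally finite locally compact group is amenable.

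Next I would produce an infinite abelian subgroup of $G$. This is where the one external input enters: by the classical theorem of P.~Hall and Kulatilaka (and, independently, Kargapolov), every infinite locally finite group has an infinite abelian subgroup; applying this to the abstract group underlying the infinite locally finite group $G$ gives an infinite abelian subgroup $H \subset G$. (Alternatively, when $G$ is non-discrete one can choose an infinite compact open subgroup $U$ --- if all compact open subgroups were finite, $G$ would be discrete --- and apply the theorem of Zelmanov quoted in the proof of Corollary~\ref{Cor-strongly-non-regular-convolutor-compact-groups} to $U$.)

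Having verified that $G$ is unimodular, amenable, and contains the infinite abelian subgroup $H$, Theorem~\ref{Th-strongly-non-regular-convolutor--un-amenable-groups} now yields a strongly non regular completely bounded convolution operator $T \co \L^p(G) \to \L^p(G)$, which is exactly the assertion. The only step that is not a pure citation is the amenability of $G$ in the non-discrete case; I expect the main point requiring care there to be to state precisely which standard fact about locally finite totally disconnected locally compact groups one is appealing to, the rest of the argument being formal once Theorem~\ref{Th-strongly-non-regular-convolutor--un-amenable-groups} and the Hall--Kulatilaka theorem are in hand.
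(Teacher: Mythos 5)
Your proof is correct and follows essentially the same route as the paper: the paper's entire proof is the citation of Robinson's Theorem 14.3.7 (the Hall--Kulatilaka theorem) to produce an infinite abelian subgroup, combined with the remark preceding the statement that locally finite groups are amenable, and then Theorem \ref{Th-strongly-non-regular-convolutor--un-amenable-groups}. Your additional care about amenability of $G$ as a \emph{topological} group in the non-discrete case is a reasonable elaboration of a point the paper passes over silently, but it does not change the argument.
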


\begin{proof}
Observe that a locally finite locally compact group is topologically locally finite, hence amenable by \cite[Corollary 2.4]{Cap1}. By \cite[Theorem 14.3.7]{Rob}, such a group has an infinite abelian subgroup.
We conclude with Theorem \ref{Th-strongly-non-regular-convolutor--un-amenable-groups}.
\end{proof}

\begin{cor}
\label{Cor-strongly-non-regular-convolutor-nilpotent-groups}
Let $G$ be an infinite nilpotent locally compact group. Suppose $1<p<\infty$. There exists a strongly non regular completely bounded convolution operator $T \co \L^p(G) \to \L^p(G)$.
\end{cor}

\begin{proof}
Such a group is unimodular \cite{MS1} (see also \cite[page 53]{Fol1} in the connected case) and amenable \cite[Corollary 13.5]{Pie} since it is solvable. Now, if $G$ is locally finite we can use Corollary \ref{Th-strongly-non-regular-convolutor-locally-finite-groups}. Otherwise, $G$ contains an infinite finitely generated subgroup which is nilpotent as a subgroup of a nilpotent group. By \cite[Lemma 8.2.2]{ECHLPT}, this group has an element of infinite order, so also contains an infinite abelian subgroup.
\end{proof}

Finally, since a discrete group is unimodular \cite[VII.12]{Bou3}, we obtain the following result.

\begin{cor}
\label{Cor-strongly-non-regular-convolutor-discrete-groups}
Let $G$ be an amenable discrete group which contains an infinite abelian subgroup. Suppose $1<p<\infty$. There exists a strongly non regular completely bounded convolution operator $T \co \L^p(G) \to \L^p(G)$.
\end{cor}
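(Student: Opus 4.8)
The plan is to obtain this statement as an immediate specialization of Theorem~\ref{Th-strongly-non-regular-convolutor--un-amenable-groups}. First I would observe that a discrete group is always unimodular: the counting measure is simultaneously a left and a right invariant Haar measure. A discrete group is of course locally compact, and amenability is part of the hypothesis. Hence $G$ is a unimodular amenable locally compact group. By assumption it contains an infinite abelian subgroup, so all the hypotheses of Theorem~\ref{Th-strongly-non-regular-convolutor--un-amenable-groups} are met, and the conclusion — the existence of a strongly non regular completely bounded convolution operator $T \co \L^p(G) \to \L^p(G)$ — follows at once.

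For completeness I would also recall how Theorem~\ref{Th-strongly-non-regular-convolutor--un-amenable-groups} is proved in this particular case, since no intermediate passage to a closure is needed when $G$ is discrete: if $H$ is an infinite abelian subgroup of $G$, then $H$ is automatically closed (every subset of a discrete group is closed), so Theorem~\ref{thm-existence-CB-strongly-non-decomposable-abelian-groups} supplies a strongly non regular, completely bounded Fourier multiplier on $\L^p(H)$, i.e.\ a strongly non regular completely bounded element of $\CV_p(H)$. Its image under the completely isometric embedding \eqref{Deri-injection}, $\CV_p(H) \hookrightarrow \CV_p(G)$, is then the desired operator on $\L^p(G)$: complete boundedness is preserved by that embedding, and strong non regularity is transferred by Proposition~\ref{prop-transfer-subgroups}, whose proof uses the amenability and unimodularity of $G$ through the averaging projection \eqref{Projection-Arendt-Voigt} and the contractive restriction map $Q_H$.

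There is essentially no obstacle here: the only point to verify is the unimodularity of a discrete group, which is trivial, after which the cited theorem applies verbatim. One may note in passing that this corollary overlaps with, but neither contains nor is contained in, Corollary~\ref{Th-strongly-non-regular-convolutor-locally-finite-groups}, since an amenable discrete group need not be locally finite and vice versa; both are instances of the same mechanism.
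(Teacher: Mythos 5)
Your proof is correct and follows exactly the paper's route: the corollary is obtained by observing that a discrete group is unimodular and then applying Theorem \ref{Th-strongly-non-regular-convolutor--un-amenable-groups} verbatim. The extra remarks (closedness of subgroups of discrete groups, the roles of \eqref{Deri-injection}, Proposition \ref{prop-transfer-subgroups} and \eqref{Projection-Arendt-Voigt}) are accurate but not needed beyond the one-line reduction the paper itself gives.
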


\subsection{CB-strongly non decomposable Schur multipliers}
\label{sec:Existence-of-strongly-non-Schur-multipliers}

We start with a result which gives a manageable condition which is necessary for that a completely bounded Schur multiplier belong to the closure of the space of decomposable operators.

\begin{prop}
\label{prop-closure-dec-closure-mult}
Suppose $1<p<\infty$. If the Schur multiplier $M_\phi \co S^p_I \to S^p_I$ is completely bounded and belongs to the closure $\ovl{\Dec(S^p_I)}^{\CB(S^p_I)}$ of the space $\Dec(S^p_I)$ with respect to the completely bounded norm then $M_\phi$ belongs to the closure $\ovl{\mathfrak{M}_I^{\infty}}^{\ell^\infty_{I \times I}}$ of the space $\mathfrak{M}_I^{\infty}$ in the Banach space $\ell^\infty_{I \times I}$.
\end{prop}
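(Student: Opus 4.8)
The plan is to run everything through the complementation projection $P_I^p \co \CB(S^p_I) \to \CB(S^p_I)$ of Corollary \ref{Prop-complementation-Schur}, which is a contractive projection onto the space $\mathfrak{M}_I^{p,\cb}$ of completely bounded Schur multipliers and which preserves complete positivity.

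First I would record an elementary evaluation estimate: if $M_\psi \co S^p_I \to S^p_I$ is any Schur multiplier, then for $i,j \in I$ we have $M_\psi(e_{ij}) = \psi_{ij} e_{ij}$ and $\norm{e_{ij}}_{S^p_I} = 1$, whence $|\psi_{ij}| \leq \norm{M_\psi}_{S^p_I \to S^p_I}$. Taking the supremum over $i,j$ gives $\norm{\psi}_{\ell^\infty_{I \times I}} \leq \norm{M_\psi}_{S^p_I \to S^p_I} \leq \norm{M_\psi}_{\cb,S^p_I \to S^p_I}$.

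Next, assume $M_\phi \in \ovl{\Dec(S^p_I)}^{\CB(S^p_I)}$ and pick a sequence $(T_n)$ of decomposable operators on $S^p_I$ with $\norm{M_\phi - T_n}_{\cb,S^p_I \to S^p_I} \to 0$. By Proposition \ref{prop-decomposable-se-decompose-en-cp} each $T_n$ is a linear combination of completely positive maps on $S^p_I$; applying $P_I^p$ and using that it preserves complete positivity (Corollary \ref{Prop-complementation-Schur}) together with Proposition \ref{prop-linearcp-imply-decomposable}, the operator $P_I^p(T_n)$ is again a linear combination of completely positive maps, hence a decomposable Schur multiplier, say $M_{\phi_n}$. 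Since $M_\phi$ is itself a completely bounded Schur multiplier, $P_I^p(M_\phi) = M_\phi$, so by contractivity of $P_I^p$ we get $\norm{M_\phi - M_{\phi_n}}_{\cb,S^p_I \to S^p_I} = \norm{P_I^p(M_\phi - T_n)}_{\cb,S^p_I \to S^p_I} \leq \norm{M_\phi - T_n}_{\cb,S^p_I \to S^p_I} \to 0$.

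Finally, since each $M_{\phi_n}$ is a decomposable Schur multiplier on $S^p_I$, Theorem \ref{prop-continuous-Schur-multiplier-dec-infty} tells us that $\phi_n$ induces a (completely) bounded Schur multiplier on $\B(\ell^2_I)$, i.e. $\phi_n \in \mathfrak{M}_I^{\infty}$. Applying the evaluation estimate to $M_\phi - M_{\phi_n} = M_{\phi - \phi_n}$ yields $\norm{\phi - \phi_n}_{\ell^\infty_{I \times I}} \leq \norm{M_\phi - M_{\phi_n}}_{\cb,S^p_I \to S^p_I} \to 0$, so $\phi$ belongs to $\ovl{\mathfrak{M}_I^{\infty}}^{\ell^\infty_{I \times I}}$, as desired. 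There is no serious obstacle here: the content is entirely carried by the compatible complementation $P_I^p$ of Corollary \ref{Prop-complementation-Schur} and by the isometric identification of decomposable Schur multipliers on $S^p_I$ with bounded Schur multipliers on $\B(\ell^2_I)$ from Theorem \ref{prop-continuous-Schur-multiplier-dec-infty}. The only point requiring a little care is checking that $P_I^p$ maps a decomposable operator to a decomposable Schur multiplier, which is exactly where the span-of-completely-positive-maps description (Proposition \ref{prop-decomposable-se-decompose-en-cp}) is used.
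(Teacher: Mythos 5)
Your proof is correct and follows essentially the same route as the paper: both arguments push everything through the contractive, complete-positivity-preserving projection $P_I^p$ of Corollary \ref{Prop-complementation-Schur}, use Propositions \ref{prop-decomposable-se-decompose-en-cp} and \ref{prop-linearcp-imply-decomposable} to see that $P_I^p$ sends decomposable operators to decomposable Schur multipliers, and then invoke Theorem \ref{prop-continuous-Schur-multiplier-dec-infty} to land in $\mathfrak{M}_I^{\infty}$. The only cosmetic difference is that you control $\norm{\phi-\phi_n}_{\ell^\infty_{I\times I}}$ by evaluating on matrix units, whereas the paper passes through the $S^2_I\to S^2_I$ norm — the same estimate in different clothing.
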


\begin{proof}
Let $R \co S^p_I \to S^p_I$ be a decomposable operator. By Proposition \ref{prop-decomposable-se-decompose-en-cp}, we can write $R=R_1-R_2+\mathrm{i}(R_3-R_4)$ where each $R_j$ is a completely positive map on $S^p_I$. Using the projection $P_I \co \CB(S^p_I) \to \mathfrak{M}_{I}^{p,\cb}$ of Corollary \ref{Prop-complementation-Schur}, we obtain
$$
P_I(R)
=P_I\big(R_1-R_2+\i(R_3-R_4)\big)
=P_I(R_1)-P_I(R_2)+\i\big(P_I(R_3)-P_I(R_4)\big).
$$
By Proposition \ref{prop-linearcp-imply-decomposable}, we conclude that the Schur multiplier $P_I(R)$ is decomposable. By Proposition \ref{prop-continuous-Schur-multiplier-dec-infty}, we infer that $P_I(R)$ is bounded on $S^\infty_I$, i.e. belongs to $\mathfrak{M}_I^{\infty}$. According to Proposition \ref{prop-continuous-Schur-multiplier-dec-infty}, it also belongs to $\mathfrak{M}^{\infty,\cb}_I$ with same norm. Now, using the contractivity of $P_I$, we have 
\begin{align*}
\MoveEqLeft
\bnorm{M_\phi-R}_{\cb, S^p_I \to S^p_I} 
\geq \bnorm{P_I(M_\phi-R)}_{\cb,S^p_I \to S^p_I}
=\bnorm{P_I(M_\phi)-P_I(R)}_{\cb,S^p_I \to S^p_I}\\
		&=\bnorm{M_\phi-P_I(R)}_{\cb,S^p_I \to S^p_I}
		\geq \bnorm{M_\phi-P_I(R)}_{S^2_I \to S^2_I}
		\geq \dist_{\ell^\infty_{I \times I}}(M_\phi,\mathfrak{M}_I^{\infty}).
\end{align*}
Hence, we deduce that
$$
\dist_{\CB(S^p_I)}\big(M_\phi,\Dec(S^p_I)\big)
\geq \dist_{\ell^\infty_{I \times I}}(M_\phi,\mathfrak{M}_I^{\infty}).
$$
\end{proof}


It is folklore that if $M_A \co \B(\ell^2) \to \B(\ell^2)$ is a bounded Schur multiplier and the limits
$$
\lim_{i \to \infty} \lim_{j \to \infty} a_{ij}=s
\text{ and } 
\lim_{j \to \infty} \lim_{i\to \infty} a_{ij}=t
$$
exist then $s=t$, see \cite[Ex 8.15 page 118]{Pau}. This property turns out to be also true for Schur multipliers belonging to the \textit{closure} $\ovl{\mathfrak{M}^\infty}^{\ell^\infty_{\N \times \N}}$.

\begin{prop}
\label{Prop-Schur-limits-variant}
Let $M_A \in \ovl{\mathfrak{M}^\infty}^{\ell^\infty_{\N \times \N}}$. If the limits
$$
\lim_{i \to \infty} \lim_{j \to \infty} a_{ij}=s
\text{ and } 
\lim_{j \to \infty} \lim_{i \to \infty} a_{ij}=t
$$
exist then $s=t$.
\end{prop}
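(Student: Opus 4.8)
The plan is to reduce the statement about $\ovl{\mathfrak{M}^\infty}^{\ell^\infty_{\N \times \N}}$ to the classical fact cited above, using a perturbation argument. Suppose $M_A$ lies in this closure and that both iterated limits $s = \lim_i \lim_j a_{ij}$ and $t = \lim_j \lim_i a_{ij}$ exist, with $s \neq t$. Set $\delta = |s - t| > 0$. Since $M_A$ is in the closure of $\mathfrak{M}^\infty$ in the uniform norm on entries, there exists a bounded Schur multiplier $M_B \co \B(\ell^2) \to \B(\ell^2)$ with $\sup_{i,j} |a_{ij} - b_{ij}| < \delta/4$. The idea is then that the entries $b_{ij}$ inherit iterated limits close to those of $a_{ij}$, contradicting the classical result.

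First I would make the iterated limits for $B$ precise. Because $\sup_{i,j} |a_{ij} - b_{ij}| < \delta/4$, for each fixed $i$ the sequence $(b_{ij})_j$ stays within $\delta/4$ of $(a_{ij})_j$, and the latter converges to some $\alpha_i$; hence $(b_{ij})_j$ need not converge, but $\limsup_j b_{ij}$ and $\liminf_j b_{ij}$ both lie in $[\alpha_i - \delta/4, \alpha_i + \delta/4]$. The cleanest route is to pass to subsequences: using a diagonal extraction, choose an increasing sequence $(j_k)$ of column indices along which $b_{i,j_k}$ converges for every $i$ (possible since the entries are uniformly bounded, as $M_B$ bounded forces $\sup_{i,j}|b_{ij}| \leq \norm{M_B}$), say to $\beta_i$. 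Then $|\beta_i - \alpha_i| \leq \delta/4$ for all $i$, so $\lim_i \beta_i$ exists and lies within $\delta/4$ of $s$. Symmetrically, passing to a further subsequence of rows $(i_l)$ along which $b_{i_l,j}$ converges for every $j$ to some $\gamma_j$, we get $\lim_j \gamma_j$ within $\delta/4$ of $t$. Now the restriction of $M_B$ to the submatrix indexed by $(i_l) \times (j_k)$ is again a bounded Schur multiplier (restriction of Schur multipliers to subsets of the index set is contractive), and this submatrix has both iterated limits existing: one equals $\lim_i \beta_i$, the other $\lim_j \gamma_j$. By the classical fact that for a bounded Schur multiplier the two iterated limits, when they exist, must coincide, we get $\lim_i \beta_i = \lim_j \gamma_j$, whence $|s - t| \leq |s - \lim_i \beta_i| + |\lim_j \gamma_j - t| \leq \delta/4 + \delta/4 = \delta/2 < \delta$, a contradiction.

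The main obstacle I anticipate is organizing the double subsequence extraction correctly so that the resulting submatrix genuinely has both iterated limits existing and matching up with perturbations of $s$ and $t$: one must extract columns first to get the $\beta_i$, then rows (a subsequence of all rows, applied to the already-chosen columns) to get convergence in each of the chosen columns, and check that the limit $\lim_i \beta_i$ along the chosen rows still equals $\lim_i \beta_i$ along all rows (it does, since $\lim_i \beta_i$ exists as a genuine limit over all $i$). A minor point to record is the uniform boundedness $\sup_{i,j} |b_{ij}| \leq \norm{M_B}_{\B(\ell^2) \to \B(\ell^2)}$, which follows from applying $M_B$ to rank-one matrices $e_{ij}$, and which is needed to run the diagonal argument. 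Everything else is routine: the restriction property of Schur multipliers and the classical two-iterated-limits fact are quoted, and the triangle-inequality bookkeeping is elementary.
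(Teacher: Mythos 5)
Your overall strategy (perturb $A$ to a genuine bounded Schur multiplier $B$, pass to a submatrix of $B$ whose two iterated limits exist, and invoke the classical two-iterated-limits fact for the restricted multiplier) is sound and differs from the paper's proof, which works directly with the representation $b_{ij}=\langle x_i,y_j\rangle_H$ and weak compactness of bounded sets in $H$; that representation produces the iterated limits of $B$ along subsequences automatically, which is exactly the point where your write-up has a gap. Concretely, the step ``$|\beta_i-\alpha_i|\leq\delta/4$ for all $i$, so $\lim_i\beta_i$ exists and lies within $\delta/4$ of $s$'' is false: closeness to a convergent sequence does not force convergence. For instance $a_{ij}=0$ and $b_{ij}=(-1)^i\delta/8$ gives a bounded Schur multiplier $B$ within $\delta/8$ of $A$ for which $\beta_i=(-1)^i\delta/8$ has no limit in $i$. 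The same objection applies to ``$\lim_j\gamma_j$ within $\delta/4$ of $t$''. Your parenthetical ``it does, since $\lim_i\beta_i$ exists as a genuine limit over all $i$'' is therefore circular: it presupposes the very existence you need to establish. As written, the submatrix indexed by $(i_l)\times(j_k)$ has row-wise and column-wise limits $\beta_{i_l}$ and $\gamma_{j_k}$, but you have not shown that the outer limits $\lim_l\beta_{i_l}$ and $\lim_k\gamma_{j_k}$ exist, so the classical fact cannot yet be applied.

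The gap is repairable by one more compactness step: after the two diagonal extractions, use Bolzano--Weierstrass to pass to a further subsequence of the rows $(i_l)$ along which $(\beta_{i_l})$ converges, and then to a further subsequence of the columns $(j_k)$ along which $(\gamma_{j_k})$ converges (each refinement preserves the convergences already secured). For the resulting submatrix both iterated limits exist, say $\beta^*$ and $\gamma^*$, and since $|\beta_{i_l}-\alpha_{i_l}|\leq\delta/4$ with $\alpha_{i_l}\to s$ one gets $|\beta^*-s|\leq\delta/4$, and similarly $|\gamma^*-t|\leq\delta/4$; the classical fact applied to the (contractive) restriction of $M_B$ gives $\beta^*=\gamma^*$ and hence the contradiction $|s-t|\leq\delta/2$. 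With this correction your argument is a valid alternative to the paper's; the paper's route is shorter because writing $b_{ij}=\langle x_i,y_j\rangle$ and extracting weakly convergent subsequences of $(x_i)$ and $(y_j)$ yields the two iterated limits of the submatrix in a single stroke and identifies both with $\langle x,y\rangle$, dispensing with the restriction lemma and the extra extractions.
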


\begin{proof}
Let $\epsi > 0$ and let $[b_{ij}]$ be a matrix corresponding to a bounded Schur multiplier $M_B \co \B(\ell^2) \to \B(\ell^2)$, such that $|b_{ij} - a_{ij}| \leq \epsi$ for any $i,j \in \N$. By the description \cite[Corollary 8.8]{Pau} of bounded Schur multipliers $\B(\ell^2) \to \B(\ell^2)$, there exist a Hilbert space $H$, some bounded sequences $(x_i)_{}$ and $(y_j)_{}$ of elements of $H$ such that $b_{ij} = \langle x_i , y_j \rangle$ for any $i,j \in \N$. By the weak compactness of closed bounded subsets of $H,$ there exist subsequences $i_k$ and $j_l$ and $x,y \in H$ such that weak-$\lim_k x_{i_k} = x$ and weak-$\lim_l y_{j_l} = y$. Thus, we have 
$$
\lim_{k \to +\infty} b_{i_k j_l} = \lim_{k \to +\infty }\langle x_{i_k} , y_{j_l} \rangle=\langle x , y_{j_l} \rangle
$$ 
and finally
$$
\lim_{l \to +\infty} \lim_{k \to +\infty} b_{i_k j_l} 
=\lim_{l \to +\infty}\langle x , y_{j_l} \rangle
=\langle x , y \rangle.
$$ 
By the same reasoning, we also have $\dsp \lim_{k \to +\infty} \lim_{l \to +\infty} b_{i_k j_l} 
= \langle x , y \rangle$. Now, we infer that 
$$
\left|\lim_{k \to +\infty} b_{i_k j_l} - \lim_{k \to +\infty} a_{i_k j_l}\right| 
\leq \epsi 
\quad \text{ and thus }\quad 
\left|\lim_{l \to +\infty} \lim_{k \to +\infty} b_{i_k j_l} - t\right| 
\leq \epsi.
$$ 
Similarly, we have 
$$
\left|\lim_{k \to +\infty} \lim_{l \to +\infty}b_{i_k j_l} - s\right| \leq \epsi.
$$ 
We infer that $|s-t| \leq 2 \epsi$. Letting $\epsi$ go to zero yields the proposition.
\end{proof}

Recall \cite[Section 6]{NR} that the triangular truncation $\mathcal{T} \co S^p \to S^p$ and the discrete noncommutative Hilbert transform $\mathcal{H} \co S^p \to S^p$ are completely bounded Schur multipliers defined by $\mathcal{T}([a_{ij}])=[\delta_{i \leq j} a_{ij}]$ and that $\mathcal{H}([a_{ij}])=[-\mathrm{i}\delta_{i < j}  a_{ij}+\mathrm{i}\delta_{i > j}a_{ij}]$ for any $[a_{ij}] \in S^p$ where $\i^2=-1$.
The fact that $\mathcal{T}$ and $\mathcal{H}$ are completely bounded on $S^p$ can be found in \cite[Section 6]{NR}.

From the last two propositions, we deduce the following result.

\begin{cor}
\label{Cor-strongly-non-reg-truncation}
The triangular truncation $\mathcal{T} \co S^p \to S^p$ and the discrete noncommutative Hilbert transform $\mathcal{H} \co S^p \to S^p$ are $\CB$-strongly non decomposable.
\end{cor}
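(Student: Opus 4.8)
The plan is to combine the two structural results that immediately precede the corollary, namely Proposition \ref{prop-closure-dec-closure-mult} and Proposition \ref{Prop-Schur-limits-variant}, applied to the explicit symbols of $\mathcal{T}$ and $\mathcal{H}$. First I would recall that $\mathcal{T}$ and $\mathcal{H}$ are indeed completely bounded Schur multipliers on $S^p$ for $1<p<\infty$ (stated in the excerpt, referring to \cite[Section 6]{NR}), so that the hypothesis ``completely bounded'' in Proposition \ref{prop-closure-dec-closure-mult} is satisfied and the corollary has content. The argument is then by contradiction: suppose $\mathcal{T}$ belongs to the $\CB$-closure $\ovl{\Dec(S^p)}^{\CB(S^p)}$. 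By Proposition \ref{prop-closure-dec-closure-mult} (with $I=\N$), the symbol $\phi_{\mathcal{T}} = [\delta_{i \leq j}]_{i,j \in \N}$ would then define a Schur multiplier in the closure $\ovl{\mathfrak{M}_{\N}^{\infty}}^{\ell^\infty_{\N \times \N}}$.

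Next I would compute the iterated limits of the matrix $A = [\delta_{i \leq j}]_{i,j \in \N}$ and check that they exist but disagree, which will contradict Proposition \ref{Prop-Schur-limits-variant}. For fixed $i$, we have $a_{ij} = \delta_{i\leq j} = 1$ for all $j \geq i$, so $\lim_{j \to \infty} a_{ij} = 1$, and hence $\lim_{i \to \infty}\lim_{j \to \infty} a_{ij} = 1$. For fixed $j$, we have $a_{ij} = \delta_{i \leq j} = 0$ for all $i > j$, so $\lim_{i \to \infty} a_{ij} = 0$, and hence $\lim_{j \to \infty}\lim_{i \to \infty} a_{ij} = 0$. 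Since $1 \neq 0$, Proposition \ref{Prop-Schur-limits-variant} is violated, giving the contradiction; therefore $\mathcal{T}$ is $\CB$-strongly non decomposable. For the discrete noncommutative Hilbert transform $\mathcal{H}$, whose symbol is $[-\mathrm{i}\delta_{i<j} + \mathrm{i}\delta_{i>j}]_{i,j \in \N}$, the same computation gives $\lim_{i}\lim_j a_{ij} = -\mathrm{i}$ and $\lim_j \lim_i a_{ij} = \mathrm{i}$, again distinct, so the identical argument applies. (One could alternatively deduce the statement for $\mathcal{H}$ from that for $\mathcal{T}$ via the relation between $\mathcal{H}$, $\mathcal{T}$ and the identity, but the direct limit computation is cleaner and self-contained.)

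I do not expect any serious obstacle here: both required propositions are already proved in the excerpt and the only genuinely new input is the elementary observation about the two iterated limits of the characteristic matrices, together with the (classical, cited) complete boundedness of $\mathcal{T}$ and $\mathcal{H}$ on $S^p$. If anything, the one point deserving a line of care is that Proposition \ref{prop-closure-dec-closure-mult} is stated for $1<p<\infty$ and produces membership in $\ovl{\mathfrak{M}_I^\infty}^{\ell^\infty_{I\times I}}$ with $I$ an arbitrary index set, so one simply specializes to $I = \N$ and then feeds the resulting matrix into Proposition \ref{Prop-Schur-limits-variant}, which is stated precisely for $\N \times \N$. So the write-up will be short: state the contradiction hypothesis, invoke Proposition \ref{prop-closure-dec-closure-mult}, compute the four iterated limits, and invoke Proposition \ref{Prop-Schur-limits-variant}.
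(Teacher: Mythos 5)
Your proof is correct and is exactly the argument the paper intends: it introduces the corollary with "From the last two propositions, we deduce the following result," i.e. precisely the combination of Proposition \ref{prop-closure-dec-closure-mult} and Proposition \ref{Prop-Schur-limits-variant} with the iterated-limit computations you supply. The limit values ($1$ vs.\ $0$ for $\mathcal{T}$, $-\mathrm{i}$ vs.\ $\mathrm{i}$ for $\mathcal{H}$) are the right ones, so nothing further is needed.
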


\subsection{CB-strongly non decomposable Fourier multipliers}
\label{sec:Existence-of-strongly-non-Fourier-multipliers}


We start with a transference result.


\begin{prop}
\label{prop-transfer-between-discrete}
Let $G$ and $H$ be two discrete groups such that $H$ is a subgroup of $G$. If $\varphi \co H \to \C$ is a complex function, we denote by $\widetilde{\varphi} \co G \to \C$ the extension of $\varphi$ on $G$ which is zero off $H$. Suppose $1<p<\infty$ and that $\VN(G)$ has $\QWEP$. If $\varphi$ induces a $\CB$-strongly non decomposable Fourier multiplier $M_{\varphi} \co \L^p(\VN(H)) \to \L^p(\VN(H))$ then $\widetilde{\varphi}$ induces a $\CB$-strongly non decomposable Fourier multiplier $M_{\widetilde{\varphi}} \co \L^p(\VN(G)) \to \L^p(\VN(G))$.
\end{prop}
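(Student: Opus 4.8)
The plan is to argue by contraposition, mirroring the structure of Proposition \ref{prop-closure-dec-closure-mult} but for Fourier multipliers on discrete groups. Suppose that $M_{\widetilde{\varphi}} \co \L^p(\VN(G)) \to \L^p(\VN(G))$ belongs to the closure $\ovl{\Dec(\L^p(\VN(G)))}^{\CB(\L^p(\VN(G)))}$. I want to transfer this approximation down to $H$ and conclude that $M_\varphi$ lies in $\ovl{\Dec(\L^p(\VN(H)))}^{\CB(\L^p(\VN(H)))}$, contradicting the hypothesis that $M_\varphi$ is $\CB$-strongly non decomposable. The two key tools are the complementation projection $P_H^p \co \CB(\L^p(\VN(G))) \to \CB(\L^p(\VN(G)))$ onto $\mathfrak{M}^{p,\cb}(H)$ from Corollary \ref{Prop-complementation-Fourier-subgroups} (which is contractive and preserves complete positivity) and the fact, recorded at the end of Subsection \ref{section-twisted-von-Neumann}, that $\mathfrak{M}^{p,\cb}(H)$ embeds isometrically into $\CB(\L^p(\VN(G)))$ via the map $[M_{\psi}] \mapsto [M_{\widetilde{\psi}}]$, implemented concretely by $J_p$ and $(J_{p^*})^*$.

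First I would take a decomposable operator $R \co \L^p(\VN(G)) \to \L^p(\VN(G))$ with $\norm{M_{\widetilde{\varphi}} - R}_{\cb,\L^p(\VN(G)) \to \L^p(\VN(G))} \leq \epsi$. Writing $R = R_1 - R_2 + \mathrm{i}(R_3 - R_4)$ with each $R_j$ completely positive via Proposition \ref{prop-decomposable-se-decompose-en-cp}, and applying $P_H^p$, I obtain that $P_H^p(R) = P_H^p(R_1) - P_H^p(R_2) + \mathrm{i}(P_H^p(R_3) - P_H^p(R_4))$ is a Fourier multiplier on $\L^p(\VN(G))$ whose symbol is supported on $H$ (since $P_H^p$ lands in $\mathfrak{M}^{p,\cb}(H)$) and which is a span of completely positive maps, hence decomposable by Proposition \ref{prop-linearcp-imply-decomposable}. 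Since $P_H^p$ is a projection onto $\mathfrak{M}^{p,\cb}(H)$ fixing $M_{\widetilde{\varphi}}$, contractivity of $P_H^p$ gives
\begin{align*}
\norm{M_{\widetilde{\varphi}} - P_H^p(R)}_{\cb,\L^p(\VN(G)) \to \L^p(\VN(G))}
&= \norm{P_H^p(M_{\widetilde{\varphi}} - R)}_{\cb,\L^p(\VN(G)) \to \L^p(\VN(G))} \\
&\leq \norm{M_{\widetilde{\varphi}} - R}_{\cb,\L^p(\VN(G)) \to \L^p(\VN(G))} \leq \epsi.
\end{align*}
Now I would use the isometric identification of $\mathfrak{M}^{p,\cb}(H)$ inside $\CB(\L^p(\VN(G)))$: both $M_{\widetilde{\varphi}}$ and $P_H^p(R)$ correspond to genuine elements of $\CB(\L^p(\VN(H)))$ — namely $M_\varphi$ and some $\mathfrak{M}^{p,\cb}(H)$-element, call it $S$ — with the completely bounded norms preserved, so $\norm{M_\varphi - S}_{\cb,\L^p(\VN(H)) \to \L^p(\VN(H))} \leq \epsi$. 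Finally I must check that $S$ is decomposable as an operator on $\L^p(\VN(H))$: decomposability of $P_H^p(R)$ on $\L^p(\VN(G))$, i.e.\ the complete positivity of the associated $2\times2$ matrix map on $S^p_2(\L^p(\VN(G)))$, restricts to $S^p_2(\L^p(\VN(H)))$ through the complete contractions $J_p$ and $(J_{p^*})^*$ (which are completely positive), yielding a completely positive $2\times2$ matrix map witnessing that $S$ is decomposable with $\norm{S}_{\dec} \leq \norm{P_H^p(R)}_{\dec}$. Letting $\epsi \to 0$ shows $M_\varphi \in \ovl{\Dec(\L^p(\VN(H)))}^{\CB(\L^p(\VN(H)))}$, the desired contradiction.

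I expect the main obstacle to be the careful handling of the isometric embedding $\mathfrak{M}^{p,\cb}(H) \hookrightarrow \CB(\L^p(\VN(G)))$ and verifying that it respects not only the completely bounded norm but also decomposability — that is, that an operator in $\mathfrak{M}^{p,\cb}(H)$ which is decomposable when viewed inside $\CB(\L^p(\VN(G)))$ is also decomposable when viewed as acting on $\L^p(\VN(H))$ itself. This requires pairing the $2\times2$ block decomposition with the maps $\Id_{S^p_2} \ot J_p$ and $\Id_{S^p_2} \ot (J_{p^*})^*$ and using that composition of completely positive maps is completely positive, together with $(J_{p^*})^* J_p = \Id$. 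One should also record (as observed repeatedly in the excerpt) that a symbol inducing a $\CB$-strongly non decomposable multiplier is automatically measurable/continuous in the relevant sense, though for discrete groups this is vacuous. The rest of the argument is a routine adaptation of Proposition \ref{prop-closure-dec-closure-mult} with $\mathfrak{M}_I^\infty$ replaced by $\mathfrak{M}^{p,\cb}(H)$ and $P_I$ by $P_H^p$.
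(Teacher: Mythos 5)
Your proposal is correct and follows essentially the same route as the paper's proof: apply the contractive, complete-positivity-preserving projection $P_{H}^p$ of Corollary \ref{Prop-complementation-Fourier-subgroups} to an approximating decomposable operator written as a span of completely positive maps, use that $P_H^p$ fixes $M_{\widetilde{\varphi}}$, and transfer back to $\L^p(\VN(H))$ via the isometric identification of $\mathfrak{M}^{p,\cb}(H)$ inside $\CB(\L^p(\VN(G)))$. The only difference is cosmetic (the paper absorbs the error term into the decomposition $M_{\widetilde{\varphi}}=R_1-R_2+\i(R_3-R_4)+R$ rather than approximating first), and your extra care in checking that decomposability descends through $J_p$ and $(J_{p^*})^*$ is a point the paper leaves implicit.
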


\begin{proof}
Let $\E$ be the trace preserving conditional expectation from $\VN(G)$ onto $\VN(H)$ and $J$ be the canonical inclusion of $\VN(H)$ into $\VN(G)$. The map $J M_{\varphi}\E$ is completely bounded on $\L^p(\VN(G))$ and is clearly equal to the Fourier multiplier $M_{\widetilde{\varphi}}$ induced by $\widetilde{\varphi}$. Suppose that $M_{\widetilde{\varphi}}$ belongs to $\ovl{\Dec(\L^p(\VN(G)))}^{\CB(\L^p(\VN(G)))}$. Let $\epsi > 0$. Then there exist some completely positive maps $R_1,R_2,R_3,R_4 \co \L^p(\VN(G)) \to \L^p(\VN(G))$ and a completely bounded map $R \co \L^p(\VN(G)) \to \L^p(\VN(G))$ of completely bounded norm less than $\epsi$ such that $M_{\widetilde{\varphi}}=R_1-R_2+\i(R_3-R_4)+R$. For any $h \in H$, we have
$$
\tau_G\big(M_{\widetilde{\varphi}}(\lambda_{h})(\lambda_{h})^*\big)=\widetilde{\varphi}(h)\tau_G\big(\lambda_{h}(\lambda_{h})^*\big)=\varphi(h).
$$
Hence, using the map $P_H^p$ given by Corollary \ref{Prop-complementation-Fourier-subgroups} since $\VN(G)$ is QWEP, we obtain
\begin{align*}
\MoveEqLeft
 M_{\varphi}
    =P_H^p\big(M_{\widetilde{\varphi}}\big)
    =P_H^p\big(R_1-R_2+\i(R_3-R_4)+R\big)\\ 
		&=P_H^p(R_1)-P_H^p(R_2)+\i\big( P_H^p(R_3)-P_H^p(R_4)\big)+P_H^p(R).
\end{align*}
Moreover, by the contractivity of $P_H$, the Fourier multiplier $P_H^p(R) \co \L^p(\VN(H)) \to \L^p(\VN(H))$ is completely bounded of completely bounded norm less than $\epsi$. Furthermore, each Fourier multiplier $P_H^p(R_i) \co \L^p(\VN(H)) \to \L^p(\VN(H))$ is completely positive. It follows that $M_{\varphi}$ is $\epsi$-close to $\Dec(\L^p(\VN(H)))$ in the Banach space $\CB(\L^p(\VN(H)))$. So letting $\epsi \to 0$ yields that $M_{\varphi}$ belongs to $\ovl{\Dec(\L^p(\VN(H)))}^{\CB(\L^p(\VN(H)))}$. This is the desired contradiction.
\end{proof}

\begin{cor}
\label{Coro-Discrete-strongly-non-dec}
Let $G$ be a discrete group which contains an infinite abelian subgroup such that $\VN(G)$ is $\QWEP$. Suppose $1<p<\infty$. There exists a $\CB$-strongly non decomposable Fourier multiplier on $\L^p(\VN(G))$.
\end{cor}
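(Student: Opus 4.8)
\textbf{Proof plan for Corollary \ref{Coro-Discrete-strongly-non-dec}.}

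The plan is to reduce to the abelian case and then transport an example along subgroup inclusions, exactly mirroring the strategy used in Subsection \ref{sec:Existence-of-strongly-non-regular-convolutors}. First I would fix an infinite abelian subgroup $H$ of the discrete group $G$; note that $H$ is itself a discrete group, being a subgroup of a discrete group, and it is an infinite (discrete, hence locally compact) abelian group. By Proposition \ref{prop-existence-strongly-non-regular-discrete-group} there is a strongly non regular \emph{completely bounded} Fourier multiplier $M_\varphi \co \L^p(H) \to \L^p(H)$ on the commutative $\L^p$-space $\L^p(H) = \L^p(\VN(\hat{H}))$. The subtlety here is a bookkeeping one: Proposition \ref{prop-existence-strongly-non-regular-discrete-group} produces a Fourier multiplier on $\L^p(H)$ viewed as a convolutor, i.e. an element of $\mathfrak{M}^p(\hat{H})$, whereas Proposition \ref{prop-transfer-between-discrete} wants a Fourier multiplier on the noncommutative $\L^p$-space $\L^p(\VN(K))$ of a group von Neumann algebra $\VN(K)$ of a discrete group $K$. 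Taking $K = \hat{H}$, which is a compact (not discrete) abelian group, does not fit. Instead I would take the contragredient point of view: since $H$ is a \emph{discrete} abelian group, $\L^p(\VN(H))$ is already a commutative $\L^p$-space (indeed $\VN(H) \cong \L^\infty(\hat H)$), and Fourier multipliers on $\L^p(\VN(H))$ in the sense of Definition \ref{defi-Lp-multiplier} are exactly convolution operators on $\L^p(\hat H)$, i.e. elements of $\mathfrak{M}^p(\hat H) = \CV_p(\hat H)$. So I would apply Proposition \ref{prop-existence-strongly-non-regular-compact-group} (the compact abelian case) to the group $\hat H$ — which is an infinite compact abelian group — to obtain a strongly non regular completely bounded Fourier multiplier $M_\varphi$ on $\L^p(\hat H) = \L^p(\VN(H))$.

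Next I would upgrade ``strongly non regular'' to ``$\CB$-strongly non decomposable.'' Since $\VN(H)$ is commutative, hence approximately finite-dimensional and in particular $\QWEP$, Theorem \ref{thm-dec=reg-hyperfinite} gives that decomposable and regular operators coincide isometrically on $\L^p(\VN(H))$, so $\ovl{\Dec(\L^p(\VN(H)))} = \ovl{\Reg(\L^p(\VN(H)))}$; thus $M_\varphi$ is strongly non decomposable. Moreover $M_\varphi$ is completely bounded, so by Remark \ref{rem-defi-CB-strongly-non-decomposable} it is $\CB$-strongly non decomposable. At this point Proposition \ref{prop-transfer-between-discrete} applies directly: with $\widetilde\varphi \co G \to \C$ the extension of $\varphi$ by zero off $H$, the Fourier multiplier $M_{\widetilde\varphi} \co \L^p(\VN(G)) \to \L^p(\VN(G))$ is $\CB$-strongly non decomposable. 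This finishes the proof.

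The one genuine obstacle is the very first step: making sure the example produced in Subsection \ref{sec:Existence-of-strongly-non-regular-Fourier-multipliers-abelian} lands in the correct category so that Proposition \ref{prop-transfer-between-discrete} can ingest it. The cleanest route, as indicated above, is to realize the infinite discrete abelian group $H$ as the dual of the infinite \emph{compact} abelian group $\hat H$ and invoke Proposition \ref{prop-existence-strongly-non-regular-compact-group}, rather than Proposition \ref{prop-existence-strongly-non-regular-discrete-group}, so that the example lives on $\L^p(\VN(H))$ from the outset. One should also double-check the identification $\mathfrak{M}^p(\hat H) \cong \CV_p(\hat H)$ used implicitly here (this is recorded in Subsection \ref{sec:complementation-convolutor}, $\CV_p(G) = \mathfrak{M}^p(\hat G)$ for $G$ abelian) together with the fact that complete boundedness is preserved under this identification — both routine but worth stating explicitly. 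Everything else is a direct citation of the transference machinery already in place.
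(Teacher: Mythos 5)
Your proof is correct and follows the paper's own route: the paper's proof is the one-line citation of Theorem \ref{thm-existence-CB-strongly-non-decomposable-abelian-groups} (whose compact case is Proposition \ref{prop-existence-strongly-non-regular-compact-group}), Remark \ref{rem-defi-CB-strongly-non-decomposable} and Proposition \ref{prop-transfer-between-discrete}. Your careful unpacking of the duality bookkeeping --- applying the abelian existence result to $\hat{H}$ so that the example lives on $\L^p(\VN(H))$ with symbol a function on $H$, as Proposition \ref{prop-transfer-between-discrete} requires --- is exactly what the paper's terse citation implicitly presupposes.
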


\begin{proof}
It suffices to use Proposition \ref{prop-transfer-between-discrete}, Theorem \ref{thm-existence-CB-strongly-non-decomposable-abelian-groups} and Remark \ref{rem-defi-CB-strongly-non-decomposable}.
\end{proof}

For example, consider $1 < p < \infty$, $n \in \N$ and the free group $G = \F_n$ of $n$ generators. Since $\VN(\F_n)$ is $\QWEP$, there exists a $\CB$-strongly non decomposable Fourier multiplier on $\L^p(\VN(\F_n))$. The next criterion allows us to give concrete examples in Proposition \ref{Prop-concrete-free-groups} and Proposition \ref{prop-free-Hilbert-transform}.

\begin{prop}
\label{Prop-strongly-non-dec-Fourier-mult}
Let $G$ be a unimodular locally compact group. Suppose $1 \leq p \leq \infty$.
\begin{enumerate}
\item Let $\varphi \co G \to \C$ be a complex function inducing a completely bounded Fourier multiplier on $\L^p(\VN(G))$. Suppose that there exists a bounded, complete positivity preserving mapping $P_G^p \co \CB(\L^p(\VN(G))) \to \mathfrak{M}^{p,\cb}(G)$, such that $P_G^p(M_\varphi) = M_\varphi$. If $M_\varphi \in \ovl{\Dec(\L^p(\VN(G)))}^{\CB(\L^p(\VN(G)))}$ then the multiplier $M_\varphi$ belongs to $\ovl{\mathfrak{M}^{\infty,\cb}(G)}^{\L^\infty(G)}$.

\item Assume that the limits $\dsp\lim_{n \to +\infty} \varphi(s^n)$ and $\dsp\lim_{n \to +\infty} \varphi(s^{-n})$ exist for some $s \in G$ and that $M_\varphi$ belongs to the closure $\ovl{\mathfrak{M}^{\infty,\cb}(G)}^{\L^\infty(G)}$ for some measurable $\varphi \co G \to \C$. Then 
$$
\lim_{n \to +\infty} \varphi(s^n) 
=\lim_{n \to +\infty} \varphi(s^{-n}).
$$
\end{enumerate}
\end{prop}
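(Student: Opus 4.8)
The plan is to treat the two parts separately. Part (1) follows the same ``averaging projection plus distance estimate'' scheme used for Schur multipliers in Proposition \ref{prop-closure-dec-closure-mult}; part (2) is reduced to the group $\Z$ via the homomorphism theorem and then settled by Ramirez's characterization of $\ovl{\B(\Z)}$ together with an elementary Ces\`aro argument, exactly in the spirit of Proposition \ref{Prop-Schur-limits-variant}.

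For part (1), assume $M_\varphi \in \ovl{\Dec(\L^p(\VN(G)))}^{\CB(\L^p(\VN(G)))}$, fix $\epsi>0$ and a decomposable $R$ with $\norm{M_\varphi-R}_{\cb,\L^p(\VN(G))\to\L^p(\VN(G))}<\epsi$. By Proposition \ref{prop-decomposable-se-decompose-en-cp} write $R=R_1-R_2+\i(R_3-R_4)$ with each $R_j$ completely positive on $\L^p(\VN(G))$; applying $P_G^p$ and using linearity gives $P_G^p(R)=P_G^p(R_1)-P_G^p(R_2)+\i(P_G^p(R_3)-P_G^p(R_4))$, where each $P_G^p(R_j)$ is a completely positive Fourier multiplier on $\L^p(\VN(G))$. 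By Proposition \ref{Th-cp-Fourier-multipliers} the symbol of $P_G^p(R_j)$ is almost everywhere equal to a continuous positive definite function, so $P_G^p(R_j)$ belongs (as a symbol) to the linear space $\mathfrak{M}^{\infty,\cb}(G)$, hence so does $P_G^p(R)$. Using that $P_G^p$ is bounded and fixes $M_\varphi$, then Lemma \ref{lemma-inclusion-Fourier-multipliers} (together with Lemma \ref{lemma-duality-Fourier-multipliers} when $p>2$) and the isometry $\mathfrak{M}^{2,\cb}(G)=\L^\infty(G)$ of Lemma \ref{prop-M2-Fourier-multipliers}, one gets
\[ \epsi > \norm{M_\varphi - R}_{\cb,\L^p\to\L^p} \geq \frac{1}{\norm{P_G^p}}\norm{P_G^p(M_\varphi-R)}_{\cb,\L^p\to\L^p} = \frac{1}{\norm{P_G^p}}\norm{M_\varphi - P_G^p(R)}_{\cb,\L^p\to\L^p} \geq \frac{1}{\norm{P_G^p}}\norm{\varphi - \psi}_{\L^\infty(G)}, \]
where $\psi$ is the symbol of $P_G^p(R)\in\mathfrak{M}^{\infty,\cb}(G)$. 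Since $\epsi$ is arbitrary, $\varphi\in\ovl{\mathfrak{M}^{\infty,\cb}(G)}^{\L^\infty(G)}$.

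For part (2), I would first observe that $\ovl{\mathfrak{M}^{\infty,\cb}(G)}^{\L^\infty(G)}$ consists of continuous functions: by \cite[Corollary 3.3]{Haa3} every symbol of a completely bounded Fourier multiplier on $\VN(G)$ is a.e. equal to a bounded continuous function, and since the Haar measure has full support the $\L^\infty$-norm agrees with the sup-norm on continuous functions, so uniform limits of such representatives are continuous. Thus we may take $\varphi$ continuous, fix continuous symbols $\varphi_k\in\mathfrak{M}^{\infty,\cb}(G)$ with $\varphi_k\to\varphi$ uniformly, and then $\varphi(s^n)$ is genuinely meaningful and $\varphi_k(s^n)\to\varphi(s^n)$. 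If $s$ has finite order then $(\varphi(s^n))_{n\geq0}$ and $(\varphi(s^{-n}))_{n\geq0}$ are periodic, so convergence forces $\varphi$ to be constant on $\langle s\rangle$ and both limits equal $\varphi(e_G)$. Assume then $s$ has infinite order and consider the continuous homomorphism $\sigma\co\Z\to G$, $n\mapsto s^n$. By the case $p=\infty$ of Theorem \ref{prop-homomorphism-Fourier-multipliers-cb} (no amenability needed there), each $\varphi_k\circ\sigma\in\mathfrak{M}^{\infty,\cb}(\Z)$, and $\varphi_k\circ\sigma\to\varphi\circ\sigma$ in $\ell^\infty(\Z)$, so $\varphi\circ\sigma\in\ovl{\mathfrak{M}^{\infty,\cb}(\Z)}^{\ell^\infty(\Z)}$; since $\Z$ is amenable, $\mathfrak{M}^{\infty,\cb}(\Z)=\mathfrak{M}^{\infty}(\Z)$, which is the Fourier--Stieltjes algebra $\B(\Z)=\B(\hat{\T})$.

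Finally I would apply Ramirez's characterization (recalled just before Proposition \ref{prop-existence-strongly-non-regular-compact-group}) to $f=\varphi\circ\sigma\in\ovl{\B(\hat{\T})}^{C_b(\hat{\T})}$ with the test measures $\mu_N=\frac1N\sum_{k=1}^N(\delta_k-\delta_{-k})$ on $\Z$: one checks $\norm{\mu_N}_{\M(\Z)}=2$ and, using $\bigl|\sum_{k=1}^N z^k\bigr|\leq 2/|z-1|$ for $z\in\T$ with $z\neq1$, that $\widehat{\mu_N}(z)\to0$ for every $z\in\T$. Hence $\int_\Z f\,\d\mu_N=\frac1N\sum_{k=1}^N\bigl(\varphi(s^k)-\varphi(s^{-k})\bigr)\to0$; since the sequences $(\varphi(s^k))$ and $(\varphi(s^{-k}))$ converge, their Ces\`aro means converge to the same limits, whence $\lim_n\varphi(s^n)-\lim_n\varphi(s^{-n})=0$. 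The main obstacle is the bookkeeping issue that $\varphi$ is a priori only an $\L^\infty$-class while the statement evaluates it along the countable set $\{s^n\}$; the continuity observation at the start of part (2) is precisely what removes it, after which everything is a routine assembly of the cited results (plus the elementary facts $\mathfrak{M}^{\infty}(\Z)=\B(\Z)$ and the estimates on $\mu_N$).
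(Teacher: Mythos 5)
Your part (1) follows essentially the paper's own argument: the same decomposition of a nearby decomposable $R$ into completely positive pieces via Proposition \ref{prop-decomposable-se-decompose-en-cp}, the same application of $P_G^p$ and Proposition \ref{Th-cp-Fourier-multipliers}, and the same chain of norm inequalities down through $\L^2$ to the $\L^\infty(G)$-distance. Part (2), however, is correct but takes a genuinely different route. The paper stays on $G$: it approximates $\varphi$ in sup norm by some $\psi \in \mathfrak{M}^{\infty,\cb}(G)$, invokes the Herz--Schur representation $\psi(rt^{-1}) = \langle P(r), Q(t)\rangle_H$ with bounded $P,Q \co G \to H$ from \cite{Ste}, extracts weakly convergent subsequences of $(P(s^{i}))$ and $(Q(s^{j}))$, and checks that the two iterated limits of $\psi(s^{i_k-j_l})$ both equal $\langle h_1,h_2\rangle$ --- precisely the double-limit trick of Proposition \ref{Prop-Schur-limits-variant} transported from Schur to Fourier multipliers. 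You instead dispose of the finite-order case, reduce to $\Z$ through the $p=\infty$ homomorphism theorem (Theorem \ref{prop-homomorphism-Fourier-multipliers-cb}, Spronk's result, which indeed needs no amenability or second countability at $p=\infty$), identify $\mathfrak{M}^{\infty,\cb}(\Z)=\mathfrak{M}^{\infty}(\Z)=\B(\Z)$, and conclude by Ramirez's criterion applied to the Ces\`aro measures $\mu_N$. Your approach buys a very concrete, classical-harmonic-analysis endgame at the cost of the continuity reduction and the transfer to $\Z$; the paper's buys a one-step argument on $G$ itself at the cost of quoting the representation theorem for completely bounded multipliers. One shared caveat: both arguments evaluate an $\L^\infty$-class along the countable set $\{s^n\}$ --- your continuous representative $\tilde{\varphi}$ strictly yields the conclusion for $\tilde{\varphi}(s^n)$ rather than for the given $\varphi(s^n)$, but the paper's proof makes exactly the same implicit identification, so this is a feature of the statement rather than a gap in your proof.
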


\begin{proof}
1. Let $R \co \L^p(\VN(G)) \to \L^p(\VN(G))$ be a decomposable operator. By Proposition \ref{prop-decomposable-se-decompose-en-cp}, we can write
$$
R
=R_1-R_2+\i(R_3-R_4)
$$
where each $R_j$ is a completely positive map on $\L^p(\VN(G))$. Using the mapping $P_G^p$ from the statement of the proposition, we obtain
$$
P_G^p(R)
=P_G^p\big(R_1-R_2+\i(R_3-R_4)\big)
=P_G^p(R_1)-P_G^p(R_2)+\i\big(P_G^p(R_3)-P_G^p(R_4)\big).
$$
Using Proposition \ref{Th-cp-Fourier-multipliers}, we see that the Fourier multiplier $P_G^p(R)$ is decomposable on $\VN(G)$ and in particular completely bounded by Proposition \ref{Prop-cb-leq-dec}. Now, using the boundedness of $P_G^p$ and Lemma \ref{prop-M2-Fourier-multipliers}, we obtain
\begin{align*}
\MoveEqLeft
\bnorm{P_G^p} \, \bnorm{M_\varphi-R}_{\cb,\L^p(\VN(G)) \to \L^p(\VN(G))} 
\geq \bnorm{P_G^p(M_\varphi-R)}_{\cb,\L^p(\VN(G)) \to \L^p(\VN(G))}\\
		&=\bnorm{P_G^p(M_\varphi)-P_G^p(R)}_{\cb,\L^p(\VN(G)) \to \L^p(\VN(G))}
		=\bnorm{M_\varphi-P_G^p(R)}_{\cb, \L^p(\VN(G))\to \L^p(\VN(G))}\\
		&\geq \bnorm{M_\varphi-P_G^p(R)}_{\L^2(\VN(G)) \to \L^2(\VN(G))}
		\geq \dist_{\L^\infty(G)}\big(M_\varphi,\mathfrak{M}_G^{\infty,\cb}\big).
\end{align*}
Hence, we deduce that
$$
\bnorm{P_G^p} \, \dist_{\CB(\L^p(\VN(G)))}\big(M_\varphi,\Dec(\L^p(\VN(G)))\big)
\geq \dist_{\L^\infty(G)}\big(M_\phi,\mathfrak{M}_G^{\infty,\cb}\big).
$$

2. Suppose that $M_\varphi$ belongs to $\ovl{\mathfrak{M}^{\infty,\cb}(G)}^{\L^\infty(G)}$. Let $\epsi > 0$ and $M_\psi \in \mathfrak{M}^{\infty,\cb}(G)$ with $\norm{\varphi-\psi}_\infty \leq \epsi$. According to \cite[page 2]{Ste}, there exist a Hilbert space $H$ and two maps $P,Q \co G \to H$ with $\norm{P}_\infty = \sup_{r \in G} \|P(r)\|_H,\: \norm{Q}_\infty = \sup_{t \in G}\norm{Q(t)}_H < \infty$ such that $\psi(rt^{-1}) = \big\langle P(r),Q(t) \big\rangle_H$ for any $r,t \in G$. The sequences $(P(s^i))_{i \geq 0}$ and $(Q(s^{j}))_{j \geq 0}$ are bounded in $H$ and thus admit weak* convergent subsequences $(P(s^{i_k}))$ and $(Q(s^{j_l}))$ to some elements $h_1$ and $h_2$ of $H$. Thus, for any $l$, we have 
$$
\lim_{k \to +\infty} \psi\big(s^{i_k-j_l}\big)  
= \lim_{k \to +\infty} \big\langle P(s^{i_k}) , Q(s^{j_l}) \big\rangle \\
= \big\langle h_1 , Q(s^{j_l}) \big\rangle,
$$ 
which implies
 \begin{align*}
\MoveEqLeft
\lim_{l \to +\infty} \lim_{k \to +\infty} \psi(s^{i_k-j_l})  
=\lim_{l \to +\infty}\big\langle h_1 , Q(s^{j_l}) \big\rangle 
=\langle h_1, h_2 \rangle.
\end{align*}
We obtain similarly that $\lim_{k \to +\infty} \lim_{l \to +\infty} \psi(s^{i_k-j_l})
= \langle h_1 , h_2 \rangle$. But by $\norm{\varphi - \psi}_\infty \leq \epsi$, we deduce that
$$
\left|\lim_{k \to +\infty} \varphi\big(s^{i_k-j_l}\big) - \lim_{k \to +\infty} \psi\big(s^{i_k-j_l}\big)\right| 
\leq \epsi \ \ \text{and thus}\ \
\left|\lim_{n \to +\infty} \varphi(s^n) - \lim_{l \to +\infty} \lim_{k \to +\infty}\psi\big(s^{i_k-j_l}\big)\right| 
\leq \epsi.
$$ 
Similarly, we have 
$$
\left|\lim_{n \to +\infty} \varphi(s^{-n}) - \lim_{k \to +\infty} \lim_{l \to +\infty} \psi\big(s^{i_k-j_l}\big)\right| 
\leq \epsi.
$$ 
Hence the limit $\lim_{n \to +\infty} \varphi(s^n)$ is $2\epsi$-close to $\lim_{n \to +\infty} \varphi(s^{-n})$. We deduce 2. by letting $\epsi \to 0$.
\end{proof}

\begin{thm}
\label{thm-strongly-non-decomposable-Fourier-multipliers-quotient-compact-subgroup} Let $G$ be a second countable amenable locally compact group and $H$ be a normal open (and then also closed) subgroup of $G$ (so $G/H$ is discrete). Let $\pi \co G \to G/H$ be the canonical map and $\varphi \co G/H \to \C$ be a continuous bounded complex function. Suppose $1<p<\infty$. If the complex function $\varphi \circ \pi \co G \to \C$ induces a $\CB$-strongly non decomposable Fourier multiplier $M_{\varphi \circ \pi} \co \L^p(\VN(G)) \to \L^p(\VN(G))$ then $\varphi$ induces a $\CB$-strongly non decomposable Fourier multiplier $M_{\varphi} \co \L^p(\VN(G/H)) \to \L^p(\VN(G/H))$.
\end{thm}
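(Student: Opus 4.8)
The plan is to prove the contrapositive: assuming that $M_\varphi$ lies in the closure $\ovl{\Dec(\L^p(\VN(G/H)))}^{\CB(\L^p(\VN(G/H)))}$, I will show that $M_{\varphi \circ \pi}$ lies in $\ovl{\Dec(\L^p(\VN(G)))}^{\CB(\L^p(\VN(G)))}$, contradicting the hypothesis. First I would record the structural facts needed: since $H$ is open, the quotient $G/H$ is a \emph{discrete} group, it is amenable and second countable as a quotient of the amenable second countable group $G$, and $\VN(G/H)$ is approximately finite-dimensional. Note also that the hypothesis that $M_{\varphi \circ \pi}$ is $\CB$-strongly non decomposable includes that it is completely bounded; since $\pi \co G \to G/H$ is a continuous surjective homomorphism, the isometric part of the completely bounded homomorphism theorem (Theorem \ref{prop-homomorphism-Fourier-multipliers-cb}) then guarantees that $M_\varphi$ is itself a completely bounded Fourier multiplier on $\L^p(\VN(G/H))$, so that both closures above make sense.

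Next, fix $\epsi > 0$ and choose a decomposable map $R \co \L^p(\VN(G/H)) \to \L^p(\VN(G/H))$ with $\norm{M_\varphi - R}_{\cb,\L^p(\VN(G/H)) \to \L^p(\VN(G/H))} \leq \epsi$. By Proposition \ref{prop-decomposable-se-decompose-en-cp}, write $R = R_1 - R_2 + \i(R_3 - R_4)$ with each $R_j$ completely positive on $\L^p(\VN(G/H))$. I would then apply the contractive, complete positivity preserving projection $P_{G/H}^p \co \CB(\L^p(\VN(G/H))) \to \mathfrak{M}^{p,\cb}(G/H)$ of Corollary \ref{Prop-complementation-Fourier-subgroups} (taken with the subgroup equal to the whole discrete group $G/H$), which fixes Fourier multipliers. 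Then each $M_{\psi_j} \overset{\mathrm{def}}= P_{G/H}^p(R_j)$ is a completely positive Fourier multiplier on $\L^p(\VN(G/H))$, so $M_\psi \overset{\mathrm{def}}= P_{G/H}^p(R) = M_{\psi_1} - M_{\psi_2} + \i(M_{\psi_3} - M_{\psi_4})$ with symbol $\psi = \psi_1 - \psi_2 + \i(\psi_3 - \psi_4)$, and since $P_{G/H}^p(M_\varphi) = M_\varphi$ we obtain $\norm{M_\varphi - M_\psi}_{\cb} = \norm{P_{G/H}^p(M_\varphi - R)}_{\cb} \leq \epsi$.

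The last step is the transference to $G$. Since $G/H$ is discrete, each $\psi_j$ is a continuous function, and by Proposition \ref{Th-cp-Fourier-multipliers} it is positive definite and induces a completely positive Fourier multiplier on $\VN(G/H)$. As $\pi$ is continuous and $G$, $G/H$ are second countable and amenable, Theorem \ref{prop-homomorphism-Fourier-multipliers-cb} shows that $\psi_j \circ \pi$ induces a completely positive Fourier multiplier on $\L^p(\VN(G))$; hence $M_{\psi \circ \pi} \overset{\mathrm{def}}= M_{\psi_1 \circ \pi} - M_{\psi_2 \circ \pi} + \i(M_{\psi_3 \circ \pi} - M_{\psi_4 \circ \pi})$ is decomposable on $\L^p(\VN(G))$ by Proposition \ref{prop-linearcp-imply-decomposable}. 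On the other hand, $\varphi - \psi$ is a continuous function on $G/H$ inducing the completely bounded multiplier $M_{\varphi - \psi} = M_\varphi - M_\psi$ with $\norm{M_{\varphi - \psi}}_{\cb,\L^p(\VN(G/H)) \to \L^p(\VN(G/H))} \leq \epsi$, so Theorem \ref{prop-homomorphism-Fourier-multipliers-cb} gives $\norm{M_{(\varphi - \psi) \circ \pi}}_{\cb,\L^p(\VN(G)) \to \L^p(\VN(G))} \leq \epsi$. Since $M_{\varphi \circ \pi} = M_{\psi \circ \pi} + M_{(\varphi - \psi) \circ \pi}$, the multiplier $M_{\varphi \circ \pi}$ is within $\epsi$ in $\CB$-norm of $\Dec(\L^p(\VN(G)))$; letting $\epsi \to 0$ yields $M_{\varphi \circ \pi} \in \ovl{\Dec(\L^p(\VN(G)))}^{\CB(\L^p(\VN(G)))}$, the desired contradiction.

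This is essentially a routine assembly of the discrete complementation projection, the characterization of completely positive Fourier multipliers, and the completely bounded homomorphism theorem. The one point that must be handled with care is that all the symbols produced along the way — notably $\psi$ and $\varphi - \psi$ — are continuous, which is what licenses the use of Theorem \ref{prop-homomorphism-Fourier-multipliers-cb}; this is automatic here precisely because $G/H$ is discrete, which is the reason the hypothesis asks $H$ to be open rather than merely closed. The secondary amenability requirement in Theorem \ref{prop-homomorphism-Fourier-multipliers-cb} for $1 < p < \infty$ is met since $G$ is amenable and $G/H$ is a quotient of $G$.
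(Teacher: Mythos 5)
Your proposal is correct and follows essentially the same route as the paper's own proof: decompose the $\epsi$-approximant into completely positive pieces plus a small remainder, project onto Fourier multipliers with Corollary \ref{Prop-complementation-Fourier-subgroups}, use discreteness of $G/H$ to get continuity of the symbols, and transfer everything to $G$ via Theorem \ref{prop-homomorphism-Fourier-multipliers-cb}. The only cosmetic difference is that you name the completely positive multipliers as images $P_{G/H}^p(R_j)$ and cite Propositions \ref{Th-cp-Fourier-multipliers} and \ref{prop-linearcp-imply-decomposable} explicitly, whereas the paper folds these observations into the statement of the homomorphism theorem.
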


\begin{proof}
Note that the Fourier multiplier $M_\varphi$ is completely bounded by Theorem \ref{prop-homomorphism-Fourier-multipliers-cb}. Suppose that $M_{\varphi}$ belongs to $\overline{\Dec(\L^p(\VN(G/H)))}^{\CB(\L^p(\VN(G/H)))}$. Let $\epsi > 0$. Then, by Proposition \ref{prop-decomposable-se-decompose-en-cp}, there exist some completely positive maps $R_1,R_2,R_3,R_4 \co \L^p(\VN(G/H)) \to \L^p(\VN(G/H))$ and a completely bounded map $R \co \L^p(\VN(G/H)) \to \L^p(\VN(G/H))$ of completely bounded norm less than $\epsi$ such that $M_{\varphi}=R_1-R_2+\i(R_3-R_4)+R$.

Corollary \ref{Prop-complementation-Fourier-subgroups} yields the existence of some complex functions $\varphi_1,\varphi_2,\varphi_3,\varphi_4$ and $\psi$ such that $M_{\varphi} =M_{\varphi_1}-M_{\varphi_2}+\i(M_{\varphi_3}-M_{\varphi_4})+M_\psi$ such that the Fourier multipliers $M_{\varphi_k}$ are completely positive on $\L^p(\VN(G/H))$ and $M_\psi$ is again of completely bounded norm less than $\epsi$. Since $G/H$ is discrete, the functions $\psi,\varphi_1,\varphi_2,\varphi_3,\varphi_4$ are continuous.  Then by Theorem \ref{prop-homomorphism-Fourier-multipliers-cb} it follows that $M_{\varphi_k \circ \pi} \co \L^p(\VN(G)) \to \L^p(\VN(G))$ is completely positive and the Fourier multiplier $M_{\psi \circ \pi} \co \L^p(\VN(G)) \to \L^p(\VN(G))$ is completely bounded of completely bounded norm less than $\epsi$. Since
$$
M_{\varphi \circ \pi} 
=M_{\varphi_1 \circ \pi}-M_{\varphi_2 \circ \pi}+\i\big(M_{\varphi_3 \circ \pi}-M_{\varphi_4 \circ \pi}\big)+M_{\psi \circ \pi}
$$
it follows that $M_{\varphi \circ \pi}$ is $\epsi$-close to $\Dec(\L^p(\VN(G)))$ in the Banach space $\CB(\L^p(\VN(G)))$, so that letting $\epsi \to 0$ yields that $M_{\varphi \circ \pi} \in \overline{\Dec(\L^p(\VN(G)))}^{\CB(\L^p(\VN(G)))}$. This is the desired contradiction.
\end{proof}

\paragraph{Riesz transforms.} An affine representation $(\mathcal{H},\alpha,b)$ of a discrete group $G$ is an orthogonal representation $\alpha \co G \to \mathrm{O}(\mathcal{H})$ over a real Hilbert space $\mathcal{H}$ together with a mapping $b \co G \to \mathcal{H}$ satisfying the cocycle condition $b(st) = \alpha_s(b(t)) + b(s)$ for any $s,t \in G$, see \cite[Definition 10.6]{PaR} and \cite{BHV}. In this situation, by \cite[Theorem 10.10]{PaR} the function $s \mapsto \norm{b(s)}_{\mathcal{H}}^2$ is conditionally of negative type, vanishes at the identity $e$ and is symmetric. We also refer to \cite{ArK2} for related information.
By \cite[page~532]{JMP2}, for any normalized vector $h \in \mathcal{H}$, we can consider the Riesz transform $R_h = M_\phi$ whose symbol $\phi \co G \to \R$ is defined by
\begin{equation}
\label{equ-symbol-of-Riesz-transform}
\phi(s) 
\ov{\mathrm{def}}{=} 
\begin{cases}
\dsp \frac{\langle b(s), h\rangle_{\mathcal{H}}}{\norm{b(s)}_{\mathcal{H}}} & \text{ if } b(s) \not= 0\\
0& \text{ if } b(s) = 0
\end{cases}.
\end{equation}
We will use the subgroup $G_0 \ov{\mathrm{def}}{=} \{s \in G \ :\ b(s)=0 \}$ of $G$. 

\begin{lemma}
\label{lem-Riesz-transform-cb}
Let $G$ be a discrete group equipped with an affine representation $(\mathcal{H},\alpha,b)$. Suppose $1 < p < \infty$. The symbol $\phi$ from \eqref{equ-symbol-of-Riesz-transform} induces a completely bounded operator $R_h = M_\phi \co \L^p(\VN(G)) \to \L^p(\VN(G))$.
\end{lemma}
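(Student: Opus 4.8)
The plan is to deduce this from the complete boundedness, for $1<p<\infty$, of a Riesz transform attached to a semigroup on a crossed product, following the strategy of \cite{JMP2} and \cite{JMP1}. As a preliminary reduction, recall that $s \mapsto \psi(s) := \norm{b(s)}_{\mathcal{H}}^2$ is conditionally of negative type, symmetric, and vanishes at $e$ (\cite[Theorem 10.10]{PaR}), so by Schoenberg's theorem each $s \mapsto \e^{-t\psi(s)}$ is continuous and positive definite; hence by Proposition \ref{Th-cp-Fourier-multipliers} it induces a unital completely positive, trace preserving Fourier multiplier $T_t$ on $\VN(G)$, and $(T_t)_{t \geq 0}$ is a semigroup whose negative generator $A$ has symbol $\psi$. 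Using the elementary subordination identity $u^{-1/2} = \pi^{-1/2}\int_0^\infty \e^{-tu}\,t^{-1/2}\,\d t$ for $u>0$ one writes, valid for every $s \in G$ (the integrand being $\equiv 0$ when $b(s)=0$),
$$
\phi(s) = \frac{1}{\sqrt{\pi}}\int_0^\infty \langle b(s),h\rangle_{\mathcal{H}}\,\e^{-t\norm{b(s)}_{\mathcal{H}}^2}\,t^{-1/2}\,\d t .
$$

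Next I would realize the ``gradient part'' $s \mapsto \langle b(s),h\rangle_{\mathcal{H}}\,\e^{-t\norm{b(s)}_{\mathcal{H}}^2}$ through a crossed product. Let $(X_\xi)_{\xi \in \mathcal{H}}$ be a centered real Gaussian field on a probability space $(\Omega,\mu)$ with $\int_\Omega X_\xi X_\eta\,\d\mu = \langle\xi,\eta\rangle_{\mathcal{H}}$, and let $\beta$ be the measure preserving action of $G$ on $\L^\infty(\Omega)$ determined by $\beta_s(X_\xi) = X_{\alpha_s(\xi)}$. The cocycle identity $b(st)=\alpha_s(b(t))+b(s)$ shows that $s \mapsto u_s := \e^{\i\sqrt{2t}\,X_{b(s)}}\lambda_s$ is a unitary representation of $G$ inside $\mathcal{N} := \L^\infty(\Omega)\rtimes_\beta G$, and it is trace preserving since $\tau_{\mathcal{N}}(u_s)=\delta_{s,e}=\tau_G(\lambda_s)$. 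Hence $\lambda_s \mapsto u_s$ extends to a normal trace preserving $*$-homomorphism $\Lambda \co \VN(G)\to\mathcal{N}$, which together with the canonical trace preserving conditional expectation $\mathcal{E} \co \mathcal{N}\to\VN(G)$ induces complete contractions on the associated noncommutative $\L^p$-spaces for every $1\leq p\leq\infty$ (as in Subsection \ref{sec:Markov-maps}). A direct Gaussian computation gives $\mathcal{E}\big((X_h\otimes 1)\,\Lambda(\lambda_s)\big) = \i\sqrt{2t}\,\langle b(s),h\rangle_{\mathcal{H}}\,\e^{-t\norm{b(s)}_{\mathcal{H}}^2}\,\lambda_s$, so that, integrating in $t$ against the weight above, $M_\phi$ is identified (up to explicit constants) with the compression to $\VN(G)$ of the Riesz transform $\delta A^{-1/2}$ attached to $A$ and to the derivation $\delta$ encoding the cocycle direction $h$. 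The complete boundedness on $\L^p(\mathcal{N})$ of such Riesz transforms for $1<p<\infty$ is a main result of \cite{JMP2}; composing with the complete contractions $\Lambda_p$ and $\mathcal{E}_p$ then yields that $M_\phi \co \L^p(\VN(G))\to\L^p(\VN(G))$ is completely bounded. (Alternatively, one may transfer to the classical operator‑valued Riesz transform on the Bochner space $\L^p(\Omega,S^p)$, which is bounded since $S^p$ is UMD, and conclude with Proposition \ref{lemma-charact-Lp-cb}.)

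The main obstacle is exactly this last step: multiplication by the unbounded Gaussian $X_h$ is not bounded on $\L^p(\mathcal{N})$, and neither the individual operators $\delta$ and $A^{-1/2}$ nor the family indexed by $t$ are bounded on their own — it is only their composition, equivalently the $t$‑integral against the singular weight $t^{-1/2}$, that produces a bounded (indeed completely bounded) operator. Making this precise is the content of the noncommutative Riesz transform inequalities, so in practice the lemma will be obtained by quoting \cite{JMP2} (or \cite{JMP1}) directly; the computation sketched above only serves to verify that the cited estimates apply to the symbol \eqref{equ-symbol-of-Riesz-transform}.
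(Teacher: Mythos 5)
Your overall strategy -- reduce to the Riesz transform estimates of \cite{JMP2} -- is the same as the paper's, and your identification of $R_h$ as a directional Riesz transform for the semigroup generated by $\psi(s)=\norm{b(s)}_{\mathcal{H}}^2$ is the right picture. But the concrete derivation you sketch does not assemble into a proof. The embedding $\lambda_s \mapsto \e^{\i\sqrt{2t}X_{b(s)}}\lambda_s$ depends on $t$, so integrating the compressions $\mathcal{E}\big((X_h\ot 1)\Lambda_t(\lambda_s)\big)$ against $t^{-1/2}\,\d t$ does not exhibit $M_\phi$ as $\mathcal{E}\circ T\circ \Lambda$ for a single bounded $T$ and a single trace-preserving embedding $\Lambda$; and, as you yourself note, neither $X_h\cdot(\,\cdot\,)$ nor $A^{-1/2}$ is bounded separately, so "composing with the complete contractions $\Lambda_p$ and $\mathcal{E}_p$" proves nothing until the whole Riesz-transform inequality is invoked. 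At that point your argument reduces to "quote \cite{JMP2}", which is what the paper does, but via a cleaner extraction: choose an orthonormal basis $(e_j)$ of $\mathcal{H}$ with $e_1=h$, apply the square-function equivalence of \cite[Theorem A1 and Remark 1.8]{JMP2} to the whole family $(R_{e_j})$, and dominate the single coordinate $R_h$ by the Rademacher average $\sum_j \epsi_j \ot R_{e_j}$ via the contraction principle. No crossed product or subordination formula needs to be set up in the present paper.

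Two steps that your proposal omits and that are genuinely needed. First, the estimates of \cite{JMP2} are stated on the subspace $\L^p_0(\VN(G))=\Ran\big(\Id_{\L^p(\VN(G))}-M_{1_{G_0}}\big)$ where $G_0=\{s\in G : b(s)=0\}$; to pass to all of $\L^p(\VN(G))$ one must know that $M_{1_{G_0}}$ is itself completely bounded. The paper gets this by observing that $1_{G_0}(s)=\langle \pi_{G_0}(s)\delta_{G_0},\delta_{G_0}\rangle$ for the quasi-regular representation on $\ell^2_{G/G_0}$, so $1_{G_0}$ is positive definite and $M_{1_{G_0}}$ is completely positive; then $R_h=R_h(\Id-M_{1_{G_0}})$ because $\phi$ vanishes on $G_0$. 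Saying that "the integrand is $\equiv 0$ when $b(s)=0$" does not substitute for this. Second, the one-sided domination $\norm{R_h x}\lesssim \norm{x}$ extracted from the square-function equivalence is only available for $p\in[2,\infty)$; the range $p\in(1,2)$ is obtained in the paper by duality together with the selfadjointness of $R_h$ (the symbol $\phi$ is real-valued), and your uniform claim for $1<p<\infty$ skips this.
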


\begin{proof}
It is essentially shown in \cite{JMP2} that $R_h$ is completely bounded on the subspace $\L^p_0(\VN(G)) \overset{\textrm{def}}= \textrm{Ran}(\Id_{\L^p(\VN(G))}-M_{1_{G_0}})$ of $\L^p(\VN(G))$. Indeed, consider some orthonormal basis $(e_j)$ of $\mathcal{H}$ with $e_1 = h$ and some independent Rademacher variables $\epsi_1,\epsi_2,\ldots$ on some probability space $\Omega_0$. For any $x \in S^p_m(\L^p_0(\VN(G)))$, using the inequalities \cite[Theorem A1 and Remark 1.8]{JMP2} for $p \in [2,\infty)$, we have 
\begin{align*}
\MoveEqLeft
   \norm{(\Id_{S^p_m} \ot R_h)(x)}_{S^p_m(\L^p(\VN(G)))} 
		\leq \norm{\sum_{i} \epsi_i \ot (\Id_{S^p_m} \ot R_{e_i})(x)}_{\L^p(\Omega_0,S^p_m(\L^p(\VN(G))))}\\
		&\approx \norm{\big((\Id_{S^p_m} \ot R_{e_i}) x\big)}_{\mathrm{RC}_p(S^p_m(\L^p(\VN(G))))}
		\lesssim \norm{x}_{S^p_m(\L^p(\VN(G)))}.
\end{align*}
Thus $R_h$ is completely bounded on $\L^p_0(\VN(G))$ for $p \in [2,\infty)$.
 
Since $G$ is discrete, the indicator function $1_{G_0}$ is continuous. Let $G/G_0$ denote the discrete space of left cosets of $G_0$ and consider the quasi-left regular representation $\pi_{G_0} \co G \to \B\big(\ell^2_{G/G_0}\big)$  given by $\pi_{G_0}(s)\delta_{tG_0} = \delta_{stG_0}$. For any $s \in G$, we can write $1_{G_0}(s) = \langle \pi_{G_0}(s)\delta_{G_0},\delta_{G_0} \rangle$. Consequently the indicator function $1_{G_0}$ is a continuous positive definite function. According to Proposition \ref{Th-cp-Fourier-multipliers}, this function induces a completely positive Fourier multiplier on $\L^p(\VN(G))$. We deduce that  $\Id_{\L^p(\VN(G))} - M_{1_{G_0}}$ is completely bounded on $\L^p(\VN(G))$. If $s \in G$ satisfies $\phi(s) \not= 0$, then $s$ does not belong to $G_0$, so $\phi = \phi \cdot (1-1_{G_0})$. Hence we can write 
$$
R_h 
= R_h \big(\Id_{\L^p(\VN(G))} - M_{1_N{G_0}}\big) 
= M_{\phi \cdot (1-1_{G_0})}.
$$ 
We conclude  that $R_h$ is completely bounded on $\L^p(\VN(G))$ for $p \in [2,\infty)$, and by duality and selfadjointness (note that $\phi$ is real-valued) also for $p \in (1,2]$.
\end{proof}

Let $\mathcal{H}$ be a real Hilbert space and fix some non-zero vectors $h_1,\ldots,h_n$ in $\mathcal{H}$ (or a sequence if $n=\infty$). We introduce the affine representation $(\mathcal{H},\alpha,b)$ of the free group $\F_n$ defined by $\alpha_s = \Id_\mathcal{H}$ for all $s \in G$ and 
$$
b\big(g_{i_1}^{j_1}\cdots g_{i_N}^{j_N}\big) 
\ov{\mathrm{def}}{=} j_1 h_{i_1} + \cdots + j_N h_{i_N},\quad j_1,\ldots,j_N \in \Z
$$ 
where $g_1,\ldots,g_n$ stand for the generators of $\F_n$. 


\begin{prop}
\label{Prop-concrete-free-groups}
Let $G = \F_n$ the free group on $n$ generators. Suppose $1 < p < \infty$. The previous Riesz transform $R_{h}$, associated with a family $(h_i)$ where $h=h_1$ is normalized, is a $\CB$-strongly non decomposable selfadjoint Fourier multiplier on $\L^p(\VN(\F_n))$.
\end{prop}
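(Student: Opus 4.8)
The plan is to identify the Riesz transform $R_h$ with its Fourier multiplier $M_\phi$, where by \eqref{equ-symbol-of-Riesz-transform} the symbol $\phi \co \F_n \to \R$ is $\phi(s) = \langle b(s),h\rangle_{\mathcal{H}}/\norm{b(s)}_{\mathcal{H}}$ when $b(s)\neq 0$ and $\phi(s)=0$ otherwise, and then to deduce $\CB$-strong non decomposability from the criterion of Proposition \ref{Prop-strongly-non-dec-Fourier-mult}. First I would dispatch the two easy assertions: since $\phi$ is real-valued, $M_\phi$ is selfadjoint as an operator on $\L^2(\VN(\F_n))$; and Lemma \ref{lem-Riesz-transform-cb} gives that $\phi$ induces a completely bounded operator $M_\phi \co \L^p(\VN(\F_n)) \to \L^p(\VN(\F_n))$ for $1<p<\infty$, so that $M_\phi \in \mathfrak{M}^{p,\cb}(\F_n)$.

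Next I would supply the complementation projection needed in Proposition \ref{Prop-strongly-non-dec-Fourier-mult}(1). Since $\F_n$ is a discrete group, Corollary \ref{Prop-complementation-Fourier-subgroups} applied with $H = G = \F_n$ yields a contractive projection $P_{\F_n}^p \co \CB(\L^p(\VN(\F_n))) \to \mathfrak{M}^{p,\cb}(\F_n)$ which preserves complete positivity; as $M_\phi$ already lies in $\mathfrak{M}^{p,\cb}(\F_n)$, we have $P_{\F_n}^p(M_\phi)=M_\phi$. Thus the hypotheses of Proposition \ref{Prop-strongly-non-dec-Fourier-mult}(1) hold, and that statement provides the implication: if $M_\phi \in \ovl{\Dec(\L^p(\VN(\F_n)))}^{\CB(\L^p(\VN(\F_n)))}$ then $M_\phi \in \ovl{\mathfrak{M}^{\infty,\cb}(\F_n)}^{\L^\infty(\F_n)}$.

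Finally I would compute the relevant one-sided limits to reach a contradiction. Let $g_1$ be a free generator and recall $h=h_1$ with $\norm{h_1}_{\mathcal{H}}=1$. By construction of the cocycle, $b(g_1^m)=m\,h_1$ for all $m\in\Z$, so that for $m\geq 1$
$$
\phi(g_1^m) = \frac{\langle m h_1, h_1\rangle_{\mathcal{H}}}{\norm{m h_1}_{\mathcal{H}}} = 1,
\qquad
\phi(g_1^{-m}) = \frac{\langle -m h_1, h_1\rangle_{\mathcal{H}}}{\norm{m h_1}_{\mathcal{H}}} = -1 .
$$
Hence $\lim_{m\to+\infty}\phi(g_1^m)=1$ and $\lim_{m\to+\infty}\phi(g_1^{-m})=-1$ both exist but differ, so Proposition \ref{Prop-strongly-non-dec-Fourier-mult}(2), applied with $s=g_1$, forbids $M_\phi$ from belonging to $\ovl{\mathfrak{M}^{\infty,\cb}(\F_n)}^{\L^\infty(\F_n)}$. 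Combined with the implication of the previous step, this shows $M_\phi \notin \ovl{\Dec(\L^p(\VN(\F_n)))}^{\CB(\L^p(\VN(\F_n)))}$, i.e. $R_h$ is $\CB$-strongly non decomposable.

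In truth there is no serious obstacle here: all the analytic weight sits in the cited results (Lemma \ref{lem-Riesz-transform-cb} for complete boundedness, Corollary \ref{Prop-complementation-Fourier-subgroups} for the projection, and Proposition \ref{Prop-strongly-non-dec-Fourier-mult} for the criterion). The only genuine point to check is the elementary asymptotic computation showing that the Riesz symbol takes the constant values $+1$ and $-1$ along the two ends of the cyclic subgroup $\langle g_1\rangle$; everything else is bookkeeping.
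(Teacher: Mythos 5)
Your proof is correct and follows essentially the same route as the paper: complete boundedness from Lemma \ref{lem-Riesz-transform-cb}, the discrete-group complementation to make Proposition \ref{Prop-strongly-non-dec-Fourier-mult} applicable, and the computation $\phi(g_1^{\pm m})=\pm 1$ along the cyclic subgroup $\langle g_1\rangle$ to contradict part 2 of that criterion. The only difference is cosmetic — you spell out explicitly that Corollary \ref{Prop-complementation-Fourier-subgroups} supplies the projection, which the paper compresses into the parenthetical "applicable, since $G$ is discrete".
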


\begin{proof}
We have shown in Lemma \ref{lem-Riesz-transform-cb} that $R_h$ is completely bounded on $\L^p(\VN(\F_n))$. On the other hand, for any $m \in \Z \backslash \{ 0 \}$, we have 
$$
\phi(g_1^m) 
=\frac{\langle b(g_1^m), h_1 \rangle_{\mathcal{H}}}{\norm{b(g_1^m)}_{\mathcal{H}}} 
=\frac{\langle m h_1, h_1 \rangle_{\mathcal{H}}}{\norm{m h_1}_{\mathcal{H}}} 
=\sign(m) \frac{\langle h_1, h_1 \rangle_{\mathcal{H}}}{\norm{h_1}_{\mathcal{H}}} 
= \sign(m) \norm{h_1}_{\mathcal{H}}.
$$ 
So we have $\lim_{m \to +\infty} \phi(g_1^m) =  \norm{h_1}_{\mathcal{H}} \neq -  \norm{h_1}_{\mathcal{H}} = \lim_{m \to +\infty} \phi(g_1^{-m})$. Using Proposition \ref{Prop-strongly-non-dec-Fourier-mult} (since $G = \F_\infty$ is discrete and that $\VN(\F_\infty)$ is $\QWEP$), we conclude that $R_h$ is $\CB$-strongly non decomposable.
\end{proof}

\paragraph{Free Hilbert transform.} A different class of linear operators which are $\CB$-strongly non decomposable on $\L^p(\VN(\F_\infty))$ is given in \cite{MR}. Namely, let $G = \F_\infty$ be the free group with a countable sequence of generators $g_1,g_2, \ldots$. For $n \in \N$, let $L_n^\pm \co \L^2(\VN(\F_\infty)) \to \L^2(\VN(\F_\infty))$ be the orthogonal projection such that
$$
L_n^\pm(\lambda_s) 
= \begin{cases}
\lambda_s & s\text{ starts with the letter }g_n^{\pm 1} \\
0 & \text{otherwise}
\end{cases}.
$$
Let further $\epsi_n^+, \epsi_n^- \in \{-1,1\}$ for any $n \in \N$. Following \cite{MR}, we define the free Hilbert transform associated with $\epsi = (\epsi_n^\pm)$ as $H_\epsi = \sum_{n \in \N} \epsi_n^+ L_n^+ + \epsi_n^- L_n^-$. Clearly, since the ranges of the $L_n^\pm$ are mutually orthogonal, $H_\epsi$ is bounded on $\L^2(\VN(\F_\infty))$. The far reaching generalization in \cite[Section 4]{MR} is that $H_\epsi$ induces a completely bounded map on $\L^p(\VN(\F_\infty))$ for any $1 < p < \infty$.

\begin{prop}
\label{prop-free-Hilbert-transform}
Let $1 < p < \infty$ and $\epsi$ as previously. If $\epsi$ is not identically constant $1$ or $-1$, then $H_\epsi$ is $\CB$-strongly non decomposable on $\L^p(\VN(\F_\infty))$.
\end{prop}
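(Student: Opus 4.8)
The plan is to follow exactly the scheme used in the proof of Proposition \ref{Prop-concrete-free-groups}: first recognize $H_\epsi$ as a Fourier multiplier, then apply the Haagerup-type complementation of Corollary \ref{Prop-complementation-Fourier-subgroups} together with the criterion of Proposition \ref{Prop-strongly-non-dec-Fourier-mult}. So the first step is to observe that $H_\epsi = M_\varphi$ is a Fourier multiplier on $\L^p(\VN(\F_\infty))$. Indeed, every non-trivial reduced word $s \in \F_\infty$ begins with a unique letter $g_n^{\pm 1}$, and $L_n^+(\lambda_s) = \lambda_s$ (resp.\ $L_n^-(\lambda_s) = \lambda_s$) precisely when $s$ begins with $g_n$ (resp.\ $g_n^{-1}$); all other $L_m^{\pm}$ kill $\lambda_s$, and every $L_m^\pm$ kills $\lambda_e$. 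Hence $H_\epsi(\lambda_s) = \varphi(s)\lambda_s$ with $\varphi(e) = 0$, $\varphi(s) = \epsi_n^+$ if $s$ begins with $g_n$, and $\varphi(s) = \epsi_n^-$ if $s$ begins with $g_n^{-1}$. By \cite[Section 4]{MR}, $M_\varphi = H_\epsi$ is completely bounded on $\L^p(\VN(\F_\infty))$ for every $1 < p < \infty$.

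Next, since $\F_\infty$ is discrete, Corollary \ref{Prop-complementation-Fourier-subgroups} applied with $H = G = \F_\infty$ provides a contractive, complete-positivity-preserving projection $P_{\F_\infty}^p \co \CB(\L^p(\VN(\F_\infty))) \to \mathfrak{M}^{p,\cb}(\F_\infty)$, and since $M_\varphi$ is already a completely bounded Fourier multiplier one has $P_{\F_\infty}^p(M_\varphi) = M_\varphi$ (the symbol extracted by $P_{\F_\infty}^p$ is $s \mapsto \tau_{\F_\infty}(M_\varphi(\lambda_s)\lambda_s^*) = \varphi(s)$). Thus the hypotheses of Proposition \ref{Prop-strongly-non-dec-Fourier-mult}(1) hold, so if $H_\epsi$ belonged to $\ovl{\Dec(\L^p(\VN(\F_\infty)))}^{\CB(\L^p(\VN(\F_\infty)))}$ then $M_\varphi$ would lie in $\ovl{\mathfrak{M}^{\infty,\cb}(\F_\infty)}^{\L^\infty(\F_\infty)}$. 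It remains to contradict this using Proposition \ref{Prop-strongly-non-dec-Fourier-mult}(2), i.e.\ to find $s \in \F_\infty$ for which $\lim_{k \to +\infty} \varphi(s^k)$ and $\lim_{k \to +\infty} \varphi(s^{-k})$ exist but are different.

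The construction of such an $s$ is the only place where the hypothesis on $\epsi$ is used, and it amounts to an elementary reduced-word computation. Since all entries $\epsi_n^{\pm}$ lie in $\{-1,1\}$ and $\N$ is infinite, the assumption that $\epsi$ is not identically $1$ nor identically $-1$ is equivalent to the existence of indices $i, j$ and signs $\delta, \delta' \in \{+,-\}$ with $\epsi_i^{\delta} = 1$ and $\epsi_j^{\delta'} = -1$. If $i = j$, then $\delta \neq \delta'$, so $\{\epsi_i^+, \epsi_i^-\} = \{1,-1\}$; taking $s = g_i$, each $s^k$ with $k \geq 1$ is reduced and begins with $g_i$, hence $\varphi(s^k) = \epsi_i^+$, while $\varphi(s^{-k}) = \epsi_i^-$, and these two limits differ. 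If $i \neq j$, put $a = g_i^{\delta}$ and $b = g_j^{-\delta'}$ (so that $b^{-1} = g_j^{\delta'}$) and take $s = ab$; since $i \neq j$ this word is reduced of length $2$, so $s^k$ is reduced and begins with $a = g_i^{\delta}$, giving $\varphi(s^k) = \epsi_i^{\delta} = 1$, whereas $s^{-k} = (b^{-1}a^{-1})^k$ is reduced and begins with $b^{-1} = g_j^{\delta'}$, giving $\varphi(s^{-k}) = \epsi_j^{\delta'} = -1$. In both cases the limits are $1$ and $-1$, contradicting Proposition \ref{Prop-strongly-non-dec-Fourier-mult}(2). Therefore $H_\epsi$ is $\CB$-strongly non decomposable on $\L^p(\VN(\F_\infty))$ for every $1 < p < \infty$. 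The only genuinely external ingredient is the complete boundedness of $H_\epsi$ from \cite{MR}; beyond that, the main (and quite mild) obstacle is merely the bookkeeping needed to verify that $P_{\F_\infty}^p$ fixes $M_\varphi$ and to produce the element $s$ by the case analysis above.
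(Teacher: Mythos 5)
Your proof is correct and follows essentially the same route as the paper: identify $H_\epsi$ as a Fourier multiplier whose symbol depends only on the first letter, note that $P_{\F_\infty}^p$ fixes it, and then apply Proposition \ref{Prop-strongly-non-dec-Fourier-mult} by exhibiting $s$ with $\lim_k\varphi(s^k)\neq\lim_k\varphi(s^{-k})$. The only (immaterial) difference is your two-case choice of $s$ versus the paper's single uniform choice $s=g_n^a g_k g_m^{-b}$ with a buffer letter $g_k$.
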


\begin{proof}
Clearly, $H_\epsi = M_{\phi_\epsi}$ is a Fourier multiplier with symbol $\phi_\epsi(s)$ depending only on the first letter of $s$. This implies that $\phi_\epsi(s^n) = \phi_\epsi(s)$ for $n \in \N$. According to Proposition \ref{Prop-strongly-non-dec-Fourier-mult}, it suffices now to find some $s \in \F_\infty$ such that $\phi_\epsi(s) \not= \phi_\epsi(s^{-1})$. Take $n,m \in \N$ and $a,b \in \{\pm\}$ such that $\epsi^a_n \not= \epsi^b_m,$ whose existence is guaranteed by $H_\epsi \not= \pm \Id_{\L^p(\VN(\F_\infty))}$. Take further $s = g_n^a g_k g_m^{-b}$ for some $k \in \N \backslash \{n,m \}$. Then $\phi_\epsi(s) = \epsi_n^a \not= \epsi_m^b = \phi_\epsi(s^{-1})$.
\end{proof}

\subsection{CB-strongly non decomposable operators on approximately finite-dimen. algebras}
\label{sec:CB-strongly-non-decomposable-operators-on-afd-algebras}

We start with a transference result.

\begin{prop}
\label{prop-transfer-N-in-M-strongly}
Let $M$ be a von Neumann algebra and $N$ be a sub von Neumann algebra equipped with a faithful normal semifinite trace such that the inclusion $N \subset M$ is trace preserving. Suppose $1<p<\infty$. We denote by $\mathbb{E} \co \L^p(M)\to \L^p(N)$ the canonical conditional expectation and $J \co \L^p(N) \to \L^p(M)$ the canonical embedding map. Then
\begin{enumerate}
	\item The map
	\begin{equation*}
\begin{array}{cccc}
   \mathcal{I}   \co &  \CB(\L^p(N))   &  \longrightarrow   &  \CB(\L^p(M))  \\
           &   T  &  \longmapsto       &  JT\mathbb{E}  \\
\end{array}
\end{equation*}
is an isometry and the map 
\begin{equation*}
\begin{array}{cccc}
    \mathcal{Q}  \co &  \CB(\L^p(M))   &  \longrightarrow   & \CB(\L^p(N))   \\
           &   S  &  \longmapsto       &  \mathbb{E}SJ  \\
\end{array}
\end{equation*}
is a contraction. Both maps preserve the complete positivity and satisfy the equality $\mathcal{Q}\mathcal{I}=\Id_{\CB(\L^p(N))}$. 

\item We have $\mathcal{Q}(\Dec(\L^p(M)))=\Dec(\L^p(N))$ and $\mathcal{I}(\Dec(\L^p(N))) \subset \Dec(\L^p(M))$. Moreover, the previous maps induce an isometry $\mathcal{I} \co \Dec(\L^p(N)) \to \Dec(\L^p(M))$ and a contraction $\mathcal{Q} \co \Dec(\L^p(M)) \to \Dec(\L^p(N))$. 

\item For any completely bounded operator $T \co \L^p(N) \to \L^p(N)$, we have
$$
\dist_{\CB(\L^p(N))}\big(T,\Dec(\L^p(N))\big) 
= \dist_{\CB(\L^p(M))}\big(\mathcal{I}(T),\Dec(\L^p(M))\big).
$$
In particular, $T$ is $\CB$-strongly non decomposable if and only if $\mathcal{I}(T)$ is $\CB$-strongly non decomposable.
\end{enumerate}
\end{prop}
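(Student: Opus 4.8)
The plan is to prove the three parts in order, with Part 1 doing essentially all the work and Parts 2 and 3 following formally. First I would record the two basic facts underlying everything: the conditional expectation $\mathbb{E} \co M \to N$ is a $\tau_M$-Markov map and the inclusion $J \co N \to M$ is a $(\tau_N,\tau_M)$-Markov map (here the trace-preserving hypothesis on $N \subset M$ is used), so by Subsection \ref{sec:Markov-maps} both descend to completely positive, completely contractive maps $\mathbb{E} \co \L^p(M) \to \L^p(N)$ and $J \co \L^p(N) \to \L^p(M)$; moreover $\mathbb{E}J = \Id_{\L^p(N)}$, since $\mathbb{E}$ restricts to the identity on $N$ and $N \cap \L^p(N)$ is dense in $\L^p(N)$. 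Given this, Part 1 is immediate by composition: $\mathcal{I} = J(\cdot)\mathbb{E}$ and $\mathcal{Q} = \mathbb{E}(\cdot)J$ are well-defined with $\norm{\mathcal{I}}, \norm{\mathcal{Q}} \leq 1$ on the $\cb$-level, they preserve complete positivity because a composition of completely positive maps is completely positive, and $\mathcal{Q}\mathcal{I}(T) = (\mathbb{E}J)T(\mathbb{E}J) = T$. The isometry of $\mathcal{I}$ then comes from the sandwich $\norm{T}_{\cb} = \norm{\mathcal{Q}\mathcal{I}(T)}_{\cb} \leq \norm{\mathcal{I}(T)}_{\cb} = \norm{JT\mathbb{E}}_{\cb} \leq \norm{T}_{\cb}$.

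For Part 2, I would run exactly the same argument with the decomposable norm replacing the completely bounded norm. By Proposition \ref{quest-cp-versus-dec1}, the completely positive contractions $\mathbb{E}$ and $J$ are decomposable with $\norm{\mathbb{E}}_{\dec}, \norm{J}_{\dec} \leq 1$, so \eqref{Composition-dec} gives $\norm{\mathcal{I}(T)}_{\dec} \leq \norm{T}_{\dec}$ and $\norm{\mathcal{Q}(S)}_{\dec} \leq \norm{S}_{\dec}$; in particular $\mathcal{I}(\Dec(\L^p(N))) \subseteq \Dec(\L^p(M))$ and $\mathcal{Q}(\Dec(\L^p(M))) \subseteq \Dec(\L^p(N))$. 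The identity $\mathcal{Q}\mathcal{I} = \Id$ upgrades the first estimate to $\norm{T}_{\dec} = \norm{\mathcal{Q}\mathcal{I}(T)}_{\dec} \leq \norm{\mathcal{I}(T)}_{\dec}$, hence $\mathcal{I} \co \Dec(\L^p(N)) \to \Dec(\L^p(M))$ is isometric, and it also shows $T = \mathcal{Q}(\mathcal{I}(T))$ lies in $\mathcal{Q}(\Dec(\L^p(M)))$, so $\mathcal{Q}(\Dec(\L^p(M))) = \Dec(\L^p(N))$.

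Part 3 is then purely formal. For one inequality, given $R \in \Dec(\L^p(M))$ one has $\mathcal{Q}(R) \in \Dec(\L^p(N))$ and $\norm{T - \mathcal{Q}(R)}_{\cb} = \norm{\mathcal{Q}(\mathcal{I}(T) - R)}_{\cb} \leq \norm{\mathcal{I}(T) - R}_{\cb}$, and taking the infimum over $R$ yields $\dist_{\CB(\L^p(N))}(T,\Dec(\L^p(N))) \leq \dist_{\CB(\L^p(M))}(\mathcal{I}(T),\Dec(\L^p(M)))$. For the reverse, given $R' \in \Dec(\L^p(N))$ one has $\mathcal{I}(R') \in \Dec(\L^p(M))$ and $\norm{\mathcal{I}(T) - \mathcal{I}(R')}_{\cb} = \norm{\mathcal{I}(T - R')}_{\cb} = \norm{T - R'}_{\cb}$ by the isometry of $\mathcal{I}$, and taking the infimum gives the opposite inequality; hence the two distances are equal. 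Since $\mathcal{I}(T)$ is completely bounded by Part 1, the two distances vanish together, which is precisely the stated equivalence of CB-strong non decomposability.

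As for the main obstacle: there is no conceptual difficulty here; the only point that needs genuine care is the preliminary verification that $\mathbb{E}$ and $J$ induce completely positive, completely contractive maps on $\L^p$ with $\mathbb{E}J = \Id_{\L^p(N)}$ — i.e. identifying them as $\L^p$-extensions of Markov maps in the sense of Subsection \ref{sec:Markov-maps} and using density of $N \cap \L^p(N)$ — after which everything reduces to submultiplicativity of $\norm{\cdot}_{\cb}$ and of $\norm{\cdot}_{\dec}$ under composition (the latter being \eqref{Composition-dec}) together with the splitting $\Id = \mathcal{Q}\mathcal{I}$.
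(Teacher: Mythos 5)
Your proposal is correct and follows essentially the same route as the paper: establish that $\mathbb{E}$ and $J$ are completely positive complete contractions on the $\L^p$-level with $\mathbb{E}J=\Id_{\L^p(N)}$, deduce Part 1 by composition and the sandwich argument, run the identical argument in the decomposable norm for Part 2 via Proposition \ref{quest-cp-versus-dec1} and \eqref{Composition-dec}, and obtain Part 3 by the two-sided comparison of distances (the paper phrases one direction with an approximating sequence $(T_n)$ where you take the infimum over all $R$ directly, which is the same computation). No gaps.
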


\begin{proof}
1. Recall that $\mathbb{E}J=\Id_{\L^p(N)}$. We have $\mathcal{Q}\mathcal{I}(T)=\mathcal{Q}(JT\mathbb{E})=\mathbb{E}JT\mathbb{E}J=T$. Now, it is obvious that $\mathcal{Q}$ is a contraction and that $\mathcal{I}$ is an isometry. Since $\mathbb{E}$ and $J$ are completely positive, the maps $\mathcal{Q}$ and $\mathcal{I}$ preserve the complete positivity.

2. Let $T \co \L^p(N) \to \L^p(N)$ be a decomposable operator. Since $\mathbb{E}$ and $J$ are contractively decomposable, we deduce by composition that $\mathcal{I}(T)$ is decomposable. Hence we have $\mathcal{I}(\Dec(\L^p(N))) \subset \Dec(\L^p(M))$. Similarly, we have the inclusion $\mathcal{Q}(\Dec(\L^p(M))) \subset \Dec(\L^p(N))$. Moreover, we have
$$
\Dec(\L^p(N))
=\mathcal{Q}\mathcal{I}\big(\Dec(\L^p(N))\big) 
\subset \mathcal{Q}\big(\Dec(\L^p(M))\big).
$$
We conclude that $\mathcal{Q}(\Dec(\L^p(M)))=\Dec(\L^p(N))$. Other statements are obvious.

3. Let $T \co \L^p(N) \to \L^p(N)$ be a completely bounded operator. Using the isometric map $\mathcal{I}$ and the inclusion $\mathcal{I}(\Dec(\L^p(N))) \subset \Dec(\L^p(M))$ we see that
\begin{align*}
\MoveEqLeft
  \dist_{\CB(\L^p(N))}\big(T,\Dec(\L^p(N))\big)
	=\dist_{\CB(\L^p(M))}\big(\mathcal{I}(T),\mathcal{I}(\Dec(\L^p(N))\big)\\
&\geq \dist_{\CB(\L^p(M))}\big(\mathcal{I}(T),\Dec(\L^p(M))\big).
\end{align*}
Now, consider a sequence $(T_n)$ of decomposable operators acting on $\L^p(M)$ with
$$
\norm{\mathcal{I}(T)-T_n}_{\cb, \L^p(M) \to \L^p(M)} 
\xra[n \to+\infty]{} 
\dist_{\CB(\L^p(M))}\big(\mathcal{I}(T),\Dec(\L^p(M))\big).
$$
By part 2, the operator $\mathcal{Q}(T_n) \co \L^p(N) \to \L^p(N)$ is decomposable. Moreover, we have
\begin{align*}
\MoveEqLeft
 \dist_{\CB(\L^p(N))}(T,\Dec(\L^p(N)))\leq \norm{T-\mathcal{Q}(T_n)}_{\cb,\L^p(N) \to \L^p(N)} \\
&=\bnorm{\mathcal{Q}(\mathcal{I}(T)-T_n)}_{\cb,\L^p(N) \to \L^p(N)} 
\leq \norm{\mathcal{I}(T)-T_n}_{\cb,\L^p(M) \to \L^p(M)}.
\end{align*}
Letting $n$ go to infinity, we obtain that 
$$
\dist_{\CB(\L^p(N))}\big(T,\Dec(\L^p(N))\big) 
\leq \dist_{\CB(\L^p(M))}\big(\mathcal{I}(T),\Dec(\L^p(M))\big).
$$
\end{proof}

We will use the following elementary lemma.

\begin{lemma}
\label{prop-comparison-norms-Schur-multiplier-p-infty}
Suppose $1<p<\infty$. For any matrix $A \in \M_n$, we have
$$
\norm{M_A}_{S^\infty_n \to S^\infty_n} \leq n^{\frac{1}{p}} \norm{M_A}_{S^p_n \to S^p_n}
$$
\end{lemma}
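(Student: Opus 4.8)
The plan is to obtain the estimate by factoring the Schur multiplier $M_A \co S^\infty_n \to S^\infty_n$ through the Schatten space $S^p_n$, exploiting the fact that as a linear map on $\M_n$ the operator $M_A$ is literally the same whether we view it on $S^\infty_n$ or on $S^p_n$ (in both cases $[x_{ij}] \mapsto [a_{ij}x_{ij}]$); only the norm on the underlying space changes. First I would record the two elementary norm comparisons on $\M_n$: for any $x \in \M_n$ with singular values $s_1(x) \geq \cdots \geq s_n(x) \geq 0$ one has
$$
\norm{x}_{S^\infty_n} = s_1(x) \leq \bigg(\sum_{k=1}^n s_k(x)^p\bigg)^{\frac1p} = \norm{x}_{S^p_n}
\qquad \text{and} \qquad
\norm{x}_{S^p_n} = \bigg(\sum_{k=1}^n s_k(x)^p\bigg)^{\frac1p} \leq n^{\frac1p} s_1(x) = n^{\frac1p}\norm{x}_{S^\infty_n}.
$$
Equivalently, the identity map $S^p_n \to S^\infty_n$ is a contraction and the identity map $S^\infty_n \to S^p_n$ has norm at most $n^{\frac1p}$.

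Then, for any $x \in \M_n$, I would chain these estimates with the boundedness of $M_A$ on $S^p_n$ (which is automatic by finite-dimensionality, so $\norm{M_A}_{S^p_n \to S^p_n} < \infty$):
\begin{align*}
\norm{M_A(x)}_{S^\infty_n}
&\leq \norm{M_A(x)}_{S^p_n}
\leq \norm{M_A}_{S^p_n \to S^p_n}\, \norm{x}_{S^p_n} \\
&\leq n^{\frac1p}\, \norm{M_A}_{S^p_n \to S^p_n}\, \norm{x}_{S^\infty_n}.
\end{align*}
Taking the supremum over all $x \in \M_n$ with $\norm{x}_{S^\infty_n} \leq 1$ yields $\norm{M_A}_{S^\infty_n \to S^\infty_n} \leq n^{\frac1p}\, \norm{M_A}_{S^p_n \to S^p_n}$, as desired.

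There is essentially no real obstacle here: the argument is a three-line factorization, and the only points to verify are the two norm comparisons on $\M_n$, which follow immediately from the fact that a nonnegative vector in $\R^n$ has its $\ell^p$-norm between its $\ell^\infty$-norm and $n^{\frac1p}$ times its $\ell^\infty$-norm. One could alternatively deduce the inequality from complex interpolation between $S^1_n$ and $S^\infty_n$ together with duality, but the direct estimate above is shorter and produces the stated constant immediately.
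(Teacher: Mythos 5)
Your argument is correct and is essentially identical to the paper's own proof: the same chain $\norm{M_A(x)}_{S^\infty_n} \leq \norm{M_A(x)}_{S^p_n} \leq \norm{M_A}_{S^p_n \to S^p_n}\norm{x}_{S^p_n} \leq n^{\frac1p}\norm{M_A}_{S^p_n \to S^p_n}\norm{x}_{S^\infty_n}$, resting on the two elementary singular-value comparisons. Nothing to add.
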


\begin{proof}
Let $B \in S^\infty_n$. We denote by $s_1(B),\ldots,s_n(B)$ the singular values of $B$. We have
$$
\norm{B}_{S^p_n} 
=\left(\sum_{i=1}^n s_i(B)^p\right)^{\frac{1}{p}} 
\leq \left(n \cdot\sup_{1 \leq i \leq n} s_i(B)^p\right)^{\frac{1}{p}} = n^{\frac{1}{p}} \cdot \sup_{1 \leq i \leq n} s_i(B) 
= n^{\frac{1}{p}} \cdot \norm{B}_{S^\infty_n}.
$$
We deduce that
\begin{align*}
\MoveEqLeft
 \norm{M_A(B)}_{S^\infty_n} 
		\leq  \norm{M_A(B)}_{S^p_n} 
		\leq \norm{M_A}_{S^p_n \to S^p_n} \norm{B}_{S^p_n}
		\leq n^{\frac{1}{p}} \norm{M_A}_{S^p_n \to S^p_n} \norm{B}_{S^\infty_n}.
\end{align*}
\end{proof}

\begin{prop}
\label{prop-hyperfinite-factor-von-Neumann-strongly-non-decomposable}
Let $\mathcal{R}$ be the hyperfinite factor of type $\mathrm{II}_1$ equipped with a normal finite faithful trace. Let $1 < p<\infty$. There exists a $\CB$-strongly non decomposable operator $T \co \L^p(\mathcal{R}) \to \L^p(\mathcal{R})$.
\end{prop}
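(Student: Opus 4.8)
The plan is to produce $T$ by transferring a known example from the commutative setting to $\mathcal{R}$ via Proposition \ref{prop-transfer-N-in-M-strongly}. The point is that although the most natural $\CB$-strongly non decomposable operator available from the previous material is the triangular truncation on $S^p=\L^p(\B(\ell^2))$ (Corollary \ref{Cor-strongly-non-reg-truncation}), the algebra $\B(\ell^2)$ is of type $\mathrm{I}_\infty$ and therefore does \emph{not} embed into the \emph{finite} algebra $\mathcal{R}$ in a trace-preserving way; so instead I would start from an example living on a diffuse commutative probability algebra, which does embed. Concretely, apply Theorem \ref{thm-existence-CB-strongly-non-decomposable-abelian-groups} to the infinite compact abelian group $G=\T$: this yields a completely bounded, $\CB$-strongly non decomposable Fourier multiplier $M_\phi\co\L^p(\T)\to\L^p(\T)$. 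Since $\L^p(\T)=\L^p(\L^\infty(\T))$ with $\L^\infty(\T)$ commutative (hence approximately finite-dimensional), this is indeed an operator on the noncommutative $\L^p$-space of a hyperfinite von Neumann algebra, and being completely bounded it is $\CB$-strongly non decomposable in the sense of Definition \ref{defi-CB-strongly-non-decomposable} (using Theorem \ref{thm-dec=reg-hyperfinite} to identify $\Reg$ and $\Dec$ on $\L^p(\L^\infty(\T))$, together with Remark \ref{rem-defi-CB-strongly-non-decomposable}).

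Next I would realize $N:=\L^\infty(\T)$ as a trace-preserving von Neumann subalgebra of $M:=\mathcal{R}$. Writing $\mathcal{R}$ as the hyperfinite $\mathrm{II}_1$ factor $\ovl{\bigotimes_{k\ge1}(\M_2,\tfrac12\tr_2)}^{\mathrm{w}^*}$ with its unique normalized trace $\tau$, the von Neumann subalgebra generated by $\bigotimes_{k\ge1}D_2$, where $D_2\subset\M_2$ denotes the diagonal, is a unital copy of $\L^\infty(\{0,1\}^{\N},\beta)$ with $\beta$ the $(\tfrac12,\tfrac12)$-product probability measure, and $\tau$ restricts on it to integration against $\beta$. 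As $\beta$ is a non-atomic probability measure, this subalgebra is trace-preservingly $*$-isomorphic to $(\L^\infty(\T),\mu_\T)$; hence the inclusion $N\subset M$ is trace preserving. We then have the associated canonical conditional expectation $\E\co\L^p(\mathcal{R})\to\L^p(\L^\infty(\T))$ and the canonical embedding $J\co\L^p(\L^\infty(\T))\to\L^p(\mathcal{R})$, and we set $T:=\mathcal{I}(M_\phi)=JM_\phi\E\co\L^p(\mathcal{R})\to\L^p(\mathcal{R})$.

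Finally I would invoke part 3 of Proposition \ref{prop-transfer-N-in-M-strongly} with these $M$, $N$ and with the completely bounded map $M_\phi\in\CB(\L^p(N))$: it gives $\dist_{\CB(\L^p(\mathcal{R}))}\big(T,\Dec(\L^p(\mathcal{R}))\big)=\dist_{\CB(\L^p(N))}\big(M_\phi,\Dec(\L^p(N))\big)$, and the right-hand side is strictly positive since $M_\phi$ is $\CB$-strongly non decomposable. Thus $T$ is completely bounded (as a composition of completely bounded maps) and $\CB$-strongly non decomposable, which proves the proposition. In this argument there is no genuine obstacle once Proposition \ref{prop-transfer-N-in-M-strongly} and Theorem \ref{thm-existence-CB-strongly-non-decomposable-abelian-groups} are granted; the only points that need care are the trace-preserving embedding of a diffuse abelian probability algebra into $\mathcal{R}$ (standard, via uniqueness of the atomless probability space) and the observation that one must not attempt to transfer directly from $S^p=\L^p(\B(\ell^2))$. (A self-contained alternative would be to take $N=\bigoplus^{\ell^\infty}_n\M_{k_n}\subset\mathcal{R}$ and try to show that $\bigoplus_n\mathcal{T}_{k_n}$ on $\L^p(N)=\big(\bigoplus_n S^p_{k_n}\big)_{\ell^p}$ is $\CB$-strongly non decomposable, using a block-diagonal Schur projection together with Theorem \ref{prop-continuous-Schur-multiplier-dec-infty} and Lemma \ref{prop-comparison-norms-Schur-multiplier-p-infty}; here the hard part would be to control the $S^\infty_{k_n}$-norm of the error term, for which Lemma \ref{prop-comparison-norms-Schur-multiplier-p-infty} seems too lossy — which is precisely why the abelian route above is the one I would follow.)
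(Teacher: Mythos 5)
Your proof is correct, but it follows a genuinely different route from the paper's. The paper realizes $\mathcal{R}$ as the group von Neumann algebra $\VN(G)$ of the group $G$ of finitely supported permutations of the integers, notes that $G$ is locally finite and hence, by \cite[Theorem 14.3.7]{Rob}, contains an infinite abelian subgroup, and then simply invokes Corollary \ref{Coro-Discrete-strongly-non-dec}, whose proof rests on the Fourier-multiplier transfer of Proposition \ref{prop-transfer-between-discrete} (powered by the Haagerup-type complementation of Corollary \ref{Prop-complementation-Fourier-subgroups}) together with Theorem \ref{thm-existence-CB-strongly-non-decomposable-abelian-groups}. You instead realize $\mathcal{R}$ as $\ovl{\bigotimes_k \M_2}$, embed the diffuse abelian algebra $\L^\infty(\T)$ trace-preservingly as the diagonal masa, and transfer the example on $\L^p(\T)$ via the general conditional-expectation distance identity of Proposition \ref{prop-transfer-N-in-M-strongly}. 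Both arguments ultimately push an abelian example from Theorem \ref{thm-existence-CB-strongly-non-decomposable-abelian-groups} into a diffuse abelian subalgebra of $\mathcal{R}$; yours uses only the general transfer principle (the very tool the paper deploys in the proofs of Proposition \ref{Prop-Kp-strongly-non-dec} and Theorem \ref{prop-hyperfinite-von-Neumann-strongly-non-decomposable}), at the cost of the standard but external fact that any two atomless standard probability algebras are isomorphic, while the paper's version avoids that measure-theoretic input and instead leans on group-theoretic facts about $S_\infty$ and on the multiplier complementation machinery. Your observation that one cannot transfer directly from $S^p=\L^p(\B(\ell^2))$, because $\B(\ell^2)$ admits no trace-preserving embedding into the finite algebra $\mathcal{R}$, is also correct and is precisely why a finite (here diffuse abelian) source algebra is needed.
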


\begin{proof}
Let $G$ be the discrete group of permutations of the integers that leave fixed all but a finite set of integers (the set may vary with the permutation). By \cite[page 902]{KaR}, the von Neumann algebra $\VN(G)$ is $*$-isomorphic to the hyperfinite factor $\mathcal{R}$ of type $\mathrm{II}_1$. Moreover, by \cite[page 902]{KaR}, the group $G$ is locally finite. 
By \cite[Theorem 14.3.7]{Rob}, it has an infinite abelian subgroup. Now, it suffices to use  Corollary \ref{Coro-Discrete-strongly-non-dec}.
\end{proof} 

We introduce the sub von Neumann algebra $K_\infty=\oplus_{n \geq 1} \M_n$ of $\B(\ell^2 \ot_2 \ell^2)$ equipped with its canonical trace and its noncommutative $\L^p$-space $K^p=\oplus_{n \geq 1}^p S^p_n$. We denote by $J \co K_\infty \to \B(\ell^2 \ot_2 \ell^2)$ the canonical inclusion and $\E \co \B(\ell^2 \ot_2 \ell^2) \to K_\infty$ the canonical trace preserving faithful normal conditional expectation.

\begin{prop}
\label{Prop-Kp-strongly-non-dec}
Let $1 < p<\infty$, $p \not=2$. There exists a $\CB$-strongly non decomposable operator $T \co K^p \to K^p$.
\end{prop}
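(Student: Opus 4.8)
The strategy is to reduce the problem on $K^p = \oplus_{n \geq 1}^p S^p_n$ to the problem on a single Schatten space $S^p$ (or on the hyperfinite factor, or on a free group von Neumann algebra), exactly as in Proposition \ref{prop-hyperfinite-factor-von-Neumann-strongly-non-decomposable}, by building a copy of $S^p$ inside $K^p$ and using the transference result Proposition \ref{prop-transfer-N-in-M-strongly}. Since $K_\infty = \oplus_{n \geq 1} \M_n$ is an infinite dimensional approximately finite-dimensional von Neumann algebra with a faithful normal semifinite trace, one could alternatively invoke Theorem \ref{prop-hyperfinite-von-Neumann-strongly-non-decomposable} directly; but the point here is presumably to give a concrete operator and a self-contained argument, so I would argue as follows.

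First I would embed $S^p$ trace-preservingly into $K^p$. The natural candidate is the block-diagonal inclusion: view $S^p = S^p_{\N}$ as the noncommutative $\L^p$-space of $\B(\ell^2)$ and map it onto the ``diagonal'' copy sitting inside $\oplus_{n} S^p_n$ by sending $x \in S^p$ to the sequence $(x_n)_n$ where $x_n$ is the compression of $x$ to the first $n$ coordinates; more precisely, I would rather use $K_\infty = \oplus_{n \geq 1} \M_n$ together with the conditional expectation $\E \co \B(\ell^2 \ot_2 \ell^2) \to K_\infty$ and inclusion $J \co K_\infty \to \B(\ell^2 \ot_2 \ell^2)$ mentioned just before the statement, and realize $S^p$ as a corner of $\B(\ell^2 \ot_2 \ell^2)$. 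Cleaner still: let $N = K_\infty$, regarded as a von Neumann subalgebra of some $M = \VN(G)$ or $M = \B(\ell^2 \ot_2 \ell^2)$ with a trace-preserving inclusion, and apply Proposition \ref{prop-transfer-N-in-M-strongly}. So the real content is: (i) exhibit inside $K^p$ a trace-preserving copy of a space on which we already know a $\CB$-strongly non decomposable operator exists, and (ii) transfer. For (i), the simplest choice is to use that $S^p_n$ embeds trace-preservingly and completely isometrically into $K^p$ as the $n$-th summand; but no fixed $S^p_n$ hosts a $\CB$-strongly non decomposable operator by finite dimensionality. Instead I would use the full $S^p$: the map $S^p = \oplus^p_{n\geq 1}(\text{rank} \leq n\text{ part})$ does not sit isometrically, so the honest route is to take $N = \B(\ell^2)$ included diagonally into $K_\infty \subset \B(\ell^2 \ot_2 \ell^2)$ via $x \mapsto \oplus_{n} P_n x P_n$ where $P_n$ is the projection onto the first $n$ basis vectors; this is not trace-preserving though.

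The cleanest correct approach, and the one I would commit to, is therefore: apply Proposition \ref{prop-transfer-N-in-M-strongly} with $M = \B(\ell^2 \ot_2 \ell^2)$, $N = K_\infty = \oplus_{n\geq 1}\M_n$, and $\E \co M \to N$, $J \co N \to M$ the canonical conditional expectation and inclusion already introduced in the excerpt, which give the isometry $\mathcal{I} \co \CB(K^p) \to \CB(S^p(\ell^2 \ot_2 \ell^2))$ and the contraction $\mathcal{Q}$, both preserving complete positivity, with $\mathcal{Q}\mathcal{I} = \Id$. By part 3 of that proposition, a completely bounded $T \co K^p \to K^p$ is $\CB$-strongly non decomposable if and only if $\mathcal{I}(T)$ is $\CB$-strongly non decomposable on $S^p(\ell^2 \ot_2 \ell^2) \cong S^p$. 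Conversely — and this is the direction I actually need — I would take a known $\CB$-strongly non decomposable operator $S$ on $S^p = S^p(\ell^2 \ot_2 \ell^2)$, for instance the triangular truncation $\mathcal{T}$ or the discrete noncommutative Hilbert transform $\mathcal{H}$ from Corollary \ref{Cor-strongly-non-reg-truncation}, transported to $\B(\ell^2 \ot_2 \ell^2)$ via a fixed unitary identification $\ell^2 \ot_2 \ell^2 \cong \ell^2$, and set $T = \mathcal{Q}(S) \co K^p \to K^p$. Then $T$ is completely bounded, and I must check it is $\CB$-strongly non decomposable. For this I would not use part 3 blindly (it controls $\mathcal{I}$, not $\mathcal{Q}$), but rather observe that $\mathcal{Q}$ restricted to the relevant corner is itself an isometric transference: the point is to choose $S$ to be ``block-respecting'', i.e. a Schur multiplier $M_\phi$ on $S^p$ whose symbol, after the identification, is supported on the block-diagonal $\sqcup_n (\{1,\dots,n\}\times\{1,\dots,n\})$ of $\ell^2 \ot_2 \ell^2 = \sqcup_n \C^n$. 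With such a choice $\mathcal{Q}(M_\phi) = \mathcal{I}^{-1}(M_\phi)$ in the appropriate sense and one gets $\dist_{\CB(K^p)}(T, \Dec(K^p)) = \dist_{\CB(S^p)}(M_\phi, \Dec(S^p)) > 0$ from part 3 applied to $T$ together with $\mathcal{I}(T) = M_\phi$.

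Concretely, I would proceed as follows. Fix a bijection $\N \leftrightarrow \sqcup_{n\geq 1}\{1,\dots,n\}$ so that $\B(\ell^2)$ is identified with $\B(\ell^2 \ot_2 \ell^2)$ and the block-diagonal subalgebra $\oplus_n \M_n$ is identified with $K_\infty$. On each block $\M_n$ put the triangular truncation $\mathcal{T}_n \co S^p_n \to S^p_n$; these assemble to a completely bounded Schur multiplier $T = \oplus_n \mathcal{T}_n \co K^p \to K^p$ with uniform $\CB$-norm (the triangular truncations are uniformly completely bounded on $S^p_n$ since $\mathcal{T}$ is completely bounded on $S^p$). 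Then $\mathcal{I}(T) = J T \E$ is precisely the Schur multiplier $M_\phi$ on $S^p = S^p(\ell^2 \ot_2 \ell^2)$ whose symbol $\phi$ is $1$ on $\{(k,l) : k \leq l, \ k,l \text{ in the same block}\}$ and $0$ elsewhere. It remains to show $M_\phi$ is $\CB$-strongly non decomposable, and here is where I would use the two abstract obstructions from Subsection \ref{sec:Existence-of-strongly-non-Schur-multipliers}: by Proposition \ref{prop-closure-dec-closure-mult}, if $M_\phi$ were in $\ovl{\Dec(S^p)}^{\CB}$ then $\phi \in \ovl{\mathfrak{M}^\infty}^{\ell^\infty}$, and then by Proposition \ref{Prop-Schur-limits-variant} the two iterated limits of $\phi_{ij}$ along a suitable enumeration would have to agree. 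Choosing the enumeration of $\sqcup_n\{1,\dots,n\}$ so that, within the $n$-th block, indices increase, one sees $\lim_i \lim_j \phi_{ij}$ and $\lim_j \lim_i \phi_{ij}$ along a diagonal family of block indices tend to $1$ and $0$ respectively — exactly as for the triangular truncation on $S^p$ itself — giving the contradiction. Finally, by Proposition \ref{prop-transfer-N-in-M-strongly}(3) with $T$ and $\mathcal{I}(T)=M_\phi$, $\dist_{\CB(K^p)}(T,\Dec(K^p)) = \dist_{\CB(S^p)}(M_\phi,\Dec(S^p)) > 0$, so $T$ is $\CB$-strongly non decomposable.

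The main obstacle, and the step requiring the most care, is the bookkeeping in the second-to-last step: making sure the iterated-limit argument genuinely produces a contradiction, i.e. that the block-diagonal symbol $\phi$ on the reassembled index set still exhibits the $s \neq t$ behaviour of Proposition \ref{Prop-Schur-limits-variant} despite the non-trivial indexing. Equivalently one must verify that $\mathcal{I}(T)$ really is the Schur multiplier with the stated symbol (a routine but notation-heavy computation with $J$ and $\E$, using that $\E$ is the block-diagonal compression) and that $T$ is genuinely completely bounded with finite $\CB$-norm, which follows from $\norm{\mathcal{T}_n}_{\cb,S^p_n \to S^p_n} \leq \norm{\mathcal{T}}_{\cb,S^p\to S^p}$ for all $n$ — a consequence of $\mathcal{T}_n$ being a compression of $\mathcal{T}$. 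The case $p = 2$ is genuinely excluded, as it must be, because on Hilbert space everything bounded is completely bounded and decomposable issues trivialize; this is consistent with the hypothesis $p \neq 2$ in the statement.
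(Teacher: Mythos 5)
Your overall architecture — transfer between $K^p$ and $S^p(\ell^2\ot_2\ell^2)$ via $\mathcal{I}=J(\cdot)\E$ from Proposition \ref{prop-transfer-N-in-M-strongly}, reduction to Schur multipliers via the projection of Corollary \ref{Prop-complementation-Schur}, and a block-diagonal multiplier as the candidate — is exactly the paper's. But the concrete example you choose, $T=\oplus_n \mathcal{T}_n$, breaks at the step you yourself flag as delicate. The symbol $\phi$ of $\mathcal{I}(T)$ vanishes off the finite diagonal blocks, so for every fixed row index $i$ the row $(\phi_{ij})_j$ is eventually zero (once $j$ leaves the finite block containing $i$), and likewise for columns. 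Hence \emph{both} iterated limits in Proposition \ref{Prop-Schur-limits-variant} exist and equal $0$ — along the full index set and along any infinite sub-index-sets, however you enumerate the blocks. There is no ``diagonal family of block indices'' along which they tend to $1$ and $0$; that behaviour is a feature of the genuine triangular truncation on $\N\times\N$ and is destroyed by cutting into finite blocks. So Propositions \ref{prop-closure-dec-closure-mult} and \ref{Prop-Schur-limits-variant} give no contradiction, and your proof of CB-strong non-decomposability of $M_\phi$ collapses.

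The deeper obstacle is quantitative and explains why the triangular truncation is the wrong block to use. After reducing (via the projection) to decomposable Schur multipliers $M_\psi$, which by Theorem \ref{prop-continuous-Schur-multiplier-dec-infty} have uniformly bounded $S^\infty_n$-multiplier norm on each block, one must bound $\norm{\mathcal{T}_n-M_{\psi_n}}_{\cb,S^p_n\to S^p_n}$ from below. The only available comparison is Lemma \ref{prop-comparison-norms-Schur-multiplier-p-infty}, $\norm{M_A}_{S^\infty_n\to S^\infty_n}\le n^{1/p}\norm{M_A}_{S^p_n\to S^p_n}$, which yields $\norm{\mathcal{T}_n-M_{\psi_n}}_{S^p_n}\gtrsim n^{-1/p}(\log n-C)\to 0$ since $\norm{\mathcal{T}_n}_{S^\infty_n\to S^\infty_n}\approx\log n$ grows far too slowly. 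This is precisely why the paper's proof replaces $\mathcal{T}_n$ by the Douglas matrices $A_n=n^{-|1/2-1/p|}D_n$ on blocks of size $n=2^m$: these satisfy $\norm{M_{A_n}}_{\cb,S^p_n\to S^p_n}\le 1$ while $\norm{M_{A_n}}_{S^\infty_n\to S^\infty_n}\ge cn^{1/p}$, the extremal growth rate, so an $\epsi$-perturbation in the $S^p_n$ cb-norm lowers the $S^\infty_n$ norm by at most $\epsi n^{1/p}$ and for $\epsi<c$ the perturbed symbol still fails to induce a bounded multiplier on $\B(\ell^2\ot_2\ell^2)$, hence is not decomposable. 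If you want to keep your block-diagonal framework, you must substitute blocks with this extremal $S^\infty$-versus-$S^p$ norm gap; with the triangular truncations the claim is, at best, unproven by your method. (Also, your aside that one could ``invoke Theorem \ref{prop-hyperfinite-von-Neumann-strongly-non-decomposable} directly'' is circular: the proof of that theorem treats $K^p$ as one of the thirteen cases by citing precisely the proposition at hand.)
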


\begin{proof}
If $n=2^m$, by \cite[page 53]{Dou}, there exists a positive constant $C$ and matrices $D_n \in \M_{n}$ such that $Cn^{\frac{1}{2}} \leq \norm{M_{D_n}}_{S^\infty_n \to S^\infty_n}$ and $\norm{M_{D_n}}_{S^p_n \to S^p_n} \leq n^{\left|\frac{1}{2}-\frac{1}{p}\right|}$ for $n$ large enough. Since the argument of \cite{Dou} of the latter inequality is based on interpolation and duality, we have the better estimate 
$$
\norm{M_{D_n}}_{\cb, S^p_n \to S^p_n} \leq n^{\left|\frac{1}{2}-\frac{1}{p}\right|}.
$$ 
Still working with $n=2^m$, we consider the matrix 
$$
A_n=\frac{1}{n^{\left|\frac{1}{2}-\frac{1}{p}\right|}} D_n.
$$
By Proposition \ref{Prop-Duality-dec}, we can suppose $p>2$. For $n$ large enough, we have 
$$
\norm{M_{A_n}}_{S^\infty_n \to S^\infty_n}
=\bgnorm{\frac{1}{n^{\left|\frac{1}{2}-\frac{1}{p}\right|}} M_{D_n}}_{S^\infty_n \to S^\infty_n}
=\frac{1}{n^{\left|\frac{1}{2}-\frac{1}{p}\right|}} \norm{M_{D_n}}_{S^\infty_n \to S^\infty_n} 
\geq cn^{\frac{1}{2}}\frac{1}{n^{\left|\frac{1}{2}-\frac{1}{p}\right|}}
=cn^{\frac{1}{p}}.
$$
Moreover, we have the estimate
$$
\norm{M_{A_n}}_{\cb,S^p_n \to S^p_n}
=\bgnorm{\frac{1}{n^{\left|\frac{1}{2}-\frac{1}{p}\right|}} M_{D_n}}_{\cb,S^p_n \to S^p_n} 
\leq 1.
$$
Now, we introduce the well-defined completely bounded linear operator
\begin{equation*}
\begin{array}{cccc}
   \Phi   \co &  K^p   &  \longrightarrow   &  K^p  \\
           &  (B_n)   &  \longmapsto       & (0,M_{A_2}(B_2),0,M_{A_4}(B_4),0,0,0,M_{A_8}(B_8),0,\ldots)   \\
\end{array}
\end{equation*}
Using the map $\mathcal{I}$ of Proposition \ref{prop-transfer-N-in-M-strongly}, we note that the map $\mathcal{I}(\Phi)=J\Phi\E \co S^p(\ell^2 \ot_2 \ell^2) \to S^p(\ell^2 \ot_2 \ell^2)$ is a completely bounded Schur multiplier $M_A$ on $ S^p(\ell^2 \ot_2 \ell^2)$. Now, we will use the following lemma.

\begin{lemma}
\label{conj-proche-de-tronc-is-non-dec}
There exists $\epsi>0$ small enough such that if a completely bounded Schur multiplier $M_B \co S^p(\ell^2 \ot_2 \ell^2) \to S^p(\ell^2 \ot_2 \ell^2)$ satisfies $\norm{M_B-M_A}_{\cb,S^p(\ell^2 \ot_2 \ell^2) \to S^p(\ell^2 \ot_2 \ell^2)} \leq \epsi$ then $M_B$ is not decomposable.
\end{lemma}

\begin{proof}
If $n=2^m$, let $B_n$ the $n \times n$-submatrix of the matrix $B$ occupying the same place as $A_n$ in $A$. The triangular inequality and Lemma \ref{prop-comparison-norms-Schur-multiplier-p-infty} give
\begin{align*}
\MoveEqLeft
    \norm{M_{B_n}}_{S^\infty_n \to S^\infty_n}
		\geq \norm{M_{A_n}}_{S^\infty_n \to S^\infty_n}-\norm{M_{B_n}-M_{A_n}}_{S^\infty_n \to S^\infty_n}\\
		&\geq \norm{M_{A_n}}_{S^\infty_n \to S^\infty_n}-n^{\frac{1}{p}}\norm{M_{B_n}-M_{A_n}}_{S^p_n \to S^p_n}\\
		&\geq\norm{M_{A_n}}_{S^\infty_n \to S^\infty_n}-n^{\frac{1}{p}}\norm{M_{B_n}-M_{A_n}}_{\cb,S^p_n \to S^p_n}.
\end{align*}
We take $0<\epsi<c$. Suppose $\norm{M_B-M_A}_{\cb,S^p(\ell^2 \ot_2 \ell^2) \to S^p(\ell^2 \ot_2 \ell^2) } \leq \epsi$. In particular, for any integer $n$, we have $\norm{M_{B_n}-M_{A_n}}_{\cb, S^p_n \to S^p_n} \leq \epsi$. If $n$ is large enough we obtain
$$
\norm{M_{B_n}}_{S^\infty_n \to S^\infty_n} 
\geq cn^{\frac{1}{p}}-\epsi n^{\frac{1}{p}}=(c-\epsi) n^{\frac{1}{p}}
\xra[n \to+\infty]{} +\infty.
$$
Hence the matrix $B$ does not induces a bounded Schur multiplier $M_B$ on $\B(\ell^2 \ot_2 \ell^2)$. By Theorem \ref{prop-continuous-Schur-multiplier-dec-infty}, we conclude that $M_B$ is not decomposable.
\end{proof} 

Now, suppose that there exists a decomposable operator $T \co S^p(\ell^2 \ot_2 \ell^2) \to S^p(\ell^2 \ot_2 \ell^2)$ such that $\norm{T-M_A}_{\cb,S^p(\ell^2 \ot_2 \ell^2) \to S^p(\ell^2 \ot_2 \ell^2)} \leq \epsi$. We can write
$$
T 
=T_1-T_2+\i(T_3-T_4)
$$
where each $T_j$ is a completely positive map acting on $S^p(\ell^2 \ot_2 \ell^2)$. Using the projection $P$ of Theorem \ref{Prop-complementation-Schur-Fourier}, we obtain $P(T)=P(T_1) - P(T_2) + \i(P(T_3) - P(T_4))$. Since each $P(T_j)$ is completely positive, we conclude that the Schur multiplier $P(T) \co S^p(\ell^2 \ot_2 \ell^2) \to S^p(\ell^2 \ot_2 \ell^2)$ is decomposable. Note also that 
\begin{align*}
\MoveEqLeft
   \norm{P(T)-M_A}_{\cb,S^p(\ell^2 \ot_2 \ell^2) \to S^p(\ell^2 \ot_2 \ell^2)} 
=\norm{P(T-M_A)}_{\cb,S^p(\ell^2 \ot_2 \ell^2) \to S^p(\ell^2 \ot_2 \ell^2)}\\
&\leq \norm{T-M_A}_{\cb,S^p(\ell^2 \ot_2 \ell^2) \to S^p(\ell^2 \ot_2 \ell^2)} 
\leq \epsi.
\end{align*}
This is impossible by Lemma \ref{conj-proche-de-tronc-is-non-dec}. Hence the map $M_A=\mathcal{I}(\Phi)$ is $\CB$-strongly non decomposable. By the point 3 of Proposition \ref{prop-transfer-N-in-M-strongly}, we conclude that $\Phi$ is $\CB$-strongly non decomposable.
\end{proof}

\begin{thm}
\label{prop-hyperfinite-von-Neumann-strongly-non-decomposable}
Let $M$ be an infinite dimensional approximately finite-dimensional von Neumann algebra equipped with a faithful normal semifinite trace. Let $1 < p <\infty$, $p \neq 2$. There exists a $\CB$-strongly non decomposable operator $T \co \L^p(M) \to \L^p(M)$.
\end{thm}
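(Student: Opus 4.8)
The plan is to reduce, via the transference result of Proposition \ref{prop-transfer-N-in-M-strongly}, the construction of a $\CB$-strongly non decomposable operator on $\L^p(M)$ to the same problem on $\L^p(N)$ for a conveniently chosen sub-von Neumann algebra $N$ of $M$ carrying a trace-preserving conditional expectation. The crucial observation is that \emph{every} infinite-dimensional von Neumann algebra $M$ with a faithful normal semifinite trace $\tau$ contains a copy of $\ell^\infty(\N)$ (possibly with a weighted counting measure) as such a subalgebra. Indeed, being infinite-dimensional and semifinite, $M$ carries a sequence $(e_n)_{n\geq1}$ of mutually orthogonal nonzero projections with $\tau(e_n)<\infty$; put $N=\ovl{\mathrm{span}}^{\,\w^*}\{e_n:n\geq1\}$, which is a copy of $\ell^\infty(\N)$ (with unit $q=\sum_n e_n$), on which $\tau|_N=\sum_n\tau(e_n)\delta_n$ is semifinite. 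The normal map $x\mapsto\sum_n\tau(e_n)^{-1}\tau(e_nxe_n)e_n$ is a conditional expectation onto $N$ and induces a completely positive contraction $\E\co\L^p(M)\to\L^p(N)$ with $\E J=\Id_{\L^p(N)}$, where $J\co\L^p(N)\to\L^p(M)$ is the canonical completely positive isometry. Since both $\E$ and $J$ are contractively decomposable (Proposition \ref{quest-cp-versus-dec1}), these are exactly the ingredients used in the proof of Proposition \ref{prop-transfer-N-in-M-strongly}, whose part 3 then gives $\dist_{\CB(\L^p(M))}(J T\E,\Dec(\L^p(M)))=\dist_{\CB(\L^p(N))}(T,\Dec(\L^p(N)))$; in particular $T\mapsto J T\E$ carries a $\CB$-strongly non decomposable operator on $\L^p(N)$ to one on $\L^p(M)$.

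Next I would identify $\L^p(N)$. As $\tau|_N$ is a (weighted) counting measure on $\N$, the space $\L^p(N)$ is a weighted $\ell^p$-space, which is completely isometric to $\ell^p(\N)=\L^p(\Z)$; this is the $\L^p$-realization of the identity $*$-isomorphism $\ell^\infty(\N)=\ell^\infty(\Z)$, which merely rescales the trace coordinate by coordinate, exactly as in Case 1.2 of the proof of Theorem \ref{thm-dec=reg-hyperfinite}. Now Theorem \ref{thm-existence-CB-strongly-non-decomposable-abelian-groups}, applied to the infinite locally compact abelian group $\Z$, yields a strongly non regular \emph{completely bounded} Fourier multiplier $T$ on $\L^p(\Z)$. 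Since $\ell^\infty(\Z)$ is approximately finite-dimensional, Theorem \ref{thm-dec=reg-hyperfinite} gives $\Reg(\L^p(\Z))=\Dec(\L^p(\Z))$ isometrically, so $T$ is completely bounded and strongly non decomposable, hence $\CB$-strongly non decomposable by Remark \ref{rem-defi-CB-strongly-non-decomposable}. Transferring $T$ through $J$ and $\E$ as above produces a $\CB$-strongly non decomposable operator on $\L^p(M)$, which proves the theorem.

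In the spirit of the two preceding propositions one may also argue through the type decomposition of the approximately finite-dimensional algebra $M$: if $M$ has a type $\mathrm{II}$ summand it contains a copy of the hyperfinite $\mathrm{II}_1$-factor $\mathcal{R}$ with a trace-preserving conditional expectation (compress to a type $\mathrm{II}_1$ corner and build a dyadic matrix unit system), and one transfers the operator of Proposition \ref{prop-hyperfinite-factor-von-Neumann-strongly-non-decomposable}; if $M$ is of type $\mathrm{I}$ and infinite-dimensional it contains, again with a trace-preserving conditional expectation and up to trace scaling, either $\B(\ell^2)$ (if some homogeneous summand has infinite-dimensional Hilbert space) — whence one transfers the triangular truncation $\mathcal{T}\co S^p\to S^p$ of Corollary \ref{Cor-strongly-non-reg-truncation} — or $K_\infty=\bigoplus_n\M_n$ (if there are blocks $\M_{n_k}$ with $n_k\to\infty$), whence one transfers the operator of Proposition \ref{Prop-Kp-strongly-non-dec}, and in the remaining cases an $\ell^\infty(\N)$, treated as above. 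Since $\ell^\infty(\N)$ embeds into each of $\mathcal{R}$, $\B(\ell^2)$ and $K_\infty$ with a trace-preserving conditional expectation, the first line of argument in fact subsumes all of these.

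I do not expect a genuinely deep obstacle here; the only delicate points are bookkeeping. First, Proposition \ref{prop-transfer-N-in-M-strongly} must be invoked with $N$ possibly non-unital in $M$ and with an $\E$ that is a conditional expectation onto $N$ but not globally $\tau$-preserving on $M$ — this is harmless because the proof of that proposition uses only that $\E$ and $J$ are completely positive contractions with $\E J=\Id$. Second, one needs the (routine) fact that a weighted $\ell^p$-space is completely isometric to $\ell^p=\L^p(\Z)$. The sole structural input is the elementary observation that an infinite-dimensional semifinite von Neumann algebra always contains infinitely many mutually orthogonal nonzero projections of finite trace, which gives the embedded $\ell^\infty(\N)$.
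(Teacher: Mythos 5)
Your argument is correct, but it takes a genuinely different route from the paper's. The paper proves Theorem \ref{prop-hyperfinite-von-Neumann-strongly-non-decomposable} by invoking the Haagerup--Rosenthal--Sukochev classification \cite[Theorem 5.1]{HRS} of $\L^p(M)$ for approximately finite-dimensional $M$ and $p\neq 2$ into thirteen complete (order) isomorphism classes, and then reasons by cases, transferring via Proposition \ref{prop-transfer-N-in-M-strongly} the Hilbert transforms on $\ell^p_\Z$ and $\L^p(\T)$, the triangular truncation of Corollary \ref{Cor-strongly-non-reg-truncation} on $S^p$, the operator of Proposition \ref{Prop-Kp-strongly-non-dec} on $K^p$, and the multiplier of Proposition \ref{prop-hyperfinite-factor-von-Neumann-strongly-non-decomposable} on $\L^p(\mathcal{R})$. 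You bypass the classification entirely: you embed a diagonal copy of $\ell^\infty(\N)$ spanned by mutually orthogonal finite-trace projections, note that the explicit averaging map and the inclusion are completely positive complete contractions composing to the identity (which is all the proof of Proposition \ref{prop-transfer-N-in-M-strongly} actually uses, as you correctly observe, so the non-unitality of $N$ and the mere trace-decreasing property of $\mathbb{E}$ on $M$ are harmless), and transfer the Hilbert transform on $\ell^p_\Z$ after the routine rescaling of the weighted $\ell^p$-space. This buys two things: the argument is far more elementary (no deep Banach-space classification), and it in fact proves more, since it never uses approximate finite-dimensionality of $M$ (only that $M$ is infinite dimensional and semifinite) and would even cover $p=2$. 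What it gives up is only the incidental information the case analysis provides, namely that on each of the thirteen model spaces a \emph{natural} operator (noncommutative Hilbert transform, triangular truncation, Riesz-type multipliers) witnesses the phenomenon, rather than one supported on a commutative diagonal. The only points to write out carefully are the ones you already flag: that $x\mapsto\sum_n e_nxe_n$ is a trace-decreasing normal completely positive contraction, hence extends contractively to every $\L^p$ by interpolation, and that the diagonal rescaling $\L^p(N)\to\ell^p$ is a completely positive complete isometry with completely positive inverse, so that it preserves $\Dec$, $\CB$ and the distance between them.
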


\begin{proof}
By the classification given by \cite[Theorem 5.1]{HRS} (see also \cite[Theorem 10.1]{PS} and \cite{Suk1}), the operator space $\L^p(M)$ is completely isomorphic to precisely one of the following thirteen operator spaces:
$$
\ell^p,\quad \L^p([0,1]),\quad S^p,\quad K^p,\quad K^p \oplus \L^p([0,1]),\quad S^p \oplus \L^p([0,1]),\quad
\L^p([0,1],K^p),  
$$
$$
S^p \oplus \L^p([0,1],K^p),\quad \L^p([0,1],S^p),\quad \L^p(\mathcal{R}),\quad S^p \oplus \L^p(\mathcal{R}),\quad \L^p([0,1],S^p) \oplus \L^p(\mathcal{R}),\quad \L^p(\mathcal{R},S^p).
$$
A careful examination of the proofs of \cite[pages 59-60]{HRS} and \cite[pages 143-145]{Suk1} shows that we can replace ``completely isomorphic'' by ``completely order and completely isomorphic''. 

By \cite[Examples 3.4 and 3.9]{ArV}, the Hilbert transforms $\ell^p_\Z \to \ell^p_\Z$ and $\L^p(\T) \to \L^p(\T)$ are strongly non regular. Since the Schatten space $S^p$ is UMD, by Proposition \ref{lemma-charact-Lp-cb}, these operators are also completely bounded (use \cite[Theorem 2.8]{BGM} for the discrete case). Using Proposition \ref{prop-transfer-N-in-M-strongly}, Proposition \ref{Prop-Kp-strongly-non-dec}, Proposition \ref{prop-hyperfinite-factor-von-Neumann-strongly-non-decomposable} and Corollary \ref{Cor-strongly-non-reg-truncation}, it is not difficult to conclude using a reasoning by cases.
\end{proof}


\begin{cor}
\label{quest-Equivalence-existence}
Suppose $1 \leq p <\infty$, $p \not=2$. Let $M$ be an infinite dimensional approximately finite-dimensional von Neumann algebra equipped with a faithful normal semifinite trace. The following properties are equivalent
\begin{enumerate}
	\item $p=1$.
	\item $\CB(\L^p(M))=\Dec(\L^p(M))$.
	\item $\CB(\L^p(M))=\ovl{\Dec(\L^p(M))}^{\CB(\L^p(M))}$.
\end{enumerate}
\end{cor}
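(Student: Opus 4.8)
The equivalence to be proven is a clean corollary of the structural results already in place, so the plan is essentially to trace implications around the triangle. The implication $2 \Rightarrow 3$ is trivial since a set is contained in its own closure. For $3 \Rightarrow 1$, I would argue by contradiction: if $p \neq 1$ (and $p \neq 2$ by hypothesis), then Theorem \ref{prop-hyperfinite-von-Neumann-strongly-non-decomposable} applied to the infinite dimensional approximately finite-dimensional algebra $M$ produces a $\CB$-strongly non decomposable operator $T \co \L^p(M) \to \L^p(M)$, i.e. a completely bounded operator that does \emph{not} lie in $\ovl{\Dec(\L^p(M))}^{\CB(\L^p(M))}$. This directly contradicts the equality $\CB(\L^p(M)) = \ovl{\Dec(\L^p(M))}^{\CB(\L^p(M))}$ of property 3, so property 3 forces $p = 1$.

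The remaining implication $1 \Rightarrow 2$ is the substantive one. When $p = 1$, I would invoke Theorem \ref{thm-norm=normcb-hyperfinite}: for completely positive maps on $\L^1$-spaces over approximately finite-dimensional von Neumann algebras, the bounded norm and the completely bounded norm coincide, and such maps are automatically completely bounded. More to the point, I want to show every completely bounded $T \co \L^1(M) \to \L^1(M)$ is decomposable. Since $M$ is approximately finite-dimensional (hence QWEP) and $p = 1$, Theorem \ref{thm-dec=reg-hyperfinite} identifies $\Dec(\L^1(M))$ isometrically with the regular operators, and by the case $p=1$ of that theorem (obtained by duality via Lemma \ref{lem-op-mappings}, \eqref{Duality-dec} and \eqref{Duality-reg}), a map $T \co \L^1(M) \to \L^1(M)$ is decomposable if and only if its adjoint $T^* \co M \to M$ is decomposable. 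For $p = \infty$, Theorem \ref{thm-dec=reg-hyperfinite} (the case $p=\infty$, which is \cite[Lemma 5.4.3]{ER}) combined with the fact that $\L^\infty(M,E) = M \ot_{\min} E$ shows that a completely bounded map $M \to M$ is automatically decomposable (indeed regular), with the decomposable norm equal to the completely bounded norm. Chaining these: given $T \in \CB(\L^1(M))$, its Banach adjoint $T^* \co M \to M$ is completely bounded by Lemma \ref{Lemma-op-mapping-1} (or directly), hence decomposable by the $p=\infty$ case, hence $T = (T^*)_*$ is decomposable by the duality statement in Proposition \ref{Prop-Duality-dec}. This gives $\CB(\L^1(M)) \subseteq \Dec(\L^1(M))$; the reverse inclusion $\Dec(\L^1(M)) \subseteq \CB(\L^1(M))$ is Proposition \ref{Prop-cb-leq-dec} (using that $M$ is QWEP, being approximately finite-dimensional). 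Hence property 2 holds.

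The main obstacle, such as it is, is bookkeeping rather than genuine difficulty: I need to make sure the $p=1$ direction is handled entirely by duality with $p=\infty$, since the interpolation identity \eqref{Regular-as-interpolation-space} is only stated for $1 < p < \infty$. The key facts to cite carefully are that approximately finite-dimensional von Neumann algebras are QWEP (so Proposition \ref{Prop-cb-leq-dec} applies), that at the $\L^\infty$ level complete boundedness of a normal map already yields decomposability with matching norm (this is the heart of $1 \Rightarrow 2$, and it rests on \cite[Lemma 5.4.3]{ER} as recorded in the proof of Theorem \ref{thm-dec=reg-hyperfinite}), and that the duality between decomposability on $\L^1$ and on $M$ is exactly Proposition \ref{Prop-Duality-dec}. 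I should also note that the hypothesis $p \neq 2$ is only used for the implication $3 \Rightarrow 1$, where Theorem \ref{prop-hyperfinite-von-Neumann-strongly-non-decomposable} requires it; for $p = 2$ the statement would fail since on a Hilbert space every bounded operator is completely bounded and, via the operator Hilbert space structure, one can check that $\CB = \Dec$ there too, so $p=2$ would be a fourth equivalent case — the corollary deliberately excludes it.
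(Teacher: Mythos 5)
Your proof is correct and follows essentially the same route as the paper, which disposes of $3\Rightarrow 1$ by the contrapositive via Theorem \ref{prop-hyperfinite-von-Neumann-strongly-non-decomposable} and declares $1\Rightarrow 2\Rightarrow 3$ obvious; your detailed justification of $1\Rightarrow 2$ (adjoint into $\CB(M)=\Dec(M)$ for $M$ approximately finite-dimensional, then back down by Proposition \ref{Prop-Duality-dec}, with Proposition \ref{Prop-cb-leq-dec} for the reverse inclusion) is exactly the intended unwinding of that "obvious" step. Your closing aside that $\CB(\L^2(M))=\Dec(\L^2(M))$ is unjustified and in fact doubtful (e.g.\ the Hilbert transform on $\L^2(\T)$ is bounded but not decomposable by Theorem \ref{prop-amenable-discrete-Fourier-multiplier-dec-infty}), but this remark is outside the hypotheses of the corollary and does not affect the proof.
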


\begin{proof}
Implications $1. \Rightarrow 2. \Rightarrow 3.$ are obvious. Theorem \ref{prop-hyperfinite-von-Neumann-strongly-non-decomposable} says that the contraposition of 3. $\Rightarrow$ 1. is true.
\end{proof}

For the case $p=\infty$, the situation is well-known for every von Neumann algebra. Indeed, by \cite[page 171]{Haa}, if $M$ is a von Neumann algebra then we have the equality $\CB(M)=\Dec(M)$ if and only if $M$ is approximately finite-dimensional. Moreover, Haagerup showed that the following properties are equivalent.
\begin{enumerate}
	\item $M$ is approximately finite-dimensional.
	\item For every C*-algebra $A$ and every completely bounded map $T \co A \to M$ we have\\ $\norm{T}_{\dec}=\norm{T}_{\cb}$.
	\item For every integer $n \geq 1$ and for every linear map $T \co \ell^\infty_n \to M$ we have $\norm{T}_{\dec}=\norm{T}_{\cb}$.
	\item There exists a positive constant $C \geq 1$, such that for every integer $n \geq 1$ and every linear map $T \co \ell^\infty_n \to M$ we have $\norm{T}_{\dec} \leq C\norm{T}_{\cb}$.
\end{enumerate}
Now, we show that these equivalences do not admit extensions to the case $1 < p<\infty$. It suffices to use the following proposition and the completely positive and completely isometric inclusion $\ell^p_n \subset \ell^p$.

\begin{prop}
\label{prop-ellpn-dec-cb}
Suppose $1<p<\infty$. There exists an integer $n$ large enough and a (completely bounded) linear map $T \co \ell^p_n \to \ell^p_n$ such that we have $\norm{T}_{\cb,\ell^p_n \to \ell^p_n}<\norm{T}_{\dec,\ell^p_n \to \ell^p_n}$. More precisely, there does not exist a positive constant $C \geq 1$ satisfying for every integer $n \geq 1$ and every linear map $T \co \ell^p_n \to \ell^p_n$ the inequality $\norm{T}_{\dec,\ell^p_n \to \ell^p_n} \leq C\norm{T}_{\cb,\ell^p_n \to \ell^p_n}$.
\end{prop}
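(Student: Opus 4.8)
The plan is to produce, for each fixed $1 < p < \infty$, an explicit sequence of linear maps $T_n \co \ell^p_n \to \ell^p_n$ with $\norm{T_n}_{\dec,\ell^p_n \to \ell^p_n} = n$ but $\norm{T_n}_{\cb,\ell^p_n \to \ell^p_n} \leq n^{1-\delta_p}$ for some $\delta_p > 0$; both assertions of the proposition then follow at once. Indeed, for the first one it suffices to choose $n$ so large that $n^{\delta_p} > 1$, and for the second, a constant $C$ with $\norm{T}_{\dec} \leq C \norm{T}_{\cb}$ for all $n$ and all $T$ would force $n = \norm{T_n}_{\dec} \leq C n^{1-\delta_p}$, i.e. $n^{\delta_p} \leq C$ for every $n$, which is absurd.

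First I would fix $\zeta = \e^{\frac{2\pi \i}{n}}$ and take $T_n \co \ell^p_n \to \ell^p_n$ to be the operator whose matrix in the canonical basis is $A_n = \big[\zeta^{jk}\big]_{1 \leq j,k \leq n}$, a scalar multiple of the discrete Fourier transform of $\Z/n\Z$. Since $\ell^p_n$ is a classical $\L^p$-space and $\ell^\infty_n$ is finite-dimensional, Theorem \ref{thm-dec=reg-hyperfinite} gives $\norm{T_n}_{\dec} = \norm{T_n}_{\reg}$, and the latter equals $\bnorm{|T_n|}_{\ell^p_n \to \ell^p_n}$ by the description of the regular norm recalled before Theorem \ref{prop-reg=decomp} (see \cite[Proposition 1.3.6]{MeN}). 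The modulus of the matrix operator $T_n$ is the matrix operator with entries $|\zeta^{jk}| = 1$; that is, $|T_n| = J_n$, the rank-one operator $x \mapsto \big(\sum_{k=1}^n x_k\big) \mathbf{1}$. Since $\mathbf{1}$ has norm $n^{\frac1p}$ in $\ell^p_n$ and norm $n^{\frac1{p^*}}$ in $\ell^{p^*}_n = (\ell^p_n)^*$, an elementary computation yields $\norm{J_n}_{\ell^p_n \to \ell^p_n} = n^{\frac1{p^*}} \cdot n^{\frac1p} = n$, hence $\norm{T_n}_{\dec,\ell^p_n \to \ell^p_n} = n$. (Alternatively, this value of the decomposable norm follows directly from Theorem \ref{thm-computation-norm-finite-factor} applied to $\ell^\infty_n$ and the unitaries $u_k = (\zeta^{jk})_j$.)

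Next I would estimate the completely bounded norm. By \eqref{normecbcommutatif}, $\norm{T_n}_{\cb,\ell^p_n \to \ell^p_n} = \bnorm{T_n \ot \Id_{S^p}}_{\ell^p_n(S^p) \to \ell^p_n(S^p)}$, where $\ell^p_n(S^p)$ is the Bochner space $\L^p(\{1,\dots,n\},S^p)$. On $\ell^1_n(S^1)$ and on $\ell^\infty_n(S^\infty)$ the triangle inequality gives the crude bound $\bnorm{T_n \ot \Id} \leq n$, while on $\ell^2_n(S^2) = \ell^2_n \ot_2 S^2$ the operator $T_n \ot \Id$ acts only on the first Hilbert factor, so its norm is $\norm{T_n}_{\B(\ell^2_n)} = \sqrt{n}$ because $n^{-\frac12}A_n$ is unitary. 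Interpolating the vector-valued $\L^p$-spaces between the exponents $1$ and $2$ when $p \leq 2$, and between $2$ and $\infty$ when $p \geq 2$ — using $[S^{p_0},S^{p_1}]_\theta = S^p$ and $[\L^{p_0}(\mu,X_0),\L^{p_1}(\mu,X_1)]_\theta = \L^p(\mu,[X_0,X_1]_\theta)$ — one obtains $\norm{T_n}_{\cb,\ell^p_n \to \ell^p_n} \leq n^{1-\delta_p}$ with $\delta_p = \min\{\tfrac1p, 1 - \tfrac1p\} > 0$. This completes the argument as explained in the first paragraph.

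The one point that is not purely routine is the last interpolation step: one should check carefully that $\ell^p_n(S^p)$ really is obtained by complex interpolation from the relevant endpoint Bochner spaces compatibly with the action of $T_n \ot \Id$, which is a standard fact about interpolation of vector-valued $\L^p$-spaces and of Schatten classes, but deserves to be made explicit. No operator-space subtlety enters, since \eqref{normecbcommutatif} has already reduced the computation of the $\cb$-norm to a Banach-space norm of $T_n \ot \Id_{S^p}$ on ordinary Bochner spaces. Everything else — the computation of $\norm{J_n}$ and of $\norm{T_n}_{\B(\ell^2_n)}$, and the two elementary deductions in the first paragraph — is straightforward.
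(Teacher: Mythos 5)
Your argument is correct, but it takes a genuinely different route from the paper. The paper's proof is soft: it invokes Theorem \ref{thm-existence-CB-strongly-non-decomposable-abelian-groups} to get a completely bounded, strongly non regular Fourier multiplier on $\L^p(\T)$ (essentially the Hilbert transform) and then quotes an external discretization argument (\cite[Proposition 3.8]{Arh5}, \cite[Theorem 3.5]{Arh1}) to produce Fourier multipliers $C_{a_n}$ on $\ell^p_n$ with $\norm{C_{a_n}}_{\cb}\leq 1$ and $\norm{C_{a_n}}_{\reg}\to\infty$. You instead compute everything explicitly for the DFT matrix: $\norm{T_n}_{\dec}=\norm{T_n}_{\reg}=\bnorm{|T_n|}=n$ (all three identifications are legitimately available in the paper, via Theorem \ref{thm-dec=reg-hyperfinite}, the remark before Theorem \ref{prop-reg=decomp}, and the entrywise description of the modulus for matrices on $\ell^p_n$; the renormalized reading of Theorem \ref{thm-computation-norm-finite-factor} gives the same value), while \eqref{normecbcommutatif} plus the three endpoint bounds $n$, $\sqrt{n}$, $n$ at $p=1,2,\infty$ and interpolation give $\norm{T_n}_{\cb}\leq n^{1-\min\{1/p,\,1/p^*\}}$. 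What each approach buys: yours is self-contained (no dependence on Section \ref{sec:Existence-of-strongly} or on the external references) and quantitative, giving an explicit polynomial rate $n^{\min\{1/p,1/p^*\}}$ for the gap; the paper's version produces examples with \emph{uniformly bounded} cb norm and unbounded decomposable norm, which is a per-operator statement of a slightly different flavour. Your flagged interpolation step is indeed the only delicate point, and only because of the $p=\infty$ endpoint when $p>2$; you can sidestep it entirely by noting that $A_n$ is symmetric, so $\norm{T_n}_{\cb,\ell^p_n\to\ell^p_n}=\norm{T_n}_{\cb,\ell^{p^*}_n\to\ell^{p^*}_n}$ by duality and Lemma \ref{Lemma-op-mapping-1}, reducing to $1<p\leq 2$ where only the exponents $1$ and $2$ are needed and Calder\'on's theorem applies without caveat.
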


\begin{proof}
By Theorem \ref{thm-existence-CB-strongly-non-decomposable-abelian-groups}, there exists a strongly non regular Fourier multiplier $M_{\varphi} \co \L^p(\T) \to \L^p(\T)$ which is completely bounded. We can suppose $\norm{M_{\varphi}}_{\cb} \leq 1$. 
Now, we approximate $M_{\varphi}$ using the method of the proof \cite[Proposition 3.8]{Arh5} (and \cite[proof of Theorem 3.5]{Arh1}). We deduce the existence of Fourier multipliers $C_{a_n}$ on $\L^p(\Z/n\Z)=\ell^p_n$ with $\norm{C_{a_n}}_{\cb} \leq 1$ and arbitrary large $\norm{C_{a_n}}_{\reg}$ when $n$ goes to the infinity. We can apply this method since $\norm{T}_{\dec}=\norm{T}_{\reg}=\sup_X \norm{T \ot \Id_X}_{\L^p(\Omega,X) \to \L^p(\Omega,X)}$.
\end{proof}

\section{Property $(\mathcal{P})$ and decomposable Fourier multipliers}
\label{sec:Property-P}

In this section, we give a proof of Proposition \ref{prop-prop-P-amenable-groups-multipliers} which is our characterization of selfadjoint contractively decomposable Fourier multipliers. Section \ref{Application-Matsaev} describes new Fourier multipliers which satisfy the noncommutative Matsaev inequality, relying on Theorem \ref{Th-Factorizable-matricial mutipliers} which gives the new result of factorizability.

\subsection{A characterization of selfadjoint contractively decomposable multipliers}
\label{A-characterization-of-selfadjoint-contractively-decomposable-multipliers}

Let $M$ be a von Neumann algebra equipped with a normal semifinite faithful trace and $T \co M \to M$ be a weak* continuous operator. Recall the following definition from \cite[Definition 3]{Kri}. We say that $T$ satisfies $(\mathcal{P})$ if there exist linear maps $v_1$, $v_2 \co M \to M$ such that the linear map $
\begin{bmatrix} 
   v_1  &  T \\
   T^\circ  &  v_2  \\
\end{bmatrix}
\co \M_2(M) \to \M_2(M)
$ 
is completely positive, completely contractive, weak* continuous and selfadjoint\footnote{\thefootnote.  The assumption selfadjoint is equivalent to the selfadjointness of $v_1$, $v_2$ and $T$.}. In this case, $v_1$ and $v_2$ are completely positive, weak* continuous and selfadjoint. 
An operator $T$ satisfying $(\mathcal{P})$ is necessarily contractively decomposable, weak* continuous and selfadjoint. The converse statement is false by \cite[Example 2]{Kri} in general. 

We start to show that the converse is true for Fourier multipliers on discrete groups. If $M_{\phi} \co \VN(G,\sigma)\to \VN(G,\sigma)$ is a bounded Fourier multiplier on a discrete group $G$ equipped with a $\T$-valued $2$-cocycle $\sigma$, it is not difficult to check that $(M_{\phi})^\circ=M_{\check{\ovl{\phi}}}$ and that $M_\phi$ is selfadjoint in the sense of Section \ref{sec:Markov-maps} if and only if its symbol $\phi \co G \to \C$ is a real-valued function. Finally, it is straightforward to prove that the preadjoint $(M_\phi)_* \co \L^1(\VN(G,\sigma)) \to \L^1(\VN(G,\sigma))$ of $M_\phi$ identifies to $M_{\check{\phi}}$.

\begin{lemma}
\label{Lemma-extension-L1}
Let $G$ be a discrete group equipped with a $\T$-valued $2$-cocycle $\sigma$. Suppose that $\psi_1,\psi_2,\psi_3,\psi_4 \co G \to \C$ are some complex-valued functions inducing some bounded Fourier multipliers $M_{\psi_1},M_{\psi_2},M_{\psi_3}$ and $M_{\psi_4}$ on the von Neumann algebra $\VN(G,\sigma)$. If the operator 
$$
T=\begin{bmatrix} 
    M_{\psi_1} & M_{\psi_2} \\
    M_{\psi_3} & M_{\psi_4} \\
\end{bmatrix} 
\co \M_2(\VN(G,\sigma)) \to \M_2(\VN(G,\sigma))
$$ 
is completely contractive then it induces a completely contractive operator $T_1$ on the space $S_2^1(\L^1(\VN(G,\sigma)))$. Finally the Banach adjoint $(T_1)^* \co \M_2(\VN(G,\sigma)) \to \M_2(\VN(G,\sigma))$ identifies to $
\begin{bmatrix} 
M_{\check{\psi_1}} & M_{\check{\psi_2}} \\ 
M_{\check{\psi_3}} & M_{\check{\psi_4}} 
\end{bmatrix}$.
\end{lemma}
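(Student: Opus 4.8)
The statement asks to go from a completely contractive matrix of Fourier multipliers acting at the level $p=\infty$ to the corresponding completely contractive operator at the level $p=1$, and to identify the Banach adjoint. The natural route is to realize $T$ as the adjoint (or preadjoint) of an explicitly identified operator and then invoke the duality facts already collected in Subsection~\ref{section-twisted-von-Neumann} together with Lemma~\ref{Lemma-dualite-calcul-adjoint}. Concretely, first I would observe that $T$ is weak* continuous: each $M_{\psi_k}\colon \VN(G,\sigma)\to\VN(G,\sigma)$ is a bounded Fourier multiplier on a \emph{discrete} group, hence (as recalled just before the statement, and via the identification of $\L^1(\VN(G,\sigma))^*=\VN(G,\sigma)$ together with $(M_{\psi_k})_*=M_{\check\psi_k}$) it is the adjoint of the bounded multiplier $M_{\check\psi_k}$ on $\L^1(\VN(G,\sigma))$. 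Assembling these four preadjoints into a matrix gives a bounded operator
$$
S=\begin{bmatrix} M_{\check\psi_1} & M_{\check\psi_3} \\ M_{\check\psi_2} & M_{\check\psi_4} \end{bmatrix}
\co S^1_2(\L^1(\VN(G,\sigma))) \to S^1_2(\L^1(\VN(G,\sigma))),
$$
and by Lemma~\ref{Lemma-dualite-calcul-adjoint} (the statement on Banach preadjoints of weak* continuous operators $\M_2(M)\to\M_2(M)$, applied with $M=\VN(G,\sigma)$) the Banach adjoint of $S$ is exactly $T$, after transposing the $2\times 2$ block indices — which matches the transposed arrangement appearing in $S$ above.

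Next I would transfer the norm information. Since $T=S^*$ as a map $\M_2(\VN(G,\sigma))\to\M_2(\VN(G,\sigma))$ and these spaces are $S^1_2(\L^1(\VN(G,\sigma)))^*=\M_2(\VN(G,\sigma))^{\op}$ in the operator space category up to the $\op$-flip, I would use Lemma~\ref{Lemma-op-mapping-1} to pass between an operator and its opposite without changing the (complete) norm, and the elementary fact that a (weak* continuous) operator and its Banach adjoint/preadjoint have the same completely bounded norm (the standard duality for completely bounded maps, used repeatedly in the paper, e.g. in the proof of Lemma~\ref{lemma-duality-Fourier-multipliers}). This yields
$$
\|S\|_{\cb,S^1_2(\L^1(\VN(G,\sigma)))\to S^1_2(\L^1(\VN(G,\sigma)))}
=\|T\|_{\cb,\M_2(\VN(G,\sigma))\to\M_2(\VN(G,\sigma))}\le 1,
$$
so $S$ is completely contractive. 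Now the desired operator $T_1$ on $S^1_2(\L^1(\VN(G,\sigma)))$ is obtained by transposing back the block indices of $S$ (equivalently, conjugating $S$ by the completely isometric flip $\mathcal{F}$ of \eqref{conj-cp}, which is a complete isometry and in fact completely positive), so that $T_1$ has the same block pattern as $T$; this conjugation does not change the completely bounded norm, whence $T_1$ is completely contractive. Finally, the Banach adjoint computation for $(T_1)^*$ is then a matter of tracking the two transpositions and applying Lemma~\ref{Lemma-dualite-calcul-adjoint} once more, using $((M_{\psi_k})_*)^* = M_{\psi_k}$ and $\check{\check\psi_k}=\psi_k$; one reads off $(T_1)^*=\begin{bmatrix} M_{\check\psi_1} & M_{\check\psi_2} \\ M_{\check\psi_3} & M_{\check\psi_4}\end{bmatrix}$ as claimed.

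The only genuinely delicate point — and the one I would be most careful about — is bookkeeping of the two ``transposes'': the $S^p$-transpose in the $2\times 2$ block component that appears when passing to preadjoints of matricial operators (as in Lemma~\ref{Lemma-dualite-calcul-adjoint} and Lemma~\ref{Lemma-op-mapping-1}), versus the order-of-composition transpose inherent in $M_\psi\mapsto (M_\psi)_*=M_{\check\psi}$. These must be combined so that the final block pattern of $T_1$ literally equals that of $T$ (not its block-transpose), and so that $(T_1)^*$ comes out with symbols $\check\psi_k$ placed exactly as stated. I would therefore carry out the identification on the dense subalgebra spanned by the $e_{ij}\ot\lambda_{\sigma,s}$ via the pairing formula \eqref{Markov-dual-Lp}/Plancherel-type trace duality, checking the matrix entries one at a time; everything else (complete contractivity, existence of $T_1$) is then immediate from the duality lemmas already in the paper.
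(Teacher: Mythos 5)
There is a genuine gap: your argument establishes the complete contractivity of the wrong operator at the $\L^1$ level. Your map $S$ is the Banach preadjoint of $T$, so its entries are the multipliers with \emph{checked} symbols $M_{\check{\psi_k}}$ (and, by Lemma \ref{Lemma-dualite-calcul-adjoint}, with \emph{no} block transposition: the paper's duality pairing is entrywise, so the preadjoint of $[T_{ij}]$ is $[(T_{ij})_*]$). Duality then yields $\norm{S}_{\cb}=\norm{T}_{\cb}\leq 1$, i.e.\ the complete contractivity of $[M_{\check{\psi_k}}]$ on $S^1_2(\L^1(\VN(G,\sigma)))$. But the operator $T_1$ you must produce is $[M_{\psi_k}]$ with the \emph{original} symbols, and no permutation of block indices turns $\check{\psi_k}$ into $\psi_k$. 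Equivalently, what you need is that $[M_{\check{\psi_k}}]$ is completely contractive on $\M_2(\VN(G,\sigma))$, and this does not follow formally from the hypothesis on $[M_{\psi_k}]$. The paper supplies exactly this missing step: it first reduces to the trivial cocycle using the cocycle-independence of the cb-norm (point 5 of Theorem \ref{thm-complementation-subgroups-Fourier-multipliers-CB-twisted}), then conjugates $T$ by $\Id_{\M_2}\ot\kappa$, where $\kappa(\lambda_s)=\lambda_{s^{-1}}$ is a trace-preserving $*$-anti-automorphism, to convert $[M_{\psi_k}]$ into $[M_{\check{\psi_k}}]$; since $\kappa\co\VN(G)\to\VN(G)^{\op}$ is a complete isometry, Lemma \ref{Lemma-op-mapping-1} gives the required bound. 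Your proposal contains no substitute for this step.

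Two further bookkeeping problems. First, the block transposition you build into $S$ is spurious, and the device you propose to undo it, conjugation by the flip $\mathcal{F}$ of \eqref{conj-cp}, does not implement a block transposition at all: $\mathcal{F}[T_{ij}]\mathcal{F}=[T_{3-i,3-j}]$ is the half-turn rotation, not $[T_{ji}]$. Had a genuine transpose been needed it could not have been used anyway, since the transpose on $S^1_2$ is not a complete contraction. Second, once $T_1=[M_{\psi_k}]$ is known to be bounded on $S^1_2(\L^1(\VN(G,\sigma)))$, the identification of $(T_1)^*$ is immediate from Lemma \ref{Lemma-dualite-calcul-adjoint} and $(M_{\psi_k})^*=M_{\check{\psi_k}}$; the identity $\check{\check{\psi_k}}=\psi_k$ plays no role.
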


\begin{proof}
According to Theorem \ref{Prop-complementation-Schur-Fourier}, we have 
$$
\norm{T}_{\cb,\M_2(\VN(G,\sigma)) \to \M_2(\VN(G,\sigma))} 
=\norm{T}_{\cb, \M_2(\VN(G)) \to \M_2(\VN(G))}
$$
and similarly, $\norm{T}_{\cb,S^1_2(\L^1(\VN(G,\sigma))) \to S^1_2(\L^1(\VN(G,\sigma)))} = \norm{T}_{\cb, S^1_2(\L^1(\VN(G))) \to S^1_2((\VN(G)))}$ provided that one of these terms is finite. So if we prove the first statement of the lemma for the trivial cocycle $\sigma = 1$, then it follows for a general $\T$-valued $2$-cocycle $\sigma$. We thus suppose now that $\sigma = 1$ is trivial. Consider the $*$-anti-automorphism $\kappa \co \VN(G) \to \VN(G)$, $\lambda_s \mapsto \lambda_{s^{-1}}$. An easy computation gives $(\Id_{\M_2} \ot \kappa)
\begin{bmatrix} 
    M_{\psi_1} & M_{\psi_2} \\
    M_{\psi_3} & M_{\psi_4} \\
\end{bmatrix} (\Id_{\M_2} \ot \kappa)=\begin{bmatrix} 
M_{\check{\psi_1}} & M_{\check{\psi_2}} \\ 
M_{\check{\psi_3}} & M_{\check{\psi_4}} 
\end{bmatrix}$ where $\check{\psi_i}(s)=\psi_i(s^{-1})$. Since the map $\kappa \co \VN(G) \to \VN(G)^{\op}$ is
a complete isometry, we conclude that the linear map $\begin{bmatrix} 
M_{\check{\psi_1}} & M_{\check{\psi_2}} \\ 
M_{\check{\psi_3}} & M_{\check{\psi_4}} 
\end{bmatrix} \co \M_2(\VN(G)) \to \M_2(\VN(G))$ is completely contractive. Moreover, by Lemma \ref{lemma-duality-Fourier-multipliers}, each symbol $\psi_i$ induces a bounded Fourier multiplier $M_{\psi_i} \co \L^1(\VN(G)) \to \L^1(\VN(G))$. Consequently, $\begin{bmatrix} 
    M_{\psi_1} & M_{\psi_2} \\
    M_{\psi_3} & M_{\psi_4} \\
\end{bmatrix}$ induces a bounded operator on $S_2^1(\L^1(\VN(G)))$. 
Furthermore, by Proposition \ref{Lemma-dualite-calcul-adjoint} and Lemma \ref{lemma-duality-Fourier-multipliers}, we see that the Banach adjoint of the operator $
\begin{bmatrix} 
M_{\psi_1} & M_{\psi_2} \\ 
M_{\psi_3} & M_{\psi_4} 
\end{bmatrix}
\co S_2^1(\L^1(\VN(G))) \to S_2^1(\L^1(\VN(G)))$ identifies to the complete contraction
$$
\begin{bmatrix} 
(M_{\psi_1})^* & (M_{\psi_2})^* \\ 
(M_{\psi_3})^* & (M_{\psi_4})^* 
\end{bmatrix}
=\begin{bmatrix} 
M_{\check{\psi_1}} & M_{\check{\psi_2}} \\ 
M_{\check{\psi_3}} & M_{\check{\psi_4}} 
\end{bmatrix} \co \M_2(\VN(G)) \to \M_2(\VN(G)).
$$ 
We conclude that the operator $\begin{bmatrix} 
    M_{\psi_1} & M_{\psi_2} \\
    M_{\psi_3} & M_{\psi_4} \\
\end{bmatrix} \co S_2^1(\L^1(\VN(G))) \to S_2^1(\L^1(\VN(G)))$ is completely contractive.
Finally, the last statement of the lemma for a general $\T$-valued $2$-cocycle $\sigma$ follows from
\begin{align*}
\tau_{G,\sigma}\left( (M_\psi)^* (\lambda_{\sigma,s}) \lambda_{\sigma,t}\right) & = \tau_{G,\sigma}\left(\lambda_{\sigma,s} M_\psi (\lambda_{\sigma,t}) \right) = \psi(t) \tau_{G,\sigma}\left(\lambda_{\sigma,s}\lambda_{\sigma,t}\right) \\
& = \psi(t) \sigma(s,t) \delta_{s,t^{-1}} = \psi(s^{-1}) \sigma(s,s^{-1}) \delta_{s,t^{-1}}
\end{align*}
and
\[ \tau_{G,\sigma} \left( M_{\check\psi}(\lambda_{\sigma,s}) \lambda_{\sigma,t} \right) = \check{\psi}(s) \tau_{G,\sigma}\left( \lambda_{\sigma,s} \lambda_{\sigma,t} \right) = \psi(s^{-1}) \sigma(s,s^{-1}) \delta_{s,t^{-1}}. \]
\end{proof}

\begin{prop}
\label{prop-prop-P-amenable-groups-multipliers}
Let $G$ be a discrete group equipped with a $\T$-valued $2$-cocycle $\sigma$. Let $\phi \co G \to \C$ be a complex-valued function. The following assertions are equivalent.
\begin{enumerate}
	\item The complex function $\phi$ induces a selfadjoint contractively decomposable  Fourier multiplier $M_{\phi} \co \VN(G,\sigma) \to \VN(G,\sigma)$ on the twisted group von Neumann algebra $\VN(G,\sigma)$.
	
	\item The function $\phi$ induces a Fourier multiplier $M_{\phi} \co \VN(G,\sigma) \to \VN(G,\sigma)$ with $(\mathcal{P})$.
	
	\item There exist some real-valued functions $\varphi_1,\varphi_2 \co G \to \R$ such that
\[
\begin{bmatrix}
   M_{\varphi_1}  &  M_{\phi} \\
   M_{\phi}^\circ  &  M_{\varphi_2}  \\
\end{bmatrix}
\co \M_2(\VN(G,\sigma)) \to \M_2(\VN(G,\sigma))
\]
is unital, completely positive, weak* continuous and selfadjoint.
\end{enumerate}
\end{prop}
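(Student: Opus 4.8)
The scheme is a cycle of implications $3 \Rightarrow 2 \Rightarrow 1 \Rightarrow 3$; the implications $3 \Rightarrow 2$ and $2 \Rightarrow 1$ are essentially immediate from the definition of property $(\mathcal{P})$ recalled above, so the genuine work lies in the implication $1 \Rightarrow 3$, which is where the new phenomenon of Proposition \ref{prop-prop-P-amenable-groups-multipliers} (the collapse of ``separately contractively decomposable and selfadjoint'' into $(\mathcal{P})$) takes place. For $3 \Rightarrow 2$ one only notes that a unital completely positive completely contractive selfadjoint $2\times 2$ block operator is in particular what is required in the definition of $(\mathcal{P})$, with $v_1 = M_{\varphi_1}$, $v_2 = M_{\varphi_2}$ (weak* continuity being automatic for Fourier multipliers, and selfadjointness of the block operator being equivalent to $\varphi_1, \varphi_2$ real-valued and $\phi$ giving a selfadjoint multiplier). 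For $2 \Rightarrow 1$ one recalls that any operator with $(\mathcal{P})$ is contractively decomposable, weak* continuous and selfadjoint.

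For the key implication $1 \Rightarrow 3$, I would start from a selfadjoint contractively decomposable $M_\phi \co \VN(G,\sigma) \to \VN(G,\sigma)$. By Proposition \ref{Prop-dec-inf-atteint} (infimum attained) there are completely positive $v_1, v_2 \co \VN(G,\sigma) \to \VN(G,\sigma)$ with $\begin{bmatrix} v_1 & M_\phi \\ M_\phi^\circ & v_2 \end{bmatrix}$ completely positive and $\max\{\|v_1\|,\|v_2\|\} \leq 1$; complete positivity plus this norm bound forces the $2\times2$ block operator to be completely contractive (using Lemma \ref{Lemma-Matricial-inequality3} / the standard fact that a completely positive block matrix with diagonal of cb-norm $\le 1$ is completely contractive). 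Next I would use the averaging projection of Theorem \ref{thm-complementation-subgroups-Fourier-multipliers-CB-twisted} applied with $I = \{1,2\}$, $H = G$, $\omega = 1$ to the $2\times2$ operator: this produces a new completely positive, completely contractive block operator $\begin{bmatrix} M_{\varphi_1} & M_\phi \\ M_\phi^\circ & M_{\varphi_2} \end{bmatrix}$ all of whose entries are now Fourier multipliers (the off-diagonal entries are unchanged since $P^{\infty}_{I,H}$ fixes multipliers; the diagonal entries become multipliers $M_{\varphi_1}, M_{\varphi_2}$). The properties preserved by $P^\infty_{I,H}$ in parts (2) and (4) of Theorem \ref{thm-complementation-subgroups-Fourier-multipliers-CB-twisted} — preservation of complete positivity, of selfadjointness, and unitality when the diagonal entries are unital — are exactly what is needed, provided I first arrange that the diagonal entries $v_1, v_2$ are themselves unital; since $\varphi_1, \varphi_2$ are real-valued once we apply the selfadjoint-preservation part (using that $M_\phi$ selfadjoint means $\phi$ real-valued), and since $M_{\varphi_i}$ completely positive implies $\varphi_i$ positive definite, we can replace $\varphi_i$ by $\varphi_i/\varphi_i(e) \le \varphi_i$ pointwise — or better, add a suitable nonnegative scalar multiple of $1_{\{e\}}$ to each $\varphi_i$ to make the diagonal unital while keeping the whole block completely positive and completely contractive (a nonnegative scalar multiple of the identity multiplier can be absorbed because $\varphi_i(e) \le \|M_{\varphi_i}\| \le 1$). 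Finally, weak* continuity of all entries is automatic for Fourier multipliers on a discrete group von Neumann algebra, so the resulting block operator is unital, completely positive, weak* continuous and selfadjoint, which is assertion 3.

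The main obstacle I anticipate is the bookkeeping around unitality: the averaging projection preserves unitality of the diagonal entries only if they are already unital, whereas the $v_1, v_2$ coming from Proposition \ref{Prop-dec-inf-atteint} are merely completely contractive and completely positive. The fix is to observe that $\mathrm{Id}_{\VN(G,\sigma)} - M_{\varphi_i}$ is completely positive when $\varphi_i$ is positive definite with $\varphi_i(e) \le 1$ (equivalently: $1 - \varphi_i$ is positive definite, which follows since $\varphi_i(e) \le 1$ and $\varphi_i$ positive definite imply $\|\varphi_i\|_\infty = \varphi_i(e) \le 1$, so $1 - \varphi_i$ is positive definite by a standard Schur-product / Naimark argument), so we may replace $M_{\varphi_i}$ by $M_{\varphi_i} + (1-\varphi_i(e))\,\mathrm{Id}$ to make the diagonal unital without destroying complete positivity of the $2\times2$ block; one checks the block stays completely contractive because adding the same nonnegative diagonal perturbation to a completely positive, completely contractive block with unital off-diagonal pattern keeps the result in the unit ball (this uses that $\|M_{\varphi_i}\| = \varphi_i(e)$ for a positive definite symbol). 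I would also need to verify carefully that part (4) of Theorem \ref{thm-complementation-subgroups-Fourier-multipliers-CB-twisted} is applicable, namely that the operator we feed in is normal with diagonal entries that are unital — which is precisely why the unitality normalization above must be done \emph{before} applying the projection. Once these compatibility points are settled the remaining verifications are routine, and the cycle of implications closes.
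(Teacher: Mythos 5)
Your cycle of implications and the first steps of $1 \Rightarrow 3$ (extracting completely positive contractions $v_1,v_2$, observing that the resulting completely positive block is automatically completely contractive at the level $p=\infty$, and projecting with Theorem \ref{thm-complementation-subgroups-Fourier-multipliers-CB-twisted} so that all four entries become Fourier multipliers) match the paper's argument, as does the final unitality adjustment of the diagonal. However, there is a genuine gap where you claim that the diagonal symbols come out real-valued ``once we apply the selfadjoint-preservation part''. Part 4 of Theorem \ref{thm-complementation-subgroups-Fourier-multipliers-CB-twisted} preserves selfadjointness only when the \emph{input} block operator is selfadjoint in the sense $\tau(T(x)y^*)=\tau(xT(y)^*)$, and the maps $v_1,v_2$ furnished by contractive decomposability are merely completely positive: complete positivity does not imply this selfadjointness (for instance $x\mapsto uxu^*$ with $u$ unitary is completely positive but not selfadjoint in general, and a positive definite symbol $\psi$ satisfies $\psi(s^{-1})=\overline{\psi(s)}$ without being real-valued, e.g. $\psi(n)=e^{\i n\theta}$ on $\Z$). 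So after projection you only know that the diagonal symbols $\psi_1,\psi_2$ are positive definite, not real, and assertion 3 does not yet follow.

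This is exactly the step where the paper has to work: it pushes the projected block $T=\begin{bmatrix} M_{\psi_1} & M_\phi \\ M_{\check{\phi}} & M_{\psi_2}\end{bmatrix}$ down to $S^1_2(\L^1(\VN(G,\sigma)))$ via Lemma \ref{Lemma-extension-L1}, takes the Banach adjoint to obtain a completely positive completely contractive block with symbols $\overline{\psi_1},\overline{\psi_2}$ on the diagonal and the off-diagonal entries swapped, conjugates by the transpose map to restore the off-diagonal pattern, and finally averages, $\frac12(T+R)$, to replace $\psi_i$ by $\Re\psi_i$. Your argument needs this symmetrization (or an equivalent proof that $\begin{bmatrix} M_{\overline{\psi_1}} & M_\phi \\ M_{\check{\phi}} & M_{\overline{\psi_2}}\end{bmatrix}$ is still completely positive) before the unitality fix. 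A secondary omission: before applying the averaging projection at $p=\infty$ you must first replace the block by its weak*-continuous part via Proposition \ref{Prop-recover-weak-star-continuity} (as in the proof of Proposition \ref{Prop-Duality-dec}), since $v_1,v_2$ need not be normal and $P^\infty_{I,H}$ is only defined on weak*-continuous completely bounded maps.
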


\begin{proof}
The statements 3. $\Rightarrow$ 2. and 2. $\Rightarrow$ 1. are obvious. Now, we show the lat implication 1. $\Rightarrow$ 3. The multiplier $M_\phi$ is selfadjoint thus we have $\ovl{\phi}=\phi$ and finally $(M_{\phi})^\circ=M_{\check{\ovl{\phi}}}=M_{\check{\phi}}$. Since the operator $M_\phi$ is contractively decomposable there exist linear maps $v_1,v_2 \co \VN(G,\sigma) \to \VN(G,\sigma)$ such that the map 
$ 
\begin{bmatrix} 
v_1 & M_\phi \\ 
M_{\check{\phi}} & v_2 
\end{bmatrix} 
\co \M_2(\VN(G,\sigma)) \to \M_2(\VN(G,\sigma))
$ 
is completely positive and completely contractive. By using the same reasoning as the one in the proof of Proposition \ref{Prop-Duality-dec}, we can suppose that this map is in addition weak* continuous. Since $G$ is discrete, we can use projection $P_{\{1,2\},G,\sigma}^\infty \co \CB_{\w^*}(\M_2(\VN(G,\sigma))) \to \mathfrak{M}^{\infty,\cb}_{\{1,2\}}(G,\sigma)$ from Theorem \ref{Prop-complementation-Schur-Fourier}. We obtain that
\begin{align*}
\MoveEqLeft
P_{\{1,2\},G,\sigma}^\infty\left(
\begin{bmatrix} 
v_1 & M_\phi \\ 
M_{\check{\phi}} & v_2 
\end{bmatrix} 
\right)
=
\begin{bmatrix} 
   P_G^\infty(v_1)  & P_G^\infty(M_\phi)  \\
  P_G^\infty(M_{\check{\phi}})  & P_G^\infty(v_2)  \\
  \end{bmatrix} 
		=
\begin{bmatrix} 
   P_G^\infty(v_1)  & M_\phi  \\
   M_{\check{\phi}}  & P_G^\infty(v_2)  \\
\end{bmatrix} .
\end{align*}
We deduce that there exist some complex functions $\psi_1,\psi_2 \co G \to \C$ such that the map 
$
T
\overset{\textrm{def}}= 
\begin{bmatrix} 
M_{\psi_1} & M_\phi \\ 
M_{\check{\phi}} & M_{\psi_2} 
\end{bmatrix} 
\co \M_2(\VN(G,\sigma)) \to \M_2(\VN(G,\sigma))
$ 
is completely positive, completely contractive and weak* continuous. By Lemma \ref{Lemma-extension-L1}, the operator $T$ induces a completely positive and completely contractive operator $T_1 \co S^1_2(\L^1(\VN(G,\sigma))) \to S^1_2(\L^1(\VN(G,\sigma)))$. 
The operator $(T_1)^* \co \M_2(\VN(G,\sigma)) \to \M_2(\VN(G,\sigma))$ is also completely contractive and completely positive by Lemma \ref{Lemma-adjoint-cp}. Again by Lemma \ref{Lemma-extension-L1}, we have 
$
(T_1)^* 
= \begin{bmatrix} 
M_{\check{\psi_1}} & M_{\check{\phi}} \\ 
M_{\phi} & M_{\check{\psi_2}} 
\end{bmatrix}
=
\begin{bmatrix} 
M_{\ovl{\psi_1}} & M_{\check{\phi}} \\ 
M_{\phi} & M_{\ovl{\psi_2}} 
\end{bmatrix}
$ 
where we used \cite[Proposition C.4.2]{BHV} and the fact that $\psi_1$ and $\psi_1$ are definite positive since $M_{\psi_1} \co \VN(G,\sigma) \to \VN(G,\sigma)$ and $M_{\psi_2} \co \VN(G,\sigma) \to \VN(G,\sigma)$ are completely positive. 

Consider the transpose map\footnote{\thefootnote. Here $\M_2^\op$ identifies to the algebra $\M_2$ with the multiplication reversed.} $\eta \co \M_2 \to \M_2^\op$, $A \mapsto \trans{A}$, which is an algebra isomorphism, hence a complete isometry and a completely positive map (see also Lemma \ref{equ-1-proof-op-mappings}). An easy computation gives $(\eta \ot \Id_{\VN(G,\sigma)})
\begin{bmatrix} 
M_{\ovl{\psi_1}} & M_{\check{\phi}} \\ 
M_{\phi} & M_{\ovl{\psi_2}} 
\end{bmatrix} (\eta \ot \Id_{\VN(G,\sigma)})
=\begin{bmatrix} 
M_{\ovl{\psi_1}} &  M_{\phi}\\ 
M_{\check{\phi}} & M_{\ovl{\psi_2}} 
\end{bmatrix}$. We conclude that the linear map $R\overset{\textrm{def}}=
\begin{bmatrix} 
M_{\ovl{\psi_1}} &  M_{\phi}\\ 
M_{\check{\phi}} & M_{\ovl{\psi_2}} 
\end{bmatrix} \co \M_2(\VN(G,\sigma)) \to \M_2(\VN(G,\sigma))$ is completely contractive and completely positive.

Now, $\frac{1}{2}(T+R) \co \M_2(\VN(G,\sigma)) \to \M_2(\VN(G,\sigma))$ is a matrix block multiplier $\begin{bmatrix} 
M_{\psi_3} & M_\phi \\ 
M_{\check{\phi}} & M_{\psi_4} 
\end{bmatrix} $ which is completely positive, completely contractive and selfadjoint with $M_\phi$ in the corner. Note that $M_{\psi_3}$ and $M_{\psi_4}$ are completely positive. So $\psi_3(e) =M_{\psi_3}(1)=\norm{M_{\psi_3}} \leq 1$ and similarly for $\psi_4$. Hence the linear maps $w_1=M_{\psi_3}+\tau_{G,\sigma}(\cdot)(1-\psi_3(e))1_{\VN(G,\sigma)}$ and $w_2=M_{\psi_4}+\tau_{G,\sigma}(\cdot)(1-\psi_4(e))1_{\VN(G,\sigma)}$ are completely positive, selfadjoint and weak* continuous. We have
\begin{align*}
\MoveEqLeft
 w_1(\lambda_{\sigma,s})   
		=\big(M_{\psi_3}+\tau_{G,\sigma}(\cdot)(1-\psi_3(e))1_{\VN(\sigma,\sigma)}\big)(\lambda_{\sigma,s}) \\
		&=M_{\psi_3}(\lambda_{\sigma,s})+\tau_{G,\sigma}(\lambda_{\sigma,s})(1-\psi_3(e))1_{\VN(G,\sigma)}\\
		&=\psi_3(s)\lambda_{\sigma,s}+\delta_{s,e}(1-\psi_3(e))1_{\VN(G,\sigma)}
		=\lambda_{\sigma,s}\begin{cases}
		\psi_3(s)&\text{if } s\not=e\\
		1&\text{if } s=e
		\end{cases}
\end{align*}
and similarly for $w_2$. We deduce that these maps are selfadjoint unital Fourier multipliers $M_{\varphi_1}$ and $M_{\varphi_2}$. Now, the map 
$
\Phi=\begin{bmatrix}
   M_{\varphi_1}  &  M_{\phi} \\
   M_{\phi}^\circ  &  M_{\varphi_2}  \\
\end{bmatrix}
\co \M_2(\VN(G,\sigma)) \to \M_2(\VN(G,\sigma))
$ 
is obviously unital, selfadjoint and weak* continuous. Moreover
$$
\Phi=\begin{bmatrix}
   M_{\varphi_1}  &  M_{\phi} \\
   M_{\phi}^\circ  &  M_{\varphi_2}  \\
\end{bmatrix}
=\begin{bmatrix}
   M_{\psi_3}  &  M_{\phi} \\
   M_{\phi}^\circ  &   M_{\psi_4} \\
\end{bmatrix}
+
\begin{bmatrix}
    \tau_{G,\sigma}(\cdot)(1-\psi_3(e))1_{\VN(G,\sigma)} &  0 \\
       0  &  \tau_{G,\sigma}(\cdot)(1-\psi_4(e))1_{\VN(G,\sigma)} \\
\end{bmatrix}.
$$
It is easy to conclude that $\Phi$ is completely positive.
\end{proof}

\begin{remark}\normalfont
Let $G$ be an amenable discrete group. By \cite[Corollary 1.8]{DCH}, a contractive Fourier multiplier $M_\varphi \co \VN(G) \to \VN(G)$ is completely contractive and finally contractively decomposable by \cite[Theorem 2.1]{Haa} since $\VN(G)$ is approximately finite-dimensional.
\end{remark}

\subsection{Factorizability of some matrix block multipliers}
\label{sec:factorizability-of-some-matrix-block-multipliers}


\paragraph{Second quantization.} We denote by $\mathrm{Sym}(n)$ the symmetric group of order $n$. If $\sigma$ is a permutation of $\mathrm{Sym}(n)$ we denote by $|\sigma|$ the number $\card\big\{ (i,j)\ :\ 1\leq i<j \leq n,\, \sigma(i)>\sigma(j)\big\}$ of inversions of $\sigma$.

Let $\mathcal{H}$ be a complex Hilbert space. The antisymmetric (or fermionic) Fock space over $\mathcal{H}$ is 
$
\mathcal{F}_{-1}(\mathcal{H})
=\C \Omega \oplus (\bigoplus_{n \geq 1} \mathcal{H}^{\ot_{n}})
$ 
where $\Omega$ is a unit vector called the vacuum and where the
scalar product on $\mathcal{H}^{\ot_{n}}$ is given, after dividing out the null space, by
\begin{equation*}
\langle h_1 \ot \dots \ot h_n ,k_1 \ot \cdots \ot k_n
\rangle_{-1}=\sum_{\sigma \in \mathrm{Sym}(n)} (-1)^{|\sigma|}\langle
h_1,k_{\sigma(1)}\rangle_{\mathcal{H}}\cdots\langle
h_n,k_{\sigma(n)}\rangle_{\mathcal{H}}.
\end{equation*}
The creation operator $c(e)$ for $e \in \mathcal{H}$ is given by $c(e) \co \mathcal{F}_{-1}(\mathcal{H}) \to \mathcal{F}_{-1}(\mathcal{H})$, $h_1 \ot \dots \ot h_n \mapsto  e \ot h_1 \ot \dots \ot h_n$. We have $c(e)^2=0$. Moreover, they satisfy the $q$-commutation relation
\begin{equation}
\label{q-commutation-relations}
c(f)^*c(e)+c(e)c(f)^* 
= \langle f,e\rangle_{\mathcal{H}} \Id_{\mathcal{F}_{-1}(\mathcal{H})}.
\end{equation}
We denote by $\omega(e) \co \mathcal{F}_{-1}(\mathcal{H}) \to \mathcal{F}_{-1}(\mathcal{H})$ the selfadjoint operator $c(e)+c(e)^*$. If $e \in \mathcal{H}$ has norm 1, then \eqref{q-commutation-relations} says that the operator $\omega(e)$ satisfies 
\begin{equation}
\label{Equa-omega2=I}
\omega(e)^2
=\Id_{\mathcal{F}_{-1}(\mathcal{H})}.
\end{equation}
Let $H$ be a real Hilbert space with complexification $H_{\C}$. We let $\mathcal{H}=H_{\C}$. The fermion von Neumann algebra $\Gamma_{-1}(H)$ is the von Neumann algebra generated by the operators $\omega(e)$ where $e \in H$. It is a finite von Neumann algebra with the trace $\tau$ defined by $\tau(x)=\langle\Omega,x\Omega\rangle_{\mathcal{F}_{-1}(\mathcal{H})}$ where $x \in \Gamma_{-1}(H)$.

Let $H$ and $K$ be real Hilbert spaces and $T \co H \to K$ be a contraction with complexification $T_{\C} \co \mathcal{H}=H_{\C} \to K_{\C}=\mathcal{K}$. We define the following linear map
\begin{equation*}
\begin{array}{cccc}
 \mathcal{F}_{-1}(T) \co   &  \mathcal{F}_{-1}(\mathcal{H})   &  \longrightarrow   & \mathcal{F}_{-1}(\mathcal{K})  \\
    &  h_1 \ot \dots \ot h_n   &  \longmapsto       &    T_{\C}h_1 \ot \dots \ot T_{\C}h_n. \\
\end{array}
\end{equation*}
Then there exists a unique map $\Gamma_{-1}(T) \co \Gamma_{-1}(H) \to
\Gamma_{-1}(K)$ such that for every $x \in \Gamma_{-1}(H)$ we have $(\Gamma_{-1}(T)(x))\Omega=\mathcal{F}_{-1}(T)(x\Omega)$. This map is normal, unital, completely positive and trace preserving. If $T\co H \to K $ is a surjective  isometry, $\Gamma_{-1}(T)$ is a $*$-isomorphism from $\Gamma_{-1}(H)$ onto $\Gamma_{-1}(K)$. 

Finally for any $e,f \in H$, we have the covariance formula
\begin{equation}
\label{petite formule de Wick} 
\tau\big(\omega(e)\omega(f)\big)
=\langle e,f \rangle_{H}.
\end{equation}

\paragraph{Kernels of positive type.} Let $X$ be a topological space. A (real) kernel of positive type on $X$  \cite[Definition C.1.1]{BHV} is a continuous function $\Phi \co X \times X \to \C$ (into $\R$) such that, for any integer $n \in \N$, any elements $x_1,\ldots,x_n \in X$ and any (real) complex numbers $c_1,\ldots,c_n$, the following inequality holds:
$$
\sum_{k,l=1}^{n} c_k\ovl{c_l}\Phi(x_k,x_l) 
\geq 0.
$$
In this case, we have $\Phi(x,y)=\ovl{\Phi(y,x)}$ for any $x,y \in X$ by \cite[Proposition C.1.2]{BHV}. If $\Phi$ is such a kernel, by \cite[page 82]{BCR} and \cite[Theorem C.1.4]{BHV}, then there exists a (real) Hilbert space $H$ and a continuous mapping $e \co X \to H$ with the following properties:
\begin{enumerate}
	\item $\Phi(x,y)=\big\langle e_x, e_y \big\rangle_{H}$ for any $x, y \in X$,
	\item the linear span of $\{e_x : x \in X\}$ is dense in $H$.
\end{enumerate}


\paragraph{Factorizable maps.} 

Let $M$ be a von Neumann equipped with a faithful normal finite trace $\tau_M$. A $\tau_M$-Markov map $T \co M \to M$ is called factorizable\footnote{\thefootnote. The definition given here is slightly different but equivalent by \cite[Remark 1.4 (a)]{HaM}.} \cite{AnD}, \cite{HaM}, \cite{JMX}, \cite{Ric2} if there exists a von Neumann algebra $N$ equipped with a faithful normal finite trace $\tau_N$, and $*$-monomorphisms $J_0 \co M \to N$ and $J_1 \co M \to N$ such that $J_0$ is $(\tau_M,\tau_N)$-Markov and $J_1$ is $(\tau_M,\tau_N)$-Markov, satisfying moreover $T=J_0^*\circ J_1$. We say that $T \co M \to M$ is $\QWEP$-factorizable \cite{Arh3} if $N$ has additionally $\QWEP$.

\paragraph{Twisted crossed products.} 
In order to prove our results we need the notion of crossed product. 
Let $H$ be a Hilbert space and $M$ be a sub-von Neumann algbra of $\B(H)$. We consider a discrete group $G$ equipped with a $\T$-valued 2-cocycle $\sigma$.  Let $\alpha \co G \to \mathrm{Aut}(M)$ be a representation of $G$ on $M$. The twisted crossed product von Neumann algebra $M \rtimes_{\sigma,\alpha} G$ \cite[Definition 2.1]{Sut} (see also \cite{ZM} for a unitary transform of this definition) is generated by the operators $\pi_\sigma(x)$ and $\lambda_{\sigma,s}$ acting on $\ell^2_G(H)$ where $x \in M$ and $s \in G$, defined by
$$
(\pi_{\sigma}(x)\xi)(s)
=\alpha_{s^{-1}}(x)\xi(s),\qquad x \in M,\ \xi \in \ell^2_G(H),\ s \in G.
$$
$$
(\lambda_{\sigma,s}\xi)(t)
=\sigma(t^{-1},s)\xi(s^{-1}t), \qquad \xi \in \ell^2_G(H),\ s,t \in G.
$$
We have the following relations of commutation \cite[Proposition 2.2]{Sut}:
\begin{equation}
\label{equa-relations-de-commutation}
\pi_{\sigma}\big(\alpha_s(x)\big)\lambda_{\sigma,s}
=\lambda_{\sigma,s}\pi_{\sigma}(x),
\quad \text{and} \quad
\lambda_{\sigma,s}\lambda_{\sigma,t}
=\sigma(s,t)\lambda_{\sigma,st} \qquad x \in M,\ s,t \in G.
\end{equation}
We can identify $M$ and $\VN(G,\sigma)$ as subalgebras of $M \rtimes_{\sigma,\alpha} G$. 

Suppose that $\tau$ is a $G$-invariant normal semi-finite faithful trace on $M$. If $\mathbb{E}$ is the normal conditional expectation from $M \rtimes_{\sigma,\alpha} G$ onto $M$ then $\tau_{\rtimes} \ov{\mathrm{def}}{=} \tau \circ \mathbb{E}$ defines a normal semifinite faithful trace on $M \rtimes_{\sigma,\alpha} G$, see \cite[Proposition 8.16]{ZM}. For any $x \in M$ and any $s \in G$, we have
\begin{equation}
\label{formule-trace-crossed}
\tau_{\rtimes}\big(x\lambda_{\sigma,s}\big)
=\delta_{s,e_G}\tau(x).
\end{equation}
Moreover, $\tau_{\rtimes}$ is finite if and only if $\tau$ is finite. Finally we will use the notation $M \rtimes_{\alpha} G=M \rtimes_{1,\alpha} G$.


The following proposition generalizes a part of \cite[Proposition 4.2]{DCH}. It probably admits a groupoid generalization (see also \cite{ArK1}).

\begin{prop}
\label{Prop-caracterisation-cp}
Suppose that $I$ is a finite set. Let $G$ be a discrete group equipped with a $\T$-valued $2$-cocycle $\sigma$. Let $(\varphi_{ij})_{i,j \in I}$ be a family of complex functions on $G$. Let $\Psi \co \M_I(\VN(G,\sigma)) \to \M_I(\VN(G,\sigma))$ be a normal completely positive map such that $\Psi([\lambda_{\sigma,s_{ij}}])=\big[\varphi_{ij}(s_{ij})\lambda_{\sigma,s_{ij}}\big]$ for any family $(s_{ij})_{i,j \in I}$ of elements of $G$. Then the map $ \Phi \co I \times G\times I  \times G    \to   \C$, $(i,s,j,s') \mapsto \varphi_{ij}(s^{-1}s')$ is a kernel of positive type, that is: for any integer $n \in \N$, any elements $i_1,\ldots, i_n \in I$, any $s_1,\ldots,s_n\in G$ and any complex numbers $c_1,\ldots,c_n$, the following inequality holds:
$$
\sum_{k,l=1}^{n} c_k \ovl{c_{l}}\varphi_{i_{k}i_{l}}\big(s_{k}^{-1}s_{l}\big) \geq 0.
$$
\end{prop}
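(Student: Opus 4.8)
The goal is to show that the matrix-indexed symbol $\Phi\colon (i,s,j,s')\mapsto \varphi_{ij}(s^{-1}s')$ is a kernel of positive type. The natural strategy is to reduce the problem to the complete positivity of $\Psi$ by plugging in cleverly chosen elements of $\M_I(\VN(G,\sigma))$ and pairing against suitable vectors. First I would fix $n\in\N$, indices $i_1,\dots,i_n\in I$, group elements $s_1,\dots,s_n\in G$ and scalars $c_1,\dots,c_n\in\C$. Since $I$ is finite, there is enough room in $\M_I$ to encode all the data: for each $k$ consider the rank-one-type element $a_k\in\M_I(\VN(G,\sigma))$ given, say, by $a_k = \sum_{i\in I} e_{i i_k}\otimes \lambda_{\sigma,s_k}$ (or more simply, working column by column, $a_k$ with a single nonzero column). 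The key computation is then to evaluate, for an appropriate positive combination, the quantity $\sum_{k,l} c_k\overline{c_l}\,\langle \Psi(a_k^* a_l)\,\xi,\xi\rangle$ for a well-chosen vector $\xi$, using the fact that $[a_k^*a_l]_{k,l}$ is a positive element of $\M_n(\M_I(\VN(G,\sigma)))$ and hence, by complete positivity of $\Psi$ (applied with $\Id_{\M_n}\otimes\Psi$), its image under $\Id_{\M_n}\otimes\Psi$ is positive in $\M_n(\M_I(\VN(G,\sigma)))$.

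The heart of the matter is to identify $\Psi(a_k^*a_l)$ explicitly. Using the relations \eqref{product-adjoint-twisted} for the twisted generators, $a_k^*a_l$ is (up to a $\T$-valued cocycle scalar) a matrix whose entries are of the form $\lambda_{\sigma, s_k^{-1}s_l}$ placed in the $(i_k,i_l)$ slot; applying $\Psi$ multiplies the entry in position $(i_k,i_l)$ by $\varphi_{i_ki_l}(s_k^{-1}s_l)$. Then pairing with a vector of the form $\xi=\sum_k \overline{c_k}\,(\text{basis vector indexed by }k, i_k, \text{and the group element }s_k^{-1}s_l\text{ appropriately})$, or rather pairing the $(k,l)$ block of $\Id_{\M_n}\otimes\Psi([a_k^*a_l])$ against $\epsi_{i_k}\otimes\delta_{e_G}$ (the vacuum-type vector in $\ell^2_G$) weighted by $c_k$, one picks out exactly $\tau_{G,\sigma}$-type coefficients $\varphi_{i_ki_l}(s_k^{-1}s_l)$ thanks to \eqref{formule-trace-crossed}-style orthogonality of the $\lambda_{\sigma,t}$. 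The positivity of $\Id_{\M_n}\otimes\Psi([a_k^*a_l])$ as an operator, tested against this vector, yields precisely $\sum_{k,l} c_k\overline{c_l}\,\varphi_{i_ki_l}(s_k^{-1}s_l)\ge 0$, which is the assertion. One must of course check that the cocycle scalars $\sigma$ appearing when rewriting $a_k^*a_l$ and when testing against the chosen vectors cancel out; this is the same bookkeeping already carried out in the long computation in the proof of Theorem \ref{thm-complementation-subgroups-Fourier-multipliers-CB-twisted} (the verification of \eqref{equ-cocycle-Fourier-multiplier}), and it should go through verbatim in spirit.

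The main obstacle I anticipate is organizational rather than conceptual: choosing the elements $a_k$ and the test vector $\xi$ so that the pairing isolates exactly the scalars $\varphi_{i_ki_l}(s_k^{-1}s_l)$ without spurious cocycle phases or off-diagonal cross terms. Two safe options are available. The cleanest is to work at the level of a single column of $\M_I$: set $b_k = (e_{i_k,1})\otimes\lambda_{\sigma,s_k}\in \M_I(\VN(G,\sigma))$, so that $b_k^*b_l = \overline{\sigma(\cdot,\cdot)}\,e_{1,1}\otimes \lambda_{\sigma,s_k^{-1}s_l}\cdot(\text{phase})$ lives only in the $(1,1)$ corner — but this loses the dependence on $i_k$, so instead one keeps $e_{1,i_k}$ versus $e_{1,i_l}$ so that $b_k^*b_l$ sits in position $(i_k,i_l)$. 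Applying $\Psi$ multiplies by $\varphi_{i_ki_l}(s_k^{-1}s_l)$ times a cocycle phase; then evaluating $\sum_{k,l}c_k\overline{c_l}\,\tau_{G,\sigma}\big(\Psi(b_k^*b_l)_{i_k i_l}\,\lambda_{\sigma,(s_k^{-1}s_l)}^{\;*}\big)$ and using \eqref{product-adjoint-twisted} once more to cancel the phase gives the nonnegative quantity $\sum_{k,l}c_k\overline{c_l}\,\varphi_{i_ki_l}(s_k^{-1}s_l)$, while the left-hand side is $\langle (\Id_{\M_n}\otimes\Psi)([b_k^*b_l])\,v,v\rangle\ge 0$ for $v$ the vector with $k$-th component $\overline{c_k}\,(\epsi_{i_k}\otimes\delta_{e_G})$, using that $(\Id_{\M_n}\otimes\Psi)([b_k^*b_l])$ is positive. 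The alternative, slightly heavier, is to invoke the Stinespring-type dilation of the completely positive map $\Psi$ directly; but the pairing argument above is more elementary and matches the style of \cite[Proposition 4.2]{DCH} which the statement says it generalizes. Once the phase cancellation is verified, continuity of $\Phi$ (automatic here since $G$ is discrete, so every function on $G\times G$ is continuous) completes the verification that $\Phi$ is a kernel of positive type.
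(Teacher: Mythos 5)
Your overall strategy --- form a Gram-type positive matrix out of the twisted translations, apply $\Id_{\M_n} \ot \Psi$, and test positivity against a vector encoding the scalars $c_k$ and the indices $i_k$ --- is exactly the one used in the paper. There is, however, a genuine gap at the step where you identify $\Psi(b_k^*b_l)$. With each of your choices ($a_k=\sum_{i}e_{ii_k}\ot\lambda_{\sigma,s_k}$ or $b_k=e_{1,i_k}\ot\lambda_{\sigma,s_k}$) the product $b_k^*b_l$ is a scalar multiple of a \emph{single} matrix unit $e_{i_ki_l}\ot\lambda_{\sigma,s_k^{-1}s_l}$, whereas the hypothesis only prescribes $\Psi$ on matrices \emph{all} of whose entries are translations. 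The linear span of the latter does not contain $e_{ij}\ot\lambda_{\sigma,s}$: for any element of that span, the sum of the coefficients (in the basis $(\lambda_{\sigma,t})_{t\in G}$) of the $(i,j)$-entry is the same for every $(i,j)$, and this fails for a single matrix unit. Concretely, take $I=\{1,2\}$, $G$ trivial and $\varphi_{ij}\equiv 1$: the unital completely positive map $\Psi(x)=pxp+(1-p)x(1-p)$ on $\M_2$, where $p$ is the rank-one projection onto the line spanned by $(1,1)$, fixes the all-ones matrix (hence satisfies the hypothesis) but sends $e_{12}$ to $\frac{1}{2}(e_{12}+e_{21})$. So the assertion that ``applying $\Psi$ multiplies the entry in position $(i_k,i_l)$ by $\varphi_{i_ki_l}(s_k^{-1}s_l)$'' is unjustified under the stated hypotheses, and your pairing then produces a nonnegative number which need not equal $\sum_{k,l}c_k\ovl{c_l}\,\varphi_{i_ki_l}(s_k^{-1}s_l)$.

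The paper's proof avoids this by enlarging the Gram matrix: it takes $C=[u_{(k,r)}^{*}u_{(l,t)}]$ with $u_{(k,r)}=\lambda_{\sigma,s_k}$ for every $r\in I$, so that each $\M_I$-block of $C\in\M_n(\M_I(\VN(G,\sigma)))$ is a fixed cocycle scalar times the matrix \emph{all} of whose entries equal $\lambda_{\sigma,s_k^{-1}s_l}$; on such blocks the hypothesis applies verbatim and gives $[M_{\varphi_{rt}}((\lambda_{\sigma,s_k})^{*}\lambda_{\sigma,s_l})]_{r,t}$. The selection of the $(i_k,i_l)$-components is then carried out by the test vector rather than by the input, namely $\eta_{l,t}=\delta_{t,i_l}\,\ovl{c_l}\,\lambda_{\sigma,s_l}^{-1}\xi$ in $\ell^2_n(\ell^2_I(\L^2(\VN(G,\sigma))))$. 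If you replace your $b_k$ by the constant column $u_k=\sum_{r\in I}e_{1r}\ot\lambda_{\sigma,s_k}$, so that $u_k^{*}u_l$ is the all-ones matrix of $\M_I$ tensored with $(\lambda_{\sigma,s_k})^{*}\lambda_{\sigma,s_l}$, and keep a test vector supported on the coordinates $i_k$ weighted by $\ovl{c_k}$, your argument closes and coincides with the paper's; the cocycle phases then cancel exactly as you anticipate.
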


\begin{proof} 
Consider $i_1,\ldots, i_n \in I$ and $s_1,\ldots,s_n \in G$ and some complex numbers $c_1,\ldots,c_n \in \C$. Let $\xi$ be a unit vector of $\L^2(\VN(G,\sigma))$. For any integer $1 \leq k \leq n$, we let $\xi_k\overset{\mathrm{def}}=\ovl{c_k} \lambda_{\sigma,s_k}^{-1}\xi$. Then using \eqref{product-adjoint-twisted} several times, we have
\begin{align*}
\MoveEqLeft
 \sum_{k,l=1}^{n}c_k \ovl{c_{l}}\varphi_{i_{k}i_{l}}\big(s_{k}^{-1}s_{l}\big)
=\sum_{k,l=1}^{n} \varphi_{i_{k}i_{l}}\big(s_{k}^{-1}s_{l}\big)c_k \ovl{c_{l}}\langle\xi,\xi\rangle
=\sum_{k,l=1}^{n} \varphi_{i_{k}i_{l}}\big(s_{k}^{-1}s_{l}\big) \big\langle \ovl{c_{k}}\xi,\ovl{c_l}\xi\big\rangle\\
&=\sum_{k,l=1}^{n}\varphi_{i_{k}i_{l}}\big(s_{k}^{-1}s_{l}\big) \big\langle \lambda_{\sigma,s_k} \xi_k,\lambda_{\sigma,s_l}\xi_l\big\rangle
=\sum_{k,l=1}^{n}\varphi_{i_{k}i_{l}}\big(s_{k}^{-1}s_{l}\big) \big\langle \xi_k,(\lambda_{\sigma,s_k})^*\lambda_{\sigma,s_l} \xi_l\big\rangle\\
		&=\sum_{k,l=1}^{n}\varphi_{i_{k}i_{l}}\big(s_{k}^{-1}s_{l}\big) \ovl{\sigma(s_k,s_k^{-1})}\big\langle \xi_k,\lambda_{\sigma,s_k^{-1}}\lambda_{\sigma,s_l} \xi_l\big\rangle\\
		&=\sum_{k,l=1}^{n}\varphi_{i_{k}i_{l}}\big(s_{k}^{-1}s_{l}\big) \ovl{\sigma(s_k,s_k^{-1})}\sigma(s_k^{-1},s_l)\big\langle \xi_k,\lambda_{\sigma,s_k^{-1}s_l} \xi_l\big\rangle\\
		&=\sum_{k,l=1}^{n} \ovl{\sigma(s_k,s_k^{-1})} \sigma(s_k^{-1},s_l)\big\langle \xi_k, M_{\varphi_{i_{k}i_{l}}} \big(\lambda_{\sigma,s_k^{-1}s_l}\big) \xi_l\big\rangle\\
		&=\sum_{k,l=1}^{n} \ovl{\sigma(s_k,s_k^{-1})} \big\langle \xi_k, M_{\varphi_{i_{k}i_{l}}} \big(\sigma(s_k^{-1},s_l)\lambda_{\sigma,s_k^{-1}s_l}\big) \xi_l\big\rangle\\
		&=\sum_{k,l=1}^{n} \ovl{\sigma(s_k,s_k^{-1})}\big\langle \xi_k,M_{\varphi_{i_{k}i_{l}}} \big(\lambda_{\sigma,s_k^{-1}}\lambda_{\sigma,s_l}\big) \xi_l\big\rangle
		=\sum_{k,l=1}^{n} \big\langle \xi_k,M_{\varphi_{i_{k}i_{l}}} \big((\lambda_{\sigma,s_k})^*\lambda_{\sigma,s_l}\big) \xi_l\big\rangle
\end{align*}	
where the brackets denote scalar products in the Hilbert space $\L^2(\VN(G,\sigma))$. Now, we consider the vector $\eta=(\eta_{l,t})_{l \in [\![1,n]\!], t \in I} \in \ell^2_n(\ell^2_I(\L^2(\VN(G,\sigma))))$, where each $\eta_{l,t}$ belongs to $\L^2(\VN(G,\sigma)$), defined by
$$
\eta_{l,t}
=\delta_{t,i_l} \xi_{l}.
$$
We consider $\Id_{\M_n} \ot \Psi=\big[M_{\varphi_{rt}}\big]_{k,l \in [\![1,n]\!], r, t \in I} \co \M_{[\![1,n]\!] \times I}(\VN(G,\sigma)) \to \M_{[\![1,n]\!] \times I}(\VN(G,\sigma))$ and the matrix
$$
C
=\big[(\lambda_{\sigma,s_k})^*\lambda_{\sigma,s_l}\big]_{k,l \in [\![1,n]\!], r, t \in I} \in \M_{[\![1,n]\!] \times I}(\VN(G,\sigma)).
$$
Note that $C$ is positive (a matrix $[a_i^* a_j]_{ij}$ of $\M_n(A)$ is positive \cite[page 34]{Pau} and we use \cite[Lemma 1.3.6]{Bha}) and that
$$
(\Id_{\M_n} \ot \Psi)(C)
=\Big[M_{\varphi_{rt}}\big((\lambda_{\sigma,s_k})^*\lambda_{\sigma,s_l}\big)\Big]_{k,l \in [\![1,n]\!], r, t \in I}
\overset{\mathrm{def}}= [b_{k,l,r,t}]_{k,l \in [\![1,n]\!], r, t \in I}.
$$ 
We have
\begin{align*}
\MoveEqLeft
0 \leq \big\langle \eta,(\Id_{\M_n} \ot \Psi)(C)\eta \big\rangle_{\ell^2_n(\ell^2_I(\ell^2_G))} \\
& =\big\langle (\eta_{k,r})_{k \in [\![1,n]\!],r \in I},[b_{k,l,r,t}]_{k,l \in [\![1,n]\!], r, t \in I}(\eta_{l,t})_{l \in [\![1,n]\!], t \in I} \big\rangle_{\ell^2_n(\ell^2_I(\L^2(\VN(G,\sigma))))}\\
		&=\Bigg\langle (\eta_{k,r})_{k \in [\![1,n]\!],r \in I},\bigg(\sum_{l=1}^{n} \sum_{t \in I}^{} b_{k,l,r,t}\eta_{l,t}\bigg)_{k \in [\![1,n]\!], r \in I} \Bigg\rangle_{\ell^2_n(\ell^2_I(\L^2(\VN(G,\sigma))))}\\
		&=\sum_{k,l=1}^{n}  \sum_{r,t \in I} \big\langle \eta_{k,r},  b_{k,l,r,t} \eta_{l,t}\big\rangle 
		=\sum_{k,l=1}^{n} \sum_{r,t \in I} \big\langle \eta_{k,r},M_{\varphi_{rt}}\big((\lambda_{\sigma,s_k})^*\lambda_{\sigma,s_l}\big)  \eta_{l,t}\big\rangle \\
		&=\sum_{k,l=1}^{n} \sum_{r,t \in I} \big\langle \delta_{r,i_k} \xi_{k}, M_{\varphi_{rt}}\big((\lambda_{\sigma,s_k})^*\lambda_{\sigma,s_l}\big) \delta_{t,i_l} \xi_{l} \big\rangle \\
		& =\sum_{k,l=1}^{n} \sum_{r,t \in I} \delta_{r,i_k} \delta_{t,i_l}\big\langle \xi_k, M_{\varphi_{rt}}\big((\lambda_{\sigma,s_k})^*\lambda_{\sigma,s_l}\big) \xi_{l} \big\rangle \\
		&=\sum_{k,l=1}^{n} \big\langle \xi_k, M_{\varphi_{i_ki_l}}\big((\lambda_{\sigma,s_k})^*\lambda_{\sigma,s_l}\big)  \xi_{l}\big\rangle.
\end{align*}
where the brackets denote scalar products in the Hilbert space $\L^2(\VN(G,\sigma))$. 
\end{proof}

The following result generalizes the results of \cite{Ric2}. 

\begin{thm}
\label{Th-Factorizable-matricial mutipliers}
Let $G$ be a discrete group equipped with a $\T$-valued $2$-cocycle $\sigma$ on $G$ and $I$ be a finite set. Let $(\varphi_{ij})_{i,j \in I}$ be a family of real-valued functions on $G$ such that $\varphi_{ii}(e)=1$ for any $i\in I$. If the (selfadjoint unital trace preserving\footnote{\thefootnote. Hence $(\tr \ot \tau_{G,\sigma})$-Markovian.}) map $[M_{\varphi_{ij}}] \co \M_I(\VN(G,\sigma)) \to \M_I(\VN(G,\sigma))$ is completely positive then $[M_{\varphi_{ij}}]$ is factorizable on a von Neumann algebra of the form $\M_I\big(\Gamma_{-1}(H) \rtimes_{\sigma,\alpha} G\big)$ where $\alpha$ is an action of $G$ on the von Neumann algebra $\Gamma_{-1}(H)$ for some Hilbert space $H$.
\end{thm}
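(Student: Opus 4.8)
The strategy is the standard GNS-type dilation construction adapted to the matrix-multiplier setting, following the scheme that works for single Fourier multipliers in \cite{Ric2}. First I would record the key positivity input: by Proposition \ref{Prop-caracterisation-cp}, since the map $[M_{\varphi_{ij}}] \co \M_I(\VN(G,\sigma)) \to \M_I(\VN(G,\sigma))$ is normal completely positive, the function
\[
\Phi \co (I \times G) \times (I \times G) \to \C, \quad (i,s,j,t) \mapsto \varphi_{ij}(s^{-1}t)
\]
is a kernel of positive type. Moreover, each $\varphi_{ij}$ is real valued and the diagonal normalization $\varphi_{ii}(e) = 1$ holds; one checks (using $\varphi_{ij}(s) = \overline{\varphi_{ji}(s^{-1})}$, which follows from selfadjointness, together with the Schur-positivity of the $I \times I$ matrix $[\varphi_{ij}(e)]$ and Cauchy--Schwarz for kernels of positive type) that $|\varphi_{ij}(s)| \leq 1$ for all $i,j,s$. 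Then $\Psi(i,s,j,t) \overset{\mathrm{def}}= 1 - \varphi_{ij}(s^{-1}t) + (1-\delta_{ij}\delta_{st})$, or more simply the function whose associated quadratic form is $\sum c_k \bar c_l (1 - \varphi_{i_k i_l}(s_k^{-1}s_l))$ restricted to vectors with $\sum c_k = 0$ when indices agree, is a kernel of \emph{conditionally negative type} on $I \times G$; this is the genuinely computational bookkeeping step and I would phrase it directly in terms of: $\psi(i,s,j,t) = 1 - \varphi_{ij}(s^{-1}t)$ is a (real) conditionally negative type kernel, invariant under the diagonal $G$-action $r\cdot(i,s) = (i,rs)$.

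Next I would realize this conditionally negative kernel via an affine isometric action. By the GNS construction for conditionally negative type kernels (\cite[Theorem C.1.4]{BHV} applied to $e^{-t\psi}$ and differentiating, or directly), there is a real Hilbert space $H$, a map $b \co I \times G \to H$, and an orthogonal representation of $G$ on $H$ compatible with the $G$-action on $I \times G$, such that
\[
\varphi_{ij}(s^{-1}t) = e^{-\frac12\|b(i,s) - b(j,t)\|_H^2} \cdot (\text{phase corrections})
\]
— here I must be careful: since the $\varphi_{ij}$ need not be positive and need not come from an \emph{exponential} of a conditionally negative kernel, the correct move (as in \cite{Ric2} and \cite{Arh3}) is to pass through the fermionic second quantization: one seeks $H$ and vectors/contractions so that $\varphi_{ij}(s^{-1}t) = \tau\big(\omega(\xi_{i,s})^* \, u_g \, \omega(\xi_{j,t})\big)$-type formulas hold in $\Gamma_{-1}(H) \rtimes_{\sigma,\alpha} G$. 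Concretely, I would let $G$ act on $H$ by the orthogonal representation coming from the conditionally negative kernel, form $\alpha = \Gamma_{-1}(\text{that representation})$ acting on $\Gamma_{-1}(H)$, and build the twisted crossed product $N = \M_I\big(\Gamma_{-1}(H) \rtimes_{\sigma,\alpha} G\big)$. Using the covariance formula \eqref{petite formule de Wick}, the relation \eqref{Equa-omega2=I}, the commutation relations \eqref{equa-relations-de-commutation} and the trace formula \eqref{formule-trace-crossed}, one defines two trace-preserving $*$-monomorphisms $J_0, J_1 \co \M_I(\VN(G,\sigma)) \to N$: $J_0$ sends $e_{ij} \ot \lambda_{\sigma,s}$ to $e_{ij} \ot (u_s$-type element$)$ directly into the crossed product (the canonical inclusion composed with the trivial embedding), while $J_1$ sends $e_{ij} \ot \lambda_{\sigma,s}$ to $e_{ij} \ot \big(W(b(i,s)) \lambda_{\sigma,s} W(b(j,\cdot))^*\big)$ built from Weyl-type unitaries / the $\omega(\xi)$ operators arising from the cocycle $b$. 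One then verifies $J_0^* \circ J_1 = [M_{\varphi_{ij}}]$ by computing $(\tr \ot \tau_{\rtimes})\big(J_0(e_{kl}\ot\lambda_{\sigma,t})^* J_1(e_{ij}\ot\lambda_{\sigma,s})\big)$ and matching it against $\delta_{ik}\delta_{jl}\delta_{st}\,\varphi_{ij}(s)$ via the covariance formula.

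The main obstacle, and where I would spend most of the care, is twofold: (i) checking that $J_1$ is a genuine $*$-homomorphism (not merely a unital completely positive map) — this requires the Weyl/fermionic operators associated to the cocycle $b$ to be unitaries satisfying exactly the twisted relations \eqref{equa-relations-de-commutation}, which in turn forces a precise choice of the $2$-cocycle-compatible representation and uses that $\alpha_s = \Gamma_{-1}(o_s)$ intertwines things correctly; and (ii) handling the $2$-cocycle $\sigma$ and the matrix amplification $\M_I(\cdot)$ simultaneously without them interfering — here the key technical point is that $\sigma$ only affects the $\lambda_{\sigma,s}$ part and commutes with the fermionic part, so the crossed product $\Gamma_{-1}(H)\rtimes_{\sigma,\alpha}G$ is well-defined with the \emph{same} cocycle $\sigma$, and $\M_I$ is just carried along as an outer tensor factor. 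A secondary subtlety is ensuring the monomorphisms are trace-preserving and Markov in the sense of Subsection \ref{sec:Markov-maps}, which follows from \eqref{formule-trace-crossed} and the fact that $\Gamma_{-1}$ of an isometry is trace preserving. I would also remark that if one wants the QWEP-factorizable strengthening, it follows because $\Gamma_{-1}(H)$ is hyperfinite (hence QWEP) and QWEP is stable under crossed products by amenable... — but $G$ need not be amenable here, so I would not claim QWEP-factorizability, only factorizability, matching the statement.
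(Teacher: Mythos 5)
Your overall architecture is the right one --- the kernel of positive type from Proposition \ref{Prop-caracterisation-cp}, a real Hilbert space $H$ carrying it, the fermion algebra $\Gamma_{-1}(H)$, a twisted crossed product by a $G$-action, and two trace-preserving $*$-monomorphisms with $J_0^*\circ J_1=[M_{\varphi_{ij}}]$. But the step you yourself flag as the main obstacle, producing $J_1$ as a genuine $*$-homomorphism, is exactly where the proposal breaks down, and the detour through conditionally negative type kernels is a wrong turn. There is no need to pass to $1-\varphi_{ij}$, to an affine action, or to any cocycle $b$: Proposition \ref{Prop-caracterisation-cp} already gives $\varphi_{ij}(s^{-1}t)=\langle e_{i,s},e_{j,t}\rangle_H$ for GNS vectors $e_{i,s}\in H$ with $\norm{e_{i,s}}_H=1$ (from $\varphi_{ii}(e)=1$), and the $G$-invariance of the kernel makes $\theta_s\co e_{i,t}\mapsto e_{i,st}$ a well-defined \emph{orthogonal} (linear, not affine) representation, whence $\alpha(s)=\Gamma_{-1}(\theta_s)$. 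Your formula $e_{ij}\ot\lambda_{\sigma,s}\mapsto e_{ij}\ot W(b(i,s))\lambda_{\sigma,s}W(b(j,\cdot))^*$ is not even well defined as written, and with any reasonable reading it will not be multiplicative; the worry you raise about it is fatal, not merely technical.

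The resolution you are missing is a one-line trick: take $J_1$ to be the \emph{canonical inclusion} of $\M_I(\VN(G,\sigma))$ into $M=\M_I\big(\Gamma_{-1}(H)\rtimes_{\sigma,\alpha}G\big)$, and take $J_0=d\,J_1(\cdot)\,d$ where $d=\sum_{i\in I}e_{ii}\ot\omega(e_{i,e})$. Since each $\omega(e_{i,e})$ is selfadjoint with $\omega(e_{i,e})^2=\Id$ by \eqref{Equa-omega2=I} (this is where $\varphi_{ii}(e)=1$ and the fermionic choice $q=-1$ enter), $d$ is a selfadjoint unitary, so $J_0$ is automatically a trace-preserving $*$-monomorphism and no multiplicativity has to be checked by hand. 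All the remaining work is the trace computation of $(\tr\ot\tau_{\rtimes})\big(J_1(e_{ij}\ot\lambda_{\sigma,s})J_0(e_{kl}\ot\lambda_{\sigma,t})\big)$, which via the commutation relation $\lambda_{\sigma,s}\omega(e_{i,t})=\omega(e_{i,st})\lambda_{\sigma,s}$, the trace formula \eqref{formule-trace-crossed} and the covariance \eqref{petite formule de Wick} collapses to $\delta_{jk}\delta_{il}\delta_{s,t^{-1}}\sigma(s,t)\varphi_{ij}(s)$, using also the symmetry $\varphi_{ij}(s)=\varphi_{ji}(s^{-1})$ of the real kernel of positive type. Your closing remark refraining from a QWEP claim is correct and consistent with the statement.
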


\begin{proof}
By Proposition \ref{Prop-caracterisation-cp}, the map $\Phi \co I \times G\times I  \times G  \to \R$, $(i,s,j,s') \mapsto \varphi_{ij}(s^{-1}s')$ is a real  kernel of positive type. Hence for any $i,j \in I$ and any $s,s' \in G$ we have $\varphi_{ij}(s^{-1}s')=\varphi_{ji}(s'^{-1}s)$ in particular
\begin{equation}
	\label{symetrie-kernel}
\varphi_{ij}(s)=\varphi_{ji}(s^{-1}). 
\end{equation}
Moreover, there exists a real Hilbert space $H$ and a map $e  \co I \times G \to H$, $(i,s) \mapsto e_{i,s}$ such that the linear span of $\{e_{i,s} : i \in I, s \in G\}$ is dense in $H$ and such that for any $i,j \in I$ and any $s,s' \in G$
$$
\Phi(i,s,j,s')=\big\langle e_{i,s}, e_{j,s'} \big\rangle_{H}, \quad \text{i.e.} \quad
\varphi_{ij}(s^{-1}s')
= \big\langle e_{i,s}, e_{j,s'} \big\rangle_{H}.
$$
In particular, we have
\begin{equation}
	\label{Equa-eig-de-norme-1}
\varphi_{ij}(s)
=\big\langle e_{i,e}, e_{j,s} \big\rangle_{H}
\quad \text{and} \quad
	\bnorm{e_{i,s}}_{H}^2
	=\big\langle e_{i,s}, e_{i,s} \big\rangle_{H}
	=\varphi_{ii}\big(s^{-1}s\big)
	=\varphi_{ii}(e)
	=1.
\end{equation}
Note that for any family of real numbers $(a_{i,t})_{i \in I, t \in G}$ with only finitely many non-zero terms, we have
\begin{align*}
\MoveEqLeft
\Bgnorm{\sum_{i \in I,t \in G} a_{i,t}e_{i,st}}_{H}^2
=\sum_{i,j \in I} \sum_{t,t' \in G} a_{i,t}\ovl{a_{j,t'}}\big\langle e_{i,st},e_{j,st'}\big\rangle_{H}
		=\sum_{i,j \in I}\sum_{t,t' \in G} a_{i,t}\ovl{a_{j,t'}}\varphi_{ij}(t^{-1}t')\\
		&=\sum_{i,j \in I}\sum_{t,t' \in G} a_{i,t}\ovl{a_{j,t'}}\big\langle e_{i,t},e_{j,t'}\big\rangle_{H}
		=\Bgnorm{\sum_{i \in I, t \in G} a_{i,t}e_{i,t}}_{H}^2.
\end{align*}
Hence, we can define the following surjective isometric operator $\theta_s \co H \to  H$, $e_{i,t} \mapsto e_{i,st}$. Consequently, we obtain a group action $\theta$ of $G$ on the Hilbert space $H$. In order to simplify the notations in the sequel of the proof, in the von Neumann algebra $\Gamma_{-1}(H)$, we use the notation $\omega_{i,s}$ instead of $\omega(e_{i,s})$. For any $s \in G$, we define the trace preserving $*$-automorphism
\begin{equation*}
\alpha(s)=\Gamma_{-1}(\theta_s)    \co\begin{cases}  \Gamma_{-1}(H)   &  \longrightarrow  \Gamma_{-1}(H)  \\
     \omega_{i,t}  &  \longmapsto   \omega_{i,st}  \end{cases}.
\end{equation*}
The group homomorphism $\alpha \co G \to \Aut(\Gamma_{-1}(H))$ allows us to define the twisted crossed product von Neumann algebra $\Gamma_{-1}(H) \rtimes_{\sigma,\alpha} G$. We identify $\Gamma_{-1}(H)$ and $\VN(G,\sigma)$ as subalgebras of $\Gamma_{-1}(H) \rtimes_{\sigma,\alpha} G$. We can write the first relations of commutation \ref{equa-relations-de-commutation} as
\begin{equation}
\label{Relations-commutation}
\lambda_{\sigma,s} \omega_{i,t}
=\omega_{i,st} \lambda_{\sigma,s}
\end{equation}
We denote by $\tau$ the faithful finite normal trace on $\Gamma_{-1}(H)$. Recall that, for any $s \in G$, the map $\alpha(s)$ is trace preserving. Thus, the trace $\tau$ is $\alpha$-invariant. We equip $\Gamma_{-1}(H) \rtimes_{\sigma,\alpha} G$ with the induced canonical finite trace $\tau_\rtimes$. Now, we introduce the von Neumann algebra
\begin{equation}
\label{Von-Neumann-algebra-QWEP}
M=\M_I\big(\Gamma_{-1}(H) \rtimes_{\sigma,\alpha} G\big).
\end{equation}
equipped with its canonical trace $\tr \ot \tau_{\rtimes}$ and we consider the  element $d=\sum_{i \in I} e_{ii} \ot \omega_{i,e}$ 
of $M$. By \ref{Equa-eig-de-norme-1} and \eqref{Equa-omega2=I}, it is easy to see\footnote{\thefootnote. We have
$$
d^2
=\sum_{i,j \in I} (e_{ii} \ot \omega_{i,e})(e_{jj} \ot \omega_{j,e})
=\sum_{i\in I} \big(e_{ii} \ot \omega_{i,e}^2\big)
=\sum_{i\in I} \big(e_{ii} \ot 1_{\Gamma_{-1}(H) \rtimes_{\sigma,\alpha} G}\big)
=1_M.
$$} that $d^2=1_M$. We let $J_1 \co \M_I(\VN(G,\sigma)) \to M$ the canonical unital $*$-monomorphism and we define the unital $*$-monomorphism 
\begin{equation*}
\begin{array}{cccc}
    J_0  \co &  \M_I(\VN(G,\sigma))   &  \longrightarrow   &  M  \\
           &  e_{kl} \ot \lambda_{\sigma,t}   &  \longmapsto       &  d(e_{kl} \ot \lambda_{\sigma,t})d=e_{kl} \ot \omega_{k,e}\lambda_{\sigma,t}\omega_{l,e}  \\
\end{array}.
\end{equation*}
It is not difficult to check that the maps $J_0$ and $J_1$ are trace preserving, hence markovian. Now, for any $i,j,k,l \in I$ and any $s,t \in G$ we have
\begingroup
\allowdisplaybreaks
\begin{align*}
\MoveEqLeft
(\tr \ot \tau_{\rtimes})\big(J_{1}(e_{ij} \ot \lambda_{\sigma,s})J_{0}(e_{kl} \ot \lambda_{\sigma,t})\big)
		=(\tr \ot \tau_{\rtimes})\big((e_{ij} \ot \lambda_{\sigma,s})(e_{kl} \ot \omega_{k,e}\lambda_{\sigma,t}\omega_{l,e})\big)\\
		&=(\tr \ot \tau_{\rtimes})\big(e_{ij}e_{kl} \ot \lambda_{\sigma,s}\omega_{k,e}\lambda_{\sigma,t}\omega_{l,e}\big)
		=\tr(e_{ij}e_{kl})\tau_{\rtimes}(\lambda_{\sigma,s}\omega_{k,e}\lambda_{\sigma,t}\omega_{l,e})\\
		&=\delta_{jk}\delta_{il}\ \tau_{\rtimes}(\omega_{k,s}\lambda_{\sigma,s}\omega_{l,t}\lambda_{\sigma,t})\quad \text{by \eqref{Relations-commutation}}\\
		&=\delta_{jk}\delta_{il}\ \tau_{\rtimes}(\omega_{k,s}\omega_{l,st}\lambda_{\sigma,s}\lambda_{\sigma,t})\quad \text{by \eqref{Relations-commutation}}\\
		&=\delta_{jk}\delta_{il}\sigma(s,t)\ \tau_{\rtimes}(\omega_{k,s}\omega_{l,st}\lambda_{\sigma,st})
		=\delta_{jk}\delta_{il}\delta_{e,st}\sigma(s,t)\ \tau_{}(\omega_{k,s}\omega_{l,st})\quad \text{by \eqref{formule-trace-crossed}}\\
		&=\delta_{jk}\delta_{il}\delta_{e,st}\sigma(s,t)\big\langle e_{k,s}, e_{l,st}\big\rangle \qquad \text{by \eqref{petite formule de Wick}}\\
		&=\delta_{jk}\delta_{il}\delta_{e,st}\sigma(s,t)\varphi_{kl}(t)
		=\delta_{jk}\delta_{il}\delta_{s,t^{-1}}\sigma(s,t)\ \varphi_{ji}(s^{-1})\\
		&=\delta_{jk}\delta_{il}\delta_{s,t^{-1}}\sigma(s,t)\ \varphi_{ij}(s)\quad \text{by \eqref{symetrie-kernel}}\\
	&=\varphi_{ij}(s)\tr\big(e_{ij}e_{kl}\big) \tau_{G,\sigma}\big(\lambda_{\sigma,s}\lambda_{\sigma,t}\big)	
	=\varphi_{ij}(s)(\tr \ot \tau_{G,\sigma})\big(e_{ij}e_{kl} \ot \lambda_{\sigma,s}\lambda_{\sigma,t}\big)\\
		&=\varphi_{ij}(s)(\tr \ot \tau_{G,\sigma})\big((e_{ij} \ot \lambda_{\sigma,s})(e_{kl} \ot \lambda_{\sigma,t})\big) \\
		&=(\tr \ot \tau_{G,\sigma})\big(\big([M_{\varphi_{ij}}](e_{ij} \ot \lambda_{\sigma,s})\big)(e_{kl} \ot \lambda_{\sigma,t})\big).
\end{align*}
\endgroup
Hence, for any $x,y \in \M_2(\VN(G,\sigma))$, we deduce that
$$
(\tr \ot \tau_{G,\sigma})\big(\big([M_{\varphi_{ij}}](x)\big)y\big)
=(\tr \ot \tau_{\rtimes})\big(J_1(x)J_{0}(y)\big)
=(\tr \ot \tau_{G,\sigma})\big(J_{0}^*J_{1}(x)y\big).
$$
We conclude that $[M_{\varphi_{ij}}]=J_0^*\circ J_1$, i.e. that the map $[M_{\varphi_{ij}}]$ is factorizable. 
\end{proof}

\subsection{Application to the noncommutative Matsaev inequality}
\label{Application-Matsaev}

In this section, we give an application of Theorem \ref{Th-Factorizable-matricial mutipliers}. Other applications will be given in subsequent publications. If $1 \leq p \leq \infty$ we denote by $S \co \ell^p \to \ell^p$ the right shift operator defined by $S(a_0,a_1,a_2,\ldots) \ov{\mathrm{def}}{=} (0,a_0,a_1,a_2,\ldots)$. If $1<p<\infty$, $p\not=2$, the validity of the following inequality
\begin{equation}
\label{NCMatsaev} 
\bnorm{P(T)}_{\L^p(M) \to \L^p(M)} 
\leq \bnorm{P(S)}_{\cb ,\ell^p \to \ell^p}
\end{equation}
is open within the class of all contractions $T \co \L^p(M) \to \L^p(M)$ on a noncommutative $\L^p$-space $\L^p(M)$ and all complex polynomials $P$. We refer to the papers \cite{Arh1}, \cite{ALM} and \cite{Pel2} for more information on this problem. The following result allows us to generalize \cite[Corollary 4.5 and Corollary 4.7]{Arh1}.

\begin{thm}
\label{Application-NC-Matsaev}
Let $G$ be a discrete group and $\sigma$ be a $\T$-valued $2$-cocycle on $G$ such that for any real Hilbert space $H$, any action $\alpha$ from $G$ onto $\Gamma_{-1}(H)$ the crossed product $\Gamma_{-1}(H) \rtimes_{\alpha} G$ has $\QWEP$. Let $\varphi \co G \to \R$ be a real function which induces a (selfadjoint) contractively decomposable Fourier multiplier $M_\varphi \co \VN(G,\sigma) \to \VN(G,\sigma)$. Suppose $1 \leq p \leq \infty$. Then, the induced completely contractive Fourier multiplier $M_\varphi \co \L^p\big(\VN(G,\sigma)\big) \to \L^p\big(\VN(G,\sigma)\big)$ satisfies the noncommutative Matsaev inequality \eqref{NCMatsaev}. More precisely, for any complex polynomial $P$, we have
$$
\bnorm{P(M_\varphi)}_{\cb,\L^p(\VN(G,\sigma)) \to \L^p(\VN(G,\sigma))} 
\leq \bnorm{P(S)}_{\cb ,\ell^p \to \ell^p}.
$$
\end{thm}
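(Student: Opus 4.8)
The plan is to reduce the problem to the untwisted cocycle, to realize $M_\varphi$ as a corner of a $\QWEP$-factorizable Markov map produced by property $(\mathcal{P})$, and then to quote the noncommutative Matsaev inequality for $\QWEP$-factorizable Markov maps. First I would reduce to the case $\sigma=1$. Since Fourier multipliers commute and $(M_\varphi)^k=M_{\varphi^k}$, for a complex polynomial $P$ one has $P(M_\varphi)=M_{P\circ\varphi}$; in particular $P(M_\varphi)$ is again a Fourier multiplier, so by the cocycle-independence of the completely bounded norm of a multiplier, equation \eqref{Normes-cb-cocycl-et-sans}, $\norm{P(M_\varphi)}_{\cb,\L^p(\VN(G,\sigma))\to\L^p(\VN(G,\sigma))}=\norm{P(M_\varphi)}_{\cb,\L^p(\VN(G))\to\L^p(\VN(G))}$. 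Hence it is enough to prove $\norm{P(M_\varphi)}_{\cb,\L^p(\VN(G))\to\L^p(\VN(G))}\leq\norm{P(S)}_{\cb,\ell^p\to\ell^p}$ for every $P$, the multiplier now living on the untwisted group von Neumann algebra.

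Next I would run the factorizability machinery. Applying Proposition \ref{prop-prop-P-amenable-groups-multipliers} (the implication $1\Rightarrow3$) to the selfadjoint contractively decomposable multiplier $M_\varphi$ on $\VN(G,\sigma)$, we obtain real-valued functions $\varphi_1,\varphi_2\co G\to\R$ such that $\Phi=\begin{bmatrix}M_{\varphi_1}&M_\varphi\\ M_\varphi^\circ&M_{\varphi_2}\end{bmatrix}\co\M_2(\VN(G,\sigma))\to\M_2(\VN(G,\sigma))$ is unital, completely positive, weak* continuous and selfadjoint; here $M_\varphi^\circ=M_{\check\varphi}$ with $\check\varphi$ real, and unitality forces $\varphi_1(e)=\varphi_2(e)=1$, so $\Phi$ is trace preserving. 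Since the complete positivity of a matrix block of Fourier multipliers is governed only by the associated kernel of positive type, which does not involve the cocycle (Proposition \ref{Prop-caracterisation-cp} together with the matrix-valued version of Proposition \ref{Th-cp-Fourier-multipliers}), and since unitality, selfadjointness, trace preservation and normality are cocycle-independent, the same symbols define a unital, completely positive, weak* continuous, selfadjoint, trace preserving matrix block multiplier $\Phi_1=\begin{bmatrix}M_{\varphi_1}&M_\varphi\\ M_\varphi^\circ&M_{\varphi_2}\end{bmatrix}$ on $\M_2(\VN(G))$. Then Theorem \ref{Th-Factorizable-matricial mutipliers}, applied with $I=\{1,2\}$ and the trivial cocycle, shows that $\Phi_1$ is factorizable through a von Neumann algebra of the form $\M_2(\Gamma_{-1}(H)\rtimes_\alpha G)$ for some real Hilbert space $H$ and some action $\alpha$ of $G$ on $\Gamma_{-1}(H)$. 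By the hypothesis of the theorem this algebra has $\QWEP$ (passing to $\M_2(\cdot)$ does not affect $\QWEP$), so $\Phi_1$ is a $\QWEP$-factorizable $(\tr\ot\tau_G)$-Markov map.

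Finally I would transfer the Matsaev inequality down to $M_\varphi$. By the noncommutative Matsaev inequality for $\QWEP$-factorizable Markov maps (the machinery of \cite{Arh1} that this statement generalizes: a $\QWEP$-factorizable Markov map $\Psi$ on a von Neumann algebra $M$ induces $\Psi_p\co\L^p(M)\to\L^p(M)$ with $\norm{Q(\Psi_p)}_{\cb,\L^p(M)}\leq\norm{Q(S)}_{\cb,\ell^p}$ for all complex polynomials $Q$ and all $1\leq p\leq\infty$), applied to $\Psi=\Phi_1$ on $M=\M_2(\VN(G))$, we get $\norm{P((\Phi_1)_p)}_{\cb,\,S^p_2(\L^p(\VN(G)))}\leq\norm{P(S)}_{\cb,\ell^p}$. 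Consider the complete contractions $j\co\L^p(\VN(G))\to S^p_2(\L^p(\VN(G)))$, $x\mapsto e_{12}\ot x$, and $q\co S^p_2(\L^p(\VN(G)))\to\L^p(\VN(G))$, $[x_{kl}]\mapsto x_{12}$ (complete contractivity follows from the module inequality \eqref{Inequality-SpnE} and Lemma \ref{lem-regulier-et-selfadjoint1}), which satisfy $q\circ j=\Id$. Since $(\Phi_1)_p$ leaves the $(1,2)$-corner $\{e_{12}\ot x\}$ invariant and acts on it as $M_\varphi$, one has $(\Phi_1)_p^{\,n}\circ j=j\circ(M_\varphi)^n$ for every $n\geq0$, hence $q\circ P((\Phi_1)_p)\circ j=P(M_\varphi)$. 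Therefore
$$
\norm{P(M_\varphi)}_{\cb,\L^p(\VN(G))}\leq\norm{q}_{\cb}\,\norm{P((\Phi_1)_p)}_{\cb}\,\norm{j}_{\cb}\leq\norm{P(S)}_{\cb,\ell^p},
$$
which, combined with the first step, proves the theorem. (The same corner argument applied to $\Phi$ on $\M_2(\VN(G,\sigma))$ also gives that $M_\varphi$ is completely contractive on $\L^p(\VN(G,\sigma))$, as asserted.)

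I expect the genuine difficulty to lie in two places. The main one is quoting the implication ``$\QWEP$-factorizable Markov map $\Rightarrow$ noncommutative Matsaev inequality'' in exactly the form needed: with the completely bounded norm, uniformly over all polynomials, and for the full range $1\leq p\leq\infty$; this is the heart of the method, and it is precisely the input reused from \cite{Arh1}. The second delicate point is the passage from the twisted to the untwisted cocycle at the level of the $2\times2$ block, which is needed so that Theorem \ref{Th-Factorizable-matricial mutipliers} can be invoked with $\sigma=1$ and its $\QWEP$ hypothesis matches the untwisted crossed-product assumption of the theorem; here the kernel-of-positive-type description of complete positivity and the cocycle-invariance results of Section \ref{sec:Decomposable-Schur-multipliers-and-Fourier-multipliers} are essential, and they neatly sidestep any discussion of $\QWEP$ for twisted crossed products. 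The remaining ingredients --- the identity $P(M_\varphi)=M_{P\circ\varphi}$, the verification that $\Phi_1$ is a Markov map, and the elementary corner compression --- are routine.
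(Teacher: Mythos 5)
Your proposal is correct and follows essentially the same route as the paper: reduce to the trivial cocycle, use Proposition \ref{prop-prop-P-amenable-groups-multipliers} to embed $M_\varphi$ as a corner of a unital completely positive selfadjoint $2\times 2$ block multiplier, invoke Theorem \ref{Th-Factorizable-matricial mutipliers} and the $\QWEP$ hypothesis to get a $\QWEP$-factorizable Markov map, and conclude by the dilation/Matsaev machinery of \cite{Arh1} plus corner compression. The only cosmetic differences are that the paper performs the cocycle reduction on $M_\varphi$ itself before forming the block, and spells out your black box as the chain factorizable $\Rightarrow$ dilatable (via \cite[Theorem 4.4]{HaM}) $\Rightarrow$ Matsaev, with an explicit tensoring by $\Id_{\B(\ell^2)}$ to upgrade to the completely bounded norm.
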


\begin{proof}
Using \eqref{Normes-cb-cocycl-et-sans}, we can suppose that $\sigma=1$. Using Proposition \ref{prop-prop-P-amenable-groups-multipliers}, we see that there exist Fourier multipliers $M_{\psi_1}, M_{\psi_2} \co \VN(G) \to \VN(G)$ such that the map
$$
\begin{bmatrix}
   M_{\psi_1}  &  M_\varphi \\
   M_\varphi^\circ  &  M_{\psi_2}\\
\end{bmatrix}
\co \M_2(\VN(G)) \to \M_2(\VN(G))
$$
is unital, completely positive, selfadjoint and weak* continuous. Note that by Lemma \ref{Lemma-extension-L1} and interpolation, the previous map induces a (completely contractive) well-defined map on $S^p_2(\L^p(\VN(G)))$. For any complex polynomial $P$, we obtain
\begin{align*}
\MoveEqLeft
  \bnorm{P(M_\varphi)}_{\cb,\L^p(\VN(G)) \to \L^p(\VN(G))}  
		\leq \left|\left|\begin{bmatrix}
    P(M_{\psi_1}) & P(M_\varphi) \\
    P(M_\varphi^\circ) & P(M_{\psi_2}) \\
\end{bmatrix} \right|\right|_{\cb,S^p_2(\L^p(\VN(G))) \to S^p_2(\L^p(\VN(G)))}\\
		&=\left|\left|P\left(\begin{bmatrix}
    M_{\psi_1} & M_\varphi \\
    M_\varphi^\circ & M_{\psi_2} \\
\end{bmatrix}\right) \right|\right|_{\cb,S^p_2(\L^p(\VN(G))) \to S^p_2(\L^p(\VN(G)))}.
\end{align*}
By Theorem \ref{Th-Factorizable-matricial mutipliers}, the operator $\begin{bmatrix}
    M_{\psi_1} & M_\varphi \\
    M_\varphi^\circ & M_{\psi_2} \\
\end{bmatrix} \co \M_2(\VN(G)) \to \M_2(\VN(G))$ is $\QWEP$-factorizable. Using \cite[Theorem 4.4]{HaM}, we deduce that this operator is dilatable on a von Neumann algebra and it is left to the reader to check that this von Neumann algebra is QWEP. Finally, it is not difficult to deduce that the operator $\Id_{\B(\ell^2)} \ot \begin{bmatrix}
    M_{\psi_1} & M_\varphi \\
    M_\varphi^\circ & M_{\psi_2} \\
\end{bmatrix} \co \B(\ell^2) \otvn \M_2(\VN(G)) \to \B(\ell^2) \otvn \M_2(\VN(G))$ is also dilatable on a QWEP von Neumann algebra. We conclude by using \cite[Corollary 2.6 and (1.5)]{Arh1} that 
\begin{align*}
\MoveEqLeft
\left|\left|P\left(\begin{bmatrix}
    M_{\psi_1} & M_\varphi \\
    M_\varphi^\circ & M_{\psi_2} \\
\end{bmatrix}\right) \right|\right|_{\cb,S^p_2(\L^p(\VN(G))) \to S^p_2(\L^p(\VN(G)))}\\
&=\left|\left|P\left(\Id_{S^p} \ot
	\begin{bmatrix}
    M_{\psi_1} & M_\varphi \\
    M_\varphi^\circ & M_{\psi_2} \\
\end{bmatrix}\right) \right|\right|_{S^p(S^p_2(\L^p(\VN(G))))\to S^p(S^p_2(\L^p(\VN(G))))} 
\leq \bnorm{P(S)}_{\cb ,\ell^p \to \ell^p}. 
\end{align*}
The proof is complete.
\end{proof}




\vspace{0.4cm}

\textbf{Acknowledgment}. 
The authors would like to thank Gilles Pisier, Hatem Hamrouni, Colin Reid, Anne Thomas, Pierre de la Harpe, Christian Le Merdy, \'Eric Ricard, Antoine Derighetti, Sven Raum, Adam Skalski, Ami Viselter, Mikael de la Salle, Catherine Aaron, Dominique Manchon, Yves Stalder and Bachir Bekka for some discussions. We will thank Erwin Neuhardt for a useful remark.

Finally, we especially thank the referee of the present paper for a detailed report containing some corrections and some improvements.


Both authors were supported by the grant ANR-18-CE40-0021 (project HASCON).
The second author was partly supported by the grant ANR-17-CE40-0021 of the French National Research Agency ANR (project Front).

\small

\vspace{0.2cm}
\footnotesize{
\noindent C\'edric Arhancet\\ 
\noindent6 rue Didier Daurat, 81000 Albi, France\\
URL: \href{http://sites.google.com/site/cedricarhancet}{https://sites.google.com/site/cedricarhancet}\\
cedric.arhancet@protonmail.com\\

\noindent Christoph Kriegler\\
Universit\'e Clermont Auvergne\\
CNRS\\
LMBP\\
F-63 000 Clermont-Ferrand, France \\
URL: \href{http://lmbp.uca.fr/~kriegler/indexenglish.html}{http://lmbp.uca.fr/{\raise.17ex\hbox{$\scriptstyle\sim$}}\hspace{-0.1cm} kriegler/indexenglish.html}\\
christoph.kriegler@uca.fr

\end{document}